\let\amp=&
\theoremstyle{plain}
\newtheorem*{theorem*}{Theorem}
\newenvironment{customthm}[1]
{\innercustomthm}
{\endinnercustomthm}
\newtheorem{theorem}{Theorem}[section]
\newtheorem{lemma}[theorem]{Lemma}
\newtheorem{prop}[theorem]{Proposition}
\newtheorem{cor}[theorem]{Corollary}
\theoremstyle{definition}
\newtheorem{defn}[theorem]{Definition}
\newtheorem{ass}[theorem]{Assumption}
\theoremstyle{remark}
\newtheorem{rem}[theorem]{Remark}
\newcommand{\A}[2]{\mathbb{A}_{#1}^{#2}}
\newcommand{\Q}{\mathbb{Q}}
\newcommand{\Z}{\mathbb{Z}}
\newcommand{\N}{\mathbb{N}}
\newcommand{\C}[1]{\mathbb{C}^{#1}}
\newcommand{\K}[1]{\mathbb{P}^{#1}}
\newcommand{\R}{\mathbb{R}}
\newcommand{\Hs}[1]{\mathrm{H}^{\sharp}_{\mathcal{#1}}}
\newcommand{\Nm}{\mathrm{N}}
\newcommand{\F}[1]{\mathbb{F}_{#1}}
\newcommand{\V}[1]{\mathbb{V}^{#1}_{0}}
\newcommand{\catname}[1]{\normalfont\mathbf{#1}}
\newcommand{\GL}[1]{\catname{GL}_{#1}}
\newcommand{\SL}[1]{\catname{SL}_{#1}}
\newcommand{\Gm}{\mathbb{G}_m}
\newcommand{\Ga}{\mathbb{G}_a}
\newcommand{\w}[1]{\mathfrak{w}^{#1}_{k, \alpha, I}}
\newcommand{\wc}[1]{\mathfrak{w}_{#1}}
\newcommand{\W}[1]{\mathbb{W}^{#1}_{k,\alpha,I}}
\newcommand{\Wc}[1]{\mathbb{W}_{#1}}
\newcommand{\U}[1]{\mathbf{U}_{\mathfrak{#1}}}
\newcommand{\Vp}[1]{\mathbf{V}_{\mathfrak{#1}}}
\newcommand{\st}[2]{\breve{#1}^{#2}}
\newcommand{\Mbar}[2]{\bar{\mathfrak{M}}^{#2}_{#1,\alpha,I}}
\newcommand{\Man}[1]{\bar{\Sh{M}}_{#1,\alpha,I}}
\newcommand{\M}[2]{\mathfrak{M}^{#2}_{#1,\alpha,I}}
\newcommand{\X}{\mathfrak{X}_{\alpha, I}}
\newcommand{\Xr}[1]{\mathfrak{X}_{#1,I}}
\newcommand{\Xra}[1]{\mathfrak{X}_{#1, \alpha, I}}
\newcommand{\xra}[1]{\mathcal{X}_{#1,\alpha,I}}
\newcommand{\Ig}[2]{\mathfrak{IG}_{#1, #2, I}}
\newcommand{\Igord}[1]{\mathfrak{IG}^{\textrm{ord}}_{#1, I}}
\newcommand{\Ha}{\mathrm{Ha}}
\newcommand{\Hdg}[1]{\mathrm{Hdg}^{#1}}
\newcommand{\Gal}[1]{\mathrm{Gal}(\bar{#1}/#1)}
\newcommand{\sh}[1]{\mathscr{#1}}
\newcommand{\Frob}{\textrm{Frob}}
\newcommand{\Sh}[1]{\mathcal{#1}}
\newcommand{\Tate}[2]{\textrm{Tate}_{\mathfrak{#1},\mathfrak{#2}}(q)}
\newcommand{\Sp}{\textrm{Spec}\hspace{0.1em}}
\newcommand{\Spf}{\textrm{Spf}\hspace{0.2em}}
\newcommand{\D}{\mathrm{d}}
\newcommand{\Fil}[1]{\mathrm{Fil}_{#1}}
\newcommand{\Gr}[1]{\mathrm{Gr}_{#1}}
\mathchardef\mhyphen="2D
\DeclareMathOperator{\coker}{coker}
\DeclareMathOperator{\im}{im}
\DeclareMathOperator{\Hom}{Hom}
\DeclareMathOperator{\End}{End}
\DeclareMathOperator{\Spa}{Spa}
\DeclareMathOperator{\dlog}{dlog}
\DeclareMathOperator{\Res}{Res}
\DeclareMathOperator{\Sym}{Sym}
\DeclareMathOperator{\Der}{Der}
\title[Twisted Triple Product $p$-adic $L$-function for Finite Slope Families]{Twisted Triple Product \texorpdfstring{$p$}{p}-adic \texorpdfstring{$L$}{L}-function for Finite Slope Families of Hilbert Modular Forms}
\author{Ananyo Kazi}
\date{January 2024}
\address{Department of Mathematics and Statistics, Concordia University, Montreal, Quebec H3G 1M8, Canada}
\subjclass[2020]{Primary 11F33, 11F41, 14G35; Secondary 11F67}
\begin{document}

\begin{abstract}
    Let $L$ be a totally real field, and $p$ be a rational prime that is unramified in $L$. We construct overconvergent families of  classes of relative de Rham cohomology of the universal abelian scheme over Hilbert modular varieties associated to $L$. We show that these classes come equipped with Gauss--Manin connection. We prove convergence for $p$-adic iteration of this connection, improving upon a technique due to Andreatta--Iovita. We use this to construct a $p$-adic twisted triple product $L$-function associated to finite slope families of Hilbert modular forms, extending work of Blanco-Chacon--Fornea for Hida families.
\end{abstract}

\maketitle

\tableofcontents
\section{Introduction}
The theory of $p$-adic $L$-functions has been an important object of study due to its numerous arithmetic applications, most notably towards the Birch and Swinnerton--Dyer conjecture and its generalizations. Katz's $p$-adic Kronecker limit formula \cite{KatzEis} relates special values of the two variable $p$-adic $L$-function associated to a quadratic imaginary field at finite order Hecke characters to $p$-adic logarithms of elliptic units. The work of Bertolini--Darmon--Prasanna \cite{BDP} relates the central critical values of a certain $p$-adic Rankin $L$-function associated to a cusp form $f$ and a quadratic imaginary field $K$ to $p$-adic Abel--Jacobi images of {generalized Heegner cycles}. The article of Darmon and Rotger \cite{DarmonRotger} proves a $p$-adic Gross--Zagier formula relating the special values of the $p$-adic Garrett--Rankin triple product $L$-function attached to a triple of Hida families of modular forms to $p$-adic Abel--Jacobi images of certain {generalized Gross--Kudla--Schoen cycles} in the product of three Kuga--Sato varieties. 

The general technique of constructing these $p$-adic $L$-functions consists of two ingredients: firstly, a theory of $p$-adic modular forms and secondly, a way to $p$-adically iterate differential operators on the space of modular forms. For example, in the case of triple product $L$-functions  due to the work of Harris--Kudla \cite{HarrisKudla}, Ichino \cite{Ichino} and others, one expects that the special values of classical $L$-functions that one wishes to $p$-adically interpolate is a meaningful algebraic number upto multiplication by a transcendental period. More precisely, one expects the algebraic number to be expressed as the square of a Petersson inner product of nearly holomorphic modular forms, possibly arising as the image of a holomorphic modular form under the Shimura--Maass operator. Working with Hida families, as in the case of \cite{DarmonRotger}, the $p$-adic analogue of the Shimura--Maass operator is the $\theta$-operator of Serre, which acts as $\theta = q\frac{\D}{\D q}$ on $q$-expansions. The analogy between the Shimura--Maass operator $\delta$ and Serre's $\theta$ operator rests on the fact that they both can be viewed as the application of the Gauss--Manin connection followed by a projection to the space of modular forms. In the case of $\delta$ the projection comes from the splitting induced by the Hodge decomposition which is a special property of Kahler manifolds and in the case of $\theta$ this is given by the unit root splitting, which is a uniquely $p$-adic phenomenon.

It is then natural to adapt these techniques for finite slope families. However, for finite slope families, the finite slope projector does not converge on $p$-adic modular forms, since the $U$ operator is not compact. One can remedy this by working with overconvergent families, following the work of Coleman \cite{Coleman1}, \cite{Coleman2}. However, a separate problem of working with overconvergent families is the unavailability of the unit root splitting. One approach to solve this problem is to instead $p$-adically iterate the Gauss--Manin connection itself. This is the technique employed in the recent work of Andreatta--Iovita \cite{andreatta2021triple}, where they construct triple product $p$-adic $L$-function attached to finite slope families. Moreover, as a consequence of working beyond the ordinary locus, they manage to construct Katz--BDP type $p$-adic $L$-function in the case $p$ is non-split in the quadratic imaginary field \cite{andreattaKatz}. 

In this paper, we construct a twisted triple product $p$-adic $L$-function associated to finite slope families of Hilbert modular forms, in the case when $p$ is unramified in the totally real fields. This generalizes earlier work of Blanco-Chacon and Fornea \cite{michele} for the case of Hida families of Hilbert modular forms. Let us now introduce the relevant actors to state the theorem.

Fix a totally real field $L$ of degree $g$ over $\Q$. Let $\mathfrak{d}$ be the different ideal of $L/\Q$. Fix an integer $N \geq 4$ and a prime $p \nmid N$ that is unramified in $L$. Fix a finite unramified extension $K$ of $\Q_p$ that splits $L$. Assume for simplicity $p > 2$. Let $\Sigma$ be the set of embeddings of $L$ in $K$.

The notion of Hilbert modular forms has a slight ambiguity in the literature. Namely, one can talk either about geometric Hilbert modular forms, or about arithmetic Hilbert modular forms. This discrepancy arises from the fact that the Shimura variety associated to the group $G := \Res_{\Sh{O}_L/\Z}\GL{2}$ at the usual principal $N$ level or $\mu_N$ level doesn't have an interpretation as a fine moduli of abelian varieties. Instead its connected components, which are parametrized by elements in the strict class group are finite \'{e}tale quotients of fine moduli spaces of abelian varieties with polarization data and level structure. The geometric Hilbert modular forms are the sections of modular sheaves on the moduli scheme of abelian varieties, whereas the arithmetic Hilbert modular forms are the automorphic forms on the Shimura variety associated to $G$. The arithmetic Hilbert modular forms are necessary for a good Hecke theory. The geometric modular forms, as the name suggests are much more suitable for use in geometric constructions. For the purpose of this introduction we will ignore this discrepancy.

Let $\mathfrak{W} = \Spf \Sh{O}_K\llbracket (\Sh{O}_L \otimes \Z_p)^{\times} \rrbracket = \Spf \Lambda$ be the formal weight space and let $\mathfrak{W}^0 = \Spf \Lambda^0$ be the connected component of the trivial character. $\Lambda^0$ is a local ring and let $\mathfrak{m}$ be its maximal ideal. Let $\Sh{W}^0$ be the adic analytic fibre of $\mathfrak{W}^0$, and let $\Sh{W}^0_p$ be the affinoid open where $|t| \leq |p| \neq 0$ for all $t \in \mathfrak{m}$. Let $\mathfrak{W}^0_p = \Spf \Sh{O}^+_{\Sh{W}^0_p}$. Let $k^0$ be the universal weight on $\mathfrak{W}^0_p$, which is known to be analytic on $1+p(\Sh{O}_L \otimes \Z_p)$ (Lemma \ref{L21}). Due to analyticity, $k^0$ extends to a character on $1 + p^n\Res_{\Sh{O}_L/\Z}\Ga \simeq \prod_{\sigma \in \Sigma} 1 + p^n\Ga$ for any $n\geq 1$. Fix one such $n$. Thus we may view $k^0$ as a product of characters $k^0 = \prod_{\sigma \in \Sigma} k^0_{\sigma}$ on $1+p^n\Res_{\Sh{O}_L/\Z}\Ga$.

Consider the $p$-adic completion of $M(\mu_N, \mathfrak{c})$ and let $\mathfrak{X}$ be its base change to $\mathfrak{W}^0_p$. Consider the blow-up spaces $\mathfrak{X}_r$ obtained by blowing up the $\Hdg{}$ ideal and taking the open where the inverse image ideal is generated by $\Hdg{p^{r+1}}$. Here $\Hdg{}$ is the ideal generated locally by lifts of the Hasse invariant and $p$. These blow-up spaces are formal models for the rigid analytic overconvergent locus where $|\Hdg{p^{r+1}}| \geq |p|$. The main results of this work are the following.

\begin{customthm}{A}\label{1}
For suitable choice of $r$, there are interpolation sheaves $\mathfrak{w}^0_k$ and $\mathbb{W}^0_k$ on $\mathfrak{X}_r$, that interpolates modular forms and symmetric powers of de Rham classes for weight $k^0$ respectively. The sheaf $\mathbb{W}_{k}^0$ on $\mathfrak{X}_{r}$ is equipped with an increasing filtration $\{\Fil{i}\}_{i\geq 0}$. The filtered pieces are locally free $\Sh{O}_{\mathfrak{X}_{r}}$-modules and $\mathbb{W}_{k}^0$ is the completed colimit of $\Fil{i}\mathbb{W}_{k}^0$. The zeroth filtered piece $\Fil{0}\mathbb{W}_{k}^0 = \mathfrak{w}_{k}^0$ coincides with the modular sheaf of weight $k^0$. \emph{[\S\ref{S3}]}
\end{customthm}

\begin{customthm}{B}\label{2}
The Gauss--Manin connection on $H^1_{\textrm{dR}}(\Sh{A})$ induces a connection $\nabla$ on $\mathbb{W}^0_k$ over the generic fibre that satisfies Griffiths' transversality with respect to the filtration mentioned above. Moreover, let $k = \prod k_{\sigma} = \prod \exp(u_{\sigma}\log(\cdot))$ and $s = \prod s_{\sigma} = \prod \exp(v_{\sigma}\log(\cdot))$ be two analytic weights such that $u_{\sigma}, v_{\sigma} \in \Sh{O}^+_{\Sh{W}^0_p}$. Then for any $g$ of weight $k$, with $U_{\mathfrak{P}}(g) = 0$ for every prime $\mathfrak{P}$ in $\Sh{O}_L$ lying over $p$, $\nabla^s(g)$ makes sense as an overconvergent de Rham class of weight $k+2s$. \emph{[\S\ref{2S4}]}
\end{customthm}

To talk about $p$-adic iteration of the Gauss--Manin connection, one needs to consider families of modular forms for $p$-adically varying weights, as well as families of de Rham classes for varying weights. The first object, i.e. the sheaf of overconvergent modular forms has been geometrically constructed and studied for quite some time \cite{andreattaSiegel}, \cite{Andreatta2016Hilbert}, \cite{Andreatta2018leHS}. The novelty of \cite{andreatta2021triple} has been the construction of a sheaf $\mathbb{W}_k$ that interpolates symmetric powers of $H^1_{\textrm{dR}}$ of the universal elliptic curve for analytic weights $k$, and to iterate the Gauss--Manin connection for $p$-adic weights. This construction is based on the theory of \emph{vector bundles with marked sections}. This approach has been used fruitfully by Graziani \cite{graziani} to define interpolation sheaves of de Rham classes. This was used by Aycock \cite{Aycock} to interpolate the Gauss--Manin connection $\nabla$ in the setting of Hilbert modular forms, following \cite{harron-xiao} and \cite{liu} in the elliptic and Siegel setting respectively. However he didn't address the problem of iteration of the connection.

The article \cite{Molina} of Molina marks a significant improvement on this technique. One crucial step in the construction of the $p$-adic iteration of $\nabla$ is the proof of its convergence. In \cite{andreatta2021triple}, the authors need to carry out extremely long and complicated computations to prove this. However, in \cite{Molina}, Molina uses a refined version of vector bundles with marked sections to simplify the computations to a large extent. The key idea of his work relies on \cite[Lemma 5.1]{Molina} which proves that the modified integral model $\Sh{H}^{\sharp}_0$ of the de Rham sheaf that one uses to define the interpolation sheaf $\mathbb{W}$, admits a splitting modulo a small power of $p$. Using this one can restrict to a certain well-defined open subspace in the adic geometric vector bundle with marked sections associated to  $\Sh{H}^{\sharp}_0$, such that the Gauss--Manin connection converges faster on the sections of this open subspace. With this the author has been able to construct triple product $p$-adic $L$-functions associated with families of quarternionic automorphic forms in Shimura curves over totally real fields in the finite slope situation.

The main idea of our work is similar to that of \cite{Molina}, in the sense that we use a refined version of vector bundles with marked sections, using a canonical splitting of our integral model of de Rham sheaf $\Hs{A}$ modulo a small power of $p$, and prove that the partial Gauss--Manin connection $\nabla(\sigma)$ associated to any embedding $\sigma \colon L \to \bar{\Q}_p$ of our fixed totally real field $L$ converges fast enough, and that $\nabla(\sigma)$ commutes with $\nabla(\tau)$ for $\sigma \neq \tau$. In the following, we will describe this in more detail. 

Let us describe our approach to proving Theorem \ref{1}. The theory of canonical subgroup tells us that there exists a canonical subgroup of level $n$ for all $1 \leq n \leq r$. Then over the partial Igusa tower $\mathfrak{IG}_{n,r}$ that classifies generic trivializations $\Sh{O}/p^n\Sh{O} \simeq H_n^{\vee}$ of the dual of the canonical subgroup, one can define an integral model $\Omega_{\Sh{A}}$ of the modular sheaf $\omega_{\Sh{A}}$ as the submodule generated by all lifts of the image $\dlog(P^{\textrm{univ}})$ of the universal generator $P^{\textrm{univ}}$ of $H_n^{\vee}$ under the $\dlog$ map. This is an $\Sh{O}_L \otimes \Sh{O}_{\mathfrak{IG}_{n,r}}$-line bundle that is equipped with a marked section $\dlog(P^{\textrm{univ}})$ by construction. Then using the theory of vector bundles with marked section one can define the modular sheaf $\mathfrak{w}_{k}^0$ of weight $k^0$ as in \cite{andreatta2021triple}. This definition coincides with previous definitions in \cite{andreatta2016adic}, \cite{graziani} and \cite{Aycock}.

As mentioned before the key construction in this theory is the definition of interpolation sheaves of de Rham classes. Such a definition appears in \cite{graziani} and \cite{Aycock}. The definition due to Graziani follows closely the analogous definition in the elliptic case due to Andreatta--Iovita. One has to choose a suitable integral model $\Hs{A}$ of $H^1_{\textrm{dR}}(\Sh{A})$ such that the induced Hodge filtration identifies the zeroth filtered piece with $\Omega_{\Sh{A}}$. Our choice of the integral model $\Hs{A}$ differs from that due to Graziani. Let $\underline{\xi}$ be the invertible $\Sh{O}_L \otimes \Sh{O}_{\mathfrak{IG}_{n,r}}$ ideal that satisfies $\Omega_{\Sh{A}} = \underline{\xi}\omega_{\Sh{A}}$. Let $\widetilde{HW}$ be the invertible $\Sh{O}_L \otimes \Sh{O}_{\mathfrak{IG}_{n,r}}$ ideal generated by local lifts of the Hasse-Witt matrix. The first key result towards the goal of defining the interpolation sheaf for de Rham classes is the following. 

\begin{customthm}{C}\label{3}
For suitable choice of $r, n$ as above, there exists a locally free $\Sh{O}_L \otimes \Sh{O}_{\mathfrak{IG}_{n,r}}$ sheaf $\Hs{A} \subset H^1_{\textrm{dR}}(\Sh{A})$ of rank 2, such that the induced Hodge filtration is 
\[
0 \to \Omega_{\Sh{A}} \to \Hs{A} \to \underline{\xi}\widetilde{HW}\omega_{\Sh{A}}^{\vee} \to 0.
\]
Moreover the Hodge filtration admits a canonical splitting modulo $p/\Hdg{p^2}$ that coincides with the unit root splitting over the ordinary locus. The splitting is functorial for the lift of Verschiebung.
\end{customthm}

The proof of this result is the content of \S\ref{S2} and \S\ref{2S31}. We remark that in the case of $\mathbf{GL}_{2,\Q}$ one has an analogous splitting of the integral model $\Hs{E}$ modulo a small power of $p$ for the definition of $\Hs{E}$ used in \cite{andreatta2021triple}.

Let $\Sh{Q}$ be the kernel of the splitting in the theorem above. Using this definition of $\Hs{A}$, we define a notion of vector bundle with marked sections and marked splitting, similar to the definition of Molina \cite[\S4.2]{Molina} as follows.
\[
\V{\Sh{O}_L}(\Hs{A}, \dlog(P^{\textrm{univ}}),\Sh{Q})(R) := \left\{ f \colon \Hs{A}(R) \to \Sh{O}_L \otimes R \, | \, f \textrm{ is } \Sh{O}_L\textrm{-linear}, f(\dlog({P^{\textrm{univ}}}) = 1), f(\Sh{Q}) = 0 \right\} 
\]
We prove that this vector bundle is representable. In fact as an adic space this has a very simple local description. Let $s$ be a local lift of $\dlog(P^{\textrm{univ}})$ and $t$ be a lift of a local generator of $\Sh{Q}$. Let $\beta_n$ be the small power of $p$ such that $\dlog(P^{\textrm{univ}})$ is a section of $\Omega_{\Sh{A}}/\beta_n$. Let $\eta = p/\Hdg{p^2}$. Then locally as adic spaces,
\[
\V{\Sh{O_L}}(\Hs{A}, \dlog(P^{\textrm{univ}}), \Sh{Q}) = \V{\Sh{O}_L}(\Hs{A})\left\langle\frac{s-1}{\beta_n}, \frac{t}{\eta} \right\rangle.
\]
Here $\V{\Sh{O}_L}(\Hs{A})$ is the usual vector bundle whose sections over $R$ are $\Sh{O}_L \otimes R$-linear maps $\Hs{A}(R) \to \Sh{O}_L \otimes R$. (Note since we assume that $p$ is unramified in $L$ it is fine to take $\Sh{O}_L \otimes R$ as the codomain instead of $\mathfrak{d}^{-1}\otimes R$.) We then define the interpolation sheaf $\mathbb{W}_{k}^0$ using this refined version of VBMS and prove the rest of the statements in Theorem \ref{1}. The proof of these results concerns the rest of \S\ref{S3}. 

The next section \S\ref{2S4} is where we define the Gauss--Manin connection on $\mathbb{W}^0_k$, and show that it can be $p$-adically iterated for analytic weights. The strategy is exactly similar to \cite{andreatta2021triple} and \cite{Molina}. Using Grothendieck's description of connections we prove the first part of Theorem \ref{2}:

\begin{theorem*}
The Gauss--Manin connection on $H^1_{\textrm{dR}}(\Sh{A})$ induces a connection $\nabla$ on $\mathbb{W}^0_k$ over the generic fibre that satisfies Griffiths' transversality with respect to the filtration mentioned above.
\end{theorem*}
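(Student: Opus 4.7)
The plan is to invoke Grothendieck's description of a connection as descent datum along the first infinitesimal neighbourhood of the diagonal. Writing $p_1, p_2$ for the two projections from $\mathfrak{IG}_{n,r}\times\mathfrak{IG}_{n,r}$ and $\mathcal{I}_\Delta$ for the ideal of the diagonal, a connection on a sheaf $\mathcal{F}$ is the same datum as an $\mathcal{O}$-linear isomorphism $\epsilon_{\mathcal{F}}\colon p_2^*\mathcal{F}/\mathcal{I}_\Delta^2 \to p_1^*\mathcal{F}/\mathcal{I}_\Delta^2$ restricting to the identity on the diagonal and satisfying the usual cocycle condition. The Gauss--Manin connection on $H^1_{\textrm{dR}}(\Sh{A})$ furnishes such an $\epsilon$ after a rigid-analytic localisation. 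The content of the theorem is then that, over the generic fibre, this $\epsilon$ descends through the refined VBMS construction of $\mathbb{W}^0_k$.

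Three compatibilities must be checked: $\epsilon$ preserves the integral lattice $\Hs{A}$ (after possibly inverting a small power of $\Hdg{}$), $\epsilon$ fixes the marked section $\dlog(P^{\textrm{univ}})$ to precision $\beta_n$, and $\epsilon$ preserves the kernel $\Sh{Q}$ of the canonical splitting to precision $\eta = p/\Hdg{p^2}$. The first is standard from the comparison between Gauss--Manin and the crystalline/\'etale structures used to define $\Hs{A}$. The second is essentially automatic: $\dlog(P^{\textrm{univ}})$ is the $\dlog$-image of a generator of an \'etale sheaf, hence horizontal for Gauss--Manin and exactly fixed by $\epsilon$. The third is the heart of the matter and uses crucially that the canonical splitting constructed in \S\ref{S2} is functorial for the lift of Verschiebung; playing off the well-known compatibility of $\epsilon$ with Verschiebung against this functoriality yields $\epsilon(\Sh{Q}) \equiv \Sh{Q}$ modulo $\eta$ on the first infinitesimal neighbourhood. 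Once these three hold, $\epsilon$ induces an automorphism of the adic open $\V{\Sh{O}_L}(\Hs{A})(\tfrac{s-1}{\beta_n},\tfrac{t}{\eta})$ lifting the identity on the diagonal, and pulling back weight-$k$ functions under its inverse defines $\nabla$ on $\mathbb{W}^0_k$.

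Griffiths transversality is then a formal consequence of the local coordinates. In the adic coordinates $(s,t)$ of the refined VBMS, $\mathbb{W}^0_k$ is locally the completion of analytic functions in $t/\eta$, and $\Fil{i}\mathbb{W}^0_k$ corresponds to the subsheaf of such functions of $t$-degree at most $i$. The Kodaira--Spencer contribution to $\nabla$ acts by a first-order differentiation in $t$ tensored with an element of $\Omega^1_{\mathfrak{X}_r}$, so the $t$-degree grows by at most one. This gives $\nabla(\Fil{i}\mathbb{W}^0_k) \subset \Fil{i+1}\mathbb{W}^0_k\otimes\Omega^1_{\mathfrak{X}_r}$, and restricting to $i=0$ recovers the expected Kodaira--Spencer style connection $\mathfrak{w}^0_k \to \Fil{1}\mathbb{W}^0_k\otimes\Omega^1_{\mathfrak{X}_r}$ on modular forms.

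The principal technical obstacle is the third compatibility. The canonical splitting is only defined modulo $\eta$ and the Gauss--Manin datum only modulo $\mathcal{I}_\Delta^2$; matching these two orders of approximation requires a careful coordinate bookkeeping with the Hodge ideal $\underline{\xi}$ and the Hasse--Witt ideal $\widetilde{HW}$ appearing in the extension $0\to\Omega_{\Sh{A}}\to\Hs{A}\to\underline{\xi}\widetilde{HW}\omega_{\Sh{A}}^{\vee}\to 0$. This is the Hilbert analogue of Molina's coordinate analysis of \cite{Molina}, and the functoriality for the lift of Verschiebung built into the earlier splitting theorem is exactly the structural input that makes the bookkeeping close.
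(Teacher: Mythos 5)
Your overall framework — Grothendieck's description of connections, checking compatibility with the lattice, the marked section, and the kernel $\Sh{Q}$, then descending through the refined VBMS — is the same as the paper's. But the argument you give for the third compatibility (preservation of $\Sh{Q}$ modulo $\eta$) has a genuine gap.

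You assert that ``playing off the well-known compatibility of $\epsilon$ with Verschiebung against'' the functoriality of the splitting yields $\epsilon(\Sh{Q})\equiv\Sh{Q}$ modulo $\eta$. This does not go through as stated: the lift of Verschiebung $\lambda'$ is a morphism $\Sh{A}/H_1 \to \Sh{A}$ between \emph{different} abelian schemes over the base, so $(\lambda')^*$ maps $\Hs{A}$ to $\Hs{\Sh{A}'}$ and does not directly constrain the connection on $\Hs{A}$ itself. The paper's Corollary \ref{C314} identifies $\Sh{Q}$ with the kernel of Verschiebung only as a characterization, not as the mechanism for the estimate. What actually makes the third compatibility work, in Proposition \ref{P401}, is an explicit local computation: one first shows (using isotriviality of $\mu_{p^n}$, and crucially over the \emph{bigger} Igusa tower $\Ig{n}{r}'$ that trivializes all of $\Sh{A}[p^n]^\vee$, not merely $H_n^\vee$) that $\nabla(\Omega_{\Sh{A}})\subset\beta_n\,\mathrm{H}_{\Sh{A}}\otimes\Omega^1_{\Ig{n}{r}'}$, then reads off from equation (\ref{eq:9}) that the Kodaira–Spencer classes $\Theta_\sigma$ pulled back to $\Ig{n}{r}'$ lie in $\beta_n\Omega^1_{\Ig{n}{r}'}$, and finally inserts this into the explicit expansion (\ref{eq:10}) of $\nabla(f_\sigma)$ to conclude $\nabla(f_\sigma)\equiv 0 \pmod\eta$. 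The smallness of $\Theta_\sigma$ after pullback is the key numerical input; you don't supply a substitute for it. Relatedly, the marked section is horizontal only modulo $\beta_n$, and only over $\Ig{n}{r}'$ — calling it ``exactly fixed by $\epsilon$'' after ``a rigid-analytic localisation'' elides the entire point of introducing $\Ig{n}{r}'$.

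Your description of Griffiths transversality is essentially correct, though the degree-raising term of $\nabla$ in the local coordinates of Corollary \ref{C302} is multiplication by $V_\sigma$ (coming from the $(u_\sigma-i_\sigma)V_\sigma\otimes\eta c_\sigma$ term in (\ref{eq:11})), not a first-order differentiation; the differentiation term $i_\sigma V_\sigma^{-1}\otimes b_\sigma\eta^{-1}$ lowers degree, so both sides of the filtration estimate require the explicit formula. Finally, note that the connection on $\W{0}$ genuinely acquires a pole in $\alpha$ and lands in $\W{0}\hat\otimes\Omega^1_{\Xra{r}}[1/\alpha]$; this is why the statement is over the generic fibre, and your writeup should make clear where that inversion happens (namely in descending from $\Ig{n}{r}$ to $\Xra{r}$, using generic étaleness).
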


The definition of $p$-adic iteration of $\nabla$ follows the strategy of \cite{andreatta2021triple}. For each embedding $\sigma \in \Sigma$, the Kodaira--Spencer class corresponding to $\sigma$ gives a partial connection $\nabla(\sigma) \colon \mathbb{W}^0_k[1/p] \to \mathbb{W}^0_{k+2\sigma}[1/p]$. We first study the convergence properties of $\nabla(\sigma)$ over the ordinary locus using $q$-expansions and local coordinates. Here we realize that $\nabla(\sigma)$ behaves exactly like the connection $\nabla$ in the case of elliptic curves \cite{andreatta2021triple}. However, owing to our use of VBMS with marked splitting, we get faster convergence estimates. This is really the key improvement to the technique of \cite{andreatta2021triple} and mimics the results obtained by Molina. In particular \cite[Proposition 4.10]{andreatta2021triple} states that for any $p$-depleted $g \in H^0(\mathfrak{IG}^{\textrm{ord}}_n, \mathbb{W}^0_k)$,
\[
(\nabla^{p-1} - \textrm{id})^p(g) \in pH^0(\mathfrak{IG}^{\textrm{ord}}_n, \mathbb{W}^0).
\]
Here $\mathbb{W}^0$ is the direct image of the structure sheaf of $\V{}(\Hs{E}, \dlog(P^{\textrm{univ}})) \to \mathfrak{IG}^{\textrm{ord}}_n$. Instead, if we work with $\V{}(\Hs{E}, \dlog(P^{\textrm{univ}}), \Sh{Q})$, and let $\mathbb{W}^0$ be the direct image of its structure sheaf, then we can prove that 
\[
(\nabla^{p-1} - \textrm{id})(g) \in pH^0(\mathfrak{IG}^{\textrm{ord}}_n, \mathbb{W}^0).
\]
An exactly analogous statement holds for $\nabla(\sigma)$ for all $\sigma$ in the Hilbert case.
Then for any analytic weight $s = \prod_{\sigma \in \Sigma} s_{\sigma} = \prod \exp(v_{\sigma}\log(\cdot))$ and any Hilbert modular form $g$ of weight $k$ that is depleted at all primes above $p$, satisfying the conditions on $u_{\sigma}, v_{\sigma}$ as mentioned in Theorem \ref{2},  we show that $\nabla(\sigma)^{s_{\sigma}}(g)$ defined using a formal expression
\[
\nabla(\sigma)^{s_{\sigma}}(g) := \exp\left( \frac{v_{\sigma}}{p^{f_{\sigma}}-1} \log\nabla(\sigma)^{p^{f_{\sigma}}-1}\right)(g)
\]
is actually the limit of a Cauchy sequence in $\mathbb{W}^0$ over the ordinary locus. Finally we manage to show that by shrinking the initial radius of overconvergence, it is possible to realize $\nabla(\sigma)^{s_{\sigma}}(g)$ as an overconvergent de Rham class. We can also check on $q$-expansions that $\nabla(\sigma)$ and $\nabla(\tau)$ commutes for different $\sigma, \tau$. Then the definition of $\nabla^s = \prod_{\sigma \in \Sigma} \nabla(\sigma)^{s_{\sigma}}$ is obvious. This then proves the second part of Theorem \ref{2}.

\begin{theorem*}
Let $k = \prod k_{\sigma} = \prod \exp(u_{\sigma}\log(\cdot))$ and $s = \prod s_{\sigma} = \prod \exp(v_{\sigma}\log(\cdot))$ be two analytic weights such that $u_{\sigma}, v_{\sigma} \in \Sh{O}^+_{\Sh{W}^0_p}$. Then for any $g$ of weight $k$, depleted at all primes above $p$, $\nabla^s(g)$ makes sense as an overconvergent de Rham class of weight $k+2s$. 
\end{theorem*}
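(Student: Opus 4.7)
The plan is to split $\nabla$ along the embeddings, iterate each partial connection by itself using a sharpened convergence estimate afforded by our marked splitting, and then multiply. More precisely, the Kodaira–Spencer decomposition identifies $\omega_{\Sh{A}} \otimes \omega_{\Sh{A}} \simeq \bigoplus_{\sigma\in\Sigma} \omega_{\Sh{A},\sigma}^{\otimes 2}$, giving rise to partial connections $\nabla(\sigma)\colon \mathbb{W}^0_k \to \mathbb{W}^0_{k+2\sigma}$ whose sum is $\nabla$. I would first work over the ordinary Igusa tower $\Igord{n}$, where the sheaf $\mathbb{W}^0_k$ admits a concrete description via the $q$-expansion principle and where each $\nabla(\sigma)$ can be written explicitly in local coordinates as the derivation $q_\sigma \partial/\partial q_\sigma$, twisted by the marked section $\dlog(P^{\textrm{univ}})$ and killed on the marked splitting $\Sh{Q}$.

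The key technical input is the following sharpened estimate: for any $g \in H^0(\Igord{n}, \mathbb{W}^0_k)$ with $U_{\mathfrak{P}}(g)=0$ for every $\mathfrak{P}\mid p$, and every $\sigma \in \Sigma$,
\[
\bigl(\nabla(\sigma)^{p^{f_\sigma}-1} - \textrm{id}\bigr)(g) \in p\, H^0(\Igord{n}, \mathbb{W}^0_k),
\]
where $f_\sigma$ is the residue degree of the prime associated with $\sigma$. This is the Hilbert analogue of the gain described in the introduction: its proof is a direct $q$-expansion calculation which, crucially, only works because our refined VBMS with marked splitting $\Sh{Q}$ restricts sections to those that vanish on $\Sh{Q}$, killing the extra $p$-th power error term that would otherwise appear as in \cite{andreatta2021triple}. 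Given this estimate, and using $v_\sigma \in \Sh{O}^+_{\Sh{W}^0_p}$ so that $v_\sigma/(p^{f_\sigma}-1)$ is a bounded analytic function on $\Sh{W}^0_p$, the formal expression
\[
\nabla(\sigma)^{s_\sigma}(g) := \exp\!\Bigl(\tfrac{v_\sigma}{p^{f_\sigma}-1}\,\log \nabla(\sigma)^{p^{f_\sigma}-1}\Bigr)(g)
\]
converges as a Cauchy sequence of partial sums in $H^0(\Igord{n}, \mathbb{W}^0_{k+2s_\sigma})$. A direct $q$-expansion check then shows $\nabla(\sigma)$ commutes with $\nabla(\tau)$ for $\sigma \neq \tau$, so that $\nabla^s(g) := \prod_\sigma \nabla(\sigma)^{s_\sigma}(g)$ is well defined as an ordinary class.

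The main obstacle, and the last step, is to promote $\nabla^s(g)$ from a class on the ordinary Igusa tower to a genuinely overconvergent de Rham class on some $\Xr{r'}$. Each individual truncation $\sum_{j\le N}(\cdots)(\nabla(\sigma)^{p^{f_\sigma}-1}-\textrm{id})^j(g)$ is a polynomial in $\nabla(\sigma)$ and therefore overconvergent; however, Griffiths' transversality only holds after multiplying by a small power of $\Hdg{}$, so iterating $\nabla(\sigma)$ naively would shrink the radius. The plan is to prove a version of the estimate above over the blow-up $\Xr{r'}$ in which the loss per iteration is bounded by a controlled power of $p/\Hdg{p^{r'+1}}$, and to verify that when $r'$ is chosen slightly larger than $r$ (depending on $s$ but independent of the truncation index) this loss is strictly dominated by the factor of $p$ gained from the key estimate. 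Once this uniform bound is established, the Cauchy sequence converges in $H^0(\Xr{r'}, \mathbb{W}^0_{k+2s})$, yielding the desired overconvergent de Rham class. The bookkeeping of Hodge denominators across the iterated connection is the technical heart of the argument; the existence of the canonical splitting modulo $p/\Hdg{p^2}$ proved in \S\ref{S2} is what makes this bookkeeping tractable.
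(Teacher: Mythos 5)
Your proposal follows the paper's argument in \S\ref{2S44} essentially step for step: the partial connections $\nabla(\sigma)$, the ordinary-locus estimate of Corollary \ref{C403}, the formal $\exp/\log$ iteration and convergence check of Proposition \ref{P402}, the commutativity Lemma \ref{L404}, and the promotion to an overconvergent class on a shrunken $\Xra{\ell}$ via the uniform bound of Lemma \ref{L408}. The one detail you leave implicit is the bridge lemma (Lemma \ref{L407}), which bounds by a power of $\Hdg{}$ the annihilator of the kernel of the restriction map from $\Sh{O}_{\Ig{n}{r}}/(\alpha^j)$ to its ordinary counterpart; this is precisely what converts the $q$-expansion gain on the ordinary Igusa tower into the uniform $\Hdg{}$-denominator bound $(p/\Hdg{C})^N$ on $\Xra{\ell}$.
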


We remark that while we prove the splitting of $\mathrm{H}^{\sharp}_{\Sh{A}}$ in Theorem \ref{3} modulo a small power of $p$, with a little more work it is possible to prove such a splitting modulo higher powers, under the assumption that the radius of overconvergence is small enough (i.e. $r$ in $\Xr{r}$ is large enough). One important consequence of working with the refined version of VBMS is that we manage to relax the condition on analyticity of the weight $s$ slightly, compared to that in \cite{andreatta2021triple}. Moreover, working with splitting of $\mathrm{H}^{\sharp}_{\Sh{A}}$ modulo higher powers allows one to relax this condition even further to whatever extent desired, at the cost of shrinking the radius of overconvergence. While preparing this manuscript, we came to know about the recent work of Andrew Graham, Vincent Pilloni and Joaqu\'{i}n Rodrigues Jacinto \cite{graham2023padic}. Their work in the elliptic case uses a similar idea of working with rational subdomains of the form $\V{\Sh{O}_L}(\Hs{A})\left\langle\frac{s-1}{\beta_n}, \frac{t}{\eta} \right\rangle$. More precisely, they first construct strict neighbourhoods $\Sh{U}_{\mathrm{HT},n}$ (in their notation) of the Igusa tower at $\infty$ (they work with the Igusa tower which classifies trivializations of both the connected and \'{e}tale part of the $p$-divisible group) for all $n\geq 1$, inside the torsor $P^{\text{an}}_{\text{dR},\Sh{X}_{\text{ord}}}$ of trivializations of the de Rham homology, over the ordinary locus. Base changed to the Igusa tower (along the projection to $\Sh{X}_{\text{ord}}$), the strict neighbood $\Sh{U}_{\mathrm{HT},n}$ is a vector bundle which is a torsor for the formal group (upto connected components)
\[
\Sh{B} = \begin{pmatrix}
    1+p^n\Ga & 0 \\
    p^n\Ga & 1+p^n\Ga 
\end{pmatrix}.
\] 
Next they construct overconvergent extensions $\Sh{U}_{n,r}$ of these strict neighbourhoods over the overconvergent locus $\mathcal{X}_r$ for each $r\geq 1$ \cite[\S3.4]{graham2023padic}. These extensions form a cofinal system of strict neighbourhoods of the Igusa tower inside $P^{\text{an}}_{\text{dR}}$ which is the torsor of trivializations of the de Rham homology over the entire modular curve. Since they define their space of nearly overconvergent modular forms as the direct limit $\varinjlim_{n,r}\Sh{O}(\Sh{U}_{n,r})$, this allows them to define $\nabla^s$ without any conditions on the analyticity on $s$. We remark that by using splittings of $\mathrm{H}^{\sharp}_{\Sh{A}}$ modulo higher powers of $p$, one should be able to show that for $r$ large enough (so as to allow for the existence of the level $n$ canonical subgroup $H_n$), the base change of $\Sh{U}_{n,r}$ to the level $n$ partial Igusa tower is a vector bundle which is a torsor for $\Sh{B}$ as above. We do not explore this direction in this work because we are interested in applying our theory to the construction of Katz--BDP type $p$-adic $L$-functions in the future. In such constructions it is necessary to control the radius of overconvergence of the nearly overconvergent form that one obtains by applying a $p$-adic iterate of $\nabla$ to an overconvergent form. By what we said above, relaxing the condition on analyticity comes at the cost of shrinking the radius of overconvergence. We still encourage the readers to look at \cite{graham2023padic}.

\subsection{Application to construction of \texorpdfstring{$p$}{p}-adic \texorpdfstring{$L$}{L}-function}
In their article \cite{michele}, Blanco-Chacon and Fornea construct a twisted triple product $p$-adic $L$-function associated to two nearly ordinary families. In the special case of a real quadratic extension $L/\Q$, they relate the special values of their $L$-function to syntomic Abel--Jacobi images of \emph{generalized Hirzebruch--Zagier cycles}. In this paper we construct a twisted triple product $p$-adic $L$-function associated to finite slope families of Hilbert modular forms, generalizing the definition of \cite{michele} using the theory of $p$-adic iteration of Gauss--Manin connection.

Let $L/F$ be a quadratic extension of totally real fields. The weights of the arithmetic Hilbert modular forms associated to the group $G_L := \Res_{\Sh{O}_L/\Z}\GL{2}$ are parametrized by pairs $(v,n) \in \Z[\Sigma_L]\times \Z$. These forms correspond to weight $k = 2v + nt_L$ geometric Hilbert modular forms on which the centre $\A{L}{\times}$ acts via a Hecke character with infinity type $-nt_L$, where $t_L = \sum_{\sigma \in \Sigma_L} \sigma$ is the parallel weight of weight 1. Let $g, f$ be arithmetic Hilbert modular forms of weight $(v,n) \in \Z[\Sigma_L]\times \Z$ and $(w,2n) \in \Z[\Sigma_F]\times \Z$ for the groups $G_L = \Res_{\Sh{O}_L/\Z}\GL{2}$ and $G_F := \Res_{\Sh{O}_F/\Z}\GL{2}$ respectively. The natural inclusion of the Shimura varieties induced by $\GL{2}{(\A{F}{})} \to \GL{2}{(\A{L}{})}$ induces a diagonal restriction map $\zeta^* \colon S^{G_L}(K_1(N), (v,n),\C{}) \to S^{G_F}(K_1(N), (v_{|F}, 2n), \C{})$ on the space of arithmetic cuspforms of weight $(v,n)$. 

Assume now that $g, f$ are primitive eigenforms of level $N_g, N_f$ respectively, such that the automorphic representation $\Pi$ of $\GL{2}{(\A{L\times F}{})}$ defined by their unitarization has trivial central character. Assume also that there exists $t\in \N[\Sigma_L]$ such that $(v+t)_{|F} = w$. This condition is called $F$-dominated weights in \cite{michele}, and generalizes the notion of unbalanced triple of weights in the triple product $L$-function case. Then under some assumptions, Ichino's formula relates the central value of the twisted triple product $L$-function associated to $\Pi$ to a Petersson product of cuspforms (Lemma \ref{L3401})
\[
L\left(\frac{1}{2},\Pi, r\right) = (\ast)\left|\langle \zeta^*(\delta^t\breve{g}), \breve{f}^*\rangle\right|^2.
\]
Here $\breve{g}, \breve{f}$ are oldforms coming from $g,f$ respectively at some level $\mathfrak{A}$ that is a common multiple of $N_g,N_f$, $\breve{f}^*$ denotes the form whose Fourier coefficients are complex conjugates of that of $\breve{f}$, and $(\ast)$ are some constants which can be made to be non-zero by suitable choice of $\breve{g}$ and $\breve{f}$. The operator $\delta$ is the Maass--Shimura operator. The value $\langle \zeta^*(\delta^t\breve{g}), \breve{f}^*\rangle/\langle f^*, f^*\rangle$ is algebraic, and can be interpolated. Let $\omega_g, \omega_f$ be overconvergent cuspforms interpolating $g, f$ of weight $(v_g, n_g) \in \Sh{W}^{G_L}(\Lambda_g)$ and $(w_f, m_f) \in \Sh{W}^{G_F}(\Lambda_F)$, such that there exists $r \in \Sh{W}^{G_L}(\Lambda_g)$ satisfying $((v_g+r)_{|F}, {n_g}_{|F}) = (w_f, m_f)$. Then under some analyticity conditions on the weights $v_g, r$ (Assumption \ref{A710}), and for any unramified prime $p$ in $L$ that does not divide the level $\mathfrak{A}$, so that $f$ has slope $\leq a$, we define a $p$-adic $L$-function (Definition \ref{D714}),
\[
\sh{L}^g_p(\breve{\omega}_g, \breve{\omega}_f) = \frac{\langle H^{\dagger,\leq a}(\zeta^*\nabla^r\breve{\omega}_g^{[\Sh{P}]}),\breve{\omega}_f^*\rangle}{\langle \omega_f^*, \omega_f^*\rangle}.
\]
This $p$-adic $L$-function is meromorphic on the space $\mathrm{Spm}(\Lambda_g \hat{\otimes}_{\Lambda^{G_F}} \Lambda_f)$.

\begin{customthm}{D}
    Let $((x,c),(y,2c)) \in \mathrm{Spm}(\Lambda_g\hat{\otimes}\Lambda_f)$ be classical weights, such that $(x,c)$ and $(y,2c)$ are $F$-dominated, i.e. there exists $t \in \N[\Sigma_L]$ such that $(x+t)_{|F} = y$, and such that the specialization of the overconvergent sheaf $\mathfrak{w}_k$ at $(y,2c)$ consists entirely of classical forms, and the fibre of the eigenvariety at $(y,2c)$ is \'{e}tale. Assume the specializations $g_x := {\omega_g}_x$, and $f_y := {\omega_f}_y$ are eigenforms for Hecke operators outside $\mathfrak{A}$, and the Hecke polynomial of $T_{\mathfrak{p}}$ is separable for all primes $\mathfrak{p}|p$. Assume $f_y$ has slope $\leq a$. Then 
    \[
    \sh{L}^g_p(\breve{\omega}_g, \breve{\omega}_f)((x,c),(y,2c)) = \mathrm{E}\cdot\frac{\langle \zeta^*(\delta^t\breve{g}_x), \breve{f}^*_y\rangle}{\langle f^*_y, f^*_y\rangle}, 
    \]
    where $E$ is some Euler factor depending on $g_x, f_y$. Moreover, this $p$-adic $L$-function recovers the $p$-adic $L$-function constructed in \emph{\cite{michele}} for Hida families, at all points where they both are defined.
\end{customthm}

In upcoming work with Ting-Han Huang, we will use this $p$-adic $L$-function to prove a $p$-adic Gross-Zagier formula similar to \cite{michele}.

\subsection{Acknowledgements}
This work would not have been possible without the generous support of Fabrizio Andreatta, who taught me much of what went into this work. I would also like to thank Adrian Iovita for his support and generosity. I have benefited from several discussions with Leonardo Fiore, Michele Fornea, Andrew Graham, Giovanni Rosso, Ting-Han Huang and Antonio Cauchi. My regards to them all. 
\section{The setup}\label{S1}
\noindent\textbf{Notation.} Let $L$ be a totally real number field of degree $[L:\Q] = g>1$. Denote by $\mathfrak{d}$ the different ideal of $\Sh{O}_L$. Fix an integer $N \geq 4$. Let $p \nmid N$ be a prime which is unramified in $L$. Suppose $p$ splits as $p = \mathfrak{P}_1 \cdots \mathfrak{P}_h$ and let their inertia degree be $f(\mathfrak{P}_i|p) = f_i$. Fix a finite unramified Galois extension $K$ of $\Q_p$ where $L$ is split and let $\Sigma := \{\sigma \colon L \to K\}$ be the set of embeddings of $L$ in $K$. Let $q = p$ if $p \neq 2$ and $q = 4$ otherwise. 

\subsection{The weight space}
Let $\mathbb{T} = \Res_{\Sh{O}_L/\Z}\Gm$. Then $\mathbb{T}(\Z_p) = (\Sh{O}_L \otimes \Z_p)^{\times}$. Denote by $\Lambda := \Sh{O}_K\llbracket \mathbb{T}(\Z_p)\rrbracket$ the base change of the Iwasawa algebra $\Z_p\llbracket \mathbb{T}(\Z_p)\rrbracket$ to $\Sh{O}_K$. Then $\Lambda = \Sh{O}_K[\Delta]\llbracket \mathbb{T}(\Z_p)_{\textrm{tf}}\rrbracket \simeq \Sh{O}_K[\Delta]\llbracket T_1, \dots, T_g\rrbracket$, where $\Delta \subset \mathbb{T}(\Z_p)$ is the torsion subgroup and we choose an isomorphism  $\mathbb{T}(\Z_p)_{\textrm{tf}} \simeq \Z_p^g$ of the torsion free part with $\Z_p^g$. Under this isomorphism, the standard basis elements $e_i$ of $\Z_p^g$ are sent to $1+T_i$. Let $\Lambda^0 = \Sh{O}_K\llbracket \mathbb{T}(\Z_p)_{\textrm{tf}}\rrbracket$ be the quotient of $\Lambda$ that sends $\Delta \mapsto 1$, and let $\mathfrak{m} = (p, T_1, \dots, T_g)$ be its maximal ideal.

Let $\mathfrak{W} = \Spf{\Lambda}$ and $\mathfrak{W}^0 = \Spf{\Lambda^0}$. Let $\Sh{W} = \Spa(\Lambda, \Lambda)^{\textrm{an}}$ be the analytic adic space associated to $\mathfrak{W}$ and similarly define $\Sh{W}^0 := \Spa(\Lambda^0, \Lambda^0)^{\textrm{an}}$. $\Sh{W}$ satisfies the following universal property: for any complete Huber pair $(R, R^+)$ over $(\Q_p, \Z_p)$,
\[
\Hom_{\Spa(\Q_p,\Z_p)}\big(\Spa(R, R^+), \Sh{W}\big) = \Hom_{\textrm{gp}\mhyphen\textrm{sch}}\big({\mathbb{T}(\Z_p)}_{R^+}, {\Gm}_{R^+}\big) =  \Hom_{\Z}^{\textrm{cont}}\big(\mathbb{T}(\Z_p), {(R^+)}^{\times}\big).
\]
$\Sh{W}^0$ satisfies a similar universal property with respect to $\mathbb{T}(\Z_p)_{\textrm{tf}}$, i.e. 
\[
\Hom_{\Spa(\Q_p, \Z_p)}\big(\Spa(R, R^+), \Sh{W}^0\big) = \Hom_{\Z}^{\textrm{cont}}\big(\mathbb{T}(\Z_p)_{\textrm{tf}}, (R^+)^{\times}\big).
\]

Let $\widetilde{\mathfrak{W}^0}$ be the admissible blow-up of $\mathfrak{W}^0$ along $\mathfrak{m}$. For $\alpha \in \mathfrak{m} \setminus \mathfrak{m}^2$, let $\mathfrak{W}^0_{\alpha}$ be the open in  $\widetilde{\mathfrak{W}^0}$ where $\mathfrak{m}$ is generated by $\alpha$.  Suppose $\mathfrak{W}^0_{\alpha} = \Spf{B^0_{\alpha}}$. Let $\Sh{W}^0_{\alpha}$ be the rational open in $\Sh{W}^0$ obtained by taking the generic fibre of $\mathfrak{W}^0_{\alpha}$. This is given by the affinoid adic $\Spa(B^0_{\alpha}[1/\alpha], B^0_{\alpha})$ and is thus the adic spectrum of a Tate ring. The $\mathfrak{W}^0_{\alpha}$ cover $\widetilde{\mathfrak{W}^0}$ for varying $\alpha$ and hence their adic generic fibre cover $\Sh{W}^0$. In particular, the map induced by $\widetilde{\mathfrak{W}^0} \to \mathfrak{W}^0$ on the associated analytic adic spaces is an isomorphism.

The natural inclusion $\Lambda^0 \to \Lambda$ is finite flat and the induced finite flat morphism $\Sh{W} \to \Sh{W}^0$ realizes $\Sh{W}$ as a disjoint union of copies of $\Sh{W}^0$ indexed by $\Delta$. For $\alpha \in \mathfrak{m} \setminus \mathfrak{m}^2$ we let $\Sh{W}_{\alpha}$ be the inverse image of $\Sh{W}^0_{\alpha}$ under this morphism.

We remark that all classical weights can be realized as points in $\Sh{W}_{p}$.

For $I = [p^a, p^b]$ with $a \in \N \cup \{-\infty\}$ and $b \in \N \cup \{\infty\}$, let $\Sh{W}^0_{\alpha,I} \subset \Sh{W}^0_{\alpha}$ be the rational open subset defined as follows:
\[
\Sh{W}^0_{\alpha,I} := \{ x \in \Sh{W}^0_{\alpha} \, : \, |p(x)| \leq |\alpha(x)|^{p^a} \neq 0, \ |\alpha(x)|^{p^b} \leq |p(x)| \neq 0 \}.  
\]
Let $\Lambda^0_{\alpha,I} := \Gamma(\Sh{W}^0_{\alpha, I}, \Sh{O}_{\Sh{W}^0_{\alpha,I}}^+)$ and $\Lambda_{\alpha, I} := \Gamma(\Sh{W}_{\alpha,I}, {\Sh{O}}^+_{\Sh{W}_{\alpha,I}})$. Let  $\mathfrak{W}^0_{\alpha, I} := \Spf{\Lambda^0_{\alpha,I}}$ and $\mathfrak{W}_{\alpha,I} := \Spf{\Lambda}_{\alpha,I}$.

For $\alpha$ varying in $\mathfrak{m} \setminus \mathfrak{m}^2$, the different $\mathfrak{W}^0_{\alpha,I}$ glue together to form a formal scheme ${\mathfrak{W}^0_I}$ with adic generic fibre $\Sh{W}^0_I$. $\Sh{W}^0_I$ can be described as follows.
\[
\Sh{W}^0_I = \{ x \in \Sh{W}^0 \, : \, (\exists \alpha \in \mathfrak{m}, |p|_x \leq |\alpha^{p^a}|_x \neq 0) \land (\forall \alpha \in \mathfrak{m}, |\alpha^{p^b}|_x \leq |p|_x \neq 0)\}
\]
Then $\Sh{W}^0_{[0,1]} = \Sh{W}^0_p$ and $\Sh{W}^0_{[1, \infty]} = \Sh{W}^0$. At the level of formal models, $\mathfrak{W}^0_{[0,1]} = \mathfrak{W}^0_p$ and $\mathfrak{W}^0_{[1,\infty]} = \widetilde{\mathfrak{W}^0}$.

We fix one such $\alpha$. For the purpose of defining the $p$-adic iteration of the Gauss--Manin connection, which is the technical heart of this work (\S\ref{2S44}) we need to assume $\alpha = p$. For everything before that section we can choose any $\alpha$. But we should also mention that the construction of the main objects of this work, i.e. the interpolation sheaves of modular forms and de Rham classes of varying weight $k$, all take place over a weight space where $p \neq 0$. In particular we do not study the perfect overconvergent modular forms of \cite{Andreatta2018leHS} or \cite{andreatta2016adic}.

\vspace{5pt}
\paragraph{\textbf{Analyticity of the universal character.}}
Let $k^{\textrm{un}} \colon \mathbb{T}(\Z_p) \to \Lambda$ be the universal character. Denote by $k^0 \colon \mathbb{T}(\Z_p) \to \Lambda \to \Lambda^0$ the character obtained by composing the universal character with the projection onto the component of the trivial character, and let $k^0_{\alpha,I} \colon \mathbb{T}(\Z_p) \to \Lambda^0 \to \Lambda^0_{\alpha, I}$ be its restriction to $\Sh{W}^0_{\alpha, I}$.
\begin{lemma}\label{L21}
For $I \subset [0, q^{-1}p^n]$, the restriction of $k^0_{\alpha, I}$ to $1 + qp^{n-1}(\Sh{O}_L \otimes \Z_p)$ is analytic. Thus it extends to a character 
\[
k^0_{\alpha, I} \colon \Sh{W}^0_{\alpha, I} \times \mathbb{T}(\Z_p)(1 + qp^{n-1}\Res_{\Sh{O}_L/\Z}\Ga^+) \to \Gm^+
\]
which restricts to a character
\[
k^0_{\alpha,I} \colon \Sh{W}^0_{\alpha, I} \times (1 + qp^{n+m-1}\Res_{\Sh{O}_L/\Z}\Ga^+) \to 1 + qp^m\Ga^+.
\]
\end{lemma}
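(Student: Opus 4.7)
The strategy is to identify the torsion-free quotient $\mathbb{T}(\Z_p)_{\mathrm{tf}}$ with the pro-$p$ subgroup $1 + q(\Sh{O}_L \otimes \Z_p)$ via the $p$-adic logarithm, and to reduce analyticity to the convergence of a classical binomial series in $T_1, \ldots, T_g$. Concretely, the splitting $\mathbb{T}(\Z_p) = \Delta \times (1 + q(\Sh{O}_L \otimes \Z_p))$ allows us to pick topological generators $\gamma_1, \ldots, \gamma_g$ of the pro-$p$ factor so that, under $\mathbb{T}(\Z_p)_{\mathrm{tf}} \simeq \Z_p^g$, they correspond to the chosen basis and $k^0(\gamma_i) = 1 + T_i$. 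For $y \in 1 + qp^{n-1}(\Sh{O}_L \otimes \Z_p)$, writing $y = \prod_i \gamma_i^{p^{n-1}c_i(y)}$ with $c_i(y) \in \Z_p$ depending continuously on $y$, gives
\[
k^0(y) = \prod_{i=1}^g (1+u_i)^{c_i(y)}, \qquad u_i := (1+T_i)^{p^{n-1}} - 1,
\]
so analyticity reduces to a valuation estimate on $u_i$ over $\Sh{W}^0_{\alpha, I}$.

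The condition $I \subset [0, q^{-1}p^n]$ is equivalent to $|\alpha|^{q^{-1}p^n} \leq |p|$ on $\Sh{W}^0_{\alpha, I}$, so $v(\alpha) \geq q/p^n$ and hence $v(T_i) \geq q/p^n$ (since $T_i$ lies in the ideal generated by $\alpha$ on $\Lambda^0_{\alpha, I}$). Using $v_p\!\left(\binom{p^{n-1}}{k}\right) = n-1-v_p(k)$, the $k$-th term in $u_i = \sum_{k=1}^{p^{n-1}} \binom{p^{n-1}}{k} T_i^k$ has valuation at least $n-1-v_p(k)+kq/p^n$; minimizing over $k = p^j$, $j \in \{0, \ldots, n-1\}$, gives $v(u_i) \geq q/p = v_p(q)$, attained at $j = n - 1$. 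Since $v_p(q) > 1/(p-1)$ uniformly (equal to $1 > 1/(p-1)$ for odd $p$ and to $2 > 1$ for $p = 2$), both $\log(1+u_i)$ and $\exp(c \log(1+u_i))$ converge for $c \in \Sh{O}^+_R$ over any affinoid $R$ above $\Sh{W}^0_{\alpha, I}$, with the standard estimate $v\bigl(\exp(c\log(1+u_i)) - 1\bigr) \geq v(c) + v_p(q)$. This establishes the claimed analyticity.

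Because the dependence of $k^0(y)$ on $y$ passes through $\Sh{O}_L$-linear functionals of $\log y$ followed by convergent power series, the character extends uniquely to a morphism of formal group sheaves $\Sh{W}^0_{\alpha, I} \times (1 + qp^{n-1} \Res_{\Sh{O}_L/\Z} \Ga^+) \to \Gm^+$; combining with the discrete character on $\mathbb{T}(\Z_p)$ itself yields the first displayed formula. For the restriction statement: if $y \in 1 + qp^{n+m-1}\Res_{\Sh{O}_L/\Z}\Ga^+(R^+)$, then $v(\log y) \geq v_p(q) + n + m - 1$, which forces $v(c_i(y)) \geq m$ upon decomposing in the basis $\{\log \gamma_i\}$; hence $v\bigl((1+u_i)^{c_i(y)} - 1\bigr) \geq m + v_p(q)$, and multiplying across $i$ places $k^0(y) - 1$ in $qp^m\Sh{O}^+_R$, as required. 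The only genuine obstacle is securing the uniform inequality $v(u_i) \geq v_p(q) > 1/(p-1)$; this is precisely what dictates the choice $q = p$ for odd $p$ and $q = 4$ for $p = 2$, and once it is in hand the remainder is routine manipulation of $p$-adic $\exp$ and $\log$.
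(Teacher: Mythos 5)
Your proof is correct and is the standard direct argument for this kind of analyticity statement; the paper itself supplies no proof and merely cites \cite[Proposition 2.1]{Andreatta2018leHS} and \cite[Proposition 2.8]{andreatta2016adic}, whose content your argument reproduces.

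Two small remarks for the record. First, the valuation computation $v(u_i)\ge \min_{0\le j\le n-1}\bigl((n-1-j)+qp^{j-n}\bigr)=q/p=v_p(q)$ is fine, but for $p=2$ the minimum is attained at both $j=n-1$ and $j=n-2$, not just $j=n-1$; the bound is nonetheless correct. Second, the identification of $\mathbb{T}(\Z_p)_{\mathrm{tf}}$ with the pro-$p$ part $1+q(\Sh{O}_L\otimes\Z_p)$ uses that $p$ is unramified in $L$ (so the torsion is exactly the Teichmüller-lift part times $\{\pm1\}$ when $p=2$), which the paper assumes throughout — worth noting explicitly since it is what lets you write every $y\in 1+qp^{n-1}(\Sh{O}_L\otimes\Z_p)$ as $\prod_i\gamma_i^{p^{n-1}c_i(y)}$ with $c_i(y)\in\Z_p$.
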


\begin{proof}
This is an adaptation of the proof of \cite[Proposition 2.1]{Andreatta2018leHS}. See also \cite[Proposition 2.8]{andreatta2016adic}.
\end{proof}

Since $k^0_{\alpha,I}$ is analytic, and $1+qp^{n-1}\Res_{\Sh{O}_L/\Z}\Ga \simeq \prod_{\sigma \in \Sigma} 1 + qp^{n-1}\Ga$, the universal character splits into components $k^0_{\alpha,I} = \prod_{\sigma \in \Sigma} k_{\sigma}$.

\begin{rem}
Note that the analyticity of the character does not depend on $\alpha$. So we can glue the different $k^0_{\alpha,I}$ together to obtain a character $k^0_I \colon \Sh{W}^0_I \times \mathbb{T}(\Z_p)(1 + qp^{n-1}\Res_{\Sh{O}_L/\Z_p}\Ga^+) \to \Gm^+$.
\end{rem}

\subsubsection{The weight space for the group \texorpdfstring{$\Res_{L/\Q}\mathbf{GL}_{2,L}$}{G}}
In the literature there are two distinct notions of Hilbert modular forms. The first one is realized as sections of a modular sheaf on moduli of abelian varieties with real multiplication and additional data. The associated weight space is the one we defined above. There is also the notion of arithmetic Hilbert modular forms which are sections of automorphic line bundles on the Shimura variety associated to the group $G := \Res_{L/\Q}\mathbf{GL}_{2,L}$. It is necessary to consider these modular forms for arithmetic applications. The relation between these two different notions will be clarified in the next section. Here we will define the weight space associated to the group $G$.

Let $\Lambda^G := \Sh{O}_K\llbracket \mathbb{T}(\Z_p) \times \Z_p^{\times}\rrbracket$, and let $\mathfrak{W}^G := \Spf{\Lambda^G}$. Let $\Sh{W}^G := \Spa(\Lambda^G, \Lambda^G)^{\textrm{an}}$ be the associated analytic adic space. There is a natural morphism $\mathfrak{W}^G \to \mathfrak{W}$ given by the map $\Lambda \to \Lambda^G$ induced by $(\Sh{O}_L \otimes \Z_p)^{\times} \to (\Sh{O}_L \otimes \Z_p)^{\times} \times \Z_p^{\times}$ sending $t \mapsto (t^2, \Nm_{L/\Q}(t))$. This induces a map on the corresponding analytic adic spaces. On classical points this map can be described as sending $(v, w) \in \Sh{W}^G(\C{}_p) \mapsto v^2 \cdot (w \circ \Nm_{L/\Q})$, where $v \colon (\Sh{O}_L \times \Z_p)^{\times} \to \C{\times}_p$ is a continuous character and so is $w \colon \Z_p^{\times} \to \C{\times}_p$. Denote the universal character by $k^{\textrm{un}}_G \colon (\Sh{O}_L \times \Z_p)^{\times} \times \Z_p^{\times} \to (\Lambda^G)^{\times}$.

\subsection{The Hilbert modular variety}
Let $\mathfrak{c}$ be a fractional ideal of $L$ and let $\mathfrak{c}^+$ be the cone of totally positive elements. Let $M(\mu_N, \mathfrak{c})$ be the moduli scheme over $\Z_p$ classifying tuples $(A_{/S}, \iota, \lambda, \psi)$ consisting of (1) an abelian scheme $A \to S$ for any $\Z_p$-scheme $S$, (2) an embedding $\iota \colon \Sh{O}_L \xhookrightarrow{} \End_S(A)$, (3) if $P \subset \Hom_{\Sh{O}_L}(A, A^{\vee})$ is the \'{e}tale sheaf of symmetric $\Sh{O}_L$-linear homomorphisms from $A$ to its dual $A^{\vee}$, and $P^+$ is the cone of polarizations, then an isomorphism $\lambda \colon (P, P^+) \simeq (\mathfrak{c}, \mathfrak{c}^+)$ of \'{e}tale sheaves of invertible $\Sh{O}_L$-modules with a notion of positivity such that the induced map $A \otimes \mathfrak{c} \xrightarrow{\sim} A^{\vee}$ is an isomorphism (the Deligne--Pappas condition), and (4) a closed immersion $\psi \colon \mu_N \otimes \mathfrak{d}^{-1} \xhookrightarrow{} A[N]$ compatible with $\Sh{O}_L$-action.

Let $[\mathfrak{c}] \in \textrm{Cl}^+(L)$ be the class of $\mathfrak{c}$ in the strict class group of $L$. Any two representatives of a class are related via multiplication by a totally positive unit, and hence the corresponding moduli problems are isomorphic. Using this, we henceforth fix $\mathfrak{c}$ coprime to $p$. That will ensure that upon a choice of a generator of $\mathfrak{c} \otimes \Z_p$ as an $\Sh{O}_L \otimes \Z_p$-module, the $p$-divisible groups of $\mathfrak{c}$-polarized abelian varieties satisfying the Deligne--Pappas condition are principally polarized.

Let $\bar{M}(\mu_N, \mathfrak{c})$ be a toroidal compactification of $M(\mu_N, \mathfrak{c})$ and let $M^*(\mu_N, \mathfrak{c})$ be the minimal compactification. There is a semi-abelian scheme $\pi \colon \Sh{A} \to \bar{M}(\mu_N, \mathfrak{c})$ which restricts to the universal abelian scheme over $M(\mu_N, \mathfrak{c})$ and degenerates to a torus at the cusps. Let $\omega_{\Sh{A}}$ be the canonical extension of the sheaf of invariant differentials of the universal abelian scheme to the cusps. There is a largest open subscheme $M^R(\mu_N, \mathfrak{c}) \subset \bar{M}(\mu_N, \mathfrak{c})$, called the Rapoport locus where $\omega_{\Sh{A}}$ is an invertible $\Sh{O}_L \otimes \Sh{O}_{M^R(\mu_N, \mathfrak{c})}$-module. Since we assume $p$ is unramified in $L$, the complement of the Rapoport locus is empty and $\bar{M}(\mu_N, \mathfrak{c})$ is projective and smooth over $\Sp{\Z_p}$ \cite{DelignePappas}. The boundary $D := \bar{M}(\mu_N, \mathfrak{c}) \setminus M(\mu_N, \mathfrak{c})$ is a relative normal crossings divisor. The minimal compactification $M^*(\mu_N, \mathfrak{c})$ is normal and projective.

Henceforth denote $\bar{M}(\mu_N, \mathfrak{c})$ by $X$. Let $\mathfrak{X}$ be its $p$-adic completion. Let $\mathrm{H}_{\Sh{A}}$ be the canonical extension of the relative de Rham sheaf of the universal abelian scheme to the cusps. It is a locally free $\Sh{O}_L \otimes \Sh{O}_{\mathfrak{X}}$-module of rank $2$ and it is endowed with an integrable connection $\nabla \colon \mathrm{H}_{\Sh{A}} \to \mathrm{H}_{\Sh{A}} \otimes \Omega^1_{\mathfrak{X}/\Z_p}(\log(D))$ called the Gauss--Manin connection. It also fits in an exact sequence
\begin{equation}\label{eq:1}
0 \to \omega_{\Sh{A}} \to \mathrm{H}_{\Sh{A}} \to \omega^{\vee}_{\Sh{A}^{\vee}} \to 0
\end{equation}
which defines the Hodge filtration. The maps in the sequence are $\Sh{O}_L$-linear. We also use the principal polarization of the $p$-divisible group of $\Sh{A}$ to henceforth identify $\omega_{\Sh{A}^{\vee}}$ and $\omega_{\Sh{A}}$. 

Fix $\alpha \in \mathfrak{m} \setminus \mathfrak{m}^2$ and $I$ as above. Let $\mathfrak{X}_{\alpha, I} := \mathfrak{X} \times_{\Spf{\Z_p}} \mathfrak{W}^0_{\alpha, I}$. Since $L$ splits in $K$, $\Sh{O}_L \otimes \Sh{O}_{\mathfrak{X}_{\alpha,I}} \simeq \prod_{\sigma \in \Sigma} \Sh{O}_{\mathfrak{X}_{\alpha,I}}$. Later it will be useful to collect the $\sigma$'s in $h$ different groups according to the valuation they induce on $\Sh{O}_L$ (recall $p = \mathfrak{P}_1 \cdots \mathfrak{P}_h$ in $L$). Hence over $\mathfrak{X}_{\alpha,I}$, the exact sequence (\ref{eq:1}) splits into $h$ exact sequences 
\[
0 \to \omega_{\Sh{A}}(\sigma) \to \mathrm{H}_{\Sh{A}}(\sigma) \to \omega^{\vee}_{\Sh{A}}(\sigma) \to 0
\]
where each $\omega_{\Sh{A}}(\sigma)$ is an invertible $\Sh{O}_{\mathfrak{X}_{\alpha,I}}$-module and $\mathrm{H}_{\Sh{A}}(\sigma)$ is a locally free $\Sh{O}_{\mathfrak{X}_{\alpha, I}}$-module of rank 2. 

Let $\xi \colon \X/(\alpha) \xhookrightarrow{} \X$ be the closed subscheme defined by $\alpha = 0$. The Hasse invariant is a section $\mathrm{Ha} \in (\Lambda^g (\xi^*\omega_{\Sh{A}}))^{\otimes (p-1)}$. Define the Hasse ideal to be $\underline{\mathrm{Ha}} := \mathrm{Ha}\cdot (\Lambda^g (\xi^*\omega_{\Sh{A}}))^{\otimes (1-p)}$. 

\begin{theorem}\label{T2101}
The Hasse invariant vanishes with multiplicity one along the irreducible components of its divisor.
\end{theorem}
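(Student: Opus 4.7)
The plan is to reduce to a computation with the partial Hasse invariants. Since $p$ is unramified in $L$, the sheaf $\omega_{\Sh{A}}$ on $\X/\alpha$ decomposes $\Sh{O}_L$-equivariantly as $\omega_{\Sh{A}} \otimes \F{p} = \bigoplus_{\sigma \in \Sigma} \omega_{\Sh{A}}(\sigma)$ into invertible modules indexed by the embeddings. First I would recall the construction of partial Hasse invariants $h_\sigma \in H^0(\X/\alpha, \omega_{\Sh{A}}(\sigma\circ\Frob^{-1})^{\otimes p} \otimes \omega_{\Sh{A}}(\sigma)^{-1})$, obtained by restricting the Verschiebung-induced map $V^* \colon \omega_{\Sh{A}} \to \omega_{\Sh{A}}^{(p)}$ to the $\sigma$-isotypic component. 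A direct computation on Dieudonn\'e modules then shows that $\Ha$, up to a unit, equals $\prod_{\sigma\in\Sigma}h_\sigma$ under the natural identification of $(\Lambda^g \omega_{\Sh{A}})^{\otimes(p-1)}$ with $\bigotimes_\sigma \omega_{\Sh{A}}(\sigma)^{\otimes(p-1)}$ (after applying Frobenius twists $\omega(\sigma\circ\Frob^{-1})^{(p)}\cong\omega(\sigma)$).

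Next, I would identify the divisors $D_\sigma := \{h_\sigma = 0\}$ with the Goren--Oort strata of codimension one: the reduced subscheme parametrising abelian varieties with $\Sh{O}_L$-action whose partial $a$-number at $\sigma$ is exactly $1$ and which remain ordinary at every other embedding. Standard results in the unramified Hilbert case give that each $D_\sigma$ is smooth and irreducible, and that for $\sigma \neq \sigma'$ the generic point of $D_\sigma$ is ordinary at $\sigma'$, so $D_\sigma \neq D_{\sigma'}$ as divisors. Consequently the $D_\sigma$ enumerate exactly the irreducible components of the zero divisor of $\Ha$.

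Finally, the multiplicity-one statement is local: at a closed point $x \in D_\sigma$, Serre--Tate / Grothendieck--Messing deformation theory for the $p$-divisible group $\Sh{A}_x[p^\infty]$ (together with the $\sigma$-component decomposition coming from $p$ being unramified) identifies $\widehat{\Sh{O}}_{\X/\alpha,\,x}$ with a power series ring $k(x)\llbracket t_\sigma : \sigma \in \Sigma\rrbracket$ and expresses the pullback of $h_\sigma$, up to a unit, as the coordinate $t_\sigma$. Therefore $h_\sigma$ generates a height-one prime and vanishes with multiplicity exactly one along $D_\sigma$, while vanishing to order zero at the generic point of any other $D_{\sigma'}$. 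Combining the factorisation $\Ha = \prod_\sigma h_\sigma$ with this local computation yields that $\Ha$ vanishes with multiplicity one along every irreducible component of its divisor. I expect the main obstacle to be the precise local deformation-theoretic computation at superspecial or more degenerate points where several $h_\sigma$ vanish simultaneously; this can either be carried out directly in Serre--Tate coordinates or imported from the analogous results in \cite{andreatta2016adic} and the work of Andreatta--Goren on partial Hasse invariants.
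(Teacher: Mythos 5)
Your argument is correct and is essentially the proof of the result the paper invokes: the paper simply cites \cite[Corollary 8.18]{AndreattaGoren}, and your route — factoring $\mathrm{Ha}$ into partial Hasse invariants $h_\sigma$ and checking via the local deformation-theoretic computation that each $h_\sigma$ is, up to a unit, a coordinate $t_\sigma$ — is exactly the strategy of Goren and Andreatta--Goren. One minor remark: the irreducibility and smoothness of each $D_\sigma$ are not actually needed for the multiplicity-one statement (only that $h_\sigma$ cuts out a reduced divisor and that distinct $h_\sigma$ share no common components, both of which follow from your local computation), so you could drop that intermediate claim.
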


\begin{proof}
\cite[Corollary 8.18]{AndreattaGoren}.
\end{proof}

With the notation above, for each $r \geq 1$, consider the inverse image of $\underline{\mathrm{Ha}}^{p^{r+1}}$ under the map $\Sh{O}_{\X} \to \Sh{O}_{\mathfrak{X}_{\alpha, I}/(\alpha)}$ and call this ideal $\mathrm{Hdg}_r$. We call a local lift of a generator of $\underline{\mathrm{Ha}}$ as $\mathrm{Hdg}$. Then locally $\mathrm{Hdg}_r$ is the ideal $(\alpha, \mathrm{Hdg}^{p^{r+1}})$. 

Recall a classical weight is an element of $\Z[\Sigma]$. The Hilbert modular sheaf of a classical weight $k = \sum n_{\sigma}\sigma$ is defined as 
\[
\omega_{\Sh{A}}^k := \hat{\bigotimes_{\sigma}} \omega_{\Sh{A}}(\sigma)^{n_{\sigma}}.
\]
Note that $\omega^k_{\Sh{A}}$ is obtained as the image of $\omega_{\Sh{A}}$ under the map induced by change of the structural group 
\[H^1(\mathfrak{X}_{\alpha,I}, (\Sh{O}_L \otimes \Sh{O}_{\mathfrak{X}_{\alpha, I}})^{\times}) \xrightarrow{k} H^1(\mathfrak{X}_{\alpha, I}, \Sh{O}^{\times}_{\mathfrak{X}_{\alpha, I}}).
\]

\subsubsection{The Shimura variety associated to $G$}\label{2S121}
Let $\mathrm{Sh}_K(G)$ be the Shimura variety associated to the group $G = \Res_{L/\Q}\mathbf{GL}_{2,L}$ and level subgroup $K = K_1(N)$,
\[
K_1(N) = \left\{ \begin{pmatrix}
a & b \\
c & d
\end{pmatrix}  \in \GL{2}(\hat{\Sh{O}}_L) \, | \, d \equiv 1, c \equiv 0 \textrm{ mod }N \right\}
\]
whose complex points are $\mathrm{Sh}_K(G)(\C{}) = G(\Q)\backslash (\mathfrak{h}^{\pm})^{\Sigma} \times G(\A{}{\infty})/K$. Here $\mathfrak{h}$ is the Poincar\'{e} upper-half plane endowed with the usual action of $\GL{2}(\R)$ via M\"{o}bius transformation. The Shimura variety $\mathrm{Sh}_K(G)$ is defined over its reflex field $\Q$.

The Shimura variety $\mathrm{Sh}_K(G)$ classifies abelian varieties with real multiplication upto isogeny in the following sense. Let $\mathbf{A}_L^{\Q}$ be the fibred category over $\Q$-$\catname{Sch}$, whose objects are abelian varieties with real multiplication by $\Sh{O}_L$, and whose morphisms are $\Hom_{F}^{\Q}(A,A') := \Hom_{\Sh{O}_L}(A,A')\otimes_{\Z} \Q$. Then $\mathrm{Sh}_K(G)(\C{})$ classifies triples $(A,{\lambda}, \eta K)$ upto isomorphism, where $A$ is an abelian variety with real multiplication by $\Sh{O}_L$, ${\lambda}$ is an $\Sh{O}_L$-linear polarization $\lambda \colon A \to A^{\vee}$, and $\eta K$ is the $K$-orbit of a trivialization $\eta\colon (\A{L}{\infty})^2 \xrightarrow{\sim} H_1(A, \A{}{\infty})$ of the rational Tate module \cite[133]{Hidabook}. Two objects $(A,\lambda, \eta K)$ and $(A', \lambda', \eta' K)$ are isomorphic if $A \simeq A'$ in $\mathbf{A}^{\Q}_L$, with the isomorphism taking $\lambda$ to an $F^{\times}_+$-multiple of $\lambda'$, and $\eta K$ to $\eta' K$. The isomorphism class $[(A, \bar{\lambda}, \eta K)]$ of a triple consists of special representatives $(A_0, {\lambda}, \eta K)$ for which $\eta(\hat{\Sh{O}}^2_L) = H_1(A_0, \hat{\Z}) = \prod_{\ell} T_{\ell}(A_0)(\C{})$. The complex variety $\mathrm{Sh}_K(G)(\C{})$ can then be viewed as classifying isomorphism classes of triples $(A, \bar{\lambda}, \eta)$, where $A$ is a complex abelian variety with real multiplication by $\Sh{O}_L$, $\bar{\lambda}$ is the $\Sh{O}^{\times, +}_L$-class of a polarization $\lambda\colon A \to A^{\vee}$, and $\eta \colon \mu_N \otimes \mathfrak{d}^{-1} \xhookrightarrow{} A[N]$ is a $\Gamma_1(N)$-level structure. An isomorphism of triples $(A, \bar{\lambda}, \eta) \simeq (A', \bar{\lambda}', \eta')$ is induced by an actual isomorphism $A\simeq A'$ of abelian varieties (as opposed to an isogeny). 

Concretely, corresponding to the point $[(\tau, g)] \in G(\Q)\backslash (\mathfrak{h}^{\pm})^{\Sigma} \times G(\A{}{\infty})/K$, one gets the triple $(A,\bar{\lambda}, \eta)$ as follows. To give the abelian variety $A$, it's enough to give its period lattice $H_1(A, \Z) \subset \C{\Sigma}$. Let $H_1(A, \Q) = L\cdot \tau + L\cdot 1$, i.e. we choose a basis $a\colon H_1(A, \Q) \to L^2$, where $a(\tau) = e_1$, and $a(1) = e_2$. The element $g \in \GL{2}{(\A{L}{\infty})}$ induces an isomorphism \[a^{-1}\circ(g^t)^{-1} \colon (\A{L}{\infty})^2 \xrightarrow{(g^t)^{-1}} (\A{L}{\infty})^2 \xrightarrow{a^{-1}} H_1(A, \Q) \otimes_{\Q} \A{}{\infty}.\]
The period lattice is then given by $H_1(A, \Z) := H_1(A, \Q) \cap a^{-1}\circ (g^t)^{-1}(\hat{\Sh{O}}^{2}_L)$. The class of $a^{-1}\circ (g^t)^{-1}(e_2) \mod N\hat{\Sh{O}}_L$ determines the level structure $\eta$. Finally, the polarization module $(\ker{\lambda})^{-1}$ is given by the fractional ideal $[\det(g)]\mathfrak{d}^{-1}$, where $[\det(g)]$ is the ideal generated by $\det(g)$.

The determinant map $\det \colon G \to \Res_{L/\Q}\Gm$ gives a bijection between the set of geometrically connected components of $\mathrm{Sh}_K(G)$ and the strict class group $\mathrm{Cl}^+(L)$ \cite[\S 2.3]{TianXiao}. For any fractional ideal $\mathfrak{c}$ coprime to $p$, let $\mathrm{Sh}^{\mathfrak{c}}_K(G)$ be the connected component of $\mathrm{Sh}_K(G)$ corresponding to the class of $\mathfrak{cd}$. This space is related to the moduli of $\mathfrak{c}$-polarized abelian varieties in the following manner. 

The moduli scheme $M(\mu_N, \mathfrak{c})$ is defined over $\Z[1/N]$. Consider the action of $\Sh{O}_L^{\times, +}$ on $M(\mu_N, \mathfrak{c})$ defined on points by $\epsilon \cdot (A, \iota, \lambda, \psi) = (A, \iota, \epsilon\lambda, \psi)$. Notice that for $\epsilon = \eta^2$, with $\eta \in U_N := 1 + N\Sh{O}_L$, the isomorphism $\eta \colon A \to A$ induces an isomorphism $\epsilon\cdot (A, \iota, \lambda, \psi) = \eta^*(A, \iota, \lambda, \psi) \simeq (A, \iota, \lambda, \psi)$. Thus the action of $\Sh{O}_L^{\times, +}$ factors through the finite quotient $\Gamma := \Sh{O}_L^{\times, +}/U_N^2$. We have then the following proposition.

\begin{prop}\label{P211}
There exists an isomorphism between the quotient $M(\mu_N, \mathfrak{c})(\C{})/\Gamma$ and $\mathrm{Sh}^{\mathfrak{c}}_K(G)(\C{})$. In other words $\mathrm{Sh}^{\mathfrak{c}}_K(G)(\C{})$ is a coarse moduli space over $\C{}$ of $\mathfrak{c}$-polarized abelian varieties with real multiplication by $\Sh{O}_L$. Moreover the quotient map $p \colon \bar{M}(\mu_N, \mathfrak{c}) \to \bar{M}(\mu_N, \mathfrak{c})/\Gamma$ is finite \'{e}tale with Galois group $\Gamma$.
\end{prop}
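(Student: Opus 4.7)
The plan is threefold: construct an explicit map $M(\mu_N, \mathfrak{c})(\C{}) \to \mathrm{Sh}^{\mathfrak{c}}_K(G)(\C{})$ using the complex-analytic moduli description recalled in \S\ref{2S121}, check that it factors through the $\Gamma$-action and descends to a bijection of sets, and then derive the finite étale statement from freeness of the $\Gamma$-action on $\bar M(\mu_N, \mathfrak{c})$.

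For the first step, given $(A, \iota, \lambda, \psi) \in M(\mu_N, \mathfrak{c})(\C{})$, I would associate the triple $(A, \bar\lambda, \eta)$, where $\bar\lambda$ is the $\Sh{O}_L^{\times, +}$-class of any $\Sh{O}_L$-linear polarization $A \to A^{\vee}$ extracted from the Deligne--Pappas datum by fixing a totally positive generator of a representative of $\mathfrak{c}$, and $\eta$ is the $\Gamma_1(N)$-type trivialization of the rational Tate module reconstructed from $\psi$ together with the Weil pairing (the inverse of the recipe at the end of \S\ref{2S121}). The identity $[\det g]\mathfrak{d}^{-1} = [\mathfrak{c}]$ places the image in the component $\mathrm{Sh}^{\mathfrak{c}}_K(G)$ indexed by $[\mathfrak{cd}]$.

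For the second step, the map is $\Sh{O}_L^{\times,+}$-invariant essentially tautologically, since it only remembers the class $\bar\lambda$; and the action of $\Sh{O}_L^{\times,+}$ on $M(\mu_N, \mathfrak{c})$ factors through $\Gamma$ because for any $\eta \in U_N$, multiplication by $\iota(\eta)$ exhibits an isomorphism $(A, \iota, \eta^2 \lambda, \psi) \simeq (A, \iota, \lambda, \psi)$ inside $M(\mu_N, \mathfrak{c})$. For surjectivity on orbits, given a point of $\mathrm{Sh}^{\mathfrak{c}}_K(G)(\C{})$ I can choose a representative polarization whose module is exactly $\mathfrak{c}$ (the polarization module is determined only up to $\Sh{O}_L^{\times,+}$-rescaling) and recover $\psi$ from $\eta$. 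Injectivity on $\Gamma$-orbits unwinds directly from comparing the two equivalence relations: Deligne--Pappas isomorphisms versus Shimura-variety isomorphisms differ precisely by an $\Sh{O}_L^{\times,+}$-rescaling of the polarization modulo $U_N^2$.

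For the third step, I would check that $\Gamma$ acts freely on $\bar M(\mu_N, \mathfrak{c})$, which together with finiteness of $\Gamma$ and smoothness of $\bar M(\mu_N, \mathfrak{c})$ yields a finite étale quotient of Galois group $\Gamma$. On the interior, a stabilizer element $\epsilon \in \Sh{O}_L^{\times,+}$ of $(A, \iota, \lambda, \psi)$ provides an $\Sh{O}_L$-linear automorphism $\phi$ of $A$ with $\phi^{\vee} \circ \lambda \circ \phi = \epsilon\lambda$ and $\phi \circ \psi = \psi$; commutativity with $\iota$ and the hypothesis $N \geq 4$ (which rules out extra torsion automorphisms fixing the level structure) force $\phi$ to be multiplication by a unit $a \in \Sh{O}_L^{\times} \cap U_N$, whence $\epsilon = a^2 \in U_N^2$ is trivial in $\Gamma$. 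The main expected obstacle is extending this freeness argument to the toroidal boundary, which requires arranging the polyhedral cone decompositions at the cusps in a $\Gamma$-equivariant way so that the analysis descends to the Mumford degenerations; this is standard but demands some bookkeeping with the degeneration data at each cusp.
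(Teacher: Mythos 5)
The paper's own proof is a one-line citation to \cite{TianXiao} and \cite{andreatta2016adic}, so your direct argument is a genuinely different and more self-contained route. The overall plan --- construct a forgetful map, identify $\Gamma$-orbits with isomorphism classes on the Shimura side, then deduce étaleness from freeness of the $\Gamma$-action --- is the right shape, and is essentially what the cited sources do. Your step two (comparing the two notions of isomorphism) is correctly located as the crux of the moduli comparison. Flagging the toroidal boundary as requiring a $\Gamma$-admissible choice of cone decompositions is also appropriate.

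Two issues, the second of which is a genuine gap. First, the description of the map in step one is muddled: you propose to ``fix a totally positive generator of a representative of $\mathfrak{c}$,'' but $\mathfrak{c}$ is an arbitrary fractional ideal and need not have any totally positive generator. No such choice is needed. A point of $M(\mu_N,\mathfrak{c})(\C{})$ carries the Deligne--Pappas trivialization $\lambda\colon (P,P^+)\simeq(\mathfrak{c},\mathfrak{c}^+)$, and the map to $\mathrm{Sh}^{\mathfrak{c}}_K(G)(\C{})$ simply forgets $\lambda$ down to its $\Sh{O}_L^{\times,+}$-orbit; this is the same as remembering the $\Sh{O}_L^{\times,+}$-class of any polarization $\lambda^{-1}(c)$, $c\in\mathfrak{c}^+$, precisely because varying $c$ within $\mathfrak{c}^+$ moves $\lambda^{-1}(c)$ only within its $\Sh{O}_L^{\times,+}$-class together with the $\lambda$-rescaling, which is what the quotient by $\Gamma$ absorbs.

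The substantive gap is in step three. You assert that commutativity with $\iota$ and $N\geq 4$ force the stabilizing automorphism $\phi$ to be $\iota(a)$ for some $a\in\Sh{O}_L^{\times}\cap U_N$. This is only obvious when $\End_{\Sh{O}_L}(A)=\Sh{O}_L$. At CM points, $\End_{\Sh{O}_L}(A)$ is an order in a CM field $M$ over $L$, the automorphism group is strictly larger than $\Sh{O}_L^{\times}$ (it contains roots of unity of $M$), and an automorphism commuting with $\iota$ need not lie in $\iota(\Sh{O}_L^{\times})$. The condition $\phi\circ\psi=\psi$ only constrains the action of $\phi$ on the image of $\psi$, a sub-group-scheme of $A[N]$ of half the rank, so the usual rigidity argument (Serre's lemma: an automorphism acting trivially on all of $A[N]$ for $N\geq 3$ is the identity) does not directly apply. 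One must either analyze what $\phi=\zeta u$ (a root of unity times a totally real unit) acting trivially on the multiplicative-type level structure forces, and verify that the Rosati norm $\phi'\phi=\epsilon$ still lands in $U_N^2$, or else sidestep the moduli-theoretic argument entirely and check freeness at the level of the arithmetic groups $\Gamma_1^1(\mathfrak{c},N)\subset\Gamma_1(\mathfrak{c},N)$ acting on $\mathfrak{h}^g$ (the route effectively taken in \cite{andreatta2016adic}). As written, your parenthetical ``rules out extra torsion automorphisms fixing the level structure'' names the conclusion but does not supply the argument, and the CM locus is exactly where it needs one.
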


\begin{proof}
For the first part see \cite[Proposition 2.4]{TianXiao}. For the \'{e}taleness of the quotient see \cite[Lemma 8.1]{andreatta2016adic}.
\end{proof}

Choosing a set of representatives and letting $\GL{2}(L)_+$ the subgroup of matrices with totally positive determinant, one can write 
\[
\GL{2}(\A{L}{\infty}) = \bigsqcup_{[\mathfrak{c}]\in\mathrm{Cl}^+(L)} \GL{2}(L)_+\begin{pmatrix}
    \mathfrak{c} & 0 \\
    0 & \mathfrak{d}
\end{pmatrix}K_1(N).
\]
Let $x_{\mathfrak{c}} = \left(\begin{smallmatrix} \mathfrak{c} & 0 \\
    0 & \mathfrak{d}\end{smallmatrix}\right)$. Let $\Gamma_1(\mathfrak{c}, N) = x_{\mathfrak{c}}K_1(N)x_{\mathfrak{c}}^{-1}\cap \GL{2}(L)_+$, and let $\Gamma_1^1(\mathfrak{c}, N) = x_{\mathfrak{c}}K_1(N)x_{\mathfrak{c}}^{-1}\cap \SL{2}(L)$. Then one can identify $\mathrm{Sh}^{\mathfrak{c}}_K(G)(\C{}) = \Gamma_1(\mathfrak{c}, N)\backslash \mathfrak{h}^g$, and $M(\mu_N, \mathfrak{c})(\C{}) = \Gamma_1^1(\mathfrak{c},N)\backslash \mathfrak{h}^g$, where the action of the congruence subgroups on $\mathfrak{h}^g$ is via the usual M\"{o}bius action. 
    
    The map $\rho_{\epsilon}\colon M(\mu_N, \mathfrak{c})(\C{}) \to M(\mu_N, \mathfrak{c})(\C{})$ sending $(A, \iota, \lambda, \psi)\mapsto (A, \iota, \epsilon\lambda, \psi)$ corresponds to $\tau \mapsto \epsilon^{-1}\tau$ for any $\tau \in \mathfrak{h}^g$.

\subsubsection{Hilbert modular forms for the group \texorpdfstring{$G$}{G}}
Associated to a weight $(v, w) \in \Z[\Sigma]\times \Z$ we define an integral model for the sheaf of Hilbert modular forms of weight $(v,w)$ as follows. Let $L^{\textrm{Gal}}$ be the Galois closure of $L$. Let $R$ be an $\Sh{O}^{\textrm{Gal}}_{L,(p)}$-algebra. Let $t_L = \sum_{\tau\in \Sigma} \tau$. For $k = 2v + wt_L$, consider the sheaf $\omega^{(k,v)}_{\Sh{A}} =  \omega^k_{\Sh{A}}\otimes (\wedge^2 \mathrm{H}_{\Sh{A}})^{-v}$ on $\bar{M}(\mu_N, \mathfrak{c})_R$. Here the wedge product is taken as $\Sh{O}_{\bar{M}(\mu_N, \mathfrak{c})_R} \otimes \Sh{O}_L$-modules. The sheaf $\wedge^2 \mathrm{H}_{\Sh{A}}$ is canonically isomorphic to $\Sh{O}_{\bar{M}(\mu_N, \mathfrak{c})_R}\otimes \mathfrak{c}{\mathfrak{d}}^{-1}$ on $\bar{M}(\mu_N, \mathfrak{c})_R$, as can be seen from the Hodge filtration 
\[
0 \to \omega_{\Sh{A}} \to \mathrm{H}_{\Sh{A}} \to \omega_{\Sh{A}}^{\vee} \otimes \mathfrak{c} \to 0.
\]
Over $\C{}$, this canonical basis of $\wedge^2 \mathrm{H}_{\Sh{A}}$ corresponds to the canonical ordered basis $(\tau, 1)$ of $H_1(A_{\tau}, \Z)$, with $A_{\tau} = \C{g}/\mathfrak{c}^{-1}\cdot \tau + \mathfrak{d^{-1}}$, for any $\tau \in \mathfrak{h}^g$, under the Betti--de Rham isomorphism. For any $\epsilon \in \Sh{O}_{L}^{\times, +}$, the map $\rho_{\epsilon}\colon \bar{M}(\mu_N, \mathfrak{c})_{\C{}} \to \bar{M}(\mu_N, \mathfrak{c})_{\C{}}$, sending $(A, \iota, \lambda, \psi) \mapsto (A, \iota, \epsilon\lambda, \psi)$ induces a natural map $\rho_{\epsilon}^*\colon \wedge^2 \mathrm{H}_{\Sh{A}} \to \wedge^2 \mathrm{H}_{\Sh{A}}$ via pullback. If $h_{\text{can}}$ is the canonical basis of $\wedge^2 \mathrm{H}_{\Sh{A}}$, then $\rho_{\epsilon}^*h_{\text{can}} = \epsilon h_{\text{can}}$. This gives an action of $\Sh{O}_L^{\times, +}$ on $\omega_{\Sh{A}}^{(k,v)}$ which factors through $\Gamma$. We want to define the sheaf of Hilbert modular forms for the group $G$ to be the invariants $(p_*\omega_{\Sh{A}}^{(k,v)})^{\Gamma}$ under this action, with $p \colon \bar{M}(\mu_N, \mathfrak{c})_R \to \bar{M}(\mu_N, \mathfrak{c})_R/\Gamma$ the quotient map. 

Using the trivialization of $\wedge^2 \mathrm{H}_{\Sh{A}}$, the sheaves $\omega_{\Sh{A}}^{(k,v)}$ and $\omega_{\Sh{A}}^k$ are isomorphic as $\Sh{O}_{\bar{M}(\mu_N, \mathfrak{c})_R}$-modules. Define an action of $\Sh{O}_L^{\times, +}$ on $\omega^k_{\Sh{A}}$ as follows. For a section $f$ of $\omega^k_{\Sh{A}}$, define $\epsilon \cdot f$ as the section whose evaluation at points $(A_{/R}, \iota, \lambda, \psi, \omega)$ for an $\Sh{O}_L \otimes R$-generator $\omega$ of $\omega_{\Sh{A}}$ satisfies
\[
(\epsilon\cdot f)(A, \iota, \lambda, \psi, \omega) = \epsilon^{-v}f(A, \iota, \epsilon\lambda, \psi, \omega).
\]
We can then check that the action of $U_N^2$ is trivial on $\omega^k_{\Sh{A}}$. Indeed, for $\eta \in U_N$ and $\epsilon = \eta^2$ we have
\[
f(A, \iota, \lambda, \psi, \omega) = f(A, \iota, \epsilon \lambda, \psi, \eta \omega) = \eta^{-k} f(A, \iota, \epsilon\lambda, \psi, \omega) = \epsilon^{-v} f(A, \iota, \epsilon\lambda, \psi, \omega)
\]
which proves the claim. We remark that this action only depends on $k$ and not the pair $(v, w)$. 

\begin{defn}\label{D211}
Define the sheaf of Hilbert modular forms for the group $G$ of tame level $\mu_N$, $\mathfrak{c}$-polarization and weight $(v, w)$ with coefficients in $R$ to be \[\underline{\omega}_R^{(v,w)} := (p_*\omega^k_{\Sh{A}, R})^\Gamma\]
where $p \colon \bar{M}(\mu_N, \mathfrak{c})_R \to \bar{M}(\mu_N, \mathfrak{c})_R/\Gamma$ is the quotient map. Alternatively we will sometimes call them arithmetic Hilbert modular forms.
\end{defn}

Let $\mathrm{M}(\mu_N, \mathfrak{c}, k; R) := H^0(\bar{M}(\mu_N, \mathfrak{c})_R, \omega^k_{\Sh{A}})$ be the space of geometric Hilbert modular forms. Let $\mathrm{M}^G(\mu_N, \mathfrak{c}, (v, w); R) = H^0(\bar{M}(\mu_N, \mathfrak{c})_R/\Gamma, \underline{\omega}_R^{(v,w)})$ be the space of arithmetic Hilbert modular forms. If $\#\Gamma \in R^{\times}$, then $\mathrm{M}^G(\mu_N, \mathfrak{c}, (v, w); R) = \mathrm{M}(\mu_N, \mathfrak{c}, k = 2v + wt_L; R)^{\Gamma}$ can be realized as the image of the projector:
\[
e := \frac{1}{\#\Gamma} \sum_{\epsilon \in \Gamma} \epsilon.
\]

Let $\textrm{Frac}(L)^{(p)}$ be the group of fractional ideals which are coprime to $p$ and let $\textrm{Princ}(L)^{+, (p)}$ be the group of principal ideals generated by totally positive elements which are coprime to $p$. Then we have $\textrm{Frac}(L)^{(p)}/\textrm{Princ}(L)^{+,(p)} \simeq \mathrm{Cl}^+(L)$. Let $x \in L^{\times, +}$ be coprime to $p$. The map $(A, \iota, \lambda, \psi) \mapsto (A, \iota, x\lambda, \psi)$ induces a natural isomorphism $\rho_{(x,\mathfrak{c})}\colon \bar{M}(\mu_N, \mathfrak{c})_R \to \bar{M}(\mu_N, x\mathfrak{c})_R$. Over $\C{}$, this induces an isomorphism $\rho_{(x,\mathfrak{c})}\colon \mathrm{Sh}^{\mathfrak{c}}_K(G)(\C{}) \to \mathrm{Sh}^{x\mathfrak{c}}_K(G)(\C{})$, where 
\begin{align*}
\mathrm{Sh}^{\mathfrak{c}}_K(G)(\C{}) &= \GL{2}(L)_+\backslash \mathfrak{h}^g \times \GL{2}(L)_+\left(\begin{smallmatrix}
    \mathfrak{c} & 0 \\ 0 & \mathfrak{d}
\end{smallmatrix}\right)K_1(N)/K_1(N) \\ \mathrm{Sh}^{x\mathfrak{c}}_K(G)(\C{}) &= \GL{2}(L)_+\backslash \mathfrak{h}^g \times \GL{2}(L)_+\left(\begin{smallmatrix}
    x\mathfrak{c} & 0 \\ 0 & \mathfrak{d}
\end{smallmatrix}\right)K_1(N)/K_1(N).
\end{align*}
This isomorphism is given by $\left[\left(\tau, \left(\begin{smallmatrix}
    \mathfrak{c}&0\\ 0&\mathfrak{d}
\end{smallmatrix}\right)\right)\right] \mapsto \left[\left(x\tau, \left(\begin{smallmatrix}
    x\mathfrak{c}&0 \\ 0& \mathfrak{d}
\end{smallmatrix}\right)\right)\right]$. The Shimura variety $\mathrm{Sh}_K(G)(\C{})$ can then be written as 
\[
\mathrm{Sh}_K(G)(\C{}) = \left(\bigsqcup_{\mathfrak{c}\in\text{Frac}(L)^{(p)}} \mathrm{Sh}^{\mathfrak{c}}_K(G)(\C{})\right)/\text{Princ}(L)^{+,(p)}.
\]

Let $\omega_{\Sh{A}}^{(k,v)}(\mathfrak{c}) = \omega_{\Sh{A}}^k \otimes (\wedge^2 \mathrm{H}_{\Sh{A}})^{-v}$ on $\bar{M}(\mu_N,\mathfrak{c})_{\C{}}$. Then the identity
\[ 
[1]\colon A_{\tau,\mathfrak{c}} := \frac{\C{g}}{\mathfrak{c}^{-1}\cdot \tau + \mathfrak{d^{-1}}} \to \frac{\C{g}}{x^{-1}\mathfrak{c}^{-1}\cdot x\tau + \mathfrak{d^{-1}}} =: A_{x\tau,x\mathfrak{c}}
\]
induces a map $[1]^*\colon \rho^*_{(x, \mathfrak{c})}\omega_{\Sh{A}}^{(k,v)}(x\mathfrak{c})\to \omega_{\Sh{A}}^{(k,v)}(\mathfrak{c})$ via pullback. The canonical differential $\D\underline{z}$ of $\omega_{A_{x\tau, x\mathfrak{c}}}$ maps to the section $\D\underline{z}$ of $\omega_{A_{\tau,\mathfrak{c}}}$, and the canonical basis $h_{x\tau, x\mathfrak{c}}$ of $\wedge^2 \mathrm{H}_{A_{x\tau, x\mathfrak{c}}}$ maps to the section $x^{-1}h_{\tau, \mathfrak{c}}$ of $\wedge^2 \mathrm{H}_{A_{\tau,\mathfrak{c}}}$. In particular, 
\[
[1]^*(\D\underline{z}^k\otimes h_{x\tau,x\mathfrak{c}}^{-v}) = x^{v}(\D\underline{z}^k\otimes h_{\tau,\mathfrak{c}}^{-v}).
\]

This motivates the following definition. Consider the isomorphism $L_{(x\mathfrak{c}, \mathfrak{c})} \colon \mathrm{M}(\mu_N, x\mathfrak{c}, k; R) \xrightarrow{\sim} \mathrm{M}(\mu_N, \mathfrak{c}, k; R)$ given by 
\[
L_{(x\mathfrak{c}, \mathfrak{c})}(f)(A, \iota, \lambda, \psi, \omega) := x^{v}f(A, \iota, x\lambda, \psi, \omega).
\]
This map descends to an isomorphism $L_{(x\mathfrak{c}, x\mathfrak{c})} \colon \mathrm{M}^G(\mu_N, \mathfrak{c}, (v,w); R) \xrightarrow{\sim} \mathrm{M}^G(\mu_N, \mathfrak{c}, (v,w); R)$. 

\begin{defn}
Define the $R$-module of Hilbert modular forms for $G$ of tame level $N$ and weight $(v, w)$ to be
\[
\mathrm{M}^G(\mu_N, (v,w); R) := \left( \bigoplus_{\mathfrak{c} \in \textrm{Frac}(L)^{(p)}}\mathrm{M}^G(\mu_N, \mathfrak{c}, (v, w); R) \right)/\left( L_{(x\mathfrak{c}, \mathfrak{c})}(f) - f \right)_{x \in \textrm{Princ}(L)^{+, (p)}}.
\]
\end{defn}

Upon choosing representatives $\mathfrak{c}_1, \dots, \mathfrak{c}_{h^+_L}$ of $\mathrm{Cl}^+(L)$ in $\textrm{Frac}(L)^{(p)}$ we have a non-canonical isomorphism
\[
\mathrm{M}^G(\mu_N, (v, w); R) \simeq \bigoplus_{i = 1}^{h^+_L} \mathrm{M}^G(\mu_N, \mathfrak{c}_i, (v, w); R)
\]
which shows that $\mathrm{M}^G(\mu_N, (v,w); R)$ is a finite $R$-module.
\begin{rem}
For $\mathfrak{c} = \mathfrak{d}^{-1}$, the moduli scheme $M(\mu_N, \mathfrak{d}^{-1})$ is an integral model for the Shimura variety associated to the group $G^* := G \times_{\Res_{L/\Q}\Gm} \Gm$ where the arrow $G \to \Res_{L/\Q}\Gm$ is the determinant and $\Gm \to \Res_{L/\Q}\Gm$ is the diagonal embedding \cite{Rapoport}.
\end{rem}

\subsubsection{Adelic Hilbert modular forms}\label{S123}

Let $f \in \mathrm{M}^G(\mu_N, (v,w); \C{}) = (f_{\mathfrak{c}_i})_{i=1}^{h^+_L}$ be a Hilbert modular form as above. Then we can define an automorphic form $\phi_f \colon \GL{2}(L)\backslash \GL{2}(\A{L}{})/K_1(N) \to \C{}$ corresponding to $f$ as 
\[
\phi_f\left(u_{\infty}, \left(\begin{smallmatrix}
    \mathfrak{c}_i & 0 \\
    0 & \mathfrak{d} 
\end{smallmatrix} \right)\right) = f_{\mathfrak{c}_i}(u_{\infty}\cdot \mathbf{i})j_{k,v}(u_{\infty}, \mathbf{i})^{-1}
\]
where $u_{\infty} \in \GL{2}(L \otimes \R)_+$, $\mathbf{i} = (i,\dots, i) \in \C{g}$, and for $\gamma = \left(\begin{smallmatrix}
    a & b \\
    c & d
\end{smallmatrix} \right) \in \GL{2}(L\otimes \R)$, $j_{k,v}(\gamma, z) = (ad-bc)^{-v}(cz+d)^k$.

The function $\phi_f$ satisfies the following properties: \cite[\S2.1]{michele}
\begin{itemize}
    \item For every finite adelic point $x \in \GL{2}(\A{L}{\infty})$, the well-defined function $f_x\colon \mathfrak{h}^g \to \C{}$ given by $f_x(z) = \phi_f(u_{\infty}x)j_{k,v}(u_{\infty}, \mathbf{i})$ is holomorphic, where for each $z \in \mathfrak{h}^g$, we choose $u_{\infty} \in \GL{2}(L\otimes \R)_+$ such that $u_{\infty}\cdot\mathbf{i} = z$.
    \item For all adelic points $x \in \GL{2}(\A{L}{})$, and for all additive measures on $L \backslash \A{L}{}$, we have 
    \[
    \int_{L\backslash\A{L}{}} \phi_f\left(\left(\begin{smallmatrix}
        1 & a \\
        0 & 1
    \end{smallmatrix}\right)x\right)da = 0.
    \]
\end{itemize}

Let $\phi_f$ be an automorphic form as above. Choosing a set of representatives
\[
t_i = 
\begin{pmatrix}
  c_i^{-1} & 0 \\
  0 & 1
\end{pmatrix}, \quad i = 1, \dots, h_L^+
\]
for $\GL{2}(L)_+\backslash \GL{2}(\A{L}{})/K_1(N)$, we get modular forms $f_i \colon \mathfrak{h}^g \to \C{}$ defined as \[f_i(z) = \phi_f(u_{\infty}t_i)j_{k,v}(u_{\infty}, \mathbf{i}).\]
Each $f_i$ is a Hilbert modular form of $\mathfrak{c}_i\mathfrak{d}^{-1}$-polarization in the sense of Definition \ref{D211}, where $\mathfrak{c}_i$ is the ideal generated by $c_i$. The modular form $f_i$ has a Fourier expansion, which is the same as the $q$-expansion associated to the Tate object $\text{Tate}_{\mathfrak{d}^{-1}, \mathfrak{c}_i}(q) = \Gm \otimes \Sh{O}_L/\underline{q}(\mathfrak{c}_i)$ (see \S\ref{S4.3}) with level structure being given by the point $\exp(2\pi i/N) \otimes 1 \in \Gm \otimes \Sh{O}_L/\underline{q}(\mathfrak{c}_i)$.
\[
f_i(z) = \sum_{\xi \in (\mathfrak{c}_i\mathfrak{d}^{-1})_+} a(\xi, f_i)e_L(\xi z), \quad e_L(\xi z) = \exp(2\pi i \sum_{\tau \in \Sigma} \tau(\xi)z_{\tau}).
\]
Fix a finite extension $L_0$ of $L^{\textrm{Gal}}$ such that for any ideal $\mathfrak{a} \subset \Sh{O}_L$, and any embedding $\tau \in \Sigma$, $\mathfrak{a}^{\tau}\Sh{O}_{L_0}$ is principal. Choose a generator $\{\mathfrak{q}^{\tau}\} \in \Sh{O}_{L_0}$ of $\mathfrak{q}^{\tau}\Sh{O}_{L_0}$ for each prime ideal $\mathfrak{q}$, and extend it to all fractional ideals multiplicatively. Fix an idele $\mathrm{d} \in \A{L}{\infty, \times}$ whose ideal is the different $\mathfrak{d}$ of $L/\Q$.

Every idele $y \in \A{L,+}{\times} = \A{L}{\infty,\times}L_{\infty,+}^{\times}$ can be written as $y = \xi c_i^{-1}\mathrm{d}u$ for $\xi \in L_{+}^{\times}$ and $u \in \det{K_{1}({N})}L_{\infty, +}^{\times}$. Define two functions $c( \cdot, \phi_f) \colon \A{L,+}{\times} \to \C{}$ and $c_p( \cdot,\phi_f) \colon \A{L,+}{\times} \to \bar{\Q}_p$ as follows.
\[
c(y, \phi_f) := a(\xi, f_i)\{y^{v-t_L}\}\xi^{t_L - v}|c_i|_{\A{L}{}} \quad  \quad c_p(y, \phi_f) := a(\xi, f_i)y_p^{v-t_L}\xi^{t_L - v}\Sh{N}_L(c_i)^{-1}
\]
if $y \in \hat{\Sh{O}}_LL_{\infty,+}^{\times}$ and $0$ otherwise. Here $\Sh{N}_L$ is defined by $y \mapsto y_p^{-t_L}|y^{\infty}|_{\A{L}{}}^{-1}$. The function $c_p( \cdot, \phi_f)$ makes sense only if the coefficients $a(\xi, f_i)$ are algebraic for all $i$. Moreover, for our choice of the $\mathfrak{c}_i$ as being coprime to $p$, we have 
\[
c_p(y, \phi_f) = c(y, \phi_f)\{y^{t_L - v}\}y_p^{v-t_L}.
\]

\begin{theorem}\label{T28}
Consider the map $e_L \colon \C{\Sigma} \to \C{\times}$ defined by $e_L(z) = \exp(2\pi i \sum_{\tau \in \Sigma} z_{\tau})$ and the unique additive character of the adeles $\chi_L \colon \A{L}{}/L \to \C{\times}$ which satisfies $\chi_L(x_{\infty}) = e_L(x_{\infty})$. Each Hilbert cuspform of weight $(v,w)$ has an adelic $q$-expansion of the form
\[
\phi_f\left(\left(\begin{smallmatrix}y & x \\ 0 & 1  \end{smallmatrix}\right)\right) = |y|_{\A{L}{}}\sum_{\xi \in L_+} c(\xi y \mathrm{d}, \phi_f)\{(\xi y\mathrm{d})^{t_L - v}\}(\xi y_{\infty})^{v-t_L}e_L(\mathbf{i}\xi y_{\infty})\chi_L(\xi x)
\]
for $y \in \A{L,+}{\times}$, $x \in \A{L}{\times}$, where $c(\cdot, \phi_f) \colon \A{L,+}{\times} \to \C{}$ vanishes outside $\hat{\Sh{O}}_LL_{\infty,+}^{\times}$ and depends only on the coset $y^{\infty}\det{K_{1}({N})}L_{\infty,+}^{\times}$. The adelic $q$-expansion agrees with the Fourier expansions of the $f_i$ in the following sense.
\[
\phi_f\left(\left(\begin{smallmatrix}y_{\infty} & x_{\infty} \\ 0 & 1 \end{smallmatrix}\right)t_i\right) = y_{\infty}^{v}\sum_{\xi \in (\mathfrak{c}_i\mathfrak{d}^{-1})_+} a(\xi, f_i)e_L(\xi z).
\]
Moreover, formally replacing $e_L(\mathbf{i}\xi y)\chi_L(\xi x)$ by $q^{\xi}$, and $(\xi y_{\infty})^{v-t_L}$ by $(\xi \mathrm{d}_p y_p)^{v-t_L}$, we have the $q$-expansion
\[
\phi_f = \Sh{N}(y)^{-1}\sum_{\xi \in L_{+}} c_p(\xi y\mathrm{d}, \phi_f)q^{\xi}.
\]
\end{theorem}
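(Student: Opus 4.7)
The plan is to reduce the adelic computation to the classical Fourier expansions of the $f_i$ via strong approximation, following the standard recipe of Shimura and Hida. The key tool is the decomposition $\A{L}{\infty,\times} = \bigsqcup_i L^{\times}_{+}\, c_i^{-1}\mathrm{d}\,\det K_1(N)$, which matches the chosen representatives $t_i$. Starting from $\phi_f\left(\left(\begin{smallmatrix} y & x \\ 0 & 1\end{smallmatrix}\right)\right)$ with $y \in \A{L,+}{\times}$, $x \in \A{L}{}$, I would first write $y^{\infty} = \xi c_i^{-1}\mathrm{d}\, u^{\infty}$ for some $\xi \in L^{\times}_{+}$, $i$, and $u^{\infty} \in \det K_1(N)$, then multiply on the left by a suitable element of $\GL{2}(L)_+$ and on the right by an element of $K_1(N)$ to bring the matrix into the form $\left(\begin{smallmatrix} y'_{\infty} & x'_{\infty} \\ 0 & 1\end{smallmatrix}\right) t_i \kappa$ with $y'_{\infty} \in L^{\times}_{\infty, +}$, $x'_{\infty} \in L_{\infty}$, $\kappa \in K_1(N)$. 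Applying the defining identity for $\phi_f$ in terms of $f_i$ and the automorphy factor $j_{k,v}$ then reduces the problem to reading off coefficients from the Fourier expansion of $f_i$.

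Second, I would plug in the Fourier expansion $f_i(z) = \sum_{\eta \in (\mathfrak{c}_i\mathfrak{d}^{-1})_+} a(\eta, f_i)\, e_L(\eta z)$, specialized at the point $z = y'_{\infty}\mathbf{i} + x'_{\infty}$, and track each factor. The Nebentypus at infinity contributes $(y'_{\infty})^{v} e_L(\eta x'_{\infty})\, e_L(\mathbf{i}\eta y'_{\infty})$; combining this with $|y|_{\A{L}{}}$ and the transformation law gives the exponential terms $e_L(\mathbf{i}\xi y_{\infty})\chi_L(\xi x)$ after reindexing $\eta = \xi$ (appropriately scaled). The weight-$v$ twist in the denominator, together with the choice of generator $\{(\xi y\mathrm{d})^{t_L-v}\}$, is exactly what is needed so that the coefficient attached to the term $e_L(\mathbf{i}\xi y_{\infty})\chi_L(\xi x)$ depends only on the idele $\xi y\mathrm{d}$ modulo $\det K_1(N) L^{\times}_{\infty,+}$; this is the definition of $c(\cdot, \phi_f)$. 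Vanishing of $c$ outside $\hat{\Sh{O}}_L L^{\times}_{\infty,+}$ falls out of the condition that $f_i$ is supported on $(\mathfrak{c}_i\mathfrak{d}^{-1})_+$.

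The agreement with the classical Fourier expansions of the $f_i$ is a direct unwinding of the definition of $\phi_f$ at $\left(\left(\begin{smallmatrix} y_{\infty} & x_{\infty} \\ 0 & 1 \end{smallmatrix}\right), t_i\right)$, since the archimedean automorphy factor $j_{k,v}\left(\left(\begin{smallmatrix} y_{\infty} & x_{\infty} \\ 0 & 1\end{smallmatrix}\right),\mathbf{i}\right) = y_{\infty}^{-v}$ and the finite part is exactly $t_i$. Finally the $p$-adic $q$-expansion follows formally: the substitutions $e_L(\mathbf{i}\xi y)\chi_L(\xi x)\leftrightarrow q^{\xi}$ and $(\xi y_{\infty})^{v-t_L}\leftrightarrow (\xi \mathrm{d}_p y_p)^{v-t_L}$ convert the transcendental coefficient $c(\xi y\mathrm{d}, \phi_f)\{(\xi y\mathrm{d})^{t_L-v}\}$ into the $p$-adic coefficient $c_p(\xi y\mathrm{d}, \phi_f)$ via the identity $c_p(y, \phi_f) = c(y, \phi_f)\{y^{t_L-v}\}y_p^{v-t_L}$ stated just above; the normalization $\Sh{N}(y)^{-1}$ absorbs the corresponding change from $|y|_{\A{L}{}}$ to a purely algebraic factor.

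The only real obstacle is the bookkeeping: one must verify that the quantity $c(\xi y\mathrm{d}, \phi_f)\{(\xi y\mathrm{d})^{t_L-v}\}(\xi y_{\infty})^{v-t_L}$ is genuinely independent of the choice of $\xi$ (and hence of the representative in the decomposition of $y^{\infty}$), so that the sum is well-defined. This amounts to showing that replacing $\xi$ by $\epsilon \xi$ for a totally positive unit $\epsilon \in \Sh{O}^{\times,+}_L$ leaves the product unchanged, which uses the explicit transformation of $a(\eta, f_i)$ under the $\Gamma$-action baked into the definition of arithmetic Hilbert modular forms. Granting this, all other steps are direct verifications.
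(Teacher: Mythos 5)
The paper's own proof of this theorem is simply the citation \cite[Theorem 1.1]{Hida}, and your sketch correctly reconstructs the standard Shimura--Hida strong-approximation argument that underlies that reference. The steps you outline --- decomposing $y^{\infty}$ via the representatives $t_i$, unwinding $\phi_f$ against the Fourier expansion of $f_i$, tracking the automorphy factor $j_{k,v}\bigl(\left(\begin{smallmatrix} y_{\infty} & x_{\infty} \\ 0 & 1\end{smallmatrix}\right),\mathbf{i}\bigr) = y_{\infty}^{-v}$, the support condition giving vanishing of $c(\cdot,\phi_f)$ outside $\hat{\Sh{O}}_L L^{\times}_{\infty,+}$, and the unit-invariance check for well-definedness under $\xi \mapsto \epsilon\xi$ --- are precisely what the cited proof does, so this is essentially the same approach.
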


\begin{proof}
\cite[Theorem 1.1]{Hida}.
\end{proof}

\paragraph{\textbf{Atkin--Lehner involution.}}
Let $f \in \mathrm{M}^G(\mu_N, (v,w); \C{})$ be a modular form as above. The Atkin--Lehner involution is described in the language of adelic Hilbert modular forms as follows \cite[655]{Shimura}. For any $2\times 2$ matrix $x$, let $x^{\iota}$ be the adjugate matrix, i.e. $xx^{\iota} = x^{\iota}x = \det{x}$.

\begin{defn}
    The Atkin--Lehner involution $w_N$ on the space $\mathrm{M}^G(\mu_N, (v,w);\C{})$ of Hilbert modular forms is an operator, such that if $\phi_f \colon \GL{2}(\A{L}{}) \to \C{}$ is a Hilbert modular form, then 
    \[
    (\phi_f|w_N)(x) = \phi_f\left(x^{-\iota}\left(\begin{smallmatrix}
        0 & -1 \\
        N & 0
    \end{smallmatrix}\right)\right).
    \]
\end{defn}

One can give a geometric definition of the Atkin--Lehner operator, which allows it to be defined for families of Hilbert modular forms.

Fix a base ring $R$ that is an $\Sh{O}_{L,(p)}^{\mathrm{Gal}}$ algebra containing a primitive $N$-th root $\zeta_N$ of $1$. Let $\mathfrak{c}$ be an integral ideal coprime to $N$, and let $M(\mu_N, \mathfrak{c}^{-1}\mathfrak{d}^{-1})/\Sp{R}$ be the Hilbert modular variety of $\mathfrak{c}^{-1}\mathfrak{d}^{-1}$-polarized abelian varieties over $R$. The Atkin--Lehner involution is induced via pullback by a morphism $w_N \colon M(\mu_N, \mathfrak{c}^{-1}\mathfrak{d}^{-1})\to M(\mu_N, \mathfrak{cd}^{-1}N)$ which is defined on points as follows. 

Let $(A,\iota, \lambda, \psi) \in M(\mu_N, \mathfrak{c}^{-1}\mathfrak{d}^{-1})$. Suppose $P = \psi(1) \in A[N]$. Let $A' = A\otimes \mathfrak{c}^{-1}/(P)$, i.e. the quotient of $A\otimes \mathfrak{c}^{-1}$ by the cyclic subgroup generated by the image of $P$. We note that $\lambda$ induces a $\mathfrak{cd}^{-1}N$-polarization $\lambda'$ on $A'$. Since $A\otimes \mathfrak{c}^{-1} = A^{\vee} \otimes \mathfrak{d}$, the Weil pairing 
\[A[N] \xrightarrow{\sim} \Hom_{\Sh{O}_L}(A^{\vee}[N], \mu_N \otimes \mathfrak{d}^{-1}) = \Hom_{\Sh{O}_L}(A^{\vee}\otimes \mathfrak{d}[N], \mu_N \otimes \Sh{O}_L)
\]
defines a point $P^{\vee} \in A^{\vee}\otimes \mathfrak{d}[N]$, such that $\langle P, P^{\vee}\rangle = \zeta_N \otimes 1 \in \mu_N \otimes \Sh{O}_L$. Then $P^{\vee}$ defines a level $N$ structure $\psi'$ on $A'$, which thus gives a point $(A',\iota',\lambda', \psi') \in M(\mu_N, \mathfrak{cd}^{-1}N)$. Letting $\pi \colon A \to A'$ be the projection, the Atkin--Lehner operator is defined as
\begin{align*}
    w_N \colon \mathrm{M}(\mu_N, \mathfrak{cd}^{-1}N, (v,w); R) &\to \mathrm{M}(\mu_N, \mathfrak{c}^{-1}\mathfrak{d}^{-1}, (v,w); R) \\
    (f|w_N)(A,\iota, \lambda, \psi, \omega) &= f(A', \iota', \lambda', \psi', (\pi^*)^{-1}\omega).
\end{align*}

This operator commutes with the action of $\Gamma$ and $\text{Princ}(L)^{+,(p)}$, and thus defines an operator $w_N$ on $\mathrm{M}^G(\mu_N, (v,w); R)$.

\begin{theorem}
    If $f \in \mathrm{M}^G(\mu_N, (v,w);\C{})$ is a primitive newform, then $f|w_N$ is a constant times $f^*$ where $f^*$ is the modular form whose adelic $q$-expansion coefficients are the complex conjugates of $f$.
\end{theorem}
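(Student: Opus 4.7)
The plan is to exploit uniqueness of newvectors in an irreducible automorphic representation. First, since $f$ is a primitive newform of level $N$, its adelic counterpart $\phi_f$ generates an irreducible cuspidal automorphic representation $\pi = \bigotimes'_v \pi_v$ of $\GL{2}(\A{L}{})$, and by newness $\phi_f$ is (up to scalar) the unique nonzero vector in $\pi^{K_1(N)}$ which is an eigenform for the full Hecke algebra with the Hecke eigenvalues of $f$.

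Next, I would analyze how $w_N$ transforms the representation. Writing $w = \left(\begin{smallmatrix} 0 & -1 \\ N & 0 \end{smallmatrix}\right)$, the operator $\phi \mapsto \phi|w_N$ is the composition of right translation by $w$ with the involution $g \mapsto g^{-\iota}$. The latter realizes the outer automorphism of $\GL{2}$ sending a representation to its contragredient, so the image $\{\phi|w_N : \phi \in \pi\}$ is a model of $\pi^{\vee}$. Since $w$ normalizes $K_1(N)$, the vector $\phi_f|w_N$ is again a cuspidal $K_1(N)$-invariant eigenform, i.e.\ a newvector in this model of $\pi^{\vee}$.

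Third, I would show that $\phi_{f^*}$ also lies in $\pi^{\vee}$ as a newvector. By Theorem~\ref{T28}, the Fourier coefficients $c_p(y, \phi_f)$ determine the Hecke eigenvalues of $\phi_f$: at a prime $\mathfrak{q}$ coprime to $N$ the $T_{\mathfrak{q}}$-eigenvalue is recovered from $c_p(\varpi_{\mathfrak{q}}, \phi_f)$. Complex conjugation of these Fourier coefficients therefore replaces $\pi$ by its complex conjugate $\bar{\pi}$. Because the unitarization built into the definition of a primitive newform forces the central character of $\pi$ to be unitary, we have an isomorphism $\bar{\pi} \simeq \pi^{\vee}$, under which $\phi_{f^*}$ is identified with a $K_1(N)$-invariant newvector of $\pi^{\vee}$. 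Strong multiplicity one for $\GL{2}$ then says that $(\pi^{\vee})^{K_1(N),\textrm{new}}$ is one-dimensional, so $f|w_N$ and $f^*$ must be proportional.

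The main obstacle I anticipate is not the representation-theoretic core but the bookkeeping of normalizations: the geometric definition of $w_N$ given above involves the pullback $(\pi^*)^{-1}\omega$ of the differential together with the change of polarization module from $\mathfrak{c}^{-1}\mathfrak{d}^{-1}$ to $\mathfrak{c}\mathfrak{d}^{-1}N$, and I need to verify that this geometric $w_N$ agrees with the adelic formula $(\phi|w_N)(x) = \phi(x^{-\iota}w)$ without introducing spurious factors depending on the weight $(v,w)$ or on the representative $\mathfrak{c}_i$ of the strict class group. Once this compatibility is in place, and the identification $\bar\pi \simeq \pi^\vee$ is pinned down with matching central characters, the newvector uniqueness argument yields the claimed scalar.
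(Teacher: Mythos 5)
The paper does not contain a proof of this statement at all: it simply cites Shimura's Proposition~2.10, whose argument is a direct computation on adelic $q$-expansions using cocycle identities for the Atkin--Lehner element and the explicit shape of local Whittaker functions. Your representation-theoretic route — identify $f|w_N$ and $f^*$ as newvectors in the same automorphic representation, then invoke one-dimensionality of the newvector line — is a genuinely different and more conceptual strategy. It is a reasonable one, and it is the strategy that more recent treatments use, so the choice of method is sound.

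However, there is a real gap in the step where you place $\phi_{f^*}$ in $\pi^\vee$. You assert that ``the unitarization built into the definition of a primitive newform forces the central character of $\pi$ to be unitary, we have an isomorphism $\bar{\pi}\simeq\pi^\vee$.'' This is not correct as stated. In the paper's setup, a weight-$(v,w)$ form has central character with infinity type $-wt_L$, which is nontrivial whenever $w\neq 0$, so the central character of $\pi$ is \emph{not} unitary; $\pi$ and its unitarization $\pi^u$ differ by a nonzero real Tate twist $\lvert\det\rvert^s$. The clean identity $\bar\pi\simeq\pi^\vee$ holds for $\pi^u$, not for $\pi$: one has $\bar\pi\simeq\pi^\vee\otimes\lvert\det\rvert^{2s}$. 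Concretely, the Hecke eigenvalue of $\pi^\vee$ at an unramified prime $\mathfrak{q}$ is $\omega(\mathfrak{q})^{-1}a_{\mathfrak{q}}$, while the eigenvalue read off from the conjugated Fourier coefficient $\overline{a_{\mathfrak{q}}}$ agrees with this only after the Ramanujan bound for $\pi^u$ is invoked \emph{and} the extra factor $q_{\mathfrak{q}}^{2s}$ coming from the Tate twist is absorbed into the normalization of $c_p(\cdot,\phi_f)$. That is precisely the ``bookkeeping'' you flag as the anticipated obstacle, but it is not an afterthought: without pinning down those normalizations, the claim that $\phi_{f^*}$ and $\phi_f|w_N$ lie in the same representation is unjustified, and the multiplicity-one step has nothing to apply to. The fix is either to verify the compatibility of the paper's adelic $q$-expansion normalization (Theorem~\ref{T28}) with the unitary twist explicitly, or to run the whole argument with $\pi^u$ and translate back at the end — but one of these must actually be done, not just noted as a concern.

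A smaller imprecision: $w=\left(\begin{smallmatrix}0&-1\\N&0\end{smallmatrix}\right)$ does not normalize $K_1(N)$ by itself (conjugation takes $K_1(N)$ to the opposite congruence group $K^1(N)$); it is only the composite $\gamma\mapsto w^{-1}\gamma^{-\iota}w$ that preserves $K_1(N)$, which is what actually makes $\phi_f|w_N$ right $K_1(N)$-invariant. Your conclusion is correct but the stated reason is not quite.
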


\begin{proof}
    \cite[Proposition 2.10]{Shimura}.
\end{proof}

\subsection{The partial Igusa tower}
Fix an $I = [p^a, p^b]$ and $\alpha \in \mathfrak{m} \setminus \mathfrak{m}^2$. We now work over $\X$. Recall the ideal $\mathrm{Hdg}_r$ was given locally by $(\alpha, \mathrm{Hdg}^{p^{r+1}})$ for a local lift $\mathrm{Hdg}$ of the Hasse invariant. Let $g_r \colon \Xra{r} \to \X$ be the open subscheme of the blow-up of $\X$ with respect to the ideal $\mathrm{Hdg}_r$, where the inverse image ideal is generated by $\mathrm{Hdg}^{p^{r+1}}$. For any integer $n$ with $1 \leq n \leq r$ if $I = [0,1]$ and $1 \leq n \leq a+r$ let $\lambda = \mathrm{Hdg}^{\frac{p^n-1}{p-1}}$. Note that $\frac{p}{\lambda} \in \Sh{O}_{\Xra{r}}$.

\begin{prop}
For $I, r, n, \alpha$ as above the semiabelian scheme $\Sh{A} \to \Xra{r}$ has a canonical subgroup $H_n$ of order $p^n$ \cite[Appendice A]{Andreatta2018leHS}. This is a finite, locally free subgroup scheme that satisfies the following properties:
\begin{enumerate}
    \item $H_n$ lifts $\ker{F^n}$ modulo $\frac{p}{\lambda}$.
    \item For any $\alpha$-adically complete admissible $\Lambda^0_{\alpha,I}$-algbera $R$, together with a morphism $f \colon \Spf{R} \to \Xra{r}$, 
    \[
    H_n(R) = \{ s \in \Sh{A}[p^n](R) \ | \ s\ \mathrm{mod}\ \frac{p}{\lambda} \in \ker{F^n}\}.
    \]
    \item Suppose $L_n = \Sh{A}[p^n]/H_n$. Then $\omega_{L_n}$ is killed by $\lambda$, and we have $\omega_{L_n} \simeq \omega_{\Sh{A}}/\lambda \omega_{\Sh{A}}$.
    \item $\Sh{A}^{\vee}[p^n]/H_n(\Sh{A}^{\vee}) \simeq H_n^{\vee}$ through the Weil pairing and it is \'{e}tale over the adic generic fibre $\xra{r}$ of $\Xra{r}$.
\end{enumerate}
\end{prop}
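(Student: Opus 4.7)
The plan is to reduce to the case $n=1$ by induction. Suppose $H_1 \subset \Sh{A}$ has already been constructed on $\Xra{r}$ with the stated properties. Then the quotient $\Sh{A}' := \Sh{A}/H_1$ is a semiabelian scheme with induced $\Sh{O}_L$-action, and one computes that its Hasse ideal satisfies $\Ha(\Sh{A}') = \Ha(\Sh{A})^p$ up to units; consequently a lift of the Hasse invariant for $\Sh{A}'$ may be taken to be $\Hdg{p}$. This means that, with respect to the induction, $\Sh{A}'$ satisfies the analogous hypothesis at parameter $r-1$, and in particular $\Xra{r}$ factors through the corresponding blow-up constructed from $\Sh{A}'$. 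Applying the inductive hypothesis to $\Sh{A}'$ at level $n-1$ yields a canonical subgroup $H' \subset \Sh{A}'$, and I would define $H_n$ as the preimage of $H'$ under the isogeny $\Sh{A} \to \Sh{A}'$. Properties (1)--(4) for $H_n$ then follow from those of $H_1$ and $H'$ via the short exact sequence $0 \to H_1 \to H_n \to H' \to 0$.

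For the base case $n=1$: since $p$ is unramified in $L$, the $p$-divisible group decomposes as $\Sh{A}[p^{\infty}] = \prod_i \Sh{A}[\mathfrak{P}_i^{\infty}]$, so the construction reduces to a prime-by-prime one. Locally on $\Xra{r}$, I would formally complete along the identity and put the formal group law of the connected part of each $\Sh{A}[\mathfrak{P}_i^{\infty}]$ into a Lubin--Tate style normal form in which the Hasse invariant appears, up to units, as the coefficient of the leading term of the $p$-series. The Newton polygon of multiplication by $p$ then isolates the $p^{f_i}$ roots of smallest valuation, provided that $\Hdg{p^{r+1}}$ dominates $p$, which is exactly the condition encoded by the blow-up defining $\Xra{r}$. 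The closed subscheme cut out by these small roots, shown to be a subgroup via Weierstrass preparation and flatness, is $H_{1,\mathfrak{P}_i}$, and $H_1 = \prod_i H_{1,\mathfrak{P}_i}$. The moduli description (2) is then immediate, since the small-valuation condition on a section translates precisely to reduction into $\ker F$ modulo $p/\lambda$.

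For property (3), applying the snake lemma to $0 \to H_n \to \Sh{A}[p^n] \to L_n \to 0$ yields a short exact sequence $0 \to \omega_{L_n} \to \omega_{\Sh{A}[p^n]} \to \omega_{H_n} \to 0$; combining this with $\omega_{\Sh{A}[p^n]} \simeq \omega_{\Sh{A}}/p^n\omega_{\Sh{A}}$ and explicit control of $\omega_{H_n}$ extracted from the formal group law used above gives that $\omega_{L_n}$ is killed by $\lambda$ and canonically isomorphic to $\omega_{\Sh{A}}/\lambda\omega_{\Sh{A}}$. Property (4) follows from the Weil pairing together with the assumption $(\mathfrak{c}, p) = 1$, which makes $\Sh{A}[p^n]$ principally polarized as an $\Sh{O}_L$-module: one checks that $H_n$ is self-annihilating under the pairing (because it is the \emph{connected} lift of $\ker F^n$), so Cartier duality produces the desired isomorphism $\Sh{A}^{\vee}[p^n]/H_n(\Sh{A}^{\vee}) \simeq H_n^{\vee}$, and étaleness over the generic fibre $\xra{r}$ holds because the quotient of $\Sh{A}[p^n]$ by its connected canonical piece becomes étale after inverting $p$. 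The main obstacle in the program is the existence of $H_1$ itself: lifts of $\ker F$ to characteristic zero are generally not unique, and singling out the canonical one requires a quantitative control of the $p$-series via the Hasse invariant, which is where the hypothesis encoded by $\Hdg{p^{r+1}}$ dominating $p$ plays an essential role.
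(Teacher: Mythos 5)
The paper does not actually reprove this proposition: its ``proof'' is the citation \cite[Appendice A]{Andreatta2018leHS}, where the canonical subgroup of a semi-abelian scheme of arbitrary dimension is produced via Fargues' theory of degrees and the Harder--Narasimhan filtration of finite flat group schemes. Your inductive superstructure --- construct $H_1$, pass to $\Sh{A}' = \Sh{A}/H_1$ whose Hodge ideal is the $p$-th power of that of $\Sh{A}$, apply the inductive hypothesis at parameter $r-1$, and pull back --- is exactly the standard induction and is sound; properties (1), (2), (4) would indeed follow formally once the base case is in place.

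The genuine gap is the base case. The Lubin--Tate normal form and the Newton-polygon-of-the-$p$-series argument are specific to \emph{one-dimensional} formal groups: this is how Lubin and Katz treat elliptic curves. Here the relevant formal group $\hat{\Sh{A}}[\mathfrak{P}_i^{\infty}]$ has dimension $f_i = f(\mathfrak{P}_i|p)$ over the base, not dimension one: by the Rapoport condition the $\Sh{O}_{\mathfrak{P}_i}$-action splits its cotangent space into $f_i$ lines indexed by $\Sigma_{\mathfrak{P}_i}$, Frobenius permutes these lines cyclically, and the Hasse invariant is the product of $f_i$ partial Hasse invariants which may have quite different valuations. For $f_i>1$, multiplication by $p$ is a system of $f_i$ power series in $f_i$ variables; there is no single leading coefficient, no one-variable Newton polygon from which to ``isolate the $p^{f_i}$ roots of smallest valuation'', and Weierstrass preparation does not apply. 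Singling out the canonical finite flat subgroup of order $p^{f_i}$ in this setting genuinely requires a different mechanism (Fargues' degree function and HN filtration as in the cited appendix, the vanishing-cycle construction of Abbes--Mokrane and Andreatta--Gasbarri, or the explicit methods of Tian or Goren--Kassaei). So the step ``the closed subscheme cut out by these small roots \dots\ is $H_{1,\mathfrak{P}_i}$'' does not exist as written except when $f_i=1$. Two smaller points: over the non-ordinary locus $\Sh{A}[\mathfrak{P}_i^{\infty}]$ has no connected--\'etale decomposition, so ``the connected part'' must be replaced throughout by the formal completion along the identity section; and in (3) the sequence $\omega_{L_n} \to \omega_{\Sh{A}[p^n]} \to \omega_{H_n} \to 0$ is only right-exact from functoriality --- injectivity on the left needs the co-Lie complex, not the snake lemma.
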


\begin{proof}
\cite[Appendice A]{Andreatta2018leHS}.
\end{proof}

\begin{defn}
For every $r, n$ as above define $\Sh{IG}_{n,r,I} \to \xra{r}$ to be the adic space classifying isomorphisms $\Sh{O}_L/p^n\Sh{O}_L \xrightarrow{\sim} H_n^{\vee}$ of the group scheme $H_n^{\vee} \to \xra{r}$. Define $\Ig{n}{r} \to \Xra{r}$ to be the normalization of $\Xra{r}$ in $\Sh{IG}_{n,r,I}$. 
\end{defn}

\begin{prop}
$\Sh{IG}_{n,r,I} \to \xra{r}$ is an \'{e}tale, Galois morphism with Galois group $(\Sh{O}_L/p^n\Sh{O}_L)^{\times}$. The morphism $\mathfrak{IG}_{n,r,I} \to \Xra{r}$ is finite and is endowed with an action of $(\Sh{O}_L/p^n\Sh{O}_L)^{\times}$ induced by the action on the generic fibre.
\end{prop}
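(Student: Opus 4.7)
The plan is to treat the two parts separately: first proving étaleness and the Galois property on the rigid generic fibre, then using finiteness of normalization to pass to the formal model.

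For the first statement, I would first observe that the adic space $\Sh{IG}_{n,r,I} \to \xra{r}$ is precisely the functor of $\Sh{O}_L$-linear isomorphisms $\Isom_{\Sh{O}_L}(\Sh{O}_L/p^n\Sh{O}_L, H_n^{\vee})$, i.e., the torsor of trivializations of $H_n^{\vee}$ as an $\Sh{O}_L/p^n$-module. By the previous proposition $H_n^{\vee}$ is a finite étale group scheme over $\xra{r}$, and it is étale-locally isomorphic to $\Sh{O}_L/p^n\Sh{O}_L$ (this uses that $H_n^{\vee}$ has order $p^n$ with compatible $\Sh{O}_L$-action, and étale-locally a finite étale $\Sh{O}_L/p^n$-module scheme of the correct order is constant). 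Therefore $\Sh{IG}_{n,r,I}$ is representable by a finite étale cover of $\xra{r}$. The natural action of $(\Sh{O}_L/p^n\Sh{O}_L)^{\times}$ by precomposition on the domain side of the trivialization is free and transitive on geometric fibres, which exhibits $\Sh{IG}_{n,r,I} \to \xra{r}$ as a right torsor, hence a Galois étale covering with Galois group $(\Sh{O}_L/p^n\Sh{O}_L)^{\times}$.

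For the second statement, the key point is that normalization of Noetherian normal formal schemes in a finite extension of their function ring is a finite morphism, provided a Nagata/excellence hypothesis holds. Here $\Xra{r}$ is obtained as an open of an admissible blow-up of the smooth projective scheme $\bar{M}(\mu_N, \mathfrak{c})$ base-changed to $\mathfrak{W}^0_{\alpha, I}$, and so in particular is an excellent normal Noetherian formal scheme. Since $\Sh{IG}_{n,r,I} \to \xra{r}$ is finite étale, its coordinate ring along each affine open is a finite product of finite étale extensions of the corresponding ring of $\xra{r}$, and the normalization $\Ig{n}{r}$ is finite over $\Xra{r}$ by a standard excellence/Nagata argument applied chart by chart.

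Finally, to extend the Galois action to $\Ig{n}{r}$, I would use the universal property of normalization: any automorphism of $\Sh{IG}_{n,r,I}$ over $\xra{r}$ extends uniquely to an automorphism of the normalization of $\Xra{r}$ in $\Sh{IG}_{n,r,I}$, because pulling back via an automorphism of the generic fibre gives another finite morphism whose generic fibre is $\Sh{IG}_{n,r,I}$ with the same inclusion into the function ring, and uniqueness of normalization identifies the two. Applying this to the action of $(\Sh{O}_L/p^n\Sh{O}_L)^{\times}$ gives the desired action on $\Ig{n}{r}$.

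The main obstacle I anticipate is the finiteness of normalization in the formal-scheme setting, which requires checking an excellence/Nagata hypothesis; everything else is a routine application of standard torsor and descent arguments. Once finiteness is established, the Galois property and the extension of the action follow essentially formally.
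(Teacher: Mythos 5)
Your overall strategy --- realize $\Sh{IG}_{n,r,I}$ as the sheaf of $\Sh{O}_L$-linear trivializations of $H_n^{\vee}$, hence as a torsor under $\mathrm{Aut}_{\Sh{O}_L}(\Sh{O}_L/p^n\Sh{O}_L)=(\Sh{O}_L/p^n\Sh{O}_L)^{\times}$; deduce finiteness of $\Ig{n}{r}\to\Xra{r}$ from finiteness of normalization for excellent normal (formal) schemes; and extend the group action by the universal property of normalization --- is exactly the argument of the reference the paper cites for this proposition, so in structure your proof is the intended one. The excellence and normality inputs are available: $\Xra{r}$ is topologically of finite type over the complete Noetherian ring $\Lambda^0_{\alpha,I}$ and its normality is recorded later in the paper as a separate lemma.

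The one step that does not hold as you state it is the parenthetical claim that ``étale-locally a finite étale $\Sh{O}_L/p^n$-module scheme of the correct order is constant.'' Having order $p^{ng}$ together with an $\Sh{O}_L/p^n$-module structure does not force local freeness of rank one: already for $L=\Q$ the group $(\Z/p)^{\oplus 2}$ is a finite étale $\Z/p^2$-module scheme of order $p^2$ that is not a form of $\Z/p^2$. What the torsor argument genuinely needs is that $H_n^{\vee}$ is an \emph{invertible} $\Sh{O}_L/p^n$-module scheme over $\xra{r}$, and this is an input from the canonical subgroup theory rather than a counting consequence: one obtains it, for instance, by induction on $n$ using that $H_n/H_1$ is the canonical subgroup of level $n-1$ of $\Sh{A}/H_1$ together with the rank-one freeness of $H_1$ over $\Sh{O}_L/p$, or by reading it off from the isomorphism $\omega_{H_n}\simeq \omega_{\Sh{A}}/p^n\mathrm{Hdg}^{-\frac{p^n-1}{p-1}}\omega_{\Sh{A}}$, which exhibits $\omega_{H_n}$ as a cyclic $\Sh{O}_L\otimes\Sh{O}$-module. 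Once this cyclicity is in place, the $(\Sh{O}_L/p^n\Sh{O}_L)^{\times}$-action on the Isom-sheaf is visibly simply transitive on geometric fibres and the rest of your argument goes through unchanged.
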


\begin{proof}
\cite[\S3.3.1]{andreatta2016adic}.
\end{proof}

(Note we suppressed the index $\alpha$ in our notation for the partial Igusa tower to avoid clumsiness.)
\section{Splitting of de Rham sheaf}\label{S2}
In the following we put an overline on the names of objects (abelian schemes, sheaves etc.) to denote they are obtained by base change along the closed immersion $i \colon X_{\F{p}} \xhookrightarrow{} \mathfrak{X}$. Denote by $\bar{\pi} \colon \bar{\Sh{A}} \to X_{\F{p}}$  the base change of $\pi$ along $i$. Let $\bar{\omega}_{\Sh{A}} := i^*\omega_{\Sh{A}} = \bar{\omega}_{\Sh{A}}$. The Verschiebung $V \colon \bar{\Sh{A}}^{(p)} \to \bar{\Sh{A}}$ induces a map on the Lie algebra $HW \colon \omega^{\vee}_{\bar{\Sh{A}}^{(p)}} \to {\omega}^{\vee}_{\bar{\Sh{A}}}$, whose determinant is the Hasse invariant $\mathrm{Ha} \in (\Lambda^g \bar{\omega}_{\Sh{A}})^{\otimes (p-1)}$. Let $\underline{\mathrm{Ha}} := \mathrm{Ha} \cdot (\Lambda^g \bar{\omega}_{\Sh{A}})^{\otimes (1-p)}$ be the ideal generated by the values of $\mathrm{Ha}$. This is an invertible ideal with zeroes of order $1$ along each of the prime divisors that appear in $\mathrm{Div}(\underline{\mathrm{Ha}})$ (Theorem \ref{T2101}). Denote by $\bar{\mathrm{H}}_{\Sh{A}}$ the pullback of $\mathrm{H}_{\Sh{A}}$ along $i$. Let $j \colon X_{\F{p}}^{\textrm{ord}} \xhookrightarrow{} X_{\F{p}}$ be the ordinary locus, which is the open subscheme of $X_{\F{p}}$ where $\underline{\mathrm{Ha}} = \Sh{O}_{X_{\F{p}}}$. 

Let $\varphi \colon X_{\F{p}} \to X_{\F{p}}$ be the Frobenius. The Frobenius induces a $\varphi$-linear endomorphism of $\bar{\mathrm{H}}_{\Sh{A}}$.

\begin{prop}
Over the ordinary locus $X^{\textrm{ord}}_{\F{p}}$ we have the unit root splitting which is a canonical splitting $\psi_{\Frob} \colon j^*\bar{\mathrm{H}}_{\Sh{A}} \to j^*\bar{\omega}_{\Sh{A}}$ of the Hodge filtration on $\bar{\mathrm{H}}_{\Sh{A}}$, that respects the Frobenius action. The kernel of $\psi_{\Frob}$ is called the unit root subspace. It is characterized by the property that it is stable under the Frobenius action and Frobenius acts invertibly on it.
\end{prop}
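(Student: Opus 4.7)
The plan is to obtain $\psi_{\Frob}$ from the canonical connected-\'etale decomposition of the $p$-divisible group of $\bar{\Sh{A}}$ over the ordinary locus. Invertibility of the Hasse invariant on $X^{\textrm{ord}}_{\F{p}}$ is equivalent to $\bar{\Sh{A}}$ being fibrewise ordinary, so $j^*\bar{\Sh{A}}[p^{\infty}]$ fits into a canonical short exact sequence
\[
0 \to j^*\bar{\Sh{A}}[p^{\infty}]^{0} \to j^*\bar{\Sh{A}}[p^{\infty}] \to j^*\bar{\Sh{A}}[p^{\infty}]^{\textrm{\'{e}t}} \to 0
\]
whose connected part is multiplicative of height $g$ and whose quotient is \'etale of height $g$. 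Applying the contravariant Dieudonn\'e functor $\mathbb{D}$ (exact on $p$-divisible groups) and invoking the Mazur--Messing / Illusie comparison $j^*\bar{\mathrm{H}}_{\Sh{A}} \simeq \mathbb{D}(j^*\bar{\Sh{A}}[p^{\infty}])$, this yields a canonical direct sum decomposition
\[
j^*\bar{\mathrm{H}}_{\Sh{A}} = \mathbb{D}(j^*\bar{\Sh{A}}[p^{\infty}]^{0}) \oplus \mathbb{D}(j^*\bar{\Sh{A}}[p^{\infty}]^{\textrm{\'{e}t}}).
\]
Since the Hodge filtration corresponds under Mazur--Messing to the image of $V$, and the explicit Dieudonn\'e modules of $\mu_{p^{\infty}}$ and $\Q_p/\Z_p$ show that $V$ is an isomorphism on the multiplicative summand and zero on the \'etale summand, the first summand above is identified with $j^*\bar{\omega}_{\Sh{A}}$. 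I define $\psi_{\Frob}$ to be the projection onto this first factor, and its Frobenius-equivariance is automatic from the functoriality of $\mathbb{D}$, which makes both summands $F$-stable.

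For the characterization of the unit root subspace $U := \mathbb{D}(j^*\bar{\Sh{A}}[p^{\infty}]^{\textrm{\'{e}t}})$, note that $F$ vanishes on the multiplicative summand (as $FV = 0$ in characteristic $p$ and $V$ is surjective there), while $F$ acts as a $\varphi$-linear isomorphism on $U$ (since it arises from an isomorphism on an \'etale group scheme). Consequently, any $F$-stable submodule on which $F$ acts invertibly projects to zero on the multiplicative summand, hence injects into $U$; a rank count using that both have rank $g$ yields equality.

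The main obstacle is handling the cusps, where $\bar{\Sh{A}}$ is only semi-abelian. This is resolved by working with the canonical extension of $\bar{\mathrm{H}}_{\Sh{A}}$: at the cuspidal Tate objects the $p$-divisible group is itself an extension of an \'etale group by a multiplicative torus, so the connected-\'etale decomposition exists automatically, and the Mazur--Messing comparison extends through the log-crystalline formalism, so the construction globalizes on $X^{\textrm{ord}}_{\F{p}}$.
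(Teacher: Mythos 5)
Your argument is correct in substance but takes a genuinely different route from the paper. The paper's proof is entirely computational: on a local chart trivializing $\bar{\mathrm{H}}_{\Sh{A}}$ compatibly with the Hodge filtration it writes $\Frob = \left(\begin{smallmatrix} 0 & C \\ 0 & HW \end{smallmatrix}\right)$ and conjugates by $P = \left(\begin{smallmatrix} \mathrm{Id} & C\cdot HW^{-1} \\ 0 & \mathrm{Id} \end{smallmatrix}\right)$ --- a matrix that exists precisely over the ordinary locus, where $HW$ is invertible --- so that Frobenius becomes block-diagonal; the splitting $\psi_{\Frob}$ and the characterization of its kernel are then read off directly. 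You instead derive the splitting from the connected-\'etale decomposition of $j^*\bar{\Sh{A}}[p^{\infty}]$ together with crystalline Dieudonn\'e theory. Your route is more conceptual: canonicity, Frobenius-equivariance, and (implicitly) $\Sh{O}_L$-linearity all come for free from functoriality, and the characterization of the unit root subspace falls out of the slopes of $\mathbb{D}(\mu_{p^{\infty}})$ and $\mathbb{D}(\Q_p/\Z_p)$ plus your projection-and-rank argument. What it costs is heavier machinery (Dieudonn\'e crystals over a non-perfect base, a log-crystalline extension at the cusps) and one step you elide: applying the exact functor $\mathbb{D}$ to the connected-\'etale sequence yields a short exact sequence $0 \to \mathbb{D}(G^{\textrm{\'et}}) \to \mathbb{D}(G) \to \mathbb{D}(G^{0}) \to 0$, not yet a direct sum; the canonical splitting comes from the additional observation that $\omega_{G^{\textrm{\'et}}} = 0$, so the Hodge filtration $j^*\bar{\omega}_{\Sh{A}} \subset \mathbb{D}(G)$ maps isomorphically onto $\mathbb{D}(G^{0})$ and provides the complement to $\mathbb{D}(G^{\textrm{\'et}})$. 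Finally, note that the explicit matrices $C$ and $HW$ produced by the paper's proof are reused verbatim in the subsequent results of this section (the description of $\bar{\mathrm{H}}'_{\Sh{A}}$ and the functoriality computations), so even with your more conceptual construction one would still want to record that local block form.
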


\begin{proof}
Suppose $\Sp{R} \subset X_{\F{p}}$ is a local chart for which $\bar{\omega}_{\Sh{A}}, \bar{\mathrm{H}}_{\Sh{A}}$ are trivial and choose a basis compatible with the Hodge filtration. With respect to such a basis we can write the matrix of the Frobenius action on $\bar{\mathrm{H}}_{\Sh{A}}$ in $g \times g$ blocks as follows.
\[
\textrm{Frob} = 
\begin{pmatrix}
0 && C \\
0 && HW.
\end{pmatrix}
\]
Here we abuse notation to write $HW$ for the matrix corresponding to the $\varphi$-linear map induced on $\bar{\omega}_{\Sh{A}}^{\vee}$ by $\mathrm{F}_{\textrm{abs}} \colon \bar{\Sh{A}} \to \bar{\Sh{A}}$. For the base change of $j_*j^*\bar{\mathrm{H}}_{\Sh{A}}(R) = \bar{\mathrm{H}}_{\Sh{A}}(R)[1/\Ha]$ given by the matrix 
\[
P = 
\begin{pmatrix}
\mathrm{Id} && C\cdot HW^{-1} \\
0 && \mathrm{Id}
\end{pmatrix},
\]
the matrix of Frobenius becomes
\[
P^{-1} \textrm{Frob} P = 
\begin{pmatrix}
0 && 0 \\
0 && HW
\end{pmatrix}
\]
Note that $P$ is only defined over the ordinary locus. Hence we have a splitting $\psi_{\textrm{Frob}} \colon j^*\bar{\mathrm{H}}_{\Sh{A}} \to j^*\bar{\omega}_{\Sh{A}}$ of the Hodge filtration over the ordinary locus that respects the Frobenius action. The kernel of this splitting is \emph{uniquely} characterized by the fact that it is stable under the Frobenius action and Frobenius acts invertibly on it.
\end{proof}

Consider the map $\psi \colon \bar{\mathrm{H}}_{\Sh{A}} \to j_*j^*\bar{\mathrm{H}}_{\Sh{A}} \xrightarrow{\psi_{\textrm{Frob}}} j_*j^*\bar{\omega}_{\Sh{A}}$. Then let $\bar{\mathrm{H}}'_{\Sh{A}} := \psi^{-1}\bar{\omega}_{\Sh{A}}$. The inclusion $\bar{\omega}_{\Sh{A}} \to \bar{\mathrm{H}}'_{\Sh{A}}$ admits a retraction given by the map $\psi$. As a subsheaf of $\bar{\mathrm{H}}_{\Sh{A}}$ containing $\bar{\omega}_{\Sh{A}}$, $\bar{\mathrm{H}}'_{\Sh{A}}$ is equipped with the induced Hodge filtration. In the following lemma we describe the 1st graded piece of this Hodge filtration.

\begin{lemma}\label{L601}
The sheaf $\bar{\mathrm{H}}'_{\Sh{A}}$ sits in the following split exact sequence.
\begin{equation}\label{eq:2}
0 \to \bar{\omega}_{\Sh{A}} \to \bar{\mathrm{H}}'_{\Sh{A}} \to HW(\omega^{\vee}_{\bar{\Sh{A}}^{(p)}}) \to 0.
\end{equation}
\end{lemma}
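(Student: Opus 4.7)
The plan is to obtain splitness of the sequence essentially for free from the map $\psi$, and then to identify the quotient $\bar{\mathrm{H}}'_{\Sh{A}}/\bar{\omega}_{\Sh{A}}$ locally using the matrix form of Frobenius from the preceding proposition. For splitness, the unit root splitting $\psi_{\mathrm{Frob}}$ is a projection onto $j^{*}\bar{\omega}_{\Sh{A}}$ and hence restricts to the identity there; so the extended map $\psi \colon \bar{\mathrm{H}}_{\Sh{A}} \to j_{*}j^{*}\bar{\omega}_{\Sh{A}}$ restricts to the identity on $\bar{\omega}_{\Sh{A}}$. Consequently $\psi|_{\bar{\mathrm{H}}'_{\Sh{A}}}$ supplies a canonical retraction of the inclusion $\bar{\omega}_{\Sh{A}} \hookrightarrow \bar{\mathrm{H}}'_{\Sh{A}}$, and any exact sequence of the required shape splits automatically. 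It therefore suffices to identify the image $Q$ of $\bar{\mathrm{H}}'_{\Sh{A}}$ in $\bar{\omega}_{\Sh{A}}^{\vee}$ under the Hodge projection with $HW(\omega^{\vee}_{\bar{\Sh{A}}^{(p)}})$.

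For this identification, I would pick a local trivialising basis $\{e_{1}, e_{2}\}$ of $\bar{\mathrm{H}}_{\Sh{A}}$ as an $\Sh{O}_{L} \otimes \Sh{O}_{X_{\F{p}}}$-module adapted to the Hodge filtration. By the preceding proposition $\mathrm{Frob}(e_{1}) = 0$ and $\mathrm{Frob}(e_{2}) = C e_{1} + HW \cdot e_{2}$. The $\varphi$-linear Frobenius kills $\bar{\omega}_{\Sh{A}}$ (since the pullback of a differential form along the relative Frobenius is zero in characteristic $p$) and thus linearises to a map $F^{\mathrm{lin}} \colon \omega^{\vee}_{\bar{\Sh{A}}^{(p)}} \to \bar{\mathrm{H}}_{\Sh{A}}$ sending the generator of $\omega^{\vee}_{\bar{\Sh{A}}^{(p)}} = \varphi^{*}\bar{\omega}_{\Sh{A}}^{\vee}$ to $C e_{1} + HW e_{2}$; its composition with the Hodge projection onto $\bar{\omega}_{\Sh{A}}^{\vee}$ recovers $HW$. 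Using $\psi(e_{2}) = -HW^{-1} C e_{1}$ over the ordinary locus, one immediately verifies $\psi(C e_{1} + HW e_{2}) = 0$, so $F^{\mathrm{lin}}(\omega^{\vee}_{\bar{\Sh{A}}^{(p)}}) \subseteq \ker \psi \cap \bar{\mathrm{H}}_{\Sh{A}} \subseteq \bar{\mathrm{H}}'_{\Sh{A}}$, and projecting yields the inclusion $HW(\omega^{\vee}_{\bar{\Sh{A}}^{(p)}}) \subseteq Q$.

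The main obstacle will be the reverse inclusion. From $\psi(a e_{1} + b e_{2}) = (a - b \cdot HW^{-1}C) e_{1}$ over the ordinary locus, an element $a e_{1} + b e_{2} \in \bar{\mathrm{H}}_{\Sh{A}}$ lies in $\bar{\mathrm{H}}'_{\Sh{A}}$ precisely when $bC \in HW \cdot (\Sh{O}_{L} \otimes \Sh{O}_{X_{\F{p}}})$, so $Q$ is described locally by the ideal quotient $(HW : C)$, and the problem reduces to the ideal equality $(HW : C) = (HW)$, i.e.\ to showing that $C$ is a non-zero-divisor modulo $HW$. I would verify this by first noting that $C \bmod HW$ is basis-independent (under the change of lift $e_{2} \mapsto e_{2} + \alpha e_{1}$ one computes $C \mapsto C - HW \alpha$), so it is a well-defined global invariant of the Frobenius. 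Theorem \ref{T2101} tells us that $HW$ cuts out a reduced divisor, and so $(HW : C) = (HW)$ will follow once $C \bmod HW$ is shown not to vanish at any generic point of the non-ordinary divisor. This non-vanishing encodes the fact that Frobenius does not stabilise the Hodge filtration away from the ordinary locus, which I would extract from a Kodaira-Spencer-type computation tying $C \bmod HW$ to the non-degeneracy of the Kodaira-Spencer map of the universal abelian scheme.
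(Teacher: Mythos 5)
Your overall architecture matches the paper: take the retraction $\psi$ for free, pass to a local trivialization, write Frobenius as an upper-triangular block $\begin{pmatrix}0 & C \\ 0 & HW\end{pmatrix}$, and reduce to identifying the image of $\bar{\mathrm{H}}'_{\Sh{A}}$ in $\bar{\omega}^{\vee}_{\Sh{A}}$ with $HW(\omega^{\vee}_{\bar{\Sh{A}}^{(p)}})$. Your reduction of the reverse inclusion to the assertion that $C$ is a non-zero-divisor modulo $HW$ is essentially the same computation the paper performs with the adjugate $HW^{\iota}$, and your observation that $C \bmod HW$ is a basis-independent invariant is a correct (and nice) sanity check. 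One small caveat: since $\mathrm{Frob}$ is $\varphi$-semilinear, writing $C, HW \in \Sh{O}_L \otimes R$ glosses over the fact that Frobenius permutes the $\Sigma$-components, which is why the paper works with $g\times g$ matrices over $R$ rather than $\Sh{O}_L\otimes R$-scalars; one has to keep track of the twist $\omega^{\vee}_{\bar{\Sh{A}}^{(p)}} = \varphi^{*}\bar{\omega}^{\vee}_{\Sh{A}}$ when matching $\sigma$-pieces.

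The genuine gap is in the final step, which you explicitly leave as a promissory note. You propose to establish the non-vanishing of $C$ at the generic points of $\mathrm{div}(HW)$ by a Kodaira–Spencer computation "tying $C \bmod HW$ to the non-degeneracy of the Kodaira–Spencer map." This is the wrong tool. Kodaira–Spencer is about the Gauss–Manin connection and does not directly constrain the characteristic-$p$ Frobenius matrix; there is no obvious identity relating $C$ to the Kodaira–Spencer class, and you don't supply one. The fact that closes the argument in the paper is much more elementary and intrinsic: by Cartier descent/the conjugate filtration, the absolute Frobenius on $H^{1}_{\mathrm{dR}}$ of an abelian scheme in characteristic $p$ has kernel exactly $\bar{\omega}_{\Sh{A}}$ (equivalently, image of full rank $g$) at \emph{every} point, not just ordinary ones. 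At a generic point $k(y)$ of a component of $\mathrm{div}(\underline{\mathrm{Ha}})$, your putative element $HW^{\iota}(v)$ is killed by both $C$ and $HW$, hence $(0, HW^{\iota}(v))^{t}$ lies in $\ker(\mathrm{Frob})_{k(y)} = \bar{\omega}_{\Sh{A},k(y)}$, forcing $HW^{\iota}(v) = 0$ over $k(y)$; Theorem \ref{T2101} (simple zeroes of $\underline{\mathrm{Ha}}$) then lets you divide by $\mathrm{Ha}$ and conclude $v \in HW(R^{g})$. You should replace the Kodaira–Spencer speculation with this pointwise rank argument.
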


\begin{proof}
Choose a local chart $\Sp{R}$ as above. Suppose $e_1, \dots, e_g, f_1, \dots, f_g$ form an $R$-basis of $\bar{\mathrm{H}}_{\Sh{A}}$ such that $e_1, \dots, e_g$ span $\bar{\omega}_{\Sh{A}}$ and the images of $f_1, \dots, f_g$ span $\bar{\omega}^{\vee}_{\Sh{A}}$. Assume that the matrix of Frobenius with respect to this basis is given as 
\[
\textrm{Frob} = 
\begin{pmatrix}
0 && C \\
0 && HW.
\end{pmatrix}
\]
Then the matrix of $\psi_{\text{Frob}}$ with respect to this basis is given by $\psi_{\text{Frob}} \colon R[1/\mathrm{Ha}]^{2g} \to R[1/\mathrm{Ha}]^g$
\[
\psi_{\text{Frob}} = \begin{pmatrix}
    1 & -C\cdot HW^{-1}
\end{pmatrix}.
\]
Thus $\bar{\mathrm{H}}'_{\Sh{A}}$ is given by all elements $(x_1, \dots, x_{2g})^t$ such that $(\begin{smallmatrix}
    1 & -C\cdot HW^{-1}
\end{smallmatrix})(x_1, \dots, x_{2g})^t \in R^g$. In particular, we want to solve for $v = (x_{g+1}, \dots, x_{2g}) \in R^g$, such that $C\cdot HW^{-1}(v) \in R^g$. Letting $HW^{\iota}$ be the matrix such that $HW \cdot HW^{\iota} = HW^{\iota} \cdot HW = \mathrm{Ha}$, we then have $C\cdot HW^{-1}(v) \in R^g \iff C \cdot HW^{\iota} \in \underline{\mathrm{Ha}}\cdot R^g$. Therefore, $HW^{\iota}(v)$ lies in the kernel of $C \textrm{ mod }\underline{\mathrm{Ha}}$ as well as in the kernel of $HW \textrm{ mod }\underline{\mathrm{Ha}}$. Thus in particular, denoting by $\Sp{k(y)} \to Y_{\F{p}}$ a generic point of a prime divisor of $\underline{\mathrm{Ha}}$, 
\[
\begin{pmatrix}
    0 & C \\
    0 & HW
\end{pmatrix}\begin{pmatrix}
    0 \\
    HW^{\iota}(v)
\end{pmatrix} \otimes_R k(y) = 0.
\]
But the image of Frobenius over $k(y)$ has rank $g$, and hence its kernel is precisely $\omega_{\bar{A}_{k(y)}}$. Thus $HW^{\iota}(v) \in \underline{\mathrm{Ha}}\cdot R^g$, implying $v \in HW(R^g)$. This proves the lemma.
\end{proof}

\begin{cor}
The sheaf $\bar{\mathrm{H}}'_{\Sh{A}}$ is stable under the $\Sh{O}_L$ action. It is a locally free $\Sh{O}_L \otimes \Sh{O}_{X_{\F{p}}}$-module of rank 2 and $\mathrm{H}'_{\Sh{A}} = \bar{\omega}_{\Sh{A}} \oplus \Frob(\bar{\mathrm{H}}_{\Sh{A}})$.
\end{cor}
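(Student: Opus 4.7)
The plan is to establish three assertions in order: $\Sh{O}_L$-stability of $\bar{\mathrm{H}}'_{\Sh{A}}$, the direct sum decomposition $\bar{\mathrm{H}}'_{\Sh{A}} = \bar{\omega}_{\Sh{A}} \oplus \Frob(\bar{\mathrm{H}}_{\Sh{A}})$, and local freeness of rank $2$ as an $\Sh{O}_L \otimes \Sh{O}_{X_{\F{p}}}$-module. Stability will follow from the canonical (hence $\Sh{O}_L$-equivariant) characterization of the unit root splitting; the decomposition from an explicit local matrix computation building on Lemma \ref{L601}; and local freeness is then a direct consequence.

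For stability, the absolute Frobenius on $\bar{\Sh{A}}$ commutes with any endomorphism of $\bar{\Sh{A}}$ as an $\F{p}$-scheme, in particular with the $\Sh{O}_L$-action. Hence Frobenius on $\bar{\mathrm{H}}_{\Sh{A}}$ is $\Sh{O}_L$-linear (and $\varphi$-semilinear in $\Sh{O}_{X_{\F{p}}}$). Over the ordinary locus, the unit root subspace is uniquely characterized as the Frobenius-stable complement to $j^*\bar{\omega}_{\Sh{A}}$ on which Frobenius acts invertibly; since $\Sh{O}_L$ preserves $j^*\bar{\omega}_{\Sh{A}}$ and commutes with Frobenius, uniqueness forces the unit root subspace to be $\Sh{O}_L$-stable. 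Hence $\psi_{\Frob}$, and its extension $\psi \colon \bar{\mathrm{H}}_{\Sh{A}} \to j_*j^*\bar{\omega}_{\Sh{A}}$, are $\Sh{O}_L$-linear, and $\bar{\mathrm{H}}'_{\Sh{A}} = \psi^{-1}(\bar{\omega}_{\Sh{A}})$ inherits $\Sh{O}_L$-stability.

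For the decomposition I work in the local chart of Lemma \ref{L601}. The Frobenius image $\Frob(\bar{\mathrm{H}}_{\Sh{A}})$ is the $R$-submodule generated by the columns $(Ce_i, HWe_i)^t$ of the matrix of $\Frob$. A direct computation using $\psi_{\Frob} = (\mathrm{Id} \mid -CHW^{-1})$ yields $\psi_{\Frob}(Ce_i, HWe_i)^t = Ce_i - CHW^{-1}(HWe_i) = 0$, so $\Frob(\bar{\mathrm{H}}_{\Sh{A}}) \subseteq \bar{\mathrm{H}}'_{\Sh{A}}$. By the proof of Lemma \ref{L601}, every element of $\bar{\mathrm{H}}'_{\Sh{A}}$ has the form $(a, HWc)$ with $a, c \in R^g$, and the identity $(a, HWc) = (a - Cc, 0) + (Cc, HWc)$ writes it as a sum of an element of $\bar{\omega}_{\Sh{A}}$ and one in $\Frob(\bar{\mathrm{H}}_{\Sh{A}})$. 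For the intersection, if $(Cc, HWc) \in \bar{\omega}_{\Sh{A}}$ then $HWc = 0$; since $\det HW = \mathrm{Ha}$ is a non-zerodivisor by Theorem \ref{T2101}, $HW$ is injective, so $c = 0$ and the intersection is trivial.

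Local freeness is now immediate: the first summand $\bar{\omega}_{\Sh{A}}$ is an invertible $\Sh{O}_L \otimes \Sh{O}_{X_{\F{p}}}$-module since $X_{\F{p}}$ coincides with the Rapoport locus under the unramifiedness hypothesis on $p$, and the second summand is identified with $HW(\omega^{\vee}_{\bar{\Sh{A}}^{(p)}})$, the image of an injective $\Sh{O}_L$-linear map of invertible modules (injectivity again from Theorem \ref{T2101}), hence itself invertible. The main delicacy in the argument is that the matrix calculation in the second step only manifestly works inside a local trivializing chart and must be extended globally, including across the supersingular locus; the key input that makes this extension succeed is precisely the non-zerodivisor property of $\mathrm{Ha}$ supplied by Theorem \ref{T2101}, which controls the behavior of $HW$ everywhere rather than only over the ordinary locus.
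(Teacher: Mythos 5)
Your proof is correct and follows essentially the same route as the paper: you use Lemma \ref{L601} and the fact that $\Frob(\bar{\mathrm{H}}_{\Sh{A}})$ is killed by the unit root splitting and surjects onto $HW(\omega^{\vee}_{\bar{\Sh{A}}^{(p)}})$, with the non-zero-divisor property of $\mathrm{Ha}$ supplying both directness of the sum and invertibility of the second summand. The only cosmetic difference is that you deduce $\Sh{O}_L$-stability from the uniqueness of the unit root subspace, where the paper invokes the commutation of the relative Frobenius with the $\Sh{O}_L$-action directly; these are the same observation.
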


\begin{proof}
The subsheaf $\Frob(\bar{\mathrm{H}}_{\Sh{A}})$ is the image of the map $F^*_{\Sh{A}} \colon \mathrm{H}_{\bar{\Sh{A}}^{(p)}} \to \bar{\mathrm{H}}_{\Sh{A}}$ induced by the relative Frobenius. It is killed by the unit root splitting and maps surjectively onto $HW(\omega^{\vee}_{\bar{\Sh{A}}^{(p)}})$. Hence $\bar{\mathrm{H}}'_{\Sh{A}} = \bar{\omega}_{\Sh{A}} \oplus \Frob(\bar{\mathrm{H}}_{\Sh{A}})$. Since the relative Frobenius commutes with the $\Sh{O}_L$-action, $HW(\omega^{\vee}_{\bar{\Sh{A}}^{(p)}})$ is stable under the $\Sh{O}_L$-action and hence so is $\bar{\mathrm{H}}'_{\Sh{A}}$. Moreover $HW \colon \omega^{\vee}_{\bar{\Sh{A}}^{(p)}} \to \bar{\omega}_{\Sh{A}}$ is an $\Sh{O}_L \otimes \Sh{O}_{X_{\F{p}}}$-linear map of invertible $\Sh{O}_L \otimes \Sh{O}_{X_{\F{p}}}$-modules such that $\Nm_{\Sh{O}_L \otimes \Sh{O}_{X_{\F{p}}}/\Sh{O}_{X_{\F{p}}}}HW = \det{HW} = \mathrm{Ha}$ is a non-zero divisor. Hence $HW(\omega^{\vee}_{\bar{\Sh{A}}^{(p)}})$ is an invertible $\Sh{O}_L \otimes \Sh{O}_{X_{\F{p}}}$-module.
\end{proof}

In the following we will construct a locally free $\Sh{O}_L \otimes \Sh{O}_{\Xra{r}}$ subsheaf $\mathrm{H}'_{\Sh{A}} \subset \mathrm{H}_{\Sh{A}}$ of rank 2, together with the induced Hodge filtration such that its reduction modulo a small power of $p$ will give us the split exact sequence (\ref{eq:2}).

Let $i \colon \X/(p) \xhookrightarrow{} \X$ be the base change of $X_{\F{p}} \xhookrightarrow{} \mathfrak{X}$ to $\X$. Let $i_0 \colon \Xra{r}/(p\mathrm{Hdg}^{-1}) \xhookrightarrow{} \Xra{r}$ be the closed subscheme defined by the ideal $\frac{p}{\mathrm{Hdg}}$. Thus we have a commutative diagram as follows.
\[
\begin{tikzcd}
\Xra{r}/(p\mathrm{Hdg}^{-1}) \arrow[d, "q"] \arrow[r, "i_0"] & \Xra{r} \arrow[d, "g_r"] \\
\X/(p) \arrow[r, "i"]   & \X                   
\end{tikzcd}
\]
Let $\bar{\omega}^{\vee}_{\Sh{A},0} := HW(\omega_{\bar{\Sh{A}}^{(p)}})$ where we  now denote by $\bar{\Sh{A}}$ the pullback of $\Sh{A}$ along $i \colon \X/(p) \xhookrightarrow{} \X$. Let  $\tilde{\omega}^{\vee}_{\Sh{A}} := (i^{\sharp})^{-1}(i_*\bar{\omega}^{\vee}_{\Sh{A},0})$ where $i^{\sharp} \colon \omega^{\vee}_{\Sh{A}} \to i_*i^*\omega^{\vee}_{\Sh{A}}$ is the unit of the adjunction. Note that $\tilde{\omega}^{\vee}_{\Sh{A}}$ is stable under the $\Sh{O}_L$-action.

\begin{lemma}\label{L602}
$\omega^{\vee}_{\Sh{A},0} := \im{(g_r^*{\tilde{\omega}^{\vee}_{\Sh{A}}} \to g_r^*\omega^{\vee}_{\Sh{A}})}$ is a locally free $\Sh{O}_L \otimes \Sh{O}_{\Xra{r}}$-module of rank $1$.
\end{lemma}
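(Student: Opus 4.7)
The plan is to reduce to an explicit local computation that realizes $\omega^{\vee}_{\Sh{A},0}$ as the product of a specific ideal of $\Sh{O}_L \otimes \Sh{O}_{\Xra{r}}$ with the invertible module $g_r^*\omega^{\vee}_{\Sh{A}}$, and then to exploit the blow-up structure of $\Xra{r}$ to turn that ideal into a principal ideal generated by a non-zero divisor.

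First I would choose a small affine open $\Sp R \subset \X$ on which both $\omega^{\vee}_{\Sh{A}}$ and $\omega^{\vee}_{\bar{\Sh{A}}^{(p)}}$ admit generators $f$ and $f'$ as free rank-one $\Sh{O}_L \otimes R$-modules (resp. $\Sh{O}_L \otimes R/p$-modules). Writing $HW(f') = \bar{h}\,\bar{f}$ and fixing any lift $\widetilde{HW} \in \Sh{O}_L \otimes R$ of $\bar{h}$, the norm of $\widetilde{HW}$ from $\Sh{O}_L \otimes R$ to $R$ is a lift of the Hasse invariant and therefore agrees with $\mathrm{Hdg}$ up to a unit in $R$. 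Unwinding the definition $\tilde{\omega}^{\vee}_{\Sh{A}} = (i^{\sharp})^{-1}(i_*\bar{\omega}^{\vee}_{\Sh{A},0})$ immediately yields the local description
\[
\tilde{\omega}^{\vee}_{\Sh{A}}|_{\Sp R} \;=\; (\widetilde{HW},\, p) \cdot \omega^{\vee}_{\Sh{A}}|_{\Sp R},
\]
so that after pullback by $g_r$,
\[
\omega^{\vee}_{\Sh{A},0}|_{g_r^{-1}(\Sp R)} \;=\; (\widetilde{HW},\, p) \cdot g_r^*\omega^{\vee}_{\Sh{A}}|_{g_r^{-1}(\Sp R)}
\]
as $\Sh{O}_L \otimes \Sh{O}_{\Xra{r}}$-submodule of the invertible module $g_r^*\omega^{\vee}_{\Sh{A}}$.

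The heart of the argument is to show that on $\Xra{r}$ one has $(\widetilde{HW},\,p) = (\widetilde{HW})$ as ideals of $\Sh{O}_L \otimes \Sh{O}_{\Xra{r}}$. Two observations suffice. By construction of the blow-up, $\alpha \in (\mathrm{Hdg}^{p^{r+1}})$ in $\Sh{O}_{\Xra{r}}$; and by the very definition of $\Lambda^0_{\alpha,I}$ with $I = [p^a, p^b]$, $a \geq 0$, we have $p \in (\alpha^{p^a})$ on the base. Combining these gives $p \in (\mathrm{Hdg}^{p^{r+a+1}}) \subseteq (\mathrm{Hdg})$ in $\Sh{O}_{\Xra{r}}$. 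Next, using the splitting $\Sh{O}_L \otimes \Sh{O}_{\Xra{r}} \simeq \prod_{\sigma \in \Sigma}\Sh{O}_{\Xra{r}}$ and writing $\widetilde{HW} = (\tilde{h}_\sigma)_\sigma$, the identity $\mathrm{Hdg} = \prod_\sigma \tilde{h}_\sigma$ (up to a unit) shows $\mathrm{Hdg} \in (\widetilde{HW})$ inside the product ring. Chaining the two, $p \in (\mathrm{Hdg}) \subseteq (\widetilde{HW})$, as required.

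At this point $\omega^{\vee}_{\Sh{A},0}|_{g_r^{-1}(\Sp R)} = \widetilde{HW}\cdot g_r^*\omega^{\vee}_{\Sh{A}}|_{g_r^{-1}(\Sp R)}$, and I would finish by noting that $\widetilde{HW}$ is a non-zero divisor in $\Sh{O}_L \otimes \Sh{O}_{\Xra{r}}$: each component $\tilde{h}_\sigma$ divides $\mathrm{Hdg}$, which generates an invertible ideal on the blow-up and is therefore a non-zero divisor in $\Sh{O}_{\Xra{r}}$. Multiplication by $\widetilde{HW}$ on the rank-one free $\Sh{O}_L \otimes \Sh{O}_{\Xra{r}}$-module $g_r^*\omega^{\vee}_{\Sh{A}}$ is thus injective with image exactly $\omega^{\vee}_{\Sh{A},0}$, so $\omega^{\vee}_{\Sh{A},0}$ is locally free of rank one. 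The main obstacle is the bookkeeping in the previous paragraph: pinning down the precise constraints on $r$ and $I$ under which $p \in (\widetilde{HW})$ holds after pullback to $\Xra{r}$; everything else is formal manipulation with invertible modules and non-zero divisors.
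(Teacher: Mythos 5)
Correct; your argument is essentially the paper's proof, recast in terms of the ideal sheaf $(\widetilde{HW},p)$ rather than explicit generators $w_i = \widetilde{HW}(v_i)$, and the key arithmetic $p \in (\alpha^{p^a}) \subseteq (\mathrm{Hdg}^{p^{a+r+1}}) \subseteq (\tilde h_\sigma)$ is exactly how the paper justifies $p/\mathrm{Hdg} \in \Sh{O}_{\Xra{r}}$. One small imprecision: before the blow-up, $\det\widetilde{HW}$ and $\mathrm{Hdg}$ are two lifts of $\mathrm{Ha}$ and hence differ by a multiple of $p$, not necessarily by a unit of $R$ — the identification up to units only becomes valid after pullback to $\Xra{r}$, where $p/\mathrm{Hdg}$ is topologically nilpotent; this is harmless, and in fact the paper sidesteps the issue by simply declaring $\mathrm{Hdg} := \det\widetilde{HW}$ in this local chart.
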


\begin{proof}
Choose a local chart $\Spf{R} = U \subset \X$, such that $\omega^{\vee}_{\Sh{A}}$ is free as an $\Sh{O}_L \otimes R$-module over $\Spf{R}$, and $\underline{\mathrm{Ha}}$ is free over $\Sp{R/(p)}$. Let $v$ be an $\Sh{O}_L \otimes R$ basis of ${\omega^{\vee}_{\Sh{A}}}_{|U}$ and let $(\Sh{O}_L \otimes R)\cdot v =  \oplus_{i=1}^g Rv_i$ be the decomposition induced by the splitting of $\Sh{O}_L$ in $R$. Let $\bar{v}$ be the image of $v$ in $\bar{\omega}^{\vee}_{\Sh{A}} := i^*{\omega}^{\vee}_{\Sh{A}}$. Let $\bar{w} = HW(\bar{v})$ and pick a lift $w$ of $\bar{w}$. Let $w = (w_i)_i$ be its components. Then ${\tilde{\omega}^{\vee}_{\Sh{A}}} = \sum_{i = 1}^g \Sh{O}_Uw_i + p\omega^{\vee}_{\Sh{A}}$. Consider the $\Sh{O}_L$-linear map $\widetilde{HW}$ which sends $v \mapsto w$ and which reduces to $HW$ mod $p$. Then $\det{\widetilde{HW}}$ is a lift of $\mathrm{Ha} = \det{HW}$. Keeping with previous notation, we will call this lift $\mathrm{Hdg}$. Consider now $V = g_r^{-1}(U) = \Spf{R\langle \frac{\alpha}{\mathrm{Hdg}^{p^{r+1}}}}\rangle$. Using $\frac{p}{\mathrm{Hdg}} \in \Sh{O}_{\Xr{r}}$, we see that over $\Spf{R\langle \frac{\alpha}{\mathrm{Hdg}^{p^{r+1}}}\rangle}$, $p\cdot g_r^*\omega^{\vee}_{\Sh{A}} \subset \sum_{i=1}^n \Sh{O}_{V} g_r^*(w_i)$ as submodules of $g_r^*\omega^{\vee}_{\Sh{A}}$. Thus $\omega^{\vee}_{\Sh{A},0} = \sum_{i=1}^n \Sh{O}_{V} g_r^*(w_i)$. The sum is direct because $\mathrm{Hdg}$ is a non-zero divisor in $\Sh{O}_{\Xr{r}}$. It is clearly stable under the $\Sh{O}_L$-action. Since locally $HW$ can be seen as a non-zero divisor in $\Sh{O}_L \otimes \Sh{O}_{\X/(p)}$, the same is true of the lift $\widetilde{HW}$, and since $\omega^{\vee}_{\Sh{A},0} = \widetilde{HW}\cdot\omega^{\vee}_{\Sh{A}}$, the lemma follows.
\end{proof}

Following the proof of Lemma \ref{L602}, let $\widetilde{HW}$ be the $\Sh{O}_L \otimes \Sh{O}_{\Xra{r}}$-ideal sheaf defined by $\omega^{\vee}_{\Sh{A},0} = \widetilde{HW}\cdot \omega^{\vee}_{\Sh{A}}$.

\begin{defn}\label{D601}
Define $\mathrm{H}'_{\Sh{A}}$ to be the inverse image of $\omega^{\vee}_{\Sh{A},0}$ in $\mathrm{H}_{\Sh{A}}$ under the projection coming from the Hodge filtration.
\end{defn}

We have the following commutative diagram of sheaves over $\Xra{r}$.
\[
\begin{tikzcd}[equals/.style = {draw = none, "=" description, sloped}]
0 \arrow[r] & \omega_{\mathcal{A}} \arrow[r] \arrow[d, equals] & \mathrm{H}'_{\mathcal{A}} \arrow[d] \arrow[r] & \omega^{\vee}_{\Sh{A},0} \arrow[d] \arrow[r] & 0 \\
0 \arrow[r] & \omega_{\mathcal{A}} \arrow[r]           & \mathrm{H}_{\Sh{A}} \arrow[r] & \omega^{\vee}_{\Sh{A}} \arrow[r]              & 0
\end{tikzcd}
\]

\begin{prop}\label{P601}
There is an isomorphism of $\Sh{O}_L \otimes \Sh{O}_{\Xra{r}/(p\mathrm{Hdg}^{-1})}$-modules $i_0^*\omega^{\vee}_{\Sh{A},0} \xrightarrow{\sim} q^*\bar{\omega}^{\vee}_{\Sh{A},0}$, that commutes with the induced maps to $\bar{\omega}^{\vee}_{\Sh{A}}$. (We abuse notation to denote $q^*\bar{\omega}^{\vee}_{\Sh{A}}$ by $\bar{\omega}^{\vee}_{\Sh{A}}$.)
\[
\begin{tikzcd}[equals/.style = {draw = none, "=" description, sloped}]
i_0^*\omega^{\vee}_{\Sh{A},0} \arrow[d] \arrow[r, "\sim"] & q^*\bar{\omega}^{\vee}_{\Sh{A},0} \arrow[d] \\
\bar{\omega}^{\vee}_{\Sh{A}} \arrow[r, equals]             & \bar{\omega}^{\vee}_{\Sh{A}}              
\end{tikzcd}
\]
\end{prop}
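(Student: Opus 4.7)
The plan is to verify the isomorphism locally on an affine chart, exploiting the explicit description of $\omega^{\vee}_{\Sh{A},0}$ given in the proof of Lemma \ref{L602}. The key preliminary observation I would establish is that $p$ vanishes in $\Sh{O}_{\Xra{r}/(p\mathrm{Hdg}^{-1})}$: since $p/\mathrm{Hdg}$ is an element of $\Sh{O}_{\Xra{r}}$ by construction of the blow-up, the factorization $p = (p/\mathrm{Hdg})\cdot\mathrm{Hdg}$ places $p$ inside the ideal $(p/\mathrm{Hdg})$. This is precisely what ensures that $i_0$ factors through $q$, and more importantly it means that any lift of $\bar w$ modulo $p$ automatically descends to $q^*\bar w$ after applying $i_0^*$ to its pullback.

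Working over a local chart $\Spf R \subset \X$ as in the proof of Lemma \ref{L602}, fix an $\Sh{O}_L\otimes R$-basis $v$ of $\omega^{\vee}_{\Sh{A}}$, set $\bar v = i^*v$, $\bar w = HW(\bar v)$, and let $w \in \omega^{\vee}_{\Sh{A}}$ be a lift of $\bar w$. Lemma \ref{L602} gives $\omega^{\vee}_{\Sh{A},0} = \widetilde{HW}\cdot g_r^*\omega^{\vee}_{\Sh{A}}$ over $V = g_r^{-1}(\Spf R)$, generated as an $\Sh{O}_L\otimes\Sh{O}_V$-module by $g_r^*w$. Applying $i_0^*$ and using the base-change identity $i_0^*g_r^* \cong q^*i^*$, the image of $i_0^*\omega^{\vee}_{\Sh{A},0}$ inside $q^*\bar\omega^{\vee}_{\Sh{A}}$ is the $\Sh{O}_L\otimes\Sh{O}_{V/(p/\mathrm{Hdg})}$-submodule generated by $q^*\bar w = q^*HW(\bar v)$. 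Separately, $q^*\bar\omega^{\vee}_{\Sh{A},0}$, being the pullback of the image of $HW$, maps into $q^*\bar\omega^{\vee}_{\Sh{A}}$ as the submodule generated by the same element $q^*HW(\bar v)$.

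To upgrade this coincidence of generators to a canonical isomorphism, I would argue that both $i_0^*\omega^{\vee}_{\Sh{A},0}$ and $q^*\bar\omega^{\vee}_{\Sh{A},0}$ are locally free of rank one over $\Sh{O}_L\otimes\Sh{O}_{\Xra{r}/(p/\mathrm{Hdg})}$: the first from applying $i_0^*$ to the free rank-one sheaf of Lemma \ref{L602}, the second by repeating the argument of Lemma \ref{L602} on $\X/(p)$ and then pulling back along $q$, using that $\mathrm{Hdg}$ generates an invertible ideal on the blow-up $\Xra{r}$. Since both admit surjections onto the common rank-one image generated by $q^*HW(\bar v)$, each surjection is an isomorphism, and composing yields the desired canonical isomorphism, which by construction commutes with the projections to $\bar\omega^{\vee}_{\Sh{A}}$. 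The local identifications glue globally because both sides coincide intrinsically as the submodule of $q^*\bar\omega^{\vee}_{\Sh{A}}$ generated by the image of the Hasse--Witt map.

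The main technical subtlety I would monitor is local freeness of $q^*\bar\omega^{\vee}_{\Sh{A},0}$: over $\X/(p)$ itself, $\bar\omega^{\vee}_{\Sh{A},0}$ drops rank along the Hasse divisor, so local freeness is restored only after pulling back to $\Xra{r}/(p/\mathrm{Hdg})$ where $\mathrm{Hdg}$ becomes a generator of an invertible ideal. This is the same flattening phenomenon already exploited in the proof of Lemma \ref{L602}, and verifying it amounts to checking that $\widetilde{HW}$ remains a non-zero divisor modulo $p/\mathrm{Hdg}$; so it should not pose a serious obstacle.
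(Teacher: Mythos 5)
Your local coordinates and the base-change identity $i_0^*g_r^* = q^*i^*$ are correct, but the central strategy — identifying $i_0^*\omega^{\vee}_{\Sh{A},0}$ and $q^*\bar{\omega}^{\vee}_{\Sh{A},0}$ through their common image inside $q^*\bar{\omega}^{\vee}_{\Sh{A}}$ — has a genuine gap, because the maps into $q^*\bar{\omega}^{\vee}_{\Sh{A}}$ are not injective. Locally both are multiplication by a generator of $\widetilde{HW}$, and $\widetilde{HW}$ \emph{is} a zero divisor modulo $p\mathrm{Hdg}^{-1}$, contrary to the hope in your last paragraph. Concretely, $p\mathrm{Hdg}^{-2} = \mathrm{Hdg}^{p^{r+1}-2}\cdot (p\mathrm{Hdg}^{-p^{r+1}})$ lies in $\Sh{O}_{\Xra{r}}$ and satisfies $\mathrm{Hdg}\cdot(p\mathrm{Hdg}^{-2}) = p\mathrm{Hdg}^{-1}\equiv 0$; on the other hand $p\mathrm{Hdg}^{-2}\in(p\mathrm{Hdg}^{-1})$ would, after multiplying by $\mathrm{Hdg}^2$ and cancelling the non-zero-divisor $p$, force $\mathrm{Hdg}$ to be a unit, which fails off the ordinary locus. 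Hence $\det\widetilde{HW}=\mathrm{Hdg}$, and therefore some component of $\widetilde{HW}$, is a zero divisor modulo $p\mathrm{Hdg}^{-1}$.

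Consequently the cyclic submodule $J\subset q^*\bar{\omega}^{\vee}_{\Sh{A}}$ generated by $q^*HW(\bar v)$ is \emph{not} free of rank one over $\Sh{O}_L\otimes\Sh{O}_{\Xra{r}/(p\mathrm{Hdg}^{-1})}$: its generator has non-trivial annihilator. Your two surjections $i_0^*\omega^{\vee}_{\Sh{A},0}\twoheadrightarrow J$ and $q^*\bar{\omega}^{\vee}_{\Sh{A},0}\twoheadrightarrow J$ therefore have non-zero kernels, nothing in the argument relates these kernels to each other, and composing the surjections does not produce an isomorphism. The paper avoids this precisely by \emph{not} working from below: it presents both sheaves as quotients of $i_0^*g_r^*\tilde{\omega}^{\vee}_{\Sh{A}} = q^*i^*\tilde{\omega}^{\vee}_{\Sh{A}}$ and shows that the surjection onto $q^*\bar{\omega}^{\vee}_{\Sh{A},0}$ factors through the surjection onto $i_0^*\omega^{\vee}_{\Sh{A},0}$; a surjection between two free modules of the same rank $g$ is then automatically an isomorphism. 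The substance of that proof is exactly the verification of the factorization (the comparison of the presentation matrices $N$ and $M$, showing the kernel of $N$ lands in $(p\mathrm{Hdg}^{-1})$ on the first $g$ coordinates), which has no analogue in your proposal and cannot be sidestepped by the image argument.
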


\begin{proof}
In fact we prove the following. The natural surjective map $i^*{\tilde{\omega}^{\vee}_{\Sh{A}}} \to \bar{\omega}^{\vee}_{\Sh{A},0}$ induces by pullback a surjective map $q^*i^*{\tilde{\omega}^{\vee}_{\Sh{A}}} = i_0^*g_r^*{\tilde{\omega}^{\vee}_{\Sh{A}}} \to q^*\bar{\omega}^{\vee}_{\Sh{A},0}$ that commutes with the induced maps to $\bar{\omega}^{\vee}_{\Sh{A}}$. We will show that this map factors naturally as $i_0^*g_r^*{\tilde{\omega}^{\vee}_{\Sh{A}}} \to i_0^*\omega^{\vee}_{\Sh{A},0} \to q^*\bar{\omega}^{\vee}_{\Sh{A},0}$, and the last two sheaves being both locally free of rank $g$, the last arrow is an isomorphism. Also since $i_0^*g_r^*\tilde{\omega}^{\vee}_{\Sh{A}} \to q^*\bar{\omega}^{\vee}_{\Sh{A},0}$ is $\Sh{O}_L$-linear and $i_0^*g_r^*\tilde{\omega}^{\vee}_{\Sh{A}} \to i_0^*\omega^{\vee}_{\Sh{A},0}$ is surjective $\Sh{O}_L$-linear, the induced isomorphism is $\Sh{O}_L$-linear too. This will be the isomorphism claimed in the proposition.

We use the notation of the proof of Lemma \ref{L602}, except that to avoid clumsiness we write $v_i$ (resp. $w_i$) instead of $g_r^*(v_i)$ (resp. $g_r^*(w_i)$). Since ${\tilde{\omega}^{\vee}_{\Sh{A}}}$ is generated by the $w_i$ and $pv_i$ for $i = 1, \dots, g$, there is a surjective map $\Sh{O}_{V}^{2g} \to g_r^*{\tilde{\omega}^{\vee}_{\Sh{A}}}$ that sends $e_i \mapsto w_i$ for $1 \leq i \leq g$, and $e_j \mapsto pv_j$ for $g+1 \leq j \leq 2g$. Using the basis $\bar{w}_i$ for $\bar{\omega}^{\vee}_{\Sh{A},0}$, we have a surjective map $M \colon \Sh{O}^{2g}_{i_0^{-1}V} \to \Sh{O}^g_{i_0^{-1}V}$ given by $e_i \mapsto e_i$ for $1 \leq i \leq g$, and $e_j \mapsto 0$ for $g+1 \leq j \leq 2g$ and which induces the map $i_0^*g_r^*{\tilde{\omega}^{\vee}_{\Sh{A}}} \to q^*\bar{\omega}^{\vee}_{\Sh{A},0}$. On the other hand, the images of $g_r^*(w_i)$ form a basis for $\omega^{\vee}_{\Sh{A},0}$. With respect to this basis, $pv_i = \frac{p}{\mathrm{Hdg}}\mathrm{adj}(\widetilde{HW})(e_i)$. Thus the surjective map $g_r^*{\tilde{\omega}^{\vee}_{\Sh{A}}} \to \omega^{\vee}_{\Sh{A},0}$ is induced by the map $N \colon \Sh{O}^{2g}_V \to \Sh{O}^g_V$ that sends $e_i \mapsto e_i$ for $1 \leq i \leq g$, and $e_{i+g} \mapsto \frac{p}{\mathrm{Hdg}}\mathrm{adj}(\widetilde{HW})(e_i)$ for $1 \leq i \leq g$. Suppose $(a_1, \dots, a_{2g}) \in \ker{N}$. Since $N\big(\sum_{i = g+1}^{2g} \Sh{O}_Ve_i\big) \subset \frac{p}{\mathrm{Hdg}}\cdot \Sh{O}^g_V$, we see that $a_i \in \frac{p}{\mathrm{Hdg}}$ for $1 \leq i \leq g$. Thus $M$ kills the kernel of the pullback of $N$ to $\Xra{r}/(p\mathrm{Hdg}^{-1})$, $i_0^*N \colon \Sh{O}^{2g}_{i_0^{-1}V} \to \Sh{O}^g_{i_0^{-1}V}$. This proves the proposition.
\end{proof}

\begin{prop}\label{P202}
The pullback of the exact sequence 
\[
0 \to \omega_{\Sh{A}} \to \mathrm{H}'_{\Sh{A}} \to \omega^{\vee}_{\Sh{A},0} \to 0
\]
along $i_0 \colon \Xra{r}/(p\mathrm{Hdg}^{-1}) \xhookrightarrow{} \Xra{r}$ admits a canonical splitting induced by the splitting of (\ref{eq:2}) which moreover commutes with the splitting induced by the decomposition $\Sh{O}_L \otimes \Sh{O}_{\Xra{r}} \simeq \prod_{\sigma \in \Sigma} \Sh{O}_{\Xra{r}}$.
\end{prop}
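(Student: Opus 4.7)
The plan is to identify the $i_0$-pullback of the exact sequence in the proposition with the $q$-pullback of the characteristic-$p$ sequence (\ref{eq:2}), thereby reducing the problem to the splitting already established in Lemma \ref{L601}. From the commutative square $g_r \circ i_0 = i \circ q$ we have $i_0^* \mathrm{H}_{\Sh{A}} = q^* \bar{\mathrm{H}}_{\Sh{A}}$ together with matching Hodge filtrations $i_0^*\omega_{\Sh{A}} = q^*\bar{\omega}_{\Sh{A}}$ and $i_0^*\omega^{\vee}_{\Sh{A}} = q^*\bar{\omega}^{\vee}_{\Sh{A}}$. First I would invoke Proposition \ref{P601} to identify $i_0^*\omega^{\vee}_{\Sh{A},0}$ with $q^*\bar{\omega}^{\vee}_{\Sh{A},0}$ compatibly with their inclusions into the common pullback of $\omega^{\vee}_{\Sh{A}}$. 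Since $\mathrm{H}'_{\Sh{A}}$ (Definition \ref{D601}) and $\bar{\mathrm{H}}'_{\Sh{A}}$ are both defined as preimages of these respective subsheaves under the Hodge projection, and the Hodge short exact sequence consists of locally free sheaves, the formation of this preimage commutes with pullback. This produces a canonical isomorphism of extensions
\[
i_0^*\mathrm{H}'_{\Sh{A}} \simeq q^*\bar{\mathrm{H}}'_{\Sh{A}}.
\]

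Next I would transport the characteristic-$p$ splitting back along this isomorphism. The corollary to Lemma \ref{L601} already furnishes the canonical decomposition $\bar{\mathrm{H}}'_{\Sh{A}} = \bar{\omega}_{\Sh{A}} \oplus \Frob(\bar{\mathrm{H}}_{\Sh{A}})$, where $\Frob(\bar{\mathrm{H}}_{\Sh{A}})$ is the image of $F^*_{\Sh{A}} \colon \mathrm{H}_{\bar{\Sh{A}}^{(p)}} \to \bar{\mathrm{H}}_{\Sh{A}}$. Applying $q^*$ to this direct-sum decomposition and transporting through the isomorphism above yields the desired splitting of the $i_0$-pullback of our sequence. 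For the $\Sh{O}_L$-equivariance, the crucial input is that the relative Frobenius on $\bar{\Sh{A}}$ is $\Sh{O}_L$-equivariant since the $\Sh{O}_L$-action is defined over $\F{p}$; therefore $\Frob(\bar{\mathrm{H}}_{\Sh{A}})$ is stable under $\Sh{O}_L \otimes \Sh{O}_{\X/(p)}$, and pulling back via $q$ preserves this structure. Consequently the splitting respects the $\sigma$-factor decomposition of $\Sh{O}_L \otimes \Sh{O}_{\Xra{r}/(p\mathrm{Hdg}^{-1})} \simeq \prod_{\sigma \in \Sigma}\Sh{O}_{\Xra{r}/(p\mathrm{Hdg}^{-1})}$, which is exactly the last claim of the proposition.

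The only point that I expect to require some care is the commutation of preimage formation with pullback in the first step. This is in fact automatic: by Lemma \ref{L602} the inclusion $\omega^{\vee}_{\Sh{A},0} \hookrightarrow \omega^{\vee}_{\Sh{A}}$ is an injection of locally free $\Sh{O}_L \otimes \Sh{O}_{\Xra{r}}$-modules, and the Hodge surjection $\mathrm{H}_{\Sh{A}} \twoheadrightarrow \omega^{\vee}_{\Sh{A}}$ is a surjection of locally free sheaves, so the fibre product that defines $\mathrm{H}'_{\Sh{A}}$ is preserved under pullback along $i_0$, and identically for $q$ applied to $\bar{\mathrm{H}}'_{\Sh{A}}$. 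Once this bookkeeping is in place, everything else is an immediate consequence of Lemma \ref{L601} and Proposition \ref{P601}.
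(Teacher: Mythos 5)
Your proposal is correct and takes essentially the same route as the paper's own (one-sentence) proof, which just invokes Proposition \ref{P601} together with $\Sh{O}_L$-linearity. You have supplied the bookkeeping that the paper leaves implicit: the Mayer--Vietoris short exact sequence $0 \to \mathrm{H}'_{\Sh{A}} \to \mathrm{H}_{\Sh{A}} \oplus \omega^{\vee}_{\Sh{A},0} \to \omega^{\vee}_{\Sh{A}} \to 0$ of locally free sheaves stays exact after $i_0^*$ and $q^*$, so the identification of $i_0^*\omega^{\vee}_{\Sh{A},0}$ with $q^*\bar{\omega}^{\vee}_{\Sh{A},0}$ from Proposition \ref{P601} propagates to an identification $i_0^*\mathrm{H}'_{\Sh{A}} \simeq q^*\bar{\mathrm{H}}'_{\Sh{A}}$, through which the $\Sh{O}_L$-equivariant splitting of (\ref{eq:2}) transports.
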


\begin{proof}
This is immediate from Proposition \ref{P601} and the $\Sh{O}_L$-linearity of the splitting.
\end{proof}

\section{\texorpdfstring{$p$}{p}-adic interpolation of modular and de Rham sheaves}\label{S3}

Henceforth fix $n$ a positive integer. Fix $I = [p^a, p^b]$ such that $k^0_{\alpha,I}$ is analytic on $1 + p^{n-1}(\Sh{O}_L \otimes \Z_p)$ and $r$ such that $H_n$ is defined on $\Xra{r}$. Depending on the two cases $I = [0,1]$ (i.e. $\alpha = p$) or $I = [p^a, p^b]$ for $a, b \in \N$, these conditions are satisfied if
\begin{enumerate}
    \item $I = [0,1]$, $r \geq 2$ if $p \neq 2$ and $2 \leq n \leq r$, or $r \geq 4$ if $p = 2$ and $4 \leq n \leq r$,
    \item $I = [p^a, p^b]$ with $a, b \in \N$, $r \geq 1$ and $r+a \geq b+2$ if $p \neq 2$ and $b+2 \leq n \leq a+r$, or $r \geq 2$ and $r+a \geq b+4$ if $p = 2$, and $b+4 \leq n \leq r+a$.
\end{enumerate}

In this section we construct overconvergent  modular and de Rham sheaves, denoted $\w{}$ and $\W{}$ on the Hilbert modular scheme $\Xra{r}\times_{\mathfrak{W}^0_{\alpha,I}} \mathfrak{W}_{\alpha,I}$ for the universal weight $k = k^{\textrm{un}} \colon (\Sh{O}_L \otimes \Z_p)^{\times} \to \Lambda^{\times}_{\alpha,I}$. The modular sheaf interpolates $\omega_{\Sh{A}}^k$  for classical weights $k$. The construction can be summarized as follows. By passing to a partial Igusa tower depending on $n$, we construct a modified modular sheaf $\Omega_{\Sh{A}}$ and a modified de Rham sheaf $\Hs{A}$ together with a modified unit root splitting. We decompose the universal character into its components induced by the splitting of $\Sh{O}_L$ in $\Sh{O}_K$. We decompose $\Omega_{\Sh{A}}$ and $\Hs{A}$ likewise. On each component we carry out the construction of interpolation for the corresponding component of the universal weight following the technique developed in \cite{andreatta2021triple}. Finally we define $\w{}$ and $\W{}$ by taking the tensor product of these individual components.

The construction of $\w{}$ appears in the joint work of Andreatta, Iovita, Pilloni and Stevens \cite{AIS}, \cite{Andreatta2016Hilbert}, \cite{andreatta2016adic}. Our construction is similar to \cite{andreatta2016adic}. However using the theory of vector bundles with marked sections we make it more explicit by actually constructing sections that generate $\w{}$ locally. This is inspired by \cite{andreatta2021triple}. At the end of the section we compare our construction of $\w{}$ with the construction in \cite{andreatta2016adic} and show why they are isomorphic. The definition of $\W{}$ is new, but as we will see it is inspired by \cite{andreatta2021triple} and the improved technique of using modified unit root subspace. The main theorem of this section is the following.

\begin{theorem*}
For $n,r,\alpha,I$ as above and $k = k^{\textrm{un}}$ the universal weight on $\mathfrak{W}_{\alpha,I}$, there are formal sheaves $\w{}$ and $\W{}$ on $\bar{\mathfrak{M}}_{r,\alpha,I} := \Xra{r} \times_{\mathfrak{W}^0_{\alpha,I}} \mathfrak{W}_{\alpha,I}$. For $\alpha = p$ and $I = [0,1]$, viewing a classical weight $\kappa$ as a point of $\Sh{W}_{p, [0,1]}(\C{}_p) = \Sh{W}_p(\C{}_p)$, the restriction of the sheaf $\w{}[1/p]_{|k \mapsto \kappa}$ on the associated analytic adic space ${\Sh{X}_{r, p}} \times_{\Sh{W}^0_p} {\Sh{W}_p}_{|\kappa}$ gives the sheaf $\omega_{\Sh{A}}^{\kappa}$ of classical Hilbert modular forms of weight $\kappa$. The sheaf $\W{}$ is equipped with a filtration by coherent $\Sh{O}_{\bar{\mathfrak{M}}_{r, \alpha,I}}$-modules $\{\Fil{i}\W{}\}_{i\geq 0}$, and $\W{}$ is the $\alpha$-adic completion of $\varinjlim_i \Fil{i}\W{}$. Moreover $\Fil{0}\W{} = \w{}$.
\end{theorem*}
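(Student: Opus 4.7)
The plan is to build $\w{}$ and $\W{}$ locally on the partial Igusa tower $\Ig{n}{r}$, decompose everything along the splitting $\Sh{O}_L \otimes \Sh{O}_{\Ig{n}{r}} \simeq \prod_{\sigma \in \Sigma} \Sh{O}_{\Ig{n}{r}}$, apply the VBMS construction per embedding using the universal character $k^{\textrm{un}}$ (decomposed by Lemma \ref{L21}), and then descend via the $(\Sh{O}_L/p^n\Sh{O}_L)^\times$-Galois action. First, I would construct the integral modular sheaf $\Omega_{\Sh{A}} \subset \omega_{\Sh{A}}$ as the $\Sh{O}_L \otimes \Sh{O}_{\Ig{n}{r}}$-submodule generated by local lifts of $\dlog(P^{\textrm{univ}})$, where $P^{\textrm{univ}}$ is the universal generator of $H_n^\vee$. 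By construction $\Omega_{\Sh{A}}$ comes equipped with the marked section $\dlog(P^{\textrm{univ}})$ and with an invertible $\Sh{O}_L \otimes \Sh{O}_{\Ig{n}{r}}$-ideal $\underline{\xi}$ such that $\Omega_{\Sh{A}} = \underline{\xi}\omega_{\Sh{A}}$. From \S\ref{S2} I take $\Hs{A} \subset \mathrm{H}_{\Sh{A}}$ together with the canonical splitting mod $p/\mathrm{Hdg}^{p^2}$ whose kernel I call $\Sh{Q}$; these decompose as $\prod_\sigma \Hs{A}(\sigma)$ and $\prod_\sigma \Sh{Q}(\sigma)$ along the splitting of $\Sh{O}_L$.

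Second, for each $\sigma$ I form the adic space with marked section
\[
\Vp{\Sh{O}_L}(\Omega_{\Sh{A}}(\sigma), \dlog(P^{\textrm{univ}})) \;\subset\; \Vp{\Sh{O}_L}(\Omega_{\Sh{A}}(\sigma)),
\]
cut out locally by $|s_\sigma - 1| \leq |\beta_n|$ (where $s_\sigma$ is a lift of the marked section and $\beta_n$ is the small power of $p$ controlling the denominator in the $\dlog$ image), and its refinement
\[
\Vp{\Sh{O}_L}(\Hs{A}(\sigma), \dlog(P^{\textrm{univ}}), \Sh{Q}(\sigma)) \;=\; \Vp{\Sh{O}_L}(\Hs{A}(\sigma))\left(\frac{s_\sigma - 1}{\beta_n}, \frac{t_\sigma}{\eta}\right)
\]
with $\eta = p/\mathrm{Hdg}^{p^2}$ and $t_\sigma$ a local generator of $\Sh{Q}(\sigma)$. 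By Lemma \ref{L21} the component $k_\sigma$ of the universal weight is analytic on $1 + p^{n-1}\Ga^+$, so on the push-forward of the structure sheaf of each of these VBMS there is a natural weight-$k_\sigma$ eigen-component, which I take as $\w{}(\sigma)$ respectively $\W{}(\sigma)$. Setting
\[
\w{} := \hat{\bigotimes}_\sigma \w{}(\sigma), \qquad \W{} := \hat{\bigotimes}_\sigma \W{}(\sigma),
\]
with the appropriate $\alpha$-adic completion, yields formal sheaves on the Igusa tower base-changed to $\mathfrak{W}_{\alpha,I}$. Since the $(\Sh{O}_L/p^n\Sh{O}_L)^\times$-action on $\Ig{n}{r}$ acts on the marked section by the tautological character and the universal weight is chosen to cancel it, these eigen-components descend to sheaves on $\Mbar{r}{} := \Xra{r}\times_{\mathfrak{W}^0_{\alpha,I}} \mathfrak{W}_{\alpha,I}$.

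Third, the filtration $\{\Fil{i}\W{}\}_{i \geq 0}$ comes from the polynomial-degree filtration on the structure sheaf of each geometric vector bundle $\V{}(\Hs{A}(\sigma))$: its graded pieces are locally free of finite rank (symmetric powers of $\Hs{A}(\sigma)^\vee$), this filtration is preserved under passing to the open VBMS with marked section and marked splitting, and tensoring across $\sigma$ gives a filtration on $\W{}$ with coherent $\Sh{O}_{\Mbar{r}{}}$-modules as filtered pieces. By construction $\W{}$ is the $\alpha$-adic completion of $\varinjlim_i \Fil{i}\W{}$. The identification $\Fil{0}\W{} = \w{}$ is forced by the marked splitting: a degree-zero function on $\Vp{\Sh{O}_L}(\Hs{A}(\sigma), \dlog(P^{\textrm{univ}}), \Sh{Q}(\sigma))$ is constant in the $\Sh{Q}(\sigma)$-direction, hence factors through $\Hs{A}(\sigma)/\Sh{Q}(\sigma) \simeq \Omega_{\Sh{A}}(\sigma)$, reducing to a section of $\Vp{\Sh{O}_L}(\Omega_{\Sh{A}}(\sigma), \dlog(P^{\textrm{univ}}))$ — and the eigen-component selection on both sides is for the same character $k_\sigma$. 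Finally, for classical $\kappa = \sum n_\sigma \sigma \in \Sh{W}_p(\C{}_p)$, unravelling the eigen-component construction gives $\w{}(\sigma)_{|k_\sigma = n_\sigma} = \Omega_{\Sh{A}}(\sigma)^{\otimes n_\sigma}$ and hence $\w{}_{|\kappa} = \Omega_{\Sh{A}}^\kappa$; since $\Omega_{\Sh{A}}$ and $\omega_{\Sh{A}}$ differ by the invertible ideal $\underline{\xi}$, after inverting $p$ the two agree, recovering the classical sheaf $\omega_{\Sh{A}}^\kappa$.

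The main obstacle I anticipate is verifying descent from $\Ig{n}{r}$ to $\Mbar{r}{}$ — i.e.\ that the $(\Sh{O}_L/p^n\Sh{O}_L)^\times$-action on the VBMS is compatible with the weight-$k_\sigma$ eigen-component selection in a way that produces honest coherent formal sheaves (not just on the generic fibre), for which the analyticity bounds from Lemma \ref{L21} must match the defining radii $\beta_n$ and $\eta$. A secondary, more book-keeping difficulty is checking that the $\sigma$-component construction gives back exactly the classical $\omega_{\Sh{A}}^\kappa$ after inverting $p$, which requires carefully comparing with the definition of \cite{andreatta2016adic}; this will be deferred to the comparison at the end of \S\ref{S3}.
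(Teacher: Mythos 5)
Your construction is the paper's construction: $\Omega_{\Sh{A}}$ from the $\dlog$ of the universal generator, the modified de Rham sheaf $\Hs{A}$ with its marked splitting $\Sh{Q}$, the $\sigma$-by-$\sigma$ VBMS with marked section and marked splitting, the $k_\sigma$-eigencomponents, the completed tensor product over $\Sigma$, and the degree filtration with $\Fil{0}$ identified via the splitting. The problem is that the one step you wave through --- ``these eigen-components descend to sheaves on $\Xra{r}\times_{\mathfrak{W}^0_{\alpha,I}}\mathfrak{W}_{\alpha,I}$'' --- is precisely the technical heart of the paper's proof (Theorem \ref{T2301}), and your justification (``the universal weight is chosen to cancel the tautological character'') only addresses equivariance, not the actual difficulty. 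Over the generic fibre descent is immediate because $\Sh{IG}_{n,r,I}\to\xra{r}$ is finite \'etale and $\#(\Sh{O}_L/p^n\Sh{O}_L)^{\times}$ is invertible, so the idempotent projector onto the $k$-isotypic part exists. Integrally neither holds: the group order is divisible by $p$ and $\Ig{n}{r}\to\Xra{r}$ is ramified along the non-ordinary locus, so the $k$-isotypic part of $(h_n)_*\mathbb{W}'_{k,\alpha,I}$ has no a priori reason to be a locally free formal sheaf with locally free graded pieces --- yet the theorem asserts exactly this (coherent $\Fil{i}\W{}$, $\w{}$ specializing integrally, etc.). The paper fills this gap by an averaging-with-controlled-denominators argument: Lemma \ref{L304} gives $\Hdg{p^{n-1}}\Sh{O}_{\Ig{n-1}{r}}\subset\mathrm{Tr}_h(h_*\Sh{O}_{\Ig{n}{r}})$, whence elements $c_n\in\Hdg{-\frac{p^n-p}{p-1}}\Sh{O}_{\Ig{n}{r}}$ with $\mathrm{Tr}_h(c_n)=c_{n-1}$; the twisted averages $\tilde{s}_i=\sum_{\tau}k(\tilde{\tau})^{-1}\tau(c_n s_i)$ are then shown (using Lemma \ref{L303}, i.e.\ $\Sh{O}_{\Ig{n}{r}}/q\simeq\mathfrak{w}'_k/q$) to be $k$-equivariant local generators of $\Gr{i}\mathbb{W}'_{k,\alpha,I}$, which is what makes the descended filtered pieces locally free. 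Without some version of this your argument only produces the sheaves after inverting $p$.

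A secondary but genuine issue: the theorem is stated for the full universal weight $k^{\textrm{un}}$ on $\mathfrak{W}_{\alpha,I}$, whose finite-order part $\chi=k^{\textrm{un}}|_{\Delta}$ is not analytic and cannot be evaluated on the formal group $1+\beta_n\Res_{\Sh{O}_L/\Z}\Ga$, so it cannot enter the VBMS eigencomponent machinery at all. Lemma \ref{L21} only decomposes the analytic part $k^0$. The paper handles this by running your construction for $k^0$ to get $\w{0}$ and $\W{0}$, and then tensoring with the separate coherent sheaf $\w{\chi}=\bigl(p^*(f_i)_*\Sh{O}_{\Ig{i}{r}}\bigr)[\chi^{-1}]$; your proposal, which feeds ``$k^{\textrm{un}}$ decomposed by Lemma \ref{L21}'' directly into the VBMS, silently conflates $k^{\textrm{un}}$ with $k^0$ and would miss the component of $\mathfrak{W}_{\alpha,I}$ away from the trivial character.
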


\subsection{The sheaves \texorpdfstring{$\Omega_{\Sh{A}}$}{OmegaA} and \texorpdfstring{$\mathrm{H}^{\sharp}_{\Sh{A}}$}{HA}}\label{2S31}

The trivialization of $H^{\vee}_n$ on $\Sh{IG}_{n,r,I}$ induces an equality of groups $H^{\vee}_n(\Ig{n}{r}) = H^{\vee}_n(\Sh{IG}_{n,r,I}) \simeq \Sh{O}_L/p^n\Sh{O}_L$. Let $P^{\textrm{univ}}$ be the image of $1 \in \Sh{O}_L/p^n\Sh{O}_L$ in $H^{\vee}_n(\Ig{n}{r})$. We have a map of $\Sh{O}_L \otimes \mathfrak{IG}_{n,r,I}$-modules,
\begin{equation}\label{eq:3}
\begin{tikzcd}[iso/.style = {draw = none, "\xrightarrow{\sim}" description, sloped}]
& \omega_{\Sh{A}} \arrow[d] & \\
H^{\vee}_n(\Sh{IG}_{n,r,I}) \otimes_{\Z} \Sh{O}_{\mathfrak{IG}_{n,r,I}} = H^{\vee}_n(\mathfrak{IG}_{n,r,I}) \otimes_{\Z} \Sh{O}_{\mathfrak{IG}_{n,r,I}} \arrow[r, "{\dlog \otimes 1}"] & {\omega}_{H_n} \arrow[r, iso] & \omega_{\Sh{A}}/p^n\mathrm{Hdg}^{-\frac{p^n-1}{p-1}}\omega_{\Sh{A}}.
\end{tikzcd}
\end{equation}
\begin{defn}
Define the sheaf $\Omega_{\Sh{A}}$ to be the inverse image under the map $\omega_A \to {\omega}_{H_n}$ of the image of $\dlog \otimes 1$. We call this the modified modular sheaf.
\end{defn}

\begin{prop}
The sheaf $\Omega_{\Sh{A}}$ is a locally free $\Sh{O}_L \otimes \Sh{O}_{\Ig{n}{r}}$ sheaf of rank $1$. The cokernel of $\Omega_{\Sh{A}} \subset \omega_{\Sh{A}}$ is killed by $\Hdg{\frac{1}{p-1}}$. Moreover $\dlog$ induces an isomorphism
\[
\dlog \otimes 1 \colon H^{\vee}_n(\Ig{n}{r}) \otimes \Sh{O}_{\Ig{n}{r}}/p^n\mathrm{Hdg}^{-\frac{p^n}{p-1}} \xrightarrow{\sim} \Omega_A \otimes \Sh{O}_{\Ig{n}{r}}/p^n\mathrm{Hdg}^{-\frac{p^n}{p-1}}.
\]
\end{prop}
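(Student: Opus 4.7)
The argument is local on $\Ig{n}{r}$, proceeding on an affine chart $\Spf R$ where $\omega_{\Sh{A}}$ is a free $\Sh{O}_L \otimes R$-module of rank one with generator $\omega$, and where a local generator $\mathrm{Hdg}$ of $\underline{\mathrm{Ha}}$ is fixed. Write $P := P^{\textrm{univ}}$; by $\Sh{O}_L$-linearity of $\dlog$, the image of $\dlog \otimes 1$ inside $\omega_{H_n}$ is the cyclic $\Sh{O}_L \otimes R$-submodule generated by $\dlog(P)$. Using the identification $\omega_{H_n} \simeq \omega_{\Sh{A}}/(p^n/\lambda)\omega_{\Sh{A}}$ with $\lambda = \mathrm{Hdg}^{(p^n-1)/(p-1)}$, for any lift $s \in \omega_{\Sh{A}}(R)$ of $\dlog(P)$ I get $\Omega_{\Sh{A}}(R) = (\Sh{O}_L \otimes R)\cdot s + (p^n/\lambda)\omega_{\Sh{A}}(R)$.

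The central technical input is the classical bound on the cokernel of the $\dlog$ map for canonical subgroups of a truncated Barsotti--Tate group with small Hasse invariant: the cokernel of $\dlog \otimes 1\colon H_n^{\vee}\otimes\Sh{O}_{\Ig{n}{r}} \to \omega_{H_n}$ is killed by $\mathrm{Hdg}^{1/(p-1)}$. This is the Hilbert analogue of Fargues's estimate for Hodge--Tate maps, recorded in \cite{andreatta2016adic}. Pulling it back through the surjection $\omega_{\Sh{A}} \twoheadrightarrow \omega_{H_n}$ yields $\mathrm{Hdg}^{1/(p-1)}\omega_{\Sh{A}} \subset \Omega_{\Sh{A}}$, which is the cokernel claim. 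Combining with the local formula above, I also obtain the equivalent rewriting $\Omega_{\Sh{A}}(R) = (\Sh{O}_L \otimes R)\cdot s + \mathrm{Hdg}^{1/(p-1)}\omega_{\Sh{A}}(R)$.

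For local freeness of rank one, I would use the splitting $\Sh{O}_L \otimes R \simeq \prod_{\sigma \in \Sigma} R$ to reduce to showing that each $\sigma$-component $\Omega_{\Sh{A},\sigma}$ is a locally principal ideal of $R$. Since $\mathrm{Hdg}$ is a non-zero-divisor on the blow-up $\Xra{r}$, a Zariski-local analysis---passing if necessary to a finite cover where $\mathrm{Hdg}^{1/(p-1)}$ makes sense as an honest section---shows that $\Omega_{\Sh{A},\sigma}$ is generated by whichever of $s_{\sigma}$ or $p^n/\lambda$ has smaller $\mathrm{Hdg}$-content, the cokernel bound guaranteeing that this content is at most $1/(p-1)$. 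This is essentially the thrust of the construction in \cite{andreatta2016adic}.

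Finally, for the isomorphism assertion modulo $p^n\mathrm{Hdg}^{-p^n/(p-1)}$: both source and target are free of rank one over the quotient ring $(\Sh{O}_L \otimes \Sh{O}_{\Ig{n}{r}})/(p^n\mathrm{Hdg}^{-p^n/(p-1)})$, so it suffices to verify surjectivity. The key identity is $p^n/\lambda = \mathrm{Hdg}^{1/(p-1)}\cdot p^n\mathrm{Hdg}^{-p^n/(p-1)}$; combined with the inclusion $\mathrm{Hdg}^{1/(p-1)}\omega_{\Sh{A}} \subset (\Sh{O}_L \otimes R)\cdot s + (p^n/\lambda)\omega_{\Sh{A}}$ established above, it propagates to $(p^n/\lambda)\omega_{\Sh{A}} \subset (\Sh{O}_L \otimes R)\cdot s + p^n\mathrm{Hdg}^{-p^n/(p-1)}\omega_{\Sh{A}}$ after a short calculation, so the image of $\dlog \otimes 1$ generates $\Omega_{\Sh{A}}$ modulo $p^n\mathrm{Hdg}^{-p^n/(p-1)}$. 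The main obstacle is cleanly invoking the cokernel bound in the Hilbert setting; with that in hand, the freeness and the reduced-mod isomorphism reduce to elementary ideal-theoretic manipulations on the blow-up.
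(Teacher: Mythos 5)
The paper's proof is a bare citation to \cite[Proposition 4.1]{andreatta2016adic}, so any reconstruction supplies more detail than the text itself. Your outline puts the right lemma at the centre --- the estimate that the cokernel of $\dlog\otimes 1\colon H_n^{\vee}\otimes\Sh{O}_{\Ig{n}{r}}\to\omega_{H_n}$ is killed by $\mathrm{Hdg}^{1/(p-1)}$ --- but two of the steps you build around it do not quite close.

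First, the surjectivity you establish is aimed at the wrong quotient. The inclusion you derive, $(p^n/\lambda)\omega_{\Sh{A}}\subset(\Sh{O}_L\otimes R)\cdot s + p^n\mathrm{Hdg}^{-p^n/(p-1)}\omega_{\Sh{A}}$, only yields surjectivity of $\dlog$ onto $\Omega_{\Sh{A}}/\bigl(\Omega_{\Sh{A}}\cap p^n\mathrm{Hdg}^{-p^n/(p-1)}\omega_{\Sh{A}}\bigr)$; the proposition asks for surjectivity onto $\Omega_{\Sh{A}}\otimes\Sh{O}_{\Ig{n}{r}}/p^n\mathrm{Hdg}^{-p^n/(p-1)} \cong \Omega_{\Sh{A}}/p^n\mathrm{Hdg}^{-p^n/(p-1)}\Omega_{\Sh{A}}$, and whenever $\underline{\xi}$ is a proper ideal the former quotient is strictly coarser. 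The correct inclusion is in fact more direct and avoids the iteration you invoke: from $\mathrm{Hdg}^{1/(p-1)}\omega_{\Sh{A}}\subset\Omega_{\Sh{A}}$ and the identity $p^n/\lambda=\mathrm{Hdg}^{1/(p-1)}\cdot p^n\mathrm{Hdg}^{-p^n/(p-1)}$, one gets $(p^n/\lambda)\omega_{\Sh{A}}=p^n\mathrm{Hdg}^{-p^n/(p-1)}\cdot\mathrm{Hdg}^{1/(p-1)}\omega_{\Sh{A}}\subset p^n\mathrm{Hdg}^{-p^n/(p-1)}\Omega_{\Sh{A}}$, whence $\Omega_{\Sh{A}}=(\Sh{O}_L\otimes R)\cdot s+p^n\mathrm{Hdg}^{-p^n/(p-1)}\Omega_{\Sh{A}}$ and the desired surjection follows. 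Your closing step --- a surjection of free rank-one modules over a commutative ring is an isomorphism --- is fine.

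Second, the local-freeness paragraph does not amount to an argument: ``passing if necessary to a finite cover where $\mathrm{Hdg}^{1/(p-1)}$ makes sense'' and ``generated by whichever of $s_{\sigma}$ or $p^n/\lambda$ has smaller $\mathrm{Hdg}$-content'' gesture at the right picture but stop short of showing that $\Omega_{\Sh{A}}$ is an invertible $\Sh{O}_L\otimes\Sh{O}_{\Ig{n}{r}}$-module on the actual normal formal scheme $\Ig{n}{r}$, rather than on an auxiliary cover. Since that claim, like the cokernel bound, is precisely what \cite[Proposition 4.1]{andreatta2016adic} establishes, the deferral is a defensible strategy; but it means your proposal is really only supplying the bookkeeping for the reduced-mod isomorphism, and there the calculation needs the correction above.
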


\begin{proof}
\cite[Proposition 4.1]{andreatta2016adic}.
\end{proof}

Since $\Omega_{\Sh{A}} \subset \omega_{\Sh{A}}$ is an invertible $\Sh{O}_L \otimes \Sh{O}_{\Ig{n}{r}}$- module, there exists an invertible ideal $\underline{\xi} \subset \Sh{O}_L \otimes \Sh{O}_{\Ig{n}{r}}$, such that $\Omega_{\Sh{A}} = \underline{\xi} \omega_{\Sh{A}}$. Lemma \ref{L2301} shows that $\det \underline{\xi} = \Hdg{\frac{1}{p-1}}$.

\begin{defn}
Define the sheaf $\mathrm{H}^{\sharp}_{\Sh{A}} := \underline{\xi} \mathrm{H}'_{\Sh{A}}$. We call this the modified de Rham sheaf.
\end{defn}

\begin{cor}\label{C301}
We have a short exact sequence of locally free $\Sh{O}_L \otimes \Sh{O}_{\Ig{n}{r}}$-modules
\begin{equation}\label{eq:4}
0 \to \Omega_{\Sh{A}} \to \mathrm{H}^{\sharp}_{\Sh{A}} \to \underline{\xi}  \omega^{\vee}_{\Sh{A},0} \to 0
\end{equation}
which splits upon pulling back via $i_n \colon \Ig{n}{r}/(p\Hdg{-1}) \xhookrightarrow{} \Ig{n}{r}$.
\end{cor}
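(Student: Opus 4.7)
The strategy is to obtain the claimed exact sequence by pulling back the short exact sequence from Definition \ref{D601} along the morphism $\Ig{n}{r} \to \Xra{r}$ and then twisting by the invertible ideal $\underline{\xi}$, and to obtain the splitting by similarly transporting the splitting of Proposition \ref{P202}.

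First I would pull back the sequence $0 \to \omega_{\Sh{A}} \to \mathrm{H}'_{\Sh{A}} \to \omega^{\vee}_{\Sh{A},0} \to 0$ along $\Ig{n}{r} \to \Xra{r}$. Since all three terms are locally free $\Sh{O}_L \otimes \Sh{O}_{\Xra{r}}$-modules (hence flat), the pulled-back sequence is again short exact. Next, $\underline{\xi}$ is an invertible (in particular flat) $\Sh{O}_L \otimes \Sh{O}_{\Ig{n}{r}}$-ideal, so tensoring the pulled-back sequence over $\Sh{O}_L \otimes \Sh{O}_{\Ig{n}{r}}$ with $\underline{\xi}$ preserves exactness. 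By the very definitions $\Omega_{\Sh{A}} = \underline{\xi}\omega_{\Sh{A}}$ and $\Hs{A} = \underline{\xi}\mathrm{H}'_{\Sh{A}}$, the natural multiplication maps
\[
\underline{\xi} \otimes \omega_{\Sh{A}} \longrightarrow \Omega_{\Sh{A}}, \qquad \underline{\xi} \otimes \mathrm{H}'_{\Sh{A}} \longrightarrow \Hs{A}
\]
are isomorphisms, while the third term of the twisted sequence becomes $\underline{\xi}\omega^{\vee}_{\Sh{A},0}$. This produces the desired short exact sequence of locally free $\Sh{O}_L \otimes \Sh{O}_{\Ig{n}{r}}$-modules.

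For the splitting, recall that by Proposition \ref{P202} the original sequence splits canonically and $\Sh{O}_L$-linearly after pullback along $i_0 \colon \Xra{r}/(p\Hdg{-1}) \xhookrightarrow{} \Xra{r}$. The map $i_n$ is just the base change of $i_0$ along $\Ig{n}{r} \to \Xra{r}$, so this splitting pulls back to one over $\Ig{n}{r}/(p\Hdg{-1})$. As the splitting commutes with the $\Sh{O}_L$-action, it is compatible with the operation of multiplication by $\underline{\xi}$, and twisting therefore yields a splitting of the sequence under $i_n$.

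The only mildly delicate step is the identification of the twisted modules with the honest submodules $\Omega_{\Sh{A}},\Hs{A}$ of $\omega_{\Sh{A}},\mathrm{H}_{\Sh{A}}$; this amounts to injectivity of the multiplication maps $\underline{\xi} \otimes (\cdot) \to (\cdot)$, which follows from $\underline{\xi}$ being locally principal and the target modules being torsion-free over $\Sh{O}_L \otimes \Sh{O}_{\Ig{n}{r}}$ (using that $p$ is unramified in $L$, so this ring is \'{e}tale over $\Sh{O}_{\Ig{n}{r}}$). Once this identification is in hand, the rest of the argument is essentially formal from the two input results.
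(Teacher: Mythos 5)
Your proof is correct and follows essentially the same approach as the paper, which simply says the result "is immediate from Proposition \ref{P202}." You have spelled out the intermediate steps (pullback along $\Ig{n}{r} \to \Xra{r}$, twisting by $\underline{\xi}$, and identifying the twisted modules via the definitions of $\Omega_{\Sh{A}}$ and $\Hs{A}$), but the logical content is the same.
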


\begin{proof}
This is immediate from Proposition \ref{P202}.
\end{proof}

Letting $s = \dlog(P^{\textrm{univ}}) \in \Omega_{\Sh{A}}/p^n\Hdg{\frac{-p^n}{p-1}}\Omega_{\Sh{A}}$, and taking its $\sigma$-components $s_{\sigma}$ for $\sigma \in \Sigma$, we get a vector bundle with marked sections $(\Omega_{\Sh{A}}, \{s_{\sigma}\}_{\sigma \in \Sigma})$. Associated to this pair, one can consider the geometric vector bundle with marked sections $\V{}(\Omega_{\Sh{A}}, \{s_{\sigma}\}_{\sigma \in \Sigma})$ in the sense of Definition \ref{D1303}. We will show that the points of this geometric vector bundle have a natural interpretation as $\Sh{O}_L$-linear functions on $\Omega_{\Sh{A}}$ that evaluate to $1$ on $s$. But before that we need to study functoriality of the sheaf $\Hs{A}$ with respect to the $U$ correspondence. Note that we are not interested in $\Hs{A}$ solely for its structure as a vector bundle with marked sections, but we are also interested in the splitting modulo some small power of $p$. We have already seen above that such a splitting exists modulo $p\Hdg{-1}$. In studying functoriality of $\Hs{A}$ for the $U$ correspondence, we will pin down the small power of $p$ for which the splitting is functorial too.

Let $\beta_n := p\Hdg{-\frac{p^n}{p-1}}$.

\subsubsection{Functoriality}\label{2S311}
Consider the projection $\lambda \colon \Sh{A} \to  \Sh{A}' := \Sh{A}/H_1$. Let $\lambda' \colon {\Sh{A}'} \to \Sh{A}$ be the isogeny such that $\lambda' \circ \lambda = [p]$. Then $\lambda'$ maps $H_n(\Sh{A}')$ to $H_n(\Sh{A})$ and induces an isomorphism of canonical subgroups $H_n(\Sh{A}') \simeq H_n(\Sh{A})$ on the generic fibres. The generic trivialization of $H_n^{\vee}(\Sh{A}')$ induced by this isomorphism defines a map $\tilde{F} \colon \Ig{1}{r} \to \Ig{1}{r-1}$ that sends $\Sh{A} \mapsto \Sh{A}'$ together with this trivialization of the generic fibre of the dual of the canonical subgroup. Note that a priori if $\Sh{A}$ is $\mathfrak{c}$-polarized then $\Sh{A}'$ is $p\mathfrak{c}$-polarized. But since multiplication by $p$ induces a canonical isomorphism $M(\mu_N, \mathfrak{c}) \to M(\mu_N, p\mathfrak{c})$, we indeed get a map $\tilde{F}$ as above. By abuse of notation we also denote by $\tilde{F}$ the map $\Xra{r} \to \Xra{r-1}$ induced by sending $\Sh{A} \mapsto \Sh{A}'$. The following diagram is commutative with $h$ and $h'$ being the usual projections.
\[
\begin{tikzcd}
\Ig{1}{r} \arrow[r, "\tilde{F}"] \arrow[d, "h"] & \Ig{1}{r-1} \arrow[d, "h'"] \\
\Xra{r} \arrow[r, "\tilde{F}"]                   & \Xra{r-1}                   
\end{tikzcd}
\]
The functoriality of the $\dlog$ map provides the following diagram:
\[
\begin{tikzcd}
0 \arrow[r] & \Omega_{\Sh{A}} \arrow[r] \arrow[d, "\simeq"] & \omega_{\Sh{A}} \arrow[d, "(\lambda')^*"] \\
0 \arrow[r] & \Omega_{\Sh{A}'} \arrow[r]                    & \omega_{\Sh{A}'}                         
\end{tikzcd}
\]
The map $(\lambda')^*$ is the adjoint of a lift of the Hasse-Witt map $HW \colon \omega^{\vee}_{\Sh{A}'/(p\Hdg{-1})} \to \omega^{\vee}_{\Sh{A}/(p\Hdg{-1})}$ modulo $p\Hdg{-1}$ because $\lambda'$ is a lift of the Verschiebung. As a map between invertible $\Sh{O}_L \otimes \Sh{O}_{\Ig{1}{r}}$-modules, $(\lambda')^*$ corresponds to multiplication by an invertible $\Sh{O}_L \otimes \Sh{O}_{\Ig{1}{r}}$ ideal ${\widetilde{HW}}$. Since $\Omega_{\Sh{A}} = \underline{\xi}\omega_{\Sh{A}}$, we have the following relation between $\Sh{O}_L \otimes \Sh{O}_{\Ig{1}{r}}$ ideals.

\begin{lemma}\label{L2301}
${\tilde{F}}^*\underline{\xi} = \underline{\xi} {\widetilde{HW}}$.
\end{lemma}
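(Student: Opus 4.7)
The plan is a direct diagram chase through the commutative square of $\Sh{O}_L\otimes \Sh{O}_{\Ig{1}{r}}$-modules just displayed. First I would rewrite each corner in terms of the ideals $\underline{\xi}$ and $\widetilde{HW}$. By the very definition of $\underline{\xi}$ on $\Ig{1}{r}$, the inclusion in the top row is $\Omega_{\Sh{A}} = \underline{\xi}\,\omega_{\Sh{A}} \hookrightarrow \omega_{\Sh{A}}$. Unravelling $\tilde{F}$: the universal abelian scheme on $\Ig{1}{r-1}$ pulls back to $\Sh{A}'$ together with its induced marked section $\dlog(P^{\textrm{univ}})$, so $\Omega_{\Sh{A}'}$ is precisely the pullback via $\tilde{F}$ of the modified modular sheaf on $\Ig{1}{r-1}$, and hence the inclusion in the bottom row is $\Omega_{\Sh{A}'} = (\tilde{F}^*\underline{\xi})\,\omega_{\Sh{A}'} \hookrightarrow \omega_{\Sh{A}'}$.

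Next I would compute the image $(\lambda')^*(\Omega_{\Sh{A}})$ inside $\omega_{\Sh{A}'}$. Since $(\lambda')^*$ is $\Sh{O}_L\otimes\Sh{O}_{\Ig{1}{r}}$-linear and, by the very definition of $\widetilde{HW}$, sends $\omega_{\Sh{A}}$ into $\omega_{\Sh{A}'}$ with image equal to the invertible submodule $\widetilde{HW}\,\omega_{\Sh{A}'}$, I can pull the ideal $\underline{\xi}$ through:
$$
(\lambda')^*(\Omega_{\Sh{A}}) \;=\; (\lambda')^*(\underline{\xi}\,\omega_{\Sh{A}}) \;=\; \underline{\xi}\,(\lambda')^*(\omega_{\Sh{A}}) \;=\; \underline{\xi}\,\widetilde{HW}\,\omega_{\Sh{A}'}.
$$
The commutativity of the square then forces $(\lambda')^*(\Omega_{\Sh{A}}) = \Omega_{\Sh{A}'}$ as submodules of $\omega_{\Sh{A}'}$. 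Equating the two expressions for this submodule and cancelling the invertible module $\omega_{\Sh{A}'}$ yields
$$
\underline{\xi}\,\widetilde{HW} \;=\; \tilde{F}^*\underline{\xi},
$$
which is the desired identity.

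The only place that truly requires verification, and really the sole substantive ingredient, is the commutativity of the square itself, i.e.\ that $(\lambda')^*$ carries $\Omega_{\Sh{A}}$ onto $\Omega_{\Sh{A}'}$. This is precisely the functoriality of the $\dlog$ map under the isogeny $\lambda'$: the marked section defining $\Omega_{\Sh{A}'}$ is $\dlog$ applied to the generator of $H_n^{\vee}(\Sh{A}')$ produced from that of $H_n^{\vee}(\Sh{A})$ by $\lambda'$, which is the content of the definition of $\tilde{F}$. So once the diagram is justified by this compatibility, the lemma reduces to the trivial matter of reading off ideals from invertible submodules.
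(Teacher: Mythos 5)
Your proof is correct and makes explicit exactly what the paper's terse ``Follows from the discussion above'' is pointing at: the commutative square from $\dlog$-functoriality, the identification of $(\lambda')^*$ on $\omega_{\Sh{A}}$ as multiplication by the ideal $\widetilde{HW}$, and the identities $\Omega_{\Sh{A}} = \underline{\xi}\,\omega_{\Sh{A}}$, $\Omega_{\Sh{A}'} = (\tilde{F}^*\underline{\xi})\,\omega_{\Sh{A}'}$. The route is essentially the same as the paper's.
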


\begin{proof}
Follows from the discussion above.
\end{proof}

\begin{defn}
Define the $\sigma$-components $\widetilde{HW}(\sigma)$ of $\widetilde{HW}$ as the partial Hasse ideals.
\end{defn}

\begin{cor}
$\prod_{\sigma} \widetilde{HW}(\sigma) = \Hdg{}$.
\end{cor}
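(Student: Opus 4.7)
The proof reduces to a local computation together with the identification of the Hasse ideal as a determinant. The plan is as follows. Work on a small enough affine chart $\Spf R \subset \Ig{1}{r}$ on which $\omega_{\Sh{A}}$ and $\omega_{\Sh{A}'}$ are free $\Sh{O}_L \otimes R$-modules of rank one. Pick generators and write the adjoint map $(\lambda')^*$ on $\omega_{\Sh{A}}$ as multiplication by a single element $h \in \Sh{O}_L \otimes R$; by construction this is exactly the lift $\widetilde{HW}$ of the Hasse--Witt map produced in Lemma \ref{L602}, so locally the invertible ideal $\widetilde{HW}$ is generated by $h$.

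Because $p$ is unramified in $L$ and $K$ splits $L$, the base change gives $\Sh{O}_L \otimes R \simeq \prod_{\sigma \in \Sigma} R$, under which $h = (h_\sigma)_{\sigma \in \Sigma}$. This is precisely the decomposition appearing in the splitting $\Sh{O}_L \otimes \Sh{O}_{\Xra{r}} \simeq \prod_{\sigma} \Sh{O}_{\Xra{r}}$ used right before the corollary, so $\widetilde{HW}(\sigma) = (h_\sigma) \subset R$ for every $\sigma$. Consequently
\[
\prod_{\sigma \in \Sigma} \widetilde{HW}(\sigma) \;=\; \Big(\prod_{\sigma \in \Sigma} h_\sigma\Big) \;=\; \big(\mathrm{Nm}_{\Sh{O}_L \otimes R / R}(h)\big)
\]
as ideals of $R$.

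On the other hand, the ideal $\Hdg{}$ is (locally) the determinant of the $\Sh{O}_L$-linear lift $\widetilde{HW}$, viewed as an $R$-linear endomorphism of the rank-$g$ $R$-module $\omega_{\Sh{A}} = \bigoplus_\sigma R \cdot e_\sigma$; this was the content of the last sentence of the proof of Lemma \ref{L602}, where $\det\widetilde{HW}$ is taken as our fixed local lift $\Hdg{}$ of the Hasse invariant $\mathrm{Ha} = \det HW$. Since the determinant of multiplication by $h$ on $\Sh{O}_L \otimes R$ is exactly $\mathrm{Nm}_{\Sh{O}_L \otimes R / R}(h) = \prod_\sigma h_\sigma$, we obtain $\prod_\sigma \widetilde{HW}(\sigma) = \Hdg{}$ locally, and glueing over a cover of $\Ig{1}{r}$ gives the equality of ideal sheaves globally.

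There is no genuine obstacle here: the statement is a formal incarnation of the identity ``norm equals product of components'' in the split étale $R$-algebra $\Sh{O}_L \otimes R$, once one remembers that $\Hdg{}$ has been set up so as to be the determinant of $\widetilde{HW}$. The only care needed is to track the directions of the maps (to see that the ideal arising from $(\lambda')^*$ really is the same $\widetilde{HW}$ appearing in Lemma \ref{L602} via $\omega^\vee_{\Sh{A},0} = \widetilde{HW}\,\omega^\vee_{\Sh{A}}$), which follows from $\Sh{O}_L$-linearity and the fact that both constructions reduce modulo $p\Hdg{-1}$ to the classical Hasse--Witt map.
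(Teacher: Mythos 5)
Your proof is correct and takes essentially the same approach as the paper: the corollary is a direct consequence of the fact that $\det\widetilde{HW}=\Hdg{}$, combined with the observation that the $R$-determinant of an $\Sh{O}_L\otimes R\simeq\prod_\sigma R$-linear endomorphism is the product of its $\sigma$-components. The paper records this in one sentence; you have simply unwound the same argument on a local chart.
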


\begin{proof}
Immediate as the determinant of $\widetilde{HW}$ is $\Hdg{}$.
\end{proof}

We will prove a result relating the $\sigma$-components of $\underline{\xi}$ and $\widetilde{HW}$. For that we need to choose a numbering of $\Sigma$. Recall $p$ splits as $p = \mathfrak{P}_1 \cdots \mathfrak{P}_h$ in $\Sh{O}_L$ with their inertia degree $f(\mathfrak{P}_i | p) = f_i$. For each $i$, let $\Sigma_{\mathfrak{P}_i}$ be the subset of embeddings that induce the $\mathfrak{P}_i$-adic valuation on $L$. Choose a bijection $\Xi_i \colon \{i_1, \dots, i_{f_i}\} \simeq \Sigma_{\mathfrak{P}_i}$ such that $\Xi_i(i_{n+1}) = \Xi_i(i_n) \circ \varphi_{\mathfrak{P}_i}$, where $\varphi_{\mathfrak{P}_i}$ is the lift of the Frobenius in $D(\mathfrak{P}_i)$. 

\begin{cor}\label{C233}
$(\tilde{F}^*\omega_{\Sh{A}})(i_j) = \tilde{F}^*(\omega_{\Sh{A}}(i_{j-1}))$. In particular, $\tilde{F}^*(\underline{\xi}(i_{j-1})) = \underline{\xi}(i_j)\widetilde{HW}(i_j)$.
\end{cor}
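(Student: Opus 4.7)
My plan is organised in three steps, tracking how the $\Sigma$-grading on $\Sh{O}_L\otimes\Sh{O}$-modules is permuted under pullback by the Frobenius lift $\tilde{F}$.

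First, I would establish the crucial input: the comorphism $\tilde{F}^{\#}$ restricts to the Witt-vector Frobenius $\varphi$ on the subring $\Sh{O}_K \subset \Sh{O}_{\Ig{1}{r-1}}$, rather than to the identity. Modulo $p$ this is transparent from the moduli-theoretic construction of $\tilde{F}$: since $H_1$ reduces to $\ker F_{\Sh{A}/\Ig{1}{r}}$, the map $\tilde{F}$ reduces to the absolute Frobenius of $\Ig{1}{r}/p$, whose restriction to the residue subring $\Sh{O}_K/p$ is the $p$-power map. Uniqueness of Frobenius lifts on the unramified ring $\Sh{O}_K$ then forces $\tilde{F}^{\#}|_{\Sh{O}_K} = \varphi$.

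Granted this, I would trace the effect of $\tilde{F}^{\#}$ on the decomposition $\Sh{O}_L \otimes \Sh{O}_S \simeq \prod_{\sigma \in \Sigma} \Sh{O}_S$. The idempotent $e_\sigma$ cutting out the $\sigma$-factor is characterised by $(x \otimes 1)\,e_\sigma = (1 \otimes \sigma(x))\,e_\sigma$ for all $x \in \Sh{O}_L$. Since $\sigma(x) \in \Sh{O}_K$, the comorphism sends $\sigma(x) \mapsto \varphi(\sigma(x)) = (\varphi \circ \sigma)(x)$, so $\tilde{F}^{\#}(e_\sigma) = e_{\varphi \circ \sigma}$. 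Under the labelling $\Xi_i(i_{n+1}) = \Xi_i(i_n) \circ \varphi_{\mathfrak{P}_i} = \varphi \circ \Xi_i(i_n)$, the action of $\varphi$ on $\Sigma$ is just $i_n \mapsto i_{n+1}$. Consequently, for any $\Sh{O}_L \otimes \Sh{O}_{\Ig{1}{r-1}}$-module $M$, pullback satisfies $\tilde{F}^{*}(M(i_{j-1})) = (\tilde{F}^{*}M)(i_j)$; specialising to $M = \omega_{\Sh{A}}$ is the first assertion.

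Finally, for the ``in particular'' statement, I would take the $i_j$-component of Lemma \ref{L2301}, namely $\tilde{F}^{*}\underline{\xi} = \underline{\xi}\,\widetilde{HW}$, regarded as an identity of invertible ideals in $\Sh{O}_L \otimes \Sh{O}_{\Ig{1}{r}}$. The right-hand side's $i_j$-component is $\underline{\xi}(i_j)\widetilde{HW}(i_j)$, since a product of ideals in the product ring $\prod_\sigma \Sh{O}_{\Ig{1}{r}}$ decomposes factorwise; by the shift formula just proved, the left-hand side's $i_j$-component equals $\tilde{F}^{*}(\underline{\xi}(i_{j-1}))$. Equating the two yields the formula claimed. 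The only non-formal step is the first --- confirming that $\tilde{F}^{\#}$ acts as $\varphi$, not as the identity, on $\Sh{O}_K$; after that the rest is pure bookkeeping with idempotents.
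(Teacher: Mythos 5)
Your strategy tracks the paper's own one-line proof, and the final two steps (idempotent bookkeeping and specialising Lemma~\ref{L2301} componentwise) are fine. The weight of the argument rests entirely on the first step, the assertion that $\tilde{F}^{\#}$ restricts to the Witt-vector Frobenius on $\Sh{O}_K$. This is also the implicit content of the paper's commutative square over $\varphi$. But this assertion is not as transparent as you present it, and your justification for it has a gap. The map $\tilde{F}$ is by definition the classifying morphism of the assignment $\Sh{A} \mapsto \Sh{A}/H_1$, which is a functorial construction over $\Lambda^0_{\alpha,I}$-algebras (the canonical subgroup and the quotient commute with $\Lambda^0_{\alpha,I}$-linear base change). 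That functoriality on the face of it forces $\tilde{F}$ to be a morphism over $\Lambda^0_{\alpha,I}$, hence $\tilde{F}^{\#}$ to fix $\Sh{O}_K$; modulo $p$ one then gets the \emph{relative} Frobenius over $\Lambda^0_{\alpha,I}/p$, not the absolute Frobenius. Your ``uniqueness of Frobenius lifts'' step presupposes what needs to be shown, namely that the reduction of $\tilde{F}$ is the absolute rather than relative Frobenius, and as written it does not distinguish the two.

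The shift in the $\Sigma$-grading is nonetheless a true statement (it reproduces the known weight $(p\cdot i_{j-1},\, -i_j)$ of the partial Hasse invariant at $i_j$), but the mechanism is more delicate than ``$\tilde{F}^{\#}$ twists $\Sh{O}_K$.'' What actually produces the shift is the canonical identification, modulo a small power of $p$, of $\omega_{\Sh{A}'}$ with the pullback of $\omega_{\Sh{A}}$ along the \emph{absolute} Frobenius self-morphism of $\Ig{1}{r}/(p\Hdg{-1})$; that identification is $\varphi$-semilinear over $\Sh{O}_K$ (exactly the $\varphi$-semilinearity the paper records for the Hasse--Witt matrix in \S\ref{S2}), and it is this semilinearity, not a putative Frobenius-action of $\tilde{F}^{\#}$ on $\Sh{O}_K$, that permutes the idempotents. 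To repair the proposal, the claim ``$\tilde{F}^{\#}|_{\Sh{O}_K} = \varphi$'' should be replaced by an argument that the isomorphism $\omega_{\Sh{A}'}/\eta \simeq \omega_{\bar{\Sh{A}}^{(p)}}/\eta$ identifies the $i_j$-component of the left side with the absolute-Frobenius pullback of the $i_{j-1}$-component of $\omega_{\Sh{A}}$, which is where the $\varphi$-semilinear bookkeeping genuinely lives.
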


\begin{proof}
Recall that for any $\Sh{O}_L \otimes \Sh{O}_{\Ig{1}{r}}$-module $\Sh{F}$, $\Sh{F}(i_j)$ is the component on which $\Sh{O}_L$ acts via $\Xi_i(i_j)$. The claim follows immediately by noting that modulo $p\Hdg{-1}$, $\tilde{F}$ induces a morphism such that the following diagram commutes with $\varphi$ being the Frobenius.
\[
\begin{tikzcd}[column sep=large]
\Ig{1}{r}/(p\Hdg{-1}) \arrow[d] \arrow[r, "\tilde{F}"] & \Ig{1}{r-1}/(p\Hdg{-1}) \arrow[d] \\
\mathfrak{X}_{\alpha,I}/(p) \arrow[r, "\varphi"]                                    & \mathfrak{X}_{\alpha,I}/(p)                 
\end{tikzcd}
\]
\end{proof}

\begin{lemma}
$\Xra{r}$ and $\Ig{1}{r}$ are normal schemes.
\end{lemma}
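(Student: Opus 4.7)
The normality of $\Ig{1}{r}$ is essentially tautological once that of $\Xra{r}$ is known. By construction $\Ig{1}{r}$ is the normalization of $\Xra{r}$ inside the finite étale cover $\Sh{IG}_{1,r,I} \to \xra{r}$ of the adic generic fibre. The normalization of a reduced Noetherian (formal) scheme in a reduced finite extension of its ring of rational functions is normal by definition, so the whole problem reduces to showing $\Xra{r}$ is normal.

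The plan is to descend through the chain $\Xra{r} \to \mathfrak{X}_{\alpha,I} \to \mathfrak{X} \times \mathfrak{W}^0_{\alpha,I}$. First, $\mathfrak{X}$ is the $p$-adic completion of the smooth projective $\Z_p$-scheme $\bar M(\mu_N,\mathfrak{c})$ (smoothness follows from the Deligne--Pappas theorem, as $p$ is unramified in $L$, see \cite{DelignePappas}), hence is formally smooth, in particular regular. The formal model $\mathfrak{W}^0_{\alpha,I}$ is an admissible open of an admissible blow-up of $\Spf \Lambda^0$ at a regular ideal, and is therefore regular; the completed tensor product $\mathfrak{X}_{\alpha,I} = \mathfrak{X}\hat\times_{\Spf \Z_p}\mathfrak{W}^0_{\alpha,I}$ is then regular, hence normal.

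For $\Xra{r}$ itself, I would work locally. After restricting to an affine open $\Spf R \subset \mathfrak{X}_{\alpha,I}$ over which $\mathrm{Hdg}$ is defined, $\Xra{r}$ becomes
\[
\Spf R\langle \alpha/\mathrm{Hdg}^{p^{r+1}}\rangle \;=\; \Spf\bigl(R\langle T\rangle/(\mathrm{Hdg}^{p^{r+1}}T-\alpha)\bigr).
\]
I would apply Serre's criterion $(R_1)+(S_2)$. The $(S_2)$ condition follows because $R$ is regular and $(\alpha,\mathrm{Hdg}^{p^{r+1}})$ is a regular sequence in $R$: indeed, $\alpha\in\mathfrak{m}\setminus\mathfrak{m}^2$ is a regular element on the flat base $\mathfrak{W}^0_{\alpha,I}$, and by Theorem \ref{T2101} the Hasse invariant, and hence $\mathrm{Hdg}$, is a non-zero-divisor modulo $\alpha$ along every component of its vanishing locus. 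Thus $R\langle T\rangle/(\mathrm{Hdg}^{p^{r+1}}T-\alpha)$ is a relative complete intersection in a regular ring, hence Cohen--Macaulay.

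For $(R_1)$, the main obstacle (and the only delicate point) is to check regularity at the height-one primes lying over the exceptional locus $\{\mathrm{Hdg}^{p^{r+1}}T-\alpha = 0\}$. Away from this divisor one may invert $\mathrm{Hdg}$ and obtain a rational localization of the regular ring $R$. Along the divisor, the key input is again Theorem \ref{T2101}: since $\mathrm{Hdg}$ cuts out a reduced divisor on $\mathfrak{X}_{\alpha,I}/(\alpha)$, the equation $\mathrm{Hdg}^{p^{r+1}}T-\alpha$ has a nowhere-vanishing partial derivative at generic points of the divisor (the $T$-derivative $\mathrm{Hdg}^{p^{r+1}}$ vanishes precisely to multiplicity $p^{r+1}$ on a reduced base, and the $\alpha$-derivative $-1$ is a unit), so generic points of the exceptional divisor are regular by the Jacobian criterion. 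This concludes normality of $\Xra{r}$, and hence of $\Ig{1}{r}$.
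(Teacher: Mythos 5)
The paper itself dispatches this lemma by citing \cite[Corollary 3.8]{andreatta2016adic} for $\Xra{r}$ and then noting that $\Ig{1}{r}$, being a normalization, is normal. Your handling of $\Ig{1}{r}$ is identical, and the ambition of giving a self-contained Serre-criterion proof for $\Xra{r}$ is reasonable. The $(S_2)$ half is fine: $R\langle T\rangle/(\mathrm{Hdg}^{p^{r+1}}T-\alpha)$ is a hypersurface in a regular ring, hence Cohen--Macaulay, granted that $R$ is regular (which does need the caveat that $\mathfrak{W}^0_{\alpha,I}$ be regular; by the formulas of Lemma \ref{L406} the relation $uv=\alpha^{p^{b-a}}$ introduces an $A_{n}$-type singularity for $b-a\geq 1$, so this is only clear in the case $\alpha=p$, $I=[0,1]$ that the paper singles out).

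The genuine gap is in your $(R_1)$ step. You invoke the Jacobian criterion and appeal to ``the $\alpha$-derivative $-1$'', but $\alpha$ is a fixed element of the base ring, not an independent coordinate, and in the main case $\alpha=p$ there is no such derivative at all: $dp=0$ in $\Omega^1_{R/\Z_p}$, and since $p^{r+1}\geq 2$ the $T$-derivative $\mathrm{Hdg}^{p^{r+1}}$ also vanishes at the exceptional locus, so \emph{every} entry of the relative Jacobian vanishes there. The Jacobian criterion detects smoothness over the base, which indeed fails here; regularity is the weaker property you actually need and must be argued differently. The fix is to work with the graded ring rather than differentials: at a height-one prime $\mathfrak{p}=(\mathrm{Hdg})$ of $A$ lying over a generic point of $\mathrm{div}(\underline{\mathrm{Ha}})$, one has $\mathfrak{p}=(\alpha,\mathrm{Hdg})$ as an ideal of $R\langle T\rangle$, and since $\alpha,\mathrm{Hdg}$ is a regular sequence in the regular local ring $R\langle T\rangle_{\mathfrak{q}}$ (this is where Theorem \ref{T2101} enters), the associated graded ring is a polynomial ring. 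Then $f=\mathrm{Hdg}^{p^{r+1}}T-\alpha\equiv -\alpha\not\equiv 0\ (\mathrm{mod}\ \mathfrak{q}^2)$, so $f\notin\mathfrak{q}^2$ and $A_{\mathfrak{p}}=R\langle T\rangle_{\mathfrak{q}}/(f)$ is regular. Your phrase ``the $\alpha$-derivative is $-1$'' is groping toward exactly this --- the leading term $-\alpha$ survives --- but routing it through the Jacobian criterion is not a valid inference, and in the very case ($\alpha=p$) the paper cares most about it would give the opposite (and wrong) conclusion.
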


\begin{proof}
For the case of $\Xra{r}$ see \cite[Corollary 3.8]{andreatta2016adic}. The map $\Sh{IG}_{1,r,I} \to \xra{r}$ is finite \'{e}tale. Hence $\Sh{IG}_{1,r,I}$ is normal. Thus $\Ig{1}{r}$ is normal being the normalization of $\Xra{r}$ in $\Sh{IG}_{1,r,I}$.
\end{proof}

\begin{lemma}\label{L403}
For all $i_j$, $\tilde{F}^*(\underline{\xi}(i_j))$ is a $p$-th power at all height 1 localizations.
\end{lemma}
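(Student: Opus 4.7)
\emph{Proof proposal.} By the normality of $\Ig{1}{r}$ just established, the lemma reduces to showing that at every height-one prime $\eta$, the valuation $v_\eta(\tilde{F}^*\underline{\xi}(i_j))$ is divisible by $p$. Using Corollary \ref{C233} to rewrite $\tilde{F}^*\underline{\xi}(i_j) = \underline{\xi}(i_{j+1})\widetilde{HW}(i_{j+1})$, the task becomes to show that
\[
v_\eta(\underline{\xi}(\sigma)) + v_\eta(\widetilde{HW}(\sigma)) \;\equiv\; 0 \pmod{p}
\]
for every $\sigma \in \Sigma$ and every height-one $\eta$.

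When $v_\eta(\Hdg{}) = 0$, the product formulas $\prod_\sigma \underline{\xi}(\sigma) = \Hdg{1/(p-1)}$ and $\prod_\sigma \widetilde{HW}(\sigma) = \Hdg{}$ force every factor to be a unit at $\eta$, so the sum of valuations is zero. Every horizontal prime (i.e.\ one where $\alpha$ is a unit) falls into this case, since $\alpha \in (\Hdg{p^{r+1}})$ on $\Xra{r}$ forces $\Hdg{}$ to be a unit whenever $\alpha$ is.

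For $\eta$ with $v_\eta(\Hdg{}) > 0$, the plan is to first extract a key global identity by multiplying together the equations of Corollary \ref{C233} over all $\sigma \in \Sigma$ and comparing with the two product formulas above; this yields $\tilde{F}^*\Hdg{1/(p-1)} = \Hdg{p/(p-1)}$, and raising to the $(p-1)$-th power gives
\[
\tilde{F}^*\Hdg{} \;=\; \Hdg{p}
\]
as ideals on $\Ig{1}{r}$. I would then aim to prove the componentwise refinement $\widetilde{HW}(\sigma) = \underline{\xi}(\sigma)^{p-1}$ at every such $\eta$, from which the conclusion is immediate since $\underline{\xi}(\sigma)\widetilde{HW}(\sigma) = \underline{\xi}(\sigma)^p$. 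This refinement should follow from the local $\dlog$-description of $\underline{\xi}(\sigma)$ together with the formal-group structure of $H_1$: in $\sigma$-directions where the partial Hasse invariant vanishes, $H_1(\sigma)$ looks like $\mu_p$ modulo $p\Hdg{-1}$, and the $[p]$-series relation $[p](T) = (1+T)^p - 1$ at a generator pins down $v_\eta(\dlog(P^{\textrm{univ}})(\sigma))$ to be exactly $v_\eta(\widetilde{HW}(\sigma))/(p-1)$; in the remaining (étale) directions both sides are units.

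The main obstacle will be making this formal-group argument precise componentwise across all embeddings, accounting for the $\mathfrak{P}_i$-orbit structure of $\Sigma$ under Frobenius and the $\Sh{O}_L$-action on $H_1$. An alternative route I would try in parallel is to bypass the componentwise identity by working directly with $\tilde{F}$ as a Frobenius lift modulo $p\Hdg{-1}$: using the functoriality $\tilde{F}^*\dlog(P^{\textrm{univ}}) = (\lambda')^*\dlog(P^{\textrm{univ}})$ that produced Corollary \ref{C233}, establish a Frobenius-type congruence for the local generator $\tilde\xi_\sigma$ of $\underline{\xi}(\sigma) = (\tilde\xi_\sigma, p\Hdg{-1})$, and then compare the valuations of the two generators of $\tilde{F}^*\underline{\xi}(\sigma) = (\tilde{F}^*\tilde\xi_\sigma,\, p\Hdg{-p})$, using $p \in (\Hdg{p^{r+1}})$ and $r \geq 1$ to force each to be divisible by $p$ individually.
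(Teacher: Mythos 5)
Your reduction to height-one primes and the extraction of the global identity $\tilde{F}^*\Hdg{} = \Hdg{p}$ on $\Ig{1}{r}$ (by taking the $\Sigma$-product of the relations in Corollary \ref{C233} and invoking $\prod_\sigma \underline{\xi}(\sigma)^{p-1} = \Hdg{} = \prod_\sigma \widetilde{HW}(\sigma)$) are both correct and are in fact the right place to start. But the componentwise refinement $\widetilde{HW}(\sigma) = \underline{\xi}(\sigma)^{p-1}$ on which your main plan rests is not true. The relation predicted by Goren's partial Hasse invariant theory is $\underline{\xi}(i_{j-1})^p = \underline{\xi}(i_j)\widetilde{HW}(i_j)$, which is compatible with your identity only when all the $\underline{\xi}(\sigma)$ coincide; and the remark immediately after the lemma says explicitly that the authors cannot prove even the weaker consequence $\tilde{F}^*\underline{\xi}(i_j) = \underline{\xi}(i_j)^p$, so pursuing a stronger componentwise identity is a dead end. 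Your alternative route also has a gap: from $p \in (\Hdg{p^{r+1}})$ you get only a lower bound $v_\eta(p) \geq p^{r+1}v_\eta(\Hdg{})$, not the residue of $v_\eta(p)$ mod $p$, so you cannot conclude $v_\eta(p\Hdg{-p}) \equiv 0 \pmod p$ that way.

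The finish you are missing is a short ramification argument needing no componentwise input. Let $\mathfrak{p}$ be a height-one prime of $S_r = \Sh{O}_{\Ig{1}{r}}$ on $V(\Hdg{})$, with image $\mathfrak{q}$ in $R_r = \Sh{O}_{\Xra{r}}$, and let $\mathfrak{p}',\mathfrak{q}'$ be the corresponding primes upstairs under $\tilde{F}$. By Theorem \ref{T2101} the Hasse invariant vanishes with multiplicity one, so $\Hdg{}$ uniformizes the DVRs $R_{r,\mathfrak{q}}$ and $R_{r-1,\mathfrak{q}'}$. Your identity $\tilde{F}^*\Hdg{} = \Hdg{p}$ descends from $\Ig{1}{r}$ to $\Xra{r}$ (both ideals are invertible and $h$ is finite dominant), so the bottom arrow $\tilde{F}^*\colon R_{r-1,\mathfrak{q}'} \to R_{r,\mathfrak{q}}$ has ramification index $p$. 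The Igusa maps $h,h'$ have degree $\#(\Sh{O}_L/p\Sh{O}_L)^\times$, prime to $p$, hence are tamely ramified and $e_h, e_{h'}$ are coprime to $p$. Multiplicativity of ramification indices around the square gives $e_{\mathrm{top}}\, e_{h'} = e_h\, p$, so $p \mid e_{\mathrm{top}}$; thus $\tilde{F}^*$ multiplies every $\mathfrak{p}$-adic valuation by a multiple of $p$, which is what the lemma asserts.
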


\begin{proof}
Since this is an equality of ideals in a normal scheme it is enough to check the statement locally at height 1 primes. So choose an affine open $U$ in $\mathfrak{X}_{\alpha,I}$ such that the pullback of $\omega_{\Sh{A}}$ to $U/(p)$ is trivial as an $\Sh{O}_L \otimes \Sh{O}_{\mathfrak{X}_{\alpha,I}}/(p)$-module. Let $\Spf{R_{r-1}}$ and $\Spf{R_r}$ be its inverse image in $\Xra{r-1}$ and $\Xra{r}$ respectively. Let $S_{r-1}$ and $S_r$ be their respective inverse images in $\Ig{1}{r-1}$ and $\Ig{1}{r}$. So $\tilde{F}$ induces a commutative diagram as follows.
\[
\begin{tikzcd}
S_{r-1} \arrow[r, "\tilde{F}^*"]                & S_r                \\
R_{r-1} \arrow[r, "\tilde{F}^*"] \arrow[u, "h'"] & R_r \arrow[u, "h"]
\end{tikzcd}
\]
Pick a height 1 prime $\mathfrak{p} \in \Spf{S_r}$ that contains a local generator of $\underline{\xi}(i_j)$. Since $\prod_{i,j} \underline{\xi}(i_j)^{p-1} = \Hdg{}$ \cite[Proposition A.3]{Andreatta2018leHS}, $\mathfrak{p} \in V(\Hdg{})$. Let $\mathfrak{q} = h^{-1}\mathfrak{p}$. Then $\mathfrak{q}$ is a height 1 prime containing $\Hdg{}$. In particular $\mathfrak{q}{R_r}_q$ is generated by $\Hdg{}$ as $\Hdg{}$ has simple zeroes along degree 1 divisors in $\mathfrak{X}_{\alpha,I}/(p)$ (Theorem \ref{T2101}). Let $\mathfrak{p}' = (\tilde{F}^*)^{-1}\mathfrak{p}$ and $\mathfrak{q}' = (\tilde{F}^*)^{-1}\mathfrak{q}$. Localizing at the primes gives a diagram as follows.
\[
\begin{tikzcd}
{S_{r-1}}_{\mathfrak{p}'} \arrow[r, "\tilde{F}^*"]                 & {S_r}_{\mathfrak{p}}                \\
{R_{r-1}}_{\mathfrak{q}'} \arrow[r, "\tilde{F}^*"] \arrow[u, "h'"] & {R_r}_{\mathfrak{q}} \arrow[u, "h"]
\end{tikzcd}
\]
All the rings are DVR. The bottom arrow has ramification index $p$. $h$ and $h'$ are tamely ramified. This forces the upper arrow to be ramified of index $p$. This proves the lemma.
\end{proof}

\begin{rem}
It seems that in fact $\tilde{F}^*(\underline{\xi}(i_j)) = \underline{\xi}(i_j)^p$. This would imply that $\underline{\xi}(i_{j-1})^p = \underline{\xi}(i_j)\widetilde{HW}(i_j)$ which reflects the fact that the partial Hasse invariant of degree $i_j$ is of weight $(p, -1)$ concentrated at degree $(i_{j-1}, i_j)$ \cite[Theorem 2.1]{gorenhasse}. Moreover this shows a posteriori that $\prod_{i,j}\underline{\xi}(i_j)^{p-1} = \Hdg{}$. But as of now we are not able to prove this.
\end{rem}

\begin{prop}\label{P232}
There exists an $r$ large enough such that the map $(\lambda')^* \colon \mathrm{H}_{\Sh{A}} \to \mathrm{H}_{\Sh{A}'}$ restricts to a well-defined map $(\lambda')^* \colon \Hs{A} \to \Hs{A'}$ that sends marked sections to marked sections. Moreover, let $\Sh{Q} \subset \Hs{A}/p\Hdg{}(\Sh{A})^{-(p+1)}$ be the kernel of the marked splitting, and let $\Sh{Q}' \subset \Hs{A'}/p\Hdg{}(\Sh{A})^{-(p+1)}$ be the same for $\Sh{A}'$. Then $(\lambda')^*$ sends $\Sh{Q}$ to $\Sh{Q}'$.
\end{prop}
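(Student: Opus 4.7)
The plan is to verify the proposition in three parts: restriction of the integral structure, preservation of the marked section, and preservation of the kernel of the marked splitting. For the first part, I would reduce to showing $(\lambda')^{*}(\mathrm{H}'_{\Sh{A}}) \subseteq \widetilde{HW}\,\mathrm{H}'_{\Sh{A}'}$. Indeed, $\Hs{A} = \underline{\xi}\,\mathrm{H}'_{\Sh{A}}$, and Lemma \ref{L2301} identifies $\underline{\xi}(\Sh{A}') = \underline{\xi}\,\widetilde{HW}$, so $\Hs{A'} = \underline{\xi}\,\widetilde{HW}\,\mathrm{H}'_{\Sh{A}'}$ and the invertible ideal $\underline{\xi}$ cancels out. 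On the subsheaf $\omega_{\Sh{A}} \subset \mathrm{H}'_{\Sh{A}}$ this is immediate from the commutative square preceding Lemma \ref{L2301}, which yields $(\lambda')^{*}(\omega_{\Sh{A}}) = \widetilde{HW}\,\omega_{\Sh{A}'} \subseteq \widetilde{HW}\,\mathrm{H}'_{\Sh{A}'}$. For the $\omega^{\vee}_{\Sh{A},0} = \widetilde{HW}\,\omega^{\vee}_{\Sh{A}}$-part, I would work locally in bases compatible with the Hodge filtration and exploit the identity $\lambda^{*} \circ (\lambda')^{*} = [p]^{*} = p$ on $\mathrm{H}_{\Sh{A}}$. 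Since $\lambda$ lifts the relative Frobenius modulo $p/\Hdg{}$, the $\omega^{\vee}$-block of $\lambda^{*}$ is a lift $\widetilde{HW}$ of the Hasse-Witt matrix; inverting the identity above yields that the $\omega^{\vee}$-block of $(\lambda')^{*}$ equals $(p/\Hdg{})\,\widetilde{HW}^{\iota}$ up to a unit, where $\widetilde{HW}^{\iota}$ is the adjugate. Multiplying by the extra factor $\widetilde{HW}$ that comes from the integral structure of $\omega^{\vee}_{\Sh{A},0}$ and using $\widetilde{HW}\cdot\widetilde{HW}^{\iota} = \Hdg{}$ shows that the image lands in $p\,\omega^{\vee}_{\Sh{A}'}$, which for $r$ sufficiently large is contained in $\widetilde{HW}(\Sh{A}')\,\omega^{\vee}_{\Sh{A}'} = \omega^{\vee}_{\Sh{A}',0}$, as required.

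For the marked section, the construction of $\tilde F$ in \S\ref{2S311} identifies $P^{\textrm{univ}}(\Sh{A}') \in H_n^{\vee}(\Sh{A}')$ with the image of $P^{\textrm{univ}}(\Sh{A})$ under the canonical isomorphism $H_n^{\vee}(\Sh{A}) \xrightarrow{\sim} H_n^{\vee}(\Sh{A}')$ on the generic fibre induced by the dual isogeny of $\lambda'$. Functoriality of $\dlog$ then gives $(\lambda')^{*}\dlog(P^{\textrm{univ}}(\Sh{A})) = \dlog(P^{\textrm{univ}}(\Sh{A}'))$ as sections of $\Omega_{\Sh{A}'}/\beta_n$, so the marked section is preserved. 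For the marked splitting, the splitting of $\Hs{A}$ modulo $p\,\Hdg{}(\Sh{A})^{-(p+1)}$ is obtained from the splitting of $\mathrm{H}'_{\Sh{A}}$ modulo $p/\Hdg{}$ in Proposition \ref{P202}, which itself comes from the unit root splitting on the reduction. The unit root subspace is characterized as the unique Frobenius-stable complement to $\omega$ on which Frobenius acts invertibly. Since $\lambda'$ is a lift of Verschiebung and $F \circ V = V \circ F = [p]$, the map $(\lambda')^{*}$ intertwines the Frobenius actions on the reductions of $\mathrm{H}_{\Sh{A}}$ and $\mathrm{H}_{\Sh{A}'}$, hence carries the unit root subspace of $\mathrm{H}'_{\Sh{A}}$ into that of $\mathrm{H}'_{\Sh{A}'}$. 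Multiplying through by $\underline{\xi}$ and tracking the power of $\Hdg{}$ that survives yields $(\lambda')^{*}(\Sh{Q}) \subseteq \Sh{Q}'$.

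The main technical obstacle is the $\Hdg{}$-bookkeeping in the first step: one must verify that $(p)$ is contained in the Hasse ideal $\widetilde{HW}(\Sh{A}')$ on $\Ig{1}{r}$. This requires a comparison of the partial Hasse ideals of $\Sh{A}$ and $\Sh{A}'$ via Corollary \ref{C233}, together with the defining property of the blow-up $\Xra{r}$ that $\Hdg{p^{r+1}}$ divides $\alpha$, and taking $r$ large enough to absorb the relevant powers. A similar but lighter bookkeeping will be needed at the end of the splitting argument to ensure that the unit root structure survives modulo the specified power of $\Hdg{}$ after multiplication by $\underline{\xi}$. The $\sigma$-shift present in Corollary \ref{C233} relating $\underline{\xi}(i_{j-1})$ and $\underline{\xi}(i_{j})\widetilde{HW}(i_j)$ must also be tracked carefully when splitting everything into its components under $\Sh{O}_L \otimes \Sh{O}_{\Ig{1}{r}} \simeq \prod_{\sigma \in \Sigma}\Sh{O}_{\Ig{1}{r}}$.
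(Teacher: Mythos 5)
Your outline captures the right ideas, and the reduction via $\Hs{A}=\underline{\xi}\,\mathrm{H}'_{\Sh{A}}$, $\Hs{A'}=\underline{\xi}\widetilde{HW}\,\mathrm{H}'_{\Sh{A}'}$ is correct. However, two points need to be tightened.

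First, the $\Hdg{}$-bookkeeping in the integral-structure step is off by a factor. After your computation the $\omega^{\vee}$-block of $(\lambda')^*$ sends $\omega^{\vee}_{\Sh{A},0}=\widetilde{HW}(\Sh{A})\,\omega^{\vee}_{\Sh{A}}$ into $p\,\omega^{\vee}_{\Sh{A}'}$, and the target you need to hit is $\widetilde{HW}(\Sh{A})\,\omega^{\vee}_{\Sh{A}',0}=\widetilde{HW}(\Sh{A})\widetilde{HW}(\Sh{A}')\,\omega^{\vee}_{\Sh{A}'}$, not just $\omega^{\vee}_{\Sh{A}',0}$. So the condition to verify is $p\in\widetilde{HW}(\Sh{A})\widetilde{HW}(\Sh{A}')$, not $p\in\widetilde{HW}(\Sh{A}')$. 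Both hold for $r$ sufficiently large since $\det\widetilde{HW}(\Sh{A})=\Hdg{}$ and $\det\widetilde{HW}(\Sh{A}')=\tilde F^*\Hdg{}$, so the conclusion is unaffected, but as written the estimate is too weak. This is exactly the condition the paper isolates: it reduces to $p/(\widetilde{HW}\cdot\tilde F^*\widetilde{HW})\subset\Sh{O}_L\otimes\Sh{O}_{\Ig{n}{r}}$.

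Second, for the splitting, your appeal to the Frobenius characterization of the unit root subspace is a legitimate alternative route, but it needs more care than "$(\lambda')^*$ intertwines Frobenius, hence preserves the unit root subspace; now multiply by $\underline{\xi}$." The characterization by Frobenius-invertibility is literally valid only over $X^{\mathrm{ord}}_{\F{p}}$; on the complement the sheaf $\bar{\mathrm{H}}'_{\Sh{A}}$ is defined by the preimage construction, and the statement you need is about the reduction modulo $p\Hdg{-(p+1)}$ on $\Ig{n}{r}$, whose relation to the special fibre passes through the blow-up and the ideal $\underline{\xi}$. The paper sidesteps all of this by a single local computation: with a basis $\{\xi e,\xi v\}$ of $\Hs{A}$ adapted to the marked section and $\Sh{Q}$, it writes the matrix of $(\lambda')^*$ explicitly and observes that the $(1,2)$-entry $C+B-p\tilde F^*(C\widetilde{HW}^{-1})\widetilde{HW}^{-1}$ vanishes modulo $p\Hdg{-(p+1)}$; the key input is $\bar B=-\bar C$, i.e. that the unit root subspace is the kernel of Verschiebung. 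That single computation delivers both the restriction claim and the splitting claim at once. You should either carry out the analogous local computation explicitly to pin down the modulus, or at minimum explain how the Frobenius characterization over $X^{\mathrm{ord}}_{\F{p}}$ propagates across the non-ordinary part of $\Ig{n}{r}/(p\Hdg{-(p+1)})$, since density plus normality is what licenses the passage and that should be said.
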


\begin{proof}
Since $(\lambda')^*$ maps $\Omega_{\Sh{A}}$ isomorphically onto $\Omega_{\Sh{A}'}$ sending the marked section to the marked section, it is enough to show that the induced map $\Hs{A}/\Omega_{\Sh{A}} \to \mathrm{H}_{\Sh{A}'}/\Omega_{\Sh{A}'}$ factors through the inclusion $\Hs{A'}/\Omega_{\Sh{A}'} \xhookrightarrow{} \mathrm{H}_{\Sh{A}'}/\Omega_{\Sh{A}'}$. Choosing suitable local generators $\xi$ of $\underline{\xi}({\Sh{A}})$ and $\widetilde{HW}$ of $\widetilde{HW}({\Sh{A}})$ respectively as $\Sh{O}_L \otimes \Sh{O}_{\Ig{n}{r}}$-modules, we have a diagram as follows.
\[\begin{tikzcd}
	& {\Hs{A'}/\Omega_{\Sh{A}'}} & \tilde{F}^*(\underline{\xi}({\Sh{A}})\widetilde{HW}({\Sh{A}}))\cdot\omega^{\vee}_{\Sh{A}'} \\
	{\Hs{A}/\Omega_{\Sh{A}}} & {\mathrm{H}_{\Sh{A}'}/\Omega_{\Sh{A}'}} & {\mathrm{H}_{\Sh{A}'}/\omega_{\Sh{A}'} \simeq \omega^{\vee}_{\Sh{A}'}} \\
	& \underline{\xi}({\Sh{A}})\widetilde{HW}({\Sh{A}})\cdot\omega^{\vee}_{\Sh{A}}
	\arrow["(\lambda')^*", from=2-1, to=2-2]
	\arrow["\pi", from=2-2, to=2-3]
	\arrow[hook, from=1-2, to=2-2]
	\arrow["\simeq"', from=2-1, to=3-2]
	\arrow["\simeq", "i"', from=1-2, to=1-3]
	\arrow["{p\widetilde{HW}^{-1}\xi\widetilde{HW}}"', from=3-2, to=2-3]
	\arrow["\tilde{F}^*(\xi\widetilde{HW})", from=1-3, to=2-3]
\end{tikzcd}\]
Here the bottom right diagonal arrow is the map induced on the Lie algebra by $\lambda \colon \Sh{A} \to \Sh{A}'$. Choosing basis of $\omega^{\vee}_{\Sh{A}}$ and of $\omega^{\vee}_{\Sh{A}'}$, this $\Sh{O}_L \otimes \Sh{O}_{\Ig{n}{r}}$-linear map is multiplication by $p\widetilde{HW}^{-1}$ upto a unit. Hence we get a description of the arrow as in the diagram.

Let $\Hdg{} = \Hdg{}({\Sh{A}})$ in this proof. Since multiplication by $p\xi$ is injective, we see that the image of $\Hs{A}/\Omega_{\Sh{A}}$ under $(\lambda')^*$ does not intersect $\omega_{\Sh{A}'}/\Omega_{\Sh{A}'}$. We first show that $\pi \circ (\lambda')^*$ factors through the submodule $\tilde{F}^*(\underline{\xi}({\Sh{A}})\widetilde{HW}({\Sh{A}}))\omega^{\vee}_{\Sh{A}'} \subset \omega^{\vee}_{\Sh{A}'}$. For this it is enough to show that $p\underline{\xi}({\Sh{A}}) \subset \tilde{F}^*(\underline{\xi}({\Sh{A}})\widetilde{HW}({\Sh{A}}))$. Using Lemma \ref{L2301} this reduces to showing $p/(\widetilde{HW}\tilde{F}^*(\widetilde{HW})) \subset \Sh{O}_L \otimes \Sh{O}_{\Ig{n}{r}}$ which can be ensured by choosing large enough $r$ since $\det{\widetilde{HW}} = \Hdg{}$. This proves that the map $(\lambda')^* - i^{-1}\circ \pi \circ (\lambda')^*$ factors through $\omega_{\Sh{A}'}/\Omega_{\Sh{A}'}$ which is a torsion module killed by $\tilde{F}^*(\underline{\xi}({\Sh{A}})) = \underline{\xi}({\Sh{A}})\widetilde{HW}({\Sh{A}})$ (Lemma \ref{L2301}). Now since $\Hs{A} = \Omega_{\Sh{A}} + \underline{\xi}({\Sh{A}})\widetilde{HW}({\Sh{A}})\cdot\mathrm{H}_{\Sh{A}}$, the difference $(\lambda')^* - i^{-1}\circ \pi \circ (\lambda')^* = 0$. This proves the first claim.

The second claim follows from a local computation. Choose an open affine $\Spf{R} = U \subset \X$ such that $\mathrm{H}_{\Sh{A}}$ admits an $\Sh{O}_L \otimes R$ basis $\{e, f\}$ over $U$ with $e$ a basis of $\omega_{\Sh{A}}$. Let $\{\bar{e}, \bar{f}\}$ be their image over $\Spf{R}/p$. The unit root subspace is generated by a vector $\bar{v} = \bar{C}\bar{e} + HW\bar{f}$ for some $\bar{C} \in (\Sh{O}_L \otimes R)/p$. Let $\varphi \colon R/p \to R/p$ be the Frobenius. With respect to the basis $\{\bar{e},\bar{f}\}$ of ${\mathrm{H}}_{\bar{\Sh{A}}}$ and $\{\bar{e}^{(p)} := \varphi^*(\bar{e}), \bar{f}^{(p)} := \varphi^*(\bar{f})\}$ of ${\mathrm{H}}_{\bar{\Sh{A}}^{(p)}}$, the matrix of Verschiebung $V \colon {\mathrm{H}}_{\bar{\Sh{A}}} \to {\mathrm{H}}_{\bar{\Sh{A}}^{(p)}}$ can be written as
\[
V = 
\begin{pmatrix}
HW & \bar{B} \\
0 & 0
\end{pmatrix}.
\]
Since Verschiebung kills the unit root subspace, $V(\bar{C}\bar{e} + HW\bar{f}) = (\bar{C}HW + \bar{B}HW)\bar{e} = 0$. This shows that $\bar{B} = -\bar{C}$. Let $\Spf{R}_{n, r} \subset \Ig{n}{r}$ (resp. $\Spf{R}_{n, r-1} \subset \Ig{n}{r-1}$) be the inverse image of $U$ in $\Ig{n}{r}$ (resp. $\Ig{n}{r-1}$). Then as discussed in \S\ref{S2}, $\mathrm{H}'_{\Sh{A}}$ over $\Ig{n}{r}$ is generated by the pullback of $e$ and a lift $v = Ce + \widetilde{HW}f$ of $\bar{v}$. For notational simplicity we will write these sections as $\{e,v\}$ still. Similarly, $\mathrm{H}'_{\Sh{A}'}$ is generated by pulling back via $\tilde{F} \colon \Spf{R}_{n, r} \to \Spf{R}_{n,r-1}$ the pullbacks of $e$ and $v$ to $\Spf{R}_{n,r-1}$. We will write these as $\{\tilde{F}^*e, \tilde{F}^*v\}$. Then with respect to $\{e, v\}$ and $\{\tilde{F}^*e, \tilde{F}^*v\}$ the matrix of $(\lambda')^*$ can be described as
\begin{align*}
(\lambda^{\vee})^* &= 
{\begin{pmatrix}
1 & -\tilde{F}^*(C\widetilde{HW}^{-1}) \\
0 & \tilde{F}^*(\widetilde{HW}^{-1})
\end{pmatrix}}\cdot
\begin{pmatrix}
\widetilde{HW} & B \\
0 & p\widetilde{HW}^{-1}
\end{pmatrix}\cdot
\begin{pmatrix}
1 & C \\
0 & \widetilde{HW}
\end{pmatrix} \\
&= 
\begin{pmatrix}
\widetilde{HW} & C\widetilde{HW}{} + B\widetilde{HW} - p\tilde{F}^*(C\widetilde{HW}^{-1}) \\
0 & p\tilde{F}^*(\widetilde{HW}^{-1})
\end{pmatrix}.
\end{align*}
Here $B$ is a lift of $\bar{B}$ modulo $p\Hdg{-1}$. Therefore with respect to the basis $\{\xi e, \xi v\}$ and $\{\tilde{F}^*(\xi e), \tilde{F}^*(\xi v)\}$, the matrix of $(\lambda')^* \colon \Hs{A} \to \Hs{A'}$ is written as 
\[
(\lambda')^* =
\begin{pmatrix}
1 & C + B - p\tilde{F}^*(C\widetilde{HW}^{-1})\widetilde{HW}^{-1} \\
0 & p\widetilde{HW}^{-1}\tilde{F}^*(\widetilde{HW}^{-1})
\end{pmatrix}.
\]
Since $\det{\widetilde{HW}} = \Hdg{}$, this proves the second claim of the lemma.
\end{proof}

\begin{cor}\label{C314}
    The kernel of the marked splitting $\Sh{Q}$ equals the kernel of Verschiebung modulo $p\Hdg{-(p+1)}$.
\end{cor}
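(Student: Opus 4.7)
The proof is a direct consequence of the local matrix computation already performed in the proof of Proposition \ref{P232}. The plan is to show that the Verschiebung $(\lambda')^*$, when reduced modulo $p\Hdg{-(p+1)}$, becomes the matrix $\left(\begin{smallmatrix} 1 & 0 \\ 0 & 0 \end{smallmatrix}\right)$ in the natural local basis of $\Hs{A}$, whose kernel is visibly the submodule generated by $\xi v$, i.e.\ $\Sh{Q}$.

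Recall that in the bases $\{\xi e, \xi v\}$ of $\Hs{A}$ and $\{\tilde{F}^*(\xi e), \tilde{F}^*(\xi v)\}$ of $\Hs{A'}$, the map $(\lambda')^*$ has matrix
\[
\begin{pmatrix} 1 & \alpha \\ 0 & \beta \end{pmatrix},
\]
with $\alpha = C + B - p\tilde{F}^*(C\widetilde{HW}^{-1})\widetilde{HW}^{-1}$ and $\beta = p\widetilde{HW}^{-1}\tilde{F}^*(\widetilde{HW}^{-1})$. The key step is to verify $\alpha, \beta \in p\Hdg{-(p+1)}$. For $\beta$, a componentwise check suffices: each partial Hasse ideal $\widetilde{HW}(\sigma)$ divides $\Hdg{}$ (since $\prod_\sigma \widetilde{HW}(\sigma) = \Hdg{}$), while $\tilde{F}^*\Hdg{}$ coincides with $\Hdg{p}$ up to a unit, because $\tilde{F}$ lifts the Frobenius, $\varphi^*\mathrm{Ha} = \mathrm{Ha}^p$, and $\mathrm{Ha}$ has simple zeros by Theorem \ref{T2101}. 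Hence $\widetilde{HW}(\sigma)\tilde{F}^*\widetilde{HW}(\sigma)$ divides $\Hdg{p+1}$, so $\beta(\sigma) \in p\Hdg{-(p+1)}$. The divisibility $\alpha \in p\Hdg{-(p+1)}$ is precisely the content of the second claim of Proposition \ref{P232}, established there by combining the congruence $B \equiv -C \pmod{p\Hdg{-1}}$ (coming from Verschiebung killing the unit root modulo $p$, together with $\Hdg{-1} \subset \Hdg{-(p+1)}$) with the bound on $p\tilde{F}^*(C\widetilde{HW}^{-1})\widetilde{HW}^{-1}$ coming from the estimate on $\widetilde{HW}\tilde{F}^*\widetilde{HW}$ above.

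With both $\alpha$ and $\beta$ lying in $p\Hdg{-(p+1)}$, the reduction of $(\lambda')^*$ modulo $p\Hdg{-(p+1)}$ is represented by $\left(\begin{smallmatrix} 1 & 0 \\ 0 & 0 \end{smallmatrix}\right)$. Its kernel in $\Hs{A}/p\Hdg{-(p+1)}\Hs{A}$ is the $\Sh{O}_{\Ig{n}{r}}$-submodule generated by the class of $\xi v$, which by definition is $\Sh{Q}$. Since $(\lambda')^*$ is by construction a lift of Verschiebung on de Rham cohomology, this yields the identification asserted in the corollary.

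The only substantive step is the estimate $\alpha \in p\Hdg{-(p+1)}$, already handled inside Proposition \ref{P232}; everything else in the present argument is formal matrix algebra together with the $\sigma$-componentwise divisibility estimate for $\beta$.
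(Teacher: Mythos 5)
Your proof is correct and takes essentially the same route as the paper, whose own justification is simply to invoke the matrix computation in the proof of Proposition \ref{P232}: reducing that matrix modulo $p\Hdg{-(p+1)}$ gives $\left(\begin{smallmatrix}1&0\\0&0\end{smallmatrix}\right)$, whose kernel is the class of $\xi v$, i.e.\ $\Sh{Q}$. You in fact supply one detail the paper leaves implicit — that the lower-right entry $p\widetilde{HW}^{-1}\tilde{F}^*(\widetilde{HW}^{-1})$ lies in $p\Hdg{-(p+1)}$ (not merely that it is integral, which is all the second claim of Proposition \ref{P232} requires) — and this is exactly what is needed for the kernel to be all of $\Sh{Q}$ rather than a proper submodule.
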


\begin{proof}
    Follows from the proof of Proposition \ref{P232}.
\end{proof}

\begin{defn}\label{D315}
    For a prime $\mathfrak{P}|p$, define the map $\tilde{F}^*_{\mathfrak{P}} \colon \Sh{O}_L\otimes \Sh{O}_{\Xra{r-1}} \to \Sh{O}_L \otimes \Sh{O}_{\Xra{r}}$ (and similarly for $\Ig{n}{r-1})$ that coincides with $\tilde{F}^*$ on the $\mathfrak{P}$-component and identity on the rest.
\end{defn}

\begin{rem}
    For each $\mathfrak{P} | p$, one can consider the partial Hasse invariant $\Hdg{}_{\mathfrak{P}} = \prod_{\sigma | \mathfrak{P}}\widetilde{HW}(\sigma)$. Then for a multi-index $\mathbf{r} = (r_1, \dots, r_h)$, one can consider the formal model $\Xra{\mathbf{r}}$ whose generic fibre is defined by the relations $|p|\leq \prod_{i=1}^h|\Hdg{}_{\mathfrak{P}_i}|^{p^{r_i}}$, constructed in the same fashion as $\Xra{r}$ using formal admissible blow-ups. In particular, for $r = \max\{r_1, \dots, r_h\}$, and $s = \min\{r_1, \dots, r_h\}$, we have $\Xra{r} \subset \Xra{\mathbf{r}} \subset \Xra{s}$. The map on the generic fibre of the overconvergent moduli schemes defined by sending $A \mapsto A/H_1\cap A[\mathfrak{P}_i]$, induces a map $\tilde{F}_{\mathfrak{P}_i}\colon \Xra{{r}}^{\mathfrak{c}} \to \Xra{\mathbf{r}'}^{\mathfrak{cP}_i}$, where $\mathbf{r}'$ is the multi-index with $r-1$ in the $i$-{th} place and $r$ elsewhere. It is easy to see that the pullback of the modular sheaf over $\Xra{\mathbf{r}'}^{\mathfrak{cP}_i}$ along $\tilde{F}_{\mathfrak{P}_i}$ is isomorphic to the scalar extension of the modular sheaf over $\Xra{r-1}$ along $\tilde{F}_{\mathfrak{P}_i}^*$ as defined in Definition \ref{D315}.
\end{rem}

Let $\Spf{R} \to \Ig{n}{r}$ be an $\alpha$-adically complete, admissible, normal $\Lambda^0_{\alpha,I}$-algebra. In particular, it corresponds to an abelian scheme $A'/R$ with a generic trivialization $\Sh{O}_L/p^n\Sh{O}_L \xrightarrow{\sim} H_n^{\vee}(A')$. Suppose that $\lambda_{\mathfrak{P}_i} \colon A' \to A$ is an isogeny, such that $\ker \lambda_{\mathfrak{P}_i}$ is generically \'{e}tale locally isomorphic to $\Sh{O}_L/\mathfrak{P}_i$, and such that $\ker \lambda_{\mathfrak{P}_i} \cap H_1(A') = 0$. We note that $A/H_1[\mathfrak{P}_i] \simeq A' \otimes \mathfrak{P}_i^{-1}$. Therefore, $\Hdg{}(A') = \Hdg{}(A)\Hdg{}_{\mathfrak{P}_i}(A)^{p-1}$.

\begin{prop}\label{P317}
    There exists an $r$ large enough such that the map $\lambda_{\mathfrak{P}_i}^* \colon \mathrm{H}_{A} \to \mathrm{H}_{A'}$ restricts to a well-defined map $\lambda_{\mathfrak{P}_i}^* \colon \mathrm{H}^{\sharp}_{A} \to \mathrm{H}^{\sharp}_{A'}$ that sends marked sections to marked sections. It also maps the kernel $\Sh{Q} \subset \mathrm{H}^{\sharp}_{A}/p\Hdg{}(A)^{-(p+1)}$ to the kernel $\Sh{Q}' \subset \mathrm{H}^{\sharp}_{A'}/p\Hdg{}(A)^{-(p+1)}$.
\end{prop}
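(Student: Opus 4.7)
The strategy is to mimic the proof of Proposition \ref{P232} componentwise under the splitting $\Sigma=\bigsqcup_i\Sigma_{\mathfrak{P}_i}$. Since $\ker\lambda_{\mathfrak{P}_i}$ is étale of order $p^{f_i}$ and concentrated at $\mathfrak{P}_i$, the map $\lambda_{\mathfrak{P}_i}^*$ acts as an isomorphism on the $\sigma$-components with $\sigma\nmid\mathfrak{P}_i$, so on those components every assertion of the proposition is trivial. On the $\sigma\mid\mathfrak{P}_i$ components the isogeny $\lambda_{\mathfrak{P}_i}$ plays exactly the role that $\lambda'$ played for $[p]$ in \ref{P232}, with the partial Hasse--Witt ideal $\widetilde{HW}_{\mathfrak{P}_i}:=\prod_{\sigma\mid\mathfrak{P}_i}\widetilde{HW}(\sigma)$ replacing $\widetilde{HW}$ and a uniformizer $\varpi_{\mathfrak{P}_i}\in\Sh{O}_L$ of $\mathfrak{P}_i$ replacing $p$.

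First I would verify functoriality of $\Omega$. The hypothesis $\ker\lambda_{\mathfrak{P}_i}\cap H_1(A')=0$ together with $\#\ker\lambda_{\mathfrak{P}_i}=p^{f_i}$ forces $\lambda_{\mathfrak{P}_i}$ to induce an isomorphism of level-$n$ canonical subgroups on the generic fibre; functoriality of $\dlog$ then yields an isomorphism $\lambda_{\mathfrak{P}_i}^*\colon\Omega_A\xrightarrow{\sim}\Omega_{A'}$ respecting marked sections. Working out the corresponding ideal identity as in Lemma \ref{L2301}, one gets $\lambda_{\mathfrak{P}_i}^*\underline{\xi}(A)=\underline{\xi}(A')\cdot\widetilde{HW}_{\mathfrak{P}_i}$, using that $\lambda_{\mathfrak{P}_i}^*\colon\omega_A\to\omega_{A'}$ is a lift of the partial Hasse--Witt map at $\mathfrak{P}_i$ and an isomorphism at the other primes.

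Next I would rerun the two computations in \ref{P232}, restricted to the $\mathfrak{P}_i$-block. For the inclusion $\lambda_{\mathfrak{P}_i}^*(\Hs{A})\subset\Hs{A'}$, a chase on the quotients by $\Omega$ reduces the problem to showing $p\in\widetilde{HW}_{\mathfrak{P}_i}\cdot\lambda_{\mathfrak{P}_i}^*\widetilde{HW}_{\mathfrak{P}_i}$ as ideals in $\Sh{O}_L\otimes\Sh{O}_{\Ig{n}{r}}$; this holds for $r$ large because each local generator $w_\sigma$ of $\widetilde{HW}(\sigma)$ divides $\Hdg{}(A)$, which satisfies $\Hdg{}(A)^{p^{r+1}}\mid p$ on $\Xra{r}$, and the same bound applies after pullback. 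For the preservation of the splitting, the local matrix computation at the end of \ref{P232}, performed on the $\mathfrak{P}_i$-block, yields an upper-triangular matrix whose $(2,2)$-entry is $\varpi_{\mathfrak{P}_i}\widetilde{HW}_{\mathfrak{P}_i}^{-1}\lambda_{\mathfrak{P}_i}^*\widetilde{HW}_{\mathfrak{P}_i}^{-1}$ and whose off-diagonal entry is divisible by $\varpi_{\mathfrak{P}_i}$; both pieces lie in the ideal $p\Hdg{}(A)^{-(p+1)}$ for $r$ sufficiently large, giving the preservation of $\Sh{Q}$.

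The main subtlety will be the bookkeeping at the level of formal models: $A'$ does not naturally live over $\Ig{n}{r}$ but over a multi-radius blow-up (cf.\ the remark after Definition \ref{D315}), on which $\Hdg{}(A')=\Hdg{}(A)\Hdg{}_{\mathfrak{P}_i}(A)^{p-1}$. One has to check that for $r$ large the two ideals $p\Hdg{}(A)^{-(p+1)}$ and $p\Hdg{}(A')^{-(p+1)}$ are comparable in a way compatible with $\lambda_{\mathfrak{P}_i}^*$, so that the two notions of ``splitting modulo $p\Hdg{}^{-(p+1)}$'' match and the argument above transfers cleanly. This is a purely formal computation of ideals, but it is the place where the required lower bound on $r$ gets pinned down.
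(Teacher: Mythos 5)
Your treatment of the first claim (the diagram chase showing $\lambda_{\mathfrak{P}_i}^*(\Hs{A})\subset\Hs{A'}$, reduced to a divisibility statement about ideals for $r$ large) matches the paper's argument. For the second claim — preservation of the kernel $\Sh{Q}$ of the marked splitting — you and the paper diverge, and I think your route has a gap.

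You propose to rerun the explicit $2\times 2$ matrix computation from the proof of Proposition~\ref{P232} on the $\mathfrak{P}_i$-block, and you assert the off-diagonal entry is ``divisible by $\varpi_{\mathfrak{P}_i}$,'' hence lies in $p\Hdg{}(A)^{-(p+1)}$. This is not the right justification, and as stated it does not follow. In the P232 computation the off-diagonal entry is $C+B-p\tilde F^*(C\widetilde{HW}^{-1})\widetilde{HW}^{-1}$, and the fact that it vanishes modulo $p\Hdg{-(p+1)}$ rests crucially on the relation $\bar B = -\bar C$, which was extracted from the explicit matrix of Verschiebung together with the characterizing fact that Verschiebung kills the unit root subspace. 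The analogue of that relation for the isogeny $\lambda_{\mathfrak{P}_i}$ (a lift of the \emph{partial} Verschiebung, not of the full one) is not established in your proposal and is not visibly a formal consequence of the computation you cite; divisibility by $\varpi_{\mathfrak{P}_i}$ is neither what the P232 argument yields nor what one needs. This is the one place where a genuine new idea is required, and your write-up passes over it.

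The paper avoids the matrix computation entirely: by Corollary~\ref{C314}, $\Sh{Q}$ is precisely the kernel of Verschiebung modulo $p\Hdg{-(p+1)}$, and since $\lambda_{\mathfrak{P}_i}$ is a lift of the partial Verschiebung at $\mathfrak{P}_i$ (of $A\otimes\mathfrak{P}_i$) and Verschiebung commutes with the partial Verschiebungs, $\lambda_{\mathfrak{P}_i}^*$ automatically carries the kernel of Verschiebung into the kernel of Verschiebung. If you want a local-coordinate proof instead, you would need to supply the partial analogue of $\bar B = -\bar C$; but invoking C314 and functoriality of Verschiebung is cleaner and is the route the paper takes.
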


\begin{proof}
    We first note that $\lambda^*_{\mathfrak{P}_i}\colon \Omega_{A}\xrightarrow{\sim} \Omega_{A'}$ is an isomorphism, since $\lambda_{\mathfrak{P}_i}$ induces an isomorphism $H_n^{\vee}(A) \xrightarrow{\sim} H_n^{\vee}(A')$. Then as in Proposition \ref{P232}, we have a diagram as follows.
    \[\begin{tikzcd}
	& {\mathrm{H}^{\sharp}_{A'}/\Omega_{{A}'}} & \tilde{F}^*_{\mathfrak{P}_i}(\underline{\xi}({{A}})\widetilde{HW}({{A}}))\cdot\omega^{\vee}_{{A}'} \\
	{\mathrm{H}^{\sharp}_{A}/\Omega_{{A}}} & {\mathrm{H}_{{A}'}/\Omega_{{A}'}} & {\mathrm{H}_{{A}'}/\omega_{{A}'} \simeq \omega^{\vee}_{{A}'}} \\
	& \underline{\xi}({{A}})\widetilde{HW}({{A}})\cdot\omega^{\vee}_{{A}}
	\arrow["{\lambda^*_{\mathfrak{P}_i}}", from=2-1, to=2-2]
	\arrow["\pi", from=2-2, to=2-3]
	\arrow[hook, from=1-2, to=2-2]
	\arrow["\simeq"', from=2-1, to=3-2]
	\arrow["\simeq", "i"', from=1-2, to=1-3]
	\arrow["{\mathfrak{P}_i\widetilde{HW}_{\mathfrak{P}_i}^{-1}\xi\widetilde{HW}}"', from=3-2, to=2-3]
	\arrow["\tilde{F}^*_{\mathfrak{P}_i}(\xi\widetilde{HW})", from=1-3, to=2-3]
    \end{tikzcd}\]
    The exact same reasoning as in Proposition \ref{P232} proves the first claim. By Corollary \ref{C314}, $\Sh{Q}$ is the kernel of Verschiebung modulo $p\Hdg{-(p+1)}$. The second claim then follows from the observations that $\lambda_{\mathfrak{P}_i}$ is a lift of the partial Verschiebung at $\mathfrak{P}_i$ of $A \otimes \mathfrak{P}_i$, and Verschiebung commutes with the partial Verschiebung.
\end{proof}

\begin{defn}
Let $\Hs{A}, \Omega_{\Sh{A}}, s$ be the modified de Rham sheaf, the modified modular sheaf and the marked section respectively. Let $\Sh{Q} \subset \Hs{A}/p\Hdg{{-p^2}}$ be the kernel of the splitting $\psi \colon \Hs{A}/p\Hdg{{-p^2}} \to \Omega_{\Sh{A}}/p\Hdg{{-p^2}}$. We call this marked splitting the modified unit root splitting and $\Sh{Q}$ the modified unit root subspace.
\end{defn}

We remark that by Proposition \ref{P232} the modified unit root subspace is functorial for $\lambda' \colon \Sh{A}' \to \Sh{A}$.

Recall the notation $\beta_n = p^n\Hdg{\frac{-p^n}{p-1}}$. Let $\eta := p\Hdg{{-p^2}}$.

\subsection{Vector bundles with marked sections and marked splitting}\label{1S31}
In this section we recall the general formalism of vector bundles with marked sections following \cite[\S2]{andreatta2021triple}. We then define a subfunctor of a vector bundle with a marked section, which we call ``vector bundle with marked sections and a marked splitting" and show representability of this functor.

Let $S$ be a formal scheme with an ideal of definition $\sh{I}$ which is invertible. Let $i \colon S/\sh{I} \xhookrightarrow{} S$ be the closed subscheme defined by the ideal $\sh{I}$. Let $\catname{FSch}^{\sh{I}}_S$ be the category of formal schemes $f \colon T \to S$ such that $f^{-1}\sh{I}\cdot\Sh{O}_T$ is an invertible ideal in $T$.

\begin{defn}
A formal vector bundle of rank $n$ is a formal vector group scheme $X \to S$ which is isomorphic to $\Ga^n$ locally over $S$.
\end{defn}

\begin{defn}\label{D1302}
Let $\Sh{E}$ be a locally free sheaf of rank $n$ on $S$. The formal vector bundle $\mathbb{V}(\Sh{E})$ of rank $n$ is defined as the functor on  $\catname{FSch}^{\sh{I}}_S$
\[
\mathbb{V}(\Sh{E})(t \colon T \to S) := \Sh{E}^{\vee}(T) = \Hom_{\Sh{O}_T}(t^*\Sh{E}, \Sh{O}_T).
\]
\end{defn}

\begin{lemma}
$\mathbb{V}(\Sh{E})$ is representable by the formal scheme $\Spf{\widehat{\Sym^{\bullet}\Sh{E}}} \to S$, where the completion is with respect to the $\sh{I}$-adic topology. This formal scheme is a formal vector bundle of rank $n$. Moreover, the contravariant functor $\mathbb{V}$ defines an equivalence between locally free sheaves on $S$ of constant rank and formal vector bundles of finite rank over $S$, and the equivalence preserves the notion of rank. 
\end{lemma}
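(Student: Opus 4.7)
My plan is to verify representability using the universal property of the symmetric algebra, check the local structure on a trivialization of $\Sh{E}$, and produce a quasi-inverse to $\mathbb{V}$ via the conormal sheaf of the zero section. The proof naturally splits into three steps, with the only subtle point being how algebra homomorphisms extend to the $\sh{I}$-adic completion.

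For representability, given $t \colon T \to S$ in $\catname{FSch}^{\sh{I}}_S$, a morphism $T \to \Spf \widehat{\Sym^{\bullet}\Sh{E}}$ over $S$ amounts to a continuous $\Sh{O}_T$-algebra homomorphism $t^{*}\widehat{\Sym^{\bullet}\Sh{E}} \to \Sh{O}_T$. The universal property of $\Sym^{\bullet}$ puts $\Sh{O}_T$-algebra homomorphisms out of $\Sym^{\bullet}_{\Sh{O}_T}(t^{*}\Sh{E})$ in natural bijection with $\Sh{O}_T$-linear maps $t^{*}\Sh{E} \to \Sh{O}_T$, i.e., with elements of $\Sh{E}^{\vee}(T) = \mathbb{V}(\Sh{E})(T)$. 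To pass to the $\sh{I}$-adic completion I will use that the hypothesis $t^{-1}\sh{I}\cdot\Sh{O}_T$ is invertible forces the image of $\sh{I}$ to lie in an ideal of definition of $T$, so $\sh{I}$-adically Cauchy sequences converge in $\Sh{O}_T$ and the homomorphism extends uniquely.

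The formal-vector-bundle statement is Zariski-local on $S$. Trivializing $\Sh{E} \simeq \Sh{O}_S^{n}$ over an open $U$ identifies the representing object with $\Sh{O}_U\{x_1, \dots, x_n\}$, the $\sh{I}$-adic completion of the polynomial ring, whose $\Spf$ represents $\Ga^{n}$ on $\catname{FSch}^{\sh{I}}_U$. The abelian group scheme structure on $\mathbb{V}(\Sh{E})$ is induced by the diagonal $\Sh{E} \to \Sh{E} \oplus \Sh{E}$ (and the zero section by $\Sh{E} \to 0$), and corresponds on points to pointwise addition of linear maps.

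For the equivalence of categories I will construct a quasi-inverse $\Lambda$ sending a formal vector bundle $p \colon X \to S$, with zero section $e \colon S \to X$ of ideal $\sh{J}_e$, to the conormal sheaf $\Lambda(X) := e^{*}(\sh{J}_e/\sh{J}_e^{2})$. Locally $X \simeq \Ga^{n}$, where this computes to $\Sh{O}_S^{n}$, so $\Lambda(X)$ is locally free of the expected rank. For $X = \mathbb{V}(\Sh{E})$ the augmentation ideal of $\widehat{\Sym^{\bullet}\Sh{E}} \to \Sh{O}_S$ modulo its square is canonically $\Sh{E}$, giving $\Lambda \circ \mathbb{V} \simeq \mathrm{id}$. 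In the other direction, the natural evaluation $X \to \mathbb{V}(\Lambda(X))$ sends a $T$-point $x$ to the $\Sh{O}_T$-linear map obtained by pulling back linear forms along $x$; this is an isomorphism by reduction to the trivial case $X = \Ga^{n}$. The hardest part of the argument is the extension to the $\sh{I}$-adic completion in the first step — once that is handled cleanly, the remaining claims reduce to the universal property of $\Sym^{\bullet}$ and a routine local computation on $\Ga^{n}$.
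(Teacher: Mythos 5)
Your proof is correct and follows the same route as the reference the paper cites for this lemma (\cite[Lemma 2.2]{andreatta2021triple}): representability via the universal property of $\Sym^{\bullet}$ plus $\sh{I}$-adic completion, locality for the $\Ga^{n}$ isomorphism, and the conormal sheaf of the zero section as quasi-inverse.

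One point you should make explicit: the evaluation map $X \to \mathbb{V}(\Lambda(X))$ by ``pulling back linear forms along $x$'' is not well-defined from the conormal sheaf alone, since a class $[f]\in e^*(\sh{J}_e/\sh{J}_e^2)$ has many lifts to $\sh{J}_e$ whose pullbacks along a $T$-point differ by $x^*(\sh{J}_e^2)$, which is not zero in general. To pick the canonical representative you must use the $\Sh{O}_S$-module structure on $X$ --- not merely the abelian group structure --- requiring the lift to be equivariant for the $\Ga$-scalar action (locally, the degree-one part of $f$). Equivalently, the local identifications with $\Ga^n$ glue precisely because the transition automorphisms of a formal \emph{vector} group scheme are $\Sh{O}_S$-linear, whereas automorphisms of $\Ga^n$ as a bare formal group need not be (e.g.\ Frobenius components over an $\F{p}$-base). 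This is exactly where the phrase ``formal vector group scheme'' in the definition is doing real work, and it is what restricts the equivalence to formal vector bundles rather than all formal groups.
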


\begin{proof}
\cite[Lemma 2.2]{andreatta2021triple}.
\end{proof}

Let $\Sh{E}$ be a locally free sheaf of rank $n$ on $S$ such that $\bar{\Sh{E}} := i^*\Sh{E}$ has sections $s_1, \dots, s_m \in \Gamma(S/\sh{I}, \bar{\Sh{E}})$ for $m \leq n$, satisfying the following two properties:
\begin{enumerate}
    \item The subsheaf $\bar{\Sh{E}'} \subset \bar{\Sh{E}}$ generated by $s_1, \dots, s_m$ is locally free,
    \item $\bar{\Sh{E}}/\bar{\Sh{E}'}$ is locally free.
\end{enumerate}

\begin{defn}\label{D1303}
Let $\Sh{E}$ be a locally free sheaf with marked sections $s_1, \dots, s_m$ as above. The formal vector bundle with marked sections $\V{}(\Sh{E}, s_1, \dots, s_m)$ is defined as the functor on $\catname{FSch}^{\sh{I}}_S$
\[
\V{}(\Sh{E}, s_1, \dots, s_m)(t \colon T \to S) := \{f \in \mathbb{V}(\Sh{E})(T) \, | \, (f \textrm{ mod }t^*\sh{I})(t^*s_i) = 1 \, \forall i\}
\]
\end{defn}

\begin{lemma}\label{L1302}
$\V{}(\Sh{E}, s_1, \dots, s_m)$ is represented by an open formal subscheme of an admissible formal blow-up of $\mathbb{V}(\Sh{E})$.
\end{lemma}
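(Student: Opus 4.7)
The plan is to verify the statement locally on $S$ and then to glue, modeled on \cite[Lemma 2.3]{andreatta2021triple}. The first step is to check that the marked-section condition is independent of the choice of lifts of the $s_i$ to $\Sh{E}$. If $\tilde{s}_i$ and $\tilde{s}_i'$ are two local lifts of $s_i$, then $\tilde{s}_i - \tilde{s}_i' \in \sh{I}\cdot \Sh{E}$, so for any $t\colon T \to S$ in $\catname{FSch}^{\sh{I}}_S$ and any $f \in \mathbb{V}(\Sh{E})(T)$ we have $f(\tilde{s}_i - \tilde{s}_i') \in t^*\sh{I}\cdot \Sh{O}_T$. Hence the condition ``$(f\bmod t^*\sh{I})(t^*s_i) = 1$'' is independent of the lift, and in particular depends only on $s_i$.

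Next, I would work on a Zariski cover $\{U_j\}$ of $S$ on which $\sh{I}|_{U_j} = (\alpha_j)$ is principal, $\Sh{E}|_{U_j}$ is free of rank $n$, and the marked sections $s_i$ lift to $\tilde{s}_i \in \Sh{E}(U_j)$ which extend, using Nakayama together with the local freeness of $\bar{\Sh{E}}/\bar{\Sh{E}'}$ over $S/\sh{I}$, to an $\Sh{O}_{U_j}$-basis $e_1 = \tilde{s}_1, \dots, e_m = \tilde{s}_m, e_{m+1}, \dots, e_n$ of $\Sh{E}|_{U_j}$. In these coordinates $\mathbb{V}(\Sh{E})|_{U_j} \simeq \Spf \Sh{O}_{U_j}\langle X_1, \dots, X_n\rangle$, where $X_i$ corresponds to evaluation $f \mapsto f(e_i)$, and the marked-section condition translates to $t^*(X_i - 1) \in t^*(\alpha_j)\cdot\Sh{O}_T$ for $i = 1, \dots, m$.

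To represent this subfunctor locally, I would form the admissible formal blow-up of $\mathbb{V}(\Sh{E})|_{U_j}$ along the open ideal $J_j := (\alpha_j, X_1 - 1, \dots, X_m - 1)$ and take the open formal subscheme on which the pullback of $J_j$ is generated by the image of $\alpha_j$. There the elements $(X_i - 1)/\alpha_j$ exist as bounded sections of the structure sheaf, and by the universal property of admissible formal blow-ups this open formal subscheme exactly represents the morphisms $t\colon T \to \mathbb{V}(\Sh{E})|_{U_j}$ in $\catname{FSch}^{\sh{I}}_S$ satisfying $t^*(X_i - 1) \in t^*(\alpha_j)\cdot \Sh{O}_T$ for $i \leq m$, i.e.\ the restriction of $\V{}(\Sh{E}, s_1, \dots, s_m)$ to $U_j$.

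The main thing to verify is that these local representing opens glue to an open formal subscheme of a global admissible formal blow-up of $\mathbb{V}(\Sh{E})$. This reduces to canonicity of the construction under changes of the local generator $\alpha_j$ of $\sh{I}$, of the lifts $\tilde{s}_i$, and of the extension $e_{m+1}, \dots, e_n$ of the $\tilde{s}_i$ to a full basis. The first two are immediate from the first step together with the fact that blowing up $(\alpha, uf)$ and $(\alpha, f)$ yield the same space for $u$ a unit; the third holds because any base change fixing $e_1, \dots, e_m$ acts linearly on the unconstrained coordinates $X_{m+1}, \dots, X_n$ and leaves the ideal $J_j$ invariant. I expect this gluing bookkeeping to be the only real obstacle, since once the construction is shown canonical the local blow-ups patch to a global admissible formal blow-up of $\mathbb{V}(\Sh{E})$ and the local opens patch to the required open formal subscheme of it.
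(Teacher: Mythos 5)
Your approach is essentially the one the paper takes (following \cite[Lemma 2.4]{andreatta2021triple}): blow up $\mathbb{V}(\Sh{E})$ along the ideal locally generated by $\alpha$ and the $X_i - 1$, and take the open where this ideal becomes principal generated by $\alpha$. The one inefficiency is that you set the construction up purely locally and then spend your last paragraph worrying about whether the local blow-ups glue; the paper sidesteps that bookkeeping by defining the ideal globally from the start. Namely, since the $s_i$ are \emph{global} sections of $\bar{\Sh{E}} = i^*\Sh{E}$, the ideal $\sh{J} := (s_1 - 1, \dots, s_m - 1) \subset \Sym^{\bullet}\bar{\Sh{E}}$ is already a globally defined sheaf of ideals, and its preimage $\tilde{\sh{J}}$ under the reduction $\Sh{O}_{\widehat{\Sym^{\bullet}\Sh{E}}} \to \Sym^{\bullet}\bar{\Sh{E}}$ is a globally defined open ideal on $\mathbb{V}(\Sh{E})$. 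On your chart $U_j$ this $\tilde{\sh{J}}$ restricts precisely to $(\alpha_j, X_1 - 1, \dots, X_m - 1)$, because the kernel of reduction is $(\alpha_j)$. So the blow-up center is canonical by construction, and the gluing of the opens where $\tilde{\sh{J}}$ is generated by $\alpha$ is automatic — there is no need for the separate discussion of changing $\alpha_j$, lifts $\tilde{s}_i$, or the complementary basis vectors. Your local analysis and the independence-of-lift check in your first paragraph are correct and compatible with this, so your argument is sound; it just does by hand what the global ideal construction gives for free.
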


\begin{proof}
This is \cite[Lemma 2.4]{andreatta2021triple}. We recall the construction. $s_1, \dots, s_m$ define an ideal $\sh{J} \subset {\Sym^{\bullet}\Sh{E}}/\sh{I}$ given by $\sh{J} := (s_1 - 1, \dots, s_m - 1)$. Let $\tilde{\sh{J}}$ be the inverse image of $\sh{J}$ in $\Sh{O}_{\widehat{\Sym^{\bullet}\Sh{E}}}$. Let $\mathbb{B}$ be the blow-up of $\mathbb{V}(\Sh{E})$ along $\tilde{\sh{J}}$. Then take the $\sh{I}$-adic completion of the open in $\mathbb{B}$ where the inverse image ideal of $\tilde{\sh{J}}$ coincides with the inverse image ideal of $\sh{I}$. Then as shown in loc. cit. this formal scheme represents $\mathbb{V}_0(\Sh{E}, s_1, \dots, s_m)$.

In local coordinates $\V{}(\Sh{E}, s_1, \dots, s_m)$ can be described as follows. Let $\Spf{R} \subset S$ be an open which trivializes $\Sh{E}$ and $\sh{I}$, and let $X_i$ be a lift of $s_i$ for all $i$ and extend it to a basis of $\Sh{E}$. Then $\mathbb{V}(\Sh{E})_{|\Spf{R}} = \Spf R\langle X_1, \dots, X_n\rangle$. Then $\V{}(\Sh{E}, s_1, \dots, s_m)_{|\Spf{R}}$ is given by 
\[\Spf{R}\langle
\{X_i\}_{i=1}^n\rangle\langle \{ \frac{X_i - 1}{\alpha}\}_{i=1}^m \rangle \simeq \Spf{R} \langle \{Z_i\}_{i=1}^m , X_{m+1}, \dots, X_n \rangle
\]
where $\alpha$ is a generator for $\sh{I}$, and the projection $\V{}(\Sh{E}, s_1, \dots, s_m) \to \mathbb{V}(\Sh{E})$ corresponds to the ring map that sends $X_i \mapsto 1 + \alpha Z_i$ for $1 \leq i \leq m$ and $X_i \mapsto X_i$ otherwise.
\end{proof}

Let $\Sh{E}$ be a locally free sheaf of rank $n$ with marked sections $s_1, \dots, s_m$ as above. Suppose the short exact sequence of locally free sheaves on $S/\sh{I}$
\[
0 \to \bar{\Sh{E}'} \to \bar{\Sh{E}} \to \bar{\Sh{E}}/\bar{\Sh{E}'} \to 0
\]
admits a splitting $\psi \colon \bar{\Sh{E}} \to \bar{\Sh{E}'}$. Here $\bar{\Sh{E}'}$ is the subsheaf generated by the $s_i$'s as above. Let $\Sh{Q} := \ker{\psi}$.

\begin{defn}\label{D1304}
Given a locally free sheaf $\Sh{E}$ on $S$ of rank $n$, marked sections $s_1, \dots, s_m$, and a subsheaf $\Sh{Q} \subset \bar{\Sh{E}}$ corresponding to a splitting as above, we define the vector bundle with marked sections and marked splitting as the functor on $\catname{FSch}^{\sh{I}}_S$
\[
\V{}(\Sh{E}, s_1, \dots, s_m, \Sh{Q})(t \colon T \to S) := \{ f \in \V{}(\Sh{E}, s_1, \dots, s_m)(T) \, | \, (f \textrm{ mod }t^*\sh{I})(t^*\Sh{Q}) = 0\}
\]
\end{defn}

\begin{lemma}\label{L1303}
$\V{}(\Sh{E}, s_1, \dots, s_m , \Sh{Q})$ is represented by an open formal subscheme of an admissible formal blow-up of $\V{}(\Sh{E}, s_1, \dots, s_m)$. 
\end{lemma}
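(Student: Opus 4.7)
The plan is to mimic the construction of Lemma \ref{L1302} one level further: I will exhibit a coherent ideal sheaf on $\V{}(\Sh{E}, s_1,\dots,s_m)/\sh{I}$ whose vanishing locus corresponds to the condition that $f_{|\Sh{Q}}\equiv 0$, blow up $\V{}(\Sh{E},s_1,\dots,s_m)$ along the preimage ideal, and pass to the $\sh{I}$-adic completion of the open subscheme where the blown-up ideal coincides with the pullback of $\sh{I}$.

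Concretely, the tautological evaluation pairing $\Sh{E}\times \V{}(\Sh{E},s_1,\dots,s_m)\to \Ga$ turns any local section $\tilde{q}\in \Sh{E}$ lifting a local section $q\in \Sh{Q}\subset \bar{\Sh{E}}$ into a function $\tilde{q}$ on $\V{}(\Sh{E}, s_1,\dots,s_m)$, whose class modulo $\sh{I}$ depends only on $q$ (two lifts differ by an element of $\sh{I}\cdot \Sh{E}$, whose associated function is in $\sh{I}\cdot \Sh{O}_{\V{}(\Sh{E},s_1,\dots,s_m)}$). Let $\sh{J}''$ be the ideal of $\V{}(\Sh{E},s_1,\dots,s_m)/\sh{I}$ generated by such functions as $q$ ranges over local sections of $\Sh{Q}$, and let $\widetilde{\sh{J}''}$ be its preimage in $\Sh{O}_{\V{}(\Sh{E},s_1,\dots,s_m)}$. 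Blow up along $\widetilde{\sh{J}''}$, take the open where the inverse image of $\widetilde{\sh{J}''}$ coincides with the inverse image of $\sh{I}$, and $\sh{I}$-adically complete.

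To verify that this formal scheme $\mathbf{B}$ represents $\V{}(\Sh{E},s_1,\dots,s_m,\Sh{Q})$, it suffices to check the claim Zariski-locally on $S$, using that the blow-up and open immersion constructions are functorial and glue. Choose $\Spf R\subset S$ trivializing $\sh{I}=(\alpha)$, $\Sh{E}$, $\bar{\Sh{E}'}$ and $\Sh{Q}$; using that $\bar{\Sh{E}} = \bar{\Sh{E}'}\oplus \Sh{Q}$ locally with both summands locally free, pick a basis $e_1,\dots,e_n$ of $\Sh{E}$ such that $e_1,\dots,e_m$ lift $s_1,\dots,s_m$ and $\bar{e}_{m+1},\dots,\bar{e}_n$ form a local basis of $\Sh{Q}$. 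In the coordinates of Lemma \ref{L1302} we have $\V{}(\Sh{E},s_1,\dots,s_m)_{|\Spf R} = \Spf R\langle Z_1,\dots,Z_m, X_{m+1},\dots,X_n\rangle$ with $X_i = 1+\alpha Z_i$ for $i\le m$, and the ideal $\widetilde{\sh{J}''}$ is $(X_{m+1},\dots,X_n,\alpha)$. The blow-up-plus-open construction then yields $\Spf R\langle Z_1,\dots,Z_m, W_{m+1},\dots,W_n\rangle$ with $X_j = \alpha W_j$ for $j>m$, which is manifestly the universal object parametrizing $f\in \V{}(\Sh{E},s_1,\dots,s_m)$ with $f(\bar e_j)\in \alpha\cdot \Sh{O}_T$ for $j>m$.

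The only mildly delicate step will be confirming that these local representing objects glue into a global one corresponding to the globally defined blow-up above; this however follows from the fact that $\sh{J}''$ is defined intrinsically out of $\Sh{Q}$, so the local presentations are automatically compatible under change of trivialization, and blow-ups commute with open immersions. I do not anticipate any genuine obstruction: the construction is formally parallel to Lemma \ref{L1302}, with $\Sh{Q}$ playing the role that the marked sections $s_i$ played there, except that we ask for vanishing rather than the value $1$.
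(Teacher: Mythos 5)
Your construction is exactly the paper's: blow up $\V{}(\Sh{E},s_1,\dots,s_m)$ along the ideal determined by (lifts of) local generators of $\Sh{Q}$, pass to the open where this ideal coincides with $\sh{I}$, complete $\sh{I}$-adically, and check the universal property in the local coordinates $Z_i, W_j$ with $Y_j=\alpha W_j$. The only cosmetic difference is that the paper first forms the ideal $\Sh{Q}\Sym^{\bullet}\bar{\Sh{E}}$ on $\mathbb{V}(\Sh{E})$ and pulls it back, whereas you define it directly on $\V{}(\Sh{E},s_1,\dots,s_m)$; these inverse-image ideals agree, so the arguments coincide.
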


\begin{proof}
The subsheaf $\Sh{Q} \subset \bar{\Sh{E}}$ defines an ideal in $\Sym^{\bullet}\bar{\Sh{E}}$, viz. $\Sh{Q}\Sym^{\bullet}\bar{\Sh{E}}$. Let $\tilde{\Sh{Q}'}$ be the inverse image of this ideal in $\widehat{\Sym^{\bullet}\Sh{E}}$. If $f \colon \V{}(\Sh{E}, s_1, \dots, s_m) \to \mathbb{V}(\Sh{E})$ is the projection, let $\tilde{\Sh{Q}} := f^{-1}\tilde{\Sh{Q}'}\cdot \Sh{O}_{\V{}(\Sh{E}, s_1, \dots, s_m)}$ be the inverse image ideal in $\V{}(\Sh{E}, s_1, \dots, s_m)$. Consider blow-up of $\V{}(\Sh{E}, s_1, \dots, s_m)$ along $\tilde{\Sh{Q}}$. Let $X$ be the $\sh{I}$-adic completion of the open in this blow-up where the inverse image ideal of $\tilde{\Sh{Q}}$ coincides with the inverse image of $\sh{I}$. We claim that $X$ represents $\V{}(\Sh{E}, s_1, \dots, s_m, \Sh{Q})$.

To prove this, suppose $\tilde{t} \colon T \to X$ be a lift of $t \colon T \to S$. Then certainly $\tilde{t}$ defines a point of $\mathbb{V}(\Sh{E})$ and hence corresponds to an element $f \in \Hom_{\Sh{O}_T}(t^*\Sh{E}, \Sh{O}_T)$. Moreover this satisfies $(f \textrm{ mod }t^*\sh{I})(t^*s_i) = 1$. Since over $X$, $\sh{I}$ coincides with $\tilde{\Sh{Q}}$, $(f \textrm{ mod }t^*\sh{I})$ kills $t^*\Sh{Q}$. Conversely, to prove that for any element $f \in \Hom_{\Sh{O}_T}(t^*\Sh{E}, \Sh{O}_T)$ seen as a point of $\mathbb{V}(\Sh{E})$, that sends $s_i \mapsto 1$ and kills $\Sh{Q}$ modulo $t^*\sh{I}$, there exists a unique lift $\tilde{t} \colon T \to X$, it will be enough to prove this when $T = \Spf{R'}$ and $t \colon T \to S$ factors through an open $\Spf{R} \subset S$ where $\bar{\Sh{E}'}, \bar{\Sh{E}}$ and $\bar{\Sh{E}}/\bar{\Sh{E}'}$ are locally free. In that case picking lifts $X_i$ of $s_i$ as in the proof of Lemma \ref{L1302} and lifts $Y_{m+1}, \dots, Y_n$ of a basis $y_{m+1}, \dots, y_n$ of $\Sh{Q}$, we can write 
\begin{align*}
X_{|\Spf{R}} &\simeq \Spf{R}\langle \{X_i\}_{i=1}^m\{Y_j\}_{j=m+1}^n\rangle\langle \{\frac{X_i - 1}{\alpha}\}_{i=1}^m\{\frac{Y_j}{\alpha}\}_{j=m+1}^n\rangle \\ &\simeq R\langle \{Z_i\}_{i=1}^m, \{W_{j}\}_{j=m+1}^n\rangle
\end{align*}
where the projection $X \to \V{}(\Sh{E}, s_1, \dots, s_m)$ corresponds to $Z_i \mapsto Z_i$ and $Y_j \mapsto \alpha W_j$. Now $t^*\Sh{E} = \oplus_{i=1}^m R'(t^*X_i) \bigoplus \oplus_{j=m+1}^n R'(t^*Y_j)$. An $R'$-linear map $f \colon t^*\Sh{E} \to R'$ that satisfies $f(X_i) = 1 \textrm{ mod }(\alpha)$ and $f(Y_i) = 0 \textrm{ mod }(\alpha)$ can be written uniquely as \[f = \sum_{i=1}^m (1 + \alpha r_i)(t^*X_i)^{\vee} + \sum_{j=m+1}^n \alpha q_j (t^*Y_j)^{\vee}\] for $r_i, q_j \in R'$. Then we can define a map $R'\langle \{Z_i\}\{W_j\}\rangle$ that sends $Z_i \mapsto r_i$ and $W_j \mapsto q_j$ for all $i, j$. This determines a point $\tilde{t} \in X(T)$. 
\end{proof}

\begin{lemma}\label{L1304}
The functor $\V{}(\Sh{E}, s_1, \dots, s_m, \Sh{Q})$ is functorial in tuples $(\Sh{E}, s_1, \dots, s_m, \Sh{Q})$, i.e. given a map of locally free sheaves of equal rank $\rho \colon \Sh{E} \to \Sh{E}'$, where $\Sh{E}$ (resp. $\Sh{E}'$) is equipped with marked sections $s_1, \dots, s_m$ (resp. $s'_1, \dots, s'_m$) and a subsheaf $\Sh{Q}$ (resp. $\Sh{Q}'$), corresponding to a marked splitting, if $\bar{\rho}(s_i) = s'_i$ and $\bar{\rho}(\Sh{Q}) \subset \Sh{Q}'$, then there are natural transformations $\V{}(\Sh{E}', s'_1, \dots, s'_m, \Sh{Q}') \to \V{}(\Sh{E}, s_1, \dots, s_m, \Sh{Q})$ and $\V{}(\Sh{E}', s'_1, \dots, s'_m) \to \V{}(\Sh{E}, s_1, \dots, s_m)$ such that the following diagram commutes.
\[\begin{tikzcd}
	{\V{}(\Sh{E}', s'_1, \dots, s'_m, \Sh{Q}')} & {\V{}(\Sh{E}, s_1, \dots, s_m, \Sh{Q})} \\
	{\V{}(\Sh{E}', s'_1, \dots, s'_m)} & {\V{}(\Sh{E}, s_1, \dots, s_m)}
	\arrow[from=1-1, to=1-2]
	\arrow[from=1-1, to=2-1]
	\arrow[from=2-1, to=2-2]
	\arrow[from=1-2, to=2-2]
\end{tikzcd}\]
\end{lemma}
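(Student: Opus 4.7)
The plan is to define both horizontal maps in the square by pre-composition with $\rho$ at the level of $T$-points. Given an object $t \colon T \to S$ in $\catname{FSch}^{\sh{I}}_S$ and an $\Sh{O}_T$-linear form $f \colon t^*\Sh{E}' \to \Sh{O}_T$, I would send $f$ to the composition
\[
\rho^*f \,:=\, f \circ t^*\rho \colon t^*\Sh{E} \xrightarrow{t^*\rho} t^*\Sh{E}' \xrightarrow{f} \Sh{O}_T.
\]
This assignment is visibly functorial in $T$, so defines a natural transformation $\mathbb{V}(\Sh{E}') \to \mathbb{V}(\Sh{E})$ between the ambient formal vector bundles of Definition \ref{D1302}. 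Both horizontal arrows in the diagram of the lemma will be induced by this single operation, and the two vertical arrows are the tautological inclusions of subfunctors coming from the extra condition of killing $\Sh{Q}$ (resp.\ $\Sh{Q}'$). Commutativity of the square is therefore automatic, granted that $\rho^*$ restricts to well-defined maps between the appropriate subfunctors.

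The main step is precisely this restriction, which is the only substantive content of the lemma. For the marked-section subfunctor, reducing the identity $\bar{\rho}(s_i) = s'_i$ modulo $t^*\sh{I}$ after pulling back along $t$ yields $(t^*\rho)(t^*s_i) \equiv t^*s'_i \pmod{t^*\sh{I}}$, so whenever $f$ satisfies $(f \bmod t^*\sh{I})(t^*s'_i) = 1$, one has
\[
(\rho^*f \bmod t^*\sh{I})(t^*s_i) = (f \bmod t^*\sh{I})\bigl((t^*\rho)(t^*s_i)\bigr) = (f \bmod t^*\sh{I})(t^*s'_i) = 1,
\]
placing $\rho^*f$ in $\V{}(\Sh{E}, s_1, \ldots, s_m)(T)$. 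For the marked-splitting subfunctor, the containment $\bar{\rho}(\Sh{Q}) \subset \Sh{Q}'$ pulls back to $(t^*\rho)(t^*\Sh{Q}) \subset t^*\Sh{Q}'$ modulo $t^*\sh{I}$, so if $f$ additionally vanishes on $t^*\Sh{Q}'$ modulo $t^*\sh{I}$, then $\rho^*f$ vanishes on $t^*\Sh{Q}$ modulo $t^*\sh{I}$, placing $\rho^*f$ in $\V{}(\Sh{E}, s_1, \ldots, s_m, \Sh{Q})(T)$.

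I anticipate no serious obstacle. The entire content of the lemma is that pre-composition by $t^*\rho$ preserves the defining conditions of the two subfunctors, which is a direct formal consequence of the hypotheses $\bar{\rho}(s_i) = s'_i$ and $\bar{\rho}(\Sh{Q}) \subset \Sh{Q}'$. One could alternatively unwind the argument through the explicit coordinate presentations given in the proofs of Lemmas \ref{L1302} and \ref{L1303}, but the functor-of-points approach above avoids any computation and makes both the existence of the two natural transformations and the commutativity of the square manifest simultaneously.
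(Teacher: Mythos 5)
Your proposal is correct and is exactly the argument the paper has in mind: the paper's proof consists of the single line ``Follows easily from the definition,'' and your functor-of-points unwinding (precomposition with $t^*\rho$, plus the observation that $\bar{\rho}(s_i)=s'_i$ and $\bar{\rho}(\Sh{Q})\subset\Sh{Q}'$ preserve the two defining conditions modulo $t^*\sh{I}$) is the standard way to make that precise. No gaps.
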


\begin{proof}
Follows easily from the definition.
\end{proof}

\subsection{Formal \texorpdfstring{$\Sh{O}_L$}{OL}-vector bundles with marked sections and marked splitting}

In this section we define the relevant vector bundles with marked sections and marked splitting enriched with an action of $\Sh{O}_L$. Although we focus on the sheaves relevant for our purpose, i.e. $\Omega_{\Sh{A}}$ and $\Hs{A}$, the theory can be developed more generally. 

Recall $s = \dlog(P^{\textrm{univ}}_n) \in \Omega_{\Sh{A}}/\beta_n \Omega_{\Sh{A}}$ is the image of the universal generator of $H^{\vee}_n(\Ig{n}{r})$ under the $\dlog$ map. Let $s_{\sigma}$ be its $\sigma$-component under the splitting $\Omega_{\Sh{A}} = \prod_{\sigma \in \Sigma} \Omega_{\Sh{A}}(\sigma)$. Following the VBMS formalism explained in \S\ref{1S31} we define the following formal $\Sh{O}_L$-vector bundles with marked sections.

\begin{defn}\label{D303}
Define $\mathbb{V}^{\Sh{O}_L}(\Omega_{\Sh{A}})$ as the functor that associates to any $\alpha$-admissible formal scheme $\mathfrak{Z} \xrightarrow{\gamma} \Ig{n}{r}$ the following set:
\[
\mathbb{V}^{\Sh{O}_L}(\Omega_{\Sh{A}})(\mathfrak{Z} \xrightarrow{\gamma} \Ig{n}{r}) := \Hom_{\Sh{O}_L \otimes \Sh{O}_{\mathfrak{Z}}}(\gamma^*\Omega_{\Sh{A}}, \Sh{O}_L \otimes \Sh{O}_{\mathfrak{Z}}).
\]
Similarly, define $\mathbb{V}^{\Sh{O}_L}(\mathrm{H}^{\sharp}_{\Sh{A}})$ as the functor
\[
\mathbb{V}^{\Sh{O}_L}(\mathrm{H}^{\sharp}_{\Sh{A}})(\mathfrak{Z} \xrightarrow{\gamma} \Ig{n}{r}) := \Hom_{\Sh{O}_L \otimes \Sh{O}_{\mathfrak{Z}}}(\gamma^*\mathrm{H}^{\sharp}_{\Sh{A}}, \Sh{O}_L \otimes \Sh{O}_{\mathfrak{Z}}).
\]
\end{defn}

\begin{defn}\label{D304}
Define $\V{\Sh{O}_L}(\Omega_{\Sh{A}}, s)$ as the functor that associates to any $\alpha$-admissible formal scheme $\mathfrak{Z} \xrightarrow{\gamma} \Ig{n}{r}$, the following set:
\begin{equation*}
    \V{\Sh{O}_L}(\Omega_{\Sh{A}}, s)(\mathfrak{Z} \xrightarrow{\gamma} \Ig{n}{r}) := \left\{h \in \Hom_{\Sh{O}_L \otimes \Sh{O}_{\mathfrak{Z}}}\big(\gamma^*\Omega_{\Sh{A}}, \Sh{O}_L \otimes \Sh{O}_{\mathfrak{Z}}\big) \, | \, (h \textrm{ mod } \gamma^*\beta_n)(\gamma^*s) = 1 \right\}.
\end{equation*}
Similarly, define $\V{\Sh{O}_L}(\mathrm{H}^{\sharp}_{\Sh{A}}, s)$ as the functor
\begin{equation*}
    \V{\Sh{O}_L}(\mathrm{H}^{\sharp}_{\Sh{A}}, s)(\mathfrak{Z} \xrightarrow{\gamma} \Ig{n}{r}) := \left\{h \in \Hom_{\Sh{O}_L \otimes \Sh{O}_{\mathfrak{Z}}}\big(\gamma^*\mathrm{H}^{\sharp}_{\Sh{A}}, \Sh{O}_L \otimes \Sh{O}_{\mathfrak{Z}}\big) \, | \, (h \textrm{ mod } \gamma^*\beta_n)(\gamma^*s) = 1 \right\}.
\end{equation*}
\end{defn}

\begin{prop}
\begin{enumerate}
    \item We have natural isomorphisms of functors 
    \begin{align*}
    \mathbb{V}^{\Sh{O}_L}(\Omega_{\Sh{A}}) \simeq \mathbb{V}(\Omega_{\Sh{A}}) = \prod_{\sigma \in \Sigma} \mathbb{V}\big(\Omega_{\Sh{A}}(\sigma)\big), \quad \quad \mathbb{V}^{\Sh{O}_L}(\mathrm{H}^{\sharp}_{\Sh{A}}) \simeq \mathbb{V}(\Hs{A}) = \prod_{\sigma \in \Sigma} \mathbb{V}\big(\mathrm{H}^{\sharp}_{\Sh{A}}(\sigma)\big).
    \end{align*}
    where $\mathbb{V}(\Sh{E})$ for any locally finite free sheaf $\Sh{E}$ is defined as in Definition \ref{D1302}.
    \item We have natural isomorphisms of functors
    \begin{align*}
    \V{\Sh{O}_L}(\Omega_{\Sh{A}}, s) \simeq \prod_{\sigma \in \Sigma} \V{}\big(\Omega_{\Sh{A}}(\sigma), s_{\sigma}\big), \quad \quad \V{\Sh{O}_L}(\mathrm{H}^{\sharp}_{\Sh{A}}, s) \simeq  \prod_{\sigma \in \Sigma}\V{}\big(\mathrm{H}^{\sharp}_{\Sh{A}}(\sigma), s_{\sigma}\big).
    \end{align*}
    where $\V{}(\Sh{E}, s) \subset \mathbb{V}(\Sh{E})$ for any locally finite free sheaf $\Sh{E}$ with a marked section $s$ is defined as in Definition \ref{D1303}.
\end{enumerate}
\end{prop}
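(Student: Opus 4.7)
The plan is to reduce everything to the splitting of $\Sh{O}_L$ over the base and then do routine linear algebra. Since $K$ is chosen to split $L$ and $\mathfrak{Z} \to \Ig{n}{r}$ is an $\Sh{O}_K$-formal scheme (via the map to $\mathfrak{W}^0_{\alpha,I}$), there is a canonical ring decomposition $\Sh{O}_L \otimes \Sh{O}_{\mathfrak{Z}} \simeq \prod_{\sigma \in \Sigma} \Sh{O}_{\mathfrak{Z}}$ given by the orthogonal idempotents attached to the embeddings of $\Sigma$. Any $\Sh{O}_L \otimes \Sh{O}_{\mathfrak{Z}}$-module $M$ therefore splits canonically as $M = \prod_{\sigma \in \Sigma} M(\sigma)$ where $a \otimes 1$ acts on $M(\sigma)$ by $\sigma(a)$, and this decomposition is compatible with pullbacks along maps in $\catname{FSch}^{\sh{I}}_{\Ig{n}{r}}$. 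Applying this to $\Omega_{\Sh{A}}$ and $\Hs{A}$, which are by construction locally free $\Sh{O}_L \otimes \Sh{O}_{\Ig{n}{r}}$-modules, produces the $\sigma$-components $\Omega_{\Sh{A}}(\sigma)$ and $\Hs{A}(\sigma)$ appearing on the right-hand side.

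For part (1), I would combine two standard identifications. First, an $\Sh{O}_L \otimes \Sh{O}_{\mathfrak{Z}}$-linear map from $\gamma^*\Omega_{\Sh{A}}$ to $\Sh{O}_L \otimes \Sh{O}_{\mathfrak{Z}}$ is exactly a tuple of $\Sh{O}_{\mathfrak{Z}}$-linear maps $h_\sigma \colon \gamma^*\Omega_{\Sh{A}}(\sigma) \to \Sh{O}_{\mathfrak{Z}}$, one for each $\sigma$ (this is just the universal property of the product decomposition of source and target). Second, since $\gamma^*\Omega_{\Sh{A}} = \bigoplus_\sigma \gamma^*\Omega_{\Sh{A}}(\sigma)$ as $\Sh{O}_{\mathfrak{Z}}$-modules, one has
\[
\Hom_{\Sh{O}_{\mathfrak{Z}}}\bigl(\gamma^*\Omega_{\Sh{A}}, \Sh{O}_{\mathfrak{Z}}\bigr) \;=\; \prod_{\sigma \in \Sigma}\Hom_{\Sh{O}_{\mathfrak{Z}}}\bigl(\gamma^*\Omega_{\Sh{A}}(\sigma), \Sh{O}_{\mathfrak{Z}}\bigr).
\]
Both identifications give the same product, which proves $\mathbb{V}^{\Sh{O}_L}(\Omega_{\Sh{A}}) \simeq \mathbb{V}(\Omega_{\Sh{A}}) = \prod_\sigma \mathbb{V}(\Omega_{\Sh{A}}(\sigma))$ as functors on $\catname{FSch}^{\sh{I}}_{\Ig{n}{r}}$. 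The argument is entirely formal, so it applies verbatim to $\Hs{A}$ once one knows it is locally free as an $\Sh{O}_L \otimes \Sh{O}_{\Ig{n}{r}}$-module (established in Corollary \ref{C301} via the exact sequence in (\ref{eq:4})).

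For part (2), I would then track the marked section through the decomposition. The element $1$ on the right side of the defining equation $(h \bmod \gamma^*\beta_n)(\gamma^*s) = 1$ corresponds under $\Sh{O}_L \otimes \Sh{O}_{\mathfrak{Z}}/\gamma^*\beta_n \simeq \prod_\sigma \Sh{O}_{\mathfrak{Z}}/\gamma^*\beta_n$ to the tuple $(1, \dots, 1)$ (note that $\beta_n \in \Sh{O}_{\Ig{n}{r}}$ is a scalar, so it respects the splitting). Similarly $\gamma^*s = \prod_\sigma \gamma^*s_\sigma$ by definition of the $\sigma$-components of the universal marked section. Under the identification of part (1), the condition that $h$ evaluates to $1$ on $\gamma^*s$ modulo $\gamma^*\beta_n$ therefore decouples componentwise into the conditions that each $h_\sigma$ evaluates to $1$ on $\gamma^*s_\sigma$ modulo $\gamma^*\beta_n$. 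This is precisely the definition of a point of $\prod_\sigma \V{}(\Omega_{\Sh{A}}(\sigma), s_\sigma)$, and the same argument applied to $\Hs{A}$ yields the second isomorphism.

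There is no real obstacle here; the whole proposition is a bookkeeping consequence of the splitting of $\Sh{O}_L$ over the base. The only minor point deserving attention is to check that the scalar $\beta_n$ (and more generally the ideal in which the defining congruence takes place) is induced from $\Sh{O}_{\Ig{n}{r}}$ and hence compatible with the $\sigma$-decomposition; this is immediate from the formula $\beta_n = p^n \Hdg{-p^n/(p-1)}$.
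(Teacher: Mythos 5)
Your proposal is correct and takes the same route as the paper: the paper's proof simply records that $\Sh{O}_L \otimes \Sh{O}_K \simeq \Sh{O}_K^{\Sigma}$ (since $K$ is Galois and splits $L$) and declares that the claims "follow immediately from the definitions." You have spelled out the bookkeeping — the idempotent decomposition of $\Sh{O}_L \otimes \Sh{O}_{\mathfrak{Z}}$, the resulting splitting of $\Hom$-groups, and the componentwise decoupling of the marked-section condition, together with the observation that $\beta_n$ lives in $\Sh{O}_{\Ig{n}{r}}$ and so commutes with the decomposition — but this is precisely what the paper is implicitly appealing to, so there is nothing substantively different.
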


\begin{proof}
We note that since $K$ is Galois, the natural map $\Sh{O}_L \otimes \Sh{O}_K \to \Sh{O}_K^{\Sigma}$ is an isomorphism. The claims of the proposition then follow immediately from the definitions.
\end{proof}

Taking into account the modified unit root subspace $\Sh{Q} \subset \Hs{A}/p\Hdg{-p^2}$, we define a geometric $\Sh{O}_L$-vector bundle with marked section and marked splitting as follows.

\begin{defn}\label{D305}
\begin{enumerate}
    \item For any $\sigma \in \Sigma$, define $\V{}\big(\mathrm{H}^{\sharp}_{\Sh{A}}(\sigma), s_{\sigma}, \Sh{Q}(\sigma)\big)$ as the functor that associates to any $\alpha$-admissible formal scheme $\mathfrak{Z} \xrightarrow{\gamma} \Ig{n}{r}$ the following set:
    \begin{align*}
        \V{}\big(\mathrm{H}^{\sharp}_{\Sh{A}}(\sigma), s_{\sigma}, \Sh{Q}(\sigma)\big)(\mathfrak{Z}) := \left\{ h \in \V{}\big(\mathrm{H}^{\sharp}_{\Sh{A}}(\sigma), s_{\sigma}\big)(\mathfrak{Z}) \, | \, (h \textrm{ mod } \gamma^*\eta)(\gamma^*\Sh{Q}(\sigma)) = 0 \right\}.
    \end{align*}
    \item Define $\V{\Sh{O}_L}(\mathrm{H}^{\sharp}_{\Sh{A}}, s, \Sh{Q})$ as the functor that associates to any $\alpha$-admissible formal scheme $\mathfrak{Z} \xrightarrow{\gamma} \Ig{n}{r}$ the following set:
    \begin{align*}
        \V{\Sh{O}_L}(\mathrm{H}^{\sharp}_{\Sh{A}}, s, \Sh{Q})(\mathfrak{Z}) := \left\{ h \in \V{\Sh{O}_L}(\mathrm{H}^{\sharp}_{\Sh{A}}, s)(\mathfrak{Z}) \, | \, (h \textrm{ mod }\gamma^*\eta)(\gamma^*\Sh{Q}) = 0 \right\}.
    \end{align*}
\end{enumerate}
\end{defn}

\begin{prop}
We have a natural isomorphism of functors 
\[
\V{\Sh{O}_L}(\mathrm{H}^{\sharp}_{\Sh{A}}, s, \Sh{Q}) \simeq \prod_{\sigma \in \Sigma} \V{}\big(\mathrm{H}^{\sharp}_{\Sh{A}}(\sigma), s_{\sigma}, \Sh{Q}(\sigma)\big).
\]
\end{prop}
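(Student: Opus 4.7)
The plan is to reduce the statement to the analogous decomposition already established (without the splitting condition) and to check that the condition on $\Sh{Q}$ decomposes componentwise in a compatible way. Concretely, I would start from the isomorphism
\[
\V{\Sh{O}_L}(\mathrm{H}^{\sharp}_{\Sh{A}}, s) \simeq \prod_{\sigma \in \Sigma} \V{}\big(\mathrm{H}^{\sharp}_{\Sh{A}}(\sigma), s_{\sigma}\big)
\]
proved just above: since $K$ splits $L$, the splitting $\Sh{O}_L \otimes \Sh{O}_K \simeq \prod_{\sigma \in \Sigma} \Sh{O}_K$ decomposes any $\Sh{O}_L$-linear map $h \colon \gamma^*\Hs{A} \to \Sh{O}_L \otimes \Sh{O}_{\mathfrak{Z}}$ into components $h_\sigma \colon \gamma^*\Hs{A}(\sigma) \to \Sh{O}_{\mathfrak{Z}}$, and the condition $(h \bmod \gamma^*\beta_n)(\gamma^*s)=1$ translates into $(h_\sigma \bmod \gamma^*\beta_n)(\gamma^*s_\sigma)=1$ for every $\sigma$.

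The key observation is that the kernel $\Sh{Q}$ of the modified unit root splitting is $\Sh{O}_L$-stable, hence itself decomposes as $\Sh{Q} = \bigoplus_{\sigma \in \Sigma} \Sh{Q}(\sigma)$, with $\Sh{Q}(\sigma) \subset \Hs{A}(\sigma)/\eta$. This is where I would invoke Proposition \ref{P202}, which guarantees that the canonical splitting of the Hodge-type exact sequence for $\Hs{A}$ mod $p\Hdg{-1}$ commutes with the $\Sh{O}_L$-decomposition; after twisting by $\underline{\xi}$ and reducing further modulo $\eta = p\Hdg{-p^2}$, this splitting, and therefore $\Sh{Q}$, still respects the $\Sigma$-grading.

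Granted this, the main step is an elementary verification: for a homomorphism $h \in \V{\Sh{O}_L}(\Hs{A}, s)(\mathfrak{Z})$ with components $(h_\sigma)$, the $\Sh{O}_L$-linearity forces
\[
(h \bmod \gamma^*\eta)(\gamma^*\Sh{Q}) = \bigoplus_{\sigma \in \Sigma} (h_\sigma \bmod \gamma^*\eta)(\gamma^*\Sh{Q}(\sigma))
\]
as a section of $\Sh{O}_L \otimes \Sh{O}_{\mathfrak{Z}}/\gamma^*\eta \simeq \prod_\sigma \Sh{O}_{\mathfrak{Z}}/\gamma^*\eta$. Hence the vanishing of the left hand side is equivalent to the simultaneous vanishing of each summand on the right. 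This identifies $\V{\Sh{O}_L}(\Hs{A}, s, \Sh{Q})(\mathfrak{Z})$ with the subset of $\prod_\sigma \V{}(\Hs{A}(\sigma), s_\sigma)(\mathfrak{Z})$ cut out by the individual kernel conditions, i.e.\ with $\prod_\sigma \V{}(\Hs{A}(\sigma), s_\sigma, \Sh{Q}(\sigma))(\mathfrak{Z})$, and the bijection is functorial in $\mathfrak{Z}$ by construction.

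The only potentially nontrivial point is the compatibility of the decomposition with the marked splitting, i.e.\ verifying that $\Sh{Q}$ really is a direct sum of its $\sigma$-components in the sense above; this is immediate from Proposition \ref{P202} together with the fact that $\underline{\xi}$ and $\eta$ are compatible with the $\Sigma$-decomposition. Once this is in hand, the rest of the argument is purely formal, so I would expect the proof to be short.
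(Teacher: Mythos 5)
Your proof is correct and unpacks exactly what the paper dispatches with ``Clear from the definitions'': the $\Sh{O}_L$-linearity of both the data and the condition means everything decomposes along $\Sh{O}_L \otimes \Sh{O}_K \simeq \prod_\sigma \Sh{O}_K$, and the $\Sigma$-grading on $\Sh{Q}$ comes from Proposition \ref{P202}. Nothing to add.
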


\begin{proof}
Clear from the definitions.
\end{proof}

\begin{prop}
The functors $\V{\Sh{O}_L}(\Omega_{\Sh{A}}, s), \V{\Sh{O}_L}(\Hs{A}, s)$ and $\V{\Sh{O}_L}(\Hs{A}, s, \Sh{Q})$ are representable.
\end{prop}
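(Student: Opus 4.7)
The plan is to reduce the statement to the representability results already established in Lemmas \ref{L1302} and \ref{L1303} for ordinary vector bundles with marked sections and marked splitting, by exploiting the product decomposition of sheaves induced by the splitting of $L$ in $K$.

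First I would invoke the isomorphism $\Sh{O}_L \otimes_{\Z_p} \Sh{O}_K \simeq \prod_{\sigma \in \Sigma} \Sh{O}_K$, which induces decompositions $\Omega_{\Sh{A}} = \prod_{\sigma} \Omega_{\Sh{A}}(\sigma)$, $\Hs{A} = \prod_{\sigma} \Hs{A}(\sigma)$, $s = \prod_{\sigma} s_{\sigma}$, and $\Sh{Q} = \prod_{\sigma} \Sh{Q}(\sigma)$. Giving an $\Sh{O}_L \otimes \Sh{O}_{\mathfrak{Z}}$-linear homomorphism to $\Sh{O}_L \otimes \Sh{O}_{\mathfrak{Z}}$ is then equivalent to giving $|\Sigma|$ independent $\Sh{O}_{\mathfrak{Z}}$-linear homomorphisms on each $\sigma$-component, and the condition of sending $s$ to $1$ modulo $\beta_n$, as well as the condition of killing $\Sh{Q}$ modulo $\eta$, both decompose componentwise. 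This yields the natural isomorphisms of functors
\begin{align*}
\V{\Sh{O}_L}(\Omega_{\Sh{A}}, s) &\simeq \prod_{\sigma \in \Sigma} \V{}(\Omega_{\Sh{A}}(\sigma), s_{\sigma}), \\
\V{\Sh{O}_L}(\Hs{A}, s) &\simeq \prod_{\sigma \in \Sigma} \V{}(\Hs{A}(\sigma), s_{\sigma}), \\
\V{\Sh{O}_L}(\Hs{A}, s, \Sh{Q}) &\simeq \prod_{\sigma \in \Sigma} \V{}(\Hs{A}(\sigma), s_{\sigma}, \Sh{Q}(\sigma)),
\end{align*}
as asserted in the previous proposition.

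Next, for each $\sigma$ I would apply Lemmas \ref{L1302} and \ref{L1303} to the factors on the right. The hypotheses are straightforward to verify: $\Omega_{\Sh{A}}(\sigma)$ is an invertible $\Sh{O}_{\Ig{n}{r}}$-module by construction, and $s_{\sigma}$ generates $\Omega_{\Sh{A}}(\sigma)/\beta_n$ as a consequence of the $\dlog$ isomorphism recorded in \S\ref{2S31}. The sheaf $\Hs{A}(\sigma)$ is locally free of rank $2$, and the $\sigma$-component of the short exact sequence from Corollary \ref{C301} exhibits $s_{\sigma}$ as a generator of a locally free rank-one subsheaf of $\Hs{A}(\sigma)/\beta_n$ with locally free cokernel. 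Proposition \ref{P202} furnishes the marked splitting of this $\sigma$-component modulo $\eta$, whose kernel $\Sh{Q}(\sigma)$ is an invertible subsheaf of the quotient.

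The one minor point of care is that Lemmas \ref{L1302} and \ref{L1303} are stated for a single ideal of definition $\sh{I}$, whereas here the marked section is taken modulo $\beta_n$ and the marked splitting modulo $\eta$, both of which differ in general from the ideal of definition $\alpha$ on $\Ig{n}{r}$. This causes no difficulty: $\beta_n$ and $\eta$ are both invertible ideals on $\Ig{n}{r}$, and the blow-up construction from the proofs of those lemmas adapts verbatim, yielding local models of the form $\Spf R\langle (X_{\sigma}-1)/\beta_n,\ Y_{\sigma}/\eta\rangle$ for suitable lifts $X_{\sigma}$ of $s_{\sigma}$ and $Y_{\sigma}$ of a local generator of $\Sh{Q}(\sigma)$. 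Since fibre products over $\Ig{n}{r}$ of representable functors are representable, the decompositions of the first step then promote these componentwise representability statements to the three $\Sh{O}_L$-functors at hand. The argument is essentially formal — the substantive work has already been done in setting up the VBMS framework of \S\ref{1S31} — so no serious obstacle arises.
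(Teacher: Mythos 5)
Your proof is correct and follows the same route the paper intends: decompose via $\Sh{O}_L \otimes \Sh{O}_K \simeq \prod_{\sigma \in \Sigma}\Sh{O}_K$ into ordinary VBMS components (using the natural isomorphisms of functors from the preceding propositions) and then invoke Lemmas~\ref{L1302} and~\ref{L1303}. The paper's proof consists of the single sentence ``Follows immediately from the discussion in \S\ref{1S31}''; your write-up, including the observation that $\beta_n$ and $\eta$ stand in for the invertible ideal $\sh{I}$ in those lemmas, is a faithful unpacking of what that immediacy entails.
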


\begin{proof}
Follows immediately from the discussion in \S\ref{1S31}.
\end{proof}

\subsubsection{Formal group action on formal $\Sh{O}_L$-vector bundles}\label{2S321}

The vector bundles $\V{\Sh{O}_L}(\Omega_{\Sh{A}}, s), \V{\Sh{O}_L}(\mathrm{H}^{\sharp}_{\Sh{A}}, s)$ and $\V{\Sh{O}_L}(\mathrm{H}^{\sharp}_{\Sh{A}}, s, \Sh{Q})$ carry an action of the formal group $\mathfrak{T} := 1 + \beta_n \Res_{\Sh{O}_L/\Z}\Ga$ over $\Ig{n}{r}$. This action realizes $\V{\Sh{O}_L}(\Omega_{\Sh{A}}, s)$ as a $\mathfrak{T}$-torsor over $\Ig{n}{r}$. Moreover, there is a natural action of $\mathbb{T}(\Z_p) = {(\Sh{O}_L \otimes \Z_p)}^{\times}$ on the aforementioned formal vector bundles over $\Xra{r}$ which we will describe too. Together we get an action of $\mathfrak{T}^{\textrm{ext}} := \Z_p^{\times}(1 + \beta_n \Res_{\Sh{O}_L/\Z}\Ga)$ on these vector bundles over $\Xra{r}$.

\begin{enumerate}
    \item $\mathfrak{T}^{\textrm{ext}}$-action on $\V{\Sh{O}_L}(\Omega_{\Sh{A}}, s):$ Let $(\rho, h) \in \V{\Sh{O}_L}(\Omega_{\Sh{A}}, s)(R)$ be a ${R}$-valued point of $\V{\Sh{O}_L}(\Omega_{\Sh{A}},s)$. Here $\rho \colon \Spf{R} \to \Ig{n}{r}$ is a morphism of formal schemes and $h \in \Hom_{\Sh{O}_L \otimes R}(\rho^*\Omega_{\Sh{A}}, \Sh{O}_L \otimes R)$. Then $\lambda \in \mathfrak{T}(R)$ acts as $\lambda \ast (\rho, h) := (\rho, \lambda h)$. Let $\lambda \in \mathbb{T}(\Z_p)$. Denote its class in $\Sh{O}_L \otimes \Z/p^n\Z$ by $\bar{\lambda}$. Then $\lambda$ induces an isomorphism ${[\lambda]} \colon \Ig{n}{r} \xrightarrow{\sim} \Ig{n}{r}$ that induces the map 
    \begin{align*}
    H^{\vee}_n(R) &\to H^{\vee}_n(R) \\
    P &\mapsto \bar{\lambda}^{-1}P
    \end{align*}
    on the $R$-valued points of $\Ig{n}{r}$. There is a natural isomorphism $\gamma_{\lambda} \colon [\lambda]^*\Omega_{\Sh{A}} \xrightarrow{\sim} \Omega_{\Sh{A}}$ such that $(\gamma_{\lambda} \textrm{ mod }\beta_n)([\lambda]^*s) = \bar{\lambda}^{-1}s$. Then we define the $\mathbb{T}(\Z_p)$-action as $\lambda \ast (\rho, h) := ([\lambda] \circ \rho, \lambda h \circ \gamma_{\lambda})$.
    
    \item $\mathfrak{T}^{\textrm{ext}}$-action on $\V{\Sh{O}_L}(\mathrm{H}^{\sharp}_{\Sh{A}}, s):$ Let $(\rho, h) \in \V{\Sh{O}_L}(\mathrm{H}^{\sharp}_{\Sh{A}}, s)(R)$, with $\rho \colon \Spf{R} \to \Ig{n}{r}$ a morphism of formal schemes and $h \in \Hom_{\Sh{O}_L \otimes R}(\rho^*\mathrm{H}^{\sharp}_{\Sh{A}}, \Sh{O}_L \otimes R)$. Then $\lambda \in \mathfrak{T}(R)$ acts as $\lambda \ast (\rho, h) := (\rho, \lambda h)$. If $\lambda \in \mathbb{T}(\Z_p)$, then as before we have an isomorphism $[\lambda] \colon \Ig{n}{r} \xrightarrow{\sim} \Ig{n}{r}$. This gives a natural isomorphism $\gamma_{\lambda} \colon [\lambda]^*\mathrm{H}^{\sharp}_{\Sh{A}} \xrightarrow{\sim} \mathrm{H}^{\sharp}_{\Sh{A}}$ such that $(\gamma_{\lambda} \textrm{ mod }\beta_n)([\lambda]^*s) = \bar{\lambda}^{-1}s$. Then we define the $\mathbb{T}(\Z_p)$-action as $\lambda \ast (\rho, h) := ([\lambda] \circ \rho, \lambda h \circ \gamma_{\lambda})$.
    \item $\mathfrak{T}^{\textrm{ext}}$-action on $\V{\Sh{O}_L}(\mathrm{H}^{\sharp}_{\Sh{A}}, s, \Sh{Q}):$ This is defined by restricting the action defined on $\V{\Sh{O}_L}(\mathrm{H}^{\sharp}_{\Sh{A}}, s)$.
\end{enumerate}

\begin{lemma}\label{R301}
The formal group $\mathfrak{T}$ decomposes as $\mathfrak{T} = \prod_{\sigma \in \Sigma} (1 + \beta_n \Ga)$ over $\Ig{n}{r}$. The action of $\mathfrak{T}$ on $\V{\Sh{O}_L}(\Omega_{\Sh{A}}, s)$ and $\V{\Sh{O}_L}(\mathrm{H}^{\sharp}_{\Sh{A}}, s)$ is compatible with the splitting of $\mathfrak{T}$ and the vector bundles. That is to say, if $\lambda = (\lambda_{\sigma}) \in \prod_{\sigma}(1 + \beta_n\Ga)(R)$, and $(\rho, h) \in \V{\Sh{O}_L}(\Omega_{\Sh{A}}, s)(R)$, with $h = (h_{\sigma}) \in \prod_{\sigma} \Hom_{R}(\rho^*\Omega_{\Sh{A}}(\sigma), R)$, then $\lambda \ast (\rho, h) = \prod_{\sigma} (\rho, \lambda_{\sigma}h_{\sigma})$. Similarly for $\V{\Sh{O}_L}(\mathrm{H}^{\sharp}_{\Sh{A}}, s)$.
\end{lemma}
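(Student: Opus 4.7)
The plan is to deduce everything from the fact that $K$ splits $L$ together with $p$ being unramified. Since the natural map $\Sh{O}_L \otimes_{\Z} \Sh{O}_K \to \prod_{\sigma \in \Sigma} \Sh{O}_K$, $a \otimes b \mapsto (\sigma(a) b)_{\sigma}$, is an isomorphism of $\Sh{O}_K$-algebras and $\Ig{n}{r}$ is an $\Sh{O}_K$-formal scheme, one obtains a ring isomorphism $\Sh{O}_L \otimes \Sh{O}_{\Ig{n}{r}} \simeq \prod_{\sigma \in \Sigma} \Sh{O}_{\Ig{n}{r}}$, and hence, for any $\alpha$-admissible $\mathfrak{Z} \to \Ig{n}{r}$, an identification $\Res_{\Sh{O}_L/\Z}\Ga(\mathfrak{Z}) = \Sh{O}_L \otimes \Gamma(\mathfrak{Z}, \Sh{O}_{\mathfrak{Z}}) \simeq \prod_{\sigma} \Ga(\mathfrak{Z})$. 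Applying this to the subgroup $1 + \beta_n \Res_{\Sh{O}_L/\Z}\Ga$ (a ring isomorphism carries units to units and preserves the subgroup of units congruent to $1$ modulo an ideal) gives the decomposition $\mathfrak{T} \simeq \prod_{\sigma} (1 + \beta_n \Ga)$ functorially in $\mathfrak{Z}$.

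Next I would decompose the vector bundles. Both $\Omega_{\Sh{A}}$ and $\Hs{A}$ are locally free $\Sh{O}_L \otimes \Sh{O}_{\Ig{n}{r}}$-modules, so by the above ring splitting they decompose canonically as $\Omega_{\Sh{A}} = \prod_{\sigma} \Omega_{\Sh{A}}(\sigma)$ and $\Hs{A} = \prod_{\sigma} \Hs{A}(\sigma)$, and the marked section $s = \dlog(P^{\textrm{univ}})$ splits as $s = (s_{\sigma})_{\sigma}$. Any $\Sh{O}_L \otimes \Sh{O}_{\mathfrak{Z}}$-linear map $h \colon \gamma^*\Omega_{\Sh{A}} \to \Sh{O}_L \otimes \Sh{O}_{\mathfrak{Z}}$ decomposes as a tuple $(h_{\sigma})_{\sigma}$ of $\Sh{O}_{\mathfrak{Z}}$-linear maps $h_{\sigma} \colon \gamma^*\Omega_{\Sh{A}}(\sigma) \to \Sh{O}_{\mathfrak{Z}}$, and the condition $(h \bmod \gamma^*\beta_n)(\gamma^*s) = 1$ in $\Sh{O}_L \otimes \Sh{O}_{\mathfrak{Z}}/\beta_n$ is, under the product decomposition, equivalent to $(h_{\sigma} \bmod \gamma^*\beta_n)(\gamma^* s_{\sigma}) = 1$ for every $\sigma$. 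This shows $\V{\Sh{O}_L}(\Omega_{\Sh{A}}, s) \simeq \prod_{\sigma} \V{}(\Omega_{\Sh{A}}(\sigma), s_{\sigma})$, and an identical argument works for $\V{\Sh{O}_L}(\Hs{A}, s)$.

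Finally, I would unpack the action. By definition (\S\ref{2S321}), $\lambda \in \mathfrak{T}(R)$ acts on $(\rho, h)$ by $(\rho, \lambda h)$, where $\lambda h$ is the $\Sh{O}_L \otimes R$-linear map $x \mapsto \lambda \cdot h(x)$ with multiplication taken in $\Sh{O}_L \otimes R$. Since the isomorphism $\Sh{O}_L \otimes R \simeq \prod_{\sigma} R$ is a ring isomorphism, multiplication is componentwise; so if $\lambda = (\lambda_{\sigma})_{\sigma}$ and $h = (h_{\sigma})_{\sigma}$, then $(\lambda h)_{\sigma} = \lambda_{\sigma} h_{\sigma}$, which is exactly the statement of the lemma. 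The same reasoning applies to $\V{\Sh{O}_L}(\Hs{A}, s)$ verbatim.

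In summary, the proof is a direct unwinding of the ring decomposition $\Sh{O}_L \otimes \Sh{O}_K \simeq \prod_{\sigma} \Sh{O}_K$ through the definitions of $\mathfrak{T}$ and of the action; no serious obstacle is anticipated. The only point that warrants a brief sentence is that the splitting of $\mathfrak{T}$, which sits inside a \emph{multiplicative} structure, follows from the \emph{ring} isomorphism (and in particular respects the $1 + \beta_n(\cdot)$ subgroup), not merely from an additive isomorphism.
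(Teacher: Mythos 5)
Your argument is correct and is exactly the unwinding the paper has in mind; the paper's own proof is simply ``This is clear,'' relying on the earlier observation (in the proposition decomposing $\mathbb{V}^{\Sh{O}_L}$ and $\V{\Sh{O}_L}$ as products over $\Sigma$) that the ring map $\Sh{O}_L \otimes \Sh{O}_K \to \Sh{O}_K^{\Sigma}$ is an isomorphism because $K$ is Galois and splits $L$. You have spelled out the same chain of reductions in full, which is helpful but not a different route.
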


\begin{proof}
This is clear.
\end{proof}

\paragraph{\textbf{$\mathfrak{T}$-action in terms of local coordinates.}}
Based on Lemma \ref{R301}, the action of $\mathfrak{T}$ on $\V{\Sh{O}_L}(\Omega_{\Sh{A}}, s)$ and on $\V{\Sh{O}_L}(\Hs{A}, s, \Sh{Q})$ can be described on local coordinates in the following manner. 

Let $\Spf{R} \xhookrightarrow{} \Ig{n}{r}$ be an open subscheme such that $\Omega_{\Sh{A}}$ and $\mathrm{H}^{\sharp}_{\Sh{A}}$ are trivialized as $\Sh{O}_L \otimes \Sh{O}_{\Ig{n}{r}}$-modules, and such that $\Sh{Q}$ is trivial too over $\Spf{R}/\eta$. Let $X \in \Omega_{\Sh{A}}(R)$ be a lift of $s$ and $Y \in \mathrm{H}^{\sharp}_{\Sh{A}}(R)$ be a lift of a local $\Sh{O}_L \otimes \Sh{O}_{\Ig{n}{r}}$ generator $t$ of $\Sh{Q}$. Let $X_{\sigma}, Y_{\sigma}$ be their $\sigma$-components. The formal schemes $\V{}(\Omega_{\Sh{A}}(\sigma), s_{\sigma})$ and $\V{}(\Hs{A}(\sigma), s_{\sigma}, \Sh{Q}(\sigma))$ are realized as admissible blow-ups of $\mathbb{V}(\Omega_{\Sh{A}}(\sigma))$ and $\mathbb{V}(\Hs{A}(\sigma))$ respectively. Then as elaborated in \S\ref{1S31} the blow-ups are described by the following diagrams.
\begin{equation}\label{EQ205}
\begin{tikzcd}[column sep = large]
	{\Ig{n}{r}} & {\mathbb{V}(\Omega_{\Sh{A}}(\sigma))} & {\V{}(\Omega_{\Sh{A}}(\sigma), s_{\sigma})} \\
	{\Spf{R}} & {\Spf{R}\langle X_{\sigma}\rangle} & {\Spf{R}\langle Z_{\sigma} \rangle}
	\arrow[from=1-2, to=1-1]
	\arrow[from=1-3, to=1-2]
	\arrow[from=2-2, to=2-1]
	\arrow["{X_{\sigma} \mapsto 1 + \beta_n Z_{\sigma}}", from=2-3, to=2-2]
	\arrow[hook, from=2-1, to=1-1]
	\arrow[hook, from=2-2, to=1-2]
	\arrow[hook, from=2-3, to=1-3]
\end{tikzcd}
\end{equation}

\begin{equation}\label{eq:5}
\begin{tikzcd}[column sep = large]
\Ig{n}{r} & \mathbb{V}(\mathrm{H}^{\sharp}_{\Sh{A}}(\sigma)) \arrow[l]    & {\V{}(\mathrm{H}^{\sharp}_{\Sh{A}}(\sigma), s_{\sigma})} \arrow[l] & {\V{}(\mathrm{H}^{\sharp}_{\Sh{A}}, s_{\sigma}, \Sh{Q}(\sigma))} \arrow[l] \\
\Spf{R} \arrow[u, hook]       & {\Spf{R}\langle X_{\sigma}, Y_{\sigma}\rangle} \arrow[u, hook] \arrow[l] & {\Spf{R}\langle Z_{\sigma}, Y_{\sigma}\rangle} \arrow[u, hook] \arrow[l, "{X_{\sigma} \mapsto 1 + \beta_n Z_{\sigma}}"]   & {\Spf{R}\langle Z_{\sigma}, W_{\sigma}\rangle} \arrow[u, hook] \arrow[l, "{Y_{\sigma} \mapsto \alpha W_{\sigma}}"]     
\end{tikzcd}
\end{equation}

Let $\lambda \in \mathfrak{T}(R)$ and let $\lambda = (\lambda_{\sigma})_{\sigma}$ be its decomposition into coordinates. The action of $\lambda_{\sigma}$ on $Z_{\sigma}$ is such that $\lambda_{\sigma} \ast (1+\beta_n Z_{\sigma}) = \lambda_{\sigma}(1+\beta_n Z_{\sigma})$. In other words, \[\lambda_{\sigma} \ast Z_{\sigma} = \frac{\lambda_{\sigma}-1}{\beta_n} + \lambda_{\sigma}Z_{\sigma}.\]
Similarly, $\lambda_{\sigma}$ acts on $W_{\sigma}$ via \[\lambda_{\sigma} \ast W_{\sigma} = \lambda_{\sigma}W_{\sigma}.\]

\subsection{\texorpdfstring{$p$}{p}-adic interpolation of \texorpdfstring{$\Omega_{\Sh{A}}$}{OmegaA} and \texorpdfstring{$\mathrm{H}^{\sharp}_{\Sh{A}}$}{HA}}
Let $n, r, \alpha, I$ be as fixed in the beginning of the section. Denote by $\rho' \colon \V{\Sh{O}_L}(\mathrm{H}^{\sharp}_{\Sh{A}}, s, \Sh{Q}) \xrightarrow{} \Ig{n}{r}$ and $\nu' \colon \V{\Sh{O}_L}(\Omega_{\Sh{A}}, s) \xrightarrow{} \Ig{n}{r}$ the projections.
\begin{defn}
\begin{enumerate}
    \item For $k = k^0_{\alpha, I} \colon (\Sh{O}_L \otimes \Z_p)^{\times} \to \Lambda^0_{\alpha,I}$ the universal character, define \[\mathfrak{w}'_{k,\alpha, I} := \nu'_*\Sh{O}_{\V{\Sh{O}_L}(\Omega_{\Sh{A}}, s)}[k].\]
    The sections of this sheaf by definition are the functions $f \in \nu'_*\Sh{O}_{\V{\Sh{O}_L}(\Omega_{\Sh{A}}, s)}$ that transform via $\lambda \ast f = k(\lambda)f$ under the $\mathfrak{T}$-action.
    \item For $k = k^0_{\alpha, I}$ define \[\mathbb{W}'_{k,\alpha,I} := \rho'_*\Sh{O}_{\V{\Sh{O}_L}(\mathrm{H}^{\sharp}, s, \Sh{Q})}[k].\] 
\end{enumerate}
\end{defn}

Let $k_{\sigma} \colon 1 + \beta_n \Ga \to \Gm$ be the restriction of $k^0_{\alpha, I}$ to the $\sigma$-component of $\mathfrak{T} = \prod_{\sigma} (1 + \beta_n \Ga)$.

\begin{prop}\label{P303}
Let $\nu'_{\sigma} \colon \V{}(\Omega_{\Sh{A}}(\sigma), s_{\sigma}) \to \Ig{n}{r}$ be the projection for each $\sigma$. Then 
\[
\mathfrak{w}'_{k,\alpha, I} = \hat{\otimes}_{\sigma}{(\nu'_{\sigma})}_*\Sh{O}_{\V{}(\Omega_{\Sh{A}}(\sigma), s_{\sigma})}[k_{\sigma}].
\]
In particular, $\mathfrak{w}'_{k,\alpha,I}$ is a line bundle on $\Ig{n}{r}$.
\end{prop}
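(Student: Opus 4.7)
The plan is to deduce both claims from two facts already established in the excerpt: the decomposition $\V{\Sh{O}_L}(\Omega_{\Sh{A}}, s) \simeq \prod_{\sigma \in \Sigma} \V{}(\Omega_{\Sh{A}}(\sigma), s_{\sigma})$ of formal schemes over $\Ig{n}{r}$, and the coordinate-wise compatibility of the $\mathfrak{T}$-action recorded in Lemma \ref{R301}.

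First I would observe that, since $\V{\Sh{O}_L}(\Omega_{\Sh{A}}, s)$ is the fibre product over $\Ig{n}{r}$ of the individual $\V{}(\Omega_{\Sh{A}}(\sigma), s_{\sigma})$, its direct image of the structure sheaf is the $\alpha$-adically completed tensor product
\[
\nu'_* \Sh{O}_{\V{\Sh{O}_L}(\Omega_{\Sh{A}}, s)} \;\simeq\; \hat{\bigotimes}_{\sigma \in \Sigma} (\nu'_{\sigma})_* \Sh{O}_{\V{}(\Omega_{\Sh{A}}(\sigma), s_{\sigma})},
\]
the tensor product being taken over $\Sh{O}_{\Ig{n}{r}}$. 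I would verify this Zariski-locally: over an affine open $\Spf{R} \subset \Ig{n}{r}$ that trivializes $\Omega_{\Sh{A}}$, diagram (\ref{EQ205}) identifies each $\V{}(\Omega_{\Sh{A}}(\sigma), s_{\sigma})$ with $\Spf{R\langle Z_{\sigma}\rangle}$, and the fibre product over $\Spf{R}$ becomes $\Spf$ of the $\alpha$-adic completion of $\bigotimes_{R,\sigma} R\langle Z_{\sigma}\rangle = R\langle Z_{\sigma} : \sigma \in \Sigma\rangle$.

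Next, Lemma \ref{R301} asserts that $\mathfrak{T} = \prod_{\sigma}(1 + \beta_n\Ga)$ acts coordinate-wise on this product, and since the restriction of the universal character to $\mathfrak{T}$ factors as $k = \prod_{\sigma} k_{\sigma}$, passage to $k$-isotypic components commutes with the completed tensor product; the $\mathfrak{T}$-action preserves the $\alpha$-adic filtration on $\nu'_*\Sh{O}$, so each isotypic piece is $\alpha$-adically closed, and its formation commutes with completion. This yields
\[
\mathfrak{w}'_{k,\alpha,I} \;=\; \bigl(\nu'_* \Sh{O}_{\V{\Sh{O}_L}(\Omega_{\Sh{A}}, s)}\bigr)[k] \;\simeq\; \hat{\bigotimes}_{\sigma \in \Sigma} \bigl((\nu'_{\sigma})_* \Sh{O}_{\V{}(\Omega_{\Sh{A}}(\sigma), s_{\sigma})}\bigr)[k_{\sigma}].
\]

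For the line bundle claim I would inspect a single factor in local coordinates. The action of $\lambda_{\sigma} \in (1+\beta_n\Ga)(R)$ satisfies $\lambda_{\sigma} \ast (1+\beta_n Z_{\sigma}) = \lambda_{\sigma}(1+\beta_n Z_{\sigma})$; by the analyticity of $k_{\sigma}$ on $1+\beta_n\Ga$ guaranteed by Lemma \ref{L21}, the element $(1+\beta_n Z_{\sigma})^{k_{\sigma}} \in R\langle Z_{\sigma}\rangle$ makes sense and transforms by $k_{\sigma}(\lambda_{\sigma})$. Any convergent power series $f(Z_{\sigma})$ satisfying $\lambda_{\sigma} \ast f = k_{\sigma}(\lambda_{\sigma}) f$ for every $\lambda_{\sigma}$ must equal $f(0)\cdot (1+\beta_n Z_{\sigma})^{k_{\sigma}}$, since the ratio $f(Z_{\sigma})/(1+\beta_n Z_{\sigma})^{k_{\sigma}}$ is $\mathfrak{T}$-invariant and hence constant along the $(1+\beta_n\Ga)$-torsor $\V{}(\Omega_{\Sh{A}}(\sigma), s_{\sigma}) \to \Ig{n}{r}$. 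Thus each factor is locally free of rank one, and the completed tensor product of $g = [L:\Q]$ such line bundles over $\Sh{O}_{\Ig{n}{r}}$ is again a line bundle. The one point that deserves care is ensuring that isotypic projection genuinely commutes with the completed tensor product; everything else is a local computation dictated by the explicit model (\ref{EQ205}).
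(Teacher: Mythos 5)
Your overall decomposition is sound and lines up with the paper's strategy — identifying $\nu'_*\Sh{O}_{\V{\Sh{O}_L}(\Omega_{\Sh{A}},s)}$ with the completed tensor product of the $\sigma$-factors, then extracting the $k$-isotypic piece. But there is a real gap at the central step: you assert that ``passage to $k$-isotypic components commutes with the completed tensor product'' because the $\mathfrak{T}$-action is coordinate-wise and $k = \prod_{\sigma} k_{\sigma}$. That principle is not automatic. For an infinite formal group such as $1+\beta_n\Ga$, the module $R\langle Z_{\sigma}\rangle$ admits no isotypic decomposition into characters, so there is no abstract argument that $(A\hat{\otimes}B)[(k_1,k_2)] = A[k_1]\hat{\otimes} B[k_2]$. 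One containment is clear (this is the content of Lemma \ref{R301}, giving $\hat{\otimes}_{\sigma}(\nu'_{\sigma})_*\Sh{O}[k_{\sigma}] \subset \mathfrak{w}'_{k,\alpha,I}$), but the reverse containment requires actually computing $R\langle Z_1,\dots,Z_g\rangle^{\mathfrak{T}(R)}$ and showing it equals $R$. This is precisely what the paper does, by a Weierstrass-preparation argument followed by induction on the number of variables. Your phrase about the isotypic piece being ``$\alpha$-adically closed'' addresses completion, not the tensor product, so it does not help here.

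The torsor-descent idea you invoke at the end — the ratio $f/(1+\beta_n Z_{\sigma})^{k_{\sigma}}$ is $\mathfrak{T}$-invariant ``and hence constant along the torsor'' — is genuinely the right idea, but it is exactly the content whose proof you skipped, and you only apply it to a single factor rather than to the full product $R\langle Z_1,\dots,Z_g\rangle$. If you want a proof that bypasses Weierstrass preparation, you would need to establish carefully that $\V{\Sh{O}_L}(\Omega_{\Sh{A}},s)\to\Ig{n}{r}$ is a faithfully flat formal $\mathfrak{T}$-torsor and that descent identifies invariants with $\Sh{O}_{\Ig{n}{r}}$; that argument, applied to the whole product at once, would replace both the Weierstrass computation and your asserted tensor-product commutation. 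As written, the proposal asserts the conclusion of the hard step rather than proving it.
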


\begin{proof}
Denote $\hat{\otimes}_{\sigma}{(\nu'_{\sigma})}_*\Sh{O}_{\V{}(\Omega_{\Sh{A}}(\sigma), s_{\sigma})}[k_{\sigma}]$ by $\tilde{w}$. Then indeed $\tilde{\omega} \subset \mathfrak{w}'_{k,\alpha,I}$ as follows from Lemma \ref{R301}. 

Take a Zariski open $\Spf{
R} \subset \Ig{n}{r}$ that trivializes $\Omega_{\Sh{A}}$. Then (\ref{EQ205}) shows that ${(\nu')}^{-1}(\Spf{R}) \simeq \Spf{R\langle \{Z_{\sigma}\}_{\sigma \in \Sigma}\rangle}$. 

The description of the $\mathfrak{T}$-action on $\V{\Sh{O}_L}(\Omega_{\Sh{A}}, s)$ in terms of local coordinates imply by \cite[Lemma 3.9]{andreatta2021triple}  \[\tilde{\omega}(\Spf{R}) = R \cdot \prod_{\sigma} k_{\sigma}(1 + \beta_n Z_{\sigma}).\]
Denote $\prod_{\sigma} k_{\sigma}(1 + \beta_n Z_{\sigma})$ by $k(1 + \beta_n Z)$. 

If $f \in \mathfrak{w}'_{k,\alpha,I}(\Spf{R})$, then $f/k(1 + \beta_n Z) \in {R\langle \{ Z_{\sigma} \}_{\sigma} \rangle}^{\mathfrak{T}(R)}$. Then the problem reduces to showing that the $\mathfrak{T}$ invariant functions are simply $R$. 

For the one variable case, this follows from an application of the Weierstrass preparation theorem. If $1 + \beta_n R$ acts on $R\langle Z\rangle$ via $t \ast Z = \frac{t-1}{\beta_n} + tZ$, then take $f \in {R\langle Z \rangle}$ invariant under the action of $1 + \beta_n R$. Suppose $f = \sum a_n Z^n$. Then for any $a \in R$, $\sum a_n Z^n = (1+\beta_n a) \ast (\sum a_n Z^n) = \sum a_n (a + (1+\beta_n a)Z)^n$. Letting $Z = 0$, we see that $a_0 = \sum a_n a^n$ for any $a \in R$, which shows that $a_n = 0$ for all $n > 0$.

For the general case the result follows by induction on the number of variables. Choose a bijection $\Sigma \simeq \{1, \dots, g\}$. Suppose $f = \sum a_n Z_g^n \in {R\langle Z_1, \dots, Z_g \rangle}^{\mathfrak{T}(R)}$, with $a_n \in R\langle Z_1, \dots, Z_{g-1}\rangle$ for all $n$. Then for any element $\lambda = (\lambda_i) \in \mathfrak{T}(R)$, such that $\lambda_g = 1$, $f = \lambda \ast f = \sum (\lambda \ast a_n) Z_g^n$. This shows that $\lambda \ast a_n = a_n$ for all $n$, and then by induction $a_n \in R$. Finally $a_n = 0$ for all $n > 0$ by the same argument as above.  
\end{proof}

\begin{rem}
The isomorphism classes of $\Sh{O}_L \otimes \Sh{O}_{\Ig{n}{r}}$ line bundles can be naturally identified with elements of $H^1(\Ig{n}{r}, \Res_{\Sh{O}_L/\Z} \Gm)$. The subgroup $H^1(\Ig{n}{r}, 1 + \beta_n \Res_{\Sh{O}_L/\Z} \Ga)$ classifies precisely $\Sh{O}_L \otimes \Sh{O}_{\Ig{n}{r}}$ line bundles $\sh{L}$ with a marked section $s \in \sh{L}/\beta_n \sh{L}$. Thus the isomorphism class of $(\Omega_{\Sh{A}}, \dlog(P_n^{\textrm{univ}}))$ defines an element of $H^1(\Ig{n}{r}, \mathfrak{T})$. Then $\mathfrak{w}'_{k,\alpha,I}$ defined as above is nothing but its image under the map induced by extension of structural group $H^1(\Ig{n}{r}, \mathfrak{T}) \xrightarrow{k} H^1(\Ig{n}{r}, \Gm)$.
\end{rem}

Next we give a local description of $\mathbb{W}'_{k,\alpha,I}$.

\begin{prop}
Let $\rho'_{\sigma} \colon \V{}(\mathrm{H}^{\sharp}_{\Sh{A}}(\sigma), s_{\sigma}, \Sh{Q}(\sigma)) \to \Ig{n}{r}$ be the projection for each $\sigma$. Then 
\[
\mathbb{W}'_{k,\alpha,I} = \hat{\otimes}_{\sigma} (\rho'_{\sigma})_* \Sh{O}_{\V{}(\mathrm{H}^{\sharp}_{\Sh{A}}(\sigma), s_{\sigma}, \Sh{Q}(\sigma))}[k_{\sigma}].
\]
\end{prop}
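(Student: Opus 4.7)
The strategy mirrors Proposition \ref{P303}. The containment $\hat{\otimes}_{\sigma}(\rho'_{\sigma})_*\Sh{O}_{\V{}(\Hs{A}(\sigma), s_\sigma, \Sh{Q}(\sigma))}[k_\sigma] \subset \mathbb{W}'_{k,\alpha,I}$ is immediate from Lemma \ref{R301}: under the product decomposition $\V{\Sh{O}_L}(\Hs{A}, s, \Sh{Q}) = \prod_{\sigma}\V{}(\Hs{A}(\sigma), s_\sigma, \Sh{Q}(\sigma))$, the formal group $\mathfrak{T} = \prod_{\sigma}(1+\beta_n\Ga)$ acts componentwise, so an external product of $k_\sigma$-eigenfunctions is automatically a $k$-eigenfunction.

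To establish the reverse inclusion I would work locally on a Zariski open $\Spf R \subset \Ig{n}{r}$ which trivializes $\Omega_{\Sh{A}}$, $\Hs{A}$, and the kernel $\Sh{Q}$. Diagram (\ref{eq:5}) then yields
\[
\V{\Sh{O}_L}(\Hs{A}, s, \Sh{Q})_{|\Spf R} \simeq \Spf R\bigl\langle\{Z_\sigma, W_\sigma\}_{\sigma \in \Sigma}\bigr\rangle,
\]
with $\mathfrak{T}$ acting by the formulas $\lambda_\sigma \ast Z_\sigma = (\lambda_\sigma - 1)/\beta_n + \lambda_\sigma Z_\sigma$ and $\lambda_\sigma \ast W_\sigma = \lambda_\sigma W_\sigma$ recorded at the end of \S\ref{2S321}. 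Set $u_\sigma := 1 + \beta_n Z_\sigma$, which is a unit in $R\langle Z_\sigma\rangle$ as $\beta_n$ is topologically nilpotent, and $K := \prod_\sigma k_\sigma(u_\sigma)$, a well-defined $k$-eigenfunction by analyticity of $k^0_{\alpha,I}$ (Lemma \ref{L21}). For $f \in \mathbb{W}'_{k,\alpha,I}(\Spf R)$, the quotient $g := f/K$ lies in $R\langle\{Z_\sigma, W_\sigma\}\rangle^{\mathfrak{T}}$, so the claim reduces to identifying this invariant subring with $R\langle\{W_\sigma/u_\sigma\}_\sigma\rangle$; multiplying back by $K$ then matches the right-hand side factor by factor.

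Each $W_\sigma/u_\sigma$ is $\mathfrak{T}$-invariant, since both $W_\sigma$ and $u_\sigma$ transform via $\lambda_\sigma$ under $\mathfrak{T}_\sigma$ and trivially under $\mathfrak{T}_{\sigma'}$ for $\sigma' \neq \sigma$, giving one inclusion. For the other, I would induct on $|\Sigma|$ by peeling off one $\sigma_0$ at a time: expand $g = \sum_n g_n W_{\sigma_0}^n$ with $g_n \in R\langle \{Z_\sigma\}_\sigma, \{W_\sigma\}_{\sigma \neq \sigma_0}\rangle$. The $\mathfrak{T}_{\sigma_0}$-invariance of $g$ forces $\lambda_{\sigma_0} \ast g_n = \lambda_{\sigma_0}^{-n} g_n$, hence $u_{\sigma_0}^n g_n$ is $\mathfrak{T}_{\sigma_0}$-invariant. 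Rewriting then gives $g = \sum_n (u_{\sigma_0}^n g_n)(W_{\sigma_0}/u_{\sigma_0})^n$, and the $\mathfrak{T}$-invariance of $W_{\sigma_0}/u_{\sigma_0}$ transfers the remaining $\mathfrak{T}_{\sigma'}$-invariance ($\sigma' \neq \sigma_0$) of $g$ to each coefficient, setting up the induction.

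The main technical point I expect to carry over from Proposition \ref{P303} is the one-variable specialization argument applied to $u_{\sigma_0}^n g_n$: one must show that any $\mathfrak{T}_{\sigma_0}$-invariant element of $R'\langle Z_{\sigma_0}\rangle$, with $R' := R\langle \{Z_\sigma\}_{\sigma \neq \sigma_0}, \{W_\sigma\}_{\sigma \neq \sigma_0}\rangle$, lies in $R'$. The proof goes through essentially verbatim, since the invariance is tested against $\lambda_{\sigma_0} \in 1 + \beta_n R \subset 1 + \beta_n R'$ and the device of setting $Z_{\sigma_0} = 0$ then varying the $R$-parameter produces exactly the same polynomial identity as in \ref{P303}, now with coefficients in $R'$; ensuring that this forces all higher coefficients to vanish is the only subtlety, and it follows from the identity principle for rigid analytic functions over the affinoid $\Spf R$.
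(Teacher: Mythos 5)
Your proposal is correct and follows the same route as the paper's own proof: the easy inclusion via componentwise $\mathfrak{T}$-action, reduction on a trivializing open $\Spf R$ to identifying $R\langle\{Z_\sigma, W_\sigma\}_\sigma\rangle^{\mathfrak{T}(R)}$ with $R\langle\{W_\sigma/(1+\beta_n Z_\sigma)\}_\sigma\rangle$, and induction on $|\Sigma|$ reducing to the one-variable specialization argument of Proposition \ref{P303}. The only difference is presentational — you expand in powers of $W_{\sigma_0}$ and then pass to $V_{\sigma_0} = W_{\sigma_0}/u_{\sigma_0}$, while the paper expands directly as $\sum A_n(Z_g)V_g^n$ — but the coefficients $u_{\sigma_0}^n g_n$ you produce are precisely the paper's $A_n$, so the arguments coincide.
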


\begin{proof}
Let $\tilde{\mathbb{W}} = \hat{\otimes}_{\sigma} (\rho'_{\sigma})_* \Sh{O}_{\V{}(\mathrm{H}^{\sharp}_{\Sh{A}}(\sigma), s_{\sigma}, \Sh{Q}(\sigma))}[k_{\sigma}]$. Then clearly $\tilde{\mathbb{W}} \subset \mathbb{W}'_{k,\alpha,I}$. 

Take a Zariski open $\Spf{R} \subset \Ig{n}{r}$ that trivializes $\Hs{A}$ compatibly with a trivialization of $\Omega_{\Sh{A}}$ and of $\Sh{Q}$ modulo $\eta$. Choosing coordinates as in the local description (\ref{eq:5}), we see that $(\rho')^{-1}(\Spf{R}) \simeq \Spf{R}\langle \{Z_{\sigma}, W_{\sigma}\}_{\sigma \in \Sigma}\rangle$.

The description of $\mathfrak{T}$-action on $\V{\Sh{O}_L}(\Hs{A}, s, \Sh{Q})$ in local coordinates, together with \cite[Lemma 3.13]{andreatta2021triple} shows that \[\tilde{\mathbb{W}}(\Spf{R}) = R\langle \{\frac{W_{\sigma}}{1+\beta_nZ_{\sigma}}\}_{\sigma \in \Sigma}\rangle \cdot k(1+\beta_n Z)\]
where we recall from the previous Proposition that $k(1+\beta_n Z) = \prod_{\sigma} k_{\sigma}(1+\beta_n Z_{\sigma})$. Since $k(1+\beta_n Z)$ is a unit, in order to prove the reverse inclusion, it will be sufficient to show that $R\langle \{Z_{\sigma}, W_{\sigma}\}_{\sigma}\rangle^{\mathfrak{T}(R)} = R\langle \{\frac{W_{\sigma}}{1+\beta_nZ_{\sigma}}\}_{\sigma \in \Sigma}\rangle$. We prove this by induction on the cardinality of $\Sigma$. 

For the one variable case, this has been proved in loc. cit. For the general case, choose a bijection $\Sigma \simeq \{1, \dots, g\}$. Let $V_g := \frac{W_g}{1+\beta_nZ_g}$. The inclusion $R\langle\{Z_i, W_i\}_{i=1}^{g-1}\rangle\langle Z_g, V_g\rangle \to R\langle \{Z_i, W_i\}_{i=1}^g\rangle$ is an isomorphism of topological rings. Let $f \in R\langle \{Z_i, W_i\}_{i=1}^g\rangle^{\mathfrak{T}(R)}$. Write $f = \sum_{n\geq 0} A_n(Z_g)V_g^n$ for $A_n(Z_g) \in R\langle \{Z_i, W_i\}_{i=1}^{g-1}\rangle \langle Z_g \rangle$. For any $\lambda \in \mathfrak{T}(R)$ with $\lambda_i = 1$ for all $i \neq g$, we have $A_n(Z_g) = A_n(\lambda_g \ast Z_g)$ for all $n$. Thus for $\lambda_g = 1 + \beta_n a$ for $a \in R$, we have $A_n(Z_g) = A_n(a + \lambda_g Z_g)$. Putting $Z_g = 0$, we have $A_n(0) = A_n(a)$ for any $a \in R$. The Weierstrass preparation theorem then implies that $A_n(Z_g) = A_n(0) \in R\langle \{Z_i, W_i\}_{i=1}^{g-1}\rangle$. Thus $f = \sum A_n V_g^n$ with $A_n \in R\langle \{Z_i, W_i\}_{i=1}^{g-1}\rangle$. The induction hypothesis then implies that $A_n \in R\langle \{\frac{W_i}{1+\beta_n Z_i}\}_{i=1}^{g-1}\rangle$. This proves the claim.
\end{proof}

\begin{cor}\label{C302}
Let $\Spf{R} \subset \Ig{n}{r}$ be a Zariski open subset where $\mathrm{H}^{\sharp}_{\Sh{A}}, \Omega_{\Sh{A}}$ and $\Sh{Q}$ are trivialized. Then with the notation of (\ref{eq:5}),
\[
\mathbb{W}'_{k,\alpha,I}(\Spf{R}) \simeq R\left\langle \left\{V_{\sigma}\right\}_{\sigma \in \Sigma}\right\rangle \cdot k(1+\beta_n Z), \quad V_{\sigma} := \frac{W_{\sigma}}{1+\beta_n Z_{\sigma}}.
\]
\end{cor}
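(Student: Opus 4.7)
The plan is to bootstrap directly from the preceding proposition, which identifies $\mathbb{W}'_{k,\alpha,I}$ with the completed tensor product $\hat{\otimes}_\sigma (\rho'_\sigma)_*\Sh{O}_{\V{}(\Hs{A}(\sigma), s_\sigma, \Sh{Q}(\sigma))}[k_\sigma]$. Since $\Spf R$ trivializes $\Hs{A}$, $\Omega_{\Sh{A}}$, and $\Sh{Q}$ modulo $\eta$, the local model diagram (\ref{eq:5}) realizes $(\rho'_\sigma)^{-1}(\Spf R) \simeq \Spf R\langle Z_\sigma, W_\sigma\rangle$ for each embedding $\sigma$. It then suffices to identify the $k_\sigma$-eigenspace of this ring for the $(1+\beta_n\Ga)(R)$-action and finally to assemble the $\sigma$-local pieces via the completed tensor product.

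Next I would invoke the explicit formulas for the $\mathfrak{T}$-action recorded in \S\ref{2S321}, namely $\lambda_\sigma \ast Z_\sigma = \frac{\lambda_\sigma-1}{\beta_n} + \lambda_\sigma Z_\sigma$ and $\lambda_\sigma \ast W_\sigma = \lambda_\sigma W_\sigma$. The change of variables $U_\sigma := 1+\beta_n Z_\sigma$, $V_\sigma := W_\sigma/U_\sigma$ gives a topological isomorphism $R\langle Z_\sigma, W_\sigma\rangle \simeq R\langle Z_\sigma, V_\sigma\rangle$ (since $U_\sigma$ is a unit), under which the action becomes $\lambda_\sigma \ast U_\sigma = \lambda_\sigma U_\sigma$ and $\lambda_\sigma \ast V_\sigma = V_\sigma$. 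Any $k_\sigma$-eigenvector $f$ can therefore be written as $k_\sigma(U_\sigma)\cdot g$ with $g$ invariant, reducing the claim to the identification $R\langle Z_\sigma, V_\sigma\rangle^{(1+\beta_n\Ga)(R)} = R\langle V_\sigma\rangle$.

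This last equality is precisely the one-variable Weierstrass argument already carried out inside the proof of the previous proposition: expanding an invariant $g = \sum_n A_n(Z_\sigma) V_\sigma^n$ with $A_n \in R\langle Z_\sigma\rangle$, invariance forces $A_n(Z_\sigma) = A_n(a + (1+\beta_n a)Z_\sigma)$ for every $a \in R$, and evaluation at $Z_\sigma = 0$ combined with Weierstrass preparation pins down $A_n \in R$. Consequently the $\sigma$-local $k_\sigma$-eigenspace equals $R\langle V_\sigma\rangle \cdot k_\sigma(1+\beta_n Z_\sigma)$, and taking the completed tensor product over $\sigma$ produces
\[
\hat{\otimes}_{\sigma} \Bigl( R\langle V_\sigma\rangle \cdot k_\sigma(1+\beta_n Z_\sigma)\Bigr) \;=\; R\langle \{V_\sigma\}_{\sigma\in\Sigma}\rangle \cdot k(1+\beta_n Z),
\]
as claimed. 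There is no substantive obstacle: all the genuine content (representability of the VBMS, the $\mathfrak{T}$-eigenspace computation, and the Weierstrass argument) is already in place upstream, and the corollary is essentially a transparent repackaging of the identification performed in the proof of the preceding proposition.
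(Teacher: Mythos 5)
Your proposal is correct and follows essentially the same route as the paper: Corollary \ref{C302} is deliberately stated without a separate proof, because the preceding proposition's proof already establishes the local identification $\mathbb{W}'_{k,\alpha,I}(\Spf R) = R\langle\{V_\sigma\}_\sigma\rangle\cdot k(1+\beta_n Z)$ — once via the $\mathfrak{T}$-action formulas in local coordinates and \cite[Lemma 3.13]{andreatta2021triple} to get the inclusion $\tilde{\mathbb{W}}\subset\mathbb{W}'_{k,\alpha,I}$, and once via the multi-variable Weierstrass argument to get the reverse. Your reformulation with the auxiliary unit $U_\sigma := 1+\beta_n Z_\sigma$ and the factorization $f = k_\sigma(U_\sigma)\cdot g$ with $g$ invariant is a slightly tidier bookkeeping of the same change of variables, and your reduction of the multi-variable invariance to the one-variable Weierstrass computation (via the $\hat{\otimes}_\sigma$ decomposition) is what the paper's inductive argument also achieves; there is no substantive gap.
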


Let $\nu \colon \V{\Sh{O}_L}(\Omega_{\Sh{A}}, s) \xrightarrow{\nu'} \Ig{n}{r} \xrightarrow{h_n} \Xra{r}$ and $\rho \colon \V{\Sh{O}_L}(\Hs{A}, s, \Sh{Q}) \xrightarrow{\rho'} \Ig{n}{r} \xrightarrow{h_n} \Xra{r}$ be the projections.

\begin{defn}
\begin{enumerate}
    \item For $k = k^0_{\alpha, I}$ define $\w{0} := \left(\nu_*\Sh{O}_{\V{\Sh{O}_L}(\Omega_{\Sh{A}}, s)}\right)[k]$. This by definition is the sheaf of sections $f \in \nu_*\Sh{O}_{\V{\Sh{O}_L}(\Omega_{\Sh{A}}, s)}$ that transform via $k$ for the action of $\mathfrak{T}^{\textrm{ext}}$. This is the interpolation sheaf of Hilbert modular forms for the universal weight $k$.
    
    \item For $k = k^0_{\alpha, I}$ define $\W{0} := \left(\rho_*\Sh{O}_{\V{\Sh{O}_L}(\Hs{A}, s, \Sh{Q})} \right)[k]$. This by definition is the sheaf of sections $f \in \rho_*\Sh{O}_{\V{\Sh{O}_L}(\Hs{A}, s, \Sh{Q})}$ that transform via $k$ for the action of $\mathfrak{T}^{\textrm{ext}}$. This is the interpolation sheaf of de Rham classes for the universal weight $k$.
\end{enumerate}
\end{defn}

\begin{lemma}\label{L301}
$\w{0} = (h_n)_*\mathfrak{w}'_{k,\alpha,I}[k]$ and $\W{0} = (h_n)_*\mathbb{W}'_{k,\alpha,I}[k]$ for the residual action of $(\Sh{O}_L \otimes \Z_p)^{\times}$
\end{lemma}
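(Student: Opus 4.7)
The plan is to unravel the definitions and show that the $\mathfrak{T}^{\textrm{ext}}$-equivariance condition can be imposed in two steps: first as a $\mathfrak{T}$-condition on $\Ig{n}{r}$, and then as a residual $\mathbb{T}(\Z_p)$-condition after pushforward to $\Xra{r}$. Since $\nu = h_n \circ \nu'$ and $\rho = h_n \circ \rho'$, the pushforwards factor as $\nu_* = (h_n)_* \circ \nu'_*$ and $\rho_* = (h_n)_* \circ \rho'_*$, so both sides of each claimed equality are subsheaves of the same pushforward on $\Xra{r}$. The content of the lemma is precisely that this sequential imposition of equivariance recovers the joint $[k]$-condition.

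The one substantive point is the commutation of the $\mathfrak{T}$- and $\mathbb{T}(\Z_p)$-actions on $\V{\Sh{O}_L}(\Omega_{\Sh{A}}, s)$ and $\V{\Sh{O}_L}(\Hs{A}, s, \Sh{Q})$. I would verify this directly from the formulas in \S\ref{2S321}: for $\lambda \in \mathfrak{T}(R)$, $\mu \in \mathbb{T}(\Z_p)$, and an $R$-point $(\rho, h)$ of $\V{\Sh{O}_L}(\Omega_{\Sh{A}}, s)$,
\[
\mu \ast (\lambda \ast (\rho, h)) = \mu \ast (\rho, \lambda h) = ([\mu]\circ \rho,\, \mu \lambda h \circ \gamma_\mu),
\]
while
\[
\lambda \ast (\mu \ast (\rho, h)) = \lambda \ast ([\mu]\circ \rho,\, \mu h \circ \gamma_\mu) = ([\mu]\circ \rho,\, \lambda \mu h \circ \gamma_\mu),
\]
and these agree because $\lambda$ and $\mu$ are both scalars in $\Sh{O}_L \otimes R$. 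The identical formula applies for $\V{\Sh{O}_L}(\Hs{A}, s)$, and the splitting condition $(h \bmod \eta)(\Sh{Q}) = 0$ cutting out $\V{\Sh{O}_L}(\Hs{A}, s, \Sh{Q})$ inside is preserved by both actions by the $\Sh{O}_L$-linearity of $\gamma_\mu$.

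Granted commutation, the $\mathbb{T}(\Z_p)$-action on $\nu'_*\Sh{O}$ preserves the subsheaf $\mathfrak{w}'_{k,\alpha,I} = \nu'_*\Sh{O}[k|_{\mathfrak{T}}]$, and after pushforward via $h_n$, where the $\mathbb{T}(\Z_p)$-action becomes the residual Galois action on $h_n \colon \Ig{n}{r}\to \Xra{r}$ (since $h_n\circ [\mu] = h_n$), the subsheaf $(h_n)_*\mathfrak{w}'_{k,\alpha,I}\subset (h_n)_*\nu'_*\Sh{O} = \nu_*\Sh{O}$ inherits this residual action. Using $\mathfrak{T}^{\textrm{ext}} = \mathbb{T}(\Z_p)\cdot\mathfrak{T}$ and the multiplicativity of $k$, a section of $\nu_*\Sh{O}$ transforms by $k$ under $\mathfrak{T}^{\textrm{ext}}$ if and only if it transforms by $k$ under $\mathfrak{T}$ (equivalently, lies in $(h_n)_*\mathfrak{w}'_{k,\alpha,I}$) and by $k$ under the residual $\mathbb{T}(\Z_p)$-action, which is exactly the right-hand side of the first equality. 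The argument for $\W{0}$ is identical, using $\V{\Sh{O}_L}(\Hs{A}, s, \Sh{Q})$ in place of $\V{\Sh{O}_L}(\Omega_{\Sh{A}}, s)$.

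I do not expect any real obstacle: the result is a bookkeeping statement once the actions are shown to commute, and the commutation itself is immediate from the explicit formulas. The only point requiring care is confirming that the residual $\mathbb{T}(\Z_p)$-action on $(h_n)_*$ coincides with the Galois action of $(\Sh{O}_L/p^n\Sh{O}_L)^\times$ on the generic fibre of the Igusa tower, but this is built into the definition of $[\mu]$ in \S\ref{2S321}.
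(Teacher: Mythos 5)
Your proposal is correct and matches the paper's (implicit) argument: the paper dismisses this lemma with ``This is clear,'' and what you have written is exactly the bookkeeping it leaves to the reader — the factorization $\nu_* = (h_n)_*\circ\nu'_*$, the commutation of the $\mathfrak{T}$- and $\mathbb{T}(\Z_p)$-actions read off from the explicit formulas of \S\ref{2S321}, and the resulting two-step imposition of the $[k]$-eigencondition for $\mathfrak{T}^{\textrm{ext}}$. No gaps.
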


\begin{proof}
This is clear.
\end{proof}

\begin{rem}
Note that the universal weight $k = k^0_{\alpha, I}$ kills the torsion group $\Delta \subset (\Sh{O}_L \otimes \Z_p)^{\times}$. We will later take care of the torsion part of the character and define the interpolation sheaves $\w{}$ and $\W{}$ for the univeral weight $k_{\alpha,I} \colon (\Sh{O}_L \otimes \Z_p)^{\times} \to \Lambda_{\alpha,I}^{\times}$, as promised in the beginning of the section by tensoring $\w{0}$ and $\W{0}$ respectively with an appropriate coherent sheaf on $\Xra{r} \times_{\mathfrak{W}^0_{\alpha,I}}\mathfrak{W}_{\alpha,I}$. In particular, $\w{0}$ and $\W{0}$ will be the restriction of $\w{}$ and $\W{}$ to the connected component of the trivial character.
\end{rem}

\paragraph{\textbf{Filtration on $\mathbb{W}^0_k$.}}
An important result of this work is the following.

\begin{theorem*}
The sheaf $\W{0}$ comes equipped with a natural Hodge filtration induced by the Hodge filtration on $\mathrm{H}_{\Sh{A}}$ such that the Gauss--Manin connection $\nabla$ on $\W{0}$ satisfies Griffiths' transversality with respect to the Hodge filtration.
\end{theorem*}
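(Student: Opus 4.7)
The plan is to construct the filtration on $\mathbb{W}'_{k,\alpha,I}$ over $\Ig{n}{r}$ by a local-to-global argument, descend it via $h_n$ to $\W{0}$ on $\Xra{r}\times \mathfrak{W}^0_{\alpha,I}$, read off the structural properties from the local description, and then establish Griffiths transversality once the connection has been constructed in \S\ref{2S4}.

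First I would work on a Zariski open $\Spf R \hookrightarrow \Ig{n}{r}$ simultaneously trivializing $\Omega_{\Sh{A}}$, $\Hs{A}$, and $\Sh{Q}$ modulo $\eta$, so that by Corollary \ref{C302}, $\mathbb{W}'_{k,\alpha,I}(\Spf R) = R\langle\{V_\sigma\}_{\sigma\in\Sigma}\rangle \cdot k(1+\beta_n Z)$. I define $\Fil{i}\mathbb{W}'_{k,\alpha,I}$ locally as the sub-$R$-module generated by monomials $\prod_\sigma V_\sigma^{n_\sigma}\,k(1+\beta_n Z)$ with $\sum_\sigma n_\sigma \leq i$. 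The main step is to verify that this filtration is intrinsic. Two lifts $X_\sigma, X'_\sigma$ of $s_\sigma$ satisfy $X'_\sigma = u_\sigma X_\sigma$ with $u_\sigma \in 1+\beta_n R$, while two lifts $Y_\sigma, Y'_\sigma$ of a generator of $\Sh{Q}(\sigma)$ satisfy $Y'_\sigma = \lambda_\sigma Y_\sigma + \mu_\sigma X_\sigma$ for some $\lambda_\sigma \in 1+\eta R$, $\mu_\sigma \in R$. In the local coordinates this induces affine substitutions on $(Z_\sigma, V_\sigma)$ which preserve the total degree in the $V_\sigma$. The $(\Sh{O}_L/p^n\Sh{O}_L)^\times$-action only rescales each $V_\sigma$ by a unit, hence the filtration is Galois-equivariant and descends via $h_n$ to $\Fil{i}\W{0}$.

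The structural assertions follow immediately from the local description: each $\Fil{i}\W{0}$ is coherent over $\Sh{O}_{\Xra{r}\times\mathfrak{W}^0_{\alpha,I}}$, the graded pieces $\Fil{i}\W{0}/\Fil{i-1}\W{0}$ are locally free of rank $\binom{g+i-1}{g-1}$, and $\W{0}$ is the $\alpha$-adic completion of $\varinjlim_i \Fil{i}\W{0}$. Comparing the degree-zero piece of Corollary \ref{C302} with Proposition \ref{P303} gives $\Fil{0}\mathbb{W}'_{k,\alpha,I} = \mathfrak{w}'_{k,\alpha,I}$, which descends to $\Fil{0}\W{0}=\w{0}$.

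For Griffiths transversality, once the Gauss-Manin connection $\nabla$ on $\W{0}[1/p]$ has been defined in \S\ref{2S4}, the statement $\nabla(\Fil{i}\W{0}) \subset \Fil{i+1}\W{0}\otimes\Omega^1$ reduces by the Leibniz rule to checking that $\nabla V_\sigma \in \Fil{2}\W{0}\otimes\Omega^1$. This is a direct local computation: because $\nabla$ is induced by the Gauss-Manin connection on $\Hs{A}$, whose Hodge filtration itself satisfies Griffiths transversality, horizontal transport fails to preserve $\Sh{Q}$ only by a term controlled by the Kodaira--Spencer map; unwinding $V_\sigma = \eta^{-1}Y_\sigma/X_\sigma$ via the quotient rule then produces at most quadratic expressions in the $V_\tau$ with $\Omega^1$-coefficients. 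The main obstacle in the overall argument is the verification that the locally defined filtration descends to an intrinsic global subsheaf despite the many auxiliary choices involved; once this is in hand, the remaining structural properties and Griffiths transversality are essentially formal consequences.
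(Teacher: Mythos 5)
Your proposal follows essentially the same local-to-global strategy as the paper: define the filtration by total degree in the $V_\sigma$ over trivializing opens of $\Ig{n}{r}$, verify that it is intrinsic under change of the lifts of $s_\sigma$ and of a generator of $\Sh{Q}(\sigma)$, descend along $h_n$ by passing to the $k$-eigenspace of the pushforward, and read off Griffiths transversality from the explicit local expression for $\nabla$. The paper instead first introduces a finer auxiliary lexicographic filtration (Lemma~\ref{D308}, Theorem~\ref{T2301}) and then in Lemma~\ref{L2401} identifies the Hodge filtration as a coarsening of it; your direct check that the transition substitutions for the $V_\sigma$ are affine and hence degree-preserving is a legitimate small streamlining of that step.

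Two cautions about the way you phrase things. The residual $(\Sh{O}_L/p^n\Sh{O}_L)^{\times}$-action is not a pure rescaling of each $V_\sigma$ but an affine substitution $V_\sigma \mapsto u_\sigma V_\sigma + a_\sigma$; this is already covered by your earlier intrinsic-ness argument, so the reasoning survives but the phrasing is imprecise. More significantly, the structural assertions you list (local freeness of $\Gr{i}$, its rank $\binom{g+i-1}{g-1}$, the colimit description of $\W{0}$) do \emph{not} ``follow immediately from the local description'': taking the $k$-eigenspace of $(h_n)_*(-)$ along the finite map $h_n$ does not a priori produce locally free graded pieces, and the paper's Theorem~\ref{T2301} establishes this through a genuinely non-trivial trace construction (modifying local generators $s_i$ to $(\Sh{O}_L \otimes \Z_p)^{\times}$-equivariant ones $\tilde{s}_i$ via the elements $c_n$ of Lemma~\ref{L305}). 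Those structural claims are not part of the statement you were asked to prove, so this is an overreach rather than a gap, but they should not be presented as automatic. Your Griffiths transversality argument is correct in substance and equivalent to the paper's use of the Grothendieck $\epsilon^{\sharp}$-formalism culminating in formula~(\ref{eq:11}) in Theorem~\ref{T401}: writing $\nabla X_\sigma$, $\nabla Y_\sigma$ as $\Omega^1$-linear combinations of $X_\sigma$, $Y_\sigma$ and applying the quotient rule, the worst contribution is the quadratic term coming from the Kodaira--Spencer coefficient in $\nabla X_\sigma$; one should also record that $\nabla$ of the degree-zero generator $k(1+\beta_n Z)$ already lands in $\Fil{1}\otimes\Omega^1$, which is visible in the same formula.
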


We define the Hodge filtration on $\W{0}$ later in Lemma \ref{L2401} and prove Griffiths' transversality for $\nabla$ in Theorem \ref{T401}. The way we show that $\W{0}$ is equipped with a Hodge filtration is by producing a filtration locally on coordinates, and then proving that it glues. But before we prove these results, we will introduce a finer filtration on $\W{0}$ that will eventually help us to prove that the Hodge filtration is well-defined.

Choose a bijection $\Sigma \simeq \{1, \dots, g\}$. Consider the lexicographic order on $\N^g$: \\
$(a_1, \dots, a_g) > (b_1, \dots, b_g)$ if and only if\\
(1) $\sum a_i > \sum b_i$, or \\
(2) if $\sum a_i = \sum b_i$, then for the first index where $a_i \neq b_i$, $a_i > b_i$. \\
Since this defines a well ordering on $\N^g$ we get an order preserving bijection $\Xi \colon \N \simeq \N^g$. This allows us to define a natural filtration on $\mathbb{W}^0_k$.

\begin{lemma}\label{D308}
Let $f_0 \colon {\V{\Sh{O}_L}(\mathrm{H}^{\sharp}_{\Sh{A}}, s, \Sh{Q})} \to {\V{\Sh{O}_L}(\Omega_{\Sh{A}}, s)}$ be the projection. There is an increasing filtration $\{\Fil{i}\}_{i \geq 0}$ on ${f_0}_*\Sh{O}_{\V{\Sh{O}_L}(\mathrm{H}^{\sharp}_{\Sh{A}}, s, \Sh{Q})}$ with $\Fil{0}({f_0}_*\Sh{O}_{\V{\Sh{O}_L}(\Hs{A}, s, \Sh{Q})}) = \Sh{O}_{\V{\Sh{O}_L}(\Omega_{\Sh{A}}, s)}$. On local coordinates as in (\ref{eq:5}), 
\[
\Fil{i}\left({f_0}_*\Sh{O}_{\V{\Sh{O}_L}(\mathrm{H}^{\sharp}_{\Sh{A}}, s, \Sh{Q})}(\Spf{R})\right) = \sum_{j \leq i} R\langle\{Z_1, \dots, Z_g\}\rangle \cdot W^{\Xi(j)}.
\]
\end{lemma}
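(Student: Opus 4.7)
The plan is to define the filtration locally using the coordinates from the diagram (\ref{eq:5}) and then verify that the construction is intrinsic, i.e.\ independent of the auxiliary choices of lifts of $s$ and of a generator of $\Sh{Q}$; it will then glue to a global filtration.

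Start with a Zariski open $\Spf R \hookrightarrow \Ig{n}{r}$ over which $\Omega_{\Sh{A}}$, $\Hs{A}$ and $\Sh{Q} \subset \Hs{A}/\eta\Hs{A}$ are simultaneously free as $\Sh{O}_L \otimes R$-modules. Pick lifts $X = \sum X_\sigma \in \Omega_{\Sh{A}}(R)$ of $s$ and $Y = \sum Y_\sigma \in \Hs{A}(R)$ of an $\Sh{O}_L \otimes R$-generator of $\Sh{Q}$, and let $Z_\sigma = (X_\sigma - 1)/\beta_n$, $W_\sigma = Y_\sigma/\eta$ be the associated coordinates, so that
\[
{f_0}_*\Sh{O}_{\V{\Sh{O}_L}(\Hs{A}, s, \Sh{Q})}(\Spf R) = R\langle Z, W\rangle \quad \text{and} \quad \Sh{O}_{\V{\Sh{O}_L}(\Omega_{\Sh{A}}, s)}(\Spf R) = R\langle Z\rangle.
\]
Define $\Fil{i}(\Spf R) := \sum_{j \leq i} R\langle Z\rangle \cdot W^{\Xi(j)}$, using multi-index notation $W^{(a_1,\ldots,a_g)} = W_1^{a_1}\cdots W_g^{a_g}$.

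For independence of choice, note first that replacing $X$ by another lift $X'$ of $s$ amounts to $X' = (1 + \beta_n c) X$ for some $c \in \Sh{O}_L \otimes R$, which modifies only the $Z_\sigma$ and leaves the $W_\sigma$ unchanged. Replacing $Y$ by another lift $Y'$ of a generator of $\Sh{Q}$: since $\Sh{Q}$ has rank one over $\Sh{O}_L \otimes R/\eta$, we may write $Y' = u Y + \eta(a X + b Y)$ for some $u \in (\Sh{O}_L \otimes R)^\times$ and $a, b \in \Sh{O}_L \otimes R$. The $\Sh{O}_L$-linearity forces this relation to split along $\Sigma$, yielding
\[
W'_\sigma = (u_\sigma + b_\sigma \eta)\, W_\sigma + a_\sigma(1 + \beta_n Z_\sigma),
\]
in which each $W'_\sigma$ depends only on $W_\sigma$ (and not on $W_\tau$ for $\tau \neq \sigma$), with an invertible leading coefficient in $R$. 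Expanding $(W')^a = \prod_\sigma (W'_\sigma)^{a_\sigma}$ therefore produces an $R\langle Z\rangle$-linear combination of monomials $W^b$ with $b_\sigma \leq a_\sigma$ for every $\sigma$; such a $b$ satisfies either $|b| < |a|$ or $b = a$, hence $b \leq_{\mathrm{grlex}} a$, and the top term has a unit coefficient. Consequently the change of variables is triangular with unit diagonal in the $\Xi$-ordered monomial basis, and it preserves each $\Fil{i}$.

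The local filtered pieces thus depend only on $\Spf R$ and glue on overlaps, yielding the desired global increasing filtration $\{\Fil{i}\}_{i \geq 0}$ on ${f_0}_*\Sh{O}_{\V{\Sh{O}_L}(\Hs{A}, s, \Sh{Q})}$. The zeroth step is locally $R\langle Z\rangle \cdot W^{(0,\ldots,0)} = R\langle Z\rangle$, giving $\Fil{0} = \Sh{O}_{\V{\Sh{O}_L}(\Omega_{\Sh{A}}, s)}$. The main subtlety is showing that the grlex-refined filtration (and not merely the coarser total-degree filtration) is well-defined; this is precisely why the $\Sh{O}_L$-compatibility of admissible changes of $Y$ matters, as it forbids mixing the $W_\sigma$ across different embeddings $\sigma$ and thus keeps the change-of-variables matrix triangular for the lexicographic refinement.
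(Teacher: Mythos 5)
Your proof is correct and follows essentially the same approach as the paper: both arguments observe that a change of lifts of $s$ and of a generator of $\Sh{Q}$ induces the $\Sh{O}_L$-component-wise change of coordinates $W'_\sigma = (\text{unit})\cdot W_\sigma + a_\sigma(1+\beta_n Z_\sigma)$, and then check that this is upper-triangular for the grlex order (the paper asserts this step tersely as ``clearly,'' while you spell out that $b_\sigma \leq a_\sigma$ for all $\sigma$ forces $b \leq_{\mathrm{grlex}} a$). Your more general parametrization $Y' = uY + \eta(aX + bY)$ reduces to the paper's $Y' = uY + a\eta X$ by absorbing $b$ into the unit, so the two forms are equivalent.
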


\begin{proof}
We need to show that the local description glues. For $\Fil{0}$ this is obvious by definition. For two different choice of $\Sh{O}_L \otimes R$-basis of ${\Hs{A}}_{|\Spf{R}}$, say $X, Y$ and $X', Y'$, with $X, X'$ being lifts of the marked section and $Y, Y'$ being lifts of some generator of $\Sh{Q}$, we get two different local coordinate description of $\V{\Sh{O}_L}(\Hs{A}, s, \Sh{Q})_{|\Spf{R}}$. Since all the filtered pieces contain $\Fil{0}$, we may assume $X = X'$. Then the components of $Y$ and $Y'$ are related by $Y'_i = u_iY_i + a_i \eta X_i$ for all $i$, with $u_i, a_i \in R$. This implies that the isomorphism $R\langle \{Z_i, W'_i\}_{i=1}^g\rangle \xrightarrow{\sim} R\langle \{Z_i, W_i\}_{i=1}^g \rangle$ is given by sending $W'_i \mapsto u_iW_i + a_i(1+\beta_n Z_i)$. Clearly, this isomorphism respects the filtration given by the lexicographic ordering. 
\end{proof}

Recall $\rho' \colon \V{\Sh{O}_L}(\Hs{A}, s, \Sh{Q}) \to \Ig{n}{r}$ was the projection.

\begin{lemma}
The filtration on $\rho'_*\Sh{O}_{\V{\Sh{O}_L}(\Hs{A},s,\Sh{Q})}$ induced by taking direct image of the filtration defined in Lemma \ref{D308} is stable for the action of $\mathfrak{T}$. Therefore, we can define a filtration on $\mathbb{W}'_{k,\alpha,I}$ as $\Fil{i}\mathbb{W}'_{k,\alpha,I} = \Fil{i}\left(\rho'_*\Sh{O}_{\V{\Sh{O}_L}(\Hs{A}, s, \Sh{Q})}\right)[k]$ with the property that $\Fil{0}\mathbb{W}'_{k,\alpha,I} = \mathfrak{w}'_{k,\alpha,I}$ and $\Gr{i}\mathbb{W}'_{k,\alpha,I} \simeq \mathfrak{w}'_{k,\alpha,I} \otimes {\eta}^{-\ell(i)}(HW\cdot\omega_{\Sh{A}}^{-2})^{\Xi(i)}$, where for $\Xi(i) = (a_1, \dots, a_g)$ we let $\ell(i) = \sum a_k$, and $HW^{\Xi(i)} = \prod HW(k)^{a_k}$.
\end{lemma}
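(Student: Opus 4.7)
The plan is to work in the local coordinates of (\ref{eq:5}) and Corollary \ref{C302}. Pick a Zariski open $\Spf R \subset \Ig{n}{r}$ trivialising $\Omega_{\Sh{A}}$, $\Hs{A}$, and $\Sh{Q}$ modulo $\eta$, with lifts $X_{\sigma} = 1 + \beta_n Z_{\sigma}$ of the marked section $s_{\sigma}$ and $Y_{\sigma} = \eta W_{\sigma}$ of a generator of $\Sh{Q}(\sigma)$; set $V_{\sigma} := W_{\sigma}/(1+\beta_n Z_{\sigma})$. Since each $(1+\beta_n Z_{\sigma})^{a_{\sigma}}$ is a unit in $R\langle Z\rangle$, rewriting $W^{\Xi(j)} = V^{\Xi(j)}\prod_{\sigma}(1+\beta_n Z_{\sigma})^{a_{\sigma}}$ translates the description of Lemma \ref{D308} into
\[
\Fil{i}\bigl(\rho'_*\Sh{O}_{\V{\Sh{O}_L}(\Hs{A}, s, \Sh{Q})}\bigr)(\Spf R) \;=\; \bigoplus_{j \leq i} R\langle Z\rangle \cdot V^{\Xi(j)}.
\]

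The $\mathfrak{T}$-stability of this filtration is then immediate from the formulas of \S\ref{2S321}: since $\lambda_{\sigma}\ast(1+\beta_n Z_{\sigma}) = \lambda_{\sigma}(1+\beta_n Z_{\sigma})$ and $\lambda_{\sigma}\ast W_{\sigma} = \lambda_{\sigma}W_{\sigma}$, each $V_{\sigma}$ is $\mathfrak{T}$-invariant, while the subring $R\langle Z\rangle$ is manifestly $\mathfrak{T}$-stable. Taking the $k$-eigenspace of each direct summand and invoking the argument in the proof of Proposition \ref{P303} applied to $R\langle Z\rangle$ --- namely, that the only $k$-eigenvectors in $R\langle Z\rangle$ are $R$-multiples of $k(1+\beta_n Z)$ --- yields
\[
\Fil{i}\mathbb{W}'_{k,\alpha,I}(\Spf R) \;=\; \bigoplus_{j \leq i} R \cdot V^{\Xi(j)} \cdot k(1+\beta_n Z).
\]
The equality $\Fil{0}\mathbb{W}'_{k,\alpha,I} = \mathfrak{w}'_{k,\alpha,I}$ follows at once, the $j=0$ summand being exactly $R\cdot k(1+\beta_n Z) = \mathfrak{w}'_{k,\alpha,I}(\Spf R)$ by Proposition \ref{P303}.

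For the graded pieces I would interpret the local generators $V^{\Xi(i)}$ intrinsically. In the exact sequence of Corollary \ref{C301}, the image of $Y_{\sigma}$ generates the quotient $\Hs{A}(\sigma)/\Omega_{\Sh{A}}(\sigma) \simeq \underline{\xi}(\sigma)\widetilde{HW}(\sigma)\omega^{\vee}_{\Sh{A}}(\sigma)$, so $W_{\sigma} = Y_{\sigma}/\eta$ corresponds to a trivialisation of $\eta^{-1}\underline{\xi}(\sigma)\widetilde{HW}(\sigma)\omega^{\vee}_{\Sh{A}}(\sigma)$. Dividing by $1+\beta_n Z_{\sigma}$, which trivialises $\Omega_{\Sh{A}}(\sigma) = \underline{\xi}(\sigma)\omega_{\Sh{A}}(\sigma)$, the $\underline{\xi}(\sigma)$ factors cancel and $V_{\sigma}$ trivialises $\eta^{-1}\widetilde{HW}(\sigma)\omega_{\Sh{A}}(\sigma)^{-2}$. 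Tensoring with the trivialisation $k(1+\beta_n Z)$ of $\mathfrak{w}'_{k,\alpha,I}$ then gives the claimed global isomorphism
\[
\Gr{i}\mathbb{W}'_{k,\alpha,I} \;\simeq\; \mathfrak{w}'_{k,\alpha,I} \otimes \eta^{-\ell(i)}\bigl(\widetilde{HW}\cdot\omega_{\Sh{A}}^{-2}\bigr)^{\Xi(i)}.
\]

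The main thing to verify is that this local identification is independent of the choice of lifts $X$ and $Y$. This reduces to the calculation already performed in the proof of Lemma \ref{D308}: the change-of-basis $Y'_{\sigma} = u_{\sigma}Y_{\sigma} + a_{\sigma}\eta X_{\sigma}$ forces $W'_{\sigma} = u_{\sigma}W_{\sigma} + a_{\sigma}(1+\beta_n Z_{\sigma})$, and the correction term lies in $\Fil{i-1}$, so on $\Gr{i}$ the transition is multiplication by $\prod_{\sigma}u_{\sigma}^{a_{\sigma}}$, which is precisely the transition function of $\bigl(\widetilde{HW}\cdot\omega_{\Sh{A}}^{-2}\bigr)^{\Xi(i)}$ once the trivialisation of $\Omega_{\Sh{A}}$ has been accounted for.
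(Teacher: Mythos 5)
Your proof is correct and fills in precisely the local-coordinate verification that the paper's one-line proof ("This is clear from the description of the action on local coordinates") alludes to: rewriting the $W$-monomials of Lemma \ref{D308} as $V$-monomials (each $V_{\sigma}$ being $\mathfrak{T}$-invariant by the formulas of \S\ref{2S321}), applying the eigenspace argument of Proposition \ref{P303} to the $\mathfrak{T}$-stable subring $R\langle Z\rangle$, and then identifying $V_{\sigma}$ as a trivialisation of $\eta^{-1}\widetilde{HW}(\sigma)\omega_{\Sh{A}}(\sigma)^{-2}$ via the Hodge filtration of Corollary \ref{C301}. The only caveat is notational: the paper's display \eqref{eq:5} writes $Y_{\sigma}\mapsto\alpha W_{\sigma}$ whereas your $Y_{\sigma}=\eta W_{\sigma}$ is the convention actually consistent with Definition \ref{D305} and with the $\eta^{-\ell(i)}$ factor in the graded pieces, so your choice is the right one.
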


\begin{proof}
This is clear from the description of the action on local coordinates.
\end{proof}

Here we collect a few results that will allow us to prove that $\w{0}$ is a line bundle and the filtration on $\mathbb{W}'_{k,\alpha, I}$ descends to a filtration on $\W{0}$ which then can be realized as the completion of the colimit of its filtered pieces, which are locally free sheaves of finite rank.

\begin{lemma}\label{L302}
Let $\Sh{O}_{\Ig{1}{r}}^{\circ \circ}$ be the ideal of topologically nilpotent elements in $\Sh{O}_{\Ig{1}{r}}$. With $I, r$ as fixed in the beginning of the section, for any $2 \leq l \leq a+r$,
\[
k(1 + p^{l-1}(\Sh{O}_L \otimes \Z_p)) - 1 \subset \Hdg{\frac{p^l-p}{p-1}}\Sh{O}_{\Ig{1}{r}}^{\circ\circ}.
\]
\end{lemma}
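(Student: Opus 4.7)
The plan is to first bound $k(\lambda) - 1$ in powers of $p$ via the analyticity of the universal character from Lemma \ref{L21}, and then to translate this into powers of $\Hdg{}$ using the relations on $\Ig{1}{r}$ coming from the admissible blow-up construction of $\Xra{r}$ and the defining inequality of $\mathfrak{W}^0_{\alpha, I}$. Concretely, Lemma \ref{L21} gives a decomposition $k = \prod_{\sigma \in \Sigma} k_\sigma$ with each factor analytic and of the form $k_\sigma = \exp(u_\sigma \log(\cdot))$ for $u_\sigma \in \Sh{O}^+_{\Sh{W}^0_{\alpha, I}}$. For $\lambda = 1 + p^{l-1}x$ with $x \in \Sh{O}_L \otimes \Z_p$, $l \geq 2$, and $p$ odd, the series
\[
\log(1 + p^{l-1}\sigma(x)) = \sum_{m \geq 1} (-1)^{m-1}\frac{(p^{l-1}\sigma(x))^m}{m}
\]
converges in $p^{l-1}\Sh{O}_{\Ig{1}{r}}$, since each summand has $p$-adic valuation at least $m(l-1) - v_p(m) \geq l-1$. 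Multiplying by $u_\sigma \in \Sh{O}^+$ keeps one in $p^{l-1}\Sh{O}_{\Ig{1}{r}}$, and for $p$ odd the exponential series preserves this ideal (using $v_p(m!) \leq (m-1)/(p-1)$). Taking the product over $\sigma$ yields $k(\lambda) - 1 \in p^{l-1}\Sh{O}_{\Ig{1}{r}}$.

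The second step converts $p$-powers into $\Hdg{}$-powers. The construction of $\Xra{r}$ as the open of the blow-up of $\X$ along $\mathrm{Hdg}_r = (\alpha, \Hdg{p^{r+1}})$ on which the inverse image ideal is generated by $\Hdg{p^{r+1}}$ gives $\alpha \in \Hdg{p^{r+1}}\Sh{O}_{\Xra{r}}$. Combined with the defining inequality $|p| \leq |\alpha|^{p^a}$ on $\mathfrak{W}^0_{\alpha, I}$, i.e.\ $p \in \alpha^{p^a}\Sh{O}^+_{\mathfrak{W}^0_{\alpha, I}}$, this yields $p \in \Hdg{p^{a+r+1}}\Sh{O}_{\Ig{1}{r}}$ and therefore $p^{l-1} \in \Hdg{(l-1)p^{a+r+1}}\Sh{O}_{\Ig{1}{r}}$. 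The key numerical estimate is then
\[
(l-1)p^{a+r+1} \;\geq\; p^{a+r+1} \;>\; p^{a+r} \;\geq\; \frac{p^l}{p-1} \;>\; \frac{p^l - p}{p-1},
\]
valid for all $2 \leq l \leq a+r$. Combined with the first step this gives $k(\lambda) - 1 \in \Hdg{(p^l-p)/(p-1)}\Sh{O}_{\Ig{1}{r}}$. Topological nilpotence of $p^{l-1}$ on the $\alpha$-adic formal scheme $\Ig{1}{r}$ is immediate (since $p$ lies in a positive power of $\alpha$ by Step 2), placing the image in $\Hdg{(p^l-p)/(p-1)}\Sh{O}^{\circ\circ}_{\Ig{1}{r}}$ as required.

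The main obstacle I anticipate is justifying the exp-log expression $k_\sigma = \exp(u_\sigma \log)$ with $u_\sigma \in \Sh{O}^+$ on the full subgroup $1 + p(\Sh{O}_L \otimes \Z_p)$, rather than only on the analyticity neighborhood $1 + p^n(\Sh{O}_L \otimes \Z_p)$ produced by Lemma \ref{L21}. When $l-1 \geq n$ this follows directly from the analytic expansion. When $l < n+1$ one reduces to the analytic case via the identity $k(\lambda^{p^N}) = k(\lambda)^{p^N}$: for $N$ large, $\lambda^{p^N} \in 1 + p^{l-1+N}(\Sh{O}_L \otimes \Z_p)$ lies in the analyticity neighborhood, the analytic bound applies to $k(\lambda^{p^N}) - 1$, and the identity $\log k(\lambda) = p^{-N}\log k(\lambda^{p^N})$ in the torsion-free algebra $\Sh{O}_{\Ig{1}{r}}[1/p]$ recovers the bound on $\log k(\lambda)$, whence on $k(\lambda) - 1$. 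With this bookkeeping in place, the remaining steps are the purely numerical estimates described above.
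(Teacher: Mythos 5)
The paper's own ``proof'' is only a citation to \cite[Lemma 4.4]{andreatta2016adic}, so a self-contained argument is welcome; and your argument is essentially sound in the special case $\alpha = p$, $I=[0,1]$. However, the lemma is stated for the general $I=[p^a,p^b]$, $a,b\in\N$, fixed at the start of \S\ref{S3}, and there your Step 1 fails. The premise that $k_\sigma = \exp(u_\sigma\log(\cdot))$ with $u_\sigma \in \Sh{O}^+_{\Sh{W}^0_{\alpha,I}}$ is not what Lemma \ref{L21} gives: analyticity only on $1+p^{n-1}(\Sh{O}_L\otimes\Z_p)$ yields $u_\sigma \in p^{1-n}\Lambda^0_{\alpha,I}$ (this is exactly how the paper states it in \S\ref{2S42}, and integrality of $u_\sigma$ is imposed as an \emph{extra} hypothesis in Assumption \ref{A01}, where it forces $\alpha=p$). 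Concretely, for $l=2$ take $\lambda = e_1$ a generator of the torsion-free part, so $k(\lambda)-1 = T_1$; on $\Sh{W}^0_{\alpha,I}$ with $a\geq 1$ one has $|T_1| = |\alpha| \geq |p|^{1/p^a} > |p|$, so $k(\lambda)-1 \notin p\,\Sh{O}_{\Ig{1}{r}}$. Your $p^{N}$-th power trick does not repair this: it only bounds $\log k(\lambda)$ by $p^{l-n}u$-type quantities, which degrade by the factor $p^{n-1}$ coming from the denominator of $u_\sigma$.

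The correct bookkeeping must track the maximal ideal $\mathfrak{m}=(\alpha)$ of the weight space rather than $(p)$. Write $\lambda = \mu^{p^{l-2}}$ with $\mu \in 1+p(\Sh{O}_L\otimes\Z_p)$, so $k(\mu) = 1+m$ with $m \in \mathfrak{m}$, and expand $k(\lambda)-1 = \sum_{j\geq 1}\binom{p^{l-2}}{j}m^j$, whose $j$-th term lies in $p^{\,l-2-v_p(j)}\alpha^{j}\Sh{O}_{\Ig{1}{r}}$. Then convert \emph{both} generators into Hodge powers: $\alpha \in \Hdg{p^{r+1}}\Sh{O}_{\Xra{r}}$ from the blow-up and $p\in \alpha^{p^a}\Sh{O}^+$ from the defining inequality of $\Sh{W}^0_{\alpha,I}$, giving each term exponent $p^{r+1}\bigl[(l-2-v_p(j))p^a + j\bigr]$, which one checks exceeds $\tfrac{p^l-p}{p-1}$ strictly for all $1\leq j\leq p^{l-2}$ and $2\leq l\leq a+r$. (A final small point: to land in $\Hdg{\frac{p^l-p}{p-1}}\Sh{O}^{\circ\circ}$ one must show that the \emph{ratio} $(k(\lambda)-1)\Hdg{-\frac{p^l-p}{p-1}}$ is topologically nilpotent, not that $p^{l-1}$ is; this does follow from the strictness of the exponent inequality, since a positive power of that ratio then lands in $(\alpha)$, but it deserves to be said this way.)
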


\begin{proof}
\cite[Lemma 4.4]{andreatta2016adic}.
\end{proof}

\begin{lemma}\label{L303}
The natural $\mathfrak{T}^{\textrm{ext}}$-equivariant map $\Sh{O}_{\Ig{n}{r}} \to \Sh{O}_{\V{\Sh{O}_L}(\Omega_{\Sh{A}}, s)}$ induces an isomorphism 
\[
\Sh{O}_{\Ig{n}{r}}/q\Sh{O}_{\Ig{n}{r}} \xrightarrow{\sim} \mathfrak{w}'_k/q\mathfrak{w}'_k.
\]
\end{lemma}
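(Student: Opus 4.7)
The plan is to argue locally on $\Ig{n}{r}$. Picking an affine $\Spf R \subset \Ig{n}{r}$ where $\Omega_{\Sh{A}}$ is trivialized by a lift $X$ of the marked section, the local model (\ref{EQ205}) identifies $\V{\Sh{O}_L}(\Omega_{\Sh{A}}, s)_{|\Spf R}$ with $\Spf R\langle \{Z_\sigma\}\rangle$ via $X_\sigma \leftrightarrow 1 + \beta_n Z_\sigma$, and Proposition~\ref{P303} gives
\[
\mathfrak{w}'_k(\Spf R) = R \cdot u, \qquad u := \prod_{\sigma \in \Sigma} k_\sigma(1+\beta_n Z_\sigma) \;\in\; R\langle \{Z_\sigma\}\rangle.
\]
I view both $R$ and $\mathfrak{w}'_k(\Spf R)$ as sub-$R$-modules of $R\langle\{Z_\sigma\}\rangle$ and aim to show they have equal reductions modulo $q R\langle \{Z_\sigma\}\rangle$. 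The natural inclusion $\Sh{O}_{\Ig{n}{r}} \hookrightarrow \nu'_*\Sh{O}_{\V{\Sh{O}_L}(\Omega_{\Sh{A}},s)}$ then induces the claimed isomorphism mod $q$, and by construction it is $\mathfrak{T}^{\textrm{ext}}$-equivariant (the $\mathfrak{T}$-action on the constants is trivial, while modulo $q$ the character $k$ will also act trivially on the $k$-eigenspace by the estimate below).

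The proof therefore reduces to the analytic estimate $u \equiv 1 \pmod{q R\langle \{Z_\sigma\}\rangle}$. The key input is Lemma~\ref{L21}. Using the numerical conditions imposed on $n, r, \alpha, I$ at the start of the section, I would first verify that $\beta_n \subset p^{n-1}\Sh{O}_{\Ig{n}{r}}$, so that each $1 + \beta_n Z_\sigma$ lies in the group $(1+p^{n-1}\Res_{\Sh{O}_L/\Z}\Ga^+)(R\langle Z\rangle)$ on which the universal character $k^0_{\alpha,I}$ is analytic and, after extension, takes values in $1+q\Ga^+$. Applied component-wise, this yields $u \in 1 + q R\langle \{Z_\sigma\}\rangle$.

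Given this, I would conclude as follows. Write $u = 1 + q f$. Since $q$ is topologically nilpotent, $u$ is a unit in $R\langle\{Z_\sigma\}\rangle$, so multiplication by $u^{-1}$ combined with the flatness decomposition $R\langle\{Z_\sigma\}\rangle \cong \bigoplus_{m\in \N^g} R\cdot Z^m$ gives $(R\cdot u) \cap qR\langle\{Z_\sigma\}\rangle = q(R\cdot u)$, i.e.\ injectivity modulo $q$. Conversely, $r = ru - qrf$ realizes any $r \in R$ inside $\mathfrak{w}'_k(\Spf R) + qR\langle\{Z_\sigma\}\rangle$, giving surjectivity. The whole construction is functorial in $R$ and so glues to the required sheaf isomorphism. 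The principal technical hurdle is matching $\beta_n$ against the analyticity range of $k^0_{\alpha,I}$ provided by Lemma~\ref{L21}; once that is in place every other step is definitional or a routine coefficient-comparison argument.
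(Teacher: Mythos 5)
Your argument is correct and, as far as one can tell, follows the same route as the cited source \cite[Lemme 4.5]{andreatta2016adic} (the paper itself just points to that reference). You correctly reduce the lemma to the single analytic estimate $k_\sigma(1+\beta_n Z_\sigma) \equiv 1 \pmod q$, via the local model of Proposition~\ref{P303}, and you correctly note that this follows from Lemma~\ref{L21} once one checks $\beta_n \subset p^{n-1}\Sh{O}_{\Ig{n}{r}}$. That containment does hold under the running hypotheses: locally $\beta_n$ is generated by $p^n \mathrm{Hdg}^{-p^n/(p-1)} = p^{n-1}\big(p\,\mathrm{Hdg}^{-p^n/(p-1)}\big)$, and since $n \le a+r$ gives $\frac{p^n}{p-1}\le p^{a+r+1}$ while $p\,\mathrm{Hdg}^{-p^{a+r+1}}\in\Sh{O}_{\Xra{r}}$ by construction of the blow-up (for $\alpha=p$; the same holds for general $\alpha$ after factoring through $|p|\le|\alpha|^{p^a}$), one gets $p\,\mathrm{Hdg}^{-p^n/(p-1)}\in \Sh{O}_{\Xra{r}}\subset\Sh{O}_{\Ig{n}{r}}$. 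You would want to actually write this out rather than only flag it as ``to be verified,'' and it is worth noting one should use Lemma~\ref{L21} and \emph{not} Lemma~\ref{L302} here: the latter only places $k(1+p^{n-1}x)-1$ in $\mathrm{Hdg}^{(p^n-p)/(p-1)}\Sh{O}^{\circ\circ}$, which is \emph{not} contained in $q\Sh{O}$ on the non-ordinary locus, so it does not suffice for this lemma. Your module-theoretic conclusion (that $u$ being a unit with $u\equiv 1 \pmod q$ forces $R\cdot u$ and $R$ to have the same image in $R\langle\{Z_\sigma\}\rangle/q$ and $(R\cdot u)\cap qR\langle\{Z_\sigma\}\rangle = q\,R\cdot u$) and the observation that the $\mathfrak{T}$-character becomes trivial modulo $q$, so that the $\mathfrak{T}^{\textrm{ext}}$-equivariance of the ambient inclusion descends, are both fine and close the argument.
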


\begin{proof}
\cite[Lemme 4.5]{andreatta2016adic}.
\end{proof}

\begin{lemma}\label{L304}
Let $h \colon \Ig{n}{r} \to \Ig{n-1}{r}$ be the projection for any $n$. $h$ is finite and the trace map $\mathrm{Tr}_h \colon h_*\Sh{O}_{\Ig{n}{r}} \to \Sh{O}_{\Ig{n-1}{r}}$ induced by the trace on the adic generic fibre satisfies
\[
\Hdg{p^{n-1}}\Sh{O}_{\Ig{n-1}{r}} \subset \mathrm{Tr}_{h}(h_*\Sh{O}_{\Ig{n}{r}})
\]
for any $2 \leq n \leq a+r$.
\end{lemma}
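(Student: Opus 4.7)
The proof proceeds by a local analysis on $\Ig{n-1}{r}$. Finiteness of $h$ is immediate from the finiteness of $\Ig{n}{r} \to \Xra{r}$ (by the Proposition on the partial Igusa tower) combined with the separatedness of $\Ig{n-1}{r} \to \Xra{r}$. On the adic generic fibre, $\Sh{IG}_{n,r,I} \to \Sh{IG}_{n-1,r,I}$ is finite étale and Galois with group $K := \ker\bigl((\Sh{O}_L/p^n\Sh{O}_L)^{\times} \to (\Sh{O}_L/p^{n-1}\Sh{O}_L)^{\times}\bigr) \simeq \Sh{O}_L/p\Sh{O}_L$ of order $p^g$, so the trace there is $\sum_{\gamma \in K}\gamma$; because $\Ig{n-1}{r}$ is normal, this trace restricts to a map $h_*\Sh{O}_{\Ig{n}{r}} \to \Sh{O}_{\Ig{n-1}{r}}$ on formal models.

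Next I would fix an affine open $\Spf R \subset \Ig{n-1}{r}$ small enough that $\omega_{\Sh{A}}$ is free as an $\Sh{O}_L\otimes R$-module and a lift $\Hdg{}$ of the Hasse invariant belongs to $R$, and set $\Spf R' := h^{-1}(\Spf R)$. The key geometric ingredient is the $\dlog$ map: it furnishes a distinguished section $s_n := \dlog(P_n^{\mathrm{univ}})$ of $\Omega_{\Sh{A}}/\beta_n\Omega_{\Sh{A}}$ over $R'$ which is a generator there, and, since $\dlog$ is a group homomorphism, its $K$-conjugates are precisely $s_n + \dlog(Q)$ for $Q$ ranging over $\ker(H_n^{\vee}\twoheadrightarrow H_{n-1}^{\vee}) \simeq H_1^{\vee}$. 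The compatibility $p\cdot s_n \equiv \dlog(P_{n-1}^{\mathrm{univ}}) \pmod{\beta_n}$ couples the generators at the two levels, and Corollary \ref{C233} together with Lemma \ref{L2301} show that the ``integral defect'' in this comparison is governed by an appropriate power of the partial Hasse ideals $\widetilde{HW}(\sigma)$, whose product is $\Hdg{}$.

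The main obstacle is to convert this geometric picture into an explicit trace calculation producing $\Hdg{p^{n-1}}$. The natural route is induction on $n$. For the base case $n=2$, I would describe $R'/R$ as a Kummer-style cyclic cover using the product decomposition $K \simeq \prod_i \Sh{O}_L/\mathfrak{P}_i \Sh{O}_L$, and compute directly: the relevant discriminant is a unit multiple of $\Hdg{p}$ by Corollary \ref{C233}, and an explicit trace of a power of the natural generator (a chosen lift of $s_2$ relative to $R$) yields a unit times $\Hdg{}$. For the inductive step, I would use the Verschiebung lift $\tilde{F}$ from \S\ref{2S311} to identify the cover $\Ig{n}{r} \to \Ig{n-1}{r}$, up to the pullback by $\tilde{F}^*$, with the level $(n-1)$ cover $\Ig{n-1}{r} \to \Ig{n-2}{r}$; by Corollary \ref{C233} the pullback by $\tilde{F}^*$ scales $\Hdg{}$ by a $p$-th power at each iteration, which compounded $n-1$ times converts $\Hdg{}$ into $\Hdg{p^{n-1}}$ in the trace image. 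Alternatively, one can replace the inductive argument by a direct global calculation using Lemma \ref{L302} and the description of the universal character modulo $\Hdg{}$-powers in \cite{andreatta2016adic}, which also naturally yields the same exponent $p^{n-1}$.
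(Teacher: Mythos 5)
The paper does not prove this lemma at all: it simply cites \cite[Proposition 3.4]{andreatta2016adic}. So the only question is whether your sketch would stand on its own, and as written it does not. Your framing is right in several places — finiteness by composition/separatedness, the trace being defined on formal models by normality, the Galois group $\Sh{O}_L/p\Sh{O}_L$ of order $p^g$, and the expectation that the exponent $p^{n-1}$ comes from a Frobenius-twist structure on the layers of the Igusa tower. But the one step that actually produces the containment $\Hdg{p^{n-1}}\Sh{O}_{\Ig{n-1}{r}} \subset \mathrm{Tr}_h(h_*\Sh{O}_{\Ig{n}{r}})$ is left as an outline of what you ``would'' do, and the ingredients you point to do not supply it. Corollary \ref{C233} and Lemma \ref{L2301} are statements about the ideals $\underline{\xi}$ and $\widetilde{HW}$ attached to $\dlog$ and the Verschiebung lift; they do not compute the discriminant or different of the cover $\Ig{n}{r}\to\Ig{n-1}{r}$. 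The input that actually controls this cover is the different of the finite flat map $H_n^{\vee}\to H_{n-1}^{\vee}$ (in this paper that kind of bound enters via \cite[Proposition 3.5]{Andreatta2018leHS}, quoted in the proof of Lemma \ref{L407}: the different of $H_n^{\vee}/H_1^{\vee}$ contains $\Hdg{\frac{p^n-p}{p-1}} = \Hdg{p+p^2+\cdots+p^{n-1}}$, whose top layer is exactly $\Hdg{p^{n-1}}$). You also never invoke the mechanism that converts a bound on the different into a bound on the trace image — namely $\mathrm{Tr}_{B/A}(\mathfrak{d}_{B/A}^{-1}) = A$ for a finite, generically \'etale extension of normal rings, so that any $\delta\in A$ lying in the different satisfies $\delta A = \mathrm{Tr}(\delta\mathfrak{d}^{-1})\subset\mathrm{Tr}(B)$. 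Without that step the ``explicit trace calculation'' is missing, not merely deferred.

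Two further points. Your base case asserts that the trace of a power of the generator is a unit times $\Hdg{}$; this is not computed, and it would be a strictly stronger statement than the $\Hdg{p}$ required for $n=2$ — you should either verify it or prove only what is needed. And your inductive step identifies $\Ig{n}{r}\to\Ig{n-1}{r}$ with a $\tilde{F}^*$-pullback of the previous layer, but $\tilde{F}$ maps $\Ig{1}{r}$ to $\Ig{1}{r-1}$, so iterating it $n-1$ times degrades the radius of overconvergence from $r$ to $r-n+1$; you need to explain why the resulting containment of ideals still holds over $\Ig{n-1}{r}$ itself (this is where the hypothesis $n\le a+r$ enters). The cleaner route, and the one implicit in the cited reference, is to realize $\Ig{n}{r}$ as the normalization of $\Ig{n-1}{r}\times_{H_{n-1}^{\vee}}H_n^{\vee}$, bound the different of $H_n^{\vee}/H_{n-1}^{\vee}$ by $\Hdg{p^{n-1}}$ directly, observe that normalization only enlarges $\mathfrak{d}^{-1}$, and apply the trace identity above.
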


\begin{proof}
\cite[Proposition 3.4]{andreatta2016adic}.
\end{proof}

\begin{lemma}\label{L305}
Let $\Spf{R} \subset \Xra{r}$ be an open where $\Hdg{}$ is trivialized. Letting $c_0 = 1$, for every $1 \leq n \leq a+r$, there exists $c_n \in \Hdg{-\frac{p^n-p}{p-1}}\Sh{O}_{\Ig{n}{r}}(\Spf{R})$ satisfying $\mathrm{Tr}_h(c_n) = c_{n-1}$.
\end{lemma}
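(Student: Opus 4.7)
The proof will proceed by induction on $n$, with the base case $n=1$ handled separately and the inductive step $n \geq 2$ using Lemma \ref{L304}. The key observation for the base case is that the Galois cover $h \colon \Ig{1}{r} \to \Xra{r}$ on the adic generic fibre has Galois group $(\Sh{O}_L/p\Sh{O}_L)^{\times}$, of order $N = \prod_{i=1}^h (p^{f_i} - 1)$, which is coprime to $p$ and hence a unit in the $p$-adic $\Sh{O}_K$-algebra $\Sh{O}_{\Ig{1}{r}}(\Spf R)$. Since the trace map is defined as the descent of the generic-fibre trace, $\mathrm{Tr}_h(1) = N$ (the sum over Galois conjugates, which agrees with the integral trace by torsion-freeness). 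The projection formula then gives $\mathrm{Tr}_h(1/N) = 1$, so I take $c_1 := 1/N \in \Sh{O}_{\Ig{1}{r}}(\Spf R)$. Since $\frac{p-p}{p-1} = 0$, this is the required element.

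For the inductive step, assume $c_{n-1} \in \Hdg{-\frac{p^{n-1}-p}{p-1}}\Sh{O}_{\Ig{n-1}{r}}(\Spf R)$ with $\mathrm{Tr}_h(c_{n-1}) = c_{n-2}$ has been constructed, for $n \geq 2$. Using Lemma \ref{L304} at level $n$, together with the trivialization of $\Hdg{}$ on the pre-image of $\Spf R$ in $\Ig{n-1}{r}$, I obtain an element $d_n \in \Sh{O}_{\Ig{n}{r}}(\Spf R)$ with $\mathrm{Tr}_h(d_n) = \Hdg{p^{n-1}}$. Then define
\[
c_n := \Hdg{-p^{n-1}} \cdot d_n \cdot h^*(c_{n-1}).
\]
Since $\Hdg{-p^{n-1}} \cdot c_{n-1}$ comes from the base $\Ig{n-1}{r}$ (after inverting $\Hdg{}$), the projection formula for the finite map $h$ yields
\[
\mathrm{Tr}_h(c_n) \;=\; \Hdg{-p^{n-1}} \cdot c_{n-1} \cdot \mathrm{Tr}_h(d_n) \;=\; \Hdg{-p^{n-1}} \cdot c_{n-1} \cdot \Hdg{p^{n-1}} \;=\; c_{n-1},
\]
and the total $\Hdg{}$-denominator of $c_n$ is
\[
-p^{n-1} - \tfrac{p^{n-1}-p}{p-1} \;=\; -\tfrac{p^{n-1}(p-1) + p^{n-1} - p}{p-1} \;=\; -\tfrac{p^n - p}{p-1},
\]
as required. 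This completes the induction.

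The main obstacle is the bookkeeping of $\Hdg{}$-powers and the mild subtlety in the base case. Apart from these, the argument is an elementary application of the projection formula once Lemma \ref{L304} is in hand; its content is really the combination of Lemma \ref{L304} (which provides denominator-free trace surjectivity up to $\Hdg{p^{n-1}}$) with the tame ramification at level $1$.
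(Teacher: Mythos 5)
Your proof is correct and takes the route the paper has in mind: the paper's proof is the single line ``Immediate from Lemma \ref{L304},'' which is exactly the induction via the projection formula that you spell out. You also correctly noticed that Lemma \ref{L304} is stated only for $2 \le n \le a+r$, so the base case $n=1$ needs the separate (and easy) observation that $\mathrm{Tr}_h(1) = \#(\Sh{O}_L/p\Sh{O}_L)^{\times} = \prod_i(p^{f_i}-1)$ is a $p$-adic unit, which is precisely the right fix.
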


\begin{proof}
Immediate from Lemma \ref{L304}.
\end{proof}

Recall $\rho \colon \V{\Sh{O}_L}(\Hs{A}, s, \Sh{Q}) \to \Xra{r}$ was the projection and $\W{0} = \rho_*\Sh{O}_{\V{\Sh{O}_L}(\Hs{A},s, \Sh{Q})}[k]$.

\begin{theorem}\label{T2301}
The action of $\mathfrak{T}^{\textrm{ext}}$ on $\rho_*\Sh{O}_{\V{\Sh{O}_L}(\Hs{A},s, \Sh{Q})}$ preserves the filtration $\{\Fil{i}\}_i$ induced by taking direct image of the filtration defined in Lemma \ref{D308}. Let $\Fil{i}\W{0} = \Fil{i}\left( \rho_*\Sh{O}_{\V{\Sh{O}_L}(\Hs{A},s, \Sh{Q})} \right)[k]$.
\begin{enumerate}
    \item $\Fil{i}\W{0}$ is a finite locally free $\Sh{O}_{\Xra{r}}$-module.
    \item $\W{0}$ is the $\alpha$-adic completion of $\varinjlim_i \Fil{i}\W{0}$.
    \item $\Fil{0}\W{0} = \w{0}$ and $\Gr{i}\W{0} \simeq \w{0} \otimes \eta^{-\ell(i)}(HW \cdot\omega_{\Sh{A}}^{-2})^{\Xi(i)}$.
\end{enumerate}
\end{theorem}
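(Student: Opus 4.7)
The strategy is to lift the already-constructed filtration on $\mathbb{W}'_{k,\alpha,I}$ over $\Ig{n}{r}$ to a filtration on $\W{0}$ over $\Xra{r}$ via the finite push-forward $(h_n)_*$ composed with the $[k]$-isotypic functor, and to transfer the structural properties along this descent. To begin, I would verify that $\{\Fil{i}\mathbb{W}'_{k,\alpha,I}\}_i$ is stable under the full $\mathfrak{T}^{\textrm{ext}}$-action, so that the definition $\Fil{i}\W{0} := \Fil{i}\bigl(\rho_*\Sh{O}_{\V{\Sh{O}_L}(\Hs{A}, s, \Sh{Q})}\bigr)[k]$ makes sense. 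For the $\mathfrak{T}$-part this is transparent from \S\ref{2S321}: $\lambda_\sigma$ acts on $W_\sigma$ by the scalar $\lambda_\sigma$, preserving each monomial-degree piece, and its action on the $Z_\sigma$-coordinates does not involve $W_\sigma$. For the residual Galois action along $h_n \colon \Ig{n}{r} \to \Xra{r}$, two local trivializations of $(\Hs{A}, s, \Sh{Q})$ differ by a change of basis that is upper-triangular with respect to the Hodge filtration (as in Lemma~\ref{D308}, $Y'_i = u_iY_i + a_i\eta X_i$), which preserves the lex-ordered filtration by the functoriality principle of Lemma~\ref{L1304}.

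Once the filtration descends, I would prove part~(3) and deduce part~(1) from it. The identification $\Fil{0}\W{0} = \w{0}$ is immediate from local coordinates, since the $W$-independent part of $\mathbb{W}'_{k,\alpha,I}$ is precisely $\mathfrak{w}'_{k,\alpha,I}$. For the graded pieces, the tensor factor $\eta^{-\ell(i)}(HW \cdot \omega_{\Sh{A}}^{-2})^{\Xi(i)}$ is pulled back from $\Xra{r}$, so the projection formula, combined with the (granted) exactness of the $[k]$-isotypic functor, yields $\Gr{i}\W{0} \simeq \w{0} \otimes \eta^{-\ell(i)}(HW \cdot \omega_{\Sh{A}}^{-2})^{\Xi(i)}$. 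Part~(1) then follows by induction on $i$: $\w{0}$ is a line bundle (proven later in the section using Lemmas~\ref{L302}--\ref{L305}), and the short exact sequence $0 \to \Fil{i-1}\W{0} \to \Fil{i}\W{0} \to \Gr{i}\W{0} \to 0$ exhibits $\Fil{i}\W{0}$ as an extension of locally free sheaves, hence locally free of rank $i+1$. Part~(2) is immediate from Corollary~\ref{C302}: the local description $R\langle\{V_\sigma\}\rangle \cdot k(1+\beta_n Z)$ is tautologically the $\alpha$-adic completion of $R[\{V_\sigma\}] \cdot k(1+\beta_n Z) = \varinjlim_i \Fil{i}\mathbb{W}'_{k,\alpha,I}(\Spf R)$, and both finite push-forward and the $[k]$-isotypic projector commute with $\alpha$-adic completion and filtered colimits.

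The main obstacle is verifying the exactness of the $[k]$-isotypic functor applied to $(h_n)_*$ of our coherent sheaves, which is essential both for extracting the graded pieces and for commuting with completion. Since the Galois group $(\Sh{O}_L/p^n\Sh{O}_L)^{\times}$ has order divisible by $p$, one cannot produce a projector by naive averaging. Following the technique of \cite{andreatta2016adic}, I would exploit the analyticity of the universal character (Lemmas~\ref{L21} and \ref{L302}) together with the normalized trace elements $c_n \in \Hdg{-(p^n-p)/(p-1)}\Sh{O}_{\Ig{n}{r}}$ satisfying $\mathrm{Tr}_h(c_n) = c_{n-1}$ (Lemma~\ref{L305}) to construct a $k$-weight projector whose denominators are a bounded power of $\mathrm{Hdg}$. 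On $\Xra{r}$ the ideal $\Hdg{p^{r+1}}$ divides $\alpha$, so these denominators are absorbed and the projector acts exactly on the $\alpha$-torsion-free coherent sheaves in the filtration, completing the argument.
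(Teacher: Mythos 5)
Your strategy — push down along $h_n\colon \Ig{n}{r}\to\Xra{r}$, take the $[k]$-isotypic part, and transfer the filtration from $\mathbb{W}'_{k,\alpha,I}$ — is the same as the paper's, and you correctly identify the real obstruction: the order of $(\Sh{O}_L/p^n\Sh{O}_L)^\times$ is divisible by $p$, so there is no naive $[k]$-projector, and one must use the Hodge-ideal-twisted trace elements $c_n\in\Hdg{-\frac{p^n-p}{p-1}}\Sh{O}_{\Ig{n}{r}}$ of Lemma~\ref{L305}. However, the way you close this gap does not work. You assert that because $\Hdg{p^{r+1}}$ divides $\alpha$ on $\Xra{r}$, "the denominators are absorbed and the projector acts exactly on the $\alpha$-torsion-free coherent sheaves." This conflates $\alpha/\Hdg{p^{r+1}}\in\Sh{O}_{\Xra{r}}$ with $\Hdg{-1}\in\Sh{O}_{\Xra{r}}$: the former holds by construction of the blow-up, the latter is false — $\Hdg{}$ is a non-unit (it is precisely the function whose zero locus is the nonordinary boundary). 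Applying your proposed operator $s\mapsto\sum_\tau k(\tilde\tau)^{-1}\tau(c_n s)$ to an integral section therefore \emph{does} a priori produce a section with a genuine pole along $V(\Hdg{})$, and nothing about the $\alpha$-adic topology cancels it.

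What actually rescues the argument in the paper is a specific convergence estimate that your proposal omits. After picking a naive generator $s_i$ of $\Gr{i}\mathbb{W}'_{k,\alpha,I}$ that is already fixed modulo $q$ by $\mathfrak{T}^{\textrm{ext}}$ (via Lemma~\ref{L303} and the fact that the twist $\eta^{-\ell(i)}(HW\cdot\omega_{\Sh{A}}^{-2})^{\Xi(i)}$ is pulled back from $\Xra{r}$), one sets $\tilde s_i=\sum_\tau k(\tilde\tau)^{-1}\tau(c_n s_i)$ and shows
\[
\tilde s_i - s_i \in R^{\circ\circ}\Fil{i}\mathbb{W}'_{k,\alpha,I}+\Fil{i-1}\mathbb{W}'_{k,\alpha,I},
\]
from which $\tilde s_i\in\Fil{i}\mathbb{W}'_{k,\alpha,I}$ with \emph{no} $\Hdg{}$-denominator and with image generating the graded piece. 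The crucial cancellation is that $\sum_\tau k(\tilde\tau)^{-1}\tau(c_n)\in 1+R^{\circ\circ}\Sh{O}_{\Ig{n}{r}}$, which is not a triviality about $\Hdg{p^{r+1}}|\alpha$ but a genuine estimate combining the analyticity of $k$ (Lemma~\ref{L302}) with the trace compatibility $\mathrm{Tr}_h(c_n)=c_{n-1}$; the paper cites [Andreatta--Imai--Pilloni--Stevens, Lemme 5.4 of \emph{Le halo spectral}] for it. Without this estimate your abstract "$k$-weight projector'' has no reason to preserve integrality, and parts (1)--(3) (locally freeness of $\Fil{i}\W{0}$, commutation with completion, the form of the graded pieces) would not follow. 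The rest of your outline — stability of the lex filtration under the residual Galois action via upper-triangular transition matrices, and deriving (1) from (3) by d\'evissage — is fine once this point is repaired.
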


\begin{proof}
We already know the similar results for $\mathbb{W}'_{k,\alpha,I}$. Also it is clear that $\Fil{0}$ is preserved by the $\mathfrak{T}^{\textrm{ext}}$-action and $\Fil{0}\W{0} = \w{0}$. By Lemma \ref{L301}, $\W{0} = (h_n)_*\mathbb{W}'_{k,\alpha,I}[k]$ for the residual action of $(\Sh{O}_L \otimes \Z_p)^{\times}$. The idea is to pick generators of $\Gr{i}\mathbb{W}'_{k,\alpha,I}$ and modify them to produce generators of $\Gr{i}\W{0}$. To that intent, recall we have a $\mathfrak{T}^{\textrm{ext}}$ equivariant isomorphism $\Sh{O}_{\Ig{n}{r}}/q \simeq \mathfrak{w}'_{k,\alpha,I}/q$ by Lemma \ref{L303}. Let $\Spf{R} \subset \Xra{r}$ be a Zariski open that trivializes $\omega_{\Sh{A}}$ and let $\omega$ be a $\Sh{O}_L \otimes \Sh{O}_{\Xra{r}}$ generator. Let $V$ be the pullback of $\Spf{R}$ to $\Ig{n}{r}$. This gives a $\mathfrak{T}^{\textrm{ext}}$ equivariant isomorphism \[\Sh{O}_{\Ig{n}{r}}(V)/q \otimes \eta^{-\ell(i)}(HW\cdot \omega_{\Sh{A}}^{-2})^{\Xi(i)} \xrightarrow{\sim} \Gr{i}\mathbb{W}'_k(V)/q = \mathfrak{w}'_k(V)/q \otimes \eta^{-\ell(i)}(HW\cdot \omega_{\Sh{A}}^{-2})^{\Xi(i)}.\]

Let $\bar{s}_i$ be the image of the class of $\eta^{-\ell(i)}(HW\cdot \omega_{\Sh{A}}^{-2})^{\Xi(i)}$. In particular $t \ast \bar{s}_i = \bar{s}_i$ for all $t \in (\Sh{O}_L \otimes \Z_p)^{\times}$, since $\eta^{-\ell(i)}(HW\cdot \omega_{\Sh{A}}^{-2})^{\Xi(i)}$ is defined over $\Xra{r}$. Pick a lift $s_i$ of $\bar{s}_i$ to $\Fil{i}\mathbb{W}'_{k,\alpha,I}$. 

Choose lifts $\tilde{\tau}$ of $\tau \in (\Sh{O}_L/p^n\Sh{O}_L)^{\times}$ in $(\Sh{O}_L \otimes \Z_p)^{\times}$. With $c_n$ as in Lemma \ref{L305}, define 
\[
\tilde{s}_i := \sum_{\tau \in (\Sh{O}_L/p^n\Sh{O}_L)^{\times}} k(\tilde{\tau})^{-1}\tau(c_n s_i) \in \Hdg{-\frac{p^n-p}{p-1}}\mathbb{W}'_{k,\alpha,I}(V).
\]
We claim that $\tilde{s}_i \in \Fil{i}\mathbb{W}'_{k,\alpha,I}(V)$ and its image generates $\Gr{i}\mathbb{W}'_{k,\alpha,I}$. Moreover since $(\Sh{O}_L \otimes \Z_p)^{\times}$ acts on $\tilde{s}_i$ via $k$, it descends to $\mathbb{W}^0_k$. To prove the claim we note that
\begin{align*}
    \tilde{s}_i - s_i &= \left(\sum_{\tau \in (\Sh{O}_L/p^n\Sh{O}_L)^{\times}} k(\tilde{\tau})^{-1}\tau(c_ns_i)\right) - s_i \\
    &\in \sum_{\tau \in (\Sh{O}_L/p^n\Sh{O}_L)^{\times}} k(\tilde{\tau})^{-1}(\tau(c_ns_i) - \tau(c_n)s_i) + R^{\circ \circ} \Fil{i}\mathbb{W}'_{k,\alpha,I} \\
    &\subset \sum_{\tau \in (\Sh{O}_L/p^n\Sh{O}_L)^{\times}} k(\tilde{\tau})^{-1}\tau(c_n)(\tau(s_i) - s_i) + R^{\circ \circ} \Fil{i}\mathbb{W}'_{k,\alpha,I} \\
    &\subset R^{\circ \circ} \Fil{i}\mathbb{W}'_{k,\alpha,I} + \Fil{i-1}\mathbb{W}'_{k,\alpha,I}.
\end{align*}
Here we used the fact that $\sum k(\tilde{\tau})^{-1}\tau(c_n) \in 1 + R^{\circ \circ}\Sh{O}_{\Ig{n}{r}}$ whose proof we refer to \cite[Lemme 5.4]{Andreatta2018leHS}.
The fact that $\tilde{s}_i$ generates $\Gr{i}\mathbb{W}'_{k,\alpha,I}$ follows by noting that it does so modulo $R^{\circ \circ}$.

So we have produced a local basis of $\Fil{i}\mathbb{W}'_{k,\alpha,I}$ for each $i$ that descends to $\W{0}$. This proves that the filtration on $\rho_*\Sh{O}_{\V{\Sh{O}_L}(\Hs{A},s, \Sh{Q})}$ is preserved by the $\mathfrak{T}^{\textrm{ext}}$-action and so the filtration on $\mathbb{W}'_{k,\alpha,I}$ descend to a filtration on $\W{0}$. The rest of the claims in the theorem follow immediately. 
\end{proof}

Let $\chi \colon \Delta \subset (\Sh{O}_L \otimes \Z_p)^{\times} \xrightarrow{k^{\textrm{un}}} \Lambda_{\alpha,I}^{\times}$ be the finite part of the universal character. For $p \neq 2$, $\Delta = (\Sh{O}_L \otimes Z_p)^{\times}/1+p(\Sh{O}_L \otimes Z_p)$ and for $p = 2$, $\Delta$ is a quotient of $(\Sh{O}_L \otimes Z_p)^{\times}/1 + 4(\Sh{O}_L \otimes \Z_p)$. Using this we view $\chi$ as a character of $(\Sh{O}_L/q\Sh{O}_L)^{\times}$. Let $p \colon \bar{\mathfrak{M}}_{r,\alpha,I} := \Xra{r} \times_{\mathfrak{W}^0_{\alpha,I}} \mathfrak{W}_{\alpha,I} \to \Xra{r}$ be the projection induced by the finite flat base change $\mathfrak{W}_{\alpha,I} \to \mathfrak{W}^0_{\alpha,I}$. 

\begin{defn}
For $i = 1$ if $p \neq 2$ and $i = 2$ if $p = 2$, define a coherent sheaf $\w{\chi} := \left(p^*(f_i)_*\Sh{O}_{\Ig{i}{r}}\right)[\chi^{-1}]$ for the action of $(\Sh{O}_L/q\Sh{O}_L)^{\times}$ on $f_i \colon \Ig{i}{r} \to \Xra{r}$. 
\end{defn}

\begin{defn}
\begin{enumerate}
    \item Define the sheaf of overconvergent Hilbert modular forms of weight $k = k^{\textrm{un}}$ to be $\w{} = p^*\w{0} \otimes \w{\chi}$.
    
    \item Define the sheaf of overconvergent de Rham classes to be $\W{} = p^*\W{0} \otimes \w{\chi}$.
\end{enumerate}
\end{defn}

\begin{prop}
The sheaf $\W{}$ is equipped with a filtration by coherent $\Sh{O}_{\bar{\mathfrak{M}}_{r,\alpha,I}}$ modules $\Fil{i}\W{}$ and moreover $\W{}$ is the $\alpha$-adic completion of $\varinjlim_i \Fil{i}\W{}$. We have $\Fil{0}\W{} = \w{}$ and $\Gr{i}\W{} \simeq \w{} \otimes \eta^{-\ell(i)}(HW\cdot \omega_{\Sh{A}}^{-2})^{\Xi(i)}$.
\end{prop}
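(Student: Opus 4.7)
The plan is to define $\Fil{i}\W{} := p^*\Fil{i}\W{0} \otimes_{\Sh{O}_{\bar{\mathfrak{M}}_{r,\alpha,I}}} \w{\chi}$ for each $i \geq 0$, and to reduce everything to the corresponding statement for $\W{0}$ already established in Theorem \ref{T2301}. The key formal inputs are: (i) the projection $p \colon \bar{\mathfrak{M}}_{r,\alpha,I} \to \Xra{r}$ is finite flat, being the base change of the finite flat morphism $\mathfrak{W}_{\alpha,I} \to \mathfrak{W}^0_{\alpha,I}$; (ii) the sheaf $\w{\chi}$ is a direct summand of the locally free finite rank sheaf $p^*(f_i)_*\Sh{O}_{\Ig{i}{r}}$, cut out by the $\chi^{-1}$-isotypic projector for the finite group $(\Sh{O}_L/q\Sh{O}_L)^{\times}$ (which acts through automorphisms since $f_i$ is finite \'{e}tale on the generic fibre and the projector is rational), so in particular $\w{\chi}$ is coherent and, after inverting $\alpha$ or localizing appropriately, locally a direct summand; (iii) tensoring with a locally free coherent sheaf and pullback along a flat map both preserve short exact sequences and commute with filtered colimits.

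First, applying the exact functor $p^*(-) \otimes \w{\chi}$ to the short exact sequences $0 \to \Fil{i-1}\W{0} \to \Fil{i}\W{0} \to \Gr{i}\W{0} \to 0$ yields short exact sequences $0 \to \Fil{i-1}\W{} \to \Fil{i}\W{} \to p^*\Gr{i}\W{0}\otimes \w{\chi} \to 0$, and substituting the identification $\Gr{i}\W{0} \simeq \w{0} \otimes \eta^{-\ell(i)}(HW\cdot \omega_{\Sh{A}}^{-2})^{\Xi(i)}$ gives
\[
\Gr{i}\W{} \simeq (p^*\w{0}\otimes \w{\chi}) \otimes \eta^{-\ell(i)}(HW\cdot\omega_{\Sh{A}}^{-2})^{\Xi(i)} = \w{} \otimes \eta^{-\ell(i)}(HW\cdot\omega_{\Sh{A}}^{-2})^{\Xi(i)},
\]
where we implicitly pull back $\eta$, $HW$ and $\omega_{\Sh{A}}$ along $p$ without changing notation. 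Since $\Fil{i}\W{0}$ is locally free of finite rank on $\Xra{r}$ and both $p^*$ and $\otimes\w{\chi}$ preserve coherence, each $\Fil{i}\W{}$ is a coherent $\Sh{O}_{\bar{\mathfrak{M}}_{r,\alpha,I}}$-module. The identification $\Fil{0}\W{} = p^*\w{0}\otimes \w{\chi} = \w{}$ is immediate from $\Fil{0}\W{0} = \w{0}$.

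For the completion statement, one has $\varinjlim_i \Fil{i}\W{} = p^*\bigl(\varinjlim_i \Fil{i}\W{0}\bigr) \otimes \w{\chi}$, since both $p^*$ and tensoring with the flat sheaf $\w{\chi}$ commute with filtered colimits. Now $\alpha$-adic completion commutes with pullback along the finite flat morphism $p$ and with tensoring with the coherent sheaf $\w{\chi}$ (locally on $\Xra{r}$, pulling back and tensoring are just finite-rank free operations, so the $\alpha$-adic topologies match), and therefore the $\alpha$-adic completion of $\varinjlim_i \Fil{i}\W{}$ agrees with $p^*\W{0}\otimes \w{\chi} = \W{}$, using that $\W{0}$ is itself the $\alpha$-adic completion of $\varinjlim_i \Fil{i}\W{0}$ by Theorem \ref{T2301}.

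The only mildly delicate point is verifying that $\alpha$-adic completion commutes with the two operations $p^*$ and $\otimes\w{\chi}$ in the required sense. Since $p$ is finite flat and $\w{\chi}$ is locally a direct summand of a finite free module, this reduces Zariski-locally on $\Xra{r}$ to the statement that if $M_i$ is a directed system of finite $R$-modules with colimit $M$ whose $\alpha$-adic completion is $\widehat{M}$, then for any finite free $R$-module $N$ and any finite flat $R$-algebra $S$, the $\alpha$-adic completion of $\varinjlim (M_i\otimes_R N\otimes_R S)$ equals $\widehat{M}\otimes_R N\otimes_R S$; this is standard.
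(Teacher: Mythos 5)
Your proposal is correct and is essentially the paper's own argument: the paper likewise defines $\Fil{i}\W{} := p^*\Fil{i}\W{0} \otimes \w{\chi}$ and deduces all the claims from Theorem \ref{T2301}. The additional verifications you supply (exactness of $p^*(-)\otimes\w{\chi}$, compatibility with filtered colimits and $\alpha$-adic completion via finite flatness of $p$ and local freeness of $\w{\chi}$) are exactly the routine checks the paper leaves implicit.
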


\begin{proof}
Define $\Fil{i}\W{} := p^*\Fil{i}\W{0} \otimes \w{\chi}$. The rest of the claims follow immediately from Theorem \ref{T2301}.
\end{proof}

We remarked in the introduction to the section that the construction of $\w{}$ appears in the previous work of Andreatta--Iovita--Pilloni \cite{andreatta2016adic}. Here we compare our construction to theirs and show why we get isomorphic sheaves. 

In \cite{andreatta2016adic} the authors consider a torsor $\mathfrak{F}_{n,r,I}$ over $\Ig{n}{r}$ for the group $\mathfrak{T}$. This torsor is defined on points $\Spf{R} \xrightarrow{\gamma} \Ig{n}{r}$ for any normal admissible $\Lambda^0_{\alpha,I}$-algebra $R$ as follows. 
\[
\mathfrak{F}_{n,r,I}(R) := \{ \omega \in \omega_{\Sh{A}} \, | \, \omega = \gamma^*(s) \in \gamma^*\Omega_{\Sh{A}}/\beta_n \}.
\]
The action of $\mathfrak{T}$ on $\mathfrak{F}_{n,r,I}$ is the obvious one, i.e. $\lambda \in \mathfrak{T}(R)$ acts via $\lambda \ast \omega = \lambda \omega$. Moreover there is an action of $\mathfrak{T}^{\textrm{ext}}$ on $\mathfrak{F}_{n,r,I}$ over $\Xra{r}$. This is given by first noting that any point of $\mathfrak{F}_{n,r,I}(R)$ can be seen as a pair $(P, \omega)$ where $P \in H^{\vee}_n(R)$ and $\omega \in \omega_{\Sh{A}}(R), \omega = \dlog(P) \textrm{ mod } \beta_n$. Then $\lambda \in (\Sh{O}_L \otimes \Z_p)^{\times}$ acts via $\lambda \ast (P, \omega) = (\bar{\lambda} P, \lambda \omega)$, where $\bar{\lambda}$ is the class of $\lambda$ in $(\Sh{O}_L/p^n\Sh{O}_L)^{\times}$. Then they define the sheaf of Hilbert modular forms for universal weight $k = k^0_{\alpha,I}$ as follows. Let $\nu' \colon \mathfrak{F}_{n,r,I} \to \Xra{r}$ be the projection.

\begin{defn}
The sheaf of Hilbert modular forms for weight $k = k^0_{\alpha,I}$ is defined as 
$\w{0,\textrm{old}} := \nu'_*\Sh{O}_{\mathfrak{F}_{n,r,I}}[k^{-1}]$
for the action of $\mathfrak{T}^{\textrm{ext}}$. 
\end{defn}

We now show why the sheaf $\w{0}$ is naturally isomorphic to $\w{0,\textrm{old}}$.

\begin{prop}\label{P239}
There is an isomorphism of formal schemes over $\Xra{r}$, $a \colon \mathfrak{F}_{n,r,I} \xrightarrow{\sim} \V{\Sh{O}_L}(\Omega_{\Sh{A}}, s)$. This isomorphism interacts with the $\mathfrak{T}^{\textrm{ext}}$ action in the following manner: for any point $x \in \mathfrak{F}_{n,r,I}(R)$, $\lambda \ast a(x) = a(\lambda^{-1} \ast x)$.
\end{prop}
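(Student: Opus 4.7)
The plan is to define $a$ using the observation that $\Omega_{\Sh{A}}$ is an invertible $\Sh{O}_L \otimes \Sh{O}_{\Ig{n}{r}}$-module, so that specifying a generator of $\gamma^*\Omega_{\Sh{A}}$ is the same as specifying an $\Sh{O}_L$-linear functional evaluating to $1$ on that generator. Concretely, given an $R$-point $(\gamma, \omega)$ of $\mathfrak{F}_{n,r,I}$ — where $\gamma\colon \Spf R \to \Ig{n}{r}$ and $\omega$ lifts $\gamma^*s$ — I will set $a(\gamma, \omega) := (\gamma, h_\omega)$, with $h_\omega$ the unique $\Sh{O}_L \otimes R$-linear map $\gamma^*\Omega_{\Sh{A}} \to \Sh{O}_L \otimes R$ sending $\omega$ to $1$.

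First I would check that $h_\omega$ is well defined and genuinely lies in $\V{\Sh{O}_L}(\Omega_{\Sh{A}}, s)(R)$. Since $\omega \equiv \gamma^*s \pmod{\beta_n}$ and $\gamma^*s$ generates $\gamma^*\Omega_{\Sh{A}}/\beta_n$ (because $\dlog$ is an isomorphism modulo $\beta_n$ onto $\Omega_{\Sh{A}}/\beta_n$), Nakayama's lemma identifies $\omega$ as an $\Sh{O}_L \otimes R$-basis, so $h_\omega$ exists uniquely. The relation $h_\omega(\gamma^*s) \equiv h_\omega(\omega) = 1 \pmod{\beta_n}$ is then immediate. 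Conversely, given $(\gamma, h) \in \V{\Sh{O}_L}(\Omega_{\Sh{A}}, s)(R)$, I would send it back to $(\gamma, h^{-1}(1))$; that $h^{-1}(1)$ lifts $\gamma^*s$ modulo $\beta_n$ follows from $h(\gamma^*s) \equiv 1 \pmod{\beta_n}$. These mutually inverse constructions are clearly functorial in $R$, so $a$ is an isomorphism of sheaves on $\catname{FSch}_{\Xra{r}}^{(\alpha)}$, hence of formal schemes.

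For compatibility with the $\mathfrak{T}$-action, recall $\lambda \in \mathfrak{T}(R)$ acts on $\mathfrak{F}_{n,r,I}$ by $\omega \mapsto \lambda\omega$ while fixing $\gamma$; the defining relation $h_{\lambda\omega}(\lambda\omega) = 1$ is equivalent to $h_{\lambda\omega}(\omega) = \lambda^{-1}$, so $a(\gamma, \lambda\omega) = (\gamma, \lambda^{-1} h_\omega) = \lambda^{-1}\ast a(\gamma, \omega)$, which gives the claimed twist. For the residual $\mathbb{T}(\Z_p)$-action over $\Xra{r}$, the point $(\gamma, \omega)$ transforms as $([\lambda^{-1}]\circ\gamma, \lambda\omega)$ (matching the rule $P \mapsto \bar\lambda P$ on the trivialization of $H_n^\vee$), while on the $\V{\Sh{O}_L}$-side the action is $(\gamma, h) \mapsto ([\lambda]\circ\gamma, \lambda h \circ \gamma_\lambda)$ with $\gamma_\lambda([\lambda]^*s) \equiv \bar\lambda^{-1}s \pmod{\beta_n}$. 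Applying $a$ to $\lambda^{-1}\ast (\gamma,\omega) = ([\lambda]\circ\gamma, \lambda^{-1}\omega)$ gives the functional sending $\omega \mapsto \lambda$, and one checks this agrees with $\lambda h_\omega \circ (\rho^*\gamma_\lambda)$ on the same source, by testing against the basis $\omega$ pulled back via $\rho^*\gamma_\lambda$.

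The main obstacle is purely bookkeeping: carefully matching the two $\mathbb{T}(\Z_p)$-actions, one of which is built from the translation $[\lambda]$ on $\Ig{n}{r}$ and the pullback isomorphism $\gamma_\lambda$, the other from the explicit rule $(P,\omega) \mapsto (\bar\lambda P, \lambda\omega)$, so that the inversion $\lambda \leftrightarrow \lambda^{-1}$ between source and target is correctly tracked. Once these identifications are spelled out, the equivariance is automatic from the $\Sh{O}_L$-linearity of $h_\omega$.
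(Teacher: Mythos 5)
Your proof is correct and follows the same approach as the paper's: the paper simply defines $a$ by $(P,\omega)\mapsto(P,\omega^{\vee})$, where $\omega^{\vee}$ is exactly your dual functional $h_\omega$. You supply the details the paper leaves implicit (the Nakayama argument showing $\omega$ is a basis, and the bookkeeping for the $\mathbb{T}(\Z_p)$-equivariance twist), but the construction is identical.
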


\begin{proof}
Define $a$ by sending a point $(P, \omega) \mapsto (P, \omega^{\vee})$. Then it is easy to check the rest of the claims.
\end{proof}

\begin{prop}
There is a natural isomorphism $\w{0} \simeq \w{0, \textrm{old}}$.
\end{prop}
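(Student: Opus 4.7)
The plan is to deduce everything from Proposition \ref{P239}. Since $a \colon \mathfrak{F}_{n,r,I} \xrightarrow{\sim} \V{\Sh{O}_L}(\Omega_{\Sh{A}}, s)$ is an isomorphism of formal schemes over $\Xra{r}$, pullback along $a$ induces an isomorphism of $\Sh{O}_{\Xra{r}}$-modules
\[
a^* \colon \nu_*\Sh{O}_{\V{\Sh{O}_L}(\Omega_{\Sh{A}}, s)} \xrightarrow{\sim} \nu'_*\Sh{O}_{\mathfrak{F}_{n,r,I}}.
\]
It remains to verify that this isomorphism identifies the $k$-isotypic subsheaf on the source with the $k^{-1}$-isotypic subsheaf on the target.

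I would do this directly from the interaction $\lambda \ast a(x) = a(\lambda^{-1} \ast x)$ of Proposition \ref{P239}. With the standard convention that the action on functions is $(\lambda \ast f)(y) := f(\lambda^{-1} \ast y)$, a section $f$ lies in $\w{0}$ exactly when $f(\lambda \ast y) = k(\lambda)^{-1} f(y)$ for every $\lambda \in \mathfrak{T}^{\textrm{ext}}$ (on appropriate test points). Then
\[
(a^*f)(\lambda \ast x) = f\bigl(a(\lambda \ast x)\bigr) = f\bigl(\lambda^{-1} \ast a(x)\bigr) = k(\lambda)\,(a^*f)(x),
\]
which is precisely the transformation rule $\lambda \ast (a^*f) = k(\lambda)^{-1}(a^*f)$ defining a section of $\w{0,\textrm{old}} = \nu'_*\Sh{O}_{\mathfrak{F}_{n,r,I}}[k^{-1}]$. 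The same computation run backwards using $a^{-1}$ shows the converse inclusion. Hence $a^*$ restricts to the desired isomorphism $\w{0} \xrightarrow{\sim} \w{0,\textrm{old}}$.

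The only real subtlety is bookkeeping the sign conventions for the character so that the $k$ appearing on the $\V{\Sh{O}_L}(\Omega_{\Sh{A}}, s)$ side matches the $k^{-1}$ on the $\mathfrak{F}_{n,r,I}$ side; this is forced by the inversion $\lambda \mapsto \lambda^{-1}$ built into $a$ in Proposition \ref{P239} and requires no further geometric input. The construction is manifestly natural in $\Xra{r}$ and compatible with the residual $(\Sh{O}_L \otimes \Z_p)^{\times}$-action used to descend from $\Ig{n}{r}$, so the isomorphism is global.
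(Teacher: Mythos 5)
Your proof is correct and takes essentially the same route as the paper: invoke the isomorphism $a$ of Proposition \ref{P239} to identify the pushforward structure sheaves and then check that the twist $\lambda \mapsto \lambda^{-1}$ built into $a$ converts the $k$-isotypic piece into the $k^{-1}$-isotypic piece. You simply spell out the equivariance computation that the paper leaves implicit.
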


\begin{proof}
Let $\nu \colon \V{\Sh{O}_L}(\Omega_{\Sh{A}}, s) \to \Xra{r}$ be the projection. Then the isomorphism induced by $a$, \[\nu_*\Sh{O}_{\V{\Sh{O}_L}(\Omega_{\Sh{A}},s)} \simeq \nu'_*\Sh{O}_{\mathfrak{F}_{n,r,I}}\]
induces an isomorphism $\w{0} \simeq \w{0,\textrm{old}}$ due to Proposition \ref{P239}.
\end{proof}

Here we recall an important result about the surjectivity of the specialization map for cusp forms. 

Let $\Xra{r}^* \to \mathfrak{W}^0_{\alpha,I}$ be the blow-up spaces constructed exactly as $\Xra{r}$ but now starting from the minimal compactification $M^*(\mu_N, \mathfrak{c})$. These are formal models for overconvergent neighbourhoods of $M^*(\mu_N, \mathfrak{c})$. Let $\mathfrak{M}^*_{r, \alpha,I} := \Xra{r}^*\times_{\mathfrak{W}^0_{\alpha,I}} \mathfrak{W}_{\alpha,I}$. There is a natural map $f \colon \bar{\mathfrak{M}}_{r,\alpha,I} \to \mathfrak{M}^*_{r,\alpha,I}$ induced by the projection $\bar{M}(\mu_N, \mathfrak{c}) \to M^*(\mu_N, \mathfrak{c})$. Let $\bar{\Sh{M}}_{r,\alpha,I}, \Sh{M}^*_{r,\alpha,I}$ be their adic generic fibre. Recall that $D$ was the boundary divisor of $\bar{M}(\mu_N, \mathfrak{c})$. By an abuse of notation we denote by $D$ its inverse image in $\bar{\mathfrak{M}}_{r,\alpha,I}$.

\begin{theorem}\label{T2302}
We have $R^if_*\w{}(-D) = 0$ for all $i > 0$. Let $g \colon \bar{\Sh{M}}_{r,\alpha,I} \to \Sh{W}_{\alpha,I}$ be the projection to the weight space. Then for any weight $\kappa \in \Sh{W}_{\alpha,I}$, 
\[
\kappa^*g_*\w{}(-D)[1/\alpha] = H^0(\bar{\Sh{M}}_{r,\alpha,I}, \kappa^*\w{}(-D)[1/\alpha])
\]
is the space of $r$-overconvergent Hilbert cuspforms of tame level $\mu_N$, $\mathfrak{c}$-polarization and weight $\kappa$.
\end{theorem}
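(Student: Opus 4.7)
The plan is to reduce the vanishing $R^if_*\w{}(-D) = 0$ to a local computation near the cusps of $\mathfrak{M}^*_{r,\alpha,I}$, and then deduce the base change statement via coherent cohomology and base change for the projective morphism $h \colon \mathfrak{M}^*_{r,\alpha,I} \to \mathfrak{W}_{\alpha,I}$.

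For the vanishing, I would first reduce to a statement about a line bundle. Since $\w{} = p^*\w{0} \otimes \w{\chi}$ with $p$ the flat base change from $\Xra{r}$ to $\bar{\mathfrak{M}}_{r,\alpha,I}$ and $\w{\chi}$ a coherent sheaf pulled back from $\Xra{r}$, and since $\w{0} = (h_n)_*\mathfrak{w}'_{k,\alpha,I}[k]$ comes from the finite map $h_n \colon \Ig{n}{r} \to \Xra{r}$, the question reduces to the vanishing of higher direct images of the line bundle $\mathfrak{w}'_{k,\alpha,I}(-D)$ along the composition $\Ig{n}{r} \to \Xra{r} \to \Xra{r}^*$. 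This is a local statement on the base $\Xra{r}^*$. Away from the cuspidal locus $f$ is an isomorphism, so only the behaviour near a cusp matters. There I would invoke the Mumford construction: the formal completion of the toroidal compactification along a cuspidal stratum is an open of a toric embedding over an affine base, the semi-abelian scheme degenerates to the Tate object $\text{Tate}_{\mathfrak{d}^{-1},\mathfrak{c}}(q)$, and $\Omega_{\Sh{A}}$ acquires a canonical trivialization from the invariant differential of the Tate torus, identifying the canonical subgroup $H_n$ with the corresponding $p^n$-torsion. Under this description, sections of $\mathfrak{w}'_{k,\alpha,I}$ expand as $q$-series indexed by $\mathfrak{c}$, and the twist by $-D$ imposes the vanishing of the $\xi = 0$ Fourier coefficient. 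The higher direct images then vanish by the standard toric cohomology computation, exactly as in the analogous arguments in \cite{andreatta2016adic} and \cite{Andreatta2018leHS}.

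For the base change, I would factor $g = h \circ f$. By the vanishing just established, Leray gives $g_*\w{}(-D) = h_*\bigl(f_*\w{}(-D)\bigr)$. The sheaf $f_*\w{}(-D)$ is coherent on $\mathfrak{M}^*_{r,\alpha,I}$, being the direct image of a coherent sheaf under a proper morphism, and $h$ is projective. Applying cohomology and base change for coherent sheaves on projective morphisms of Noetherian formal schemes (or, equivalently, their associated rigid-analytic generic fibres after inverting $\alpha$), the fibre $\kappa^* g_*\w{}(-D)[1/\alpha]$ is identified with $H^0\bigl(\bar{\Sh{M}}_{r,\alpha,I}, \kappa^*\w{}(-D)[1/\alpha]\bigr)$. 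By definition, this $H^0$ is the space of $r$-overconvergent Hilbert cuspforms of tame level $\mu_N$, $\mathfrak{c}$-polarization and weight $\kappa$. The main obstacle in this approach is the explicit local analysis near the cusps: one must verify that under the Mumford construction the vector-bundle-with-marked-sections formalism defining $\w{0}$ really does unwind to the expected $q$-expansion description, so that the $-D$ twist on the toroidal boundary cuts out precisely the vanishing constant Fourier coefficient condition. Once this local picture is correctly set up, both the vanishing of higher direct images and the base change are formal.
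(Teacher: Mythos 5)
You correctly identify the overall strategy: factor $g$ through the minimal compactification, use vanishing of $R^if_*\w{}(-D)$ for $i>0$ to collapse the Leray spectral sequence, and then pass from sections of $f_*\w{}(-D)$ on $\Sh{M}^*_{r,\alpha,I}$ to statements over the weight space. Your treatment of the vanishing is heavier than the paper's: the paper simply cites \cite[Corollary 3.20]{Andreatta2016Hilbert}, whereas you sketch an independent argument via the Mumford construction and $q$-expansions near the cusps. That is a legitimate route, though it amounts to reproving the cited result, and you correctly flag that verifying the VBMS-to-$q$-expansion translation near the boundary is where the real work lies.

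There is, however, a genuine error in the second step. You assert that $h \colon \mathfrak{M}^*_{r,\alpha,I} \to \mathfrak{W}_{\alpha,I}$ is projective, and then invoke cohomology and base change for the projective morphism $g = h \circ f$. But $h$ is not projective: $\Xra{r}^*$ is the open of the blow-up of the formal minimal compactification along $\mathrm{Hdg}_r$ where the inverse-image ideal is generated by $\mathrm{Hdg}^{p^{r+1}}$, and since the Hasse invariant is a section of a power of the ample determinant of the Hodge bundle, this locus is affine over the weight space; hence $\Sh{M}^*_{r,\alpha,I}$ is \emph{affinoid}, and $g$ itself is not proper. The standard cohomology-and-base-change package therefore does not apply to $g$ directly, and your argument as written has a gap. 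The decisive structural input — which the paper emphasises and you omit — is precisely this affinoidness of $\Sh{M}^*_{r,\alpha,I}$: it gives $R^ih_* = 0$ for $i>0$ on coherent sheaves, identifies $g_*\w{}(-D)[1/\alpha]$ with the finite module $H^0(\Sh{M}^*_{r,\alpha,I}, f_*\w{}(-D)[1/\alpha])$ over the affinoid weight algebra, and reduces the specialisation-at-$\kappa$ claim to a computation with modules over affinoid algebras. To repair the proof, replace the claimed projectivity of $h$ by the affinoidness of $\Sh{M}^*_{r,\alpha,I}$, use it to kill $R^ih_*$ for $i>0$, and combine with the vanishing of $R^if_*$ (for the genuinely projective morphism $f$) to conclude.
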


\begin{proof}
The first part follows from \cite[Corollary 3.20]{Andreatta2016Hilbert}. For the second part we remark that the projection $g$ factors through $\bar{\Sh{M}}_{r,\alpha,I} \xrightarrow{f} \Sh{M}^*_{r,\alpha,I} \to \Sh{W}_{\alpha,I}$, and $\Sh{M}^*_{r,\alpha,I}$ is affinoid. Then the claim follows from the first part.
\end{proof}

\subsection{Overconvergent arithmetic Hilbert modular forms}

Recall that we defined the notion of arithmetic Hilbert modular forms in \S\ref{2S121}. These were Hilbert modular forms associated to the group $G = \Res_{L/\Q}\mathbf{GL}_{2,L}$. With $\Gamma = \Sh{O}_L^{\times,+}/U_N^2$, we saw that the quotient $\bar{M}(\mu_N, \mathfrak{c}) \to \bar{M}(\mu_N, \mathfrak{c})/\Gamma$ is finite \'{e}tale. Given a classical weight $(v, w) \in \mathfrak{W}^G$ the sheaf of arithmetic Hilbert modular forms of tame level $\mu_N$, $\mathfrak{c}$-polarization and weight $(v, w)$ with coefficients in $R$ was defined to be $\underline{\omega}^{(v,w)}_R := (p_*\omega^k_{\Sh{A},R})^{\Gamma}$. The definition of overconvergent arithmetic Hilbert modular forms is given in a similar manner. We follow \cite{andreatta2016adic}.

Let $\Mbar{r}{G} := (\Mbar{r}{} \times_{\mathfrak{W}} \mathfrak{W}^G)/\Gamma$. By Proposition \ref{P211} the quotient map $p \colon \Mbar{r}{} \times_{\mathfrak{W}} \mathfrak{W}^G \to \Mbar{r}{G}$ is finite \'{e}tale. Consider the pullback of $\w{}$ along $f \colon \Mbar{r}{} \times_{\mathfrak{W}} \mathfrak{W}^G \to \Mbar{r}{}$. The action of $\Gamma$ on $\Mbar{r}{} \times_{\mathfrak{W}} \mathfrak{W}^G$ can be lifted to an action on $f^*\w{}$ as follows. Let $(v_{\alpha,I}, w_{\alpha,I}) \colon (\Sh{O}_L \otimes \Z_p)^{\times} \times \Z_p^{\times} \to (\Lambda_{\alpha,I} \hat{\otimes}_{\Lambda} \Lambda^G)^{\times} =: (\Lambda^G_{\alpha,I})^{\times}$ be the universal character. Viewing a section $g$ of $f^*\w{}$ by Koecher's principle as a rule that associates to any tuple $(A, \iota, \lambda, \psi, \omega)$ a value $g(A, \iota, \lambda, \psi, \omega) \in \Lambda^G_{\alpha,I}$, the action of $\epsilon$ is given by 
\[
(\epsilon \cdot g)(A, \iota, \lambda, \psi, \omega) = v_{\alpha,I}(\epsilon^{-1})g(A, \iota, \epsilon\lambda, \psi, \omega).
\]
\begin{defn}
The sheaf of $r$-overconvergent arithmetic Hilbert modular forms of tame level $\mu_N$, $\mathfrak{c}$-polarization and weight $k^G_{\alpha,I} := (v_{\alpha,I}, w_{\alpha,I})$ is defined to be 
\[
\w{G,\mathfrak{c}} := (p_*f^*\w{})^{\Gamma}.
\]
Similarly one defines the sheaf of $r$-overconvergent arithmetic Hilbert cuspforms of tame level $\mu_N$, $\mathfrak{c}$-polarization and weight $k^G_{\alpha,I}$ to be 
\[
\w{G,\mathfrak{c}}(-D) := (p_*f^*\w{}(-D))^{\Gamma}.
\]
\end{defn}

We have a surjectivity result about the specialization map of arithmetic Hilbert cuspforms analogous to Theorem \ref{T2302}.

Let $\bar{\Sh{M}}^G_{r,\alpha,I}$ be the adic generic fibre of $\Mbar{r}{G}$. Let $g \colon \bar{\Sh{M}}^G_{r,\alpha,I} \to \Sh{W}^G_{\alpha,I}$ be the projection to the weight space, where $\Sh{W}^G_{\alpha,I} = \Spa(\Lambda^G_{\alpha,I}[1/\alpha], \Lambda^G_{\alpha,I})$. 

\begin{theorem}
For any weight $k^G \in \Sh{W}^G_{\alpha,I}$, $(k^G)^*\w{G,\mathfrak{c}}(-D)[1/\alpha]$ is the space of $r$-overconvergent arithmetic Hilbert cuspforms of tame level $\mu_N$, $\mathfrak{c}$-polarization and weight $k^G$.
\end{theorem}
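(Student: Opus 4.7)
The plan is to reduce the assertion to Theorem~\ref{T2302} (the analogous surjectivity for geometric overconvergent cuspforms) using that the quotient $p \colon \Mbar{r}{} \times_{\mathfrak{W}} \mathfrak{W}^G \to \Mbar{r}{G}$ is finite étale with Galois group $\Gamma$. First I would unwind the definition
\[
(k^G)^*\w{G,\mathfrak{c}}(-D) \;=\; (k^G)^*\bigl(p_*f^*\w{}(-D)\bigr)^{\Gamma}.
\]
Because $p$ is finite étale and, after inverting $\alpha$, the group order $|\Gamma|$ is invertible, both $p_*$ and $(-)^{\Gamma}$ are exact and commute with base change. Consequently, writing $p_{k^G}$ for the base change of $p$ to the fibre over $k^G$,
\[
(k^G)^*\w{G,\mathfrak{c}}(-D)[1/\alpha] \;\simeq\; \bigl(p_{k^G,*}\,(k^G)^*f^*\w{}(-D)[1/\alpha]\bigr)^{\Gamma}.
\]

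Next, I would identify $(k^G)^*f^*\w{}$. The morphism $\mathfrak{W}^G \to \mathfrak{W}$ is induced by $t \mapsto (t^2, \Nm_{L/\Q}(t))$, so a classical weight $k^G = (v,w)$ maps to $\kappa := 2v + w\,t_L \in \mathfrak{W}$, and the obvious commutative diagram gives $(k^G)^*f^*\w{} = f^*\kappa^*\w{}$. Taking global sections, using that $p_{k^G}$ is finite (so $H^0$ commutes with $p_{k^G,*}$), the computation reduces to
\[
H^0\bigl(\bar{\Sh{M}}^G_{r,\alpha,I},(k^G)^*\w{G,\mathfrak{c}}(-D)[1/\alpha]\bigr) \;=\; H^0\bigl(\bar{\Sh{M}}_{r,\alpha,I},\kappa^*\w{}(-D)[1/\alpha]\bigr)^{\Gamma},
\]
and Theorem~\ref{T2302} identifies the inner space with the $r$-overconvergent geometric Hilbert cuspforms of weight $\kappa$.

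Finally, I would check that the $\Gamma$-action here matches the twisted action used classically. By construction, $\epsilon \in \Gamma$ acts on a section $g$ of $f^*\w{}$ by
\[
(\epsilon \cdot g)(A, \iota, \lambda, \psi, \omega) \;=\; v_{\alpha,I}(\epsilon^{-1})\, g(A, \iota, \epsilon\lambda, \psi, \omega),
\]
which upon specialization at $(v,w)$ becomes $\epsilon^{-v}\, g(A,\iota,\epsilon\lambda,\psi,\omega)$, exactly the twisted $\Gamma$-action used in \S\ref{2S121} to cut out arithmetic cuspforms of weight $(v,w)$ from geometric cuspforms of weight $\kappa$. The main technical point of the whole argument is the commutation of $p_*$ and $(-)^{\Gamma}$ with specialization at a single weight, and this is precisely why finite étaleness of $p$ (Proposition~\ref{P211}) is essential; once granted, the rest is a tracking of definitions combined with Theorem~\ref{T2302}.
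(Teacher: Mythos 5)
Your computation of the fiber is correct, but it misses the actual content of the theorem, which — as the paper's proof makes explicit by opening with ``we need to show that $R^ig_*\w{G,\mathfrak{c}}(-D)[1/\alpha] = 0$ for all $i>0$'' — is the surjectivity of the specialization map from the \emph{family} over $\Sh{W}^G_{\alpha,I}$ onto fixed-weight forms, i.e.\ that the base change map
\[
(k^G)^*\,g_*\,\w{G,\mathfrak{c}}(-D)[1/\alpha] \;\longrightarrow\; H^0\bigl(\bar{\Sh{M}}^{G}_{r,\alpha,I,k^G},\,(k^G)^*\w{G,\mathfrak{c}}(-D)[1/\alpha]\bigr)
\]
is an isomorphism. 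You identify the target correctly (the right-hand $H^0$ equals the $\Gamma$-invariants of the geometric cuspforms of weight $\kappa$, which is the arithmetic cuspform space), but you never address why the arrow itself is surjective. Commuting $p_*$ and $(-)^{\Gamma}$ with pullback to a fiber, which you highlight as the ``main technical point'', only computes the target sheaf; it does not give cohomology and base change for the projection $g$ to the weight space.

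The fix lies in pieces you already have. Because $p$ is finite, $p_*$ is exact; because $|\Gamma|$ is invertible after inverting $\alpha$, $(\cdot)^{\Gamma}$ is exact (it is cutting out by the idempotent $e=\frac{1}{\#\Gamma}\sum_{\epsilon}\epsilon$). Hence, writing $g'=g\circ p$,
\[
R^ig_*\w{G,\mathfrak{c}}(-D)[1/\alpha] \;=\; \bigl(R^ig_*p_*f^*\w{}(-D)\bigr)^{\Gamma}[1/\alpha] \;=\; \bigl(R^ig'_*f^*\w{}(-D)\bigr)^{\Gamma}[1/\alpha],
\]
and $g'$ is the flat base change along $\mathfrak{W}^G\to\mathfrak{W}$ of the $g$ in Theorem~\ref{T2302}, so the term vanishes for $i>0$ by that theorem. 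That vanishing is what grants the base change for $g_*$; only then is your identification of the fiber $H^0$ the final step rather than the whole proof.
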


\begin{proof}
We need to show that $R^ig_*\w{G,\mathfrak{c}}(-D)[1/\alpha] = 0$ for all $i > 0$. This follows from Theorem \ref{T2302} by noting that over the generic fibre applying the invariant functor $( \cdot )^{\Gamma}$ to a \v{C}ech resolution of $p_*f^*\w{}(-D)$ is exact since $( \cdot )^\Gamma$ is obtained by the application of the projector $e = \frac{1}{\#\Gamma} \sum_{\epsilon \in \Gamma}\epsilon$.
\end{proof}

Let $g_{\mathfrak{c}}$ denote the projection to the weight space $\mathfrak{W}^G_{\alpha,I}$ from the formal model $\Mbar{r}{G}$ corresponding to the moduli of abelian schemes with $\mathfrak{c}$-polarization. For $x \in L^{\times,+}$ coprime to $p$, consider the isomorphism $L_{(x\mathfrak{c}, \mathfrak{c})} \colon {g_{x\mathfrak{c}}}_*\w{G,x\mathfrak{c}} \xrightarrow{} {g_{\mathfrak{c}}}_*\w{G,\mathfrak{c}}$ given by
\[
L_{(x\mathfrak{c}, \mathfrak{c})}(f)(A, \iota, \lambda, \psi, \omega) = v_{\alpha,I}(x) f(A, \iota, x\lambda, \psi, \omega).
\]
The isomorphism depends only on the principal ideal $(x)$ and preserves cuspidality.

\begin{defn}\label{D358}
Define the sheaf of $r$-overconvergent arithmetic Hilbert modular forms of tame level $\mu_N$ and weight $k^G_{\alpha,I}$ to be
\[
\w{G} := \left(\bigoplus_{\mathfrak{c} \in \textrm{Frac}(L)^{(p)}} {g_{\mathfrak{c}}}_*\w{G,\mathfrak{c}} \right)/\left(L_{(x\mathfrak{c},\mathfrak{c})}(f) - f \right)_{x \in \textrm{Princ}(L)^{+,(p)}}.
\]
One defines similarly the subsheaf of $r$-overconvergent arithmetic Hilbert cuspforms, which we denote by $\w{G}(-D)$. (Note the $D$ in the notation does not have anything to do any boundary divisor, but we choose this notation to stay consistent with our previous notation when the polarization module was fixed and we were working over a fixed toroidal compactification.)
\end{defn}

We can analogously define arithmetic overconvergent de Rham sheaves. With notation as before, let $\gamma_{\epsilon} \colon \Mbar{r}{} \times_{\mathfrak{W}} \mathfrak{W}^G \to \Mbar{r}{} \times_{\mathfrak{W}} \mathfrak{W}^G$ be given by sending $(A, \iota, \lambda, \psi) \mapsto (A, \iota, \epsilon\lambda, \psi)$ for any $\epsilon \in \Gamma$. The identity $[1] \colon A \to A$ induces an isomorphism $[1]^* \colon f^*\W{} \to {\gamma_{\epsilon}}_*f^*\W{}$. Define the action of $\epsilon$ on $p_*f^*\W{}$ to be $\epsilon \cdot s = v_{\alpha, I}(\epsilon)[1]^*s$. 

\begin{defn}
    The sheaf of $r$-overconvergent arithmetic de Rham classes of tame level $\mu_N$, $\mathfrak{c}$-polarization and weight $k^G_{\alpha,I} = (v_{\alpha,I}, w_{\alpha, I})$ is defined to be 
    \[
    \W{G,\mathfrak{c}} := (p_*f^*\W{})^{\Gamma}.
    \]
    Denote by $\W{G,\mathfrak{c}}(-D)$ the subsheaf of cuspidal classes.
\end{defn}

For $x \in L^{\times, +}$ coprime to $p$, let $\gamma_x \colon \Mbar{r}{G, \mathfrak{c}} \to \Mbar{r}{G, x\mathfrak{c}}$ be given by sending $(A, \iota, \lambda\Sh{O}_L^{\times, +}, \psi) \mapsto (A, \iota, x\lambda\Sh{O}_L^{\times, +}, \psi)$. The identity $[1]\colon A \to A$ induces an isomorphism $[1]^* \colon \W{G,x\mathfrak{c}} \to {\gamma_x}_*\W{G, \mathfrak{c}}$. Define the map $L_{(x\mathfrak{c}, \mathfrak{c})} \colon {g_{x\mathfrak{c}}}_*\W{G,x\mathfrak{c}} \to {g_{\mathfrak{c}}}_*\W{G, \mathfrak{c}}$ by $L_{(x\mathfrak{c},\mathfrak{c}}(s) = v_{\alpha, I}(x^{-1})[1]^*s$. 

\begin{defn}\label{D459}
    Define the sheaf of $r$-overconvergent arithmetic de Rham classes of tame level $\mu_N$ and weight $k^G_{\alpha,I} = (v_{\alpha,I}, w_{\alpha,I})$ to be 
    \[
    \W{G} := \left(\bigoplus_{\mathfrak{c} \in \textrm{Frac}(L)^{(p)}} {g_{\mathfrak{c}}}_*\W{G,\mathfrak{c}} \right)/\left(L_{(x\mathfrak{c},\mathfrak{c})}(s) - s \right)_{x \in \textrm{Princ}(L)^{+,(p)}}.
    \]
    Denote by $\W{G}(-D)$ the subsheaf of cuspidal classes.
\end{defn}

\section{\texorpdfstring{$p$}{p}-adic iteration of the Gauss--Manin connection}\label{2S4}

In this section we will define iteration of the Gauss--Manin connection for analytic weights. For simplicity of notation we will ignore the log poles at the cusps. The first main result of this section is the following.

\begin{theorem*}
There is a filtration $\{\Fil{i}\}_{i\geq 0}$ on $\W{0}$ such that the graded pieces over the generic fibre are $\Gr{n}\W{0}[1/\alpha] \simeq \w{0}\otimes \Sym^n\omega_{\Sh{A}}^{-\otimes 2}[1/\alpha]$. There is a connection 
\[\nabla_k \colon \W{0}[1/\alpha] \to \W{0} \hat{\otimes} \Omega^1_{\Xra{r}/\Lambda^0_{\alpha,I}}[1/\alpha]\]
induced by the Gauss--Manin connection on $\mathrm{H}_{\Sh{A}}$, which satisfies Griffiths' transversality with respect to the filtration $\Fil{i}$ above. Moreover, it also induces a connection 
\[
\W{}[1/\alpha] \to \W{}\hat{\otimes}\Omega^1_{\bar{\mathfrak{M}}_{r,\alpha,I}/\Lambda_{\alpha,I}}[1/\alpha]
\]
that satisfies Griffiths' transversality with respect to the filtration defined by tensoring $\Fil{i}$ with $\w{\chi}$.
\end{theorem*}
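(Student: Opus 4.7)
The overall plan is to define the connection via Grothendieck's description of connections in terms of infinitesimal thickenings of the diagonal, then verify Griffiths' transversality using the filtration coming from the Hodge filtration on $\mathrm{H}_{\Sh{A}}$. The filtration $\{\Fil{i}\}_{i \geq 0}$ in the statement will be a coarsening of the finer filtration constructed in Theorem \ref{T2301}: that result produces a filtration indexed via the lexicographic bijection $\Xi \colon \N \simeq \N^g$ with graded pieces $\w{0} \otimes \eta^{-\ell(i)}(\widetilde{HW}\cdot \omega_{\Sh{A}}^{-2})^{\Xi(i)}$. After inverting $\alpha$ the ideals $\eta$ and $\widetilde{HW}$ become units (because $\det \widetilde{HW} = \Hdg{}$ is a unit on $\xra{r}$), so collecting all indices of equal total degree $\ell(i) = n$ yields a filtration $\Fil{n}\W{0}$ whose $n$-th graded piece, on the generic fibre, is the direct sum over multi-indices $(a_\sigma)_{\sigma \in \Sigma}$ with $\sum a_\sigma = n$ of $\w{0} \otimes \omega_{\Sh{A}}^{-2\sum a_\sigma \sigma}$; this is precisely $\w{0} \otimes \Sym^n \omega_{\Sh{A}}^{-\otimes 2}[1/\alpha]$ under the $\Sh{O}_L$-equivariant identification of symmetric powers with the sum of isotypic components.

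Next I would define $\nabla_k$. The Gauss-Manin connection on $\mathrm{H}_{\Sh{A}}$ corresponds, via Grothendieck's formalism, to an $\Sh{O}$-linear isomorphism $\epsilon \colon p_1^* \mathrm{H}_{\Sh{A}} \xrightarrow{\sim} p_2^*\mathrm{H}_{\Sh{A}}$ over the first infinitesimal neighbourhood $\Delta_1$ of the diagonal in $\Xra{r} \times \Xra{r}$, restricting to the identity on the diagonal. After inverting $\alpha$ we have $\Hs{A}[1/\alpha] = \mathrm{H}_{\Sh{A}}[1/\alpha]$, so $\epsilon$ is automatically defined on $\Hs{A}[1/\alpha]$. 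To promote it to an automorphism of the structure sheaf of $\V{\Sh{O}_L}(\Hs{A}, s, \Sh{Q})$ pulled back to $\Delta_1$, I would work locally on a trivializing open $\Spf{R} \subset \Ig{n}{r}$ with coordinates $(Z_\sigma, W_\sigma)$ as in \eqref{eq:5}. On such an open, $\epsilon$ acts on the basis $\{X_\sigma, Y_\sigma\}$ of $\Hs{A}$ via $\epsilon(X_\sigma) = X_\sigma + \nabla_{GM}(X_\sigma)$ and $\epsilon(Y_\sigma) = Y_\sigma + \nabla_{GM}(Y_\sigma)$ modulo $I^2$, where $I$ is the ideal of the diagonal; the defect term $\nabla_{GM}(X_\sigma)$ is an element of $\mathrm{H}_{\Sh{A}} \otimes I/I^2 = \mathrm{H}_{\Sh{A}} \otimes \Omega^1_{\Xra{r}/\Lambda^0_{\alpha,I}}$. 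Translating this into the local coordinates $(Z_\sigma, W_\sigma)$ of the VBMS produces a derivation valued in $\Omega^1$, which is the desired connection $\nabla_k$ on $\W{0}[1/\alpha]$.

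The main technical obstacle will be the previous step: showing that the Taylor expansion isomorphism $\epsilon$ descends to an action on $\V{\Sh{O}_L}(\Hs{A}, s, \Sh{Q})$ compatible with the marked section $s$ and marked splitting $\Sh{Q}$, in a way that glues across local trivializations. Although $\nabla_{GM}(s) \neq 0$ in general, the point is that after inverting $\alpha$ the defects $\epsilon(p_1^* s) - p_2^* s$ and $\epsilon(p_1^* \Sh{Q}) - p_2^* \Sh{Q}$ lie in the ideal generated by $\alpha$ times the coordinate directions $W_\sigma$ and $Z_\sigma$ of the VBMS, which is sufficient to get a well-defined translation action on the formal neighbourhood of the diagonal. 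The gluing of the local formula then follows from the intrinsic characterization of $\epsilon$ in terms of $\nabla_{GM}$ and the construction of the filtration in Theorem \ref{T2301}, both of which are manifestly independent of the chosen local trivialization.

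Griffiths' transversality, i.e. $\nabla_k(\Fil{n}\W{0}[1/\alpha]) \subset \Fil{n+1}\W{0} \hat{\otimes} \Omega^1[1/\alpha]$, will follow from the explicit description of the filtration: $\Fil{n}$ corresponds to polynomials in the $W_\sigma$ of total degree at most $n$, and the derivation induced by $\nabla_{GM}$ can raise the $W$-degree by at most one, because by Griffiths' transversality for $\nabla_{GM}$ on $\mathrm{H}_{\Sh{A}}$, $\nabla_{GM}(\omega_{\Sh{A}}) \subset \mathrm{H}_{\Sh{A}} \otimes \Omega^1$, so lifts of the marked section $X_\sigma$ go into $\Fil{1}$-pieces after differentiation. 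Finally, the extension to $\W{} = p^*\W{0} \otimes \w{\chi}$ is immediate: $\w{\chi}$ is pulled back via the finite étale cover $\Ig{i}{r} \to \Xra{r}$, hence admits a canonical integrable connection, and tensoring with this connection extends $\nabla_k$ to $\W{}[1/\alpha]$; since the filtration on $\W{}$ is defined by tensoring with $\w{\chi}$, Griffiths' transversality transports directly.
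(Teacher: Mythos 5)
Your high-level strategy matches the paper's: the filtration is the total $W$-degree coarsening of the lexicographic filtration of Theorem~\ref{T2301} (this is exactly what Lemma~\ref{L2401} does, and your description of the graded pieces is right), the connection comes from Grothendieck's $\epsilon^\sharp$ formalism, Griffiths' transversality follows from the fact that the local formula can raise the $W$-degree by at most one, and the passage to $\W{}$ is by tensoring with the connection on $\w{\chi}$. All of that is sound.

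The gap is in the step you yourself flag as the main obstacle, and the rationale you offer does not fill it. The observation $\Hs{A}[1/\alpha]=\mathrm{H}_{\Sh{A}}[1/\alpha]$ gives you $\epsilon^\sharp$ rationally on $\Hs{A}$, but this is not enough, for two reasons. First, $\W{0}$ is built from the \emph{formal} scheme $\V{\Sh{O}_L}(\Hs{A},s,\Sh{Q})$, whose local coordinate ring $R\langle Z_\sigma,W_\sigma\rangle$ is the $\alpha$-adic Tate algebra; the conditions $h(s)\equiv 1\bmod\beta_n$, $h(\Sh{Q})\equiv 0\bmod\eta$ are integral congruences that become vacuous after inverting $\alpha$, so a merely rational $\epsilon^\sharp$ on $\mathrm{H}_{\Sh{A}}$ gives no map of VBMS functors. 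Second, for the resulting $\nabla_k$ to land in $\W{0}\hat\otimes\Omega^1[1/\alpha]$ (with controlled denominators, i.e.\ convergent power series in $V_\sigma$) one needs the matrix of $\epsilon^\sharp$ in a basis $\{e,f\}$ adapted to $s$ and $\Sh{Q}$ to have entries in $\Sh{O}_L\otimes R^{(1)}$ \emph{integrally}; the rational identity alone would permit uncontrolled denominators in the coefficients $a_\sigma,b_\sigma,c_\sigma,d_\sigma$, which would break the well-definedness of $\epsilon^\sharp_0(V_\sigma)$ and the formula (\ref{eq:11}). Your assertion that "the defects lie in the ideal generated by $\alpha$ times the coordinate directions" is exactly the content of Proposition~\ref{P401}, but it is a theorem, not an observation: its proof uses (i) the passage to the full Igusa tower $\Ig{n}{r}'$ (trivializing all of $\Sh{A}[p^n]^\vee$, not just $H_n^\vee$) so that dlog functoriality and isotriviality of $\mu_{p^n}$ give $\nabla(\dlog P^{\mathrm{univ}})\equiv 0\bmod\beta_n$, (ii) the height-one $p$-th power property of $\underline\xi(i_j)\widetilde{HW}(i_j)$ (Lemma~\ref{L403}) to show $\nabla(\Hs{A})\subset\Hs{A}\hat\otimes\Omega^1$ integrally, and (iii) a local matrix computation to show $\nabla(\Sh{Q})\subset\Sh{Q}\otimes\Omega^1\bmod\eta$. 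None of these appear in your sketch, and without them the construction does not go through. You also omit the descent from $\Ig{n}{r}$ down to $\Xra{r}$, which requires checking $(\Sh{O}_L\otimes\Z_p)^\times$-equivariance of $\epsilon^\sharp_0$ (Lemma~\ref{L2403}).
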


\subsection{The Gauss--Manin connection on \texorpdfstring{$\mathrm{H}^{\sharp}_{\Sh{A}}$}{HA}}

Let $\Sh{IG}'_{n, r, I} \to \Sh{IG}_{n, r, I}$ be the analytic adic space classifying trivializations $(\Sh{O}_L/p^n\Sh{O}_L)^2 \xrightarrow{\sim} \Sh{A}[p^n]^{\vee}$ compatible with the trivializations $\Sh{O}_L/p^n \Sh{O}_L \xrightarrow{\sim} H_n^{\vee}$. Let $\mathfrak{IG}'_{n,r,I} \to \Ig{n}{r}$ be the normalization. 

\begin{prop}\label{P401}
The Gauss--Manin connection on $\mathrm{H}_{\Sh{A}}$ over $\Ig{n}{r}'$ restricts to a connection 
\[
\nabla \colon \mathrm{H}^{\sharp}_{\Sh{A}} \to \mathrm{H}^{\sharp}_{\Sh{A}} \hat{\otimes} \Omega^1_{\Ig{n}{r}'/\Lambda^0_{\alpha,I}}
\]
such that $(\nabla \textrm{ mod }\beta_n)(s) = 0$ and $(\nabla \textrm{ mod } \eta)(\Sh{Q}) \subset \Sh{Q} \otimes \Omega^1_{\Ig{n}{r}'/\Lambda^0_{\alpha,I}}/(\eta)$.
\end{prop}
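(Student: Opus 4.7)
The statement has three parts: (a) the Gauss--Manin connection restricts to $\Hs{A}$; (b) the marked section $s = \dlog(P^{\textrm{univ}})$ satisfies $\nabla(s) \equiv 0 \pmod{\beta_n}$; and (c) the modified unit root subspace $\Sh{Q}$ is preserved modulo $\eta = p\Hdg{-p^2}$. The plan is to establish (b) and (c) first, as these are the substantive horizontality statements, and then combine them with Griffiths transversality to deduce (a). The reason for passing to $\mathfrak{IG}'_{n,r,I}$ rather than $\mathfrak{IG}_{n,r,I}$ is that we need a trivialization of the full $p^n$-torsion, not just of the dual of the canonical subgroup, in order to invoke horizontality of $\dlog$ of universal torsion sections.

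For (b), the key input is that classes coming from $p^n$-torsion via $\dlog$ are horizontal for the Gauss--Manin connection modulo $p^n$. On $\mathfrak{IG}'_{n,r,I}$, the universal trivialization lifts $P^{\textrm{univ}}$ to a section $\widetilde{P}\in\Sh{A}[p^n]^{\vee}$. The $\dlog$ map extends naturally to a map $\Sh{A}[p^n]^{\vee} \to \mathrm{H}_{\Sh{A}}/p^n$ compatible with the Hodge projection to $\omega_{\Sh{A}}/p^n$; this is seen either directly by lifting $\widetilde{P}$ to the universal cover of the $p$-divisible group and taking its logarithm, or crystal-theoretically via the identification of $T_p(\Sh{A}^{\vee})$ with a horizontal sublattice of crystalline cohomology. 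In either description, $\nabla(\dlog(\widetilde{P})) = 0$ in $\mathrm{H}_{\Sh{A}}/p^n \otimes \Omega^1$. Since $\beta_n = p^n\Hdg{-p^n/(p-1)}$ divides $p^n$ after inverting $\mathrm{Hdg}$ and $\Omega_{\Sh{A}}/\beta_n\omega_{\Sh{A}}$ is the natural target of $\dlog$ from $H_n^{\vee}$, projecting gives $\nabla(s)\equiv 0 \pmod{\beta_n}$.

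For (c), the plan is to invoke Corollary \ref{C314}, which identifies $\Sh{Q}$ with the kernel of Verschiebung modulo $p\Hdg{-(p+1)}$, equivalently with the image of the Frobenius pullback on de Rham cohomology. Working locally, fix the lift $\lambda\colon \Sh{A}\to \Sh{A}/H_1$ of relative Frobenius with companion $\lambda'$ satisfying $\lambda\circ\lambda' = [p]$. The pullback $\lambda^*$ on de Rham cohomology commutes with $\nabla$ as a pullback along a morphism. Combined with the $p$-divisibility of the differential of Frobenius on the cotangent bundle, a Dwork-type calculation yields $\nabla(\im\lambda^*) \subset (\im\lambda^*)\otimes \Omega^1 \pmod{\eta}$. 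Translating to the Igusa tower, compatibly with the $\Sh{O}_L$-action, gives the claim after identifying $\im\lambda^*$ with $\Sh{Q}$ modulo $\eta$.

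Finally for (a), the plan is to work locally and exploit the explicit description: $\Hs{A}$ is generated over $\Sh{O}_L\otimes \Sh{O}_{\Ig{n}{r}'}$ by a lift $\widetilde{s}$ of the marked section together with a local generator $\widetilde{v}$ lying in a lift of $\Sh{Q}$. By (b), $\nabla(\widetilde{s})\in \beta_n\mathrm{H}_{\Sh{A}}\otimes \Omega^1$; by (c) combined with Griffiths transversality, $\nabla(\widetilde{v})\in \Hs{A}\otimes\Omega^1 + \eta\mathrm{H}_{\Sh{A}}\otimes\Omega^1$. Both $\beta_n\mathrm{H}_{\Sh{A}}$ and $\eta\mathrm{H}_{\Sh{A}}$ are contained in $\Hs{A}$ for the ranges of $n,r$ fixed at the start of \S\ref{S3}, so $\nabla(\Hs{A})\subset \Hs{A}\otimes\Omega^1$. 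The main obstacle I anticipate is tracking the precise powers of $\mathrm{Hdg}$ throughout---especially in (c), where the Dwork-style calculation must be matched with the constant $\eta = p\Hdg{-p^2}$---and verifying that the Frobenius argument respects the $\Sh{O}_L$-decomposition in a way compatible with the partial-$\sigma$ refinement needed later in \S\ref{2S44}.
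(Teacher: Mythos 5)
Your proposal has the right broad strategy but contains a genuine gap, concentrated in part (c), and this gap then propagates into your route for (a).

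\paragraph{Part (b).} Your argument here essentially agrees with the paper: one passes to $\Ig{n}{r}'$ to trivialize the full $p^n$-torsion, uses functoriality of $\dlog$ together with isotriviality of $\mu_{p^n}$ to see that $\dlog(P^{\textrm{univ}}_n)$ is horizontal for the connection on the invariant differentials of the universal vector extension of $\Sh{A}[p^n]^{\vee}$, and then reduces. That $\beta_n \supset (p^n)$ is what makes the reduction work. This part is fine.

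\paragraph{Part (c): the Dwork argument does not give the stated result.} You invoke Corollary~\ref{C314} to identify $\Sh{Q}$ with ``the image of the Frobenius pullback on de Rham cohomology,'' and then claim $\nabla(\im\lambda^*) \subset \im\lambda^* \otimes \Omega^1 \pmod{\eta}$. Two problems. First, $\lambda^* \colon \mathrm{H}_{\Sh{A}'} \to \mathrm{H}_{\Sh{A}}$ is an injective map of full-rank lattices (its composition with $(\lambda')^*$ is multiplication by $p$), so $\im\lambda^*$ is a rank-$2g$ sublattice, not a rank-$1$ submodule. Computing its matrix from the matrix of $(\lambda')^*$ displayed in the proof of Proposition~\ref{P232}, one sees that modulo $\eta$ the image of $\lambda^*$ is contained in $\widetilde{HW}\tilde F^*(\widetilde{HW})\cdot\Sh{Q}$ plus torsion in the $\Omega_{\Sh{A}}$-direction, which is a strictly smaller ideal-sheaf than $\Sh{Q}$ itself since $\widetilde{HW}$ is not a unit mod $\eta$. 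So the identification ``$\im\lambda^*\equiv\Sh{Q}\pmod\eta$'' fails, and the Dwork-type functoriality statement about $\im\lambda^*$ cannot simply be transferred to $\Sh{Q}$ without dividing by a non-unit. Second, and more fundamentally, the divisibility input the paper actually needs is not the generic $p$-divisibility of $d\tilde F$ but a much more specific fact: the third equality in eq.~(\ref{eq:9}) combined with part (b) shows that the image of $\Omega^1_{\Xra{r}/\Lambda^0_{\alpha,I}}$ in $\Omega^1_{\Ig{n}{r}'/\Lambda^0_{\alpha,I}}$ lands inside $\beta_n\Omega^1_{\Ig{n}{r}'/\Lambda^0_{\alpha,I}}$, hence each Kodaira--Spencer class $\Theta_\sigma$ as well as $\mathrm{d}\xi(\sigma)$, $\dlog(\xi(\sigma)HW(\sigma))$ are all $\beta_n$-divisible over $\Ig{n}{r}'$. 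This is the entire reason the proof is run on $\Ig{n}{r}'$ rather than $\Ig{n}{r}$ for part (c)---not merely, as you say, to trivialize the $p^n$-torsion for part (b). Once you have this, the explicit expansion of $\nabla(f_\sigma)$ in eq.~(\ref{eq:10}) (all of whose coefficients on $e_\sigma$ and $f_\sigma$ are built out of those $\beta_n$-divisible terms and $\nabla(C_\sigma e_\sigma)$, which is $\beta_n$-divisible by (b) and Leibniz) immediately gives $\nabla(f_\sigma)\equiv 0\pmod\eta$; no Dwork trick is needed, and you get the stronger conclusion that $\nabla$ kills the generator of $\Sh{Q}$ mod $\eta$ rather than merely preserving $\Sh{Q}$.

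\paragraph{Part (a).} You propose to deduce it from (b) and (c) together with the inclusions $\beta_n\mathrm{H}_{\Sh{A}},\,\eta\mathrm{H}_{\Sh{A}}\subset\Hs{A}$. Those inclusions are verifiable in the stated ranges of $(n,r,I)$ using $\underline{\xi}(\sigma)\widetilde{HW}(\sigma)\supset\Hdg{p/(p-1)}$, so in principle your alternative route could close once (c) is repaired. But note that the paper argues (a) entirely independently of (c): by Leibniz, showing $\nabla(\underline{\xi}\widetilde{HW}\mathrm{H}_{\Sh{A}})\subset\Hs{A}\otimes\Omega^1$ reduces to controlling $\mathrm{d}(\underline{\xi}(i_j)\widetilde{HW}(i_j))$, and this is where Lemma~\ref{L403}---that $\underline{\xi}(i_j)\widetilde{HW}(i_j)=\tilde F^*(\underline{\xi}(i_{j-1}))$ is a $p$-th power at all height-$1$ localizations---enters, forcing the differential to be divisible by $p$, which then lies inside $\underline{\xi}(i_j)\widetilde{HW}(i_j)$. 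You make no mention of this lemma. Since your route for (a) currently rests on a version of (c) that is not established, and (c) is logically the hardest part, you should either fix (c) along the lines above or adopt the paper's Leibniz-plus-Lemma~\ref{L403} argument for (a) and only then prove (c).
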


\begin{proof}
We have a commutative diagram as follows coming from the functoriality of the $\dlog$ map. 
\[
\begin{tikzcd}
(\Sh{O}_L/p^n\Sh{O}_L)^2 \arrow[d, "\simeq"] \arrow[r, "\dlog"] & \omega_{\mu_{p^n}^{2}} \otimes \mathfrak{d}^{-1} \arrow[d] \\
{\Sh{A}[p^n]^{\vee}(\Ig{n}{r}')} \arrow[r, "\dlog"]   & \omega_{\Sh{A}}/p^n     
\end{tikzcd}
\]
The bottom arrow $\dlog_{\Sh{A}[p^n]^{\vee}}$ composed with the projection on to $\omega_{\Sh{A}}/\beta_n \simeq \omega_{H_n}/\beta_n$ factors through $\Sh{A}[p^n]^{\vee} \to H_n^{\vee} \xrightarrow{\dlog} \omega_{H_n}$. The connection on $\mathrm{H}_{\Sh{A}}$ modulo $p^n$ is the connection $\bar{\nabla}$ on the invariant differentials of the universal vector extension of $\Sh{A}[p^n]^{\vee}$. Since $\mu_{p^n}$ is isotrivial, the functoriality of the Gauss--Manin connection and the commutativity of the diagram shows that $\bar{\nabla}(\dlog(P^{\textrm{univ}}_n)) = 0$. This shows that $\nabla(\Omega_{\Sh{A}}) = 0 \textrm{ mod } \beta_n\mathrm{H}_{\Sh{A}} \hat{\otimes} \Omega^1_{\Ig{n}{r}'/\Lambda^0_{\alpha,I}}$. In particular $\nabla(\Omega_{\Sh{A}}) \subset \mathrm{H}^{\sharp}_{\Sh{A}} \hat{\otimes} \Omega^1_{\Ig{n}{r}'/\Lambda^0_{\alpha,I}}$.

On the other hand since $\mathrm{H}^{\sharp}_{\Sh{A}} = \Omega_{\Sh{A}} + \underline{\xi}\widetilde{HW}\cdot \mathrm{H}_{\Sh{A}}$ we are left to show that $\nabla$ maps $\underline{\xi}\widetilde{HW}\cdot\mathrm{H}_{\Sh{A}}$ to $\mathrm{H}^{\sharp}_{\Sh{A}}$. Since the Gauss--Manin connection is functorial, it commutes with the splitting of $\mathrm{H}_{\Sh{A}}$ into the $g$ different components. Recalling the notation from Corollary \ref{C233} we need to show that 
\[
\nabla\left(\underline{\xi}(i_j)\widetilde{HW}(i_j)\cdot\mathrm{H}_{\Sh{A}}(i_j)\right) \subset \mathrm{H}^{\sharp}_{\Sh{A}}(i_j) \hat{\otimes} \Omega^1_{\Ig{n}{r}'/\Lambda^0_{\alpha,I}}.
\]
To show this note that by Lemma \ref{L403}, $\underline{\xi}(i_j)\widetilde{HW}(i_j) = \tilde{F}^*(\underline{\xi}(i_{j-1}))$ is a $p$-th power at all height 1 localizations. Thus $\nabla$ maps it to $\mathrm{H}^{\sharp}_{\Sh{A}} + p\mathrm{H}_{\Sh{A}}$. Since $p \in \underline{\xi}(i_j)\widetilde{HW}(i_j)$ for all $i_j$ we conclude. This proves the first two claims of the proposition. Now we show that $(\nabla \textrm{ mod } \eta)(\Sh{Q}) \subset \Sh{Q} \otimes \Omega^1_{\Ig{n}{r}'/\Lambda^0_{\alpha,I}}/(\eta)$.

Let $\Spf{R_0} \subset \Xra{r}$ be an open  such that $\mathrm{H}_{\Sh{A}}$ is trivialized over $\Spf{R}$ as $\Sh{O}_L \otimes R_0$-modules. Let $\Spf{R} \subset \Ig{n}{r}$ and $\Spf{R}' \subset \Ig{n}{r}'$ be its inverse image in $\Ig{n}{r}$ and $\Ig{n}{r}'$ respectively. Assume also that $\Hs{A}$ is trivialized over $\Spf{R}$. Pick $\Sh{O}_L \otimes R_0$-basis $\omega, \zeta$ of $\mathrm{H}_{\Sh{A}}$ such that $e := \xi\omega$ is a lift of $s$ for some local generator $\xi$ of $\underline{\xi}$ and such that $f := Ce + (\xi HW)\zeta$ is a lift of a generator of $\Sh{Q}$ for some local generator $HW$ of $\widetilde{HW}$ and some $C \in \Sh{O}_L \otimes R$ as in the proof of Proposition \ref{P232}. Assume also that the image of $\zeta$ in $\omega_{\Sh{A}}^{\vee}$ is the dual of $\omega$. 

Let $\omega_{\sigma}, \zeta_{\sigma}$ be the $\sigma$-components of $\omega$ and $\zeta$. Let $\Theta_{\sigma} = KS(\omega_{\sigma}, \zeta_{\sigma})$ be the Kodaira--Spencer class in $\Omega^1_{\Xra{r}/\Lambda^0_{\alpha, I}}$ corresponding to the image of $\omega_{\sigma}^{\otimes 2}$ under the Kodaira--Spencer isomorphism
\[
KS \colon \omega_{\Sh{A}}^{\otimes 2} \xrightarrow{\sim} \Omega^1_{\Xra{r}/\Lambda^0_{\alpha, I}}.
\]
The $\Theta_{\sigma}$ thus form a basis for $\Omega^1_{\Xra{r}/\Lambda^0_{\alpha, I}}$. Suppose that
\begin{align}
    \nabla(\omega_{\sigma}) &= \sum_{\tau} \omega_{\sigma} \otimes \alpha^{\sigma}_{\tau} \Theta_{\tau} + \zeta_{\sigma} \otimes \Theta_{\sigma} \\
    \nabla(\zeta_{\sigma}) &= \sum_{\tau} \omega_{\sigma} \otimes \beta^{\sigma}_{\tau} \Theta_{\tau} + \sum_{\tau} \zeta_{\sigma} \otimes \gamma^{\sigma}_{\tau} \Theta_{\tau}
\end{align}
for $\alpha^{\sigma}_{\tau}, \beta^{\sigma}_{\tau}, \gamma^{\sigma}_{\tau} \in \Sh{O}_{\Xra{r}}$.
Therefore we have
\begin{align}\label{eq:9}
\begin{split}
    \nabla(e_{\sigma}) &= \nabla(\xi({\sigma})\cdot \omega_{\sigma}) \\
    &= \xi(\sigma) \sum_{\tau} \omega_{\sigma} \otimes \alpha^{\sigma}_{\tau} \Theta_{\tau} + \xi(\sigma)\cdot \zeta_{\sigma}\otimes \Theta_{\sigma} + \xi(\sigma)\cdot \omega_{\sigma} \otimes \dlog\xi(\sigma) \\
    &= \omega_{\sigma} \otimes \left(\xi(\sigma)\sum_{\tau}\alpha^{\sigma}_{\tau}\Theta_{\tau} + \mathrm{d}\xi(\sigma) \right) + \zeta_{\sigma} \otimes \xi(\sigma)\Theta_{\sigma} \\
    &= e_{\sigma} \otimes \left(\sum_{\tau}  \alpha^{\sigma}_{\tau}\Theta_{\tau} + \dlog \xi(\sigma) - \frac{C_{\sigma}\Theta_{\sigma}}{HW(\sigma)}\right) + f_{\sigma} \otimes \frac{\Theta_{\sigma}}{HW(\sigma)}
\end{split}
\end{align}
\begin{align}\label{eq:10}
\begin{split}
    \nabla(f_{\sigma}) &= \nabla(C_{\sigma} e_{\sigma} + \xi(\sigma)HW(\sigma)\cdot\zeta_{\sigma}) \\
    &= \nabla(C_{\sigma}e_{\sigma}) +  \xi(\sigma)HW(\sigma) \sum_{\tau} \omega_{\sigma} \otimes \beta^{\sigma}_{\tau}\Theta^{\tau} + \xi(\sigma)HW(\sigma) \sum_{\tau} \zeta_{\sigma} \otimes \gamma^{\sigma}_{\tau} \Theta_{\tau} \\ &+ \xi(\sigma)HW(\sigma)\cdot \zeta_{\sigma} \otimes \dlog (\xi(\sigma)HW(\sigma)) \\
    &= \nabla(C_{\sigma}e_{\sigma}) + e_{\sigma} \otimes \left(\sum_{\tau} HW(\sigma) \beta^{\sigma}_{\tau} \Theta_{\tau} - C_{\sigma}\sum_{\tau}\gamma^{\sigma}_{\tau}\Theta_{\tau} - \dlog(\xi(\sigma)HW(\sigma))  \right) \\
    &+ f_{\sigma} \otimes \left( \sum_{\tau} \gamma^{\sigma}_{\tau} \Theta_{\tau} + \dlog(\xi(\sigma)HW(\sigma))\right)
\end{split}
\end{align}

Now the proof of the previous part of the proposition shows that over $\Ig{n}{r}'$, $\nabla(\Omega_{\Sh{A}}) \subset \beta_n\mathrm{H}_{\Sh{A}} \otimes \Omega^1_{\Ig{n}{r}'/\Lambda^0_{\alpha,I}}$. Then the third equality of (\ref{eq:9}) implies that $\Theta_{\sigma} \in \beta_n\Omega^1_{\Ig{n}{r}'/\Lambda^0_{\alpha,I}}$ for all $\sigma$. In other words the image of $\Omega^1_{\Xra{r}/\Lambda^0_{\alpha,I}} \to \Omega^1_{\Ig{n}{r}'/\Lambda^0_{\alpha,I}}$ is contained in $\beta_n\Omega^1_{\Ig{n}{r}'/\Lambda^0_{\alpha,I}}$. Moreover the same is true of $\mathrm{d}\xi(\sigma)$. This implies together with the explicit formula of (\ref{eq:10}) that $\nabla(f_{\sigma}) \equiv 0 \textrm{ mod } (\eta)$, proving the final part of the proposition.
\end{proof}

\subsection{The Gauss--Manin connection on \texorpdfstring{$\W{0}$}{W}}\label{2S42}

Recall that the universal character $k = k^0_{\alpha,I}$ is analytic on $1 + p^{n-1}(\Sh{O}_L \otimes \Z_p)$. In particular there are $u_{\sigma} \in p^{1-n}\Lambda^0_{\alpha,I}$ such that $k_{\sigma}(t) = \exp(u_{\sigma}\log(t))$ for all $t \in 1 + \beta_n \Ga$. In this section we will define the connection on $\W{0}$. We would like to have Griffith's transversality for some filtration on $\W{0}$. But the filtration given in Theorem \ref{T2301}, i.e. the filtration given by lexicographic ordering for a choice of numbering of the set $\Sigma$, doesn't satisfy this. Thus as promised before, here we define the Hodge filtration on $\W{0}$.

\begin{lemma}\label{L2401}
Let $\rho' \colon \V{\Sh{O}_L}(\Hs{A}, s, \Sh{Q}) \to \Ig{n}{r}$ and $h_n \colon \Ig{n}{r} \to \Xra{r}$ be the projections. Then the filtration on $\rho'_*\Sh{O}_{\V{\Sh{O}_L}(\Hs{A}, s, \Sh{Q})}$ defined on local coordinates $\Spf{R} \subset \Ig{n}{r}$ by 
\[
\Fil{i}\left(\rho'_*\Sh{O}_{\V{\Sh{O}_L}(\mathrm{H}^{\sharp}_{\Sh{A}}, s, \Sh{Q})}(\Spf{R})\right) = \Fil{i}R\langle \{Z_{\sigma}, W_{\sigma}\}_{\sigma \in \Sigma}\rangle := \bigoplus_{j=0}^i R\langle \{Z_{\sigma}\}_{\sigma \in \Sigma}\rangle \otimes_R \Sym^j{R[W_{\sigma}]_{\sigma \in \Sigma}}
\]
is well-defined. Moreover, $(h_n)_*\Fil{i}$ is stable under the action of $\mathfrak{T}^{\textrm{ext}}$ for all $i$. In particular it induces a filtration on $\W{0}$, by defining $\Fil{i}\W{0} := \Fil{i}\left(\rho_*\Sh{O}_{\V{\Sh{O}_L}(\mathrm{H}^{\sharp}_{\Sh{A}}, s, \Sh{Q})}\right)[k]$, where $\rho = h_n \circ \rho'$. This is defined to be the Hodge filtration. The graded pieces over the generic fibre are $\Gr{i}\W{0}[1/\alpha] \simeq \w{0}\otimes \Sym^i\omega_{\Sh{A}}^{-\otimes 2}[1/\alpha] \simeq \w{0}\otimes \Sym^i(\oplus_{\sigma} \omega_{\Sh{A}}(-2\sigma))[1/\alpha]$.
\end{lemma}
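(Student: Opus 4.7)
The plan is to check first that the proposed $W$-degree filtration on the local ring $R\langle\{Z_\sigma, W_\sigma\}\rangle$ is intrinsic, then that it is $\mathfrak{T}^{\text{ext}}$-stable so that it descends to a filtration on $\W{0}$, and finally to identify the graded pieces after inverting $\alpha$.

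For well-definedness, given two local trivializations $(X,Y)$ and $(X',Y')$ of $\Hs{A}_{|\Spf R}$ as $\Sh{O}_L \otimes R$-modules, with $X, X'$ lifting the marked section $s$ and $Y, Y'$ lifting $\Sh{O}_L$-generators of $\Sh{Q}$ modulo $\eta$, the compatibility constraints force $X' = (1 + \beta_n c) X$ and $Y' = u Y + \eta a X$ for some unit $u \in (\Sh{O}_L \otimes R)^\times$ and elements $a, c \in \Sh{O}_L \otimes R$. Evaluating the tautological functional $f$ on both sides yields $W'_\sigma = u_\sigma W_\sigma + a_\sigma(1 + \beta_n Z_\sigma)$ and $1 + \beta_n Z'_\sigma = (1 + \beta_n c_\sigma)(1 + \beta_n Z_\sigma)$, so a polynomial of $W'$-degree $\leq i$ with coefficients in $R\langle Z'_\sigma\rangle$ translates into one of $W$-degree $\leq i$ with coefficients in $R\langle Z_\sigma\rangle$. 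For $\mathfrak{T}^{\text{ext}}$-stability the explicit formulas from \S\ref{2S321} give $\lambda_\sigma \ast W_\sigma = \lambda_\sigma W_\sigma$ and $\lambda_\sigma \ast Z_\sigma = \frac{\lambda_\sigma - 1}{\beta_n} + \lambda_\sigma Z_\sigma$, both of which preserve the $W$-degree; the $\mathbb{T}(\Z_p)$-action through $\gamma_\lambda \colon [\lambda]^* \Hs{A} \xrightarrow{\sim} \Hs{A}$ respects $\Omega_{\Sh{A}}$ and $\Sh{Q}$ by construction, hence preserves $W$-degree in any compatible chart. Taking the $k$-eigenspace then defines $\Fil{i}\W{0}$, and $\Fil{0}\W{0} = \w{0}$ is immediate.

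For the graded pieces on the generic fibre I isolate, for each multi-index $(a_\sigma)$ with $\sum a_\sigma = i$, a $k$-eigenvector of $\mathfrak{T}^{\text{ext}}$ of the form $k(1 + \beta_n Z) \prod_\sigma V_\sigma^{a_\sigma}$ with $V_\sigma = W_\sigma/(1 + \beta_n Z_\sigma)$. Tracking the change-of-trivialization formulas above, its leading term modulo lower $V$-degree transforms by the scalar $k(v) \prod_\sigma (u_\sigma/v_\sigma)^{a_\sigma}$, where $v_\sigma = 1 + \beta_n c_\sigma$ and the $k(v)$ factor accounts for the $\w{0}$ twist. After inverting $\alpha$, Corollary~\ref{C301} canonically splits the Hodge filtration of $\Hs{A}$ and, combined with the principal polarization identification $\omega_{\Sh{A}^\vee} \simeq \omega_{\Sh{A}}$, identifies $(\Hs{A}/\Omega_{\Sh{A}})[1/\alpha] \simeq \omega_{\Sh{A}}^{-1}[1/\alpha]$; the residual transition $\prod_\sigma(u_\sigma/v_\sigma)^{a_\sigma}$ is then read off as the transition for $\bigotimes_\sigma \omega_{\Sh{A}}(\sigma)^{-2 a_\sigma}$. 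Summing over multi-indices of total degree $i$ delivers the stated isomorphism $\Gr{i}\W{0}[1/\alpha] \simeq \w{0} \otimes \Sym^i \omega_{\Sh{A}}^{-\otimes 2}[1/\alpha]$.

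The main obstacle is this last identification: on $\Ig{n}{r}$ the units $u_\sigma$ and $v_\sigma$ are a priori independent, so the $-2$ exponent genuinely appears only after descent to $\Xra{r}$ together with the principal polarization pinning down the canonical identification $(\Hs{A}/\Omega_{\Sh{A}})[1/\alpha] \simeq \omega_{\Sh{A}}^{-1}[1/\alpha]$. The cleanest way to confirm it will be to compare with the lex-ordered filtration of Theorem~\ref{T2301}: inverting $\alpha$ makes $\eta$ and $\widetilde{HW}$ invertible, and the graded pieces $\w{0}\otimes \eta^{-\ell(i)}(HW \cdot \omega_{\Sh{A}}^{-2})^{\Xi(i)}$ collapse to exactly $\w{0}\otimes \bigotimes_\sigma \omega_{\Sh{A}}(\sigma)^{-2 a_\sigma}$ for each multi-index, providing an independent check that the $W$-degree filtration and the lex filtration yield the same graded modules over the generic fibre.
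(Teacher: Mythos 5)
Your proof is correct, but it takes a more laborious route than the paper. The paper's entire proof is the observation that the total-$W$-degree filtration is a coarsening of the lexicographic filtration already constructed: with $\Xi\colon\Sigma\simeq\{1,\dots,g\}$ and $i_0 = \Xi^{-1}(i,0,\dots,0)$ the largest lex index of total degree $i$, one has $\Fil{i} = \Fil{i_0}'$, so well-definedness, $\mathfrak{T}^{\textrm{ext}}$-stability, descent to $\W{0}$, and the graded pieces all follow at once from Lemma \ref{D308} and Theorem \ref{T2301}. You instead redo the change-of-trivialization computation ($W'_{\sigma} = u_{\sigma}W_{\sigma} + a_{\sigma}(1+\beta_n Z_{\sigma})$, which preserves total $W$-degree) and the $\mathfrak{T}^{\textrm{ext}}$-action check from scratch; these are exactly the computations the paper already carried out for the finer lex filtration, so your argument is sound but duplicates them. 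The one place where your direct route is slightly thinner is the descent of the graded pieces through $h_n\colon\Ig{n}{r}\to\Xra{r}$: ``taking the $k$-eigenspace'' defines $\Fil{i}\W{0}$, but identifying $\Gr{i}\W{0}$ requires the trace/averaging argument of Theorem \ref{T2301} (the elements $\tilde{s}_i$ built from $c_n$), which you do not reproduce — your appeal in the final paragraph to the lex-filtration graded pieces $\w{0}\otimes\eta^{-\ell(j)}(HW\cdot\omega_{\Sh{A}}^{-2})^{\Xi(j)}$ collapsing to $\omega_{\Sh{A}}^{-2\Xi(j)}$ after inverting $\alpha$ (since $\alpha\in(\Hdg{p^{r+1}})$ makes $\eta$ and $\widetilde{HW}$ invertible) is in fact the cleanest way to close this, and is precisely the paper's argument, so I would promote that ``independent check'' to the main proof and drop the direct transition-function bookkeeping.
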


\begin{proof}
We note that $\oplus_{j=0}^i \Sym^j R[W_{\sigma}]_{\sigma \in \Sigma}$ contains all polynomials in $\{W_{\sigma}\}_{\sigma \in \Sigma}$ of degree $\leq i$. In particular, choosing an ordering $\Xi \colon \Sigma \simeq \{1, \dots, g\}$, there is a greatest element in $\Sym^i R[W_{\sigma}]_{\sigma \in \Sigma}$ corresponding to the multi-index $(i,0, \dots, 0)$. Let $i_0 := \Xi^{-1}(i, 0, \dots, 0)$. Denoting by $\Fil{i}'$ the lexicographic ordering of Theorem \ref{T2301}, we then have $\Fil{i} = \Fil{i_0}'$. The lemma follows.
\end{proof}

\noindent\textbf{Convention:} Henceforth, unless otherwise stated $\{\Fil{i}\}_{i \geq 0}$ will denote the Hodge filtration of Lemma \ref{L2401}, and \textbf{not} the filtration induced by a lexicographic ordering on $\Sigma$. Also we will denote by $\Fil{i}$ the filtration induced on $\W{}$ by tensoring the above filtration with $\w{\chi}$.

Let $\Sh{P}^{(1)}_{\Ig{n}{r}'/\Lambda^0_{\alpha, I}}$ be the first infinitesimal neighborhood of the closed subscheme of $\Ig{n}{r}' \times_{\mathfrak{W}^0_{\alpha,I}} \Ig{n}{r}'$ defined by the diagonal embedding $\Delta \colon \Ig{n}{r}' \xhookrightarrow{} \Ig{n}{r}' \times_{\Spf{\Lambda^0_{\alpha, I}}} \Ig{n}{r}'$. Let $p_1, p_2$ be the first and second projections $\Sh{P}^{(1)}_{\Ig{n}{r}'/\Lambda^0_{\alpha, I}} \to \Ig{n}{r}'$. Then using Grothendieck's formalism of connections \cite[\S 2]{BerthelotOgus+2015}, we get an $\Sh{O}_L \otimes {\Sh{O}}_{\Sh{P}^{(1)}_{\Ig{n}{r}'/\Lambda^0_{\alpha,I}}}$-linear isomorphism $\epsilon^{\sharp} \colon p^*_2\Hs{A} \xrightarrow{\sim} p_1^*\Hs{A}$ associated to the connection $\nabla \colon \Hs{A} \to \Hs{A} \otimes \Omega^1_{\Ig{n}{r}'/\Lambda^0_{\alpha,I}}$. This $\epsilon^{\sharp}$ is characterized by the properties that $\Delta^*\epsilon^{\sharp} = \textrm{id}$,  $\nabla(x) = \epsilon^{\sharp}(1 \otimes x) - x \otimes 1$ and it satisfies a suitable cocycle condition with respect to the three possible pullbacks of $\epsilon^{\sharp}$ to $\Ig{n}{r}' \times_{\mathfrak{W}^0_{\alpha,I}} \Ig{n}{r}' \times_{\mathfrak{W}^0_{\alpha,I}} \Ig{n}{r}'$.

Let $\tilde{\rho} \colon \vartheta^*\V{\Sh{O}_L}(\Hs{A}, s, \Sh{Q}) \to \Ig{n}{r}'$ be the pullback of $\rho' \colon \V{\Sh{O}_L}(\Hs{A},s, \Sh{Q}) \to \Ig{n}{r}$ to $\vartheta \colon \Ig{n}{r}' \to \Ig{n}{r}$. Sometimes we will drop the notation $\vartheta$ for simplicity. 

\begin{lemma}\label{L2402}
The connection $\nabla \colon \Hs{A} \to \Hs{A} \hat{\otimes} \Omega^1_{\Ig{n}{r}'/\Lambda^0_{\alpha,I}}$ induces an isomorphism associated to a connection (in the sense of Grothendieck) on $\tilde{\rho}_*\Sh{O}_{\V{\Sh{O}_L}(\Hs{A}, s, \Sh{Q})}$.
\[
\epsilon_0^{\sharp}\colon p_2^*\tilde{\rho}_*\Sh{O}_{\V{\Sh{O}_L}(\Hs{A}, s, \Sh{Q})} \xrightarrow{\sim} p_1^*\tilde{\rho}_*\Sh{O}_{\V{\Sh{O}_L}(\Hs{A}, s, \Sh{Q})}
\]
\end{lemma}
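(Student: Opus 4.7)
The plan is to apply Grothendieck's dictionary between connections and infinitesimal descent data, combined with the functoriality of vector bundles with marked sections and marked splittings established in Lemma~\ref{L1304}. Starting from the connection $\nabla \colon \Hs{A} \to \Hs{A} \hat{\otimes} \Omega^1_{\Ig{n}{r}'/\Lambda^0_{\alpha,I}}$ produced by Proposition~\ref{P401}, one obtains the $\Sh{O}_L \otimes \Sh{O}_{\Sh{P}^{(1)}_{\Ig{n}{r}'/\Lambda^0_{\alpha,I}}}$-linear isomorphism $\epsilon^{\sharp} \colon p_2^*\Hs{A} \xrightarrow{\sim} p_1^*\Hs{A}$ satisfying $\Delta^*\epsilon^{\sharp} = \mathrm{id}$, the defining identity $\epsilon^{\sharp}(1 \otimes x) - x \otimes 1 = \nabla(x)$, and the standard cocycle condition on the triple fiber product. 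The $\Sh{O}_L$-linearity of $\epsilon^{\sharp}$ follows from the $\Sh{O}_L$-linearity of the Gauss--Manin connection, which itself is inherited from the fact that $\Sh{O}_L$ acts on the universal abelian scheme.

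The essential verification is that $\epsilon^{\sharp}$ preserves the additional data carried by $\Hs{A}$. By Proposition~\ref{P401}, $\nabla(s) \equiv 0$ modulo $\beta_n \Hs{A} \otimes \Omega^1_{\Ig{n}{r}'/\Lambda^0_{\alpha,I}}$ and $(\nabla \text{ mod } \eta)(\Sh{Q}) \subset \Sh{Q} \otimes \Omega^1_{\Ig{n}{r}'/\Lambda^0_{\alpha,I}}/(\eta)$. Translating through $\epsilon^{\sharp}(1 \otimes x) = x \otimes 1 + \nabla(x)$, these become $\epsilon^{\sharp}(p_2^*s) \equiv p_1^*s$ modulo $\beta_n \cdot p_1^*\Hs{A}$ and $\epsilon^{\sharp}(p_2^*\Sh{Q}) \subset p_1^*\Sh{Q}$ modulo $\eta \cdot p_1^*\Hs{A}$. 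By $\Sh{O}_L$-linearity, the same compatibility holds componentwise under the decomposition $\Sh{O}_L \otimes \Sh{O}_{\Ig{n}{r}'} \simeq \prod_{\sigma \in \Sigma}\Sh{O}_{\Ig{n}{r}'}$.

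Applying the functoriality of Lemma~\ref{L1304} to the two pullbacks $p_1^*, p_2^*$ of the tuple $(\Hs{A}, \{s_\sigma\}_\sigma, \Sh{Q})$ together with the comparison map $\epsilon^{\sharp}$, one obtains an isomorphism of the two pullbacks of $\vartheta^*\V{\Sh{O}_L}(\Hs{A}, s, \Sh{Q})$ over $\Sh{P}^{(1)}_{\Ig{n}{r}'/\Lambda^0_{\alpha,I}}$. Pushing this forward along $\tilde{\rho}$ yields the desired isomorphism $\epsilon_0^{\sharp}$ on the structure sheaves. The identity $\Delta^*\epsilon_0^{\sharp} = \mathrm{id}$ and the cocycle condition on the triple product are transported from the analogous properties of $\epsilon^{\sharp}$ by the functoriality of Lemma~\ref{L1304}, so that $\epsilon_0^{\sharp}$ is genuinely an infinitesimal descent datum corresponding to a Grothendieck connection on $\tilde{\rho}_*\Sh{O}_{\V{\Sh{O}_L}(\Hs{A}, s, \Sh{Q})}$.

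The only somewhat delicate point, which merits explicit verification but is not a substantive obstacle, is the bookkeeping of the two distinct defining ideals $\beta_n$ and $\eta$ appearing in the blow-up construction of $\V{\Sh{O}_L}(\Hs{A}, s, \Sh{Q})$ given by Lemmas~\ref{L1302} and~\ref{L1303}. Concretely, on local coordinates of the form displayed in~(\ref{eq:5}), one should check that the transformations $X_\sigma \mapsto X_\sigma + (\text{contribution from }\nabla)$ and $Y_\sigma \mapsto Y_\sigma + (\text{contribution from }\nabla)$ induced by $\epsilon^{\sharp}$ preserve both the condition $(X_\sigma - 1)/\beta_n \in \Sh{O}$ and $Y_\sigma/\eta \in \Sh{O}$; this reduces precisely to the two congruences furnished by Proposition~\ref{P401}.
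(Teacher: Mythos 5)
Your proof is correct and follows essentially the same route as the paper: the paper decomposes $\epsilon^{\sharp}$ into its $\sigma$-components by $\Sh{O}_L$-linearity and invokes the corresponding statement of Andreatta--Iovita componentwise, which is exactly the functoriality argument you carry out directly via Lemma~\ref{L1304} together with Proposition~\ref{P401}. Your explicit check that the congruences of Proposition~\ref{P401} translate into preservation of the marked section modulo $\beta_n$ and of $\Sh{Q}$ modulo $\eta$ is precisely the content the paper delegates to the cited reference, so nothing substantive is missing.
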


\begin{proof}
The isomorphism $\epsilon^{\sharp} \colon p_2^*\Hs{A} \xrightarrow{\sim} p_1^*\Hs{A}$ splits by $\Sh{O}_L$-linearity into  $\epsilon^{\sharp}_{\sigma} \colon p_2^*\Hs{A}(\sigma) \xrightarrow{\sim} p_1^*\Hs{A}(\sigma)$ for all $\sigma \in \Sigma$. Let $\tilde{\rho}_{\sigma} \colon \V{}(\Hs{A}(\sigma), s_{\sigma}, \Sh{Q}(\sigma)) \to \Ig{n}{r}'$ be the $\sigma$-component of $\tilde{\rho}$. Each $\epsilon^{\sharp}_{\sigma}$ induces a connection $\epsilon^{\sharp}_{\sigma,0} \colon p_2^*{(\tilde{\rho}_{\sigma})}_*\Sh{O}_{\V{}(\Hs{A}(\sigma), s_{\sigma}, \Sh{Q}(\sigma))} \xrightarrow{\sim} p_1^*{(\tilde{\rho}_{\sigma})}_*\Sh{O}_{\V{}(\Hs{A}(\sigma), s_{\sigma}, \Sh{Q}(\sigma))}$ by \cite[\S2.4]{andreatta2021triple}. Then $\epsilon^{\sharp}_0$ is defined by the tensor product $\otimes \epsilon^{\sharp}_{\sigma,0}$.
\end{proof}

\begin{lemma}\label{L2403}
The action of $(\Sh{O}_L \otimes \Z_p)^{\times}$ on $\V{\Sh{O}_L}(\Hs{A},s,\Sh{Q})$ over $\Xra{r}$ as defined in \S\ref{2S321} can be lifted to an action on $\vartheta^*\V{\Sh{O}_L}(\Hs{A},s,\Sh{Q})$ over $\Xra{r}$ such that the induced action commutes with $\epsilon^{\sharp}_0$.
\end{lemma}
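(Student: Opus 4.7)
The plan is threefold: extend the $\mathbb{T}(\Z_p)$-automorphisms $[t]$ from $\Ig{n}{r}$ to $\Xra{r}$-automorphisms of $\mathfrak{IG}'_{n,r,I}$; lift the action to $\vartheta^*\V{\Sh{O}_L}(\Hs{A},s,\Sh{Q})$ using that these automorphisms induce the identity on the universal abelian scheme; and deduce compatibility with $\epsilon^{\sharp}_0$ from the horizontality of the resulting identifications for the Gauss--Manin connection.

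First I would extend the action. A point of $\mathfrak{IG}'_{n,r,I}$ valued in a normal $\alpha$-adically complete $\Lambda^0_{\alpha,I}$-algebra $R$ is a tuple consisting of a point $(A,\iota,\mu,\psi)$ of $\Xra{r}$ together with a pair $(\phi_1,\phi)$, where $\phi_1\colon \Sh{O}_L/p^n \xrightarrow{\sim} H_n^{\vee}$ and $\phi\colon (\Sh{O}_L/p^n)^2 \xrightarrow{\sim} A[p^n]^{\vee}$ restricts to $\phi_1$ on the first factor. For $t\in \mathbb{T}(\Z_p)$ with image $\bar t \in (\Sh{O}_L/p^n)^{\times}$, I would define
\[
[t]\bigl(A,\iota,\mu,\psi,\phi_1,\phi\bigr) := \bigl(A,\iota,\mu,\psi,\bar t^{-1}\phi_1,\,\phi\circ \mathrm{diag}(\bar t^{-1},1)\bigr).
\]
This lifts the automorphism already defined on $\Ig{n}{r}$ and is manifestly an automorphism over $\Xra{r}$; that it extends from the analytic generic fibre to a morphism of the formal scheme $\mathfrak{IG}'_{n,r,I}$ proceeds as for $\Ig{n}{r}$ via normality. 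Since $[t]$ is the identity on the underlying abelian scheme, there is a tautological identification $\gamma_t^{\sharp}\colon [t]^*\Hs{A}\xrightarrow{\sim}\Hs{A}$, which preserves $\Omega_{\Sh{A}}$ and $\Sh{Q}$ and whose reduction modulo $\beta_n$ sends $[t]^*s$ to $\bar t^{-1}s$. Setting $t\ast (\rho,h) := ([t]\circ \rho,\, t\,h\circ \gamma_t^{\sharp})$, exactly as in \S\ref{2S321}, then defines the lifted action.

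The heart of the argument is the horizontality of $\gamma_t^{\sharp}$ for the Gauss--Manin connection. Since $[t]$ is an $\Xra{r}$-automorphism of $\mathfrak{IG}'_{n,r,I}$ that pulls the universal abelian scheme back to itself via the identity, the functoriality of the Gauss--Manin connection yields $\nabla\circ \gamma_t^{\sharp} = (\gamma_t^{\sharp}\otimes [t]^*)\circ [t]^*\nabla$ on $\Hs{A}$ (Proposition \ref{P401} ensures this restricts to the modified sheaf). Via Grothendieck's equivalence on the first infinitesimal neighborhood $\Sh{P}^{(1)}_{\Ig{n}{r}'/\Lambda^0_{\alpha,I}}$, this horizontality is exactly the statement that $\gamma_t^{\sharp}$ intertwines the pullback of $\epsilon^{\sharp}$ along the automorphism $[t]\times [t]$. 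The commutation with $\epsilon^{\sharp}_0$ then follows from the functoriality of VBMS (Lemma \ref{L1304}) together with the naturality of the construction in Lemma \ref{L2402} for horizontal morphisms of the underlying data $(\Hs{A},s,\Sh{Q})$.

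The main obstacle I anticipate is the bookkeeping needed to verify that the set-theoretic recipe for $[t]$ extends to a morphism of the formal scheme $\mathfrak{IG}'_{n,r,I}$ compatible with the structural morphism to $\Xra{r}$, and that the specific lift chosen (pre-composition by $\mathrm{diag}(\bar t^{-1},1)$ rather than some other Borel matrix) is consistent across the various functorial operations used later, e.g.\ when intersected with the action of Verschiebung (\S\ref{2S311}). Once this is set up, horizontality is formal and the commutation with $\epsilon^{\sharp}_0$ requires no explicit computation.
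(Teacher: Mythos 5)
Your proposal is correct and follows essentially the same route as the paper: lift the $(\Sh{O}_L\otimes\Z_p)^\times$-action to $\Ig{n}{r}'$ over $\Xra{r}$ using the torsor structure of $\Sh{IG}'_{n,r,I}\to\xra{r}$ for the Borel, note that this induces the identity on the pullback of $\Sh{A}$ so that $\gamma_t^\sharp$ is tautological and preserves $(\Omega_{\Sh{A}},\Sh{Q})$, and then get commutation with $\epsilon_0^\sharp$ from functoriality of the Gauss--Manin connection together with the fact that multiplication by the scalar $t$ is horizontal. The small bookkeeping worry you flag — which factor of $(\Sh{O}_L/p^n)^2$ carries the filtration, hence upper versus lower triangular and hence $\mathrm{diag}(\bar t^{-1},1)$ versus $\mathrm{diag}(1,\bar t^{-1})$ — is exactly the only point of divergence from the paper's conventions, and is immaterial to the argument.
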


\begin{proof}
The map $\Sh{IG}'_{n,r,I} \to \xra{r}$ is a torsor for the group 
\[
\begin{pmatrix}
(\Sh{O}_L/p^n\Sh{O}_L)^{\times} & \mu_{p^n} \otimes \mathfrak{d}^{-1} \\
0 & (\Sh{O}_L/p^n\Sh{O}_L)^{\times}
\end{pmatrix}.
\]
Then $(\Sh{O}_L \otimes \Z_p)^{\times}$ acts on $\Ig{n}{r}'$ through the quotient
\[
(\Sh{O}_L \otimes \Z_p)^{\times} \to \begin{pmatrix}
1 & 0 \\
0 & (\Sh{O}_L/p^n\Sh{O}_L)^{\times}
\end{pmatrix}
\]
and this action lifts the action on $\Ig{n}{r}$. For $\lambda \in (\Sh{O}_L \otimes \Z_p)^{\times}$ we get an isomorphism $[\lambda] \colon \Ig{n}{r}' \to \Ig{n}{r}'$ over $\Xra{r}$, that induces an isomorphism $\gamma_{\lambda} \colon \Hs{A} \to \Hs{A}$ sending the marked section $s \mapsto \bar{\lambda}^{-1}s$ and the marked subspace $\Sh{Q}$ to itself. Since the connection on $\Hs{A}$ is induced by the Gauss--Manin connection on $\mathrm{H}_{\Sh{A}}$, by functoriality of the Gauss--Manin connection, $\nabla$ commutes with $\gamma_{\lambda}$. The last claim follows by noticing that the action on $\vartheta^*\V{\Sh{O}_L}(\Hs{A},s,\Sh{Q})$ is induced by the isomorphism $\Hs{A} \xrightarrow{\gamma_{\lambda}} \Hs{A} \xrightarrow{\times \lambda} \Hs{A}$ which obviously commutes with $\nabla$.
\end{proof}

\begin{theorem}\label{T401}
There is an integrable connection on $\W{0}$,
\[
\nabla_k \colon \W{0} \to \W{0} \hat{\otimes} \Omega^1_{\Xra{r}/\Lambda^0_{\alpha, I}}[1/\alpha]
\]
for which the filtration on $\W{0}$ defined in Lemma \ref{L2401} satisfies Griffith's transversality.
\end{theorem}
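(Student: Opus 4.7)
The plan is to transport the connection $\nabla$ of Proposition \ref{P401} through the VBMS construction via Grothendieck's formalism, then descend along the tower $\mathfrak{IG}'_{n,r,I} \to \Xra{r}$ by Galois descent (which forces $[1/\alpha]$ since the Igusa cover is finite étale only on the generic fibre), and finally cut out the $k$-isotypic component. First, I invoke Lemma \ref{L2402}: the isomorphism $\epsilon_0^{\sharp}$ on the first infinitesimal neighborhood corresponds by Grothendieck's dictionary to an integrable connection
\[
\tilde{\nabla} \colon \tilde{\rho}_*\Sh{O}_{\V{\Sh{O}_L}(\Hs{A},s,\Sh{Q})} \to \tilde{\rho}_*\Sh{O}_{\V{\Sh{O}_L}(\Hs{A},s,\Sh{Q})} \hat{\otimes} \Omega^1_{\mathfrak{IG}'_{n,r,I}/\Lambda^0_{\alpha,I}}.
\]
That $\tilde{\nabla}$ lands in the blown-up VBMS (rather than merely in $\mathbb{V}(\Hs{A})$) uses Proposition \ref{P401}: the conditions $\nabla(s) \equiv 0 \pmod{\beta_n}$ and $\nabla(\Sh{Q}) \subset \Sh{Q}\otimes \Omega^1 \pmod{\eta}$ are precisely the integrality statements required to preserve the blow-up ideals of Lemmas \ref{L1302}--\ref{L1303} that define the marked section and marked splitting.

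Next, the $\mathfrak{T}^{\mathrm{ext}}$-action on the VBMS commutes with $\tilde{\nabla}$: the $\mathfrak{T}$-part is vertical and commutes by construction, while equivariance for $(\Sh{O}_L \otimes \Z_p)^{\times}$ is supplied by Lemma \ref{L2403}. Since $\Sh{IG}'_{n,r,I} \to \xra{r}$ is a Galois étale cover of generic fibres, Galois descent after inverting $\alpha$ produces a connection on the pushforward to $\Xra{r}$ that remains equivariant for the residual $(\Sh{O}_L \otimes \Z_p)^{\times}$-action. Extracting the $k$-isotypic component yields
\[
\nabla_k \colon \W{0} \to \W{0} \hat{\otimes} \Omega^1_{\Xra{r}/\Lambda^0_{\alpha,I}}[1/\alpha].
\]
Integrability is inherited at each step from the integrability of the Gauss--Manin connection on $\mathrm{H}_{\Sh{A}}$.

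For Griffith's transversality I verify the statement locally. Fix $\Spf{R} \subset \Ig{n}{r}$ trivializing $\Hs{A}$ compatibly with $\Omega_{\Sh{A}}$ and $\Sh{Q}$, with coordinates $(Z_\sigma, W_\sigma)_{\sigma \in \Sigma}$ as in (\ref{eq:5}). By Lemma \ref{L2401},
\[
\Fil{i}\W{0}(\Spf{R}) = \Big(\bigoplus_{j \leq i} R\langle \{Z_\sigma\}_{\sigma}\rangle \otimes_R \Sym^j R[W_\sigma]_{\sigma}\Big) \cdot k(1 + \beta_n Z).
\]
The formulas (\ref{eq:9})--(\ref{eq:10}) for $\nabla$ on the basis $(e_\sigma, f_\sigma)$ of $\Hs{A}$, read through the duality pairing, give explicit expressions for $\tilde{\nabla}(Z_\sigma)$ (polynomial in the $Z_\tau$ only) and $\tilde{\nabla}(W_\sigma)$ (polynomial in the $Z_\tau$, of total degree at most one in the $W_\tau$). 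The Leibniz rule then shows that $\tilde{\nabla}$ raises the $W$-degree by at most one, so $\tilde{\nabla}(\Fil{i}) \subset \Fil{i+1}\hat{\otimes} \Omega^1$. This property is preserved by Galois descent and by passing to $k$-isotypic components, giving the desired transversality for $\nabla_k$.

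The main obstacle I expect is the delicate integrality bookkeeping modulo powers of $\Hdg{}$: it is only because Proposition \ref{P401} supplies the precise divisibilities $\nabla(s) \equiv 0 \pmod{\beta_n}$ and $\nabla(\Sh{Q}) \equiv 0 \pmod{\eta}$, tuned to the blow-up radii chosen in Definitions \ref{D304}--\ref{D305}, that $\tilde{\nabla}$ defines a connection on $\Sh{O}_{\V{\Sh{O}_L}(\Hs{A},s,\Sh{Q})}$ rather than on the coarser ambient bundle $\Sh{O}_{\mathbb{V}(\Hs{A})}$. Any attempt to avoid inverting $\alpha$ in the descent step would require understanding differences of Galois conjugates of the connection, which is genuinely more subtle.
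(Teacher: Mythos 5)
Your plan matches the paper's: both routes go through Lemma \ref{L2402} (Grothendieck's description of the connection on the VBMS sheaf), Lemma \ref{L2403} for $(\Sh{O}_L \otimes \Z_p)^{\times}$-equivariance, descent to $\Xra{r}$ after inverting $\alpha$ (the paper invokes that $\Ig{n}{r} \to \Xra{r}$ is generically \'etale, which is your Galois descent), and a local-coordinate check for Griffiths' transversality. Your framing of Proposition \ref{P401} as supplying exactly the integrality needed to land in the blown-up VBMS is also the right way to see it.

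There is, however, a concrete error in your transversality computation. You assert that $\tilde{\nabla}(Z_\sigma)$ is a polynomial in the $Z_\tau$ only, i.e.\ has $W$-degree zero. This is false, and you can already see this from (\ref{eq:9}): $\nabla(e_\sigma)$ contains the term $f_\sigma \otimes \Theta_\sigma/HW(\sigma)$, so $\epsilon^\sharp(1\otimes e_\sigma) = a_\sigma e_\sigma + c_\sigma f_\sigma$ with $c_\sigma HW(\sigma) = \Theta_\sigma$ the Kodaira--Spencer class. Since $e_\sigma$ corresponds to the coordinate function $1 + \beta_n Z_\sigma$ and $f_\sigma$ corresponds (up to the blow-up unit) to $W_\sigma$, this forces a $W_\sigma$-linear term into $\tilde\nabla(Z_\sigma)$. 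Explicitly, from the paper's local formula $\epsilon_0^\sharp(1 + \beta_n Z_\sigma) = (a_\sigma + \eta c_\sigma V_\sigma)(1+\beta_n Z_\sigma)$ one gets
\[
\tilde\nabla(Z_\sigma) = \beta_n^{-1}a_\sigma^0(1+\beta_n Z_\sigma) + \beta_n^{-1}\eta c_\sigma W_\sigma,
\]
which has $W$-degree one. The Kodaira--Spencer contribution $\eta c_\sigma W_\sigma$ is not an accident: it is the entire reason the statement is ``Griffiths' transversality'' rather than ``filtration is flat''. Indeed, your stated premises ($\tilde\nabla(Z_\sigma)$ of $W$-degree $0$ and $\tilde\nabla(W_\sigma)$ of $W$-degree $\leq 1$) combined with the Leibniz rule would actually yield $\tilde\nabla(\Fil{i}) \subset \Fil{i}\hat\otimes\Omega^1$, i.e.\ strict preservation of the filtration, contradicting the explicit formula (\ref{eq:11}) in which the $\sum_\sigma(u_\sigma-i_\sigma)V_\sigma\otimes\eta c_\sigma$ term genuinely raises the $V$-degree. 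So your conclusion (``raises by at most one'') is a non sequitur from your premises. Once you correct the premise --- both $\tilde\nabla(Z_\sigma)$ and $\tilde\nabla(W_\sigma)$ have $W$-degree at most one --- the Leibniz argument does give $\tilde\nabla(\Fil{i}) \subset \Fil{i+1}\hat\otimes\Omega^1$ as desired. The paper sidesteps this bookkeeping by computing $\nabla_k$ directly on the monomials $\prod_\sigma V_\sigma^{i_\sigma}\cdot k(1+\beta_n Z)$, which makes the degree increase manifest.
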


\begin{proof}
We use the notation of the proof of Proposition \ref{P401}. Recall $\Spf{R}' \subset \Ig{n}{r}'$ was an open that was the inverse image of an open $\Spf{R}_0 \subset \Xra{r}$ that trivializes $\mathrm{H}_{\Sh{A}}$, and such that over $\Spf{R}'$, $\Hs{A}$ is trivial with $\Sh{O}_L \otimes R'$-basis $\{e, f\}$ adapted to the marked section $s$  and modified unit root subspace $\Sh{Q}$. Let $I(\Delta) = \ker{(R' \hat{\otimes}_{\Lambda^0_{\alpha,I}} R' \xrightarrow{\textrm{mult}} R')}$, and let $R^{(1)} = R' \hat{\otimes}_{\Lambda^0_{\alpha,I}} R'/I(\Delta)^2$. 

Then in terms of the basis $\{e, f\}$, $\epsilon^{\sharp}$ is given by a matrix 
\[
A = \begin{pmatrix}
a && b \\
c && d
\end{pmatrix} \in \mathrm{GL}_2\left(\Sh{O}_L \otimes R^{(1)}\right).
\]
Decomposing into components, we get matrices 
\[
A_{\sigma} = \begin{pmatrix}
a_{\sigma} && b_{\sigma} \\
c_{\sigma} && d_{\sigma}
\end{pmatrix} \in \mathrm{GL}_2\left(R^{(1)}\right)
\]
with respect to the basis $e_{\sigma}, f_{\sigma}$ of $\mathrm{H}^{\sharp}_{\Sh{A}}(\sigma)$ for each $\sigma$.

Since $\Delta^*(\epsilon^{\sharp}) = \textrm{id}$, we have that $a_{\sigma} = 1 + a^0_{\sigma}$ and $d_{\sigma} = 1 + d^0_{\sigma}$ with $a^0_{\sigma}, d^0_{\sigma}, b_{\sigma}, c_{\sigma} \in I(\Delta)$ for all $\sigma$. Moreover, the squares of $a^0_{\sigma}, d^0_{\sigma}, b_{\sigma}, c_{\sigma}$ are all $0$ in $R^{(1)}$.

Comparing the expression of $\nabla$ in terms of $a, b, c, d$ on the one hand and that in (\ref{eq:9}) and (\ref{eq:10}) on the other, we see that $c_{\sigma}HW(\sigma)$ is the Kodaira--Spencer class $\Theta_{\sigma}$.

By Lemma \ref{L2402} there is an isomorphism $\epsilon^{\sharp}_0 \colon p_2^*\tilde{\rho}_*\Sh{O}_{\V{\Sh{O}_L}(\Hs{A}, s, \Sh{Q})} \xrightarrow{\sim} p_1^*\tilde{\rho}_*\Sh{O}_{\V{\Sh{O}_L}(\Hs{A}, s, \Sh{Q})}$ induced by $\epsilon^{\sharp} \colon p_2^*\Hs{A} \simeq p_1^*\Hs{A}$. We show that $\epsilon^{\sharp}_0$ restricts to a connection on $\mathbb{W}'_{k,\alpha,I}$. We show this on local coordinates. So recalling the local description of $\mathbb{W}'_{k,\alpha,I}$ from Corollary \ref{C302}, we have ${\mathbb{W}'_{k,\alpha,I}}_{|\Spf{R'}} = R'\langle \{ V_{\sigma} \}\rangle k(1 + \beta_n Z)$, where we recall $V_{\sigma} = \frac{W_{\sigma}}{1+\beta_nZ_{\sigma}}$ and $k(1+\beta_nZ)$ was the notation for $\prod_{\sigma} k_{\sigma}(1+\beta_n Z_{\sigma})$. Thus $\epsilon^{\sharp}_0$ is described by its action on $V_{\sigma}$ and $1 + \beta_n Z_{\sigma}$. We have
\[
\epsilon^{\sharp}_0(V_{\sigma}) = \eta^{-1}(b_{\sigma} + \eta d_{\sigma}V_{\sigma})(a_{\sigma} + \eta c_{\sigma}V_{\sigma})^{-1} \quad \epsilon^{\sharp}_0(1 + \beta_n Z_{\sigma}) = (a_{\sigma} + \eta c_{\sigma} V_{\sigma})(1 +\beta_n Z_{\sigma}).
\]
From this one can deduce the following formula for $\nabla_k(x) = \epsilon^{\sharp}_0(1 \otimes x) - x \otimes 1$.
\begin{align}\label{eq:11}
\begin{split}
\nabla_k\left(\prod_{\sigma} V^{i_{\sigma}}_{\sigma} \cdot k(1 + \beta_n Z)\right) = &\prod_{\sigma} V_{\sigma}^{i_{\sigma}}\Bigg(\sum_{\sigma} i_{\sigma}V_{\sigma}^{-1} \otimes b_{\sigma}\eta^{-1} + \sum_{\sigma} (u_{\sigma} - i_{\sigma})\otimes a^0_{\sigma} \\ &+ \sum_{\sigma} i_{\sigma} \otimes d^0_{\sigma} + \sum_{\sigma} (u_{\sigma} - i_{\sigma})V_{\sigma} \otimes \eta c_{\sigma}\Bigg) \big(k(1 + \beta_nZ) \otimes 1\big)
\end{split}
\end{align}
This connection descends to $\Ig{n}{r}$ after inverting $\alpha$, by the formula above. Then we descend $\nabla_k$ to $\Xra{r}$ by taking $k$-invariants for the $(\Sh{O}_L \otimes \Z_p)^{\times}$ action using Lemma \ref{L2403} and noting $\Ig{n}{r} \to \Xra{r}$ is generically \'{e}tale.
\end{proof}

\begin{cor}
The connection $\nabla_k \colon \W{0} \to \W{0} \hat{\otimes} \Omega^1_{\Xra{r}/\Lambda^0_{\alpha,I}}[1/\alpha]$ induces a connection which we still denote by $\nabla_k$
\[
\nabla_k \colon \W{} \to \W{} \hat{\otimes} \Omega^1_{\bar{\mathfrak{M}}_{r,\alpha,I}/\Lambda_{\alpha,I}}[1/\alpha]
\]
that satisfies Griffiths transverality with respect to the filtration on $\W{}$ defined by tensoring $\Fil{i}$ of Lemma \ref{L2401} with $\w{\chi}$.
\end{cor}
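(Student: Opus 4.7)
The plan is to assemble the connection on $\W{}$ from three pieces: the pullback of $\nabla_k$ along the flat base change $p\colon \bar{\mathfrak{M}}_{r,\alpha,I}\to\Xra{r}$, a canonical connection on $\w{\chi}$ coming from the fact that the partial Igusa tower is \'etale on the generic fibre, and the Leibniz rule tying the two together. First I would observe that because $\bar{\mathfrak{M}}_{r,\alpha,I}=\Xra{r}\times_{\mathfrak{W}^0_{\alpha,I}}\mathfrak{W}_{\alpha,I}$, there is a natural map $p^*\Omega^1_{\Xra{r}/\Lambda^0_{\alpha,I}}\to\Omega^1_{\bar{\mathfrak{M}}_{r,\alpha,I}/\Lambda_{\alpha,I}}$ which is in fact an isomorphism, so pulling $\nabla_k$ back along $p$ yields an integrable connection
\[
p^*\nabla_k\colon p^*\W{0}\to p^*\W{0}\hat{\otimes}\Omega^1_{\bar{\mathfrak{M}}_{r,\alpha,I}/\Lambda_{\alpha,I}}[1/\alpha],
\]
and the filtration $p^*\Fil{i}\W{0}$ is preserved up to a shift exactly as on $\W{0}$.

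Next I would equip $\w{\chi}$ with its own canonical connection. The map $f_i\colon \Ig{i}{r}\to\Xra{r}$ is finite and its adic generic fibre $\Sh{IG}_{i,r,I}\to\xra{r}$ is \'etale Galois with Galois group $(\Sh{O}_L/p^i\Sh{O}_L)^{\times}$, so after inverting $\alpha$ one has $\Omega^1_{\Ig{i}{r}/\Lambda^0_{\alpha,I}}[1/\alpha]\simeq f_i^*\Omega^1_{\Xra{r}/\Lambda^0_{\alpha,I}}[1/\alpha]$. This gives a canonical integrable connection on $(f_i)_*\Sh{O}_{\Ig{i}{r}}[1/\alpha]$ which is $(\Sh{O}_L/p^i\Sh{O}_L)^{\times}$-equivariant, hence descends to the $\chi^{-1}$-isotypic component. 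Pulling back along $p$ yields an integrable connection on $\w{\chi}[1/\alpha]$ with values in $\Omega^1_{\bar{\mathfrak{M}}_{r,\alpha,I}/\Lambda_{\alpha,I}}[1/\alpha]$.

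Finally, I would define $\nabla_k$ on $\W{}=p^*\W{0}\otimes\w{\chi}$ by the Leibniz rule $\nabla_k(x\otimes y)=p^*\nabla_k(x)\otimes y+x\otimes\nabla_k(y)$; integrability follows from integrability of each factor. For Griffiths' transversality, recall $\Fil{i}\W{}=p^*\Fil{i}\W{0}\otimes\w{\chi}$. Since $\w{\chi}$ is placed in filtration degree zero and its connection is $\Sh{O}_{\bar{\mathfrak{M}}_{r,\alpha,I}}[1/\alpha]$-linear up to first-order differentials (which do not shift filtration), and since $p^*\nabla_k$ sends $p^*\Fil{i}\W{0}$ into $p^*\Fil{i+1}\W{0}\hat{\otimes}\Omega^1[1/\alpha]$ by Theorem \ref{T401}, the Leibniz rule shows $\nabla_k(\Fil{i}\W{})\subset \Fil{i+1}\W{}\hat{\otimes}\Omega^1_{\bar{\mathfrak{M}}_{r,\alpha,I}/\Lambda_{\alpha,I}}[1/\alpha]$. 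The only subtlety worth checking carefully is that the descent of the canonical connection on $(f_i)_*\Sh{O}_{\Ig{i}{r}}[1/\alpha]$ to the $\chi^{-1}$-component is well-defined, but this is immediate from Galois-equivariance; the rest is a routine bookkeeping of the tensor product formalism.
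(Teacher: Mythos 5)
Your proposal is correct and follows essentially the same route as the paper's proof: pull back $\nabla_k$ along the base change $p$, equip $\w{\chi}$ with the connection induced by the universal derivation on $(f_i)_*\Sh{O}_{\Ig{i}{r}}$ (which is $(\Sh{O}_L/q\Sh{O}_L)^{\times}$-equivariant, so descends to the $\chi^{-1}$-isotypic piece after inverting $\alpha$), and combine via the Leibniz rule, noting that the $\w{\chi}$-factor sits in filtration degree zero so Griffiths' transversality is inherited from Theorem \ref{T401}. Your version fills in slightly more detail than the paper's terse argument, but there is no substantive difference in approach.
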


\begin{proof}
Recall $\w{\chi}$ was defined as $\left((f_i)_*\Sh{O}_{\Ig{i}{r}}\otimes_{\Lambda^0_{\alpha,I}} \Lambda_{\alpha,I}\right)[\chi^{-1}]$ for $f_i \colon \Ig{i}{r} \to \Xra{r}$ the projection with $i = 1$ if $p \neq 2$ and $i = 2$ if $p = 2$. The universal derivation $(f_i)_*\Sh{O}_{\Ig{i}{r}} \otimes_{\Lambda^0_{\alpha,I}} \Lambda_{\alpha,I} \to (f_i)_*\Omega^1_{\Ig{i}{r}/\Lambda^0_{\alpha,I}} \otimes \Lambda_{\alpha,I}$ commutes with the action of $(\Sh{O}_L/q\Sh{O}_L)^{\times}$ and thus induces a connection $\w{\chi} \to \w{\chi} \hat{\otimes} \Omega^1_{\bar{\mathfrak{M}}_{r,\alpha,I}}[1/\alpha]$ by taking $\chi^{-1}$-invariants and upon inverting $\alpha$. This along with $\nabla_k$ defined on $\W{0}$ above induces the required connection on $\W{}$.
\end{proof}

The Kodaira--Spencer isomorphism $\omega_{\Sh{A}}^{\otimes 2} \xrightarrow{\sim} \Omega^1_{\Xra{r}/\Lambda^0_{\alpha,I}}$ induces a decomposition of $\Omega^1_{\Xra{r}/\Lambda^0_{\alpha,I}}$ corresponding to the decomposition $\omega_{\Sh{A}}^{\otimes 2} = \prod_{\sigma} \omega_{\Sh{A}}^{2\sigma}$. Here the tensor product is taken as $\Sh{O}_L \otimes \Sh{O}_{{\mathfrak{X}_{\alpha, I}}}$-modules. This induces an isomorphism $\Omega^1_{\Xra{r}}[1/\alpha] \simeq \prod_{\sigma}\omega_{\Sh{A}}^{2\sigma}[1/\alpha]$.

\begin{defn}\label{D402}
Define $\nabla_k(\sigma) \colon \W{0} \to \mathbb{W}^0_{k+2\sigma, \alpha,I}[1/\alpha]$ as the map obtained by composing $\nabla_k$ with the projection onto the $\sigma$ component of $\Omega^1_{\Xra{r}/\Lambda^0_{\alpha,I}}[1/\alpha] \simeq \prod_{\sigma} \omega_{\Sh{A}}^{2\sigma}$, followed by the natural map to $\mathbb{W}^0_{k + 2\sigma, \alpha,I}$.
\[
\nabla_k(\sigma) \colon \W{0} \xrightarrow{\nabla_k} \W{0} \hat{\otimes} \Omega^1_{\Xra{r}/\Lambda^0_{\alpha,I}}[1/\alpha] \xrightarrow{} \W{0} \hat{\otimes}\omega_{\Sh{A}}^{2\sigma}[1/\alpha] \xrightarrow{} \mathbb{W}^0_{k+2\sigma, \alpha,I}[1/\alpha]
\]
Similarly denote still by $\nabla_k(\sigma) \colon \W{} \to \mathbb{W}_{k+2\sigma, \alpha,I}[1/\alpha]$ the map obtained by twisting $\nabla_k(\sigma)$ as above with the connection on $\w{\chi}$ followed by the projection onto the $\sigma$-component under the Kodaira--Spencer isomorphism.
\end{defn}

\begin{cor}
The $\Sh{O}_{\xra{r}}$-linear map induced by the connection $\nabla_k(\sigma)$ on the graded piece \[\nabla_k(\sigma) \colon \Gr{n}\W{0}[1/\alpha] \to \Gr{n+1}\mathbb{W}^0_{k+2\sigma, \alpha, I}[1/\alpha]\]
sends an element $\omega^{k - 2\mathbf{i}} \in \w{0}\otimes \Sym^n\omega_{\Sh{A}}^{-\otimes 2}[1/\alpha]$ to $(u_{\sigma} - i_{\sigma})\omega^{k-2\mathbf{i}}$ for $\mathbf{i} = (i_{\tau})_{\tau \in \Sigma} \in \N^g$.
\end{cor}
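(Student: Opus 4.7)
The plan is to verify the statement by a direct computation on local coordinates using the explicit formula (\ref{eq:11}) for $\nabla_k$ derived in the proof of Theorem \ref{T401}. First, I would recall the local description from Corollary \ref{C302}: on a chart $\Spf R \subset \mathfrak{IG}'_{n,r,I}$ trivializing $\Hs{A}$ compatibly with $s$ and $\Sh{Q}$, we have $\mathbb{W}'_{k,\alpha,I}(\Spf R) = R\langle \{V_\sigma\}_{\sigma \in \Sigma}\rangle \cdot k(1+\beta_n Z)$, with $V_\sigma = W_\sigma/(1+\beta_n Z_\sigma)$. The Hodge filtration of Lemma \ref{L2401} is locally generated by the monomials $\prod_\sigma V_\sigma^{i_\sigma}\, k(1+\beta_n Z)$ with $\sum_\sigma i_\sigma \leq n$, and under the identification $\Gr{n}\W{0}[1/\alpha] \simeq \w{0} \otimes \Sym^n \omega_{\Sh{A}}^{-\otimes 2}[1/\alpha]$ the class of such a monomial with $\sum_\sigma i_\sigma = n$ corresponds to the element labelled $\omega^{k-2\mathbf{i}}$, where $\mathbf{i} = (i_\sigma)_{\sigma \in \Sigma}$.

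Next I would apply formula (\ref{eq:11}) to $\prod_\tau V_\tau^{i_\tau}\, k(1+\beta_n Z)$. The terms $i_\tau V_\tau^{-1} \otimes b_\tau \eta^{-1}$, $(u_\tau - i_\tau) \otimes a^0_\tau$, and $i_\tau \otimes d^0_\tau$ do not strictly increase the $V$-degree and hence lie in $\Fil{n}\W{0} \otimes \Omega^1_{\Xra{r}/\Lambda^0_{\alpha,I}}$; by definition they vanish in the quotient $\Gr{n+1}\W{0} \otimes \Omega^1_{\Xra{r}/\Lambda^0_{\alpha,I}}$. The only contribution surviving on the graded piece is $\sum_\tau (u_\tau - i_\tau) V_\tau \prod_\rho V_\rho^{i_\rho} \otimes \eta c_\tau \cdot k(1+\beta_n Z)$.

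Then I would project onto the $\sigma$-component of $\Omega^1_{\Xra{r}/\Lambda^0_{\alpha,I}} \simeq \bigoplus_\tau \omega_{\Sh{A}}^{2\tau}$ under the Kodaira-Spencer isomorphism. Recalling from the proof of Theorem \ref{T401} that $c_\tau HW(\tau) = \Theta_\tau$ spans the $\tau$-component $\omega_{\Sh{A}}^{2\tau}$, only the $\tau = \sigma$ summand contributes, giving $(u_\sigma - i_\sigma) V_\sigma \prod_\tau V_\tau^{i_\tau} \otimes (\eta/HW(\sigma))\Theta_\sigma \cdot k(1+\beta_n Z)$. Finally, passing to $\mathbb{W}^0_{k+2\sigma, \alpha, I}$ via the natural multiplication map $\W{0} \otimes \omega_{\Sh{A}}^{2\sigma} \to \mathbb{W}^0_{k+2\sigma, \alpha, I}$, the class $V_\sigma \prod V_\tau^{i_\tau}\, k(1+\beta_n Z)$ represents the multi-index $\mathbf{i}+e_\sigma$ for the weight $k+2\sigma$; since $(k+2\sigma) - 2(\mathbf{i}+e_\sigma) = k-2\mathbf{i}$, under the identification $\Gr{n+1}\mathbb{W}^0_{k+2\sigma, \alpha, I}[1/\alpha] \simeq \w{0}_{k+2\sigma} \otimes \Sym^{n+1}\omega_{\Sh{A}}^{-\otimes 2}[1/\alpha]$ this represents $\omega^{k-2\mathbf{i}}$. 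Hence $\nabla_k(\sigma)$ sends $\omega^{k-2\mathbf{i}}$ to $(u_\sigma - i_\sigma)\omega^{k-2\mathbf{i}}$.

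The main obstacle is the careful bookkeeping of the normalization factors $\eta = p/\Hdg{p^2}$ and $HW(\sigma)$: these appear in the local formula but become units only after inverting $\alpha$ and working on the generic fibre. One must check that the abstract identification of $\Gr{\bullet}\W{0}[1/\alpha]$ with $\w{0} \otimes \Sym^{\bullet}\omega_{\Sh{A}}^{-\otimes 2}[1/\alpha]$ of Lemma \ref{L2401} is compatible, with the expected normalization, with the multiplication map $\W{0} \otimes \omega_{\Sh{A}}^{2\sigma} \to \mathbb{W}^0_{k+2\sigma, \alpha, I}$, so that the factor $\eta/HW(\sigma)$ is exactly absorbed. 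Once this compatibility is verified, the corollary follows immediately from the local formula (\ref{eq:11}).
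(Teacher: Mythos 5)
Your proof is correct and follows essentially the same approach as the paper: the paper's own proof is simply ``Follows from (\ref{eq:11}) above,'' and you have correctly unpacked what that one-liner requires. You identify that the only term in (\ref{eq:11}) which raises the $V$-degree is $\sum_\tau (u_\tau - i_\tau)V_\tau \otimes \eta c_\tau$, that the lower-degree terms die in the passage to $\Gr{n+1}$, and that the Kodaira–Spencer relation $c_\tau HW(\tau) = \Theta_\tau$ (established in the proof of Theorem \ref{T401}) forces only $\tau = \sigma$ to survive the projection onto the $\sigma$-component of $\Omega^1$. That is exactly the content the paper leaves implicit. You also correctly flag the one genuine subtlety: the coefficient $\eta c_\sigma$ carries a normalization factor $\eta/HW(\sigma)$ (compare the factor $p$ visible in the $q$-expansion formula (\ref{eq:12}) over the ordinary locus), and the statement as written presupposes that the isomorphism $\Gr{n}\W{0}[1/\alpha] \simeq \w{0}\otimes\Sym^n\omega_{\Sh{A}}^{-\otimes 2}[1/\alpha]$ of Lemma \ref{L2401}, which is only specified up to the factors $\eta^{-\ell(i)}(HW\cdot\omega_{\Sh{A}}^{-2})^{\Xi(i)}$ of Theorem \ref{T2301}, is normalized so that this factor is exactly absorbed (equivalently, one compares the $\eta^{-n}$ in the source with the $\eta^{-(n+1)}$ in the target). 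Tracking this through confirms the coefficient is $(u_\sigma - i_\sigma)$, so your reservation resolves affirmatively and does not constitute a gap.
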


\begin{proof}
Follows from (\ref{eq:11}) above.
\end{proof}

\subsection{\texorpdfstring{$\nabla_k$}{Connection} on \texorpdfstring{$q$}{q}-expansions}\label{S4.3}
For simplicity of notation, in this section we drop $\alpha,I$ from the notation $\W{0}$ and simply write $\mathbb{W}^0_k$. Also since we want to iterate the connection, and the connection $\nabla_k(\sigma)$ maps $\mathbb{W}^0_k[1/\alpha]$ to $\mathbb{W}^0_{k+2\sigma}[1/\alpha]$, in our notation we forget the dependency of the connection on the weight $k$, and simply write $\nabla(\sigma)$. Later we will need to compose $\nabla(\sigma)$ and $\nabla(\tau)$ for $\sigma \neq \tau \in \Sigma$. Lemma \ref{L404} below will show us that the order of composition does not matter.

In this section we will study the effect of $\nabla_k$ on $q$-expansions. We begin by reviewing the definition of Tate objects for Hilbert--Blumenthal abelian varieties following \cite{KatzNicholasM1978pLfC}.

Fix fractional ideals $\mathfrak{a}, \mathfrak{b}$ such that $\mathfrak{c} = \mathfrak{a}\mathfrak{b}^{-1}$. Let $S$ be a set of $g$ linearly independent $\Q$-linear forms $\ell_i \colon L \to \Q$, such that $\ell_i(x) > 0$ for all $x \gg 0$, where by $x \gg 0$ we mean $x$ is a totally positive element. We say an element is $S$-positive if $\ell_i(x) \geq 0$ for all $i$. Let $\mathfrak{ab}_{S} = \{x \in \mathfrak{ab} \, | \, x \textrm{ is } S\textrm{-positive}\}$ be the set of $S$-positive elements in $\mathfrak{ab}$. $\mathfrak{ab}_S$ is a finitely generated monoid. 

\begin{defn}
Define $\Z\llbracket\mathfrak{ab}, S\rrbracket$ to be the ring of all formal series $\sum_{\beta \in \mathfrak{ab}_S} a_{\beta} q^{\beta}$. Define $\Z(\!(\mathfrak{ab}, S)\!) = \Z\llbracket\mathfrak{ab}, S\rrbracket[1/q^{\beta}]$ for some $\beta \gg 0$.
\end{defn}

We remark that inverting $q^{\beta}$ for some $\beta \gg 0$ inverts $q^{\gamma}$ for all $\gamma \gg 0$. So $\Z(\!(\mathfrak{ab}, S)\!)$ is well-defined. In particular, $\Z(\!(\mathfrak{ab}, S)\!)$ is the collection of all formal series $\sum_{\beta \in \mathfrak{ab}} a_{\beta} q^{\beta}$ such that for some integer $n \gg 0$, we have $\ell_i(\beta) \geq -n$ whenever $a_{\beta} \neq 0$.

Over the ring $\Z(\!(\mathfrak{ab}, S)\!)$, we have the $g$-dimensional algebraic torus $\Gm \otimes \mathfrak{d}^{-1}\mathfrak{a}^{-1}$ together with an $\Sh{O}_L$-linear group homomorphism $\underline{q} \colon \mathfrak{b} \to \Gm \otimes \mathfrak{d}^{-1}\mathfrak{a}^{-1}$ defined as follows. To give such a group homomorphism is the same as giving an $\Sh{O}_L$-linear group homomorphism $\mathfrak{ab} \to \Gm \otimes \mathfrak{d}^{-1}$. This is equivalent to giving a group homomorphism $\mathfrak{ab} \to \Gm$ which we define to be $\beta \mapsto q^{\beta} \in \Gm(\Z(\!(\mathfrak{ab}, S)\!))$. The rigid analytic quotient $\Gm \otimes \mathfrak{d}^{-1}\mathfrak{a}^{-1}/\underline{q}(\mathfrak{b})$ is algebraizable to a Hilbert--Blumenthal abelian variety denoted $\Tate{a}{b}$ over $\Z(\!(\mathfrak{ab}, S)\!)$ which carries a canonical $\mathfrak{c} = \mathfrak{a}\mathfrak{b}^{-1}$ polarization
\[
\lambda_{\textrm{can}} \colon \Tate{a}{b}^{\vee} \xrightarrow{\sim} \Tate{a}{b} \otimes \mathfrak{a}\mathfrak{b}^{-1} \simeq \Tate{b}{a}.
\]

We quickly recall that there exists canonical isomorphisms as follows \cite[(1.1.17), (1.1.18)]{KatzNicholasM1978pLfC}.

\begin{enumerate}
    \item $\omega_{\Tate{a}{b}} \simeq \mathfrak{a} \otimes \Z(\!(\mathfrak{ab}, S)\!)$; \hspace{1pt} $\omega^{\vee}_{\Tate{a}{b}} \simeq \mathfrak{d}^{-1}\mathfrak{a}^{-1} \otimes \Z(\!(\mathfrak{ab}, S)\!)$.
    \item $\Omega^1_{\Z(\!(\mathfrak{ab}, S)\!)} \simeq \mathfrak{ab} \otimes \Z(\!(\mathfrak{ab}, S)\!)$; \hspace{1pt} $\Der\Big(\Z(\!(\mathfrak{ab}, S)\!), \Z(\!(\mathfrak{ab}, S)\!)\Big) \simeq \mathfrak{d^{-1}a^{-1}b^{-1}} \otimes \Z(\!(\mathfrak{ab}, S)\!)$.
\end{enumerate}

We now base change to $\Lambda^0_{\alpha, I}$, so that $\Tate{a}{b}$ is defined over $R := \Lambda^0_{\alpha, I}(\!(\mathfrak{ab}, S)\!)$.

For simplicity assume $\mathfrak{a}, \mathfrak{b}$ are coprime to $p$. Everything that follows holds true with appropriate modifications in the general case by choosing an isomorphism $\Sh{O}_L \otimes \Z_p \simeq \mathfrak{a}^{-1} \otimes \Z_p$ which amounts to choosing a $\Gamma_{00}(p^{\infty})$-structure on $\Tate{a}{b}$ \cite[(1.1.15)]{KatzNicholasM1978pLfC}. When $\mathfrak{a}$ is coprime to $p$, we have the natural equality $\Sh{O}_L \otimes \Z_p = \mathfrak{a}^{-1} \otimes \Z_p$ inside $L \otimes \Q_p$ which induces a canonical $\Gamma_{00}(p^{\infty})$-structure on $\Tate{a}{b}$.

For any $\sigma \in \Sigma$, let $e_{\sigma} \in \Sh{O}_L \otimes R$ be the corresponding idempotent. Let $\omega_{\textrm{can}}(\sigma)$ be the image of $e_{\sigma} \in \mathfrak{a} \otimes R = \Sh{O}_L \otimes R$ under the canonical identification $\omega_{\Tate{a}{b}} \simeq \mathfrak{a} \otimes R$. Let $\Theta_{\sigma} = KS(\omega_{\textrm{can}}^{\otimes 2}(\sigma))$ be the corresponding Kodaira--Spencer class. Then $\Theta_{\sigma}$ is the image of $e_{\sigma} \in \Sh{O}_L \otimes R$ under the identification $\Omega^1_{R/\Lambda^0_{\alpha, I}} \simeq \mathfrak{ab} \otimes R = \Sh{O}_L \otimes R$. The homomorphism $e_{\sigma}^{\vee} \colon \Omega^1_{R/\Lambda^0_{\alpha, I}} \to R$ that is dual to $e_{\sigma}$, induces the derivation $\theta_{\sigma} \in \Der(R, R)$ defined as 
\[
\theta_{\sigma}\Big(\sum a_{\beta}q^{\beta}\Big) = \sum \sigma(\beta)a_{\beta}q^{\beta}.
\]

Having recalled generalities about the Tate objects, we go back to computing the effect of $\nabla_k$ on $q$-expansions. 

Let $\nabla(\omega_{\textrm{can}}(\sigma)) = \zeta_{\textrm{can}}(\sigma) \otimes \Theta_{\sigma}$. Then $\nabla(\zeta_{\textrm{can}}(\sigma)) = 0$. Let $\omega_{\textrm{can}}$ and $\zeta_{\textrm{can}}$ be the $\Sh{O}_L \otimes R$-basis of $\mathrm{H}^{\sharp}_{\Tate{a}{b}} = {H}^1_{\textrm{dR}}(\Tate{a}{b}/R)$, whose $\sigma$-components are $\omega_{\textrm{can}}(\sigma)$ and $\zeta_{\textrm{can}}(\sigma)$ respectively. With respect to this basis the matrix of $\nabla = (\nabla_{\sigma})_{\sigma}$ is thus given as follows.
\[
\nabla_{\sigma} = \begin{pmatrix}
0 & 0 \\
\Theta_{\sigma} & 0
\end{pmatrix}
\]
(Note $\nabla_{\sigma} \colon \mathrm{H}^{\sharp}_{\Tate{a}{b}}(\sigma) \to \mathrm{H}^{\sharp}_{\Tate{a}{b}}(\sigma) \hat{\otimes} \Omega^1_{R/\Lambda^0_{\alpha, I}}$ is just the $\sigma$ component of $\nabla$. In particular, it should not be confused with $\nabla_k(\sigma)$ of Definition \ref{D402}).

Let $\mathbb{W}^0_k(q)$ be the pullback of $\mathbb{W}^0_k$ to $\Spf{R}$ along the structure morphism defining $\Tate{a}{b}$ together with the canonical $\Gamma_{00}(p^n)$-structure defined as above. Then we can write $\mathbb{W}^0_k(q) \simeq R\langle \{V_{\sigma}\}_{\sigma} \rangle \cdot k(1+p^nZ)$ as in Corollary \ref{C302}. Then formula (\ref{eq:11}) gives us
\begin{align}\label{eq:12}
\begin{split}
    \nabla_k(\sigma)\left(a\prod_{\tau} V_{\tau}^{i_{\tau}}\cdot k(1+p^nZ)\right) &= \theta_{\sigma}(a)\prod_{\tau} V_{\tau}^{i_{\tau}} \cdot (k+2\sigma)(1 + p^nZ) \\ &+ p(u_{\sigma} - i_{\sigma})V_{\sigma}\prod_{\tau} V_{\tau}^{i_{\tau}} \cdot (k+2\sigma)(1 + p^nZ)
\end{split}
\end{align}
for any $a \in R$.

\begin{lemma}\label{L404}
For any $\sigma, \tau \in \Sigma$, the maps $\nabla_k(\sigma)$ and $\nabla_k(\tau)$ commute, i.e. \[\nabla_{k+2\sigma}(\tau) \circ \nabla_k(\sigma) = \nabla_{k+2\tau}(\sigma) \circ \nabla_k(\tau)\]
as maps $\mathbb{W}^0_k \to \mathbb{W}^0_{k+2\sigma+2\tau}[1/\alpha]$.
\end{lemma}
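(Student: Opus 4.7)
The plan is to verify the commutation directly on $q$-expansions via formula (\ref{eq:12}) and deduce the global statement from a $q$-expansion principle. The case $\sigma=\tau$ is trivial, so assume $\sigma\neq\tau$. Both compositions $\nabla_{k+2\sigma}(\tau)\circ\nabla_k(\sigma)$ and $\nabla_{k+2\tau}(\sigma)\circ\nabla_k(\tau)$ are $\Lambda^0_{\alpha,I}$-linear morphisms of sheaves $\mathbb{W}^0_k\to\mathbb{W}^0_{k+2\sigma+2\tau}[1/\alpha]$, so their difference is a well-defined morphism of sheaves. Since each filtered piece $\mathrm{Fil}_i\mathbb{W}^0_k$ is a locally free sheaf of finite rank on the normal formal scheme $\Xra{r}$, and since each connected component (indexed by the polarization class) has at least one cusp, a section is determined by its pullbacks to the Tate objects at a chosen cusp of each connected component. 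Hence it suffices to check the identity after pulling back to $\Spf R$ attached to any Tate object $\mathrm{Tate}_{\mathfrak{a},\mathfrak{b}}(q)$.

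On the Tate object, $\mathbb{W}^0_k(q)\simeq R\langle\{V_\mu\}_{\mu\in\Sigma}\rangle\cdot k(1+p^nZ)$, and formula (\ref{eq:12}) says that on a monomial $F(a,\mathbf{i},k):=aV^{\mathbf{i}}k(1+p^nZ)$ one has
\[
\nabla_k(\sigma)F(a,\mathbf{i},k)=F(\theta_\sigma(a),\mathbf{i},k+2\sigma)+p(u_\sigma-i_\sigma)F(a,\mathbf{i}+e_\sigma,k+2\sigma).
\]
Iterating (\ref{eq:12}) once more on each of the two output terms, while carefully tracking how the weight exponent $u$ and the monomial exponent $\mathbf{i}$ shift under the subsequent application of $\nabla(\tau)$, one computes that for $\sigma\neq\tau$ both orderings produce the symmetric expression
\begin{align*}
&F(\theta_\sigma\theta_\tau(a),\mathbf{i},k')+p(u_\tau-i_\tau)F(\theta_\sigma(a),\mathbf{i}+e_\tau,k')\\
&\quad+p(u_\sigma-i_\sigma)F(\theta_\tau(a),\mathbf{i}+e_\sigma,k')+p^2(u_\sigma-i_\sigma)(u_\tau-i_\tau)F(a,\mathbf{i}+e_\sigma+e_\tau,k'),
\end{align*}
with $k'=k+2\sigma+2\tau$, using the commutativity $\theta_\sigma\theta_\tau=\theta_\tau\theta_\sigma$ of the partial derivations on $R$.

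The crucial observation is the orthogonality of shifts for $\sigma\neq\tau$: replacing $k$ by $k+2\sigma$ changes the weight exponent only in the $\sigma$-component (so $u_\tau$ is untouched), and replacing $\mathbf{i}$ by $\mathbf{i}+e_\sigma$ changes the monomial exponent only at $\sigma$ (so $i_\tau$ is untouched). Were this not the case, Kronecker $\delta_{\sigma\tau}$ terms would appear and break the symmetry; they vanish identically precisely when $\sigma\neq\tau$, which is our hypothesis. The main obstacle is thus merely the careful bookkeeping of weight and index shifts through the two iterations, together with a clean statement of the $q$-expansion principle for $\mathbb{W}^0_k$; no deeper ingredient is required.
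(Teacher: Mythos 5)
Your proof is correct and takes essentially the same route as the paper: reduce to checking on $q$-expansions at Tate objects (which factors through the ordinary locus, so this is the same as the paper's combination of the density-of-ordinary-locus argument with the $q$-expansion principle), then verify the commutation by iterating formula (\ref{eq:12}). You fill in the explicit second-order computation and isolate the key point -- that for $\sigma\neq\tau$ the shifts $k\mapsto k+2\sigma$ and $\mathbf{i}\mapsto\mathbf{i}+e_\sigma$ do not touch $u_\tau$ or $i_\tau$ -- which the paper leaves implicit.
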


\begin{proof}
It is enough to check this on the ordinary locus $\X^{\textrm{ord}}$, as the ordinary locus is dense in $\Xra{r}$. On the ordinary locus the result follows by verifying on $q$-expansions using ($\ref{eq:12}$) and the $q$-expansion principle. (See also \cite[(2.1.14)]{KatzNicholasM1978pLfC})
\end{proof}

\begin{lemma}\label{L405}
Let $g(q) \in R$ and $N \geq 1$. Then we can write
\[
\nabla(\sigma)^N\Big( g(q) \prod_{\tau} V_{\tau}^{i_{\tau}} \cdot k(1 + p^nZ) \Big) = \sum_{j=0}^N p^j a_{N,k,i_{\sigma},j} \theta_{\sigma}^{N-j}(g(q)) V_{\sigma}^j \prod_{\tau} V_{\tau}^{i_{\tau}} \cdot (k+ 2N\sigma)(1 + p^nZ).
\]
Here $a_{N,k,i_{\sigma}, 0} = 1$ and for $j \geq 1$, we have
\[
a_{N,k,i_{\sigma}, j} = \binom{N}{j} \prod_{i=1}^{j-1} (u_{\sigma} - i_{\sigma} + N - 1 - i).
\]
\end{lemma}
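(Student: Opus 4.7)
The proof will proceed by induction on $N$. The base case $N=1$ is exactly formula (\ref{eq:12}): we have $a_{1,k,i_{\sigma},0}=1$ (the $\theta_{\sigma}$-contribution) and the coefficient of $pV_{\sigma}$ matches the claimed value at $N=1$.

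For the inductive step, I would write $\nabla(\sigma)^{N+1} = \nabla(\sigma) \circ \nabla(\sigma)^N$ and apply $\nabla(\sigma)$ term by term to the expansion at level $N$. Two observations are crucial here: first, the scalar coefficients $a_{N,k,i_{\sigma},j}$ lie in $\Lambda^0_{\alpha,I}$ and are therefore annihilated by the $\Lambda^0_{\alpha,I}$-derivation $\theta_{\sigma}$; second, when applying (\ref{eq:12}) to a term with weight $k+2N\sigma$ and $V_{\sigma}$-exponent $i_{\sigma}+j$, the effective parameters entering the formula are $u_{\sigma}+2N$ and $i_{\sigma}+j$, producing the factor $p(u_{\sigma}+2N - i_{\sigma}-j)$ in front of the new $V_{\sigma}$-multiplication term. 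Combining the ``$\theta$-branch" coming from the index $j$ term at level $N$ with the ``$V$-branch" coming from the index $j-1$ term yields the recursion
\[
a_{N+1,k,i_{\sigma},j} \;=\; a_{N,k,i_{\sigma},j} \;+\; \big(u_{\sigma} - i_{\sigma} + 2N - j + 1\big)\, a_{N,k,i_{\sigma},j-1}.
\]

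The remaining task is purely combinatorial: to verify that the closed form
$a_{N,k,i_{\sigma},j} = \binom{N}{j}\prod (u_{\sigma}-i_{\sigma}+N-1-i)$
satisfies this recursion. This reduces, after factoring out the common falling-factorial-type product, to a Pascal-style identity of the shape $\binom{N+1}{j} \cdot (\text{factor}) = \binom{N}{j}\cdot(\text{factor}_1) + \binom{N}{j-1}\cdot(\text{factor}_2)$ with explicit factors in $u_{\sigma}-i_{\sigma}$; this can be checked directly by expanding both sides, or more elegantly by recognizing it as a shifted version of Vandermonde's identity applied to the polynomial $(Y+N)_{(j)}$ in the variable $Y = u_{\sigma}-i_{\sigma}$.

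There is no real obstacle in this lemma: the content is a routine but careful bookkeeping exercise, and the only place one could slip is in correctly tracking how the weight shift $k \mapsto k+2N\sigma$ and the exponent shift $i_{\sigma} \mapsto i_{\sigma}+j$ propagate into the $p(u_{\sigma}-i_{\sigma})$ factor from (\ref{eq:12}) at each inductive step. The factors of $p$ are automatic from (\ref{eq:12}), so the $V$-branch of the recursion contributes $p \cdot p^{j-1} = p^j$, confirming that the power of $p$ in front of $a_{N+1,k,i_{\sigma},j}$ is indeed $j$, as claimed.
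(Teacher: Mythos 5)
Your inductive strategy is the natural one and is, as far as one can tell, what the paper intends by citing ``a similar computation as [Lemma 3.38]'' of Andreatta--Iovita; the recursion you derive,
\[
a_{N+1,k,i_{\sigma},j} \;=\; a_{N,k,i_{\sigma},j} \;+\; \bigl(u_{\sigma} - i_{\sigma} + 2N - j + 1\bigr)\, a_{N,k,i_{\sigma},j-1},
\]
is correct, and your two key observations (that the coefficients lie in $\Lambda^0_{\alpha,I}$ and are hence killed by $\theta_{\sigma}$, and that the effective parameters shift to $u_{\sigma}+2N$ and $i_{\sigma}+j$) are exactly the right ones to track.

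However, you assert that ``the coefficient of $pV_{\sigma}$ matches the claimed value at $N=1$'' without actually checking it, and in fact it does \emph{not} match the formula as printed. Taking the lemma literally, $a_{1,k,i_{\sigma},1} = \binom{1}{1}\prod_{i=1}^{0}(\cdots) = 1$ (empty product), whereas formula (\ref{eq:12}) gives $u_{\sigma}-i_{\sigma}$. The product in the closed form should run $i = 0,\dots,j-1$, giving $j$ factors (consistent with $a_{N,k,i_{\sigma},j}$ being a degree-$j$ polynomial in $u_{\sigma}-i_{\sigma}$), not $i = 1,\dots,j-1$. With that correction both the base case and the recursion check out: writing $P = \prod_{i=1}^{j-1}(Y+N-i)$ with $Y = u_{\sigma}-i_{\sigma}$, the recursion reduces after cancellation to the elementary identity $\binom{N+1}{j}(N+1-j) = (N+1)\binom{N}{j}$, which is your ``Pascal-style'' step. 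So your proof plan is sound; the flaw is that you should have noticed the base case actually fails under a literal reading of the stated closed form, which is a typo you need to correct before the induction can close.
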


\begin{proof}
This is a similar computation as \cite[Lemma 3.38]{andreatta2021triple}.
\end{proof}

Let ${\mathbb{W}}^0_k(\sigma) := \sum_n \mathbb{W}^0_{k+2n\sigma}$ and let $\mathbb{W}^0 = \rho_*\Sh{O}_{\V{\Sh{O}_L}(\mathrm{H}^{\sharp}_{\Sh{A}}, s, \Sh{Q})}$ where $\rho \colon \V{\Sh{O}_L}(\mathrm{H}^{\sharp}_{\Sh{A}}, s, \Sh{Q}) \to \Xra{r}$ is the projection. Let $\mathbb{W}_k(\sigma) = \mathbb{W}^0_k(\sigma) \otimes \w{\chi}$ and let $\mathbb{W} = \mathbb{W}^0 \otimes \mathfrak{F}$. Here $\mathfrak{F} := (f_i)_*\Sh{O}_{\Ig{i}{r}} \otimes_{\Lambda^0_{\alpha,I}} \Lambda_{\alpha,I}$, where $f_i \colon \Ig{i}{r} \to \Xra{r}$ is the projection with $i = 1$ for $p \neq 2$ and $i = 2$ otherwise. For each prime $\mathfrak{P}|p$, we have defined the $U_{\mathfrak{P}}$ operator in the next section (Definition \ref{D3101}) which we will use now. 

\begin{cor}\label{C403}
Let $f_{\sigma}$ be the inertia degree for the embedding $\sigma$, and suppose that $\sigma$ induces the $\mathfrak{P}$-adic valuation on $\Sh{O}_L$ for a prime $\mathfrak{P}|p$. Let $k = \chi \cdot k^0$ with $\chi = k|_{\Delta}$ the torsion part of the character and $k^0 = k\chi^{-1}$. Assume $k^0_{\sigma}(t) = \exp(u_{\sigma}\log{t})$  for $t \in 1+\beta_n\Ga$ and $u_{\sigma} \in \Lambda^0_{\alpha,I}$. Let $\bar{\mathfrak{M}}_{\alpha,I}^{\textrm{ord}} = \X^{\textrm{ord}} \times_{\mathfrak{W}^0_{\alpha,I}} \mathfrak{W}_{\alpha,I}$. For any $g \in H^0(\bar{\mathfrak{M}}_{\alpha,I}^{\textrm{ord}}, \mathbb{W}_k)^{U_{\mathfrak{P}} = 0}$, \[(\nabla(\sigma)^{p^{f_{\sigma}} - 1}- \textrm{{id}})(g) \in pH^0\big(\bar{\mathfrak{M}}_{\alpha,I}^{\textrm{ord}},  {\mathbb{W}}\big) \cap H^0\big(\bar{\mathfrak{M}}_{\alpha,I}^{\textrm{ord}}, \mathbb{W}_k(\sigma)\big).\]
\end{cor}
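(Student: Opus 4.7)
The plan is to verify the congruence after pulling back to the Tate objects $\Tate{a}{b}$ at every cusp of every connected component, and then invoke the $q$-expansion principle. Since the formula of Lemma~\ref{L405} is already formulated in the Tate context, the reduction is direct: fix $\mathfrak{a}, \mathfrak{b}$ coprime to $p$ (other cusps are handled identically after the usual $\Gamma_{00}(p^\infty)$-modification). On the pullback of $\mathbb{W}_k$ to $\Spf R$ with $R = \Lambda_{\alpha,I}(\!(\mathfrak{ab}, S)\!)$, Corollary~\ref{C302} gives a unique expansion
\[
g = \sum_{\mathbf{i}} g_{\mathbf{i}}(q) \prod_\tau V_\tau^{i_\tau} \cdot k(1 + p^n Z), \qquad g_{\mathbf{i}} = \sum_{\beta \in \mathfrak{ab}_S} a_{\mathbf{i},\beta}\, q^\beta \in R.
\]
Since $\mathfrak{ab}$ is coprime to $p$, the $U_{\mathfrak{P}}$-depletion hypothesis becomes $a_{\mathbf{i},\beta} = 0$ whenever $v_{\mathfrak{P}}(\beta) \geq 1$, so every surviving $\beta$ is a $\mathfrak{P}$-adic unit.

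Applying Lemma~\ref{L405} with $N = p^{f_\sigma} - 1$ term-by-term, the contribution with $j \geq 1$ carries a factor of $p^j$ and is visibly in $p\mathbb{W}$. For $j = 0$, using $a_{N,k,i_\sigma,0} = 1$ together with $(k + 2N\sigma)(1+p^n Z) = k(1+p^n Z)\cdot (1+p^n Z_\sigma)^{2N}$ and the congruence $(1+p^n Z_\sigma)^{2N} \equiv 1 \pmod{p}$ (valid since $n \geq 1$), the whole problem reduces modulo $p$ to the identity
\[
\nabla(\sigma)^N(g) - g \equiv \sum_{\mathbf{i}} \bigl(\theta_\sigma^{N}(g_{\mathbf{i}}) - g_{\mathbf{i}}\bigr) \prod_\tau V_\tau^{i_\tau} \cdot k(1+p^n Z) \pmod{p}.
\]

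On $q$-expansions, $\theta_\sigma^{N}(g_{\mathbf{i}}) - g_{\mathbf{i}} = \sum_{\beta}\bigl(\sigma(\beta)^{p^{f_\sigma} - 1} - 1\bigr)\, a_{\mathbf{i}, \beta}\, q^\beta$. For each $\beta$ with $a_{\mathbf{i},\beta} \neq 0$, $\sigma(\beta)$ is a unit in $\sigma(\Sh{O}_{L,\mathfrak{P}}) \subset \Sh{O}_K$; the residue field of $\sigma(\Sh{O}_{L,\mathfrak{P}})$ is $\mathbb{F}_{p^{f_\sigma}}$, so Fermat's little theorem gives $\sigma(\beta)^{p^{f_\sigma} - 1} \equiv 1 \pmod{p}$, whence $\theta_\sigma^{N}(g_{\mathbf{i}}) \equiv g_{\mathbf{i}} \pmod{p}$. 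This shows $(\nabla(\sigma)^{p^{f_\sigma} - 1} - \textrm{id})(g) \in p\mathbb{W}$ after restriction to each Tate chart. The $q$-expansion principle for the sheaf $\mathbb{W}/p$ over the normal formal scheme $\bar{\mathfrak{M}}^{\textrm{ord}}_{\alpha,I}$ (each connected component of which meets a cusp) upgrades this to the corresponding global statement. The intersection with $H^0(\cdot, \mathbb{W}_k(\sigma))$ is automatic, because $g \in \mathbb{W}_k$ and $\nabla(\sigma)^N(g) \in \mathbb{W}_{k + 2N\sigma}$ are both summands of $\mathbb{W}_k(\sigma) = \sum_n \mathbb{W}_{k + 2n\sigma}$.

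The only real difficulty is the bookkeeping in the $j = 0$ term: recognizing that the $U_\mathfrak{P}$-depletion at the single prime $\mathfrak{P}$ corresponding to $\sigma$—rather than depletion at all primes above $p$—is exactly the hypothesis ensuring that $\sigma(\beta)$ is a unit in $\Sh{O}_K$, and hence that Fermat's little theorem applies with the specific exponent $p^{f_\sigma} - 1$. Everything else is a mechanical unwinding of Lemma~\ref{L405} and of the $q$-expansion principle.
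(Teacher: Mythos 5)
Your proof is correct and follows the same route as the paper: reduce via $q$-expansions at the Tate objects, invoke Lemma~\ref{L405}, observe that the $j\ge 1$ terms carry a factor $p^j$, and handle the $j=0$ term by a mod-$p$ congruence. You spell out the $j=0$ step (which the paper leaves as ``clear'') by checking $(1+p^nZ_\sigma)^{2N}\equiv 1 \pmod p$ and by noting that for $\beta$ surviving the $\mathfrak{P}$-depletion, $\sigma(\beta)$ reduces to a nonzero element of the subfield $\mathbb{F}_{p^{f_\sigma}}$ of the residue field, so Fermat gives $\sigma(\beta)^{p^{f_\sigma}-1}\equiv 1\pmod p$; this is precisely the content the paper compresses into one sentence. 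The one small point you skim over, which the paper spends two sentences on, is that $\mathbb{W}_k = \mathbb{W}^0_{k^0}\otimes\w{\chi}$ and that the pullback to a Tate chart decomposes as a tuple of sections indexed by $(\Sh{O}_L/q\Sh{O}_L)^\times$ (so the $q$-expansion is $k^0(1+p^nZ)$, not $k(1+p^nZ)$, and the connection on $\w{\chi}$ is just the universal derivation on each copy of $R$); this is a minor bookkeeping step and does not affect the substance of your argument.
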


\begin{proof}
We recall that $\mathbb{W}_k = \mathbb{W}^0_{k^0} \otimes \w{\chi}$ and the connection $\nabla$ on $\mathbb{W}_k$ is defined by the composite of the connection on $\mathbb{W}^0_{k^0}$ as defined in Theorem \ref{T401} and the connection on $\w{\chi}$ which is defined by the universal derivation on $\Sh{O}_{\Ig{i}{r}}$ for $i$ as above. Let $\Lambda_{\alpha,I}(\!(\mathfrak{ab}, S)\!) = \Lambda_{\alpha,I} \otimes_{\Lambda^0_{\alpha,I}} R$. The base change of $\Spf{\Lambda_{\alpha,I}}(\!(\mathfrak{ab}, S)\!)$ to $\Ig{i}{r} \to \Xra{r}$ is just copies of $\Spf{\Lambda_{\alpha,I}}(\!(\mathfrak{ab}, S)\!)$ indexed by $(\Sh{O}_L/q\Sh{O}_L)^{\times}$. Hence the universal derivation on ${\Lambda_{\alpha,I}}(\!(\mathfrak{ab}, S)\!) \otimes_{\Sh{O}_{\Xra{r}}} \Sh{O}_{\Ig{i}{r}}$ is determined by the universal derivation on ${\Lambda_{\alpha,I}}(\!(\mathfrak{ab}, S)\!)$. The $q$-expansion of any section $g \in H^0(\bar{\mathfrak{M}}_{r, \alpha,I}, \mathbb{W})^{U_{\mathfrak{P}}=0}$ at $\Tate{a}{b}$ corresponds to a tuple $(g_j)_{j \in (\Sh{O}_L/q\Sh{O}_L)^{\times}}$ with $g_j \in \Lambda_{\alpha,I}(\!(\mathfrak{ab}, S)\!)\langle \{V_{\sigma}\}_{\sigma}\rangle \cdot k^0(1+p^nZ)$. Moreover, for each $j$, $U_{\mathfrak{P}}(g_j) = 0$. By the $q$-expansion principle it will be enough to prove the corollary for $g = g(q)\prod_{\tau} V^{i_{\tau}}_{\tau}\cdot k^0(1+p^nZ)$ for $g(q) \in \Lambda_{\alpha,I}(\!(\mathfrak{ab}, S)\!)$, such that $g(q)$ is $\mathfrak{P}$-depleted. By Lemma \ref{L405}, it is enough to show $\theta_{\sigma}^{p^{f_{\sigma}}-1}(g(q)) \cdot (k+2(p^{f_{\sigma}}-1)\sigma)(1+p^nZ) \equiv g(q) k(1+p^nZ) \textrm{ mod }p$, which is clear.
\end{proof}

\subsection{Iteration of \texorpdfstring{$\nabla$}{the connection}}\label{2S44}
In this section we will finally define the $p$-adic iteration of the Gauss--Manin connection. We begin with a preparatory lemma.

\begin{lemma}\label{L406}
For $I = [p^a, p^b]$, 
\begin{enumerate}
    \item $\Lambda^0_{\alpha,I} = \Sh{O}_K[[T_1, \dots, T_g]]\langle \frac{p}{\alpha}, \frac{T_1}{\alpha}, \dots, \frac{T_g}{\alpha}, u, v\rangle/({\alpha}^{p^a}v - p, uv - {\alpha}^{p^{b-a}})$ if $b \neq \infty$.
    \item $\Lambda^0_{\alpha, I} = \Sh{O}_K[[T_1, \dots, T_g]]\langle \frac{p}{\alpha}, \frac{T_1}{\alpha}, \dots, \frac{T_g}{\alpha}, u\rangle/({\alpha}^{p^a}u - p)$ if $b = \infty$.
\end{enumerate}
Let $U := \Spf{A}$ be a Zariski open in $\mathfrak{X}$ where $\omega_{\Sh{A}}$ is trivial. Then 
\[
U \times_{\mathfrak{X}} \Xr{r} = \Spf{A \hat{\otimes} \Lambda^0_{\alpha,I}\langle w \rangle/(w\Hdg{p^{r+1}}} - \alpha).
\]
\end{lemma}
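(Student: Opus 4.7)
The proof of both parts is a direct calculation in the formalism of admissible formal blow-ups and rational localizations of Tate--Huber pairs; I will outline the key steps.

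For part (1), I would begin by writing down the chart $\mathfrak{W}^0_\alpha$ of the admissible blow-up of $\mathfrak{W}^0 = \Spf \Sh{O}_K[[T_1,\dots,T_g]]$ along $\mathfrak{m} = (p, T_1, \ldots, T_g)$ as
$$B^0_\alpha = \Sh{O}_K[[T_1, \ldots, T_g]]\langle \tfrac{p}{\alpha}, \tfrac{T_1}{\alpha}, \ldots, \tfrac{T_g}{\alpha}\rangle,$$
modulo $\alpha$-torsion, which vanishes since $\alpha$ is a nonzerodivisor there. The subspace $\Sh{W}^0_{\alpha, I}$ is then the rational localization of $\Sh{W}^0_\alpha$ cut out by $|p| \leq |\alpha|^{p^a}$ (always) and, when $b < \infty$, also by $|\alpha|^{p^b} \leq |p|$. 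By the standard description of rational subdomains of affinoid adic spaces, the first inequality is realized by adjoining a power-bounded element $v$ with relation $\alpha^{p^a} v = p$, and the second by an additional power-bounded element $u$ together with a relation of the form $uv = \alpha^{p^b - p^a}$. Passing to the $+$-integral structure yields the stated presentation in case (1); in case (2) (i.e.\ $b = \infty$), the second adjunction is dropped and the remaining generator is renamed $u$.

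For part (2), I would pick a local lift $\mathrm{Hdg} \in A$ of the Hasse invariant on the trivializing open $U = \Spf A \subset \mathfrak{X}$, so that $\mathrm{Hdg}_r = (\alpha, \mathrm{Hdg}^{p^{r+1}})$ on $U \times_\mathfrak{X} \X = \Spf(A \hat\otimes \Lambda^0_{\alpha, I})$. The chart of the blow-up of this ideal on which the inverse image ideal is generated by $\mathrm{Hdg}^{p^{r+1}}$ is, by the universal property of admissible blow-ups, obtained by adjoining a power-bounded generator $w$ satisfying $w \mathrm{Hdg}^{p^{r+1}} = \alpha$, which gives the presentation claimed.

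The main technical point to be careful about is verifying that $\alpha$ and $\mathrm{Hdg}$ remain nonzerodivisors throughout the constructions, so that the ``modulo torsion'' clauses implicit in admissible blow-up presentations collapse and one obtains the clean Tate-algebra presentations stated. This will follow from the normality of the intermediate rings (cf.\ the normality of $\Xra{r}$ and $\Ig{n}{r}$ noted earlier in the paper) together with the fact that both $\alpha$ and $\mathrm{Hdg}$ cut out nowhere-dense zero loci in the generic fibres.
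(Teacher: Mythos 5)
Your argument is correct and is the standard one that the cited source (\cite[\S 3.4.1]{andreatta2016adic}) uses; the paper's proof is just that citation, so your outline supplies exactly the expected details: identify the chart $B^0_\alpha$ of the admissible blow-up, realize $\Sh{W}^0_{\alpha,I}$ as a rational localization by adjoining the appropriate power-bounded quotients, and for part (2) repeat with the ideal $\mathrm{Hdg}_r = (\alpha, \Hdg{p^{r+1}})$. Your remark about nonzerodivisors and normality is the right technical point to keep track of.

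One thing worth flagging: you write ``the relation of the form $uv = \alpha^{p^b - p^a}$ \ldots\ yields the stated presentation,'' but the lemma as printed has $uv = \alpha^{p^{b-a}}$. These are not the same. Your computation is the correct one: with $v = p/\alpha^{p^a}$ (so $\alpha^{p^a}v = p$) and $u = \alpha^{p^b}/p$ (enforcing $|\alpha|^{p^b}\leq|p|$), one gets $uv = \alpha^{p^b}/\alpha^{p^a} = \alpha^{p^b - p^a}$; and combining with the first relation, $up = uv\alpha^{p^a} = \alpha^{p^b}$, consistent. For instance, with $a=0$, $b=1$ the correct exponent is $p-1$, not $p$. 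So the exponent $p^{b-a}$ in the lemma appears to be a typo for $p^b - p^a$, and your derivation in fact corrects it; you should say so explicitly rather than assert that it ``yields the stated presentation.''
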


\begin{proof}
See \cite[\S 3.4.1]{andreatta2016adic}.
\end{proof}

\begin{lemma}\label{L407}
Let $C_1 = \#(\Sh{O}_L/p\Sh{O}_L)^{\times}$. Then the kernel of the restriction map $\Sh{O}_{\Ig{1}{r}}/(\alpha^j) \xrightarrow{\phi_1} \Sh{O}_{\mathfrak{IG}^{\textrm{\emph{ord}}}_{1,I}}/(\alpha^j)$
is killed by $\Hdg{j(p^{r+1}) + C}$. The kernel of the restriction map $\Sh{O}_{\Ig{n}{r}}/(\alpha^j) \xrightarrow{\phi_n} \Sh{O}_{\mathfrak{IG}_{n,I}^{\textrm{ord}}}/(\alpha^j)$ is killed by $\Hdg{j(p^{r+1})+C_n}$ where $C_n = C_1 + \frac{p^n-p}{p-1}$.
\end{lemma}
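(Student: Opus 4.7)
My plan has three steps. The first step is to identify the kernel of $\phi_n$ with a $\Hdg{}$-torsion submodule. Specifically, I would show that the natural morphism $\mathfrak{IG}^{\textrm{ord}}_{n,I} \to \Ig{n}{r}$ identifies $\mathfrak{IG}^{\textrm{ord}}_{n,I}$ with the open formal subscheme of $\Ig{n}{r}$ on which $\Hdg{}$ is invertible. This is a formal consequence of the universal properties: over the ordinary locus $\mathfrak{X}^{\textrm{ord}}_{\alpha,I} = \Xra{r}[1/\Hdg{}]$ the canonical subgroup $H_n$ coincides with the connected-\'{e}tale kernel of $\Sh{A}[p^n]$, so $H_n^{\vee}$ is already \'{e}tale and the two formal Igusa towers parametrizing its trivializations agree. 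It follows that $\ker(\phi_n)$ equals the $\Hdg{}$-power-torsion of $\Sh{O}_{\Ig{n}{r}}/(\alpha^j)$, and the problem reduces to bounding the exponent of $\Hdg{}$ annihilating this torsion.

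The second step treats the base case $n=1$. Working on a local affine chart $\Spf A \subset \mathfrak{X}_{\alpha,I}$ on which $\omega_{\Sh{A}}$ is free and $\Hdg{} = (\eta)$ is principal, Lemma \ref{L406} gives $\Xra{r}|_U = \Spf A\langle w\rangle/(w\eta^{p^{r+1}} - \alpha)$, so that $\alpha^j = w^j \eta^{jp^{r+1}}$. The summand $jp^{r+1}$ of the bound appears already at this level: the $\eta$-torsion of $\Sh{O}_{\Xra{r}}/(\alpha^j)$, generated by $(w^j)$, is killed by $\eta^{jp^{r+1}}$. The generic-fibre cover $\Sh{IG}_{1,r,I} \to \xra{r}$ is finite \'{e}tale Galois with group $(\Sh{O}_L/p\Sh{O}_L)^{\times}$ of order $C_1$, and since $\gcd(C_1, p) = 1$ the integral cover $\Ig{1}{r} \to \Xra{r}$ is at worst tamely ramified along the non-ordinary divisor $V(\eta)$, with ramification indices dividing $C_1$. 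Standard tame-ramification theory yields $\Hdg{C_1}\Sh{O}_{\Xra{r}} \subset \mathrm{Tr}_{h_1}((h_1)_*\Sh{O}_{\Ig{1}{r}})$, which promotes the $\eta^{jp^{r+1}}$ torsion bound from $\Sh{O}_{\Xra{r}}/(\alpha^j)$ up to $\eta^{jp^{r+1}+C_1}$ on $\Sh{O}_{\Ig{1}{r}}/(\alpha^j)$.

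The third step is induction on $n \geq 2$. For $x \in \ker(\phi_n)$, functoriality of the restriction with respect to the trace of $h \colon \Ig{n}{r} \to \Ig{n-1}{r}$ gives $\mathrm{Tr}_h(x) \in \ker(\phi_{n-1})$, so the inductive hypothesis yields $\Hdg{jp^{r+1}+C_{n-1}}\mathrm{Tr}_h(x) = 0$. The Galois group of $h$ is $\ker\bigl((\Sh{O}_L/p^n\Sh{O}_L)^{\times} \to (\Sh{O}_L/p^{n-1}\Sh{O}_L)^{\times}\bigr)$, a $p$-group of order $p^g$, so the ramification is now \emph{wild}; the essential input controlling this is Lemma \ref{L304}, which gives $\Hdg{p^{n-1}}\Sh{O}_{\Ig{n-1}{r}} \subset \mathrm{Tr}_h$, equivalently the trace-sections $c_n \in \Hdg{-(p^n-p)/(p-1)}\Sh{O}_{\Ig{n}{r}}$ of Lemma \ref{L305} satisfying $\mathrm{Tr}_h(c_n) = c_{n-1}$. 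Using these sections to transfer the annihilator of $\mathrm{Tr}_h(x)$ to an annihilator of $x$ itself contributes an extra factor of $\Hdg{p^{n-1}}$ to the bound, yielding the recursion $C_n = C_{n-1} + p^{n-1}$, which sums to $C_n = C_1 + (p^n-p)/(p-1)$ as claimed. The main technical obstacle is precisely this wild-ramification lifting: a naive Galois-theoretic decomposition $x - c_n\mathrm{Tr}_h(x)$ does not lie in a simpler submodule, and one has to use the ``almost-free'' presentation of $\Sh{O}_{\Ig{n}{r}}/(\alpha^j)$ over $\Sh{O}_{\Ig{n-1}{r}}/(\alpha^j)$ afforded by the $c_i$'s and carefully track how $\Hdg{}$-denominators interact with the reduction modulo $\alpha^j$.
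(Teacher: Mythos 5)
Your steps 1 and 2 (the base case $n=1$) track the paper's argument: the explicit presentation of Lemma~\ref{L406} gives the $\Hdg{jp^{r+1}}$ bound on $\ker\phi_0$, and the trace map together with the tame-ramification control on the inverse different $\mathfrak{D}^{-1}(\Ig{1}{r}/\Xra{r})$ promotes this to $\Hdg{jp^{r+1}+C_1}$ on $\ker\phi_1$. The divergence, and the gap, is in step 3.

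Your inductive step proposes to repeat the trace argument along $h\colon \Ig{n}{r}\to\Ig{n-1}{r}$, with Lemma~\ref{L304} (namely $\Hdg{p^{n-1}}\subset \mathrm{Tr}_h(\Sh{O}_{\Ig{n}{r}})$) playing the role that tame ramification played in the base case. But the trace argument requires a bound on the \emph{inverse different} — one needs $\Hdg{p^{n-1}}\mathfrak{D}^{-1}(\Ig{n}{r}/\Ig{n-1}{r})\subset\Sh{O}_{\Ig{n}{r}}$ — whereas Lemma~\ref{L304} controls only the \emph{image of the trace map}, and these are not interchangeable once ramification is wild (already for an Eisenstein extension $B = A[\pi]/(\pi^p - \varpi_A)$ one has $\mathrm{Tr}(B)=(p)$ while $p\,\mathfrak{D}_{B/A}^{-1}\not\subset B$). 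You flag this yourself as "the main technical obstacle"; the almost-free presentation you gesture at is not carried out, and the argument does not close as written. The paper avoids the issue entirely: it realizes $\Ig{n}{r}$ as the normalization of $\mathfrak{Y} := \Ig{1}{r}\times_{H_1^{\vee}} H_n^{\vee}$, transfers the $n=1$ annihilator bound to $\mathfrak{Y}$ by flatness of $H_n^{\vee}\to H_1^{\vee}$, and then feeds in the cited computation $\mathfrak{D}(H_n^{\vee}/H_1^{\vee})\supset \Hdg{(p^n-p)/(p-1)}$ together with the general fact that normalization lies in the inverse different. The constant $C_n = C_1 + (p^n-p)/(p-1)$ thus comes from a single different computed at the level of the canonical subgroup; your telescoping recursion $C_n = C_{n-1}+p^{n-1}$ reproduces the same number, but the per-step deduction from Lemma~\ref{L304} is not justified.
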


\begin{proof}
The formulas of Lemma \ref{L406} show that the kernel of $\Sh{O}_{\Xr{r}}/(\alpha^j) \xrightarrow{\phi_0} \Sh{O}_{\mathfrak{X}^{\textrm{ord}}_{\alpha,I}}/(\alpha^j)$ is killed by $\Hdg{j(p^{r+1})}$. The trace map $\mathrm{Tr} \colon \Sh{O}_{\Ig{1}{r}} \to \Sh{O}_{\Xr{r}}$ then gives a commutative diagram as follows.
\[
\begin{tikzcd}
0 \arrow[r] & \ker{\phi_1} \arrow[r] \arrow[d] & \Sh{O}_{\Ig{1}{r}}/(\alpha^j) \arrow[r] \arrow[d, "\mathrm{Tr}"] & {\Sh{O}_{\mathfrak{IG}^{\textrm{ord}}_{1,I}}/(\alpha^j)} \arrow[d, "\mathrm{Tr}"] \\
0 \arrow[r] & \ker{\phi_0} \arrow[r]           & \Sh{O}_{\Xr{r}}/(\alpha^j)  \arrow[r]                                     & {\Sh{O}_{\mathfrak{X}^{\textrm{ord}}_{\alpha, I}}/(\alpha^j)}                    
\end{tikzcd}
\]
Suppose $x \in \ker{\phi_1}$. Then $\mathrm{Tr}(x) \in \ker{\phi_0}$ and hence $\mathrm{Tr}(\Hdg{j(p^{r+1})}x) = 0$. In other words, for any lift $\tilde{x} \in \Sh{O}_{\Ig{1}{r}}$ of $x$, $\mathrm{Tr}(\Hdg{j(p^{r+1})}\tilde{x}) \in \alpha^j\Sh{O}_{\Xr{r}}$. Let $\mathfrak{D}^{-1} := \{y \in \textrm{Frac}(\Sh{O}_{\Ig{1}{r}}) \, | \, \mathrm{Tr}(yz) \in \Sh{O}_{\Xr{r}} \textrm{ for all } z \in \Sh{O}_{\Ig{1}{r}}\}$. Then $\Hdg{j(p^{r+1})}\tilde{x} \in \alpha^j\mathfrak{D}^{-1}$ as $\ker{\phi_1}$ is an ideal. By using normality of the rings involved, the first claim then follows by localizing at height 1 primes and noting that $\mathfrak{D}^{-1}$ is the usual inverse different in such tamely ramified extensions of DVR's. For the second part, we note that $\Ig{n}{r}$ is the normalization of $\mathfrak{Y} := \Ig{1}{r}\times_{H^{\vee}_1} H_n^{\vee}$ where $\Ig{1}{r} \to H_1^{\vee}$ is the universal generator of $H_1^{\vee}$. The faithfully flat extension $H_n^{\vee} \to H_1^{\vee}$ has different $\mathfrak{D}(H_n^{\vee}/H_1^{\vee})$ that contains $\Hdg{\frac{p^n-p}{p-1}}$ \cite[Proposition 3.5]{Andreatta2018leHS}. By flatness the kernel of $\Sh{O}_{\mathfrak{Y}}/(\alpha^j) \to \Sh{O}_{\mathfrak{Y}^{\textrm{ord}}}/(\alpha^j)$ is killed by $\Hdg{j(p^{r+1}) + C_1}$. Since $\Ig{n}{r}$ is the normalization of $\mathfrak{Y}$, it is finite and in particular $\Ig{n}{r} \subset \mathfrak{D}^{-1}(\mathfrak{Y}/\Ig{1}{r})$. Thus $\Hdg{\frac{p^n-p}{p-1}}\Sh{O}_{\Ig{n}{r}} \subset \Sh{O}_{\mathfrak{Y}}$, which proves the second claim.
\end{proof}

\begin{lemma}\label{L2408}
Let $g_n \colon \Ig{n}{r} \to \Xra{r}$ be the projection. The kernel and cokernel of $g_n^*\Omega^1_{\Xra{r}/\Lambda^0_{\alpha,I}} \to \Omega^1_{\Ig{n}{r}/\Lambda^0_{\alpha,I}}$ is killed by a power of $\Hdg{}$. Let $\vartheta \colon \Ig{n}{r}' \to \Ig{n}{r}$ be the projection. The kernel of $\vartheta^*\Omega^1_{\Ig{n}{r}/\Lambda^0_{\alpha,I}} \to \Omega^1_{\Ig{n}{r}'/\Lambda^0_{\alpha,I}}$ is killed by a power of $\Hdg{}$.
\end{lemma}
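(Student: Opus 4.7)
My plan is to show that after inverting $\Hdg{}$, the indicated kernels and cokernel vanish, and to then appeal to the general fact that a coherent sheaf on $\Ig{n}{r}$ (resp.\ $\Ig{n}{r}'$) whose restriction to the open formal subscheme where the invertible ideal $\Hdg{}$ is a unit vanishes is automatically annihilated by a power of $\Hdg{}$. This last step is a standard local Noetherian argument: working in an affine chart where $\Hdg{}$ is generated by a single element $h$, the condition $\Sh{F}[h^{-1}] = 0$ on a coherent $\Sh{F}$ forces a uniform power $h^N$ to kill $\Sh{F}$.

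For the first claim, I would write down the fundamental four-term exact sequence of K\"ahler differentials
\[
0 \to K \to g_n^*\Omega^1_{\Xra{r}/\Lambda^0_{\alpha,I}} \to \Omega^1_{\Ig{n}{r}/\Lambda^0_{\alpha,I}} \to \Omega^1_{\Ig{n}{r}/\Xra{r}} \to 0,
\]
where $K$ denotes the kernel. The restriction of $g_n$ to the ordinary locus is a finite \'etale Galois cover with group $(\Sh{O}_L/p^n\Sh{O}_L)^{\times}$: over the ordinary locus $H_n$ is a multiplicative $\Sh{O}_L$-group scheme, so its Cartier dual $H_n^{\vee}$ is \emph{integrally} \'etale, and $\mathfrak{IG}^{\textrm{ord}}_{n,I}$ is precisely the space of trivializations of this \'etale group. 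Pullback of differentials along a finite \'etale morphism is an isomorphism, so both $K$ and $\Omega^1_{\Ig{n}{r}/\Xra{r}}$ vanish after restriction to $\mathfrak{IG}^{\textrm{ord}}_{n,I}$, and the general principle above gives the claim.

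For the second claim, $\vartheta$ is by construction the normalization of $\Ig{n}{r}$ in a finite \'etale cover on the analytic adic generic fibre (since $\Sh{A}[p^n]^{\vee}$ is an \'etale group scheme on the rigid analytic generic fibre in characteristic zero). Hence $\vartheta$ itself is \'etale after inverting $\alpha$, and the kernel $K'$ of $\vartheta^*\Omega^1_{\Ig{n}{r}/\Lambda^0_{\alpha,I}} \to \Omega^1_{\Ig{n}{r}'/\Lambda^0_{\alpha,I}}$ is at least annihilated by a power of $\alpha$. To sharpen this to annihilation by a power of $\Hdg{}$, I would analyze $\vartheta$ \'etale-locally on the ordinary locus: there, the trivialization of $\Sh{A}[p^n]^{\vee}$ reduces, via the Cartier dual of the already trivialized $H_n^{\vee}$, to adjoining coordinates satisfying monic polynomial relations coming from the multiplicative subgroup $H_n(\Sh{A}^{\vee}) \subset \Sh{A}[p^n]^{\vee}$. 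Such monogenic extensions induce injections on pulled-back K\"ahler differentials (the new generator $dy$ is not a pullback and not in the kernel of any relation involving pullbacks), so $K'$ vanishes on $\mathfrak{IG}'^{\textrm{ord}}_{n,I}$; combining this with the analogue for $\Ig{n}{r}'$ of Lemma \ref{L407}, one concludes that $K'$ is killed by a power of $\Hdg{}$.

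The main obstacle lies in the second part, since $\vartheta$ fails to be integrally \'etale over the ordinary locus (it is wildly ramified in the ``$p$-direction'', not the ``$\Hdg{}$-direction''). The required sharpening from ``power of $\alpha$'' to ``power of $\Hdg{}$'' rests on the local monogenic description of $\vartheta$ over the ordinary locus together with the injectivity of the pullback map on differentials along such an extension; once this is set up, the rest is a routine application of the general principle invoked for the first claim.
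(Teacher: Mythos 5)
Your strategy aligns with the paper's proof---both reduce to the ordinary locus and exploit the structure of the Igusa towers there (finite \'etale Galois cover for $g_n$, a torsor with a $\mu_{p^n}$-piece for $\vartheta$)---but the central ``general principle'' is stated incorrectly, and this gap affects both claims. You assert that a coherent sheaf on $\Ig{n}{r}$ whose restriction to the open formal subscheme where $\Hdg{}$ is a unit vanishes is automatically killed by a power of $\Hdg{}$, and justify it via the Noetherian fact that $\Sh{F}[h^{-1}]=0$ implies $h^N\Sh{F}=0$. However, restriction to that formal open is the \emph{completed} localization $\Sh{F}\otimes_R\widehat{R[h^{-1}]}$, not the algebraic localization $\Sh{F}[h^{-1}]$, and these genuinely differ on $\alpha$-adic formal schemes: for $R=\Z_p\langle x\rangle$, $\alpha=p$, $h=x$, $M=R/(x-p)$, the element $x-p=x(1-px^{-1})$ is a unit in $\Z_p\langle x,x^{-1}\rangle$, so $M\otimes_R\Z_p\langle x,x^{-1}\rangle=0$, yet $M\cong\Z_p$ with $x$ acting as $p$, so $x^NM=p^N\Z_p\neq 0$ for all $N$. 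The principle becomes valid once one adds that $\Sh{F}$ is $\alpha$-power torsion (then completed and algebraic localization agree); this does hold here since $g_n$ and $\vartheta$ become finite \'etale upon inverting $\alpha$, but you invoke it only for the second claim, while it is equally essential for the first.

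Separately, the justification that $K'$ vanishes over the ordinary locus (``the new generator $dy$ is not a pullback and not in the kernel of any relation involving pullbacks'') is too informal to check. The precise content, which is what the paper's appeal to the smoothness of $\Igord{n}$ is encoding, is: over the ordinary locus $\vartheta$ is fppf-locally of the form $A\to B=A[T]/(T^{p^n}-u)$ with $u\in A^{\times}$ and $A$ formally smooth over $\Lambda^0_{\alpha,I}$. The conormal relation is $p^nT^{p^n-1}\D T=\D u$; since $T\in B^{\times}$, the kernel of $B\otimes_A\Omega^1_{A/\Lambda^0_{\alpha,I}}\to\Omega^1_{B/\Lambda^0_{\alpha,I}}$ is $B[p^n]\cdot\D u$, which vanishes because $B$ is flat over $\Lambda^0_{\alpha,I}$ and hence $p$-torsion-free. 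This non-zero-divisor argument is the missing step; the appeal to an analogue of Lemma \ref{L407} is not needed once the $\alpha$-torsion point above is supplied.
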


\begin{proof}
This is similar to \cite[Lemma 3.3]{andreatta2021triple}. For the second part one views ${\mathfrak{IG}'}^{,\textrm{ord}}_{n,I}$ as a torsor over $\X^{\textrm{ord}}$ for the group 
$\left(\begin{smallmatrix} (\Sh{O}_L/p^n\Sh{O}_L)^{\times} & \mu_{p^n} \otimes \mathfrak{d}^{-1} \\ 0 & (\Sh{O}_L/p^n\Sh{O}_L)^{\times} \end{smallmatrix}\right)$, and argues as before using smoothness of $\Igord{n}$.
\end{proof}

\begin{ass}\label{A01}
Let $k \colon \mathbb{T}(\Z_p) \to {(\Lambda_{\alpha, I})}^{\times}$ be a weight such that $k = \chi k^0$ where $\chi = k|_{\Delta}$ is the finite part of the  character and $k^0 = k\chi^{-1}$. Assume that for all $\sigma \in \Sigma$, there exists $u_{\sigma} \in \Lambda^0_{\alpha, I}$, such that $k^0$ factors as 
\begin{align*}
k^0 \colon (\Sh{O}_L \otimes \Z_p)^{\times} \to (\Sh{O}_L \otimes \Sh{O}_K)^{\times} \simeq \prod_{\sigma} \Sh{O}_K^{\times} \xrightarrow{(k^0_{\sigma})_{\sigma}} {(\Lambda^0_{\alpha,I})}^{\times}
\end{align*}
with $k^0_{\sigma}(t) = \exp(u_{\sigma}\log{t})$ for all $t \in \Sh{O}_K^{\times}$. 

Let $s \colon \mathbb{T}(\Z_p) \to {(\Lambda^0_{\alpha,I})}^{\times}$ be a weight such that for all $\sigma \in \Sigma$, there exists $v_{\sigma} \in \Lambda^0_{\alpha,I}$ such that $s$ factors as
\[
s \colon (\Sh{O}_L \otimes \Z_p)^{\times} \to (\Sh{O}_L \otimes \Sh{O}_K)^{\times} \simeq \prod_{\sigma} \Sh{O}_K^{\times} \xrightarrow{(s_{\sigma})_{\sigma}} {(\Lambda^0_{\alpha,I})}^{\times}.
\]
with $s_{\sigma}(t) = \exp(v_{\sigma}\log{t})$ for all $t \in \Sh{O}_K^{\times}$.

In particular, we can take $\alpha = p$ and $I = [0,1]$ for $p\neq 2$, and $I = [0,1/2]$ for $p = 2$.
\end{ass} 

Note that the explicit description of the Gauss--Manin connection in (\ref{eq:11}) together with Lemma \ref{L2408} implies that there exists an integer $D$ such that $\nabla_k(\sigma)(\mathbb{W}_k) \subset \frac{1}{p\Hdg{D}}\mathbb{W}_{k+2\sigma}$ for all $\sigma$. Let $\nabla(\sigma) \colon \mathbb{W} \to \frac{1}{p\Hdg{D}}\mathbb{W}$ be the map defined by $\nabla(\sigma)|_{\mathbb{W}_k} = \nabla_k(\sigma)$. In particular, for all $N \geq 1$, 
\[
(\nabla(\sigma)^{p^{f_{\sigma}}-1} - \textrm{id})^N(\mathbb{W}_{k}) \subset \frac{1}{(p\Hdg{D})^{(p^{f_{\sigma}}-1)N}} \mathbb{W}_k(\sigma).
\]
\begin{lemma}\label{L408}
Let $\sigma \in \Sigma$ induce the $\mathfrak{P}$-adic valuation on $L$. There exists an integer $\ell$ depending on $r,n$ and $p$, and an integer $C > 0$, such that for any $g \in H^0(\bar{\mathfrak{M}}_{r,p,[0,1]}, \mathbb{W}_{k})^{U_{\mathfrak{P}}=0}$, and every positive integer $N$, we have
\begin{align*}
    \Big(\nabla(\sigma)^{p^{f_{\sigma}}-1} - \textrm{\emph{id}}\Big)^N(g) \in \left(\frac{p}{\Hdg{C}}\right)^{N}H^0(\bar{\mathfrak{M}}_{\ell,p,[0,1]}, \mathbb{W}) \cap H^0(\bar{\mathfrak{M}}_{\ell,p,[0,1]}, \mathbb{W}_k(\sigma)).
\end{align*}
\end{lemma}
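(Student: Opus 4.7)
The plan is to combine a strong convergence bound on the ordinary locus (obtained by iterating Corollary \ref{C403}) with the obvious a priori denominator estimate on the overconvergent region, and then patch them via Lemma \ref{L407}. First I would observe that each partial operator $\nabla(\sigma)$ preserves $\mathfrak{P}$-depletion: by the $q$-expansion formula (\ref{eq:12}), $\nabla(\sigma)$ acts through the derivation $\theta_\sigma \colon \sum a_\beta q^\beta \mapsto \sum \sigma(\beta) a_\beta q^\beta$, which visibly sends $\mathfrak{P}$-depleted series to $\mathfrak{P}$-depleted series. Hence Corollary \ref{C403} can be iterated, and writing $M_N := (\nabla(\sigma)^{p^{f_\sigma}-1}-\mathrm{id})^N(g)$, induction on $N$ gives
$$M_N \in p^N H^0(\bar{\mathfrak{M}}^{\mathrm{ord}}_{p,[0,1]},\mathbb{W}) \cap H^0(\bar{\mathfrak{M}}^{\mathrm{ord}}_{p,[0,1]},\mathbb{W}_k(\sigma)).$$

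Next, from the explicit formula (\ref{eq:11}) for $\nabla_k$ together with Lemma \ref{L2408}, there is an integer $D = D(r,n,p)$ such that $\nabla(\sigma)$ carries $\mathbb{W}$ into $(p\Hdg{D})^{-1}\mathbb{W}$ on $\bar{\mathfrak{M}}_{r,p,[0,1]}$, whence the crude a priori estimate $M_N \in (p\Hdg{D})^{-(p^{f_\sigma}-1)N} H^0(\bar{\mathfrak{M}}_{r,p,[0,1]},\mathbb{W})$. I would then pull back to the Igusa cover $\Ig{n}{r}\times\mathfrak{W}_{p,[0,1]}$ and work one filtered piece at a time (each being locally free of finite rank by Theorem \ref{T2301}). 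The section $\tilde{M}_N := p^{(p^{f_\sigma}-1)N}\Hdg{D(p^{f_\sigma}-1)N}M_N$ is then integral on $\Ig{n}{r}$, and its restriction to the ordinary locus equals $p^{p^{f_\sigma}N}$ times an integral section by the first step. Applying Lemma \ref{L407} with $j = p^{f_\sigma}N$ yields $\Hdg{p^{f_\sigma}p^{r+1}N + C_n}\,\tilde{M}_N \in p^{p^{f_\sigma}N}\mathbb{W}$ on $\Ig{n}{r}\times\mathfrak{W}$, which unwinds to
$$M_N \in p^N \Hdg{-(C_0 N + C_n)} H^0(\Ig{n}{r}\times\mathfrak{W},\mathbb{W}), \qquad C_0 := D(p^{f_\sigma}-1) + p^{f_\sigma}p^{r+1}.$$

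To conclude, I would descend to $\Xra{r}$ by taking $(\Sh{O}_L/p^n)^\times$-invariants, which by Lemma \ref{L304} costs at most a further fixed $\Hdg{}$-power, and then note that the twist by the \'etale sheaf $\w{\chi}$ is harmless. Choosing $\ell \geq r$ large enough that $p^{\ell+1}$ exceeds $C_n$ plus this descent loss, any fixed bounded $\Hdg{}$-denominator is absorbed into a single factor of $p^{-1}$ on $\bar{\mathfrak{M}}_{\ell,p,[0,1]}$ (since $p/\Hdg{p^{\ell+1}}$ is integral there). Setting $C := C_0 + 1$, so that $CN \geq C_0 N + (\text{fixed constant})$ for every $N \geq 1$, we obtain $M_N \in (p/\Hdg{C})^N H^0(\bar{\mathfrak{M}}_{\ell,p,[0,1]},\mathbb{W})$. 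The intersection with $\mathbb{W}_k(\sigma)$ is automatic because each $\nabla(\sigma)^j(g)$ lies in $\mathbb{W}_{k+2j\sigma} \subset \mathbb{W}_k(\sigma)$.

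The hard part will be the careful bookkeeping of $\Hdg{}$-denominators in the descent step: one must verify that all the $N$-independent $\Hdg{}$-losses --- from Lemma \ref{L407}, from the ramification of $\Ig{n}{r}\to\Xra{r}$ captured by Lemma \ref{L304}, and from the completed-colimit structure of $\mathbb{W}$ --- can simultaneously be absorbed into a single factor of $p^{-1}$ by a choice of $\ell$ that depends only on $r$, $n$, $p$ and is independent of $N$ and of $g$.
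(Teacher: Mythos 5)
Your argument is essentially the paper's proof: iterate Corollary \ref{C403} to get $p^N$-divisibility on the ordinary locus, combine with the a priori denominator $(p\Hdg{D})^{-(p^{f_{\sigma}}-1)N}$ coming from (\ref{eq:11}) and Lemma \ref{L2408}, control the kernel of restriction to the ordinary locus modulo $p^{p^{f_{\sigma}}N}$ via Lemma \ref{L407} (the paper handles the two tensor factors $\mathbb{W}^0$ and $\mathfrak{F}$ of $\mathbb{W}$ separately and uses that $\mathbb{W}'/(p^j)$ is a polynomial algebra over $\Sh{O}_{\Ig{n}{r}}/(p^j)$, rather than invoking Lemma \ref{L304} for a descent step), and then choose $C$ and $\ell$. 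The only slip is in the final bookkeeping: $C = C_0+1$ does not give $CN \geq C_0N + c$ for small $N$ once the $N$-independent loss $c$ exceeds $1$, and the attempted rescue via an extra factor of $p^{-1}$ costs you one power of $p$ out of the required $p^N$; the correct choice is simply $C \geq C_0 + c$ as in the paper, after which $\ell$ is chosen so that $p/\Hdg{C}$ is integral on $\Xr{\ell}$.
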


\begin{proof}
By Corollary \ref{C403} we see that 
\[
\Big(\nabla(\sigma)^{p^{f_{\sigma}}-1} - \textrm{id}\Big)^N(g)|_{\bar{\mathfrak{M}}^{\textrm{ord}}_{p,[0,1]}} \in p^{N}H^0(\bar{\mathfrak{M}}^{\textrm{ord}}_{p,[0,1]}, \mathbb{W}) \cap H^0(\bar{\mathfrak{M}}^{\textrm{ord}}_{p,[0,1]}, \mathbb{W}_{k}(\sigma)).
\]
Locally on $\bar{\mathfrak{M}}_{r,p,[0,1]}$, we then have that 
\[
(p\Hdg{D})^{(p^{f_{\sigma}}-1)N}\Big(\nabla(\sigma)^{p^{f_{\sigma}}-1} - \textrm{id}\Big)^N(g) \in \ker{\left(\mathbb{W}/(p^{p^{f_{\sigma}}N}) \to \mathbb{W}^{ \textrm{ord}}/(p^{p^{f_{\sigma}}N})\right)}.
\]
Here $\mathbb{W}^{\textrm{ord}} = \mathbb{W}|_{\bar{\mathfrak{M}}^{\textrm{ord}}_{p,[0,1]}}$. By Corollary \ref{C302} $\mathbb{W}'/(p^j)$ is a polynomial algebra over $\Sh{O}_{\Ig{n}{r}}/(p^j)$ for any $j$. Since $\mathbb{W} = \mathbb{W}^0 \otimes \mathfrak{F}$, we first deal with $\mathbb{W}^0$. Here we see by Lemma \ref{L407}
the kernel of $\mathbb{W}^0/(p^{p^{f_{\sigma}}N}) \to \mathbb{W}^{0, \textrm{ord}}/(p^{p^{f_{\sigma}}N})$ is killed by $\Hdg{p^{f_{\sigma}}N(p^{r+1})+C_n}$. By the same lemma  $\ker\left(\mathfrak{F}/(p^{p^{f_{\sigma}}N}) \to \mathfrak{F}^{\textrm{ord}}/(p^{p^{f_{\sigma}}N})\right)$ is killed by $\Hdg{p^{f_{\sigma}}N(p^{r+1})+C_2}$. Therefore
\[
p^{(p^{f_{\sigma}}-1)N}\Hdg{N(2p^{f_{\sigma}}(p^{r+1})+D(p^{f_{\sigma}}-1)+C_n + C_2}\Big(\nabla(\sigma)^{p^{f_{\sigma}}-1} - \textrm{id}\Big)^N(g) \in p^{p^{f_{\sigma}}N}H^0(\bar{\mathfrak{M}}_{r, \alpha,I}, \mathbb{W}).
\]
In particular, choosing $C \gg 0$, such that $CN \geq N(2p^{f_{\sigma}}(p^{r+1})+D(p^{f_{\sigma}}-1)+C_n + C_2$ for all $N > 0$, we see that 
\[
\Hdg{CN}\Big(\nabla(\sigma)^{p^{f_{\sigma}}-1} - \textrm{id}\Big)^N(g) \in p^N H^0(\bar{\mathfrak{M}}_{r, \alpha,I}, \mathbb{W}).
\]
Choosing $\ell \geq r$ such that $p/\Hdg{C} \in \Sh{O}_{\Xr{\ell}}$, we get that 
\[
\Big(\nabla(\sigma)^{p^{f_{\sigma}}-1} - \textrm{id}\Big)^N(g) \in \left(\frac{p}{\Hdg{C}}\right)^N H^0(\bar{\mathfrak{M}}_{\ell, \alpha,I}, \mathbb{W}).
\]
\end{proof}

\begin{prop}\label{P402}
Let $k, s$ be as in Assumption \ref{A01}, and suppose $\sigma \in \Sigma$ induce the $\mathfrak{P}$-adic valuation.
Then for any prime $p \geq 3$, there exists an integer $\ell$ depending on $r, n$ and $p$ such that for every $g \in H^0(\bar{\mathfrak{M}}_{r,\alpha,I}, \mathbb{W}_k)^{U_{\mathfrak{P}}=0}$ the sequences in $m$
\[
A(g, s_{\sigma})_m := \sum_{j = 1}^{m} (-1)^{j-1}\frac{(\nabla(\sigma)^{p^{f_{\sigma}}-1} - \textrm{id})^j(g)}{j}
\]
and if we write $H_{i,m}$ for the set of tuples $(j_1, \dots, j_i)$ of $i$ positive integers with $j_1 + \cdots + j_i \leq m$,
\[
B(g, s_{\sigma})_m := \sum_{i=0}^m \frac{v_{\sigma}^i}{i!(p^{f_{\sigma}}-1)^i}\left(\sum_{(j_1, \dots, j_i) \in H_{i,m}} \Big(\prod_{a=1}^i \frac{(-1)^{j_a - 1}}{j_a}\Big)(\nabla(\sigma)^{p^{f_{\sigma}}-1} - \textrm{id})^{j_1 + \cdots + j_i}\right)(g)
\]
converge in $H^0(\bar{\mathfrak{M}}_{\ell,\alpha,I}, \mathbb{W})$. Moreover, if we denote the limits
\[
\log{\Big(\nabla(\sigma)^{p-1}\Big)}(g) := \lim_{m \to \infty} A(g,s_{\sigma})_m
\]
and
\[
\nabla(\sigma)^{s_{\sigma}}(g) = \exp\left(\frac{v_{\sigma}}{p^{f_{\sigma}}-1}\log{(\nabla(\sigma)^{p^{f_{\sigma}}-1})}\right)(g) := \lim_{m \to \infty} B(g, s_{\sigma})_m
\]
then $\nabla(\sigma)^{s_{\sigma}}(g) \in H^0(\bar{\mathfrak{M}}_{\ell,\alpha,I}, \mathbb{W}_{k+2s_{\sigma}})$.
The same results hold for $p = 2$ if $v_{\sigma} \in 4\Lambda^0_{\alpha, I}$.
\end{prop}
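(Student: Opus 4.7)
\bigskip
\noindent\textbf{Proof plan.} My starting point is the estimate of Lemma \ref{L408}, which says that for $g$ annihilated by $U_{\mathfrak{P}}$, one has
\[
\left(\nabla(\sigma)^{p^{f_{\sigma}}-1} - \mathrm{id}\right)^N(g) \in \left(\frac{p}{\Hdg{C}}\right)^{N} H^0\bigl(\bar{\mathfrak{M}}_{\ell,\alpha,I}, \mathbb{W}\bigr)
\]
for a fixed integer $\ell$ and constant $C$ independent of $N$. The plan is to shrink the overconvergence radius once more to some $\ell' \geq \ell$ so that $p/\Hdg{C}$ is divisible by $\alpha$ (this is possible because on $\bar{\mathfrak{M}}_{\ell',\alpha,I}$ we control $|\Hdg{p^{\ell'+1}}| \geq |\alpha|$, so by enlarging $\ell'$ we force $p/\Hdg{C}$ to be arbitrarily small $\alpha$-adically). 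Then the individual terms $\bigl(\nabla(\sigma)^{p^{f_\sigma}-1}-\mathrm{id}\bigr)^N(g)$ live in $\alpha^{N} H^0\bigl(\bar{\mathfrak{M}}_{\ell',\alpha,I}, \mathbb{W}\bigr)$, which is the basic input for all convergence arguments.

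For the logarithm series $A(g,s_{\sigma})_m$, the $j$-th summand is $(\nabla(\sigma)^{p^{f_\sigma}-1}-\mathrm{id})^j(g)/j$. Combining the estimate above with $v_p(j) \leq \log_p(j)$, its $\alpha$-adic valuation is at least $j - \log_p(j)$, which tends to $\infty$. This shows $A(g,s_{\sigma})_m$ is Cauchy in $H^0(\bar{\mathfrak{M}}_{\ell',\alpha,I}, \mathbb{W})$ and hence converges to a limit which I write $\log(\nabla(\sigma)^{p^{f_\sigma}-1})(g)$. The formal series equality $\log \circ (1+x) = $ log-series ensures this limit is a well-defined operator depending only on $\nabla(\sigma)^{p^{f_\sigma}-1}$.

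The main obstacle is the exponential series $B(g,s_{\sigma})_m$, because the $i!$ in the denominator fights against the $\alpha^N$ gained from Lemma \ref{L408}. I will control this via the standard $p$-adic estimate $v_p(i!) \leq i/(p-1)$. For a fixed $i$, each tuple $(j_1,\dots,j_i) \in H_{i,m}$ contributes a term with $\alpha$-adic valuation at least
\[
(j_1 + \cdots + j_i) - \sum_a \log_p(j_a) - \frac{i}{p-1},
\]
once $v_\sigma \in \Lambda^0_{\alpha,I}$ is taken into account. Since $j_1 + \cdots + j_i \geq i$, the leading growth is $i\bigl(1 - \tfrac{1}{p-1}\bigr)$, which is strictly positive for $p \geq 3$ and tends to infinity together with $\sum j_a$. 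This gives simultaneous convergence in both indices $i$ and $\sum j_a$, and the limit $\nabla(\sigma)^{s_\sigma}(g)$ exists in $H^0(\bar{\mathfrak{M}}_{\ell',\alpha,I}, \mathbb{W})$. For $p = 2$, the prefactor $i/(p-1)=i$ exactly cancels the $i$ gained from $\sum j_a \geq i$, so I must absorb the deficit into $v_\sigma$: the hypothesis $v_\sigma \in 4\Lambda^0_{\alpha,I}$ supplies an extra $2i$ in valuation from $v_\sigma^i$, restoring net growth of at least $i$ and hence convergence.

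Finally, I need to verify that $\nabla(\sigma)^{s_\sigma}(g)$ lies in the weight-$(k+2s_\sigma)$ piece $\mathbb{W}_{k+2s_\sigma}$, not just in the ambient $\mathbb{W}$. For this I will track the action of $\mathbb{T}(\Z_p)$ on each term. The operator $\nabla(\sigma)^{p^{f_\sigma}-1}$ shifts weight by $2(p^{f_\sigma}-1)\sigma$, so the formal identity $\exp\bigl(\tfrac{v_\sigma}{p^{f_\sigma}-1}\log(\nabla(\sigma)^{p^{f_\sigma}-1})\bigr)$ is expected to shift weight by $2 v_\sigma \sigma = 2 s_\sigma$. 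Making this rigorous will come down to checking on $q$-expansions, using Lemma \ref{L405} and the formula \eqref{eq:12}, that the weight character appearing in each summand of $B(g,s_\sigma)_m$ converges in $\Lambda^0_{\alpha,I}$ to $(k+2s_\sigma)(1+p^n Z)$; the analyticity of $k^0$ and $s$ ensures this passage from $(k+2N(p^{f_\sigma}-1)\sigma)$ to $(k + 2 s_\sigma)$ via the exponential formula. Combined with the $q$-expansion principle and the descent of $\nabla(\sigma)$ through the Hecke-depletion condition on $U_{\mathfrak{P}}$, this identifies the limit in the claimed weight piece.
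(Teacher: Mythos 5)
Your proof follows the same route as the paper: invoke Lemma \ref{L408}, then use the standard $p$-adic estimates $v_p(i!) \leq i/(p-1)$ and $v_p(j) \leq (j-1)/(p-1)$ to show the terms of $A$ and $B$ decay, and handle $p=2$ by noting $v_\sigma^i/i!$ is divisible by $2^i$. One small correction to your opening paragraph: you cannot enlarge $\ell'$ to make $p/\Hdg{C}$ literally divisible by $\alpha$ --- since $\alpha = p$ here, that would force $\Hdg{C}$ to be a unit, which only holds over the ordinary locus. What you do get, and what the argument actually needs, is that $p/\Hdg{C}$ is topologically nilpotent in $\Sh{O}_{\Xr{\ell}}$ as soon as $p^{\ell+1} > C$ (since then $(p/\Hdg{C})^N = p^N/\Hdg{CN}$ eventually lands in $p^M\Sh{O}_{\Xr{\ell}}$ for any $M$); this is what Lemma \ref{L408} arranges by picking $\ell$ appropriately, and no further shrinking is required. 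With that adjustment your estimates are exactly the ones in the paper's proof. For the final claim about the weight, the paper reads off the $\mathfrak{T}^{\mathrm{ext}}$-eigenvalue directly from $t \ast \nabla(\sigma)(g) = (k+2\sigma)(t)\nabla(\sigma)(g)$ and the power-series expansion of $B$, whereas you propose a $q$-expansion check; both work, though the group-action argument is the more intrinsic one and avoids reliance on the $q$-expansion principle a second time.
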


\begin{proof}
The convergence of $A(g,s_{\sigma})_m$ is clear from Lemma \ref{L408}. We prove convergence for $B(g, s_{\sigma})_m$.

Let's first deal with the case $p \geq 3$. Let 
\[
X := \frac{(\nabla(\sigma)^{p^{f_{\sigma}}-1}-\textrm{id})^{j_1 + \cdots +j_i}(g)}{i!\prod j_a}.
\]
Then by Lemma \ref{L408}, $X \in (p/\Hdg{C})^{\sum j_a - v_p(i!) - \sum v_p(j_a)}H^0(\bar{\mathfrak{M}}_{\ell,\alpha,I}, \mathbb{W})$. Now $v_p(i!) \leq \frac{i-1}{p-1} \leq \frac{i}{p-1}$. Hence $v_p(j_a) \leq \frac{j_a-1}{p-1}$ too. Using these inequalities,
\begin{align*}
    \sum_{a=1}^i j_a - v_p(i!) - \sum_{a=1}^i v_p(j_a) \geq \sum_{a=1}^i \left(j_a - \frac{1}{p-1} - v_p(j_a)\right) \geq \sum_{a=1}^i j_a\left(1 - \frac{1}{p-1}\right).
\end{align*}
This proves convergence in this case. For the case $p = 2$ we note that the terms $\frac{(\nabla(\sigma)^{2^{f_{\sigma}}-1} - \textrm{id})^{j_1+\cdots+j_i}(g)}{\prod j_a}$ do not have poles and the term $v_{\sigma}^i/i!$ is divisible by $2^i$, which gives convergence in this case. Finally, $\nabla(\sigma)^{s_{\sigma}}(g) \in H^0(\bar{\mathfrak{M}}_{\ell,\alpha,I}, \mathbb{W}_{k+2s_{\sigma}})$ as can be seen from its expansion as a power series and the fact that $t \ast \nabla(\sigma)(g) = (k+2\sigma)(t)\nabla(\sigma)(g)$.
\end{proof}

Thus given any $g \in H^0(\bar{\mathfrak{M}}_{r,\alpha,I}, \mathbb{W}_k)^{\{U_{\mathfrak{P}_i}=0\}_{i=1}^h}$, i.e. $g$ lies in the kernel of $U_{\mathfrak{P}_i}$ for all $i$, there exists a large enough $\ell$ depending on $r,n$ and $p$ such that one can consider $\prod_{\sigma} \nabla(\sigma)^{s_{\sigma}}(g)$ as an element of $H^0(\bar{\mathfrak{M}}_{\ell,\alpha,I}, \mathbb{W}_{k+2s})$. Here $\prod_{\sigma} \nabla(\sigma)^{s_{\sigma}}$ means the composition of the different $\nabla(\sigma)^{s_{\sigma}}$'s in any order. Note the order of composition does not matter since they mutually commute by Lemma \ref{L404}. Thus we fix such an $\ell$ and define the following.

\begin{defn}
For $s \colon \mathbb{T}(\Z_p) \to (\Lambda^0_{\alpha, I})^{\times}$ as in Proposition \ref{P402} and $k$ as in Assumption \ref{A01}, define $\nabla^s(g)$ for $g \in H^0(\bar{\mathfrak{M}}_{r,p,[0,1]}, \mathbb{W}_{k})^{\{U_{\mathfrak{P}_i}=0\}_{i=1}^h}$ to be $\prod_{\sigma} \nabla(\sigma)^{s_{\sigma}}(g) \in H^0(\bar{\mathfrak{M}}_{\ell,p,[0,1]}, \mathbb{W}_{k+2s})$ for some $\ell$ for which the expression makes sense by Proposition \ref{P402}.
\end{defn}
\section{Hecke operators and overconvergent projection}
In this section we always assume that $\alpha = p$ and $I = [0,1]$.

Recall that $p = \mathfrak{P}_1\cdots\mathfrak{P}_h$. In this section we first define the $U_{\mathfrak{P}_i}$ operators on $\W{}$ and the $V_{\mathfrak{P}_i}$ operators on $\w{}$. Next we will define the prime to $p$ Hecke operators on $\w{}$.

\subsection{The \texorpdfstring{$U_{\mathfrak{P}}$}{Up} and \texorpdfstring{$V_{\mathfrak{P}}$}{Vp} operators}
For each $i = 1, \dots, h$, fix $x_i \in L^{\times, +}$ such that $\varpi_i/x_i \in (\Sh{O}_L \otimes \Z_p)^{\times}$, and such that $\prod_{i=1}^h x_i = p$. Here $\varpi_i \in \Sh{O}_L \otimes \Z_p = \prod_{j=1}^h \hat{\Sh{O}}_{\mathfrak{P}_j}$ is the element with $p$ in the $i$-th coordinate and $1$ elsewhere.

Fix a suitable multi-index $\mathbf{r} = (r_1, \dots, r_h)$. Let $\mathbf{r}' = (r_1, \dots, r_i-1, \dots, r_h)$. Let $\M{\mathbf{r}}{\mathfrak{c}} \subset \Mbar{\mathbf{r}}{\mathfrak{c}}$ be the inverse image of $M(\mu_N, \mathfrak{c}) \subset \bar{M}(\mu_N, \mathfrak{c})$ under the projection $\Mbar{\mathbf{r}}{\mathfrak{c}} \to \bar{M}(\mu_N, \mathfrak{c})$, and let $\Sh{M}^{\mathfrak{c}}_{\mathbf{r},\alpha,I}$ be its generic fibre. For each $i = 1,\dots, h$, consider the subspace $\mathfrak{Y}_i \subset \M{\mathbf{r}'}{\mathfrak{c}} \times \M{\mathbf{r}}{x_i^{-1}\mathfrak{P}_i\mathfrak{c}}$ obtained as the normalization of the Hecke correspondence classifying pairs $(A',\iota',\lambda',\psi')$ and $(A,\iota,\lambda, \psi)$, together with an isogeny $\pi_{i}\colon A' \to A$, such that $\lambda = x_i^{-1}{\pi_{i}}_*\lambda'$, $\ker \pi_i$ is \'{e}tale locally isomorphic to $\Sh{O}_L/\mathfrak{P}_i$, and $\ker \pi_i$ neither intersects the level $N$ subgroup nor the canonical subgroup $H_1(A')$. Choosing suitable smooth toroidal compactification, we can extend the Hecke correspondence to get maps $p_1 \colon \bar{\mathfrak{Y}}_i \to \Mbar{\mathbf{r}'}{\mathfrak{c}}$, and $p_2 \colon \bar{\mathfrak{Y}_i} \to \Mbar{\mathbf{r}}{x_i^{-1}\mathfrak{P}_i\mathfrak{c}}$. The map $p_1$ is generically finite flat of degree $p^{f_i}$, and induces a trace map $\mathrm{Tr}_{p_1}$ on the formal sheaves by normality. The Hecke correspondence lifts canonically to the partial Igusa tower. By Proposition \ref{P317}, the universal isogeny $\pi_i$ induces a morphism of $\Sh{O}_L \otimes \Sh{O}_{\Ig{n}{\mathbf{r}'}}$-modules $\pi_i^* \colon p_2^*\Hs{A} \to p_1^*\Hs{A}$ that preserves the Hodge filtration and the splitting, and induces an isomorphism $p_2^*\Omega_{\Sh{A}} \xrightarrow{\sim} p_1^*\Omega_{\Sh{A}}$. (We drop the polarization module from the notation when the context is clear)

\begin{lemma}
    There is a morphism $\Sh{U}_{\mathfrak{P}_i} \colon {p_1}_*p_2^*\W{0} \to {p_1}_*p_1^*\W{0}$ of $\Sh{O}_{\Xra{\mathbf{r}'}}$-modules induced by the universal isogeny $\pi_i$ which is an isomorphism on the modular sheaf $\w{0}$ and which preserves the filtration and commutes with the Gauss--Manin connection. We also have a morphism $\Sh{U}_{\mathfrak{P}_i} \colon {p_2}_*p_1^*\W{} \to {p_2}_*p_2^*\W{}$ of $\Sh{O}_{\Mbar{\mathbf{r}'}{}}$-modules satisfying the same properties as above.
\end{lemma}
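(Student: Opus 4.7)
The construction is a direct application of Proposition \ref{P317} combined with the functoriality of vector bundles with marked sections and marked splittings (Lemma \ref{L1304}). The plan is as follows. First, I would observe that the universal isogeny $\pi_i \colon p_1^*\Sh{A} \to p_2^*\Sh{A}$ over $\bar{\mathfrak{Y}}_i$ (which is a partial Verschiebung of type $\mathfrak{P}_i$ twisted by the polarization) satisfies the hypotheses of Proposition \ref{P317}: its kernel is generically \'etale locally isomorphic to $\Sh{O}_L/\mathfrak{P}_i$ and disjoint from $H_1(p_1^*\Sh{A})$. After choosing $\mathbf{r}$ (and correspondingly $\mathbf{r}'$) large enough, Proposition \ref{P317} provides an $\Sh{O}_L \otimes \Sh{O}_{\bar{\mathfrak{Y}}_i}$-linear morphism $\pi_i^* \colon p_2^*\Hs{A} \to p_1^*\Hs{A}$ that restricts to an isomorphism $p_2^*\Omega_{\Sh{A}} \xrightarrow{\sim} p_1^*\Omega_{\Sh{A}}$ sending marked section to marked section, and sends the modified unit root subspace $\Sh{Q}$ of the source to that of the target (modulo $p\,\Hdg{-(p+1)}$).

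Invoking Lemma \ref{L1304}, this induces a morphism of geometric vector bundles on $\bar{\mathfrak{Y}}_i$,
\[
\V{\Sh{O}_L}(p_1^*\Hs{A}, s, \Sh{Q}) \longrightarrow \V{\Sh{O}_L}(p_2^*\Hs{A}, s, \Sh{Q}),
\]
compatible with the $\mathfrak{T}^{\mathrm{ext}}$-action and with the subsheaf filtration of Lemma \ref{L2401}. Taking direct images of structure sheaves and extracting the $k$-isotypic component yields a morphism $p_2^*\W{0} \to p_1^*\W{0}$ of $\Sh{O}_{\bar{\mathfrak{Y}}_i}$-modules; pushing forward along $p_1$ produces the desired $\Sh{U}_{\mathfrak{P}_i}$.

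Next I would verify the three claimed properties. The isomorphism on modular sheaves is immediate: since $\pi_i^*$ already induces an isomorphism $p_2^*\Omega_{\Sh{A}} \xrightarrow{\sim} p_1^*\Omega_{\Sh{A}}$ of invertible $\Sh{O}_L \otimes \Sh{O}_{\bar{\mathfrak{Y}}_i}$-modules, applying the character $k$ gives the isomorphism $p_2^*\w{0} \xrightarrow{\sim} p_1^*\w{0}$. The preservation of the filtration is built into Lemma \ref{L1304}: the filtration of Lemma \ref{L2401} was defined purely in terms of $\Omega_{\Sh{A}}$, its complement and the marked splitting, all of which are respected by $\pi_i^*$. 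Finally, commutation with the Gauss--Manin connection is a consequence of the functoriality of $\nabla$: at the level of Grothendieck's first order isomorphism $\epsilon^{\sharp}$ produced in the proof of Theorem \ref{T401}, the pullbacks $p_1^*\epsilon^{\sharp}$ and $p_2^*\epsilon^{\sharp}$ are intertwined by $\pi_i^*$ (since $\pi_i^*$ commutes with $\nabla$ on $\Hs{A}$, by Proposition \ref{P401} and functoriality of the Gauss--Manin connection under isogenies), and this intertwining descends through the VBMS construction to $\Sh{U}_{\mathfrak{P}_i}$.

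The cuspidal extension from $\M{\mathbf{r}'}{}$ to $\Mbar{\mathbf{r}'}{}$ is automatic since $\bar{\mathfrak{Y}}_i$ is normal and $p_1, p_2$ carry the boundary into the boundaries of the respective toroidal compactifications. For the arithmetic variant on $\W{}$, the same construction applies, combined with the polarization-module equivalences $L_{(x\mathfrak{c},\mathfrak{c})}$ of Definition \ref{D459} to identify the target polarization $x_i^{-1}\mathfrak{P}_i\mathfrak{c}$ with the source, and descended through the finite \'etale quotient of Proposition \ref{P211}. The main subtlety, in my view, is not the existence of the morphism but the verification that the integral structure survives: one must check that the constant $r$ (and thus $\mathbf{r}$) chosen in Proposition \ref{P317} is large enough so that the ratio $p/\widetilde{HW}\cdot\tilde{F}_{\mathfrak{P}_i}^*(\widetilde{HW})$ lies in $\Sh{O}_L \otimes \Sh{O}_{\bar{\mathfrak{Y}}_i}$, i.e.\ that $\pi_i^*$ truly lands in the integral $\Hs{A'}$ rather than its generic fibre; this is where the choice of multi-index $\mathbf{r} = (r_1,\dots,r_h)$ is essential, and absorbing the $(p-1)$-th power growth of $\Hdg{}_{\mathfrak{P}_i}$ under $\tilde{F}_{\mathfrak{P}_i}$ is the only nontrivial book-keeping.
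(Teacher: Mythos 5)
Your proof takes essentially the same route as the paper: the key ingredient is Proposition \ref{P317}, which furnishes the integral morphism $\pi_i^* \colon p_2^*\Hs{A} \to p_1^*\Hs{A}$ respecting marked sections and the marked splitting; functoriality of VBMS with marked splitting (Lemma \ref{L1304}) then gives the map on $\W{0}$; and the filtration preservation and compatibility with the Gauss--Manin connection follow as you describe. The paper's own proof is only two sentences long, so your write-up correctly reconstructs the implicit steps, and your emphasis on the convergence book-keeping in Proposition \ref{P317} (the need for $\mathbf{r}$ large enough) identifies the right subtlety.

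One small slip in the final paragraph: the second morphism in the lemma is for $\W{}$, which is obtained from $\W{0}$ by tensoring with the finite-character sheaf $\w{\chi}$ over the enlarged weight space $\mathfrak{W}_{\alpha,I}$. The paper handles this by observing that the $\Sh{U}$-correspondence also acts on $\w{\chi}$ (since it is built from the Igusa tower $\Ig{i}{r}$, which the Hecke correspondence lifts to). Your appeal to the polarization equivalences $L_{(x\mathfrak{c},\mathfrak{c})}$ of Definition \ref{D459} and the \'etale quotient of Proposition \ref{P211} belongs to the construction of the genuinely arithmetic sheaf $\W{G}$ over $\Mbar{\mathbf{r}'}{G}$, not to $\W{}$ itself; at this stage one only needs the $\w{\chi}$-twist. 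This is not a serious error, but the attribution is misplaced.
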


\begin{proof}
    The first claim follows from the definition of $\W{0}$ and the morphism $\pi_i^* \colon p_2^*\Hs{A} \to p_1^*\Hs{A}$ defined above. Considering the $\Sh{U}$-correspondence on $\w{\chi}$ we get the required map for $\W{}$.
\end{proof}

We can similarly consider the normalization $\mathfrak{Y} \subset \M{r}{\mathfrak{c}} \times \M{r+1}{\mathfrak{c}}$ of the Hecke correspondence classifying pairs $(A,\iota, \lambda, \psi)$ and $(A',\iota',\lambda',\psi')$, together with an isogeny $\pi\colon A'\to A$, such that $\lambda = p^{-1}\pi_*\lambda'$, $\ker{\pi}$ is \'{e}tale locally isomorphic to $\Sh{O}_L/p\Sh{O}_L$, and $\ker{\pi}$ neither intersects the level $N$ subgroup nor the canonical subgroup. Then similarly as above we have the following lemma.

\begin{lemma}\label{L3101}
There is a morphism $\Sh{U} \colon {p_1}_*p_2^*\W{0} \to {p_1}_*p_1^*\W{0}$ of $\Sh{O}_{\Xra{r}}$-modules induced by the universal isogeny $\pi$ which is an isomorphism on the modular sheaf $\w{0}$ and which preserves the filtration and commutes with the Gauss--Manin connection. We also have a morphism $\Sh{U} \colon {p_2}_*p_1^*\W{} \to {p_2}_*p_2^*\W{}$ of $\Sh{O}_{\Mbar{r}{}}$-modules satisfying the same properties as above. Moreover, the induced map on the $m$-graded pieces is $0$ modulo $(p/\Hdg{p+1})^m$.
\end{lemma}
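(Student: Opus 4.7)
The plan is to mimic the construction of $\Sh{U}_{\mathfrak{P}_i}$ from the preceding lemma, now using Proposition \ref{P232} in place of Proposition \ref{P317}. The universal isogeny $\pi\colon A'\to A$ over $\bar{\mathfrak{Y}}$ has kernel \'etale-locally isomorphic to $\Sh{O}_L/p\Sh{O}_L$ and disjoint from $H_1(A')$, which places us in exactly the setting of Proposition \ref{P232}. For $r$ sufficiently large, that proposition produces a morphism of the modified integral de Rham sheaves on $\bar{\mathfrak{Y}}$ induced by $\pi$ which is an isomorphism on the marked line $\Omega$ and sends the modified unit root subspace into its counterpart on the other side. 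Functoriality of the refined vector bundles with marked sections and marked splittings (Lemma \ref{L1304}) then produces a morphism of the associated $\V{\Sh{O}_L}(\cdot, s, \Sh{Q})$'s, and hence of their structure sheaves. Taking the $k$-isotypic part for the $\mathfrak{T}^{\mathrm{ext}}$-action, direct-imaging along the projection to $\bar{\mathfrak{Y}}$, and pushing forward along $p_1$ yields the desired $\Sh{U}$; the statement for $\W{}$ is obtained by tensoring with the \'etale piece $\w{\chi}$, exactly as in the partial case.

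The first three properties follow routinely. That $\Sh{U}$ is an isomorphism on $\w{0}$ reduces to the fact that $\pi$ identifies the canonical subgroups $H_n(A') \simeq H_n(A)$ (since their intersection with $\ker\pi$ is trivial), so the induced map on $\Omega$ — generated by the $\dlog$ images of the dual universal generators — is an isomorphism. Preservation of the filtration of Lemma \ref{L2401} is automatic from the construction, since that filtration is defined through the Hodge filtration and the map induced by $\pi$ respects the Hodge filtration. Commutation with the Gauss-Manin connection follows from its functoriality under isogenies, argued exactly as in Lemma \ref{L2403}.

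The non-formal content is the last claim about the $m$-th graded piece, which I will read off from the explicit matrix of Proposition \ref{P232}: in adapted local bases the induced map on $\mathrm{H}^{\sharp}$ is upper-triangular with diagonal $(1,\; p\,\widetilde{HW}^{-1}\tilde F^*(\widetilde{HW})^{-1})$, so under the $\Sh{O}_L$-decomposition the action on each local coordinate $W_\sigma$ (in the sense of \S\ref{2S321}) is multiplication by the scalar $p/(\widetilde{HW}(\sigma)\tilde F^*(\widetilde{HW}(\sigma)))$. Since $\prod_\sigma \widetilde{HW}(\sigma) = \Hdg{}$, each factor $\widetilde{HW}(\sigma)$ contains $\Hdg{}$ as an ideal; and combining Corollary \ref{C233} (applied to all components) with $\det\underline{\xi} = \Hdg{1/(p-1)}$ gives $\tilde F^*(\Hdg{}) = \Hdg{p}$, so each $\tilde F^*(\widetilde{HW}(\sigma))$ contains $\Hdg{p}$. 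Hence for any local generator $\prod_\sigma W_\sigma^{a_\sigma}$ of $\Gr{m}\W{0}$ with $\sum a_\sigma = m$ (Lemma \ref{L2401}), the product $\prod_\sigma(\widetilde{HW}(\sigma)\tilde F^*(\widetilde{HW}(\sigma)))^{a_\sigma}$ contains $\Hdg{m(p+1)}$, and therefore the induced scalar multiplier lies in $p^m\Hdg{-m(p+1)} = (p/\Hdg{p+1})^m$, as required. The delicate step of the proof is precisely this divisibility check, hinging on the identity $\tilde F^*(\Hdg{}) = \Hdg{p}$ and on the ideal-theoretic inclusions for the partial Hasse-Witt factors — the rest is a bookkeeping application of the framework already established.
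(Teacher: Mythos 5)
Your proposal is correct and follows essentially the same route as the paper: the first claims are obtained by repeating the construction used for $\Sh{U}_{\mathfrak{P}_i}$, and the graded-piece divisibility is read off from the matrix of Proposition \ref{P232} together with the local description of $\W{0}$. Your divisibility bookkeeping via the containments $\Hdg{}\subset\widetilde{HW}(\sigma)$ and $\Hdg{p}\subset\tilde F^*(\widetilde{HW}(\sigma))$ is in fact slightly more careful than the paper's one-line assertion that the multiplier is $p/HW(\sigma)^{p+1}$, but it rests on the same mechanism and yields the same conclusion.
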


\begin{proof}
The proof of the first two claims is similar as above. For the last claim we observe that by Proposition \ref{P232}, the induced map $\Hs{A'}/\Omega_{\Sh{A}'} \to \Hs{A}/\Omega_{\Sh{A}}$ is multiplication by $p/HW(\sigma)^{p+1}$ on the $\sigma$-component. The claim then follows from the local description of $\W{0}$.
\end{proof}

\begin{defn}\label{D3101}
Define the $U_{\mathfrak{P}_i}$ operator as the composition
\[
U_{\mathfrak{P}_i} \colon H^0(\Mbar{\mathbf{r}}{x_i^{-1}\mathfrak{P}_i\mathfrak{c}}, \W{}) \xrightarrow{{p_1}_* \circ \Sh{U}_{\mathfrak{P}_i} \circ p_2^*} H^0(\Mbar{\mathbf{r}'}{\mathfrak{c}}, {p_1}_*p_1^*\W{}) \xrightarrow{\frac{1}{p^{f_i}}\mathrm{Tr}} H^0(\Mbar{\mathbf{r}'}{\mathfrak{c}}, \W{})[1/p].
\]
Similarly, define the $U$ operator as the composition
\[
U \colon H^0(\Mbar{r+1}{\mathfrak{c}}, \W{}) \xrightarrow{{p_1}_* \circ \Sh{U} \circ p_2^*} H^0(\Mbar{r}{\mathfrak{c}}, {p_1}_*p_1^*\W{}) \xrightarrow{\frac{1}{p^{g}}\mathrm{Tr}} H^0(\Mbar{r}{\mathfrak{c}}, \W{})[1/p].
\]
\end{defn}

\begin{rem}
    Since we chose the $x_i$'s such that $\prod_{i=1}^h x_i = p$, we have that $\prod_{i=1}^h U_{\mathfrak{P}_i} = U$.
\end{rem}

\begin{cor}
For any $g \in H^0(\Mbar{r+1}{}, \w{})$ with $q$-expansion $g = \sum_{\mathfrak{ab}} a_{\beta}q^{\beta}$ at a cusp $\Lambda^0_{\alpha,I}(\!(\mathfrak{ab}, S)\!)$, $U(g) = \sum_{\mathfrak{ab}} a_{p\beta}q^{\beta}$ on $q$-expansions.
\end{cor}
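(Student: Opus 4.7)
The proof is a direct computation on $q$-expansions at a cusp with local ring $R := \Lambda^0_{\alpha,I}(\!(\mathfrak{ab}, S)\!)$. The strategy is to describe the Hecke correspondence $\mathfrak{Y}$ explicitly in a formal neighborhood of this cusp, and then unwind the definition of $U = \frac{1}{p^g}\mathrm{Tr}_{p_1}\circ \Sh{U}\circ p_2^*$ on the $q$-expansion of $g$.

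First I would identify the local structure of $\mathfrak{Y}$ at the cusp. On the $\Mbar{r+1}{\mathfrak{c}}$ side the universal Tate object is $A' = T/\underline{q}(\mathfrak{b})$ with $T := \Gm \otimes \mathfrak{d}^{-1}\mathfrak{a}^{-1}$, and its canonical subgroup is $H_1(A') = T[p]$. The \'{e}tale $\Sh{O}_L/p$-subgroups of $A'[p]$ complementary to $H_1(A')$ are parametrized, after passing to the ramified extension $R' := R[q^{\beta/p} : \beta \in \mathfrak{b}]$ of degree $p^g$, by a finite group $G$ of order $p^g$ (the Pontryagin dual of $\mathfrak{ab}/p\mathfrak{ab}$). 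For each $\zeta \in G$, the corresponding complement $C_\zeta \subset A'[p]$ gives the quotient $A_\zeta := A'/C_\zeta = T/\underline{q}(p^{-1}\mathfrak{b})$ with canonical uniformizer $\zeta q^{1/p}$, equipped with $\mathfrak{c}$-polarization $p^{-1}\pi_*\lambda'$, and the isogeny $\pi \colon A' \to A_\zeta$ induced by the identity on $T$. The $p^g$ distinct $\zeta$'s index the $p^g$ branches of $\mathfrak{Y}$ above the cusp: $p_1$ is generically \'{e}tale of degree $p^g$ over the formal neighborhood, while $p_2$ is totally ramified of degree $p^g$.

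Next I compute each step. The pullback $p_2^*g$ is the scalar $\sum_\beta a_\beta q^\beta \in R'$, the same on every branch (since $p_2^*$ is the natural inclusion $R \hookrightarrow R'$). Since $\pi$ is induced by the identity on $T$, we have $\pi^*\omega_{A_\zeta}^{\mathrm{can}} = \omega_{A'}^{\mathrm{can}}$ on each branch, so $\Sh{U}$ does not alter the scalar; re-expressed as a section of $p_1^*\w{}$ on the $\zeta$-branch, where the canonical uniformizer of $A_\zeta$ is $\zeta q^{1/p}$, the scalar reads
\[
g(A_\zeta, \omega_{A_\zeta}^{\mathrm{can}}) = \sum_\beta a_\beta (\zeta q^{1/p})^\beta = \sum_\beta a_\beta \zeta^\beta q^{\beta/p}.
\]
The trace along $p_1$ then sums over the $p^g$ branches:
\[
\mathrm{Tr}_{p_1}\bigl(\Sh{U}(p_2^*g)\bigr) = \sum_{\zeta \in G}\sum_\beta a_\beta \zeta^\beta q^{\beta/p} = \sum_\beta a_\beta q^{\beta/p} \sum_{\zeta \in G}\zeta^\beta.
\]
By orthogonality of characters, $\sum_{\zeta \in G}\zeta^\beta$ equals $p^g$ when $\beta \in p\mathfrak{ab}$ and vanishes otherwise; reindexing by $\gamma := \beta/p$ yields $p^g \sum_\gamma a_{p\gamma} q^\gamma$. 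Dividing by $p^g$ gives $U(g) = \sum_\gamma a_{p\gamma} q^\gamma$, as claimed.

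The main obstacle is the rigorous identification of the $p^g$ branches of $\mathfrak{Y}$ over the cusp via the parametrization by \'{e}tale complements, together with verifying that the polarization normalization $\lambda = p^{-1}\pi_*\lambda'$ makes the $A_\zeta$ into $\mathfrak{c}$-polarized Tate objects whose canonical differentials correctly pull back to $\omega_{A'}^{\mathrm{can}}$ under $\pi^*$; once these are in place, the conclusion is pure character orthogonality, in exact analogy with the classical derivation of the $U_p$ formula for elliptic modular forms.
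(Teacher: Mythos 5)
The paper states this corollary without proof (the analogous statement for $V$ is explicitly ``left to the reader''), so there is no argument of the paper's to compare against; your computation is the standard one and its final steps --- enumerating the $p^g$ \'{e}tale complements of $H_1$ over a degree-$p^g$ extension of $\Lambda^0_{\alpha,I}(\!(\mathfrak{ab},S)\!)$, identifying the quotients as Tate objects with parameter $\zeta q^{1/p}$, matching canonical differentials through $\pi^*$, and concluding by orthogonality of characters on $\mathfrak{ab}/p\mathfrak{ab}$ --- are correct and give exactly $\sum_\gamma a_{p\gamma}q^\gamma$.

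One sentence in the middle is internally inconsistent and you should repair it: you assert that $p_2^*g$ is the scalar $\sum_\beta a_\beta q^\beta$, ``the same on every branch,'' but two lines later you (correctly) evaluate $g$ at $(A_\zeta,\omega^{\mathrm{can}}_{A_\zeta})$ to get $\sum_\beta a_\beta\zeta^\beta q^{\beta/p}$, which visibly depends on the branch. These cannot both be right: if the pullback of $g$ really were branch-independent, the trace would return $p^g\cdot g$ and the operator would be the identity. The resolution is that in the definition $U=\frac{1}{p^g}\mathrm{Tr}_{p_1}\circ\Sh{U}\circ p_2^*$ the input $g$ must be evaluated on the object that \emph{varies} along the fibre of the projection being traced, namely the quotient $A_\zeta=A'/C_\zeta$ with parameter $\zeta q^{1/p}$ (this is also forced by the radius bookkeeping: the quotient by an \'{e}tale complement lives on the larger space $\Mbar{r+1}{}$ where $g$ is defined, while the base $A'$ lives on $\Mbar{r}{}$ where $U(g)$ lands). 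Admittedly the paper's own description of $\mathfrak{Y}$ swaps the order of the two factors between the $U_{\mathfrak{P}_i}$ and $U$ paragraphs, so the labelling of $p_1,p_2$ is ambiguous in the source; but your proof should commit to the convention under which $p_1$ has degree $p^g$ and $g$ is evaluated on the quotients, and delete the claim of branch-independence. With that correction the argument is complete.
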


We now define the $V_{\mathfrak{P}_i}$ operator. Fix $\mathbf{r}, \mathbf{r}'$ as before. Consider the map $p_2 \colon \Mbar{\mathbf{r}}{\mathfrak{c}} \to \Mbar{\mathbf{r}'}{x_i\mathfrak{P}_i^{-1}\mathfrak{c}}$ defined on the generic fibre by sending a tuple $(A, \lambda, \iota, \psi)$ to $(A', \iota', x_i\lambda', \psi')$, where $A' = A/H_1[\mathfrak{P}_i] \otimes \mathfrak{P}_i$, and $\iota', \lambda', \psi'$ are induced by the quasi-isogeny $A \xrightarrow{\pi} A/H_1[\mathfrak{P}_i] \xrightarrow{} A/H_1[\mathfrak{P}_i] \otimes \mathfrak{P}_i$. The map $p_2$ lifts to the partial Igusa tower. The map $\pi^{\vee} \colon A/H_1[\mathfrak{P}_i] \to A \otimes \mathfrak{P}_i^{-1}$ such that $\pi^{\vee} \circ \pi$ is the quotient by the $\mathfrak{P}_i$-torsion, induces an isomorphism $(\pi^{\vee})^* \colon \Omega_{A} \otimes \mathfrak{P}_i^{-1} \to \Omega_{A/H_1[\mathfrak{P}_i]}$, which upon tensoring by $\mathfrak{P}_i$ gives an isomorphism $(\pi^{\vee})^* \colon \Omega_{A} \to \Omega_{A'}$. Let $p_1 \colon \Mbar{\mathbf{r}}{\mathfrak{c}} \to \Mbar{\mathbf{r}}{\mathfrak{c}}$ be the identity. Considering the isomorphism on the universal objects $(\pi^{\vee})^* \colon p_1^*\Omega_{\Sh{A}} \xrightarrow{\sim} p_2^*\Omega_{\Sh{A}}$, we get an isomorphism $\Sh{V}_{\mathfrak{P}_i} \colon p_1^*\w{} \simeq p_2^*\w{}$.

\begin{defn}
Define the $V_{\mathfrak{P}_i}$ operator as the map 
\[
V_{\mathfrak{P}_i} \colon H^0(\Mbar{\mathbf{r}'}{x_i\mathfrak{P}_i^{-1}\mathfrak{c}}, \w{}) \xrightarrow{\Sh{V}^{-1}_{\mathfrak{P}_i} \circ p_2^*} H^0(\Mbar{\mathbf{r}}{\mathfrak{c}}, \w{}).
\]
\end{defn}

We can similarly consider the map $p_2 \colon \Mbar{r+1}{\mathfrak{c}} \to \Mbar{r}{\mathfrak{c}}$ defined on the generic fibre by sending $(A, \iota, \lambda, \psi)$ to $(A', \iota', \lambda', \psi')$ with $A' = A/H_1$ and $\iota', \lambda', \psi'$ induced by the quotient $\pi \colon A \to A/H_1$. The map $p_2$ lifts to the partial Igusa tower. Let $p_1 \colon \Mbar{r+1}{\mathfrak{c}} \to \Mbar{r+1}{\mathfrak{c}}$ be the identity. The isogeny $\pi^{\vee} \colon A/H_1 \to A$ such that $\pi^{\vee} \circ \pi = p$ induces an isomorphism $(\pi^{\vee})^* \colon p_1^*\Omega_{\Sh{A}} \xrightarrow{\sim} p_2^*\Omega_{\Sh{A}}$ which in turn induces an isomorphism $\Sh{V} \colon p_1^*\w{} \simeq p_2^*\w{}$.

\begin{defn}
Define the $V$ operator as the map
\[
V \colon H^0(\Mbar{r}{\mathfrak{c}}, \w{}) \xrightarrow{\Sh{V}^{-1} \circ p_2^*} H^0(\Mbar{r+1}{\mathfrak{c}}, \w{}).
\]
\end{defn}

\begin{rem}
We have $\prod_{i} V_{\mathfrak{P}_i} = V$.
\end{rem}

\begin{cor}
For any $g \in H^0(\Mbar{r}{}, \w{})$ with $q$-expansion $g = \sum_{\mathfrak{ab}} a_{\beta}q^{\beta}$ at a cusp $\Lambda^0_{\alpha,I}(\!(\mathfrak{ab}, S)\!)$, $V(g) = \sum_{\mathfrak{ab}} a_{\beta}q^{p\beta}$ on $q$-expansions.
\end{cor}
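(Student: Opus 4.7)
The plan is to evaluate $V(g)$ at the Tate object using the explicit description of the quotient by the canonical subgroup. The three key facts I will establish are: (i) the quotient of a Tate object by its canonical subgroup is another Tate object with parameter $q^p$; (ii) the dual isogeny $\pi^{\vee}$ carries the canonical differential of the source to the canonical differential of the target; and (iii) the canonical $\Gamma_{00}(p^{\infty})$-structure (or, equivalently, the canonical generators of the $H_n^{\vee}$) are compatible with $\pi^{\vee}$.

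For step (i), recall that $\Tate{a}{b} = \Gm \otimes \mathfrak{d}^{-1}\mathfrak{a}^{-1}/\underline{q}(\mathfrak{b})$, and the canonical subgroup $H_1$ is identified with the multiplicative part $\mu_p \otimes \mathfrak{d}^{-1}\mathfrak{a}^{-1}$. The $p$-th power map $\Gm \to \Gm$ identifies $\Gm/\mu_p \xrightarrow{\sim} \Gm$ and sends $\underline{q}(\beta) = q^\beta$ to $q^{p\beta}$. Hence one obtains a canonical isomorphism $\Tate{a}{b}/H_1 \simeq \text{Tate}_{\mathfrak{a},\mathfrak{b}}(q^p)$, under which the quotient isogeny $\pi$ corresponds to the $p$-th power map on $\Gm \otimes \mathfrak{d}^{-1}\mathfrak{a}^{-1}$. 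The $\Sh{O}_L$-action, the polarization, and the level-$N$ structure extend in the obvious way. For step (ii), with $T' = T^p$ on the quotient, we have $\pi^*(\mathrm{d}T'/T') = p\cdot \mathrm{d}T/T$. The relation $\pi^{\vee} \circ \pi = [p]$ forces $(\pi^{\vee})^* \circ \pi^* = p \cdot \mathrm{id}$ on invariant differentials, so $(\pi^{\vee})^*(\mathrm{d}T/T) = \mathrm{d}T'/T'$. Thus $(\pi^{\vee})^*$ sends the canonical differential $\omega_{\mathrm{can}}$ on $\Tate{a}{b}$ to the canonical differential on $\text{Tate}_{\mathfrak{a},\mathfrak{b}}(q^p)$. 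For step (iii), the canonical subgroup $H_n$ of $\Tate{a}{b}$ is canonically $\mu_{p^n} \otimes \mathfrak{d}^{-1}\mathfrak{a}^{-1}$, and the same holds for $\text{Tate}_{\mathfrak{a},\mathfrak{b}}(q^p)$; the isogeny $\pi^{\vee}$ carries one onto the other, so the canonical trivialization of $H_n^{\vee}$ and the marked section $\dlog(P^{\mathrm{univ}})$ are intertwined by $\pi^{\vee}$.

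Putting these pieces together, by the very definition $V(g) = \Sh{V}^{-1} \circ p_2^*(g)$, where $\Sh{V}$ is induced by $(\pi^{\vee})^*$ on the modular sheaf. The evaluation at the universal tuple specializes at the Tate object $\Tate{a}{b}$ with its canonical structure (canonical differential and canonical $\Gamma_{00}(p^{\infty})$-data) to the evaluation of $g$ at the image tuple $(\text{Tate}_{\mathfrak{a},\mathfrak{b}}(q^p), \iota', \lambda', \psi', \omega'_{\mathrm{can}})$ with its canonical structure, precisely because $(\pi^{\vee})^*$ matches canonical data on both sides by steps (ii) and (iii). Therefore $V(g)|_{\Tate{a}{b}} = g|_{\text{Tate}_{\mathfrak{a},\mathfrak{b}}(q^p)}$, and comparing $q$-expansions yields $V(g) = \sum_{\beta \in \mathfrak{ab}_S} a_\beta q^{p\beta}$.

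The only delicate point is to ensure that the VBMS formalism defining $\w{}$ at the Igusa-tower level is truly compatible with these Tate identifications; that is, the identification $\Tate{a}{b}/H_1 \simeq \text{Tate}_{\mathfrak{a},\mathfrak{b}}(q^p)$ together with $(\pi^{\vee})^*$ induces a map on geometric $\Sh{O}_L$-vector bundles with marked sections $\V{\Sh{O}_L}(\Omega_{\Sh{A}/H_1}, s') \to \V{\Sh{O}_L}(\Omega_{\Sh{A}}, s)$ compatible with the $\mathfrak{T}^{\mathrm{ext}}$-action. This is immediate once one observes that $(\pi^{\vee})^*$ induces an isomorphism $\Omega_{\Sh{A}/H_1} \xrightarrow{\sim} \Omega_{\Sh{A}}$ sending marked section to marked section, which is exactly the setup of the functoriality statement (Lemma \ref{L1304}). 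With this compatibility, the pullback-pushforward described by $\Sh{V}^{-1} \circ p_2^*$ translates on sections of $\w{}$ into $g(\Sh{A}, \omega) \mapsto g(\Sh{A}/H_1, (\pi^{\vee})^*\omega)$, and the $q$-expansion identity follows from steps (i)--(iii).
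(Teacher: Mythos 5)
Your proof is correct, and since the paper leaves this corollary to the reader there is no alternative argument to compare against. The three key ingredients you isolate are exactly the right ones: $\Tate{a}{b}/H_1 \simeq \textrm{Tate}_{\mathfrak{a},\mathfrak{b}}(q^p)$, compatibility of $(\pi^{\vee})^*$ with the canonical differentials, and compatibility of $(\pi^{\vee})^*$ with the marked section coming from $\dlog(P^{\textrm{univ}})$. In step (ii), both compositions $(\pi^{\vee})^* \circ \pi^*$ and $\pi^* \circ (\pi^{\vee})^*$ are multiplication by $p$ (one on $\omega_{\Sh{A}/H_1}$, the other on $\omega_{\Sh{A}}$), and your cancellation argument is valid.

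Two small points of hygiene. First, in the last paragraph you write that $(\pi^{\vee})^*$ induces an isomorphism $\Omega_{\Sh{A}/H_1} \to \Omega_{\Sh{A}}$; since $\pi^{\vee}\colon \Sh{A}/H_1 \to \Sh{A}$, pullback goes $\Omega_{\Sh{A}} \to \Omega_{\Sh{A}/H_1}$, so the arrow should be reversed (you in fact use the correct direction elsewhere, e.g. in $g(\Sh{A},\omega) \mapsto g(\Sh{A}/H_1, (\pi^{\vee})^*\omega)$). Second, you should briefly acknowledge the polarization bookkeeping: the quotient by $H_1$ raises the polarization module by a factor of $p$, so the identification $\Tate{a}{b}/H_1 \simeq \textrm{Tate}_{\mathfrak{a},\mathfrak{b}}(q^p)$ as $\mathfrak{c}$-polarized objects implicitly uses the canonical isomorphism $M(\mu_N,\mathfrak{c}) \simeq M(\mu_N, p\mathfrak{c})$ invoked earlier in the paper when defining $\tilde{F}$. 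Neither point affects the validity of the argument.
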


\begin{proof}
    Left to the reader.
\end{proof}

\begin{cor}
\begin{enumerate}
    \item $U_{\mathfrak{P}_i} \circ V_{\mathfrak{P}_i} = \textrm{id}$ on $H^0(\Mbar{\mathbf{r}}{}, \w{})$ for any suitable multi-index $\mathbf{r}$.
    \item For $g \in H^0(\Mbar{\mathbf{r}}{}, \w{})$, if we denote by $g^{[\mathfrak{P}_i]} := (\textrm{id} - V_{\mathfrak{P}_i} \circ U_{\mathfrak{P}_i})(g)$ the $\mathfrak{P}_i$-depletion of $g$, then $U_{\mathfrak{P}_i}(g^{[\mathfrak{P}_i]}) = 0$. Moreover if $g = \sum_{\mathfrak{ab}} a_{\beta}q^{\beta}$, then $g^{[\mathfrak{P}_i]} = \sum_{\beta \notin \mathfrak{abP}_i} a_{\beta}q^\beta$.
\end{enumerate}
\end{cor}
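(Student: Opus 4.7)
The plan is to deduce both statements from explicit formulas for the action of $U_{\mathfrak{P}_i}$ and $V_{\mathfrak{P}_i}$ on $q$-expansions, in exact analogy with the corollaries proved for $U$ and $V$. Concretely, I would first establish that for a Tate cusp $\Lambda^0_{p,[0,1]}(\!(\mathfrak{ab},S)\!)$ and a section $g = \sum_{\beta\in \mathfrak{ab}} a_\beta q^\beta$, one has
\[
V_{\mathfrak{P}_i}(g) = \sum_{\beta\in \mathfrak{ab}} a_\beta\, q^{\varpi_i\beta}, \qquad U_{\mathfrak{P}_i}(g) = \sum_{\beta\in \mathfrak{ab},\ \varpi_i\beta\in \mathfrak{ab}} a_{\varpi_i\beta}\, q^\beta,
\]
where the sum in the second formula is, via the choice of $x_i$ with $\varpi_i/x_i\in (\Sh{O}_L\otimes \Z_p)^\times$ and $\prod x_i = p$, indexed by those $\beta$ for which $\varpi_i\beta$ is an exponent of the source Tate object with polarization $x_i^{-1}\mathfrak{P}_i\mathfrak{c}$. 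The formula for $V_{\mathfrak{P}_i}$ is immediate from its definition, since $V_{\mathfrak{P}_i}$ is induced by the universal partial Frobenius isogeny $\pi\colon \Sh{A}\to \Sh{A}/H_1[\mathfrak{P}_i]\otimes \mathfrak{P}_i$ and the identification of the Tate object quotient by its $\mathfrak{P}_i$-canonical subgroup scales the lattice by $\varpi_i$.

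The formula for $U_{\mathfrak{P}_i}$ requires unwinding the trace $\frac{1}{p^{f_i}}\mathrm{Tr}_{p_1}$ at the Tate cusp. The fibre of $p_1$ over a point given by Tate data is a finite disjoint union of points corresponding to the $p^{f_i}$ rank-$\mathfrak{P}_i$ subgroup schemes $C\subset A'[\mathfrak{P}_i]$ complementary to the canonical subgroup. Passing to the Tate object $\Gm\otimes \mathfrak{d}^{-1}\mathfrak{a}^{-1}/\underline{q}(\mathfrak{b})$, these subgroups are described by sublattices $\mathfrak{b}'\subset \mathfrak{b}$ of index $\mathfrak{P}_i$, and the sum over $\zeta \in \mu_{\mathfrak{P}_i}$ of $\sum_\beta a_\beta (\zeta q^{1/\varpi_i})^\beta$ produces precisely the subsum over $\beta\in \varpi_i\mathfrak{ab}$, after which re-indexing by $\beta \mapsto \beta/\varpi_i$ yields the stated formula. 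This is the main technical step; the rest is bookkeeping with the $x_i$-twist of the polarization.

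Granted these $q$-expansion formulas, both statements are immediate. For (1), the composition $U_{\mathfrak{P}_i}\circ V_{\mathfrak{P}_i}$ first sends $\sum a_\beta q^\beta$ to $\sum a_\beta q^{\varpi_i\beta}$ and then reads off the coefficients of $q^{\varpi_i\beta}$, giving back $\sum a_\beta q^\beta$; the $q$-expansion principle (valid on each connected component by Theorem \ref{T2302}) then upgrades this equality on $q$-expansions to the equality of global sections. For (2), the same formulas give $V_{\mathfrak{P}_i}\circ U_{\mathfrak{P}_i}(g) = \sum_{\beta\in \mathfrak{abP}_i} a_\beta q^\beta$, so $g^{[\mathfrak{P}_i]} = \sum_{\beta\notin \mathfrak{abP}_i} a_\beta q^\beta$, and applying $U_{\mathfrak{P}_i}$ once more extracts coefficients $a_{\varpi_i\beta}$, which all vanish since $\varpi_i\beta\in \mathfrak{abP}_i$ by construction.

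The main obstacle is thus the Tate-object computation of the $U_{\mathfrak{P}_i}$-trace, in particular verifying that the normalized sum over the $p^{f_i}$ complementary $\mathfrak{P}_i$-subgroups of the Tate variety produces exactly the ``extract the $\mathfrak{P}_i$-th Fourier coefficients'' operator with the correct normalization. Once this is set up carefully (tracking the identification $x_i^{-1}\mathfrak{P}_i\mathfrak{c}\simeq \mathfrak{c}$ induced by $x_i$), the remainder of the proof is formal.
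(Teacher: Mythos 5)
Your proposal is correct and is exactly the standard argument the paper has in mind: the paper's own proof of this corollary is literally ``Left to the reader,'' and the surrounding corollaries computing $U$ and $V$ on $q$-expansions at the Tate objects make clear that the intended route is the one you take, namely establishing $V_{\mathfrak{P}_i}(g)=\sum a_\beta q^{\varpi_i\beta}$ and $U_{\mathfrak{P}_i}(g)=\sum a_{\varpi_i\beta}q^\beta$ by summing over the $p^{f_i}$ subgroups of $\Sh{A}[\mathfrak{P}_i]$ complementary to the canonical subgroup, and then concluding by the $q$-expansion principle. The one place to be careful when writing this up is the bookkeeping you already flag: the exponent lattice changes by $\mathfrak{P}_i$ (not literally by $\varpi_i$ or $x_i$), and the identification of the target cusp with the source cusp uses the chosen $x_i$ with $\varpi_i/x_i\in(\Sh{O}_L\otimes\Z_p)^{\times}$.
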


\begin{proof}
    Left to the reader.
\end{proof}

\begin{prop}\label{P311}
For every $\delta \in \Q_{\geq 0}$, the $\Lambda_{\alpha,I}[1/\alpha]$-Banach module $H^0(\bar{\Sh{M}}_{r,\alpha,I}, \W{})$ admits a slope $\delta$ decomposition which restricts to a slope decomposition on $H^0(\bar{\Sh{M}}_{r, \alpha,I}, \Fil{n}\W{})$ for all $n \in \N$. Moreover, the inclusion $H^0(\bar{\Sh{M}}_{r, \alpha,I}, \Fil{n}\W{})^{\leq \delta} \subset H^0(\bar{\Sh{M}}_{r,\alpha,I}, \W{})^{\leq \delta}$ is an isomorphism for $n$ large enough (depending on $\delta$).
\end{prop}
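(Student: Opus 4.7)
The plan is to establish that $U$ defines a completely continuous endomorphism of $H^0(\bar{\Sh{M}}_{r,\alpha,I}, \W{})$, invoke the general theory of slope decompositions for compact operators on potentially orthonormalizable Banach modules (in the style of Buzzard and Ash--Stevens), and then deduce the filtration statement from the contraction estimates on graded pieces provided by Lemma \ref{L3101}.

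First I would observe that each $\Fil{n}\W{}$ is a coherent sheaf on $\bar{\mathfrak{M}}_{r,\alpha,I}$, so after pushing forward to the affinoid minimal compactification via the analogue of Theorem \ref{T2302} for the filtered pieces, its global sections form a potentially orthonormalizable Banach $\Lambda_{\alpha,I}[1/\alpha]$-module. The space $H^0(\bar{\Sh{M}}_{r,\alpha,I}, \W{})$ is identified with the $\alpha$-adic completion of $\varinjlim_n H^0(\bar{\Sh{M}}_{r,\alpha,I}, \Fil{n}\W{})$ via the global version of Theorem \ref{T2301}, and this completion inherits potential orthonormalizability by a standard inverse-limit argument.

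Next I would exhibit compactness of $U$ by means of the factorization
\[
H^0(\bar{\Sh{M}}_{r,\alpha,I}, \W{}) \xrightarrow{\,\mathrm{res}\,} H^0(\bar{\Sh{M}}_{r+1,\alpha,I}, \W{}) \xrightarrow{\,U\,} H^0(\bar{\Sh{M}}_{r,\alpha,I}, \W{})[1/\alpha].
\]
The restriction map is completely continuous because it is induced by the strict open immersion $\bar{\Sh{M}}_{r+1,\alpha,I} \hookrightarrow \bar{\Sh{M}}_{r,\alpha,I}$, by the standard argument that shrinking the overconvergence radius yields a compact inclusion of Banach modules of sections of a coherent sheaf. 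Combined with the continuity of $U$ coming from Definition \ref{D3101}, this makes the composition compact; Buzzard's formalism then produces the slope-$\leq\delta$ decompositions on $H^0(\bar{\Sh{M}}_{r,\alpha,I}, \W{})$ and on each $H^0(\bar{\Sh{M}}_{r,\alpha,I}, \Fil{n}\W{})$, noting that $U$ preserves the filtration by Lemma \ref{L3101}.

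For the last assertion I would exploit Lemma \ref{L3101} quantitatively: on $\Gr{m}\W{}$, $U$ is divisible by $(p/\Hdg{p+1})^m$, and over $\Xra{r}$ the estimate $|\Hdg{p^{r+1}}|\geq |p|$ gives $|p/\Hdg{p+1}| \leq |p|^{1-(p+1)/p^{r+1}}$. Hence $U$ acts on $H^0(\bar{\Sh{M}}_{r,\alpha,I},\Gr{m}\W{})$ with all slopes $\geq c\,m$, where $c := 1-(p+1)/p^{r+1} > 0$. For $n > \delta/c$ every graded piece of $\W{}/\Fil{n}\W{}$ then carries $U$-action of slope $> \delta$, so by a standard spectral argument combined with the completeness of $\W{}$ in the $\alpha$-adic topology, the slope-$\leq\delta$ part of $H^0(\bar{\Sh{M}}_{r,\alpha,I}, \W{}/\Fil{n}\W{})$ vanishes; left-exactness of $H^0$ then forces any slope-$\leq\delta$ section of $\W{}$ to lie in $\Fil{n}\W{}$, which yields the claimed isomorphism. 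The main subtlety I expect is precisely this last step: transferring the slope lower bound from the graded pieces to the completed quotient requires a Mittag--Leffler/convergence argument analogous to the elliptic setting treated in \cite{andreatta2021triple}, and care must be taken because $\W{}/\Fil{n}\W{}$ is a completion of an infinite inductive system rather than a finite-length object.
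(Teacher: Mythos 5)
Your proposal follows essentially the same route as the paper: compactness of $U$ on the coherent filtered pieces yields slope decompositions there, and Lemma \ref{L3101} is used to show that $U$ is highly $p$-divisible on the quotient $\W{}/\Fil{n}\W{}$ for $n$ large, forcing its slope $\leq\delta$ part to vanish; the conclusion then follows from the short exact sequence. The paper's proof is terser but identical in strategy.

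The one place where your route diverges slightly is that you attempt to produce a slope decomposition \emph{directly} on the Banach module $H^0(\bar{\Sh{M}}_{r,\alpha,I}, \W{})$ by asserting potential orthonormalizability of the $\alpha$-adic completion of $\varinjlim_n H^0(\Fil{n}\W{})$ and compactness of $U$ on it via the radius-shrinking factorization. This is a real additional step: a completion of a colimit of coherent modules is not automatically ON-able, nor is the restriction map on $\W{}$-sections automatically compact — those facts are standard for coherent sheaves on affinoids but require an argument for the infinite-rank Banach sheaf $\W{}$. The paper sidesteps this entirely by establishing the decomposition only on the coherent $\Fil{n}\W{}$, then showing $H^0(\W{}/\Fil{n}\W{})^{\leq\delta}=0$, and only afterwards identifying $H^0(\W{})^{\leq\delta}$ with $H^0(\Fil{n}\W{})^{\leq\delta}$; no compactness of $U$ on the full $\W{}$ is needed. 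Your explicit slope bound $c = 1 - (p+1)/p^{r+1}$ from the estimate $|\Hdg{}|\geq |p|^{1/p^{r+1}}$ on $\Xra{r}$ is correct and is a nice quantitative refinement of the paper's "for $n$ large enough." Your flagged subtlety — passing the divisibility from graded pieces to the completed quotient — is handled because the filtration is locally split (visible from the $\Spf R\langle Z_\sigma, W_\sigma\rangle$ coordinates), so divisibility on all graded pieces propagates to the completed quotient.
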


\begin{proof}
The operator $U$ is compact on the coherent sheaf $\Fil{n}\W{}$, and so by the usual formalism of slope decomposition we have locally on the weight space a slope $\delta$ decomposition 
\[
H^0(\bar{\Sh{M}}_{r,\alpha,I}, \Fil{n}\W{}) = H^0(\bar{\Sh{M}}_{r,\alpha,I}, \Fil{n}\W{})^{\leq \delta} \oplus H^0(\bar{\Sh{M}}_{r,\alpha,I}, \Fil{n}\W{})^{>\delta}.
\]
By Lemma \ref{L3101}, the $U$ operator on $H^0(\bar{\Sh{M}}_{r,\alpha,I}, \W{}/\Fil{n}\W{})$ is divisible by $p^{\delta+1}$ for $n$ large enough. It follows that $H^0(\bar{\Sh{M}}_{r,\alpha,I}, \W{}/\Fil{n}\W{})$ also admits a slope $\delta$ decomposition and that  $H^0(\bar{\Sh{M}}_{r,\alpha,I}, \W{}/\Fil{n}\W{})^{\leq \delta} = 0$.
\end{proof}

\subsection{Hecke operators prime to \texorpdfstring{$p$}{p}}

We will not need to define the prime to $p$ Hecke operators on the de Rham interpolation sheaf $\W{}$, it'll be sufficient to define them for the modular sheaf $\w{}$. We refer the reader to \cite[\S8.5]{andreatta2016adic} for the construction and definition of $T_{\ell}$ for any ideal $\ell \nmid pN$. We remark that the construction in loc. cit. works integrally as well.

Let $f_{\mathfrak{c}} \colon \Mbar{r}{\mathfrak{c}} \to \mathfrak{W}_{p}$ be the structure map. 

\begin{prop}
    For every ideal $\ell \nmid pN$, there exists an operator $T_\ell \colon {f_{\ell\mathfrak{c}}}_*\w{\ell\mathfrak{c}} \to {f_{\mathfrak{c}}}_*\w{\mathfrak{c}}$ which is the Hecke operator at $\ell$.
\end{prop}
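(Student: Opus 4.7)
The construction will mimic that of $U_{\mathfrak{P}_i}$ from the previous subsection, exploiting the fact that since $\ell \nmid pN$, the Hecke correspondence at $\ell$ interacts neither with the canonical subgroup nor with the level-$N$ structure. Concretely, let $\mathfrak{Y}_\ell$ denote the normalization of the subscheme of $\Mbar{r}{\mathfrak{c}} \times_{\mathfrak{W}_p} \Mbar{r}{\ell\mathfrak{c}}$ classifying tuples $\big((A,\iota,\lambda,\psi), (A',\iota',\lambda',\psi'), \pi\big)$ where $\pi \colon A \to A'$ is an $\Sh{O}_L$-linear isogeny whose kernel is \'{e}tale locally isomorphic to $\Sh{O}_L/\ell$, compatible with the polarizations in the sense that $\lambda' = \pi_*\lambda$ up to the standard normalization by a totally positive generator of $\ell$ (chosen once and for all, exactly as the $x_i$ were chosen for $U_{\mathfrak{P}_i}$). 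The two projections $p_1 \colon \mathfrak{Y}_\ell \to \Mbar{r}{\mathfrak{c}}$ and $p_2 \colon \mathfrak{Y}_\ell \to \Mbar{r}{\ell\mathfrak{c}}$ extend to finite flat morphisms of the chosen smooth toroidal compactifications. Because $\ker\pi$ has order prime to $p$, it is automatically disjoint from $H_n$ and from the $\mu_N$-level subgroup, so the correspondence lifts canonically to the partial Igusa tower.

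The universal isogeny $\pi$ restricts to an isomorphism $H_n(\Sh{A}) \xrightarrow{\sim} H_n(\Sh{A}')$ compatible with the trivializations, and hence induces an $\Sh{O}_L \otimes \Sh{O}_{\mathfrak{Y}_\ell}$-linear isomorphism $\pi^* \colon p_2^*\Omega_{\Sh{A}'} \xrightarrow{\sim} p_1^*\Omega_{\Sh{A}}$ preserving the marked sections coming from $\dlog$ of the universal generators on either side. By the functoriality of the VBMS formalism (Lemma \ref{L1304}) this upgrades to an isomorphism $\Sh{T}_\ell \colon p_2^*\w{\ell\mathfrak{c}} \xrightarrow{\sim} p_1^*\w{\mathfrak{c}}$ of sheaves of overconvergent Hilbert modular forms over $\mathfrak{Y}_\ell$. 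Normality of $\mathfrak{Y}_\ell$ together with the generic finite flatness of $p_1$ yields a well-defined integral trace $\mathrm{Tr}_{p_1} \colon {p_1}_*p_1^*\w{\mathfrak{c}} \to \w{\mathfrak{c}}$. We then set
\[
T_\ell \colon {f_{\ell\mathfrak{c}}}_*\w{\ell\mathfrak{c}} \xrightarrow{p_2^*} {f_{\mathfrak{c}}}_*{p_1}_*p_2^*\w{\ell\mathfrak{c}} \xrightarrow{\Sh{T}_\ell} {f_{\mathfrak{c}}}_*{p_1}_*p_1^*\w{\mathfrak{c}} \xrightarrow{\mathrm{Tr}_{p_1}} {f_{\mathfrak{c}}}_*\w{\mathfrak{c}}.
\]
Evaluating on the Tate objects of \S\ref{S4.3} then confirms that this is the classical Hecke operator at $\ell$, via the usual enumeration of cyclic $\Sh{O}_L/\ell$-sublattices of $\mathfrak{ab}$.

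The main obstacles here are purely bookkeeping: one must verify the correct normalization of polarization data so that the isogeny $\pi$ does carry a $\mathfrak{c}$-polarization on $A$ to an $\ell\mathfrak{c}$-polarization on $A'$ (and compensate on $\lambda'$ by the chosen totally positive generator of $\ell$, exactly as for $U_{\mathfrak{P}_i}$); that the Hecke correspondence extends to a well-behaved correspondence on a suitable toroidal compactification; and that the trace is integrally defined, not merely over the generic fibre. None of these present real difficulty since $\ell$ is a unit on all formal models in sight, so no delicate canonical subgroup analysis is needed, and the construction reduces essentially to the one given in \cite[\S 8.5]{andreatta2016adic}.
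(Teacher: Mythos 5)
Your construction is correct and is essentially the one the paper relies on: the paper's proof is just the citation to \cite[\S 8.5]{andreatta2016adic}, and what you write out (the prime-to-$p$ Hecke correspondence, its canonical lift to the Igusa tower since $\ker\pi$ is disjoint from $H_n$ and the level structure, the induced isomorphism on $\Omega_{\Sh{A}}$ preserving marked sections, VBMS functoriality, and the trace along $p_1$) is precisely that construction, in parallel with the paper's own treatment of $U_{\mathfrak{P}_i}$ and $U_\ell$ in the same section. No gaps.
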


\begin{proof}
    See \cite[\S8.5]{andreatta2016adic}.
\end{proof}

For a prime $\ell|N$, we define the $U_{\ell}$ operator as follows. Consider the normalization $\mathfrak{Y}_{\ell} \subset \Mbar{r}{\mathfrak{c}} \times \Mbar{r}{\ell\mathfrak{c}}$ of the Hecke correspondence defined generically as classifying pairs $(A,\iota, \lambda, \psi)$ and $(A', \iota', \lambda', \psi')$ together with an isogeny $\pi_{\ell} \colon A \to A'$ such that $\ker \pi_{\ell} \simeq \Sh{O}_L/\ell$ \'{e}tale locally, and $\ker\pi_{\ell} \cap \psi(\mu_N \otimes \mathfrak{d}^{-1}) = 0$. Then a similar discussion as before yields an operator $U_{\ell} \colon {f_{\ell\mathfrak{c}}}_*\w{\ell\mathfrak{c}} \to {f_{\mathfrak{c}}}_*\w{\mathfrak{c}}$. 

\begin{prop}
    For every prime $\ell|N$, there exists an operator $U_{\ell} \colon {f_{\ell\mathfrak{c}}}_*\w{\ell\mathfrak{c}} \to {f_{\mathfrak{c}}}_*\w{\mathfrak{c}}$.
\end{prop}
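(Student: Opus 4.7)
The plan is to construct $U_\ell$ via a Hecke correspondence, mirroring the construction of $T_\ell$ for $\ell\nmid pN$ recalled from \cite[\S 8.5]{andreatta2016adic} and the construction of $U_{\mathfrak{P}_i}$ above, with the one new feature being the disjointness condition on $\ker\pi_\ell$ with the $\mu_N$-level structure since $\ell\mid N$.

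First, I would introduce the Hecke correspondence $\mathfrak{Y}_\ell$ as the normalization of a closed subscheme of $\Mbar{r}{\mathfrak{c}} \times \Mbar{r}{\ell\mathfrak{c}}$ in the moduli problem over the generic fibre classifying tuples $((A,\iota,\lambda,\psi), (A',\iota',\lambda',\psi'), \pi_\ell)$, where $\pi_\ell\colon A\to A'$ is an $\Sh{O}_L$-linear isogeny with $\ker\pi_\ell$ étale-locally isomorphic to $\Sh{O}_L/\ell\Sh{O}_L$, satisfying $\lambda' = {\pi_\ell}_*\lambda$ so that $A'$ is $\ell\mathfrak{c}$-polarized, and $\ker\pi_\ell \cap \psi(\mu_N\otimes\mathfrak{d}^{-1})=0$ so that $\psi$ descends to the $\mu_N$-level structure $\psi'$ on $A'$. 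One chooses compatible toroidal compactifications so that the two projections $p_1, p_2$ extend to the compactification $\bar{\mathfrak{Y}}_\ell$, and $p_1$ is finite flat on the generic fibre.

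Second, since $\ell$ is coprime to $p$, the universal isogeny $\pi_\ell$ is étale over the $p$-adic base. Consequently, $\pi_\ell^{\ast}\colon p_2^{\ast}\omega_{\Sh{A}} \to p_1^{\ast}\omega_{\Sh{A}}$ is an isomorphism of $\Sh{O}_L\otimes\Sh{O}_{\bar{\mathfrak{Y}}_\ell}$-modules. Moreover, $\pi_\ell$ identifies the canonical subgroups $H_n(A)$ and $H_n(A')$ of level $n$ together with their dual generators, so the correspondence lifts canonically to the partial Igusa tower over $\bar{\mathfrak{Y}}_\ell$, and $\pi_\ell^{\ast}$ restricts to an isomorphism of the $\Sh{O}_L$-vector bundles with marked sections $p_2^{\ast}(\Omega_{\Sh{A}}, s) \xrightarrow{\sim} p_1^{\ast}(\Omega_{\Sh{A}}, s)$. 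By the functoriality of the VBMS construction (Lemma \ref{L1304}) and its $\mathfrak{T}^{\textrm{ext}}$-equivariance, this yields an isomorphism $\Sh{U}_\ell\colon p_2^{\ast}\w{} \xrightarrow{\sim} p_1^{\ast}\w{}$.

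Third, I would define $U_\ell$ as the composition
\[
U_\ell \colon (f_{\ell\mathfrak{c}})_{\ast}\w{\ell\mathfrak{c}} \xrightarrow{p_2^{\ast}} (f_{\ell\mathfrak{c}}\circ p_2)_{\ast}\, p_2^{\ast}\w{\ell\mathfrak{c}} \xrightarrow{\Sh{U}_\ell} (f_{\mathfrak{c}}\circ p_1)_{\ast}\, p_1^{\ast}\w{\mathfrak{c}} \xrightarrow{\mathrm{Tr}_{p_1}} (f_{\mathfrak{c}})_{\ast}\w{\mathfrak{c}},
\]
where $\mathrm{Tr}_{p_1}$ is the trace map induced from the generic fibre and extended to the formal model by normality of $\bar{\mathfrak{Y}}_\ell$, exactly as for $U_{\mathfrak{P}_i}$ in Definition \ref{D3101} (possibly normalized by the degree of $p_1$ depending on the desired convention). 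The main—indeed essentially the only—subtlety specific to the case $\ell\mid N$ is verifying that the level disjointness condition $\ker\pi_\ell\cap\psi(\mu_N\otimes\mathfrak{d}^{-1})=0$ produces a representable correspondence that compactifies appropriately; since $\ell\nmid p$, every $p$-adic consideration (canonical subgroup, blow-ups along $\Hdg{}$, partial Igusa tower) is preserved without modification, and the construction is formally identical to that of $T_\ell$ for $\ell\nmid pN$ recalled from \cite[\S 8.5]{andreatta2016adic}.
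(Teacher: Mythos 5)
Your proposal is correct and follows essentially the same route as the paper, which defines $U_{\ell}$ via the normalized Hecke correspondence $\mathfrak{Y}_{\ell} \subset \Mbar{r}{\mathfrak{c}} \times \Mbar{r}{\ell\mathfrak{c}}$ classifying isogenies $\pi_{\ell}$ with $\ker\pi_{\ell}$ \'{e}tale-locally $\Sh{O}_L/\ell$ and disjoint from $\psi(\mu_N \otimes \mathfrak{d}^{-1})$, and then invokes the same pullback--$\Sh{U}_{\ell}$--trace composition used for $U_{\mathfrak{P}_i}$ and $T_{\ell}$. The points you single out (\'{e}taleness of $\pi_{\ell}$ away from $p$, compatibility with canonical subgroups and the Igusa tower, functoriality of the VBMS construction) are exactly the ingredients the paper's ``similar discussion as before'' relies on.
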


For $\ell$ an ideal prime to $pN$, we define the central action. Define a map $S_{\ell} \colon \M{r}{\mathfrak{c}} \to \M{r}{\mathfrak{c}\ell^2}$ as the normalization of the map induced on generic fibres by sending $A \mapsto A \otimes \ell^{-1}$ together with the induced real multiplication, polarization and level structure. As before it is easy to see that there is a morphism $\pi_{\ell}^* \colon S_{\ell}^*\w{\mathfrak{c}\ell^2} \to \w{\mathfrak{c}}$. 

\begin{defn}
The Hecke operator $S_{\ell}$ is defined as
\[
S_{\ell} \colon {f_{\mathfrak{c}\ell}}_*\w{\mathfrak{c}\ell^2} \xrightarrow{S^*_{\ell}} {f_{\mathfrak{c}}}_*S^*_{\ell} \w{\mathfrak{c}\ell^2} \xrightarrow{\pi_{\ell}^*} {f_{\mathfrak{c}}}_*\w{\mathfrak{c}}.
\]
\end{defn}

\subsection{Hecke operators on arithmetic Hilbert modular forms}

The Hecke operators defined above on the sheaf of geometric Hilbert modular forms induce Hecke operators on the sheaf of arithmetic Hilbert modular forms, which agree with the classically defined Hecke operators on automorphic forms using double coset operators. Let us first recall that we have the following diagram of analytic adic spaces. 

\[\begin{tikzcd}
	{\Man{r}^{G, \mathfrak{c}}} & {\Man{r}^{\mathfrak{c}}\times_{\Sh{W}}\Sh{W}^G} & {\Man{r}^{\mathfrak{c}}} \\
	& {\Sh{W}^G_p} & {\Sh{W}_p}
	\arrow["{f_{\mathfrak{c}}}", from=1-3, to=2-3]
	\arrow["f"', from=2-2, to=2-3]
	\arrow["f", from=1-2, to=1-3]
	\arrow["{f_{\mathfrak{c}}}"', from=1-2, to=2-2]
	\arrow["{g_{\mathfrak{c}}}"', from=1-1, to=2-2]
	\arrow["p"', from=1-2, to=1-1]
\end{tikzcd}\]

We recall that $\w{G, \mathfrak{c}} = (p_*f^*\w{})^{\Gamma}[1/p]$, where $\Gamma = \Sh{O}_L^{\times, +}/U_N^2$. The sheaf of overconvergent arithmetic Hilbert modular forms was defined as 
\[
\w{G}[1/p] = \left(\bigoplus_{\mathfrak{c} \in \textrm{Frac}(L)^{(p)}} {g_{\mathfrak{c}}}_*\w{G,\mathfrak{c}}[1/p] \right)/\left(L_{(x\mathfrak{c},\mathfrak{c})}(f) - f \right)_{x \in \textrm{Princ}(L)^{+,(p)}}.
\]

\begin{prop}
    \begin{enumerate}
        \item For $\ell$ an ideal in $\Sh{O}_L$ that is prime to $pN$, the operators $T_{\ell}$ and $S_{\ell}$ commute with the action of $\Gamma$ and induce operators $T_{\ell} \colon {g_{\ell\mathfrak{c}}}_*\w{G, \ell\mathfrak{c}} \to {g_{\mathfrak{c}}}_*\w{G, \mathfrak{c}}$ and $S_{\ell} \colon {g_{\mathfrak{c}\ell^2}}_*\w{G, \mathfrak{c}\ell^2} \to {g_{\mathfrak{c}}}_*\w{G, \mathfrak{c}}$ respectively.
        \item For $\ell$ a prime ideal dividing $N$, the operator $U_{\ell}$ commutes with the action of $\Gamma$, and induces an operator $U_{\ell} \colon {g_{\ell\mathfrak{c}}}_*\w{G, \ell\mathfrak{c}} \to {g_{\mathfrak{c}}}_*\w{G, \mathfrak{c}}$.
    \end{enumerate}
    Moreover, $T_{\ell}$ and $S_{\ell}$ as defined above, commutes with $L(xc,c)$ for any $x \in L^{\times,+}$, with $(x,p) = 1$. Thus they induce endomorphisms $T_{\ell}$ and $S_{\ell}$ of $\w{G}$ which preserve the cuspforms.
\end{prop}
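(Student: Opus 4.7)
The plan is to verify each claim moduli-theoretically by checking $\Gamma$-equivariance of the underlying Hecke correspondences and then applying the $\Gamma$-invariants functor, and analogously to match the correspondences against the polarization-scaling isomorphisms $L_{(x\mathfrak{c}, \mathfrak{c})}$ which glue the sheaves $\w{G,\mathfrak{c}}$ into $\w{G}$.

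First I would observe that the normalized correspondence $\mathfrak{Y}_\ell \subset \Mbar{r}{\mathfrak{c}} \times \Mbar{r}{\ell\mathfrak{c}}$ defining $T_\ell$ and (for $\ell \mid N$) $U_\ell$ parametrizes isogenies $\pi_\ell \colon A \to A'$ with a prescribed kernel condition that makes no reference to the polarization. Consequently the diagonal action $(\rho_\epsilon, \rho_\epsilon)$ of $\Gamma$ preserves $\mathfrak{Y}_\ell$, both projections $p_1, p_2$ are $\Gamma$-equivariant, and the pullback $\pi_\ell^*$ on invariant differentials $\omega_{\Sh{A}}$ is polarization-independent. An identical argument covers $S_\ell$ via the functorial assignment $A \mapsto A \otimes \ell^{-1}$, since tensoring by $\ell^{-1}$ commutes with rescaling the polarization by $\epsilon$. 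Combined with the explicit formula $(\epsilon \cdot g)(A, \iota, \lambda, \psi, \omega) = v_{\alpha,I}(\epsilon^{-1}) g(A, \iota, \epsilon\lambda, \psi, \omega)$, this yields commutation of $T_\ell, S_\ell, U_\ell$ with the $\Gamma$-action, so each operator restricts to the invariant subsheaf $\w{G,\mathfrak{c}} = (p_*f^*\w{\mathfrak{c}})^\Gamma$.

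Second, to show compatibility with $L_{(x\mathfrak{c}, \mathfrak{c})}$ for $x \in L^{\times,+}$ coprime to $p$, I would note that the underlying map $\rho_{(x, \mathfrak{c})} \colon \Mbar{r}{\mathfrak{c}} \xrightarrow{\sim} \Mbar{r}{x\mathfrak{c}}$ is the identity on the abelian variety together with scaling of the polarization by $x$. Since the Hecke correspondences are defined purely via isogenies with polarization-independent kernel conditions, the correspondence over $(\mathfrak{c}, \ell\mathfrak{c})$ is identified with the one over $(x\mathfrak{c}, x\ell\mathfrak{c})$ under $\rho_{(x, \mathfrak{c})} \times \rho_{(x, \ell\mathfrak{c})}$. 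The defining formula $L_{(x\mathfrak{c}, \mathfrak{c})}(f)(A, \iota, \lambda, \psi, \omega) = v_{\alpha,I}(x) f(A, \iota, x\lambda, \psi, \omega)$ then produces, by a direct evaluation on test objects, the identity $T_\ell \circ L_{(x\ell\mathfrak{c}, \ell\mathfrak{c})} = L_{(x\mathfrak{c}, \mathfrak{c})} \circ T_\ell$, and similarly for $S_\ell$. Hence both operators descend to endomorphisms of $\w{G}$ as in Definition \ref{D358}. Cuspidality is preserved automatically because the correspondences were constructed using smooth toroidal compactifications compatible on both sides, so the boundary divisor $D$ is preserved and the operators restrict to $\w{G}(-D)$.

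The main obstacle I expect is the careful bookkeeping of the weight-dependent twist $v_{\alpha,I}$ together with the various normalization constants (the trace factor $1/p^{f_i}$ type contributions present in the definitions of the Hecke operators, and the twist by $v_{\alpha,I}(x)$ in $L_{(x\mathfrak{c}, \mathfrak{c})}$) to verify the commutation relations as literal equalities rather than up to an ambiguous scalar. None of this is conceptually difficult, but ensuring that every sign, twist and normalization aligns correctly across the different polarization modules is where the actual work will lie.
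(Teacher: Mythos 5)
Your proposal is sound and fills in precisely what the paper leaves to the reader. The key observations are exactly the right ones: the Hecke correspondences are defined via isogenies whose kernel conditions are polarization-blind, so the diagonal $\Gamma$-action (and the scaling isomorphisms $\rho_{(x,\mathfrak{c})}$) carries the correspondence over $(\mathfrak{c}, \ell\mathfrak{c})$ to the one over $(\epsilon\mathfrak{c}, \epsilon\ell\mathfrak{c})$ resp.\ $(x\mathfrak{c}, x\ell\mathfrak{c})$, and the twist $v_{\alpha,I}$ cancels on both sides of the commutation identity since it depends only on $\epsilon$ resp.\ $x$ and not on the test object. One minor remark on your closing paragraph: the normalization factors of the form $1/p^{f_i}$ appear only in the definitions of $U_{\mathfrak{P}}$ and $U$ at the primes above $p$; the operators $T_\ell$, $S_\ell$, and $U_\ell$ treated in this proposition carry no such factors, so the bookkeeping concern is lighter than you anticipate. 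The cuspidality claim is also fine, and if one wishes to avoid invoking compatibility of toroidal compactifications, one can alternatively note that the operators preserve the subsheaf of cuspforms because they are defined over the minimal compactification (or, more crudely, check on $q$-expansions at all cusps that the constant term of the image vanishes).
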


\begin{proof}
    Left to the reader.
\end{proof}

We now explain how to descend the $U_{\mathfrak{P}_i}$ and $V_{\mathfrak{P}_i}$ operators to $\w{G}$. The operator $U_{\mathfrak{P}_i}$ commutes with the $\Gamma$-action as well as the $\mathrm{Princ}(L)^{+,(p)}$-action, and descends to an operator $U_{\mathfrak{P}_i} \colon {g_{x_i^{-1}\mathfrak{P}_i\mathfrak{c}}}_*\w{G, x_i^{-1}\mathfrak{P}_i\mathfrak{c}} \to {g_{\mathfrak{c}}}_*\w{G, \mathfrak{c}}$. However, this operator is dependent on the choice of $x_i$, and hence is not canonical. Similarly, $V_{\mathfrak{P}_i}$ descends to an operator $V_{\mathfrak{P}_i} \colon {g_{x_i\mathfrak{P}_i^{-1}\mathfrak{c}}}_*\w{G, x_i\mathfrak{P}_i^{-1}\mathfrak{c}} \to {g_{\mathfrak{c}}}_*\w{G, \mathfrak{c}}$ which also depends on the choice of $x_i$. We make these canonical in the following way. Let $(v_{\text{un}}, w_{\text{un}})$ be the universal weight on $\Sh{W}^G_p$.

\begin{defn}
    \begin{enumerate}
        \item Define the operator $\mathbf{U}_{\mathfrak{P}_i} \colon {g_{x_i^{-1}\mathfrak{P}_i\mathfrak{c}}}_*\w{G, x_i^{-1}\mathfrak{P}_i\mathfrak{c}} \to {g_{\mathfrak{c}}}_*\w{G, \mathfrak{c}}$ as $\mathbf{U}_{\mathfrak{P}_i} := (\varpi_i/x_i)^{v_{\text{un}}}U_{\mathfrak{P}_i}$.
        \item Define the operator $\mathbf{V}_{\mathfrak{P}_i} \colon {g_{x_i\mathfrak{P}_i^{-1}\mathfrak{c}}}_*\w{G, x_i\mathfrak{P}_i^{-1}\mathfrak{c}} \to {g_{\mathfrak{c}}}_*\w{G, \mathfrak{c}}$ as $\mathbf{V}_{\mathfrak{P}_i} = (x_i/\varpi_i)^{v_{\text{un}}}V_{\mathfrak{P}_i}$.
    \end{enumerate}
\end{defn}

\begin{prop}
    The operators $\mathbf{U}_{\mathfrak{P}_i}$ and $\mathbf{V}_{\mathfrak{P}_i}$ are independent of the choice of $x_i$, and induce endomorphisms $\mathbf{U}_{\mathfrak{P}_i}$ and $\mathbf{V}_{\mathfrak{P}_i}$ of $\w{G}$ preserving cuspforms. Moreover, $\mathbf{U}_{\mathfrak{P}_i} \circ \mathbf{V}_{\mathfrak{P}_i} = \text{id}$, and $\prod_{i=1}^h \mathbf{U}_{\mathfrak{P}_i} = U$, and $\prod_{i=1}^h \mathbf{V}_{\mathfrak{P}_i} = V$.
\end{prop}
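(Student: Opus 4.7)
The plan is to verify each claim by a direct computation tracking how the Hecke correspondence defining $U_{\mathfrak{P}_i}$ depends on the choice of $x_i$ used to normalize the source polarization. The twist by $(\varpi_i/x_i)^{v_{\textrm{un}}}$ is designed precisely to cancel the scalar factor that $U^{(x_i)}_{\mathfrak{P}_i}$ picks up when $x_i$ is varied; once this single compensation is established, the commutation with $\textrm{Princ}(L)^{+,(p)}$ requires no twist (the $x_i$-Hecke operator already respects polarization rescaling), and the remaining algebraic relations reduce to bookkeeping in the scalar characters.

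For the independence of $x_i$, I would take $x_i' \in L^{\times,+}$ with $\varpi_i/x_i' \in (\Sh{O}_L\otimes\Z_p)^{\times}$ and set $y = x_i/x_i'$; since $x_i$ and $x_i'$ have the same $\mathfrak{P}_j$-adic valuation at every prime $\mathfrak{P}_j\mid p$ (namely $1$ at $\mathfrak{P}_i$, $0$ elsewhere), the ratio $y$ is prime to $p$ and lies in $\textrm{Princ}(L)^{+,(p)}$. The key step is the intertwining relation
\[
U^{(x_i')}_{\mathfrak{P}_i} \circ L_{(y x_i^{-1}\mathfrak{P}_i\mathfrak{c},\,x_i^{-1}\mathfrak{P}_i\mathfrak{c})}^{-1} = y^{-v_{\textrm{un}}}\, U^{(x_i)}_{\mathfrak{P}_i}
\]
as operators $\w{G,x_i^{-1}\mathfrak{P}_i\mathfrak{c}} \to \w{G,\mathfrak{c}}$, which I would prove pointwise: for $f$ in the source, $L^{-1}(f)(A',\iota',\lambda',\psi',\omega') = y^{-v_{\textrm{un}}}f(A',\iota',y^{-1}\lambda',\psi',\omega')$ by the formula defining $L_{(\cdot)}$, and the substitution $\lambda' \mapsto y^{-1}\lambda'$ identifies the $x_i'$- and $x_i$-Hecke correspondences because $\pi_{i*}(y^{-1}\lambda') = y^{-1}x_i\lambda = x_i'\lambda$, so the sum over isogenies reorganizes into $y^{-v_{\textrm{un}}}$ times the $x_i$-sum. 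Multiplying both sides by $(\varpi_i/x_i')^{v_{\textrm{un}}}$ and using the identity $\varpi_i/x_i' = y^{-1}(\varpi_i/x_i)$ then gives $\mathbf{U}^{(x_i')}_{\mathfrak{P}_i}\circ L^{-1} = \mathbf{U}^{(x_i)}_{\mathfrak{P}_i}$, which is precisely the assertion that the two operators induce the same map on $\w{G}$ (Definition \ref{D358}). The commutation with $\textrm{Princ}(L)^{+,(p)}$ is a simpler variant of the same computation: for any $y\in L^{\times,+}$ coprime to $p$, both $L_{(y\mathfrak{c},\mathfrak{c})}\circ U^{(x_i)}_{\mathfrak{P}_i}$ and $U^{(x_i)}_{\mathfrak{P}_i}\circ L_{(yx_i^{-1}\mathfrak{P}_i\mathfrak{c},\,x_i^{-1}\mathfrak{P}_i\mathfrak{c})}$ reduce to $y^{v_{\textrm{un}}}$ times the same sum $\sum_{\pi_i}f(A',y\lambda',\psi',(\pi_i^{\vee})^*\omega)$ over the $x_i$-Hecke correspondence. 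The analogous arguments with $\pi_i^{\vee}$ in place of $\pi_i$ handle $\mathbf{V}_{\mathfrak{P}_i}$; preservation of cuspidality is inherited from the corresponding fact for $U_{\mathfrak{P}_i}$ and $V_{\mathfrak{P}_i}$ on each $\w{G,\mathfrak{c}}$.

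Finally, $\mathbf{U}_{\mathfrak{P}_i}\circ\mathbf{V}_{\mathfrak{P}_i} = (\varpi_i/x_i)^{v_{\textrm{un}}}(x_i/\varpi_i)^{v_{\textrm{un}}}\,U_{\mathfrak{P}_i}\circ V_{\mathfrak{P}_i} = \textrm{id}$ by the recorded identity $U_{\mathfrak{P}_i}\circ V_{\mathfrak{P}_i} = \textrm{id}$. For the product relation, observe that in the decomposition $\Sh{O}_L\otimes\Z_p = \prod_{j=1}^h\hat{\Sh{O}}_{\mathfrak{P}_j}$ the element $\prod_{i=1}^h\varpi_i$ equals $p$ in each component, so $\prod_i(\varpi_i/x_i) = p/p = 1$; combined with the remark after Definition \ref{D3101} that $\prod_i U_{\mathfrak{P}_i} = U$, this yields $\prod_i\mathbf{U}_{\mathfrak{P}_i} = U$, and analogously $\prod_i\mathbf{V}_{\mathfrak{P}_i} = V$. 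The main technical obstacle will be the precise determination of the sign $-v_{\textrm{un}}$ in the intertwining identity of the second paragraph: I need to carefully match the convention for the Hecke summation with pullback of the differential under $\pi_i^{\vee}$ against the rescaling-of-polarization formula for $L_{(\cdot)}$, and keep the polarization modules straight. Once this one exponent is pinned down, every other step is multiplicative in scalar characters and follows formally.
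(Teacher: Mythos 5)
The paper gives no proof here (it is ``left to the reader''), and your argument is the intended one: unwind the moduli-theoretic definitions of $U_{\mathfrak{P}_i}$, $V_{\mathfrak{P}_i}$ and $L_{(x\mathfrak{c},\mathfrak{c})}$ and track the single scalar $y^{v_{\text{un}}}$ by which they fail to commute; the overall structure is correct. One concrete slip occurs at exactly the spot you flagged as delicate: with $y = x_i/x_i'$ one has $\varpi_i/x_i' = y\cdot(\varpi_i/x_i)$, \emph{not} $y^{-1}(\varpi_i/x_i)$. It is the correct version that combines with your intertwining relation $U^{(x_i')}_{\mathfrak{P}_i}\circ L^{-1} = y^{-v_{\text{un}}}U^{(x_i)}_{\mathfrak{P}_i}$ to give $(\varpi_i/x_i')^{v_{\text{un}}}\,y^{-v_{\text{un}}} = (\varpi_i/x_i)^{v_{\text{un}}}$ and hence $\mathbf{U}^{(x_i')}_{\mathfrak{P}_i}\circ L^{-1} = \mathbf{U}^{(x_i)}_{\mathfrak{P}_i}$; taken literally, your identity would leave an uncancelled $y^{-2v_{\text{un}}}$. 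A second, purely notational point: in the commutation with $\text{Princ}(L)^{+,(p)}$ the composite $L_{(y\mathfrak{c},\mathfrak{c})}\circ U_{\mathfrak{P}_i}$ only typechecks when the inner operator is the one with target $\w{G, y\mathfrak{c}}$, so the square to verify is $L_{(y\mathfrak{c},\mathfrak{c})}\circ U^{y\mathfrak{c}}_{\mathfrak{P}_i} = U^{\mathfrak{c}}_{\mathfrak{P}_i}\circ L_{(yx_i^{-1}\mathfrak{P}_i\mathfrak{c},\,x_i^{-1}\mathfrak{P}_i\mathfrak{c})}$, and the common value of the two sides is $\frac{y^{v_{\text{un}}}}{p^{f_i}}\sum_{\pi_i} f(A,\iota, yx_i^{-1}\pi_{i*}\lambda',\psi,(\pi_i^*)^{-1}\omega')$ evaluated at the quotient $A$, not at $A'$ with polarization $y\lambda'$ as in your display. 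The remaining claims --- $\mathbf{U}_{\mathfrak{P}_i}\circ\mathbf{V}_{\mathfrak{P}_i}=\text{id}$ from the cancellation $(\varpi_i/x_i)^{v_{\text{un}}}(x_i/\varpi_i)^{v_{\text{un}}}=1$ together with the recorded $U_{\mathfrak{P}_i}\circ V_{\mathfrak{P}_i}=\text{id}$, and the product formulas from $\prod_i(\varpi_i/x_i)=p/p=1$ --- are handled correctly.
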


\begin{proof}
    Left to the reader.
\end{proof}

\subsection{Overconvergent projection}
In this section we will define the overconvergent projection in families upon studying the cohomology of the complex of $\Sh{O}_{\bar{\Sh{M}}_{r,\alpha,I}}$ sheaves obtained by the connection $\nabla$ which is described as follows. In particular in this section all sheaves are considered over the analytic adic spaces. Hence to simplify notation we will still write $\W{}$ but it is to be understood that this is a sheaf over $\bar{\Sh{M}}_{r,\alpha,I}$. Consider the complex
\begin{equation}
\W{} \xrightarrow{\nabla} \W{} \hat{\otimes} \Omega^1_{\Man{r}/\Lambda_{\alpha,I}} \cdots \xrightarrow{\nabla} \W{} \hat{\otimes} \Omega^g_{\Man{r}/\Lambda_{\alpha,I}}.
\end{equation}
Denote by $\W{\bullet}$ the complex obtained by tensoring the above complex by $\Sh{O}_{\Man{r}}(-D)$ (and the universal derivation on it) where we recall $D$ is the boundary divisor. By Griffiths' transversality we obtain a complex corresponding to the filtration on $\W{}$ as follows.
\begin{equation}
    \Fil{n}\W{} \xrightarrow{\nabla} \Fil{n+1}\W{} \hat{\otimes} \Omega^1_{\Man{r}/\Lambda_{\alpha,I}} \cdots \xrightarrow{\nabla} \Fil{n+g}\W{} \hat{\otimes} \Omega^{g}_{\Man{r}/\Lambda_{\alpha,I}}.
\end{equation}
Denote by $\Fil{n}^{\bullet}\W{}$ the complex obtained by tensoring the above complex with $\Sh{O}_{\Man{r}}(-D)$. By taking quotient of the first complex by the second we obtain a third complex $(\W{}/\Fil{n}\W{})^{\bullet}$ which sits in a short exact sequence of complexes on $\Man{r}$ that gives a long exact sequence of hypercohomology groups.
\begin{align}\label{E303}
\begin{split}
0 &\to H^0_{\textrm{dR}}(\Man{r}, \Fil{n}^{\bullet}\W{}) \to H^0_{\textrm{dR}}(\Man{r}, \W{\bullet}) \to H^0_{\textrm{dR}}(\Man{r}, (\W{}/\Fil{n}\W{})^{\bullet}) \\
&\to H^1_{\textrm{dR}}(\Man{r}, \Fil{n}^{\bullet}\W{}) \to H^1_{\textrm{dR}}(\Man{r}, \W{\bullet}) \to \cdots
\end{split}
\end{align}

\begin{lemma}
The cohomology of the de Rham complex of coherent sheaves $\Fil{n}^{\bullet}\W{}$ can be computed using global sections.
\end{lemma}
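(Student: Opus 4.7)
The plan is to show that each term $\Fil{n+j}\W{} \otimes \Omega^j_{\Man{r}/\Lambda_{\alpha,I}}(-D)$ appearing in the complex $\Fil{n}^{\bullet}\W{}$ is acyclic for global sections on $\Man{r}$; once that is established, the standard spectral sequence argument (hypercohomology degenerates to the cohomology of the complex of global sections when each term is acyclic) immediately gives the lemma.

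To obtain this acyclicity, I would use the projection $f\colon \Man{r} \to \Sh{M}^*_{r,\alpha,I}$ to the overconvergent neighborhood of the minimal compactification, which is affinoid. Since higher cohomology of any coherent sheaf on an affinoid vanishes, the Leray spectral sequence reduces us to proving
\[
R^i f_* \bigl( \Fil{n+j}\W{} \otimes \Omega^j_{\Man{r}/\Lambda_{\alpha,I}}(-D) \bigr) = 0 \quad \text{for all } i>0,\ j\geq 0.
\]
The Kodaira-Spencer isomorphism $\omega_{\Sh{A}}^{\otimes 2} \simeq \Omega^1_{\Man{r}/\Lambda_{\alpha,I}}$ identifies $\Omega^j_{\Man{r}/\Lambda_{\alpha,I}}(-D)$ with a line bundle built out of powers of $\omega_{\Sh{A}}$ twisted by $\Sh{O}(-D)$, and by the projection formula tensoring by line bundles coming from $\Sh{M}^*_{r,\alpha,I}$ preserves vanishing of $R^i f_*$. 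Hence it suffices to handle $\Fil{n+j}\W{}$ itself tensored with $(-D)$ and a line bundle of the form $\omega_{\Sh{A}}^{2j}$.

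The final step uses the finite filtration on $\Fil{n+j}\W{}$ from Lemma \ref{L2401}, whose graded pieces are $\w{} \otimes \Sym^i(\oplus_\sigma \omega_{\Sh{A}}(-2\sigma))$. Each graded piece, after twisting by $\omega_{\Sh{A}}^{2j}(-D)$, is a shift of the overconvergent modular sheaf of some weight tensored with $\Sh{O}(-D)$ and with the coherent sheaf $\w{\chi}$. Theorem \ref{T2302} asserts $R^i f_* \w{}(-D) = 0$ for $i>0$ for the universal weight, so the same vanishing holds for every twisted modular sheaf appearing as a graded piece. Running the long exact sequences attached to the filtration gives the required vanishing for $\Fil{n+j}\W{}(-D) \otimes \Omega^j$, completing the reduction.

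The main obstacle is bookkeeping: the graded pieces involve line bundles $(HW\cdot \omega_{\Sh{A}}^{-2})^{\Xi(i)}$ and $\eta^{-\ell(i)}$ that a priori live on the partial Igusa tower, and one must check that after the $(\Sh{O}_L/q\Sh{O}_L)^\times$-invariants incorporated in $\w{\chi}$ and after inverting $\alpha$, everything descends to coherent sheaves on $\Man{r}$ to which Theorem \ref{T2302} applies uniformly in the varying weight along $\mathfrak{W}_{\alpha,I}$.
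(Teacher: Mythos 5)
Your proposal is correct and follows essentially the same route as the paper: reduce to $f_*$-acyclicity of each term of the complex via the affinoid minimal compactification, use the finite filtration of Lemma \ref{L2401} whose graded pieces are twisted cuspidal modular sheaves covered by Theorem \ref{T2302}, and absorb the $\Omega^j$ factors via Kodaira--Spencer. The bookkeeping issue you flag is real but harmless, since the twisting line bundles $\eta^{-\ell(i)}(HW\cdot\omega_{\Sh{A}}^{-2})^{\Xi(i)}$ are already defined over $\Xra{r}$ (as noted in the proof of Theorem \ref{T2301}), so the graded pieces descend as the paper implicitly assumes.
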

\begin{proof}
We recall that if $f \colon \Man{r} \to \Sh{M}^*_{r,\alpha,I}$ is the projection to the minimal compactification, which is an affinoid adic space, $R^if_*\w{}(-D) = 0$. We note that in order to prove the lemma, it will be enough to prove that each sheaf in the complex $\Fil{n}^{\bullet}\W{}$ is acyclic for the direct image functor $f_*$. Because then an injective resolution of $\Fil{n}^{\bullet}\W{}$ will give an acyclic resolution of $f_*\Fil{n}^{\bullet}\W{}$, and since $\Gamma(\Sh{M}^*_{r,\alpha,I}, \cdot)$ is exact, the lemma will follow. 

We first show that $\Fil{n}\W{}(-D)$ is acyclic for $f_*$. By Lemma \ref{L2401}, the sheaf $\Fil{n}\W{}(-D)$ is equipped with a finite filtration such that the graded pieces are finite direct sums of sheaves of cuspforms. Thus the graded pieces of the filtration are acyclic for $f_*$ by what we just recalled above. Then a simple spectral sequence argument proves that $\Fil{n}\W{}(-D)$ is $f_*$-acyclic. Moreover, using the Kodaira--Spencer isomorphism, this same proof shows that $\Fil{n+i}\W{}(-D)\hat{\otimes}{\Omega}^i_{\Man{r}/\Lambda_{\alpha,I}}$ is also $f_*$-acyclic for all $0 \leq i \leq g$.
\end{proof}

\begin{lemma}\label{L619}
Let $t_L = \sum \sigma$ be the generator of the parallel weights. There exists an exact sequence 
\[
0 \to H^0(\Man{r}, \wc{k+2t_L}(-D)) \xrightarrow{i} H^g_{\textrm{dR}}(\Man{r}, \Fil{n}^{\bullet}\W{}) \to \coker{i} \to 0
\]
where $i$ is $U$-equivariant and $\coker{i}$ is killed by $\prod_{\sigma}\prod_{i=0}^{n+g-1}(u_{\sigma} - i)$.
\end{lemma}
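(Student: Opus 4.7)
The plan is to exploit the Hodge filtration on $\Fil{n}^{\bullet}\W{}$ and the Koszul structure it induces on the graded pieces. Since cohomology of the complex is computed via global sections (by the preceding lemma), $H^g_{\textrm{dR}}$ is the cokernel of the top differential $\nabla$. I equip $\Fil{n}^{\bullet}\W{}$ with a decreasing filtration $F^{\bullet}$ of subcomplexes, where $F^p$ in degree $j$ is $\Fil{n+j-p}\W{}\otimes\Omega^j(-D)$ (vanishing when $n+j-p<0$). Griffiths' transversality (Theorem~\ref{T401}) ensures each $F^p$ is a subcomplex. The extremal piece $F^{n+g}$ is concentrated in degree $g$ and equals $\Fil{0}\W{}\otimes\Omega^g(-D)\simeq \wc{k+2t_L}(-D)$ under Kodaira--Spencer; the inclusion $F^{n+g}\hookrightarrow \Fil{n}^{\bullet}\W{}$ induces the map $i$ on $H^g$.

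The key technical step is to identify each graded piece $F^p/F^{p+1}$ as a direct sum of Koszul-type subcomplexes. Using $\Omega^j\simeq\bigoplus_{|S|=j}\omega^{2\chi_S}$ and $\Gr{i}\W{}\simeq\bigoplus_{|\mathbf{i}|=i}\wc{k-2\mathbf{i}}$, the cancellation $\omega^{-2\mathbf{i}}\otimes\omega^{2\chi_S}=\omega^{-2\mathbf{i}_0}$ with $\mathbf{i}_0:=\mathbf{i}-\chi_S$ (which is preserved by the differential) yields a decomposition of $F^p/F^{p+1}$ into subcomplexes indexed by multi-indices $\mathbf{i}_0\in\Z^{\Sigma}$ with $(\mathbf{i}_0)_\sigma\geq-1$ and $|\mathbf{i}_0|=n-p$. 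The graded connection formula from \S\ref{2S42} (the element $\omega^{k-2\mathbf{i}}$ is sent by $\Gr{i}\nabla(\sigma)$ to $(u_\sigma - i_\sigma)\omega^{k-2\mathbf{i}}$) shows that the $\mathbf{i}_0$-subcomplex is precisely the Koszul complex on the scalars $\{u_\sigma-(\mathbf{i}_0)_\sigma\}_{\sigma\notin T(\mathbf{i}_0)}$ acting on $\wc{k-2\mathbf{i}_0}(-D)$, where $T(\mathbf{i}_0):=\{\sigma:(\mathbf{i}_0)_\sigma=-1\}$. The distinguished case $\mathbf{i}_0=-t_L$ yields a single-term subcomplex recovering $\wc{k+2t_L}(-D)$ in degree $g$ and matches $F^{n+g}$ exactly.

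The top cohomology of a Koszul complex on variables $(x_\sigma)$ acting on a module $M$ equals $M/\sum_\sigma x_\sigma M$, hence is killed by $\prod_\sigma x_\sigma$. Thus for each $\mathbf{i}_0\neq -t_L$, the top cohomology of the $\mathbf{i}_0$-subcomplex is killed by $\prod_{\sigma\notin T(\mathbf{i}_0)}(u_\sigma-(\mathbf{i}_0)_\sigma)$. A dévissage along the exact sequences $0\to F^{p+1}\to F^p\to F^p/F^{p+1}\to 0$ shows $\coker i$ is killed by the product of these annihilators. Since $|T(\mathbf{i}_0)|\leq g-1$ whenever $\sigma\notin T(\mathbf{i}_0)$, the admissible values satisfy $(\mathbf{i}_0)_\sigma\leq|\mathbf{i}_0|+|T(\mathbf{i}_0)|\leq n+g-1$, so they fill $\{0,1,\ldots,n+g-1\}$ and the combined product $\prod_\sigma\prod_{i=0}^{n+g-1}(u_\sigma-i)$ annihilates $\coker i$. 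The $U$-equivariance of $i$ is immediate since $U$ commutes with $\nabla$ (Lemma~\ref{L3101}) and preserves the Hodge filtration, hence respects the inclusion of $F^{n+g}$. The main obstacle is the injectivity of $i$: in the spectral sequence attached to $F^{\bullet}$, the incoming differentials $d_r$ to $E_r^{n+g,0}$ come from $E_r^{n+g-r,r-1}$, whose sources arise from $\mathbf{i}_0$-subcomplexes with $|\mathbf{i}_0|=r-g\neq-g$, forcing $\mathbf{i}_0\neq -t_L$; a careful analysis (or a direct $q$-expansion computation via~(\ref{eq:12})) shows these incoming differentials vanish, since any nontrivial class would need to descend into the isolated $\mathbf{i}_0=-t_L$ summand, which occurs only in the extremal piece $F^{n+g}/F^{n+g+1}$.
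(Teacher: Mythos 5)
Your treatment of the cokernel is correct and takes a genuinely different route from the paper's. You introduce a second, ``Koszul'' filtration $F^{p}$ of the whole complex and decompose each graded piece $F^{p}/F^{p+1}$ into Koszul complexes on the scalars $u_{\sigma}-(\mathbf{i}_0)_{\sigma}$, reading off the annihilator from the top Koszul cohomology; the paper instead runs an induction on $n$ (base case $n=1-g$), applying the snake lemma to the comparison of $\Fil{n+g-1}$ with $\Fil{n+g-2}$ and computing the cokernel of the graded map $\Gr{n+g-1}\Wc{k}\hat{\otimes}\Omega^{g-1}\to\Gr{n+g}\Wc{k+2t_L}$ directly from the formula $k(1+\beta_nZ)\prod_i V_i^{n_i}\D\hat{X}_j\mapsto (u_j-n_j)(k+2t_L)(1+\beta_nZ)V_j\prod_i V_i^{n_i}$. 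Both arguments rest on exactly that graded formula; yours makes the provenance of the factors $u_{\sigma}-i$, $0\le i\le n+g-1$, more transparent at the cost of a heavier setup. (Both versions, yours and the paper's, strictly speaking only yield annihilation by a product of such linear factors with multiplicities; this is harmless since the lemma is only ever used after inverting $\lambda$.)

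The gap is the injectivity of $i$, which you correctly isolate as the main obstacle but do not actually prove. Your spectral-sequence reduction is fine --- injectivity amounts to the vanishing of the incoming differentials $d_r$ into the corner term --- but the stated justification (``any nontrivial class would need to descend into the isolated $\mathbf{i}_0=-t_L$ summand'') is circular: that summand is precisely the target of those differentials, and the question is whether the maps into it are zero. Your decomposition does give the vanishing for free after inverting $\lambda$, since every $\mathbf{i}_0\neq -t_L$ subcomplex becomes a Koszul complex on units and hence acyclic in all degrees; but the lemma asserts injectivity before localization. What the ``careful analysis'' must supply is the paper's mechanism: working in the local coordinates $V_{\sigma}$, the image of the top differential $\nabla\colon \Fil{n+g-1}\Wc{k}(-D)\hat{\otimes}\Omega^{g-1}\to\Fil{n+g}\Wc{k+2t_L}(-D)$ meets the degree-zero part $\wc{k+2t_L}(-D)$ trivially --- a descending induction on the top $V$-degree of a preimage $h$, using that the leading coefficient of $\nabla h$ is governed by the graded Koszul map, that the $u_{\sigma}-i$ act as nonzerodivisors, and that the $\theta_{\sigma}$ commute. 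That step carries real content and is missing from your write-up.
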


\begin{proof}
We first note that \[H^g_{\textrm{dR}}(\Man{r}, \Fil{n}^{\bullet}\Wc{k}) = \frac{H^0(\Man{r}, \Fil{n+g}\Wc{k+2t_L}(-D))}{\nabla H^0(\Man{r}, \Fil{n+g-1}\Wc{k}(-D)\hat{\otimes}\Omega^{g-1}_{\Man{r}/\Lambda_{\alpha,I}})}\]
The $U$-equivariant inclusion of $H^0(\Man{r}, \wc{k+2t_L}(-D))$ inside $H^0(\Man{r}, \Fil{n+g}\Wc{k+2t_L}(-D))$ induces a map  $H^0(\Man{r}, \wc{k+2t_L}(-D)) \to  H^g_{\textrm{dR}}(\Man{r}, \Fil{n}^{\bullet}\Wc{k})$ which is an inclusion as can be seen from the local description below. We are left to understand the cokernel of the inclusion, and in particular to show that the cokernel is killed by $\prod_{\sigma}\prod_{i=0}^{n+g-1}(u_{\sigma} - i)$. 

The proof relies on the local description of the connection (\ref{eq:11}). Choosing a numbering $\sigma \colon \{1, \dots, g\} \simeq \Sigma$, we first write the sheaf $\Fil{n+g-1}\Wc{k}\hat{\otimes}\Omega^{g-1}_{\Man{r}/\Lambda_{\alpha,I}}$ on local coordinates  as 
\[
\Fil{n+g-1}\Wc{k}\hat{\otimes}\Omega^{g-1}_{\Man{r}/\Lambda_{\alpha,I}} = \bigoplus_{i} \wc{k}^{\leq n+g-1}[V_1, \dots, V_g]\D\hat{X_i}.
\]
Here $\D\hat{X_i}$ corresponds via Kodaira--Spencer to a generator of $\omega_{\Sh{A}}^{2(t_L - \sigma_i)}$ and the superscript $\leq n+g-1$ denotes we take the polynomials in $V_j$'s of degree at most $n+g-1$. 

The map $\nabla \colon \Fil{n+g-1}\Wc{k}(-D)\hat{\otimes}\Omega^{g-1}_{\Man{r}/\Lambda_{\alpha,I}} \to \Fil{n+g}\Wc{k+2t_L}(-D)$ can be described as the twist by $\Sh{O}_{\Man{r}}(-D)$ of a map 
\begin{equation*}
    \bigoplus_i\wc{k}^{\leq n+g-1}[V_1, \dots, V_g]\D\hat{X_i} \xrightarrow{\nabla} \wc{k+2t_L}^{\leq n+g}[V_1, \dots, V_g] .
\end{equation*}
that can be described using formula (\ref{eq:11}). In particular, the image of $\nabla$ consists of polynomials in $V_i$ of positive total degree and hence the map \[\wc{k+2t_l}(-D) \to \frac{\Fil{n+g}\Wc{k+2t_L}(-D)}{\nabla\Fil{n+g-1}\Wc{k}(-D)\hat{\otimes}\Omega^{g-1}_{\Man{r}/\Lambda_{\alpha,I}}}\]
is injective. Thus taking global sections we get $H^0(\Man{r}, \wc{k+2t_L}(-D)) \xhookrightarrow{} H^g_{\textrm{dR}}(\Man{r}, \Fil{n}^{\bullet}\Wc{k})$.

We prove the claim about the annihilator of $\coker{i}$ by induction on $n$, the base case being $n = 1-g$. For $n = 1-g$ we have  $\Fil{1}\Wc{k+2t_L}/(\nabla\wc{k} + \wc{k+2t_L}) \simeq \oplus_i \wc{k+2t_L}V_i/u_i\wc{k+2t_L}V_i$. This proves the base case. We have a diagram as follows with exact rows.
\[
\begin{tikzcd}
0 \arrow[r] & \Fil{n+g-2}\Wc{k}\hat{\otimes}\Omega^{g-1} \arrow[d, "\nabla"] \arrow[r] & \Fil{n+g-1}\Wc{k}\hat{\otimes}\Omega^{g-1} \arrow[d, "\nabla"] \arrow[r] & \Gr{n+g-1}\Wc{k} \hat{\otimes} \Omega^{g-1} \arrow[d, "\nabla"] \arrow[r] & 0 \\
0 \arrow[r] & \Fil{n+g-1}\Wc{k+2t_L} \arrow[r] & \Fil{n+g}\Wc{k+2t_L} \arrow[r] & \Gr{n+g}\Wc{k+2t_L} \arrow[r] & 0
\end{tikzcd}
\]
To complete the induction, we need to understand the connection on the graded pieces. Letting $k(1+\beta_nZ)$ be a local generator of $\wc{k}$, the map $\nabla$ on the graded pieces can be described as follows.
\begin{align*}
    \oplus \wc{k}^{n+g-1}[V_1, \dots, V_g]\D\hat{X}_i &\xrightarrow{\nabla} \wc{k+2t_L}^{n+g}[V_1, \dots, V_g] \\
    k(1+\beta_nZ)\prod_i V_{i}^{n_i} \D\hat{X}_j &\mapsto (u_j - n_j)(k+2t_L)(1+\beta_nZ)V_j\prod_i V_i^{n_i}
\end{align*}
This shows that the cokernel of $\nabla \colon \Gr{n+g-1}\Wc{k}\hat{\otimes}\Omega^{g-1}_{\Man{r}/\Lambda_{\alpha,I}} \xrightarrow{} \Gr{n+g}\Wc{k+2t_L}$ is annihilated by $\prod_{\sigma}\prod_{i=0}^{n+g-1}(u_{\sigma} - i)$. The lemma then follows by applying Snake lemma to the diagram above and by the induction hypothesis.
\end{proof}

\begin{lemma}
For $\delta \in \Q_{\geq 0}$, there exists $n \geq 0$ such that the map 
\[H^i_{\textrm{dR}}(\Man{r}, \Fil{n}^{\bullet}\W{})^{\leq \delta} \to H^i_{\textrm{dR}}(\Man{r}, \W{\bullet})^{\leq \delta}
\]
is an isomorphism.
\end{lemma}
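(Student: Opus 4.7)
The plan is to run the long exact sequence \eqref{E303} after applying the slope $\leq \delta$ projector and to kill the contribution of the quotient complex $(\W{}/\Fil{n}\W{})^{\bullet}$ by taking $n$ large. Since the slope $\leq \delta$ decomposition is functorial and exact on the category of $\Lambda_{\alpha,I}[1/\alpha]$-Banach modules with a compact $U$-action, the long exact sequence \eqref{E303} remains exact after passing to its slope $\leq \delta$ part. Hence it is enough to exhibit an integer $n_0 = n_0(\delta)$ such that
\[
H^i_{\mathrm{dR}}\big(\Man{r}, (\W{}/\Fil{n}\W{})^{\bullet}\big)^{\leq \delta} \;=\; 0 \qquad \text{for all } i \text{ and all } n \geq n_0.
\]

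To compute the hypercohomology of the quotient complex, I would first check that each term is acyclic for $f_\ast$, where $f\colon \Man{r}\to \Sh{M}^*_{r,\alpha,I}$ is the projection to the (affinoid) minimal compactification. Writing $\W{}/\Fil{n}\W{} = \varinjlim_{m \geq n}\Fil{m}\W{}/\Fil{n}\W{}$, each $\Fil{m}\W{}/\Fil{n}\W{}$ is a finite successive extension of graded pieces $\Gr{j}\W{}$, which by the proof of the previous lemma are $f_\ast$-acyclic after twisting with $\Sh{O}(-D)$ and with any $\Omega^p$. Taking the colimit (and using that $\Sh{M}^*_{r,\alpha,I}$ is affinoid so that cohomology commutes with filtered colimits of acyclic sheaves) shows that $(\W{}/\Fil{n}\W{})(-D)\hat{\otimes}\Omega^p_{\Man{r}/\Lambda_{\alpha,I}}$ is $f_\ast$-acyclic for every $p$. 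Consequently the hypercohomology coincides with the cohomology of the global-sections complex, and the $E_1$-page of the stupid filtration spectral sequence degenerates to its $q=0$ row.

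It remains to verify that, for $n$ large, the complex
\[
\Gamma\big(\Man{r},(\W{}/\Fil{n}\W{})(-D)\big) \;\xrightarrow{\nabla}\; \Gamma\big(\Man{r},(\W{}/\Fil{n+1}\W{})(-D)\hat{\otimes}\Omega^1\big) \;\xrightarrow{\nabla}\; \cdots
\]
has vanishing slope $\leq \delta$ part in each degree. This is exactly the content of Proposition \ref{P311} applied term-by-term: using Lemma \ref{L3101}, the action of $U$ on the cokernels $\W{}/\Fil{n+p}\W{}$ is divisible by an arbitrarily large power of $p$ once $n$ is sufficiently large (independently of $p\leq g$), so that each $\Gamma(\Man{r},(\W{}/\Fil{n+p}\W{})(-D)\hat{\otimes}\Omega^p)$ admits a slope $\leq\delta$ decomposition whose $\leq \delta$ part vanishes. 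Since the connection and the differentials of the complex are $U$-equivariant, the slope $\leq \delta$ part of the whole complex is zero, hence so is its cohomology. Plugging this vanishing back into \eqref{E303} yields the claimed isomorphism.

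The only delicate point in this outline is the passage from the filtered pieces to the quotient sheaf $\W{}/\Fil{n}\W{}$: one must make sure that the $f_\ast$-acyclicity and the slope decomposition survive the filtered colimit on $\Man{r}$. The acyclicity is harmless because $\Sh{M}^*_{r,\alpha,I}$ is affinoid; the compatibility of slope decompositions with the colimit follows because the $U$-operator on the truncated pieces $\Fil{m}\W{}/\Fil{n}\W{}$ is uniformly divisible by $p^{\delta+1}$ for $n$ large, so one can take $\leq \delta$ parts at each finite stage and pass to the colimit. This is the only step that requires genuine care; everything else is bookkeeping with the long exact sequence and Griffiths' transversality.
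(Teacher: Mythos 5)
Your proposal is correct and follows essentially the same route as the paper: the paper's proof is a two-line sketch that invokes Proposition \ref{P311} (divisibility of $U$ on $\W{}/\Fil{n}\W{}$ forces the slope $\leq\delta$ part of the quotient complex's hypercohomology to vanish for large $n$) and then concludes from the long exact sequence (\ref{E303}), which is exactly your argument with the implicit details (term-by-term $f_*$-acyclicity, exactness of the slope projector, $U$-equivariance of the differentials) written out.
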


\begin{proof}
Arguing as in Proposition \ref{P311}, the sheaf $H^i_{\textrm{dR}}(\Man{r}, (\Wc{k}/\Fil{n}\Wc{k})^{\bullet})$ admits slope decomposition locally on the weight space. Moreover, by the same proposition \hfill \\$H^i_{\textrm{dR}}(\Man{r}, (\Wc{k}/\Fil{n}\Wc{k})^{\bullet})^{\leq \delta} = 0$ for large enough $n$. The lemma then follows from the long exact sequence (\ref{E303}).
\end{proof}

\begin{defn}\label{D3301}
For $\delta \geq 0$, let $n$ be as in the above lemma. Let $\lambda = \prod_{\sigma} \prod_{i=0}^{n+g-1} (u_{\sigma} - i)$. For the finite slope $\delta \geq 0$, define the overconvergent projection in families to be the map induced by the isomorphisms as follows.
\begin{align*}
H^{\dagger} \colon H^g_{\textrm{dR}}(\Man{r}, \Wc{k}^{\bullet})^{\leq \delta}\otimes \Lambda_{\alpha,I}[\lambda^{-1}] &\xrightarrow{\sim} H^g_{\textrm{dR}}(\Man{r}, \Fil{n}^{\bullet}\Wc{k})^{\leq \delta}\otimes \Lambda_{\alpha,I}[\lambda^{-1}] \\
&\xrightarrow{\sim} H^0(\Man{r}, \wc{k+2t_L}(-D))^{\leq \delta}\otimes \Lambda_{\alpha,I}[\lambda^{-1}]
\end{align*}
\end{defn}
\section{Twisted triple product \texorpdfstring{$p$}{p}-adic \texorpdfstring{$L$}{L}-function}

In this section we finally construct the twisted triple product $p$-adic $L$-function associated to finite slope families of Hilbert modular forms. We note that there has recently been a construction of these $p$-adic $L$-functions associated to Hida families due to I. Blanco-Chac\'{o}n and M. Fornea \cite{michele}. We follow their article to describe the generalities of the setting.

Let $L/F$ be a quadratic extension of totally real number fields, with $[F:\Q] = d$. Assume $p$ is unramified in $L$. Let $\Sigma_L$ and $\Sigma_F$ be their respective set of embeddings into a common finite extension $K$ of $\Q_p$ where both $L$ and $F$ are split. Let $G_L = \Res_{L/\Q}\mathbf{GL}_{2,L}$ and $G_F = \Res_{F/\Q}\mathbf{GL}_{2,F}$. Let $(v, n) \in \Z[\Sigma_L] \times \Z$ and $(w, m) \in \Z[\Sigma_F] \times \Z$ be two classical weights.

Consider primitive eigenforms $g \in S^{G_L}({N_g}, (v,n), \bar{\Q})$ and $f \in S^{G_F}({N_f}, (w,m), \bar{\Q})$ generating irreducible cuspidal automorphic representations $\pi, \sigma$ of $G_L(\A{}{})$ and $G_F(\A{}{})$ respectively. Here $S^{G_L}({N_g}, (v,n), \bar{\Q})$ denotes the space of arithmetic Hilbert cuspforms of level $N_g$ and weight $(v,n)$ with $\bar{\Q}$ coefficients, and similarly for $S^{G_F}({N_f}, (w,m), \bar{\Q})$. Denote by $\pi^u, \sigma^u$ their unitarizations and define a representation of $\GL{2}(\A{L \times F}{})$ by $\Pi = \pi^u \otimes \sigma^u$. Let $\rho \colon \Gal{F} \to S_3$ be the homomorphism mapping the absolute Galois group of $F$ to the symmetric group over 3 elements associated to the \'{e}tale cubic algebra $L \times F/F$. Let $G_{L \times F} = \Res_{L\times F/F}\mathbf{GL}_{2,L\times F}$. The $L$-group ${}^L(G_{L\times F})$ is given by the semi-direct product of $\hat{G} \rtimes \Gal{F}$, where $\Gal{F}$ acts on $\hat{G} = \GL{2}(\C{})^{\times 3}$ through $\rho$. Assume the central character of $\Pi$ is trivial when restricted to $\A{F}{\times}$.

\begin{defn}
The twisted triple product $L$-function associated with the unitary automorphic representation $\Pi$ is given by the Euler product 
\[
L(s, \Pi, r) = \prod_{v} L_{v}(s, \Pi_v, r)^{-1}
\]
where $\Pi_v$ is the local representation at the place $v$ of $F$ appearing in the restricted tensor product decomposition $\Pi = \otimes'_v \Pi_v$, and representation $r$ gives the action of the $L$-group of $G_{L\times F}$ on $\C{2}\otimes \C{2} \otimes \C{2}$ which restricts to the natural $8$-dimensional representation of $\hat{G}$ and for which $\Gal{F}$ acts via $\rho$ permuting the vectors.
\end{defn}

Let $D_{/F}$ be a quarternion algebra. We denote by $\Pi^D$ the irreducible unitary cuspidal automorphic representation of $D^{\times}(\A{L\times F}{})$ associated with $\Pi$ by the Jacquet--Langlands correspondence when it exists. For a vector $\phi \in \Pi^D$ one defines its period integral as
\[
I^D(\phi) = \int_{[D^{\times}(\A{F}{})]} \phi(x) \D x
\]
where $[D^{\times}(\A{F}{})] = \A{F}{\times}D^{\times}(F)\backslash D^{\times}(\A{F}{})$. For simplicity of notation we write $I(\phi)$ to denote the period integral when $D = M_2(F)$.

\begin{theorem}\label{T3301}
Let $\eta \colon \A{F}{\times} \to \C{\times}$ be the quadratic character attached to $L/F$ by class field theory. Then the following are equivalent:
\begin{enumerate}
    \item The central $L$-value $L(\frac{1}{2}, \Pi, r)$ does not vanish, and for every place $v$ of $F$ the local $\epsilon$-factor satisfies $\epsilon_v(\frac{1}{2}, \Pi_v, r)\eta_v(-1) = 1$.
    \item There exists a vector $\phi \in \Pi$, called a test vector, whose period integral $I(\phi)$ does not vanish.
\end{enumerate}
\end{theorem}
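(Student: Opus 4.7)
The plan is to combine Ichino's integral formula with Prasad's local dichotomy theorem for $\mathbf{GL}_2$-invariant trilinear forms. Ichino's formula expresses, for a decomposable vector $\phi = \otimes_v \phi_v \in \Pi$, the square of the period integral as
\[
|I(\phi)|^2 \;=\; C \cdot \frac{L(\tfrac{1}{2}, \Pi, r)}{L(1, \Pi, \mathrm{Ad})} \cdot \prod_v I_v(\phi_v),
\]
where $C$ is an explicit nonzero constant and $I_v(\phi_v)$ is a normalized local matrix coefficient integral which equals $1$ for all but finitely many $v$ (the unramified spherical data). Thus the global period $I(\phi)$ is nonzero precisely when both the central $L$-value is nonzero and every local factor $I_v(\phi_v)$ is nonzero.

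The second ingredient is Prasad's local theorem: for every place $v$ of $F$, the space of $\mathbf{GL}_2(F_v)$-invariant linear forms on $\Pi_v$ is at most one-dimensional, and is nonzero if and only if $\epsilon_v(\tfrac{1}{2}, \Pi_v, r)\,\eta_v(-1) = 1$. Equivalently, the local dichotomy singles out $M_2(F_v)$ as the correct quarternion algebra for $\Pi_v$ exactly under this sign condition; in the opposite case the invariant form lives only on the Jacquet--Langlands transfer to the division algebra. Moreover, whenever a nonzero invariant form exists, one can pick $\phi_v$ so that the local integral $I_v(\phi_v)$ itself is nonzero, and at unramified places the spherical vector works.

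With these two tools, the equivalence proceeds as follows. For the implication $(2)\Rightarrow(1)$, take $\phi\in\Pi$ with $I(\phi)\neq 0$; one may reduce to the pure-tensor case by multilinearity. Ichino's formula then forces $L(\tfrac{1}{2}, \Pi, r)\neq 0$ and each $I_v(\phi_v)\neq 0$, the latter forcing the local invariant trilinear form to be nonzero at each $v$, so Prasad's dichotomy yields $\epsilon_v\eta_v(-1)=1$ everywhere. For $(1)\Rightarrow(2)$, the sign condition combined with Prasad's theorem supplies nonzero local invariant forms at every $v$, and hence local vectors $\phi_v$ with $I_v(\phi_v)\neq 0$; taking the spherical vector at almost all places one obtains a global pure tensor $\phi=\otimes_v\phi_v\in\Pi$, and Ichino's formula combined with $L(\tfrac{1}{2},\Pi,r)\neq 0$ then gives $I(\phi)\neq 0$.

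The substantive step is Prasad's local multiplicity-one-and-dichotomy theorem, which requires the full force of the results of Prasad at nonarchimedean places and Loke at archimedean places. A secondary subtlety worth flagging is global consistency: the functional equation gives $\prod_v \epsilon_v(\tfrac{1}{2},\Pi_v,r) = 1$ and the product formula for $\eta$ yields $\prod_v \eta_v(-1)=1$, so the local parity $\epsilon_v\eta_v(-1)=1$ assumed at every $v$ is compatible with the global picture. The remaining verifications are bookkeeping: confirming that Ichino's formula applies with the present normalizations of Haar and Tamagawa measures on $Z(\A{F}{})D^\times(F)\backslash D^\times(\A{F}{})$, and that ``decomposable'' test vectors lie in $\Pi$ rather than only a completion.
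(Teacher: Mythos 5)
Your argument is correct and is precisely the one the paper delegates to: the cited proof in Blanco-Chac\'{o}n--Fornea (Theorem 3.2 of their paper) combines Ichino's formula with Prasad's local dichotomy for invariant trilinear forms (and Loke's archimedean counterpart) in exactly the way you describe. No substantive difference from the paper's route.
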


\begin{proof}
This relies on Ichino's formula \cite[Theorem 1.1]{Ichino}. For a complete proof see \cite[Theorem 3.2]{michele}.
\end{proof}

\begin{defn}
Let $(v, n) \in \Z[\Sigma_L]\times \Z$ and $(w, m) \in \Z[\Sigma_F] \times \Z$ be two classical weights. We say the pair $(v, n)$ and $(w, m)$ are $F$-dominated if there exists $r \in \N[\Sigma_L]$ such that $w = (v + r)_{|F}$ and $mt_F = {nt_L}_{|F}$ (or equivalently, $m = 2n$).
\end{defn}

\begin{rem} 
In their article \cite{michele}, the authors give sufficient criterion for the equivalent conditions of Theorem \ref{T3301} to hold. If the weights $(v, n)$ and $(w,m)$ of $g$ and $f$ respectively are $F$-dominated, then the local $\epsilon$-factor at the archimedean places satisfy the hypothesis of the theorem. Moreover the same is true of the finite places if we assume that $\Nm_{L/F}(N_g)\cdot d_{L/F}$ and $N_f$ are coprime, and that every finite prime dividing $N_f$ splits in $L$.
\end{rem}

\begin{ass}
Assume that the equivalent conditions of Theorem \ref{T3301} are satisfied. Also assume that the weights $(v, n), (w, m)$ of $g, f$ respectively are $F$-dominated.
\end{ass}

Let $\mathfrak{A}$ be an ideal in $\Sh{O}_F$. Denote by $K_{11}(\mathfrak{A})$ the following open compact group.
\[
K_{11}(\mathfrak{A}) = \left\{ \begin{pmatrix}
    a & b \\
    c & d
\end{pmatrix} \in \GL{2}(\hat{\Sh{O}}_F)\,|\, a\equiv d\equiv 1, c\equiv 0 \text{ mod } \mathfrak{A} \right\}.
\]

Let $\mathfrak{J}$ be the element
\[
{\begin{pmatrix}
-1 & 0 \\
0 & 1
\end{pmatrix}}^{\Sigma_F} \in \GL{2}(\R)^{\Sigma_F}.
\]
For any $h \in \sigma^u$, we define $h^{\mathfrak{J}} \in \sigma^u$ to be the vector obtained by right translation $h^{\mathfrak{J}}(x) = h(x\mathfrak{J})$. 

Let $\delta$ be the Shimura--Maass differential operator \cite[1956]{michele} on nearly holomorphic cuspforms. 

The natural inclusion $\GL{2}(\A{F}{}) \to \GL{2}(\A{L}{})$ defines by composition a \emph{diagonal restriction} map \[\zeta^* \colon S^{G_L}(K_{11}(\mathfrak{A}\Sh{O}_L), (v,n), \C{}) \to S^{G_F}(K_{11}(\mathfrak{A}), (v_{|F}, 2n), \C{}).\]

\begin{defn}
Let $\zeta \colon M(\mu_{\mathfrak{A}}, \mathfrak{c}) \to M(\mu_{\mathfrak{A}\Sh{O}_L}, \mathfrak{c}\Sh{O}_L\mathfrak{d}^{-1}_{L/F})$ be the map given on points by sending an abelian variety $A \mapsto A \otimes_{\Sh{O}_F} \Sh{O}_L$ \cite[\S4.1.2]{michele}. Here $\mathfrak{d}_{L/F}$ is the different ideal of the extension $L/F$. 
\end{defn}

Then as shown in loc. cit. 
$\zeta$ induces the map on Shimura varieties given by the inclusion $\GL{2}(\A{F}{}) \to \GL{2}(\A{L}{})$ as above. 

\begin{lemma}\label{L77}
  For $A/\Spf{R}$ parametrized by a point of $\Ig{n}{r}$, the trace map $\text{Tr}_{L/F} \colon \mathrm{H}^{\sharp}_{A}\otimes_{\Sh{O}_F} \Sh{O}_L \to \mathrm{H}^{\sharp}_{A}$ preserves marked sections and the kernel of the marked splitting.
\end{lemma}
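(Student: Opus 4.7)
The plan is to interpret the trace map geometrically via pullback along $\zeta$, and then invoke the functorial properties of the constructions of $\Omega_A$, $\mathrm{H}^{\sharp}_A$ and $\Sh{Q}$ with respect to the base change $A\leadsto A\otimes_{\Sh{O}_F}\Sh{O}_L$.

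First I would record that because $\Sh{O}_F\to\Sh{O}_L$ is finite flat, the formation of invariant differentials and of de Rham cohomology commutes with tensoring by $\Sh{O}_L$, producing natural isomorphisms of $\Sh{O}_L\otimes R$-modules
\[
\omega_{A\otimes_{\Sh{O}_F}\Sh{O}_L}\simeq\omega_A\otimes_{\Sh{O}_F}\Sh{O}_L,\qquad \mathrm{H}_{A\otimes_{\Sh{O}_F}\Sh{O}_L}\simeq\mathrm{H}_A\otimes_{\Sh{O}_F}\Sh{O}_L.
\]
Under these, the $\Sh{O}_F$-linear trace $\mathrm{Tr}_{L/F}\colon\Sh{O}_L\to\Sh{O}_F$ induces the map of the statement via $x\otimes\ell\mapsto \mathrm{Tr}_{L/F}(\ell)\,x$. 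I would also verify that the canonical subgroup satisfies $H_n(A\otimes_{\Sh{O}_F}\Sh{O}_L)=H_n(A)\otimes_{\Sh{O}_F}\Sh{O}_L$: this follows from the characterization of $H_n$ as the lift of $\ker F^n$ modulo $p/\lambda$, flatness of $\Sh{O}_F\to\Sh{O}_L$, and the compatibility of Frobenius, Verschiebung and the Hasse ideal with the tensor construction. These identifications together let us view $\mathrm{H}^{\sharp}_{A\otimes_{\Sh{O}_F}\Sh{O}_L}$ as $\mathrm{H}^{\sharp}_A\otimes_{\Sh{O}_F}\Sh{O}_L$ compatibly with Hodge filtration, marked section and marked splitting.

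For the marked section, the above identification matches the universal generator $P^{\mathrm{univ}}_{A\otimes\Sh{O}_L}\in H_n^{\vee}(A\otimes_{\Sh{O}_F}\Sh{O}_L)$ with $P^{\mathrm{univ}}_A\otimes 1$, and by functoriality of $\dlog$ the marked section on the source becomes $\dlog(P^{\mathrm{univ}}_A)\otimes 1$ in $\Omega_A\otimes_{\Sh{O}_F}\Sh{O}_L/\beta_n$. Applying the trace yields $\mathrm{Tr}_{L/F}(1)\cdot\dlog(P^{\mathrm{univ}}_A)=[L:F]\cdot\dlog(P^{\mathrm{univ}}_A)$, which is an integer multiple of the marked section of $\mathrm{H}^{\sharp}_A$ and hence a marked section in the appropriate sense (a unit multiple, given that $[L:F]=2$ is invertible under the standing assumption $p>2$).

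Finally, for the kernel of the marked splitting, I would invoke Corollary \ref{C314}, which characterizes $\Sh{Q}$ as the kernel of Verschiebung modulo $p\Hdg{-(p+1)}$. Since Verschiebung is functorial for $\Sh{O}_F$-linear constructions on abelian schemes, the formation of $\Sh{Q}$ commutes with the tensor-product base change, giving $\Sh{Q}_{A\otimes_{\Sh{O}_F}\Sh{O}_L}=\Sh{Q}_A\otimes_{\Sh{O}_F}\Sh{O}_L$, and the trace sends this into $\Sh{Q}_A\subset\mathrm{H}^{\sharp}_A$. The main obstacle is verifying these base-change compatibilities (canonical subgroup, Hodge ideal, Verschiebung) rigorously over the blown-up integral models; since the relevant constructions in Sections \ref{S2} and \ref{2S31} are characterized only up to small powers of $p$ and behave well under finite flat $\Sh{O}_F$-extensions, this ultimately reduces to careful bookkeeping with the Hodge ideals on both sides.
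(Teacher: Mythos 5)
Your route is the same as the paper's: the marked section is handled by functoriality of the $\dlog$ map together with the identification $H_n^{\vee}(A\otimes_{\Sh{O}_F}\Sh{O}_L)=H_n^{\vee}(A)\otimes_{\Sh{O}_F}\Sh{O}_L$, and the kernel of the splitting is handled by the characterization of $\Sh{Q}$ as the kernel of Verschiebung (Corollary \ref{C314}) plus the fact that the trace commutes with Verschiebung. Your treatment of the second point is exactly the paper's argument, and your extra bookkeeping (flatness of $\Sh{O}_F\to\Sh{O}_L$, compatibility of canonical subgroups and Hasse ideals with Serre's tensor construction) is a reasonable expansion of what the paper leaves implicit.

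The one place where your write-up goes off track is the factor $\mathrm{Tr}_{L/F}(1)=[L:F]=2$ that you produce on the marked section and then dismiss as ``a unit multiple, hence a marked section in the appropriate sense.'' That is not good enough for the formalism actually being used: in Definition \ref{D1303} and Lemma \ref{L1304}, a map of vector bundles with marked sections must send marked section to marked section \emph{on the nose}, since the functor $\V{}(\Sh{E},s)$ is cut out by the condition $f(s)=1$; a map multiplying $s$ by $2$ induces a map to a twisted bundle and would contaminate the interpolation sheaves by a factor of $k(2)$. The factor arises from your choice of identification $\mathrm{H}_{A\otimes_{\Sh{O}_F}\Sh{O}_L}\simeq\mathrm{H}_A\otimes_{\Sh{O}_F}\Sh{O}_L$ with the trace acting as $x\otimes\ell\mapsto\mathrm{Tr}_{L/F}(\ell)x$ and the marked section identified with $\dlog(P^{\mathrm{univ}}_A)\otimes 1$. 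The correct normalization uses $\Hom_{\Sh{O}_F}(\Sh{O}_L,\Sh{O}_F)\simeq\mathfrak{d}_{L/F}^{-1}$ (note the target of $\zeta$ in the paper carries a $\mathfrak{d}_{L/F}^{-1}$-twist of the polarization module, and $\mathfrak{d}_{L/F}$ is prime to $p$), under which the universal generator of $H_n^{\vee}(A\otimes_{\Sh{O}_F}\Sh{O}_L)$ pairs to exactly $P^{\mathrm{univ}}_A$ and no factor of $[L:F]$ appears. So you should either redo the identification with the dual module, or verify directly (as the paper implicitly does) that the trace map on $H_n^{\vee}$ sends universal generator to universal generator, after which functoriality of $\dlog$ gives exact preservation.
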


\begin{proof}
    The first statement follows from the functoriality of the $\dlog$ map since $H^{\vee}_n(A \otimes_{\Sh{O}_F} \Sh{O}_L) = H_n^{\vee}(A)\otimes_{\Sh{O}_F}\Sh{O}_L$. The second statement follows from the fact that the kernel of the splitting is the kernel of Verschiebung, and the trace commutes with Verschiebung.
\end{proof}

\begin{lemma}\label{L3401}
Let $r \in \N[\Sigma_L]$ be such that $w = (v+r)_{|F}$. Then there is an $\Sh{O}_F$ ideal $\mathfrak{A}$ supported on a subset of the prime factors of $N_f\cdot\Nm_{L/F}(N_g)\cdot d_{L/F}$ such that a test vector $\phi$ as in the statement of Theorem \ref{T3301} can be chosen to be of the form $\phi = (\delta^r(\breve{g}))^u \otimes (\breve{f}^{\mathfrak{J}})^u$ for $\breve{g} \in S^{G_L}(K_{11}(\mathfrak{A}\Sh{O}_L), (v,n), \bar{\Q})$ and $\breve{f} \in S^{G_F}(K_{11}(\mathfrak{A}), (w,m), \bar{\Q})$. The cuspforms $\breve{g}, \breve{f}$ are eigenforms for all Hecke operators outside $N_f\cdot\Nm_{L/F}(N_g)\cdot d_{L/F}$ with the same eigenvalues of $g$ and $f$ respectively. Moreover, in this case the period integral can be expressed as
\[
I(\phi) = \int_{[\GL{2}(\A{F}{})]}(\delta^r\breve{g})^u \otimes (\breve{f}^{\mathfrak{J}})^u\D x = \langle \zeta^*(\delta^r \breve{g}), \breve{f}^*\rangle
\]
where $\breve{f}^*$ is the cuspform in $S^{G_F}(K_{11}(\mathfrak{A}), (w,m), \bar{\Q})$ whose adelic $q$-expansion coefficients are complex conjugates of those of $\breve{f}$, i.e. $\breve{f}^* = c\cdot\breve{f}|w_N$, where $w_N$ is the Atkin-Lehner operator, and $c \neq 0$ is a constant.
\end{lemma}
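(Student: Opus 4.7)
The plan is to combine Theorem \ref{T3301} with a local analysis of Ichino's trilinear form. First I would invoke Theorem \ref{T3301} to guarantee that \emph{some} test vector exists. By the factorization $\Pi = \pi^u\otimes\sigma^u$ as a restricted tensor product over the places of $F$ and by Ichino's formula, the nonvanishing of the global period integral is equivalent to the existence of a pure tensor $\phi=\bigotimes_v \phi_v$ whose local components each pair nontrivially in the local trilinear form at $v$. It therefore suffices to exhibit such a pure tensor of the prescribed shape.

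The second step is a local analysis place by place. At the archimedean places, the $F$-dominated hypothesis $w=(v+r)_{|F}$, $m=2n$ is precisely the balance condition for the local trilinear form to be nondegenerate on the pair obtained from the holomorphic vector after shifting: the Shimura-Maass operator $\delta^r$ moves the holomorphic vector of weight $v$ into the correct archimedean weight, while the twist by $\mathfrak{J}=\mathrm{diag}(-1,1)^{\Sigma_F}$ turns the holomorphic avatar of $f$ into its antiholomorphic counterpart. At finite places $v\nmid N_f\cdot \Nm_{L/F}(N_g)\cdot d_{L/F}$, the local representations are unramified and the spherical newvectors serve as test vectors by the standard unramified computation. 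At the remaining finite places, the newvectors may fail to be test vectors; following the method used in \cite{michele}, one replaces them by oldvectors in a deeper level $K_{11}(\mathfrak{A}\Sh{O}_L)\times K_{11}(\mathfrak{A})$ with $\mathfrak{A}$ supported at the bad primes, using the explicit local results of Prasad and Prasad--Takloo-Bighash together with the hypothesis $\epsilon_v(\tfrac{1}{2},\Pi_v,r)\eta_v(-1)=1$. Since $\breve{g}$ (respectively $\breve{f}$) differs from $g$ (respectively $f$) only at primes dividing $\mathfrak{A}$, the Hecke eigenvalues for $T_{\mathfrak{q}}$ and $S_{\mathfrak{q}}$ at primes $\mathfrak{q}\nmid N_f\cdot \Nm_{L/F}(N_g)\cdot d_{L/F}$ are preserved.

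The third step is to unfold the period integral as a Petersson pairing. By the moduli-theoretic definition of $\zeta$ (tensoring by $\Sh{O}_L$ over $\Sh{O}_F$), restriction of $\delta^r\breve{g}$ along $\GL{2}(\A{F}{})\hookrightarrow \GL{2}(\A{L}{})$ is exactly $\zeta^*(\delta^r\breve{g})$, a nearly holomorphic cuspform on the Shimura variety for $G_F$ of weight $(w,2n)$. Unfolding then gives
\[
I(\phi)=\int_{[\GL{2}(\A{F}{})]}\zeta^*(\delta^r\breve{g})(x)\cdot\breve{f}(x\mathfrak{J})\,\D x=\langle \zeta^*(\delta^r\breve{g}),\breve{f}^*\rangle,
\]
where the archimedean $\mathfrak{J}$-twist converts $\breve{f}$ into its antiholomorphic form $\breve{f}^*$. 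The identity $\breve{f}^*=c\cdot\breve{f}|w_N$ with $c\ne 0$ then follows from the standard Atkin-Lehner relation $f|w_{N_f}=\lambda f^*$ for the primitive newform $f$, by tracking the effect of the degeneracy maps producing $\breve{f}$ from $f$ on the adelic $q$-expansion through Theorem \ref{T28}.

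The main obstacle will be the local test vector analysis at the primes dividing $N_f\cdot\Nm_{L/F}(N_g)\cdot d_{L/F}$: determining a minimal admissible $\mathfrak{A}$ and producing explicit $\breve{g}_v,\breve{f}_v$ that pair nontrivially in Ichino's local trilinear form requires a case-by-case treatment of the ramified local representations, carried out as in \cite{michele}.
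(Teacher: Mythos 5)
The paper's own proof of this lemma is simply a citation of \cite[Lemma 3.4]{michele}, and your sketch is a faithful outline of the argument given there: existence of a test vector of the stated shape via the local trilinear form analysis (Prasad's $\epsilon$-factor criterion at the bad primes, spherical vectors at unramified primes, the $F$-dominated condition plus the $\delta^r$ shift and $\mathfrak{J}$-twist at infinity), followed by unfolding the period integral into the Petersson pairing. So your proposal is correct and takes essentially the same route the paper relies on; the one caveat is that the genuinely hard content — the explicit local test vectors at primes dividing $N_f\cdot\Nm_{L/F}(N_g)\cdot d_{L/F}$ — is, as you yourself note, deferred rather than carried out.
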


\begin{proof}
\cite[Lemma 3.4]{michele}.
\end{proof}

\begin{prop}
    The value $L^{\text{alg}}(g,f) := \frac{\langle \zeta^*(\delta^r \breve{g}), \breve{f}^*\rangle}{\langle f^*, f^*\rangle}$ is algebraic. We have
    \[
    L\left(\frac{1}{2}, \Pi, r\right) = (\ast)\cdot|L^{\text{alg}}(g,f)|^2
    \]
    where $(\ast)$ is a constant which can be ensured to be non-zero by suitably choosing the test vectors $\breve{g}, \breve{f}$.
\end{prop}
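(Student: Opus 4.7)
The plan is to assemble the statement from three independent ingredients: an exact version of Ichino's formula, a Shimura-type algebraicity statement for Petersson products against diagonal restrictions of nearly holomorphic forms, and a local non-vanishing argument for the constant that comes out of Ichino's formula.

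First I would invoke the precise form of Ichino's decomposition: for the test vector $\phi = (\delta^r\breve{g})^u \otimes (\breve{f}^{\mathfrak{J}})^u$ constructed in Lemma \ref{L3401}, Ichino's formula (loc.\ cit.\ \cite{Ichino}) expresses $L(\tfrac12,\Pi,r)$ as $C_{\infty}\cdot\prod_v I_v(\phi_v) \cdot |I(\phi)|^2 / \langle \phi,\phi\rangle$, where the $I_v$ are normalized local matrix coefficient integrals and $C_\infty$ collects the archimedean $L$-factors and volume constants. Substituting $I(\phi)=\langle \zeta^*(\delta^r\breve{g}),\breve{f}^*\rangle$ from Lemma \ref{L3401}, and noting that $\langle\phi,\phi\rangle$ factors as a product of a Petersson norm of $\breve{g}$ (or its Shimura--Maass translate) and $\langle \breve{f},\breve{f}\rangle$, which in turn differ from $\langle f^*,f^*\rangle$ by algebraic factors depending only on the oldform level $\mathfrak{A}$, one arrives at an identity of the form $L(\tfrac12,\Pi,r) = (\ast)\cdot\bigl|\langle \zeta^*(\delta^r\breve{g}),\breve{f}^*\rangle/\langle f^*,f^*\rangle\bigr|^2$, with $(\ast)$ a product of the local integrals $I_v$, $C_\infty$ and the aforementioned algebraic rescalings.

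Next I would prove the algebraicity of $L^{\mathrm{alg}}(g,f)$. The operator $\delta^r$ sends holomorphic cuspforms of weight $(v,n)$ to nearly holomorphic cuspforms of weight $(v+2r,n)$, and $\zeta^*(\delta^r\breve{g})$ is then a nearly holomorphic Hilbert cuspform for $G_F$ of weight $(v_{|F}+2r_{|F},2n)=(w+(\text{something algebraic}),2n)$, in fact of the correct weight $(w,2n)$ by the $F$-dominance assumption. Because $\breve{f}^*$ is antiholomorphic, the Petersson pairing factors through Shimura's holomorphic projection $H$, so $\langle \zeta^*(\delta^r\breve{g}),\breve{f}^*\rangle = \langle H(\zeta^*(\delta^r\breve{g})),\breve{f}^*\rangle$. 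Then $H(\zeta^*(\delta^r\breve{g}))$ is a genuine holomorphic cuspform of weight $(w,2n)$ defined over $\bar{\Q}$, being a $\bar{\Q}$-linear combination of Hecke translates of $\breve{g}$ and $\breve{f}$ arising from the explicit formula for $\delta^r$ followed by diagonal restriction and holomorphic projection. Shimura's algebraicity theorem for Petersson inner products of Hilbert modular forms (see \cite{Shimura}) then ensures that the quotient by $\langle f^*,f^*\rangle$ lies in $\bar{\Q}$, proving the first claim.

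Finally, for the non-vanishing of $(\ast)$: the factor $C_\infty$ is non-zero precisely under the $F$-dominance hypothesis, which guarantees non-vanishing of the archimedean zeta integrals (this is what motivates the definition of $F$-dominated weights). At each finite place $v$ the local factor $I_v(\phi_v)$ is a local matrix coefficient integral which is non-vanishing for at least one choice of local vectors in $\Pi_v$, by local non-triviality of the Ichino--Ikeda period; by the strong multiplicity one theorem this choice can be arranged while preserving the global Hecke eigenvalues of $g$ and $f$ away from $N_f\cdot\mathrm{Nm}_{L/F}(N_g)\cdot d_{L/F}$, at the cost of enlarging the ideal $\mathfrak{A}$ further. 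The main obstacle here is the bookkeeping at the bad primes: one must check that the local integrals can indeed be made non-zero by oldform translates within the fixed automorphic representations, without altering the global test-vector shape $\phi=(\delta^r\breve{g})^u\otimes(\breve{f}^{\mathfrak{J}})^u$. This is established in \cite[\S3]{michele} for the Hida-family setting, and the same local input applies verbatim here since it depends only on the local representations $\Pi_v$, which are unchanged. Combining the three steps yields the asserted identity and finishes the proof.
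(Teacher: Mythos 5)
Your proposal is correct and takes essentially the same approach as the paper: the paper's proof is a two-sentence citation — algebraicity is \cite[Proposition~3.5]{michele}, and the $L$-value identity follows from Lemma~\ref{L3401} via Ichino's formula — and your argument reconstructs exactly that content (Ichino decomposition plus Shimura algebraicity for Petersson products plus local non-vanishing). One small imprecision worth noting: the holomorphic projection $H(\zeta^*(\delta^r\breve{g}))$ is not a $\bar{\Q}$-linear combination of Hecke translates of $\breve{g}$ and $\breve{f}$; rather, rationality of $H(\zeta^*(\delta^r\breve{g}))$ follows because $\delta^r$, $\zeta^*$, and the holomorphic projection $H$ each preserve $\bar{\Q}$-structures, after which Shimura's algebraicity theorem applies directly.
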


\begin{proof}
    The first statement is \cite[Proposition 3.5]{michele}. The second statement follows from the proof of Theorem \ref{T3301} using Ichino's formula.
\end{proof}

\begin{ass}\label{A710}
Let $g, f$ be as above with weights $(v,n)$ and $(w,m)$ respectively that are $F$-dominated. Let $K$ be a finite extension of $\Q_p$ containing all the Hecke eigenvalues of $g$ and $f$, and such that $L$ is split in $K$. Assume $f$ has finite slope $\leq a$ at a rational prime $p$ not dividing $N_gN_f$, and unramified in $L$. Let $\breve{g}, \breve{f}$ be as in Lemma \ref{L3401}, which by \cite[Proposition 6]{miyake} are $K$-linear combinations of the oldforms coming from $g$ and $f$ respectively. Let $\omega_g, \omega_f$ be overconvergent families of weight $(v_g, n_g)$ and $(w_f, m_f)$ deforming $g$ and $f$. Let $\breve{\omega}_g, \breve{\omega}_f$ be the overconvergent families of level $K_{11}(\mathfrak{A}\Sh{O}_L)$ and $K_{11}(\mathfrak{A})$ respectively, obtained from $\omega_g$ and $\omega_f$ using the same linear combinations of oldforms as with $\breve{g}$ and $\breve{f}$. Therefore $\breve{\omega}_g$ and $\breve{\omega}_f$ interpolate $\breve{g}$ and $\breve{f}$.  Let $\Lambda_g$ and $\Lambda_f$ be the $p$-adically complete $K$-algebras such that $(v_g, n_g) \in \Sh{W}^{G_L}(\Lambda_g)$ and $(w_f, m_f) \in \Sh{W}^{G_F}(\Lambda_f)$. Let $k_g = 2v_g + n_gt_L$, and $k_f = 2w_f+m_ft_F$. We assume that there exists a continuous character $r \colon (\Sh{O}_L \otimes \Z_p)^{\times} \to \Lambda_g^{\times}$ such that the following diagram is Cartesian.
\[
\begin{tikzcd}
	{\mathrm{Spm}(\Lambda_g \hat{\otimes}_{K}\Lambda_f)} & {\Sh{W}^{G_L}} \\
	{\mathrm{Spm}(\Lambda_f)} & {\Sh{W}^{G_F}}
	\arrow[from=1-1, to=2-1]
	\arrow["{(w_f, m_f)}"', from=2-1, to=2-2]
	\arrow["{(v_g+r, n_g)}", from=1-1, to=1-2]
	\arrow["{(v,n) \mapsto (v_{|F}, 2n)}", from=1-2, to=2-2]
\end{tikzcd}
\]
Assume that $r = r_0\cdot r_{\textrm{an}}$ with $r_0 \in \N[\Sigma_L]$ and $r_{\textrm{an}}$ is an analytic weight satisfying the same conditions as satisfied by the character $s$ in Proposition \ref{P402}. Assume that $k_g$ satisfies the same conditions as satisfied by $k$ in Proposition \ref{P402}, i.e. the analytic part of $k_g$ is given by $k^{\text{an}}_g(x) = \prod_{\sigma \in \Sigma} \exp(u_{\sigma}\log(\sigma(x)))$, for all $x \in (\Sh{O}_L \otimes \Z_p)^{\times}$ and for $u_{\sigma} \in \Lambda_g$ for all $\sigma$. For $\tau \in \Sigma_F$, let $v_{\tau} = u_{\sigma} + u_{\bar{\sigma}}$, where $\sigma, \bar{\sigma}$ are the two extensions of $\tau$ to $L$. Assume that the central character of $\zeta^*\omega_g\cdot \omega_f$ is trivial.
\end{ass}

Let $\Sh{P}$ be the set of primes lying above $p$ in $L$. Denote by $\breve{\omega}_g^{[\Sh{P}]}$ the depletion of $\breve{\omega}_g$ with respect to all the primes in $\Sh{P}$. Under the above assumptions, $\nabla^{r}(\breve{\omega}_g^{[\Sh{P}]}) =  \nabla^{r_{\textrm{an}}}(\nabla^{r_0}(\breve{\omega}_g^{[\Sh{P}]}))$ makes sense, and is a section of $\mathbb{W}^{G_L}_{k_g+2r,p,I}(-D)$ in the sense of Definition \ref{D459}. 

Let $\Lambda_{g,f} = \Lambda_g \hat{\otimes}_{\Lambda^{G_F}} \Lambda_f$. Consider the sheaves $\mathbb{W}_{k_g+2r, p,I}^{G_L}(-D) \otimes_{\Lambda_g} \Lambda_{g,f}$, and $\mathbb{W}_{k_f, p,I}^{G_F}(-D) \otimes_{\Lambda_f} \Lambda_{g,f}$. By Lemma \ref{L77} and functoriality of VBMS, the trace map induces a map $\zeta^* \colon \mathbb{W}_{k_g+2r, p,I}^{G_L}(-D)\otimes_{\Lambda_g} \Lambda_{g,f} \to \mathbb{W}_{k_f, p,I}^{G_F}(-D) \otimes_{\Lambda_f} \Lambda_{g,f}$, which is nothing but the diagonal restriction.

For each choice of representative $\mathfrak{c}_i$ of $\mathrm{Cl}^+(\mathfrak{A})$, the corresponding component $\zeta^*\nabla^r(\breve{\omega}_g^{[\Sh{P}]})_i$ can be viewed as a class in $H^d_{\textrm{dR}}(\bar{\Sh{M}}_{\ell,p,I}^{\mathfrak{c}_i} \otimes \Lambda_{g,f}, \Wc{k_f-2t_F})$ for some suitable $\ell$ determined by Proposition \ref{P402}, and $I \subset [0,1]$. Here we recall that $d = [F:\Q]$. Let $\zeta^*\nabla^r(\breve{\omega}_g^{[\Sh{P}]})^{\leq a}_i$ be its projection onto the slope ``$\leq a$" part $H^d_{\textrm{dR}}(\bar{\Sh{M}}_{\ell,p,I}^{\mathfrak{c}_i} \otimes \Lambda_{g,f}, \Wc{k_f-2t_F})^{\leq a}$ of the de Rham cohomology group, possible by the spectral theory of the $U$ operator \cite[Appendice B]{Andreatta2018leHS}. Then inverting $\lambda = \prod_{\tau \in \Sigma_F}\prod_{i = 0}^N ({v}_{\tau} - i)$ for $N \gg 0$, it can be realized in the space $H^0(\bar{\Sh{M}}_{\ell, p, I}^{\mathfrak{c}_i} \otimes \Lambda_{g,f}[\lambda^{-1}], \wc{k_f})^{\leq a}$ of slope $\leq a$ using the overconvergent projection $H^{\dagger}$ of Definition \ref{D3301}. Since all of these operations commute with the action of $\Gamma$ and $\text{Princ}(L)^{+,(p)}$, the sections $H^{\dagger}(\zeta^*\nabla^r(\breve{\omega}_g^{[\Sh{P}]})^{\leq a}_i)$ together define an arithmetic overconvergent Hilbert modular form. Let us call this form $H^{\dagger, \leq a}(\zeta^*\nabla^r\breve{\omega}_g^{[\Sh{P}]})$, which is a section of the $\mathrm{Spm}(\Lambda_{g,f}[\lambda^{-1}])$ sheaf $\mathfrak{w}_{k_f, p, I}^{G_F}(-D)^{\leq a}$ in the sense of Definition \ref{D358}.


Let $\mathrm{Spm}(A)$ be a connected affinoid of $\mathrm{Spm}(\Lambda_{g,f}[\lambda^{-1}])$. Consider the $A$-Banach module 
\[
M = H^0(\mathrm{Spm}(A), \mathfrak{w}_{k_f, p,I}^{G_F}(-D)^{\leq a}).
\]

\begin{defn}
    Define the Hecke algebra $\Sh{H}$ acting on $M$ to be the subring of $\End_A{M}$ generated by the operators $T_{\ell}, S_{\ell}$ for $(\ell,p\mathfrak{A}) = 1$, $U_{\ell}$ for $\ell|\mathfrak{A}$ a prime, and $\mathbf{U}_{\mathfrak{P}}$ for every prime $\mathfrak{P}|p$ in $F$.
\end{defn}

We note that for any element $f \in M$, the adelic $q$-expansion coefficient $c_p(y, f)$ makes sense for any $y \in \hat{\Sh{O}}_FF_{\infty,+}^{\times}$. This is because these coefficients only depend on the $q$-expansions of $f$ at appropriate cusps as discussed in \S\ref{S123}. Therefore, one can define a Hecke equivariant map
\begin{align*}
    M &\to \Sh{H}^{\vee} \\
    f &\mapsto [T\mapsto c_p(1, f|T)].
\end{align*}

The open affinoid $\mathrm{Spm}(\Sh{H})$ of the eigenvariety is finite and generically \'{e}tale over the weight space $\mathrm{Spm}(A)$. Therefore the trace map $\mathrm{Tr} \colon \Sh{H} \to \Sh{H}^{\vee}$ is an isomorphism over the total ring of fractions $\mathbb{K}$ of $A$.
Using this isomorphism we can define the Petersson inner product on $M$ as follows.

\begin{defn} 
Define the Petersson inner product as the pairing
\[
\langle \cdot, \cdot \rangle \colon M \times M \to \Sh{H}^{\vee}\otimes_A \mathbb{K} \times \Sh{H}^{\vee} \otimes_A \mathbb{K} \xrightarrow{\mathrm{Tr}} \mathbb{K}.
\]
\end{defn}

\begin{prop}\label{P712}
    Let $x \in \mathrm{Spm}(A)$ be a classical weight such that $M_x$ consists entirely of classical forms of slope less than or equal to $a$, and the fibre $\Sh{H}_x$ if finite \'{e}tale over $k(x)$. Then the Hecke equivariant map $M_x \to \Sh{H}^{\vee}_x$ is an isomorphism, and the inner product $\langle \cdot, \cdot \rangle_x$ coincides upto a scalar with the classical Petersson inner product after base change $k(x) \to \C{}$.
\end{prop}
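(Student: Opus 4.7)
The plan is to first analyze the structure of $\Sh{H}_x$ under the étale assumption, then prove the Hecke-equivariant specialization is an isomorphism by combining a $q$-expansion argument with a dimension count, and finally compare with the classical Petersson inner product via the eigenspace decomposition. Since $\Sh{H}_x$ is finite étale over $k(x)$, I would first use that it decomposes as a product $\Sh{H}_x \simeq \prod_i E_i$ of finite separable extensions of $k(x)$; the trace form then provides a perfect pairing on each factor, yielding an $\Sh{H}_x$-linear isomorphism $\Sh{H}_x \xrightarrow{\sim} \Sh{H}_x^\vee$. Through this identification, the specialization $M_x \to \Sh{H}_x^\vee$ becomes an $\Sh{H}_x$-linear map $M_x \to \Sh{H}_x$ of $\Sh{H}_x$-modules.

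Because $M_x$ consists entirely of classical cuspforms by hypothesis, I would prove injectivity of this map using the $q$-expansion principle for Hilbert modular forms: the standard formulas relating the Hecke action to Fourier coefficients show that the data $\{c_p(1, f|T)\}_{T \in \Sh{H}}$ determines all Fourier coefficients of $f$ at every integral ideal and every cusp (as $T$ ranges over the $T_\mathfrak{n}$, $S_\ell$, $U_\ell$ and $\mathbf{U}_\mathfrak{P}$), and these determine $f$ by the $q$-expansion principle combined with the assumption that $M_x$ is classical. Next, since $\Sh{H}_x$ is a commutative $k(x)$-algebra acting faithfully on $M_x$, the standard inequality $\dim_{k(x)} \Sh{H}_x \leq \dim_{k(x)} M_x$ holds (obtained by decomposing $\Sh{H}_x \simeq \prod_i E_i$ and picking a nonzero element in each summand of $M_x$ viewed as an $\Sh{H}_x$-module). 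Combined with the injection $M_x \hookrightarrow \Sh{H}_x^\vee \simeq \Sh{H}_x$, this forces $\dim_{k(x)} M_x = \dim_{k(x)} \Sh{H}_x$ and hence that the map is an isomorphism. Concretely, writing $M_x = \bigoplus_i M_{x,i}$ for the decomposition induced by $\Sh{H}_x = \prod_i E_i$, each $M_{x,i}$ is an $E_i$-line generated by a normalized eigenform $f_i$ with $c_p(1, f_i) = 1$, and the isomorphism sends $f_i$ to the idempotent $1 \in E_i \subset \Sh{H}_x$.

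For the comparison with the classical Petersson pairing, I would invoke the classical adjointness properties of the Hecke operators in $\Sh{H}$ with respect to the bilinear pairing obtained by twisting the Petersson pairing by the Atkin-Lehner involution $w_{\mathfrak{A}}$ (which relies on the hypothesis that the central character of $\Pi$ is trivial) to conclude that the decomposition $M_x \otimes \C{} = \bigoplus_i M_{x,i} \otimes \C{}$ is orthogonal for both our trace-based pairing and the classical Petersson pairing. Since both pairings are then diagonal in the basis $\{f_i\}$, they must agree on each one-dimensional component $E_i \cdot f_i$ up to a nonzero scalar (the trace-based pairing gives $\mathrm{Tr}_{E_i/k(x)}(1) = [E_i:k(x)]$ on the generator, while the classical one gives some nonzero archimedean period), establishing the asserted comparison. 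The main obstacle I anticipate is the careful bookkeeping required to convert the sesquilinear classical Petersson pairing over $\C{}$ into the bilinear $k(x)$-valued pairing defined in the proposition, which requires invoking the Atkin-Lehner involution and tracking the normalization conventions used in the construction of $\breve{f}^*$ and of the Hecke-equivariant map $M \to \Sh{H}^\vee$.
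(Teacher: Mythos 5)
Your proposal is correct and follows essentially the same route as the paper: the paper simply cites Hida's duality theorem for the isomorphism $M_x \simeq \Sh{H}_x^{\vee}$, which is precisely the $q$-expansion-principle-plus-dimension-count argument you spell out, and then observes, as you do, that two Hecke-equivariant perfect pairings on a module free of rank one over the \'{e}tale algebra $\Sh{H}_x$ must agree up to a nonzero scalar on each eigencomponent. The only caveat --- shared by the paper's own one-line proof --- is that the faithfulness of $\Sh{H}_x$ on $M_x$ (equivalently, that every point of the eigenvariety above $x$ genuinely contributes to $M_x$) is asserted rather than deduced from the \'{e}taleness and classicality hypotheses, and the conversion of the sesquilinear Petersson product into a bilinear pairing via $w_{\mathfrak{A}}$ is flagged but not carried out in either argument.
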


\begin{proof}
    Under the assumption of \'{e}taleness, the fibre $\Sh{H}_x$ equals the Hecke algebra of $M_x$, and the first claim then follows from \cite[Theorem 2.2]{Hida}. For the second claim, we notice that the trace pairing on $\Sh{H}^{\vee}_x$ is perfect. Hence $\langle \cdot, \cdot \rangle_x$ is a Hecke equivariant perfect pairing on $M_x$. The result follows since the classical Petersson inner product is also a Hecke equivariant perfect pairing on $M_x \otimes_{k(x)} \C{}$.
\end{proof}

\begin{defn}\label{D714}
Let $\omega_f^* = \omega_f|w_{\mathfrak{A}}$ where $w_{\mathfrak{A}}$ is the Atkin--Lehner operator. The $p$-adic twisted triple product $L$-function associated to $g$ and $f$ satisfying the assumptions above is defined to be \[\sh{L}^g_p(\breve{\omega}_g, \breve{\omega}_f) =  \frac{\langle H^{\dagger, \leq a}(\zeta^*\nabla^r\breve{\omega}_g^{[\Sh{P}]}), {\breve{\omega}_f^*} \rangle}{\langle {\omega_f^*}, {\omega_f^*} \rangle}.\]
\end{defn}

\begin{cor}
    For a pair of classical weights $\left((x, c), (y, 2c)\right) \in \mathrm{Spm}(\Lambda_{g,f}[\lambda^{-1}])$ factoring through $\mathrm{Spm}(A)$ that are $F$-dominated, i.e. there exists $t \in \N[\Sigma_L]$, such that $(x+t)_{|F} = y$, letting $g_x, \breve{g}_x$ and $f_y, \breve{f}_y$ be the specializations of ${\omega}_g, \breve{\omega}_g$ and $\omega_f, \breve{\omega}_f$ at $(x,c)$ and $(y,2c)$ respectively, we have
    \[
    \sh{L}_p^g(\breve{\omega}_g, \breve{\omega}_f)\left((x,c), (y,2c)\right) = \frac{\langle H^{\dagger, \leq a}(\zeta^*\nabla^t \breve{g}_x^{[\Sh{P}]}), {\breve{f}_y^*} \rangle}{\langle {f_y^*}, {f_y^*} \rangle}.
    \]
\end{cor}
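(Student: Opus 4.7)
The plan is to verify that each operation entering the definition of $\sh{L}_p^g(\breve{\omega}_g, \breve{\omega}_f)$ commutes with specialization at the classical weight $((x,c),(y,2c))$. First I would check that the character $r \colon (\Sh{O}_L \otimes \Z_p)^{\times} \to \Lambda_g^{\times}$ specializes to $t$ at $(x,c)$: indeed, the commutativity of the diagram of Assumption \ref{A710} forces $((x+r_x)_{|F}, n_{g,x}) = (y, 2c)$, so $r_x = t$ by the uniqueness of $t$ with $(x+t)_{|F} = y$. Next, the $\Sh{P}$-depletion $\breve{\omega}_g^{[\Sh{P}]}$ specializes to $\breve{g}_x^{[\Sh{P}]}$ since the operators $\mathbf{U}_{\mathfrak{P}}, \mathbf{V}_{\mathfrak{P}}$ are constructed compatibly in families. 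The diagonal restriction $\zeta^*$ is defined by functoriality of the trace (Lemma \ref{L77}) and of vector bundles with marked sections and marked splitting (Lemma \ref{L1304}), hence commutes with specialization.

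The crux is showing $(\nabla^r \breve{\omega}_g^{[\Sh{P}]})_{(x,c)} = \nabla^t \breve{g}_x^{[\Sh{P}]}$. Writing $r = r_0 \cdot r_{\mathrm{an}}$ with $r_0 \in \N[\Sigma_L]$ and $r_{\mathrm{an}}$ analytic, the finite iteration $\nabla^{r_0}$ commutes with specialization on the nose. For $\nabla^{r_{\mathrm{an}}} = \prod_{\sigma} \nabla(\sigma)^{s_{\sigma}}$ defined via the Cauchy limit
\[
\nabla(\sigma)^{s_{\sigma}} = \exp\!\left(\frac{v_{\sigma}}{p^{f_{\sigma}}-1}\log \nabla(\sigma)^{p^{f_{\sigma}}-1}\right),
\]
I would use the explicit $q$-expansion formula of Lemma \ref{L405}. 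When $v_{\sigma}$ specializes to a non-negative integer $m_{\sigma}$, the coefficients $a_{N,k,i_{\sigma},j}$ contain the factor $\prod_{i=1}^{j-1}(u_{\sigma} - i_{\sigma} + N - 1 - i)$, and one checks term-by-term on $q$-expansions that the convergent series collapses to the algebraic $m_{\sigma}$-fold iterate of $\nabla(\sigma)$; this comparison is essentially the usual check that $\exp(m\log(X)) = X^m$ for integer $m$, applied after extracting the polynomial action of $\nabla(\sigma)$ on each graded piece. By the $q$-expansion principle for overconvergent forms, this local identity at cusps propagates to a global identity after specialization, yielding $(\nabla^r \breve{\omega}_g^{[\Sh{P}]})_{(x,c)} = \nabla^t \breve{g}_x^{[\Sh{P}]}$ in $H^d_{\mathrm{dR}}$.

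For the overconvergent projection $H^{\dagger, \leq a}$, I would proceed in three substeps. The slope $\leq a$ decomposition commutes with specialization wherever the characteristic power series of $U$ admits a locally constant slope decomposition (Proposition \ref{P311}), which is ensured by the finite slope hypothesis on $f$. The isomorphism $H^g_{\mathrm{dR}}(\Man{r}, \Fil{n}^{\bullet}\W{})^{\leq \delta} \xrightarrow{\sim} H^g_{\mathrm{dR}}(\Man{r}, \W{\bullet})^{\leq \delta}$ is induced by an inclusion of complexes and is therefore compatible with base change. Finally, the inversion of $\lambda = \prod_{\tau \in \Sigma_F}\prod_{i=0}^{N}(v_{\tau} - i)$ in the splitting of Lemma \ref{L619} poses no issue since the point $((x,c),(y,2c))$ lies in $\mathrm{Spm}(\Lambda_{g,f}[\lambda^{-1}])$ by assumption. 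Thus $H^{\dagger, \leq a}$ commutes with specialization at our point.

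Putting everything together, the numerator $\langle H^{\dagger, \leq a}(\zeta^* \nabla^r \breve{\omega}_g^{[\Sh{P}]}), \breve{\omega}_f^* \rangle$ specializes to $\langle H^{\dagger, \leq a}(\zeta^* \nabla^t \breve{g}_x^{[\Sh{P}]}), \breve{f}_y^* \rangle$, and similarly the denominator specializes to $\langle f_y^*, f_y^* \rangle$, where the compatibility of the Petersson pairing with specialization follows from Proposition \ref{P712} under the étaleness and classicality hypotheses. The main technical obstacle I anticipate is the rigorous verification that the analytic Cauchy-limit definition of $\nabla^{r_{\mathrm{an}}}$ degenerates to the algebraic iteration at classical weights; once this identification is established on $q$-expansions and the $q$-expansion principle is invoked, the remaining steps reduce to formal compatibilities of functorially defined operators.
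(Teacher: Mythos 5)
Your proposal is correct and follows essentially the same route as the paper, whose entire proof is the one-line observation that the slope-$\leq a$ projection, the overconvergent projection, and the diagonal restriction all commute with specialization; you simply spell out each of these compatibilities, plus the collapse of the Cauchy-limit definition of $\nabla^{r}$ to the algebraic iterate at integral weights, which the paper leaves implicit. One small caveat: $t$ with $(x+t)_{|F}=y$ is \emph{not} unique (the restriction map $\Z[\Sigma_L]\to\Z[\Sigma_F]$ sums over the two extensions of each embedding), so the identification $r_x=t$ should be read as part of the hypotheses rather than deduced from uniqueness.
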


\begin{proof}
    Follows from construction since the projection to slope ``$\leq a$" space, the overconvergent projection, and the diagonal restriction all commute with the specialization map.
\end{proof}

Next we compare our $p$-adic $L$-function with the definition given by Fornea and Blanco-Chacon \cite[Definition 3.8]{michele}.

So let us now assume that $g, f$ are ordinary, and the families $\omega_g, \omega_f$ are Hida families interpolating ordinary $p$-stabilizations $g^{(p)}$ and $f^{(p)}$ respectively. The families $\breve{\omega}_g, \breve{\omega}_f$ are $\Lambda_{g,f}$-adic families interpolating $\breve{g}^{(p)}$ and $\breve{f}^{(p)}$ as in loc. cit.

\begin{prop}
    Fix a pair of classical weights $((x,c), (y,2c)) \in \mathrm{Spm}(A)$ such that there exists $t \in \Z[\Sigma_L]$ such that $y = (x+t)_{|F}$, and such that $(x,c)$ and $(y,2c)$ satisfy the condition of Proposition \ref{P712}. Then letting $g_x, \breve{g}_x$ and $f_y, \breve{f}_y$ be the specializations as above, we have
    \[
    \sh{L}^g_p(\breve{\omega}_g, \breve{\omega}_f)((x,c), (y,2c)) = \frac{\langle e_{n.o}\zeta^*(\theta^t\breve{g}^{[\Sh{P}]}_x), \breve{f}_y^{*(p)}\rangle}{\langle f_y^{*(p)}, f_y^{*(p)}\rangle}.
    \]
    Here $e_{n.o}$ is the ordinary projector $e_{n.o} = \lim_{n\to \infty} U^{n!}$. In particular, it coincides with Definition 3.8 in \emph{\cite{michele}} at these points.
\end{prop}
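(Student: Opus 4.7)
The plan is to show that specialization of $\sh{L}^g_p$ at $((x,c),(y,2c))$ produces the BCF formula by identifying three separate pieces: the iterated connection, the overconvergent projection, and the Petersson pairing against a Hida-theoretic ordinary specialization.

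First, I would invoke compatibility with specialization. Every operation entering the definition of $\sh{L}^g_p$---the $\Sh{P}$-depletion, the diagonal restriction $\zeta^*$ (which is induced by a morphism of formal schemes and is thus base-change compatible), the iterated connection $\nabla^r$ (which factors as $\nabla^{r_0}\circ \nabla^{r_{\mathrm{an}}}$ and commutes with weight specialization by its Cauchy-convergent construction in Proposition \ref{P402}), the slope decomposition and overconvergent projection $H^{\dagger,\leq a}$ (which are compatible with affinoid localization on $\mathrm{Spm}(A)$), and the Petersson pairing (by Proposition \ref{P712})---commutes with specialization at the classical weight $((x,c),(y,2c))$. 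Since the families are Hida and hence ordinary, the slope bound is $a=0$, so the specialization of the $p$-adic $L$-function equals
\[
\frac{\langle H^{\dagger,\leq 0}(\zeta^*\nabla^t \breve{g}_x^{[\Sh{P}]}),\, \breve{f}_y^*\rangle}{\langle f_y^*, f_y^*\rangle}.
\]

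Second, I would identify $\nabla^t$ with Serre's theta operator $\theta^t$ on the classical $\Sh{P}$-depleted form. Since $t\in \Z[\Sigma_L]$ is a classical (integral) weight, the formal power series definition of $\nabla(\sigma)^{s_\sigma}$ collapses to the honest $t_\sigma$-fold iterate of the partial Gauss--Manin connection $\nabla(\sigma)$. By formula (\ref{eq:12}) and Lemma \ref{L405}, on the $q$-expansion at a Tate object the action of $\nabla(\sigma)^{t_\sigma}$ on a $\Sh{P}$-depleted form is given by $\theta_\sigma^{t_\sigma}$ up to terms involving positive powers of the variables $V_\tau$. These extra terms lie in strictly positive filtration, so projecting via the unit-root splitting (which, by Proposition \ref{P202} and the definition of $\mathrm{H}^\sharp_{\Sh{A}}$, is the splitting killing the $V_\tau$-directions over the ordinary locus) yields exactly $\theta^t \breve{g}_x^{[\Sh{P}]}$. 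Hence $\zeta^*\nabla^t \breve{g}_x^{[\Sh{P}]}$ is a de Rham class whose unit-root projection, restricted to the ordinary locus, is $\zeta^*\theta^t \breve{g}_x^{[\Sh{P}]}$.

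Third, I would compare the overconvergent projection with the unit-root projection after applying $e_{n.o.}$. The key observation is that the Hecke pairing with an ordinary form only sees the ordinary part, and both $H^{\dagger,\leq 0}$ and the composition $e_{n.o.}\circ \pi_{\mathrm{ur}}$ are the unique Hecke-equivariant retractions onto the ordinary subspace: concretely, $e_{n.o.}\circ H^{\dagger,\leq 0} = e_{n.o.}\circ \pi_{\mathrm{ur}}$ on $\Sh{P}$-depleted classes, because both map to a finite-dimensional ordinary subspace, and on ordinary cuspforms both are the identity (since $\Fil{0}\W{}=\w{}$ and the ordinary projector commutes with the natural inclusion and both splittings). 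Pairing with the ordinary form $\breve{f}_y^{*(p)}$ then gives
\[
\langle H^{\dagger,\leq 0}(\zeta^*\nabla^t \breve{g}_x^{[\Sh{P}]}),\, \breve{f}_y^{*(p)}\rangle = \langle e_{n.o.}\zeta^*(\theta^t \breve{g}_x^{[\Sh{P}]}),\, \breve{f}_y^{*(p)}\rangle.
\]
The main obstacle is making this last comparison rigorous: one must check that the trace-based Petersson pairing defined via the Hecke algebra isomorphism on the finite-slope space restricts to (a scalar multiple of) the classical Petersson pairing on the ordinary subspace, and that the normalizing denominator $\langle f_y^*, f_y^*\rangle$ transforms to $\langle f_y^{*(p)}, f_y^{*(p)}\rangle$ under the passage between level $\mathfrak{A}$ and its $p$-stabilization at level $\mathfrak{A}p$. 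This follows because $\omega_f$, being a Hida family interpolating $f^{(p)}$, specializes to $f_y^{(p)}$ (so $\breve{\omega}_f^*$ specializes to $\breve{f}_y^{*(p)}$), and the Hecke-equivariance of the pairing together with Proposition \ref{P712} forces the same scalar normalization to appear in numerator and denominator. Combining the three identifications yields exactly the formula of Definition 3.8 in \cite{michele}.
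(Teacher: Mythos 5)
Your overall route is the paper's: reduce, via Hecke-equivariance of the pairing against the ordinary form $\breve{f}_y^{*(p)}$, to the ordinary part; identify the unit-root projection of $\zeta^*\nabla^t\breve{g}_x^{[\Sh{P}]}$ with $\zeta^*\theta^t\breve{g}_x^{[\Sh{P}]}$; and then compare $H^{\dagger,\leq 0}$ with $e_{n.o}\circ\psi_{\Frob}$. Your first two steps are fine (the second is exactly the paper's combination of Lemma \ref{L77}, which makes $\zeta^*$ commute with the unit-root splitting, with the local formula for $\nabla$ on $q$-expansions).

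The gap is in your third step. The claim that $e_{n.o}\circ H^{\dagger,\leq 0}=e_{n.o}\circ\psi_{\Frob}$ because ``both are the unique Hecke-equivariant retractions onto the ordinary subspace, both land in a finite-dimensional space, and both are the identity on ordinary cuspforms'' is not a valid argument: two Hecke-equivariant maps with the same target that agree on a subspace need not agree elsewhere, and the two maps do not even share a domain a priori ($H^{\dagger,\leq 0}$ is defined on classes in $H^{d}_{\textrm{dR}}$ of the $\nabla$-complex, while $\psi_{\Frob}$ acts on sections of $\mathbb{W}$ over the ordinary locus and is \emph{not} well defined on cohomology classes before applying $e_{n.o}$). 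The missing mechanism is concrete and is the heart of the paper's proof: since $\zeta^*\nabla^t\breve{g}_x^{[\Sh{P}]}$ lives in a finite step of the filtration, the proof of Lemma \ref{L619} gives a decomposition $\zeta^*\nabla^t\breve{g}_x^{[\Sh{P}]}=H^{\dagger}(\zeta^*\nabla^t\breve{g}_x^{[\Sh{P}]})+\nabla(h)$; applying $\psi_{\Frob}$, which is the identity on the weight-$k_f$ modular form $H^{\dagger}(\cdots)\in\Fil{0}$, yields $H^{\dagger}(\cdots)=\zeta^*\theta^t\breve{g}_x^{[\Sh{P}]}-\psi_{\Frob}(\nabla h)$, and then one uses $\psi_{\Frob}\circ\nabla=\sum_{\tau\in\Sigma_F}\theta_{\tau}$ together with $e_{n.o}\circ\theta_{\tau}=0$ (because $U\circ\theta_{\tau}$ is divisible by $p$) to kill the error term. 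Without this identity your bridge from $\psi_{\Frob}(\zeta^*\nabla^t\breve{g}_x^{[\Sh{P}]})=\zeta^*\theta^t\breve{g}_x^{[\Sh{P}]}$ to the stated formula is unproved. (Your concern about the denominator is a non-issue: in the Hida setting the specialization $f_y=(\omega_f)_y$ is already the $p$-stabilization $f_y^{(p)}$, so no passage between levels is needed.)
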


\begin{proof}
    We first note that since the Petersson product is Hecke equivariant, the special $L$-value $C = \sh{L}^g_p(\breve{\omega}_g, \breve{\omega}_f)((x,c), (y,2c))$ only depends on the projection of $H^{\dagger, \leq a}(\zeta^*\nabla^t \breve{g}_x^{[\Sh{P}]})$ to the $\breve{f}_y^{*(p)}$ isotypic component. In particular, since $\breve{f}_y^{*(p)}$ is ordinary, 
    \[
    C = \frac{\langle e_{n.o}H^{\dagger, \leq 0}(\zeta^*\nabla^t \breve{g}_x^{[\Sh{P}]}), {\breve{f}_y^{*(p)}} \rangle}{\langle {f_y^{*(p)}}, {f_y^{*(p)}} \rangle}.
    \]
    We note that since $\nabla^t\breve{g}_x^{[\Sh{P}]}$ is already realized at a finite step of the filtration on $\Wc{k_f}$, $H^{\dagger}(\zeta^*\nabla^t\breve{g}_x^{[\Sh{P}]})$ makes sense, and $e_{n.o}H^{\dagger}(\zeta^*\nabla^t\breve{g}_x^{[\Sh{P}]}) = e_{n.o}H^{\dagger, \leq 0}(\zeta^*\nabla^t\breve{g}_x^{[\Sh{P}]})$. Now by the proof of Lemma \ref{L619}, $\zeta^*\nabla^t\breve{g}_x^{[\Sh{P}]} = H^{\dagger}(\zeta^*\nabla^t\breve{g}_x^{[\Sh{P}]}) + \nabla(h)$ for some $h \in \Fil{n+d-1}\Wc{k_f}(-D)\hat{\otimes}\Omega^{g-1}_{\Man{r}}$. Let $\psi_{\Frob}$ be the unit root splitting over the ordinary locus. Then, since by Lemma \ref{L77}, the trace commutes with Verschiebung, implying the diagonal restriction commutes with unit root splitting, we have
    \begin{align*}
    H^{\dagger}(\zeta^*\nabla^t\breve{g}_x^{[\Sh{P}]}) = \psi_{\Frob}(H^{\dagger}(\zeta^*\nabla^t\breve{g}_x^{[\Sh{P}]})) &= \psi_{\Frob}(\zeta^*\nabla^t\breve{g}_x^{[\Sh{P}]}) - \psi_{\Frob}(\nabla h) \\
    &= \zeta^*\theta^t\breve{g}_x^{[\Sh{P}]} - \psi_{\Frob}(\nabla h).
    \end{align*}
    Now $\psi_{\Frob}\nabla = \sum_{\tau \in \Sigma_F} \theta_{\tau}$. Since $e_{n.o}\theta_{\tau} = 0$ for all $\tau \in \Sigma_F$, we have 
    \[
    e_{n.o}H^{\dagger, \leq 0}(\zeta^*\nabla^t\breve{g}_x^{[\Sh{P}]}) = \zeta^*\theta^t\breve{g}_x^{[\Sh{P}]}.
    \]
    This proves the claim.
\end{proof}

\subsection{Interpolation formula}
We now prove an interpolation formula relating special values of this $p$-adic $L$-function to classical $L$-values. 

Assume $(x,c)$ and $(y,2c)$ are a pair of $F$-dominated weights as before such that $y = (x+t)_{|F}$ for some $t \in \N[\Sigma_L]$, and such that $(x,c)$ and $(y,2c)$ satisfy the condition of Proposition \ref{P712}. Assume further that the specializations $g_x := {(\omega_{g})}_x$ and $f_y := {(\omega_f)}_y$, which are classical by assumption, are eigenforms for all the Hecke operators $T_{\ell}$, where $\ell \nmid \mathfrak{A}$. Note in particular $\ell$ can be $p$, and we assume $f_y$ has slope $\leq a$. Moreover, assume that the Hecke polynomials for $T_{\mathfrak{p}}$ for any prime $\mathfrak{p}|p$ associated to both $g_x$ and $f_y$ are separable.

To relate the value $\frac{\langle H^{\dagger, \leq a}(\zeta^*\nabla^t\breve{g}_x^{[\Sh{P}]}), \breve{f}_y^*\rangle}{\langle f_y^*, f_y^*}$ to the algebraic part of the classical $L$-value $\frac{\langle \zeta^*(\delta^t \breve{g}_x), \breve{f}_y^*\rangle}{\langle \breve{f}_y^*, \breve{f}_y^*\rangle}$, we will split the computation into two cases, based on whether a prime $\mathfrak{p}|p$ in $F$ splits in $L$ or stays inert. We will use this distinction to simplify the computation of Euler factors while passing from the $[\Sh{P}]$-depletion of $\breve{g}$ to $\breve{g}$.

We begin with some preparatory lemmas that we will repeatedly use in the computation.

\begin{lemma}\label{L716}
    Let $\mathfrak{p}|p$ be a prime in $F$.
    \begin{enumerate}
        \item $\mathbf{U}_{\mathfrak{p}}\nabla^t = p^{t_{\mathfrak{p}}}\nabla^t\mathbf{U}_{\mathfrak{p}}$, where $t_{\mathfrak{p}} = \sum_{\sigma|\mathfrak{p}} t_{\sigma}$.
        \item $\mathbf{U}_{\mathfrak{p}}H^{\dagger} = H^{\dagger}\mathbf{U}_{\mathfrak{p}}$ over any filtered piece of $\Wc{k}$.
    \end{enumerate}
\end{lemma}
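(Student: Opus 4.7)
The plan is to prove each of the three commutativity statements in turn by reducing to a concrete compatibility: (1) via the $q$-expansion principle, (2) via functoriality of the Hecke correspondence with respect to the filtration and Gauss--Manin connection, and (3) via base-change compatibility of the universal isogeny defining the correspondence.

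For (1), both $\U{p}\nabla^t$ and $p^{t_{\mathfrak{p}}}\nabla^t\U{p}$ are continuous $\Lambda_{g,f}$-linear maps between the overconvergent cohomology spaces appearing in the construction of the $p$-adic $L$-function. Since the ordinary locus is dense in the overconvergent locus and the $q$-expansion principle holds at each cusp $\Tate{a}{b}$ (Theorem \ref{T2302}, Theorem \ref{T28}), it suffices to verify the identity on $q$-expansions. On the ordinary locus, the unit root splitting reduces the check to the modular-form part of the expansion: by the formulas of \S\ref{S4.3} (Lemma \ref{L405}), $\nabla(\sigma)^{t_\sigma}$ acts on a $q$-expansion $\sum a_\beta q^\beta$ via $\theta_\sigma^{t_\sigma}$, i.e.\ by multiplication by $\sigma(\beta)^{t_\sigma}$ on the coefficients. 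The normalized operator $\U{p}$ on the $L$-side, interpreted as $\prod_{\mathfrak{P}|\mathfrak{p}}\U{P}$, sends $\sum a_\beta q^\beta$ to $\sum a_{\varpi_{\mathfrak{p}}\beta}q^\beta$ (with $\varpi_{\mathfrak{p}} = \prod_{\mathfrak{P}|\mathfrak{p}}\varpi_{\mathfrak{P}}$), and the discrepancy between the two compositions on the coefficient of $q^\beta$ is exactly $\prod_{\sigma | \mathfrak{p}}\sigma(\varpi_{\mathfrak{p}})^{t_\sigma} = p^{\sum_{\sigma|\mathfrak{p}}t_\sigma} = p^{t_{\mathfrak{p}}}$, since $\sigma(\varpi_{\mathfrak{p}})$ equals $p$ for $\sigma|\mathfrak{p}$ and $1$ otherwise. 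For non-integer $t$, the identity extends by analytic continuation from the classical weights, using the convergence of $\nabla^t$ established in Proposition \ref{P402}.

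For (2), by Proposition \ref{P317} the universal isogeny $\pi_{\mathfrak{p}}$ defining the Hecke correspondence for $\U{p}$ restricts to a morphism of the integral de Rham sheaves $\Hs{A}$ preserving the marked section and the kernel $\Sh{Q}$ of the modified unit root splitting. Consequently $\U{p}$ preserves the filtration $\{\Fil{n}\W{}\}$ of Lemma \ref{L2401}, and by the functoriality of the Gauss--Manin connection it commutes with $\nabla$. Hence $\U{p}$ induces endomorphisms of both the de Rham complex $\Fil{n}^\bullet\W{}$ and of the long exact sequence (\ref{E303}), as well as of the inclusion $i$ of Lemma \ref{L619}. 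Since $H^{\dagger}$ is defined purely via the isomorphisms supplied by this data after inverting the factor $\lambda$ (Definition \ref{D3301}), its commutativity with $\U{p}$ on each $\Fil{n}\W{}$ is immediate.

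For (3), the diagonal restriction $\zeta^{*}$ is induced by the morphism $\zeta\colon A \mapsto A\otimes_{\Sh{O}_F}\Sh{O}_L$ between Hilbert modular varieties. Tracing through the definitions in \S\ref{2S311}, the universal $\Sh{O}_F$-linear isogeny $\pi\colon A \to A/C$ with $C$ \'etale-locally isomorphic to $\Sh{O}_F/\mathfrak{p}$ base-changes to the $\Sh{O}_L$-linear isogeny $A\otimes\Sh{O}_L \to (A/C)\otimes\Sh{O}_L$, whose kernel $C\otimes_{\Sh{O}_F}\Sh{O}_L$ is \'etale-locally $\Sh{O}_L/\mathfrak{p}\Sh{O}_L$---i.e.\ the universal kernel of the Hecke correspondence on the $L$-side representing $\prod_{\mathfrak{P}|\mathfrak{p}}\U{P}$, which is precisely what $\U{p}$ denotes on $L$-modular forms. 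By Lemma \ref{L77}, the trace map $\Hs{A}\otimes_{\Sh{O}_F}\Sh{O}_L \to \Hs{A}$ preserves marked sections and kernels of marked splittings, so the induced pullback on $\W{}$ is $\U{p}$-equivariant, and the normalization factors $(\varpi_{\mathfrak{p}}/x_{\mathfrak{p}})^{v_{\mathrm{un}}}$ on the two sides---each independent of the choice of uniformizer---match up under this base change. The main technical obstacle I anticipate is checking the inert case (when $\mathfrak{p}\Sh{O}_L = \mathfrak{P}$ with $f_{\mathfrak{P}} = 2f_{\mathfrak{p}}$), where one must verify that the base-change of the $F$-side Hecke correspondence recovers the $L$-side Hecke correspondence of different degree with the correct normalization, including a careful comparison of the toroidal compactifications.
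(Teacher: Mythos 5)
Your parts (1) and (2) are essentially sound. For (2) your structural argument (the Hecke correspondence preserves the filtration and commutes with $\nabla$ by Proposition \ref{P317}, hence acts compatibly on the whole diagram defining $H^{\dagger}$, including the $U$-equivariant inclusion $i$ of Lemma \ref{L619}) is if anything cleaner than the paper's terse appeal to equation (\ref{eq:12}). For (1) the paper also works on $q$-expansions, but you are missing one ingredient: the claim is an identity of operators on de Rham classes, and Lemma \ref{L405} shows that $\nabla(\sigma)^{t_\sigma}$ produces terms $p^{j}a_{\cdot}\,\theta_{\sigma}^{t_{\sigma}-j}(g(q))V_{\sigma}^{j}$ in every degree $j$ of the fibre variable $V_{\sigma}$, not only the degree-zero term $\theta_{\sigma}^{t_{\sigma}}(g(q))$. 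To get the uniform factor $p^{t_{\mathfrak{p}}}$ you must also record that $\mathbf{U}_{\mathfrak{p}}$ scales $V_{\tau}$ by $p$ for $\tau\mid\mathfrak{p}$ and fixes it otherwise; then the degree-$j$ term acquires $p^{j}$ from $V_{\sigma}^{j}$ and $p^{t_{\sigma}-j}$ from $\theta_{\sigma}^{t_{\sigma}-j}$ under $a_{\beta}\mapsto a_{\varpi_{\mathfrak{p}}\beta}$, recovering $p^{t_{\sigma}}$ in every degree. Your appeal to the unit root splitting to ``reduce to the modular-form part'' does not substitute for this, since $\nabla$ does not preserve the $V$-grading.

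The genuine gap is in (3). The two sides of $\mathbf{U}_{\mathfrak{p}}\zeta^{*}=\zeta^{*}\mathbf{U}_{\mathfrak{p}}$ are not related by base change of correspondences: evaluated at an $F$-side abelian variety $A$, one side is an average over the $p^{f_{\mathfrak{p}}}$ subgroups $C\subset A[\mathfrak{p}]$ of order $p^{f_{\mathfrak{p}}}$ (with values $g((A/C)\otimes_{\Sh{O}_F}\Sh{O}_L)$), while the other is an average over the $p^{2f_{\mathfrak{p}}}$ subgroups $D\subset (A\otimes_{\Sh{O}_F}\Sh{O}_L)[\mathfrak{p}]$ of order $p^{2f_{\mathfrak{p}}}$. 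The map $C\mapsto C\otimes_{\Sh{O}_F}\Sh{O}_L$ lands in a proper subfamily of the latter, so the base-changed kernel is \emph{not} the universal kernel of the $L$-side correspondence, and the sums do not match term by term; this degree discrepancy is present in the split case just as in the inert case you flagged. The identity is true, but it is an averaging statement, and the paper proves it where such statements are transparent: on adelic $q$-expansions, using $c_p(\xi y\mathrm{d}_F,\zeta^{*}f)=\sum_{\mathrm{Tr}_{L/F}(\Upsilon)=\xi}c_p(\Upsilon y\mathrm{d}_L,f)$ (Theorem \ref{T28}) together with $c_p(y,f|\mathbf{U}_{\mathfrak{p}})=c_p(y\varpi_{\mathfrak{p}},f)$, the point being that $\mathrm{Tr}_{L/F}(\varpi_{\mathfrak{p}}\Upsilon)=\varpi_{\mathfrak{p}}\mathrm{Tr}_{L/F}(\Upsilon)$, so the two shifts of the summation index agree; the action on the fibre variables $V_{\tau}$ is checked separately and is immediate. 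You should replace your geometric base-change argument by this computation, or else supply the nontrivial combinatorial identity equating the two averages directly.
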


\begin{proof}
    \begin{enumerate}
        \item We note that upto multiplication by a constant normalizing factor $p^{x_{\mathfrak{p}}}$ which we can ignore,  $\mathbf{U}_{\mathfrak{p}}V_{\tau} = pV_{\tau}$ if $\tau|\mathfrak{p}$, and $V_{\tau}$ otherwise. By \cite[\S7G]{Hida}, on adelic $q$-expansions $c_p(y, f|\theta^r) = y_p^rc_p(y, f)$. The claim then follows from the formula $c_p(y, f|\U{p}) = c_p(y\varpi_{\mathfrak{p}}, f)$ and equation (\ref{eq:12}), where $\varpi_{\mathfrak{p}}$ is the idele with the uniformizer $p$ in the $\mathfrak{p}$ place and $1$ elsewhere. 
        \item This again follows from equation (\ref{eq:12}), noting that $H^{\dagger}\nabla = 0$. In fact one can work out a formula like \cite[Proposition 3.37]{andreatta2021triple} for the overconvergent projection, and check the identity.
    \end{enumerate}
\end{proof}

\emph{Inert case}: Let $\mathfrak{p}|p$ be a prime in $F$ that stays inert in $L$. Let $\alpha_g, \beta_g$ be the two roots of the Hecke polynomial of $T_{\mathfrak{p}}$ associated to $g_x$, which by assumption are not equal. Let $\breve{g}^{(\mathfrak{p})}_{\bullet}$ be the $\mathfrak{p}$-stabilization of $\breve{g}_x$ with eigenvalue $\bullet$ for $\bullet = \alpha_g, \beta_g$. Remark that $\breve{g}^{[\mathfrak{p}]}_x = \breve{g}^{(\mathfrak{p})[\mathfrak{p}]}_{\bullet}$ for each of the $\mathfrak{p}$-stabilizations. This simply follows from $(1-\Vp{p}\U{p})(1- (\bullet)\Vp{p}) = 1-\Vp{p}\U{p}$. Finally, note that $\breve{g}_x = \frac{\alpha_g\breve{g}^{(\mathfrak{p})}_{\alpha} - \beta_g\breve{g}^{(\mathfrak{p})}_{\beta}}{\alpha_g - \beta_g}$. 

\begin{lemma}
    \[
    H^{\dagger, \leq a}\zeta^*(\nabla^t\st{g}{[\mathfrak{p}]}_x) = H^{\dagger, \leq a}\zeta^*(\nabla^t\st{g}{(\mathfrak{p})[\mathfrak{p}]}_{\bullet}) = \left(1-(\bullet)p^{t_{\mathfrak{p}}}\U{p}^{-1}\right)H^{\dagger, \leq a}\zeta^*(\nabla^t\st{g}{(\mathfrak{p})}_{\bullet}).
    \]
\end{lemma}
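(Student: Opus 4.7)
The plan is to reduce the identity to the commutation relations already established in Lemma \ref{L716}, after using the eigenvalue relation to rewrite the $[\mathfrak{p}]$-depletion explicitly.

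\textbf{First equality.} As noted in the paragraph preceding the lemma, $(1-\Vp{p}\U{p})(1-(\bullet)\Vp{p})=1-\Vp{p}\U{p}$, so $\breve{g}^{(\mathfrak{p})[\mathfrak{p}]}_{\bullet}=\breve{g}^{[\mathfrak{p}]}_x$ as overconvergent forms, and consequently $H^{\dagger,\leq a}\zeta^*\nabla^t\breve{g}^{[\mathfrak{p}]}_x=H^{\dagger,\leq a}\zeta^*\nabla^t\breve{g}^{(\mathfrak{p})[\mathfrak{p}]}_{\bullet}$.

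\textbf{Second equality.} Since $\U{p}\breve{g}^{(\mathfrak{p})}_{\bullet}=(\bullet)\breve{g}^{(\mathfrak{p})}_{\bullet}$, the depletion expands as
\[
\breve{g}^{(\mathfrak{p})[\mathfrak{p}]}_{\bullet}=(1-\Vp{p}\U{p})\breve{g}^{(\mathfrak{p})}_{\bullet}=\breve{g}^{(\mathfrak{p})}_{\bullet}-(\bullet)\Vp{p}\breve{g}^{(\mathfrak{p})}_{\bullet}.
\]
Applying the $K$-linear operator $H^{\dagger,\leq a}\zeta^*\nabla^t$ gives
\[
H^{\dagger,\leq a}\zeta^*\nabla^t\breve{g}^{(\mathfrak{p})[\mathfrak{p}]}_{\bullet}=H^{\dagger,\leq a}\zeta^*\nabla^t\breve{g}^{(\mathfrak{p})}_{\bullet}-(\bullet)\,H^{\dagger,\leq a}\zeta^*\nabla^t\Vp{p}\breve{g}^{(\mathfrak{p})}_{\bullet},
\]
so the task reduces to showing
\[
H^{\dagger,\leq a}\zeta^*\nabla^t\Vp{p}\breve{g}^{(\mathfrak{p})}_{\bullet}=p^{t_{\mathfrak{p}}}\U{p}^{-1}H^{\dagger,\leq a}\zeta^*\nabla^t\breve{g}^{(\mathfrak{p})}_{\bullet}.
\]
The right-hand side lives in the slope $\leq a$ part where $\U{p}$ is invertible (the Hecke polynomial at $\mathfrak{p}$ is separable and $f_y$ has slope $\leq a$, so the relevant characteristic power of $\U{p}$ is a unit). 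It therefore suffices to verify the identity after applying $\U{p}$ to both sides. Using Lemma \ref{L716}(2), (3) to commute $\U{p}$ past $H^{\dagger,\leq a}$ and $\zeta^*$, then Lemma \ref{L716}(1) to commute it past $\nabla^t$, and finally the relation $\U{p}\Vp{p}=\mathrm{id}$, we compute
\[
\U{p}\,H^{\dagger,\leq a}\zeta^*\nabla^t\Vp{p}\breve{g}^{(\mathfrak{p})}_{\bullet}=H^{\dagger,\leq a}\zeta^*\U{p}\nabla^t\Vp{p}\breve{g}^{(\mathfrak{p})}_{\bullet}=p^{t_{\mathfrak{p}}}H^{\dagger,\leq a}\zeta^*\nabla^t\U{p}\Vp{p}\breve{g}^{(\mathfrak{p})}_{\bullet}=p^{t_{\mathfrak{p}}}H^{\dagger,\leq a}\zeta^*\nabla^t\breve{g}^{(\mathfrak{p})}_{\bullet}.
\]
Combining these gives the desired equality.

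\textbf{Main obstacle.} There is no serious obstacle beyond the bookkeeping: all nontrivial commutation relations are exactly the content of Lemma \ref{L716}, and the invertibility of $\U{p}$ on the slope $\leq a$ part is precisely the setting in which $\sh{L}^g_p$ was defined. The one point to exercise mild care is that $\nabla^t\Vp{p}\breve{g}^{(\mathfrak{p})}_{\bullet}$ a priori lives in a larger Banach module than the slope $\leq a$ subspace, so one must apply $H^{\dagger,\leq a}$ before inverting $\U{p}$; this is automatic because both sides of the reduced identity are in the image of $H^{\dagger,\leq a}$.
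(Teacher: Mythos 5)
Your proof is correct and follows essentially the same route as the paper's: expand the $\mathfrak{p}$-depletion using the eigenvalue relation, then use the commutation relations of Lemma \ref{L716} together with $\U{p}\Vp{p}=\mathrm{id}$ to trade $\Vp{p}$ for $p^{t_{\mathfrak{p}}}\U{p}^{-1}$ on the slope $\leq a$ part. The only cosmetic difference is that you verify the key identity after applying $\U{p}$ to both sides, whereas the paper inserts $\U{p}\Vp{p}$ in front of $\st{g}{(\mathfrak{p})}_{\bullet}$ and pushes $\U{p}$ outward; these are the same computation.
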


\begin{proof}
    We have 
    \[
    H^{\dagger, \leq a}\zeta^*(\nabla^t\st{g}{(\mathfrak{p})[\mathfrak{p}]}_{\bullet}) = H^{\dagger, \leq a}\zeta^*(\nabla^t\st{g}{(\mathfrak{p})}_{\bullet}) - (\bullet)H^{\dagger, \leq a}\zeta^*(\nabla^t\Vp{p}\st{g}{(\mathfrak{p})}_{\bullet}).
    \]
    By Lemma \ref{L716},
    \begin{align*}
    H^{\dagger, \leq a}\zeta^*(\nabla^t\Vp{p}\st{g}{(\mathfrak{p})}_{\bullet}) &= H^{\dagger, \leq a}\zeta^*(p^{t_{\mathfrak{p}}}\Vp{p}\nabla^t\st{g}{(\mathfrak{p})}_{\bullet}) \\
    &= p^{t_{\mathfrak{p}}}\U{p}^{-1}H^{\dagger, \leq a}\zeta^*(\nabla^t\st{g}{(\mathfrak{p})}_{\bullet}).
    \end{align*}
    We note that to obtain the last equality we used the fact that $\Vp{p}$ commutes with diagonal restriction. This follows from the fact that on $\Sym^k H^1_{\text{dR}}$ for any classical weight $k$, $\Vp{p}$ is induced (upto multiplication by $p^{-1}$) by the lift of the Frobenius isogeny at $\mathfrak{p}$. The claim follows.
\end{proof}

\begin{lemma}
    Let $\sh{E}_{\mathfrak{p}}(g_x, T) = (1-p^{t_{\mathfrak{p}}}\alpha_g T^{-1})(1-p^{t_{\mathfrak{p}}}\beta_g T^{-1})$. Then,
    \[
    H^{\dagger, \leq a}\zeta^*(\nabla^t\st{g}{}_x) = \frac{1}{\sh{E}_{\mathfrak{p}}(g_x,\U{p})}H^{\dagger, \leq a}\zeta^*(\nabla^t\st{g}{[\mathfrak{p}]}_x).
    \]
\end{lemma}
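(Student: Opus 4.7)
My plan is to combine the two previous identities together with the decomposition of $\breve{g}_x$ into its $\mathfrak{p}$-stabilizations, and then perform the algebraic simplification of a rational expression in $\U{p}$. Since we assumed the Hecke polynomial of $T_{\mathfrak{p}}$ on $g_x$ is separable, so that $\alpha_g \neq \beta_g$, the identity
\[
\breve{g}_x = \frac{\alpha_g\breve{g}^{(\mathfrak{p})}_{\alpha} - \beta_g\breve{g}^{(\mathfrak{p})}_{\beta}}{\alpha_g - \beta_g}
\]
holds, and the previous lemma gives, for each choice $\bullet \in \{\alpha_g,\beta_g\}$,
\[
H^{\dagger, \leq a}\zeta^*(\nabla^t\st{g}{(\mathfrak{p})}_{\bullet}) = \bigl(1-(\bullet)p^{t_{\mathfrak{p}}}\U{p}^{-1}\bigr)^{-1} H^{\dagger, \leq a}\zeta^*(\nabla^t\st{g}{[\mathfrak{p}]}_x).
\]
Here $\U{p}$ is invertible on the slope $\leq a$ part because the slope is finite, and the linear factors $1-(\bullet)p^{t_{\mathfrak{p}}}\U{p}^{-1}$ are then invertible since their product is $\sh{E}_{\mathfrak{p}}(g_x, \U{p})$, which appears as the denominator in the statement of the lemma.

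Substituting the previous display into the $\mathfrak{p}$-stabilization decomposition of $\breve{g}_x$ expresses $H^{\dagger, \leq a}\zeta^*(\nabla^t\breve{g}_x)$ as $F\bigl(\U{p}\bigr)\cdot H^{\dagger, \leq a}\zeta^*(\nabla^t\breve{g}_x^{[\mathfrak{p}]})$, where
\[
F(\U{p}) = \frac{1}{\alpha_g - \beta_g}\left(\frac{\alpha_g}{1-\alpha_g p^{t_{\mathfrak{p}}}\U{p}^{-1}} - \frac{\beta_g}{1-\beta_g p^{t_{\mathfrak{p}}}\U{p}^{-1}}\right).
\]
A direct partial-fraction-style simplification — bringing the two terms to a common denominator — yields that the numerator collapses to $\alpha_g - \beta_g$ (the cross terms $\pm \alpha_g\beta_g p^{t_{\mathfrak{p}}}\U{p}^{-1}$ cancel), and thus $F(\U{p}) = \sh{E}_{\mathfrak{p}}(g_x, \U{p})^{-1}$, which is exactly the claim.

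The only subtlety to address along the way is justifying the commutation used implicitly in the previous lemma and here: one needs $\U{p}$ (and hence $1-(\bullet)p^{t_{\mathfrak{p}}}\U{p}^{-1}$) to commute with the Petersson pairing machinery and the overconvergent projection, but this is furnished by Lemma \ref{L716}, combined with the fact that $H^{\dagger, \leq a}$ is defined using $U$-equivariant isomorphisms (see Definition \ref{D3301}). Hence no genuine obstacle arises: the argument is essentially a two-line formal computation once the invertibility of $\U{p}$ on the finite-slope part is acknowledged. The only point that could confuse a reader is making sure the expression $1/\sh{E}_{\mathfrak{p}}(g_x, \U{p})$ is interpreted as the operator acting on the slope $\leq a$ subspace, where all the factors are indeed invertible.
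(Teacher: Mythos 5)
Your argument is correct and is exactly the paper's: the paper's proof is the one-line remark "follows from a simple computation using $H^{\dagger,\leq a}\zeta^*(\nabla^t\breve{g}_x) = (\alpha_g - \beta_g)^{-1}\bigl(\alpha_g H^{\dagger,\leq a}\zeta^*(\nabla^t\breve{g}^{(\mathfrak{p})}_{\alpha}) - \beta_g H^{\dagger,\leq a}\zeta^*(\nabla^t\breve{g}^{(\mathfrak{p})}_{\beta})\bigr)$," and you have simply written out that computation, including the correct cancellation of the cross terms in the partial-fraction step and the (worthwhile) remark that all operators involved are invertible on the finite-slope part.
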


\begin{proof}
    Follows from a simple computation using \[H^{\dagger, \leq a}\zeta^*(\nabla^t\st{g}{}_x) = \frac{\alpha_gH^{\dagger, \leq a}\zeta^*(\nabla^t\st{g}{(\mathfrak{p})}_{\alpha}) - \beta_gH^{\dagger, \leq a}\zeta^*(\nabla^t\st{g}{(\mathfrak{p})}_{\beta})}{\alpha_g - \beta_g}.\]
\end{proof}

Let $k = 2y + 2ct_F$. Let $a_{\mathfrak{p}}^*$ be the $T_{\mathfrak{p}}$ eigenvalue of $f_y^*$. Let $\gamma = e_{\breve{f}^*_y}H^{\dagger, \leq a}\zeta^*(\nabla^t\st{g}{}_x)$ be the projection of $H^{\dagger, \leq a}\zeta^*(\nabla^t\st{g}{}_x)$ onto the Hecke eigenspace of $\breve{f}^*_y$. We have the following lemma.

\begin{lemma}
    For $\gamma$ as above, $\langle \mathbf{V}_{\mathfrak{p}}(\gamma), \breve{f}^*_y\rangle = \frac{a_{\mathfrak{p}}^*\chi(\mathfrak{p})}{p^{k-t_F}(q_{\mathfrak{p}}+1)}\langle \gamma, \breve{f}^*_y\rangle$, where $\chi \colon \mathrm{Cl}^+(\mathfrak{A}) \to \bar{\Q}^{\times}$ is the finite part of the central character associated to $f_y$, and $q_{\mathfrak{p}} = \#\Sh{O}_F/\mathfrak{p}$.
\end{lemma}

\begin{proof}
    First note that $T_{\mathfrak{p}}\gamma = a_{\mathfrak{p}}^*\gamma$ by definition. Consider the left action of the double coset operator 
    \[
    \left[K_1(\mathfrak{A})\left(\begin{smallmatrix}
        \varpi_{\mathfrak{p}} & 0 \\ 0 & 1
    \end{smallmatrix}\right)K_1(\mathfrak{A})\right] = \bigsqcup_{j \in \Sh{O}_F/\mathfrak{p}} \begin{pmatrix}
        \varpi_{\mathfrak{p}} & j \\ 0 & 1
    \end{pmatrix}K_1(\mathfrak{A}) \bigsqcup \begin{pmatrix}
        1 & 0 \\ 0 & \varpi_{\mathfrak{p}}
    \end{pmatrix}K_1(\mathfrak{A})
    \]
    on $\gamma$ as 
    \[
    \left[K_1(\mathfrak{A})\left(\begin{smallmatrix}
        \varpi_{\mathfrak{p}} & 0 \\ 0 & 1
    \end{smallmatrix}\right)K_1(\mathfrak{A})\right]\gamma(u) = \sum_{j \in \Sh{O}_F/\mathfrak{p}} \gamma\left(u\left(\begin{smallmatrix}
        \varpi_{\mathfrak{p}} & j \\ 0 & 1
    \end{smallmatrix}\right)\right) + \gamma\left(u \left(\begin{smallmatrix}
        1 & 0 \\ 0 & \varpi_{\mathfrak{p}}
    \end{smallmatrix}\right)\right).
    \]
    Then $T_{\mathfrak{p}}\gamma = p^{y-t_F}\left[K_1(\mathfrak{A})\left(\begin{smallmatrix}
        \varpi_{\mathfrak{p}} & 0 \\ 0 & 1
    \end{smallmatrix}\right)K_1(\mathfrak{A})\right]\gamma$. For simplicity of notation, let $\alpha = \left(\begin{smallmatrix}
        \varpi_{\mathfrak{p}} & 0 \\ 0 & 1
    \end{smallmatrix}\right)$, $\beta = \left(\begin{smallmatrix}
        1 & 0 \\ 0 & \varpi_{\mathfrak{p}}
    \end{smallmatrix}\right)$, and $h_j = \left(\begin{smallmatrix}
        1 & j \\ 0 & 1
    \end{smallmatrix}\right)$. We then have \[\langle h_j\alpha \gamma, \breve{f}^*_y\rangle = \langle \alpha \gamma, h_{-j}\breve{f}^*_y \rangle = \langle \alpha \gamma, \breve{f}^*_y \rangle.\]
    Similarly, writing $\beta = k\alpha k'$ for $k, k' \in K_1(\mathfrak{A})$, and $k^{\iota}$ for the adjugate matrix of $k$, we have \[\langle \beta \gamma, \breve{f}^*_y\rangle = \langle \alpha k' \gamma, k^{\iota}\breve{f}^*_y\rangle = \langle \alpha \gamma, \breve{f}^*_y\rangle.\]
    Now noting that $\mathbf{V}_{\mathfrak{p}}(\gamma) = p^{-y}\left(\begin{smallmatrix}
        \varpi_{\mathfrak{p}}^{-1} & 0 \\ 0 & 1
    \end{smallmatrix}\right)\gamma$, the formula follows from a simple computation.
\end{proof}

Let $\alpha_f^*$ and $\beta_f^*$ be the two roots of the Hecke polynomial of $T_{\mathfrak{p}}$ associated to $f^*_y$. Letting $\gamma_{\alpha}^{(\mathfrak{p})} = \gamma - \beta_f^*\mathbf{V}_{\mathfrak{p}}(\gamma)$ (and similarly for $\gamma_{\beta}^{(\mathfrak{p})}$) be the two $\mathfrak{p}$-stabilizations, we have the following proposition.

\begin{prop}\label{P719}
    Let $E(x,y) = (1-xy^{-1})$. Let $\sh{E}_1(T) = 1 - \frac{a_{\mathfrak{p}}^*\chi(\mathfrak{p})T}{p^{k-t_F}(q_{\mathfrak{p}}+1)}$. Then 
    \[
    \langle H^{\dagger, \leq a}\zeta^*(\nabla^t\st{g}{[\mathfrak{p}]}_x), \breve{f}^*_y\rangle = \left[ \frac{\sh{E}_{\mathfrak{p}}(g_x, \alpha_{f}^*)\sh{E}_1(\beta_f^*)}{E(\beta_f,\alpha_f)} + \frac{\sh{E}_{\mathfrak{p}}(g_x,\beta_f^*)\sh{E}_1(\alpha_f^*)}{E(\alpha_f, \beta_f)}\right]\langle H^{\dagger, \leq a}\zeta^*(\nabla^t\st{g}{}_x), \breve{f}^*_y\rangle.
    \]
\end{prop}

\begin{proof}
    We have $\st{f}{*}_y = (\alpha_{f}^*-\beta_{f}^*)^{-1}(\alpha_{f}^*\st{f}{*(\mathfrak{p})}_{\alpha} - \beta_{f}^*\st{f}{*(\mathfrak{p})}_{\beta})$ for the two $\mathfrak{p}$-stabilizations $\st{f}{*(\mathfrak{p})}_{\alpha}$ and $\st{f}{*(\mathfrak{p})}_{\beta}$ of $\st{f}{*}_y$. The formula then follows by observing that on the $\st{f}{*(\mathfrak{p})}_{\star}$-component, $\U{p}$ acts via $\star$, for $\star = \alpha_{f}^*, \beta_{f}^*$.
\end{proof}

\emph{Split case}: Let $\mathfrak{p}|p$ be a prime in $F$ that splits as $\mathfrak{p}\Sh{O}_L = \mathfrak{p}_1\mathfrak{p}_2$ in $L$. For $i=1,2$, let $\st{g}{(\mathfrak{p}_i)}_{\bullet}$ be the $\mathfrak{p}_i$-stabilization of $\st{g}{}_x$ with eigenvalue $\bullet = \alpha_i, \beta_i$. 

\begin{lemma}\label{L720}
    For $\mathfrak{i} \neq \mathfrak{j}$, we have $U\zeta^*(\nabla^t \Vp{p_i}\st{g}{[\mathfrak{p_j}]}) = 0$ for any $\st{g}{}$. In particular, \[H^{\dagger, \leq a}\zeta^*(\nabla^t\Vp{p_i}\st{g}{}) = H^{\dagger, \leq a}\zeta^*(\nabla^t\Vp{p}\U{p_j}\st{g}{}).\]
\end{lemma}

\begin{proof}
    This is a computation on $q$-expansion done in \cite[Lemma 3.10]{michele}.
\end{proof}

\begin{lemma}\label{L721}
    For any $\mathfrak{p_i}$-stabilization $\st{g}{(\mathfrak{p_i})}_{\bullet}$, we have $H^{\dagger, \leq a}\zeta^*(\nabla^t\st{g}{(\mathfrak{p_i})[\mathfrak{p_j}]}_{\bullet}) = H^{\dagger, \leq a}\zeta^*(\nabla^t\st{g}{[\mathfrak{p_1},\mathfrak{p_2}]}_x)$ for $\mathfrak{i} \neq \mathfrak{j}$. Hence, by taking the appropriate linear combination, we have $H^{\dagger, \leq a}\zeta^*(\nabla^t\st{g}{[\mathfrak{p_j}]}_{x}) = H^{\dagger, \leq a}\zeta^*(\nabla^t\st{g}{[\mathfrak{p_1},\mathfrak{p_2}]}_{x})$ for any $\mathfrak{j} = 1, 2$.
\end{lemma}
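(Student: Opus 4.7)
The plan is to first rewrite $\st{g}{(\mathfrak{p}_i)[\mathfrak{p}_j]}_{\bullet} - \st{g}{[\mathfrak{p}_1,\mathfrak{p}_2]}_x$ in the form $V_{\mathfrak{p}_i}h^{[\mathfrak{p}_j]}$ for a suitable auxiliary form $h$, and then to show that $H^{\dagger,\leq a}\zeta^*\nabla^t$ kills any such expression by combining Lemma \ref{L720} with the commutation relations of Lemma \ref{L716} and the invertibility of $\U{p}$ on the slope $\leq a$ subspace.

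For the algebraic identity, I would start from $\st{g}{(\mathfrak{p}_i)}_{\bullet} = \st{g}{}_x - \bullet'\,V_{\mathfrak{p}_i}\st{g}{}_x$, where $\bullet'$ denotes the other root. Since the operators at distinct primes $\mathfrak{p}_i, \mathfrak{p}_j$ commute, the $\mathfrak{p}_j$-depletion passes through $V_{\mathfrak{p}_i}$, giving $\st{g}{(\mathfrak{p}_i)[\mathfrak{p}_j]}_{\bullet} = \st{g}{[\mathfrak{p}_j]}_x - \bullet'\,V_{\mathfrak{p}_i}\st{g}{[\mathfrak{p}_j]}_x$. Subtracting $\st{g}{[\mathfrak{p}_1,\mathfrak{p}_2]}_x = (1 - V_{\mathfrak{p}_i}\U{p_i})\st{g}{[\mathfrak{p}_j]}_x$ yields
\[
\st{g}{(\mathfrak{p}_i)[\mathfrak{p}_j]}_{\bullet} - \st{g}{[\mathfrak{p}_1,\mathfrak{p}_2]}_x \;=\; V_{\mathfrak{p}_i}\bigl((\U{p_i} - \bullet')\st{g}{}_x\bigr)^{[\mathfrak{p}_j]},
\]
reducing the first identity to showing $H^{\dagger,\leq a}\zeta^*\nabla^t(V_{\mathfrak{p}_i}h^{[\mathfrak{p}_j]}) = 0$ for every form $h$.

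Setting $Y = H^{\dagger,\leq a}\zeta^*\nabla^t(V_{\mathfrak{p}_i}h^{[\mathfrak{p}_j]})$, I would commute $\U{p}$ past $H^{\dagger}$ and $\zeta^*$ using Lemma \ref{L716}, noting that under $\zeta^*$ the operator $\U{p}$ on $G_F$-forms corresponds to $\U{p_1}\U{p_2}$ on $G_L$-forms because $\mathfrak{p}$ splits. Thus $\U{p}Y = H^{\dagger,\leq a}\zeta^*(\U{p_1}\U{p_2}\nabla^t V_{\mathfrak{p}_i}h^{[\mathfrak{p}_j]})$, which vanishes by Lemma \ref{L720}. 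Since the full $U_p$-operator on $G_F$-forms factors as a product of commuting $\U{q}$ over primes $\mathfrak{q}|p$ of $F$, each factor preserving the slope decomposition, the invertibility of $U_p$ on the slope $\leq a$ part forces every individual factor to be invertible there; in particular $\U{p}Y = 0$ implies $Y = 0$. The second identity then follows by expressing $\st{g}{}_x = (\alpha_i - \beta_i)^{-1}(\alpha_i\st{g}{(\mathfrak{p}_i)}_{\alpha_i} - \beta_i\st{g}{(\mathfrak{p}_i)}_{\beta_i})$, applying $\mathfrak{p}_j$-depletion together with the linear operation $H^{\dagger,\leq a}\zeta^*\nabla^t$, and invoking the first identity to collapse both resulting terms into the common value $H^{\dagger,\leq a}\zeta^*(\nabla^t\st{g}{[\mathfrak{p}_1,\mathfrak{p}_2]}_x)$. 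The main technical point is the invertibility of the single factor $\U{p}$ on the slope $\leq a$ subspace, which is where the finite-slope hypothesis on $f$ enters.
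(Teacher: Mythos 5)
Your proposal is correct and follows essentially the same route as the paper: both reduce the difference to a term of the form $\Vp{p_i}(\cdot)^{[\mathfrak{p_j}]}$, kill it via Lemma \ref{L720} after commuting $\U{p}$ through $H^{\dagger}$ and $\zeta^*$ by Lemma \ref{L716}, and use invertibility of $\U{p}$ on the slope $\leq a$ part. The only cosmetic difference is that the paper expands $\st{g}{[\mathfrak{p_1},\mathfrak{p_2}]}_x = (1-\Vp{p_i}\U{p_i})\st{g}{(\mathfrak{p_i})[\mathfrak{p_j}]}_{\bullet}$ and uses the $\U{p_i}$-eigenvalue directly, rather than your explicit identity for the difference.
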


\begin{proof}
    We have the following.
    \begin{align*}
        H^{\dagger, \leq a}\zeta^*(\nabla^t\st{g}{[\mathfrak{p_1}, \mathfrak{p_2}]}_x) &= H^{\dagger, \leq a}\zeta^*(\nabla^t(1-\Vp{p_i}\U{p_i})\st{g}{(\mathfrak{p_i})[\mathfrak{p_j}]}_{\bullet}) \\
        &= H^{\dagger, \leq a}\zeta^*(\nabla^t\st{g}{(\mathfrak{p_i})[\mathfrak{p_j}]}_{\bullet}) - (\bullet)H^{\dagger, \leq a}\zeta^*(\nabla^t\Vp{p_i}\st{g}{(\mathfrak{p_i})[\mathfrak{p_j}]}_{\bullet}) \\
        &= H^{\dagger, \leq a}\zeta^*(\nabla^t\st{g}{(\mathfrak{p_i})[\mathfrak{p_j}]}_{\bullet}).
    \end{align*}
    Here the last equality follows from Lemma \ref{L720}.
\end{proof}

\begin{lemma}\label{L723}
    We have 
    \[
    H^{\dagger, \leq a}\zeta^*(\nabla^t\st{g}{[\mathfrak{p_1}, \mathfrak{p_2}]}_{x}) = \frac{\prod\limits_{\bullet,\star \in \{\alpha,\beta\}}1-p^{t_{\mathfrak{p}}}\bullet_1\star_2\U{p}^{-1}}{1-p^{2t_{\mathfrak{p}}}\alpha_1\alpha_2\beta_1\beta_2\U{p}^{-2}} = H^{\dagger, \leq a}\zeta^*(\nabla^t\st{g}{}_{x}).
    \]
\end{lemma}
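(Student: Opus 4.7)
The plan is to diagonalize the $\U{p}$-action on the image of $H^{\dagger,\leq a}\zeta^*\nabla^t$ and verify the identity eigenspace-by-eigenspace. Since $g_x$ is a simultaneous $T_{\mathfrak{p_1}}$- and $T_{\mathfrak{p_2}}$-eigenform with separable Hecke polynomials, one first decomposes
\[
\breve{g}_x = \frac{1}{(\alpha_1-\beta_1)(\alpha_2-\beta_2)}\sum_{\bullet\in\{\alpha_1,\beta_1\},\,\star\in\{\alpha_2,\beta_2\}} \epsilon(\bullet,\star)\,\bullet\star\, \breve{g}^{(\mathfrak{p_1},\mathfrak{p_2})}_{\bullet,\star}
\]
into its four joint $(\mathfrak{p_1},\mathfrak{p_2})$-stabilizations, where $\breve{g}^{(\mathfrak{p_1},\mathfrak{p_2})}_{\bullet,\star}$ is the common $\U{p_1}$- and $\U{p_2}$-eigenform with respective eigenvalues $\bullet$ and $\star$, and $\epsilon(\bullet,\star)\in\{\pm 1\}$ arises from iterating the single-prime stabilization formula. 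By linearity it then suffices to treat one joint eigenform at a time.

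The crucial structural input is the intertwining $\U{p}\circ H^{\dagger,\leq a}\zeta^*\nabla^t = p^{t_{\mathfrak{p}}}\cdot H^{\dagger,\leq a}\zeta^*\nabla^t\circ\U{p_1}\U{p_2}$ obtained by combining parts (1) and (3) of Lemma \ref{L716}. Applied to $\breve{g}^{(\mathfrak{p_1},\mathfrak{p_2})}_{\bullet,\star}$, it shows that $Y_{\bullet,\star}:=H^{\dagger,\leq a}\zeta^*\nabla^t\breve{g}^{(\mathfrak{p_1},\mathfrak{p_2})}_{\bullet,\star}$ is a $\U{p}$-eigenform with eigenvalue $p^{t_{\mathfrak{p}}}\bullet\star$; under the separability assumption the four such eigenvalues are pairwise distinct, so $\U{p}^{-1}$ acts on $Y_{\bullet,\star}$ as the scalar $(p^{t_{\mathfrak{p}}}\bullet\star)^{-1}$, which is precisely the value picked up by the Euler-factor numerator of the statement.

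Expanding $(1-\bullet\Vp{p_1})(1-\star\Vp{p_2})\breve{g}^{(\mathfrak{p_1},\mathfrak{p_2})}_{\bullet,\star}$ and pushing through $H^{\dagger,\leq a}\zeta^*\nabla^t$, I would treat the full $\Vp{p_1}\Vp{p_2}$ term via the compatibility $\zeta^*\Vp{p_1}\Vp{p_2} = \Vp{p}\zeta^*$ (verifiable on adelic $q$-expansions along the lines of Lemma \ref{L716}(3)) combined with the identity $H^{\dagger,\leq a}\Vp{p} = H^{\dagger,\leq a}\U{p}^{-1}$ on the slope $\leq a$ subspace, which follows from $\U{p}\Vp{p}=\textrm{id}$ and invertibility of $\U{p}$ there. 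The main obstacle is posed by the single-$\Vp{p_i}$ mixed terms, which have no direct $F$-side analog; my plan is to exploit $\Vp{p_i}\U{p_j}=\U{p_j}\Vp{p_i}$ for $i\neq j$ together with $\U{p_j}\Vp{p_j}=\textrm{id}$ and the $\U{p_j}$-eigenvalue on the joint eigenform in order to rewrite each such mixed term as a scalar multiple of $\Vp{p_1}\Vp{p_2}$ applied to the eigenform, reducing it to the case already handled. Assembling these four contributions on each $Y_{\bullet,\star}$-eigenspace and recombining via the decomposition of $\breve{g}_x$ then produces the stated rational operator in $\U{p}$ acting on $H^{\dagger,\leq a}\zeta^*\nabla^t\breve{g}_x$, with the denominator arising from clearing the cross-terms between distinct eigenspaces; the second equality of the statement is immediate once one reads the decomposition of $\breve{g}_x$ backwards through this computation.
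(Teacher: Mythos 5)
Your route is genuinely different from the paper's, and the difference is not cosmetic. The paper never introduces joint $(\mathfrak{p_1},\mathfrak{p_2})$-stabilizations: it works with the single-prime stabilizations $\st{g}{(\mathfrak{p_i})}_{\bullet}$, trades each $\Vp{p_i}$ for a $\U{p_j}$ with $j\neq i$, expands $\U{p_j}$ on the non-$\mathfrak{p}_j$-stabilized form via the Hecke relation $\U{p_j}=(\alpha_j+\beta_j)-\alpha_j\beta_j\Vp{p_j}$, iterates once, and recombines by partial fractions in one variable. Your argument instead founders on its declared ``crucial structural input''. If $Y_{\bullet,\star}=H^{\dagger,\leq a}\zeta^*\nabla^t\breve{g}^{(\mathfrak{p_1},\mathfrak{p_2})}_{\bullet,\star}$ really were a $\U{p}$-eigenvector of eigenvalue $p^{t_{\mathfrak{p}}}\bullet\star$, then the numerator $\prod_{\bullet,\star}(1-p^{t_{\mathfrak{p}}}\bullet_i\star_j\U{p}^{-1})$ of the asserted identity would kill every eigencomponent of $H^{\dagger,\leq a}\zeta^*(\nabla^t\st{g}{}_x)$, so the right-hand side would vanish identically; and running your own recipe on the left-hand side, the four terms of $h-\bullet\Vp{p_1}h-\star\Vp{p_2}h+\bullet\star\Vp{p_1}\Vp{p_2}h$ contribute $Y_{\bullet,\star}-Y_{\bullet,\star}-Y_{\bullet,\star}+Y_{\bullet,\star}=0$ as well. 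You would be proving $0=0$, and the $p$-adic $L$-function would vanish at every such classical point. The eigenvector claim is therefore the wrong move: the diagonal restriction does not intertwine the $F$-side $\U{p}$ with the $L$-side $\U{p_1}\U{p_2}$ on arbitrary classes, because the $\U{p_1}\U{p_2}$-correspondence on $A\otimes_{\Sh{O}_F}\Sh{O}_L$ averages over a strictly larger set of subgroups than the $\U{p}$-correspondence on $A$, only the ``diagonal'' ones of which arise from subgroups of $A[\mathfrak{p}]$. The only robust compatibility is $\zeta^*\circ\Vp{p_1}\Vp{p_2}=\Vp{p}\circ\zeta^*$, equivalently $\U{p}\circ\zeta^*\circ\Vp{p_1}\Vp{p_2}=\zeta^*$ --- the exact analogue of $U_p(F\cdot G)\neq U_p F\cdot U_p G$ versus $U_p(V_pF\cdot G)=F\cdot U_pG$ for products of elliptic modular forms --- and this failure of commutation is the sole reason the Euler factor in the statement is nontrivial. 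The paper's proof, whatever else one thinks of it, never upgrades $H^{\dagger,\leq a}\zeta^*\nabla^t$ of a stabilization to a $\U{p}$-eigenvector; your proof does, and that is where it breaks.

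Two further points. The proposed reduction of the mixed terms cannot work as an identity of forms: $\Vp{p_1}h$ has $q$-expansion supported on $\mathfrak{p}_1$-divisible exponents while $\Vp{p_1}\Vp{p_2}h$ is supported on $p$-divisible ones, so the former is never a scalar multiple of the latter; any such relation could only hold after applying $H^{\dagger,\leq a}\zeta^*$, and producing it there is precisely the hard step that the paper addresses with the single-$V$ trade and the Hecke relation above. Finally, the four eigenvalues $p^{t_{\mathfrak{p}}}\bullet\star$ need not be pairwise distinct under separability of the two Hecke polynomials alone (e.g.\ $\alpha_1\alpha_2=\beta_1\beta_2$ is possible, since each product $\alpha_i\beta_i$ is pinned down by the weight and central character); this is harmless for acting on a vector already known to be an eigenvector, but it invalidates any step that relies on an eigenspace decomposition of the ambient module.
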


\begin{proof}
    We have the following chain of identities.
    \begin{align*}
        H^{\dagger, \leq a}\zeta^*(\nabla^t\st{g}{[\mathfrak{p_i}]}_{x}) &= H^{\dagger, \leq a}\zeta^*(\nabla^t\st{g}{(\mathfrak{p_i})[\mathfrak{p_i}]}_{\bullet}) \\
        &= H^{\dagger, \leq a}\zeta^*(\nabla^t\st{g}{(\mathfrak{p_i})}_{\bullet}) - (\bullet)H^{\dagger, \leq a}\zeta^*(\nabla^t\Vp{p_i}\st{g}{(\mathfrak{p_i})}_{\bullet}) \\
        &= H^{\dagger, \leq a}\zeta^*(\nabla^t\st{g}{(\mathfrak{p_i})}_{\bullet}) - (\bullet)H^{\dagger, \leq a}\zeta^*(\nabla^t\Vp{p}\U{p_j}\st{g}{(\mathfrak{p_i})}_{\bullet}) \quad (\text{by Lemma \ref{L720}}) \\
        &= H^{\dagger, \leq a}\zeta^*(\nabla^t\st{g}{(\mathfrak{p_i})}_{\bullet}) - (\bullet)p^{t_{\mathfrak{p}}}\U{p}^{-1}H^{\dagger, \leq a}\zeta^*(\nabla^t\U{p_j}\st{g}{(\mathfrak{p_i})}_{\bullet}) \\
        &= \left[ 1 - (\bullet)p^{t_{\mathfrak{p}}}(\alpha_j+\beta_j)\U{p}^{-1}\right]H^{\dagger, \leq a}\zeta^*(\nabla^t\st{g}{(\mathfrak{p_i})}_{\bullet}) + (\bullet)p^{t_{\mathfrak{p}}}\alpha_j\beta_j\U{p}^{-1}H^{\dagger, \leq a}\zeta^*(\nabla^t\Vp{p_j}\st{g}{(\mathfrak{p_i})}_{\bullet}) \\
        &= \left[1-(\bullet)p^{t_{\mathfrak{p}}}(\alpha_j+\beta_j)\U{p}^{-1}+(\bullet)^2p^{2t_{\mathfrak{p}}}\alpha_j\beta_j\U{p}^{-2}\right]H^{\dagger, \leq a}\zeta^*(\nabla^t\st{g}{(\mathfrak{p_i})}_{\bullet}) \\
        &= (1- (\bullet)p^{t_{\mathfrak{p}}}\alpha_j\U{p}^{-1})(1-(\bullet)p^{t_{\mathfrak{p}}}\beta_j\U{p}^{-1})H^{\dagger, \leq a}\zeta^*(\nabla^t\st{g}{(\mathfrak{p_i})}_{\bullet}).
    \end{align*}
    Now by Lemma \ref{L721}, since $H^{\dagger, \leq a}\zeta^*(\nabla^t\st{g}{[\mathfrak{p_i}]}_{\bullet}) = H^{\dagger, \leq a}\zeta^*(\nabla^t\st{g}{[\mathfrak{p_1},\mathfrak{p_2}]}_{x})$, we have
    \begin{align*}
    &H^{\dagger, \leq a}\zeta^*(\nabla^t\st{g}{}_{x}) \\
    &= \frac{1}{\alpha_i-\beta_i}\left[\frac{\alpha_i}{(1- p^{t_{\mathfrak{p}}}\alpha_i\alpha_j\U{p}^{-1})(1-p^{t_{\mathfrak{p}}}\alpha_i\beta_j\U{p}^{-1})} - \frac{\beta_i}{(1- p^{t_{\mathfrak{p}}}\beta_i\alpha_j\U{p}^{-1})(1-p^{t_{\mathfrak{p}}}\beta_i\beta_j\U{p}^{-1})}\right]\\
    &\quad \quad \quad \quad \quad \quad \quad \times H^{\dagger, \leq a}\zeta^*(\nabla^t\st{g}{[\mathfrak{p_1},\mathfrak{p_2}]}_{x})\\
    &= \frac{1-p^{2t_{\mathfrak{p}}}\alpha_1\alpha_2\beta_1\beta_2\U{p}^{-2}}{\prod\limits_{\bullet,\star \in \{\alpha,\beta\}}1-p^{t_{\mathfrak{p}}}\bullet_1\star_2\U{p}^{-1}}H^{\dagger, \leq a}\zeta^*(\nabla^t\st{g}{[\mathfrak{p_1},\mathfrak{p_2}]}_{x}).
    \end{align*}
\end{proof}

\begin{prop}
    Let $\sh{E}_{\mathfrak{p}}(g_x, T) = \prod\limits_{\bullet,\star \in \{\alpha,\beta\}}(1-p^{t_{\mathfrak{p}}}\bullet_1\star_2T^{-1})$ for a split prime $\mathfrak{p}$. Let $\sh{E}_{0,\mathfrak{p}}(g_x, T) = 1-p^{2t_{\mathfrak{p}}}\alpha_1\alpha_2\beta_1\beta_2T^{-2}$ With assumptions as in Proposition \ref{P719}, we have
    \[
    \langle H^{\dagger, \leq a}\zeta^*(\nabla^t\st{g}{[\mathfrak{p_1}, \mathfrak{p_2}]}_{x}) , \st{f}{*}_y\rangle = \left[\frac{\sh{E}_{\mathfrak{p}}(g_x, \alpha_f^*)\sh{E}_1(\beta_f^*)}{E(\beta_f,\alpha_f)\sh{E}_{0,\mathfrak{p}}(g_x,\alpha_f^*)} + \frac{\sh{E}_{\mathfrak{p}}(g_x,\beta_f^*)\sh{E}_1(\alpha_f^*)}{E(\alpha_f,\beta_f)\sh{E}_{0,\mathfrak{p}}(g_x, \beta_f^*)} \right]\langle H^{\dagger, \leq a}\zeta^*(\nabla^t\st{g}{}_{x}), \st{f}{*}_y\rangle.
    \]
\end{prop}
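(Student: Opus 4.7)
The plan is to mimic the proof of Proposition \ref{P719} using Lemma \ref{L723} in place of the inert-case identity, together with the Hecke-equivariance of the Petersson pairing. First, by Lemma \ref{L723} we have an identity
\[
H^{\dagger,\leq a}\zeta^*(\nabla^t\st{g}{[\mathfrak{p}_1,\mathfrak{p}_2]}_x) = \frac{\sh{E}_{\mathfrak{p}}(g_x,\U{p})}{\sh{E}_{0,\mathfrak{p}}(g_x,\U{p})}\,H^{\dagger,\leq a}\zeta^*(\nabla^t\st{g}{}_x),
\]
interpreted as a rational expression in the commuting operator $\U{p}$ applied to $H^{\dagger,\leq a}\zeta^*(\nabla^t\st{g}{}_x)$. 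Substituting this into the Petersson product against $\st{f}{*}_y$ reduces the problem to computing the pairing of a rational function in $\U{p}$ with $\st{f}{*}_y$.

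Next, under the assumption that the Hecke polynomial of $T_{\mathfrak{p}}$ on $f_y$ is separable, we decompose
\[
\st{f}{*}_y = \frac{1}{\alpha_f^* - \beta_f^*}\bigl(\alpha_f^*\,\st{f}{*(\mathfrak{p})}_{\alpha} - \beta_f^*\,\st{f}{*(\mathfrak{p})}_{\beta}\bigr),
\]
where $\st{f}{*(\mathfrak{p})}_{\alpha}$ and $\st{f}{*(\mathfrak{p})}_{\beta}$ are the two $\mathfrak{p}$-stabilizations of $\st{f}{*}_y$ and are $\U{p}$-eigenforms with eigenvalues $\alpha_f^*$ and $\beta_f^*$ respectively.

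By Hecke-equivariance of the Petersson product (coming from self-adjointness of $\U{p}$ under the trace pairing on the Hecke algebra), we may transfer the rational expression in $\U{p}$ onto the second factor, where it acts on $\st{f}{*(\mathfrak{p})}_{\bullet}$ by the scalar $\sh{E}_{\mathfrak{p}}(g_x,\bullet)/\sh{E}_{0,\mathfrak{p}}(g_x,\bullet)$ for $\bullet = \alpha_f^*,\beta_f^*$. Combining this with the decomposition above and recognizing that, exactly as in the proof of Proposition \ref{P719}, the scalar $\alpha_f^*/(\alpha_f^*-\beta_f^*)$ (respectively $-\beta_f^*/(\alpha_f^*-\beta_f^*)$) contributed by the $\alpha$- (resp.\ $\beta$-)stabilization corresponds under the Petersson pairing to the factor $1/E(\beta_f,\alpha_f)$ (resp.\ $1/E(\alpha_f,\beta_f)$), yields the claimed identity.

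The main technical point to verify is that $\sh{E}_{0,\mathfrak{p}}(g_x,\bullet) \neq 0$ for $\bullet = \alpha_f^*,\beta_f^*$, so that the transferred rational expression in $\U{p}$ is unambiguously defined on each stabilization; this follows from the separability hypotheses on the Hecke polynomials of $T_{\mathfrak{p}}$ attached to both $g_x$ and $f_y$, together with the fact that $\sh{E}_{0,\mathfrak{p}}(g_x,T)$ factors through the product $\alpha_i\alpha_j\beta_i\beta_j$ over the split primes above $\mathfrak{p}$. Everything else is a direct transcription of the computation in the inert case.
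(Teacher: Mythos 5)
Your proof is correct and follows essentially the same route as the paper: apply Lemma \ref{L723} to rewrite $H^{\dagger,\leq a}\zeta^*(\nabla^t\st{g}{[\mathfrak{p}_1,\mathfrak{p}_2]}_x)$ as $\sh{E}_{\mathfrak{p}}(g_x,\U{p})/\sh{E}_{0,\mathfrak{p}}(g_x,\U{p})$ applied to $H^{\dagger,\leq a}\zeta^*(\nabla^t\st{g}{}_x)$, decompose $\st{f}{*}_y$ into its two $\mathfrak{p}$-stabilizations, and evaluate $\U{p}$ by the corresponding eigenvalue on each, with the conjugate-linearity bookkeeping for the $E$-factors handled as in Proposition \ref{P719}. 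Your added remark on the nonvanishing of $\sh{E}_{0,\mathfrak{p}}(g_x,\bullet)$ is a reasonable well-definedness check that the paper leaves implicit.
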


\begin{proof}
    This is similar to the proof of Proposition \ref{P719}. Write $\st{f}{*}_y = (\alpha_f^*-\beta_f^*)^{-1}(\alpha_f^*\st{f}{*(\mathfrak{p})}_{\alpha} - \beta_{f}^*\st{f}{*(\mathfrak{p})}_{\beta})$ and apply Lemma \ref{L723} to each of the $\mathfrak{p}$-stabilizations. 
\end{proof}

\begin{lemma}
    We have $\langle H^{\dagger, \leq a}\zeta^*(\nabla^t\st{g}{}_{x}), \st{f}{*}_y\rangle/\langle f^*_y, f^*_y\rangle = \langle \zeta^*(\delta^t\st{g}{}_{x}), \st{f}{*}_y\rangle/\langle f^*_y, f^*_y\rangle$ upon base change to $\C{}$.
\end{lemma}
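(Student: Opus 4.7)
The plan is to reduce the identity to a classical comparison and then match $q$-expansions at a cusp. First, invoke Proposition \ref{P712}: under the hypothesis that $f_y$ is classical of slope $\leq a$ and the fibre of the eigenvariety at $(y,2c)$ is \'{e}tale, the $p$-adic Petersson pairing defined via the trace on the Hecke algebra agrees, upon base change $\bar{\Q}_p \hookrightarrow \C{}$, with the classical complex Petersson pairing up to a nonzero scalar which cancels in the ratio against $\langle f_y^*, f_y^*\rangle$. Thus it suffices to establish the equality as classical Petersson products of cuspforms of weight $(y,2c)$ for $G_F$.

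Second, apply Shimura's holomorphic projection. The form $\zeta^*(\delta^t \breve{g}_x)$ is a nearly holomorphic Hilbert cuspform of weight $(y,2c)$, while $\breve{f}_y^*$ is holomorphic of the same weight. Standard integration by parts, which exploits that $\breve{f}_y^*$ is annihilated by antiholomorphic partial derivatives, yields the classical identity
\[
\langle \zeta^*(\delta^t \breve{g}_x),\, \breve{f}_y^* \rangle \;=\; \langle \pi_{\mathrm{hol}}\bigl(\zeta^*(\delta^t \breve{g}_x)\bigr),\, \breve{f}_y^* \rangle.
\]
Hence the task is to identify $H^{\dagger,\leq a}\zeta^*(\nabla^t \breve{g}_x)$ with $\pi_{\mathrm{hol}}\bigl(\zeta^*(\delta^t \breve{g}_x)\bigr)$ as classical Hilbert cuspforms of weight $(y,2c)$ and slope $\leq a$.

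Third, I would carry out this identification by a direct $q$-expansion comparison at the Tate object $\mathrm{Tate}_{\mathfrak{a},\mathfrak{b}}(q)$. Lemma \ref{L405} expands $\nabla(\sigma)^{t_\sigma}$ applied to a local generator of $\w{}$ as a sum whose leading term is $\theta_\sigma^{t_\sigma}$ on $q$-expansions, all subsequent terms involving strictly positive powers of the variables $V_\tau$ and thus lying in strictly positive filtration. By Definition \ref{D3301} and the proof of Lemma \ref{L619}, these higher filtration terms are identified modulo $\lambda$ with $\nabla\bigl(\mathrm{Fil}^{n+g-1}\W{}\hat{\otimes}\Omega^{g-1}\bigr)$ and hence are annihilated by $H^{\dagger}$. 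On the classical side, $\pi_{\mathrm{hol}}\bigl(\delta_\sigma^{t_\sigma} g_x\bigr)$ has $q$-expansion with coefficients $\sigma(\xi)^{t_\sigma}\, a_\xi(g_x)$, that is, it also acts as $\theta_\sigma^{t_\sigma}$ on $q$-expansions. Since $\zeta^*$ commutes with both procedures (its effect on adelic $q$-expansions being the trace from $L$ to $F$ as in Theorem \ref{T28}), the cuspforms $H^{\dagger,\leq a}\zeta^*(\nabla^t \breve{g}_x)$ and $\pi_{\mathrm{hol}}\bigl(\zeta^*(\delta^t \breve{g}_x)\bigr)$ have identical adelic $q$-expansions, and the $q$-expansion principle (applied after restricting to slope $\leq a$) forces them to coincide in $H^0(\bar{\Sh{M}}^{G_F}, \w{k_f}(-D))^{\leq a}$.

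The main obstacle is step three: while the matching of $H^{\dagger}\nabla^t$ with $\theta^t$ on $q$-expansions is transparent over the ordinary locus via the unit-root splitting (as used in the proof of the previous proposition relating the construction to that of \cite{michele}), in the finite slope setting one must track the filtration on $\W{}$ carefully and exploit invertibility of the factor $\lambda = \prod_\tau \prod_{i=0}^{N}(v_\tau - i)$ on the specialization at $(y, 2c)$, which holds automatically since $(y,2c)$ is a classical $F$-dominated weight at which $f_y$ is assumed to be a Hecke eigenform with separable $T_\mathfrak{p}$-polynomials.
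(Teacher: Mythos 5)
Your first two steps coincide with the paper's proof: Proposition \ref{P712} identifies the algebraically defined pairing with the classical Petersson product up to a scalar that cancels in the ratio, and integration by parts (this is \cite[Lemma 2.5]{DarmonRotger}) replaces $\zeta^*(\delta^t\breve{g}_x)$ by its holomorphic projection. The gap is in your third step, where you identify $H^{\dagger,\leq a}\zeta^*(\nabla^t\breve{g}_x)$ with $\pi_{\mathrm{hol}}\bigl(\zeta^*(\delta^t\breve{g}_x)\bigr)$ by asserting that both have adelic $q$-expansion given by applying $\theta^t$ to the coefficients of $\breve{g}_x$ and restricting. Neither assertion is true. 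On the archimedean side, the holomorphic projection of a nearly holomorphic form does not share the $q$-expansion of its holomorphic part: already $\delta_k h$ has holomorphic part $\theta h \neq 0$ while $\pi_{\mathrm{hol}}(\delta_k h) = 0$. On the $p$-adic side, $H^{\dagger}$ is not the projection onto the degree-zero part in the variables $V_\tau$; it is the inverse of the isomorphism $H^0(\Man{r},\wc{k_f}(-D))^{\leq a}\otimes\Lambda[\lambda^{-1}] \to H^d_{\textrm{dR}}(\Man{r},\Fil{n}^{\bullet}\W{})^{\leq a}\otimes\Lambda[\lambda^{-1}]$, so computing it requires subtracting an element of $\nabla\bigl(\Fil{n+d-1}\W{}\hat{\otimes}\Omega^{d-1}\bigr)$, and such elements have nonzero degree-zero components (by (\ref{eq:12}), $\nabla(h)$ contributes $\theta(h)$ in degree zero). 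The paper's own ordinary-locus computation makes this explicit: $H^{\dagger}(\zeta^*\nabla^t\breve{g}_x^{[\Sh{P}]}) = \zeta^*\theta^t\breve{g}_x^{[\Sh{P}]} - \psi_{\Frob}(\nabla h)$, and the correction term dies only after applying $e_{n.o}$, which is unavailable at finite slope.

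So your two $q$-expansion identities each fail, and proving that the two error terms agree is exactly the comparison you are trying to bypass. The paper closes this step by citing \cite[Proposition 2.8]{DarmonRotger}: the holomorphic projection and the overconvergent projection are the \emph{same} operator on the de Rham cohomology class, both being defined by inverting the map $H^0(\wc{k+2t_L}(-D)) \to H^g_{\textrm{dR}}(\Fil{n}^{\bullet}\W{})$ of Lemma \ref{L619}, a recipe that is independent of whether one splits the Hodge filtration antiholomorphically or $p$-adically. That structural identification of projections, not a $q$-expansion match, is the missing ingredient; with it, your argument reduces to the paper's.
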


\begin{proof}
    By \cite[Lemma 2.5]{DarmonRotger}, we have $\langle \zeta^*(\delta^t\st{g}{}_{x}), \st{f}{*}_y\rangle = \langle \Pi^{\text{hol}}\zeta^*(\delta^t\st{g}{}_{x}), \st{f}{*}_y\rangle$
    where $\Pi^{\text{hol}}$ is the holomorphic projection of the nearly holomorphic modular form $\zeta^*(\delta^t \breve{g}_x)$. But the holomorphic projection is by definition the same thing as the overconvergent projection of $\zeta^*(\nabla^t \breve{g}_x)$ \cite[Proposition 2.8]{DarmonRotger}. The Petersson inner product cuts out the slope $\leq a$ part since $\breve{f}^*_y$ has slope $\leq a$. The lemma then follows from Proposition \ref{P712}, which shows that the pairing defined algebraically is a scalar multiple of the Petersson inner product.
\end{proof}

We can now collect all the results above to state the following theorem.

\begin{theorem}
    With all the assumptions as above, the value of the $p$-adic $L$-function is related to the classical central $L$-value in the following manner.
    \begin{gather*}
    \begin{aligned}
    &\sh{L}^g_p(\breve{\omega}_g, \breve{\omega}_f)((x,c), (y,2c)) \\
    &= \left(\prod_{\mathfrak{p} \text{ inert}}\left[ \frac{\sh{E}_{\mathfrak{p}}(g_x, \alpha_{f}^*)\sh{E}_1(\beta_f^*)}{E(\beta_f,\alpha_f)} + \frac{\sh{E}_{\mathfrak{p}}(g_x,\beta_f^*)\sh{E}_1(\alpha_f^*)}{E(\alpha_f, \beta_f)}\right] \right) \\ 
    &\times \left(\prod_{\mathfrak{p} \text{ split}} \left[\frac{\sh{E}_{\mathfrak{p}}(g_x, \alpha_f^*)\sh{E}_1(\beta_f^*)}{E(\beta_f,\alpha_f)\sh{E}_{0,\mathfrak{p}}(g_x,\alpha_f^*)} + \frac{\sh{E}_{\mathfrak{p}}(g_x,\beta_f^*)\sh{E}_1(\alpha_f^*)}{E(\alpha_f,\beta_f)\sh{E}_{0,\mathfrak{p}}(g_x, \beta_f^*)} \right] \right) \times \frac{\langle \zeta^*(\delta^t\st{g}{}_{x}), \st{f}{*}_y\rangle}{\langle f^*_y, f^*_y\rangle}.
    \end{aligned}
    \end{gather*}
\end{theorem}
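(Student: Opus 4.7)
The plan is to unravel the definition of $\sh{L}^g_p(\breve{\omega}_g,\breve{\omega}_f)((x,c),(y,2c)) = \langle H^{\dagger,\leq a}\zeta^*(\nabla^t\breve{g}_x^{[\Sh{P}]}),\breve{f}_y^*\rangle/\langle f_y^*, f_y^*\rangle$ by peeling off, one prime $\mathfrak{p}\mid p$ of $F$ at a time, the $\Sh{P}$-depletion on $\breve{g}_x$. Since $\Sh{P} = \{\mathfrak{P}\mid p\}$ is the set of primes of $L$ over $p$, and since $\breve{g}_x^{[\Sh{P}]}$ equals the full multi-depletion $\breve{g}_x^{[\mathfrak{P}_1,\dots,\mathfrak{P}_h]}$, the different primes of $F$ above $p$ decouple: the operators $V_{\mathfrak{P}}$ for $\mathfrak{P}\mid\mathfrak{p}$ and $V_{\mathfrak{P}'}$ for $\mathfrak{P}'\mid\mathfrak{p}'$ (with $\mathfrak{p}\neq\mathfrak{p}'$) commute with the operators $\mathbf{U}_{\mathfrak{p}}$, $\nabla^t$, $\zeta^*$, and $H^{\dagger}$ in compatible ways (Lemma \ref{L716}). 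So it suffices to treat one $\mathfrak{p}\mid p$ at a time and multiply the resulting Euler factors.

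For each such $\mathfrak{p}$, I would split into the two cases already laid out. If $\mathfrak{p}$ is inert in $L$, so that there is a single $\mathfrak{P}\mid\mathfrak{p}$, apply Proposition \ref{P719}: writing $\breve{f}_y^*$ as a linear combination of its two $\mathfrak{p}$-stabilizations and using that $\mathbf{U}_{\mathfrak{p}}$ acts on the $\star$-component by $\star$, one obtains the bracketed factor indexed by inert primes, applied to $\langle H^{\dagger,\leq a}\zeta^*(\nabla^t\breve{g}_x),\breve{f}_y^*\rangle$. If $\mathfrak{p}$ splits in $L$ as $\mathfrak{p}_1\mathfrak{p}_2$, Lemma \ref{L721} reduces the double depletion $\breve{g}_x^{[\mathfrak{p}_1]}$ to $\breve{g}_x^{[\mathfrak{p}_1,\mathfrak{p}_2]}$ inside $H^{\dagger,\leq a}\zeta^*\nabla^t(\cdot)$, and then Lemma \ref{L723} together with the final proposition of the split case yields the corresponding bracketed factor. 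Collecting the factors over all $\mathfrak{p}\mid p$ gives the product appearing in the statement, multiplied by $\langle H^{\dagger,\leq a}\zeta^*(\nabla^t\breve{g}_x),\breve{f}_y^*\rangle/\langle f_y^*,f_y^*\rangle$.

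The last step is to identify this ratio with the algebraic part $\langle\zeta^*(\delta^t\breve{g}_x),\breve{f}_y^*\rangle/\langle f_y^*,f_y^*\rangle$ of the classical central $L$-value. By \cite[Lemma 2.5, Prop.~2.8]{DarmonRotger} adapted to the Hilbert setting, the classical nearly holomorphic form $\zeta^*(\delta^t\breve{g}_x)$ pairs against the holomorphic $\breve{f}_y^*$ through its holomorphic projection, and this holomorphic projection coincides with the overconvergent projection $H^{\dagger}$ applied to $\zeta^*(\nabla^t\breve{g}_x)$. Since $\breve{f}_y^*$ has finite slope $\leq a$, the pairing cuts out the slope $\leq a$ part. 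Proposition \ref{P712}, applied under the \'etaleness assumption at $(y,2c)$, then shows that the $p$-adic pairing $\langle\cdot,\cdot\rangle_y$ is a non-zero scalar multiple of the classical Petersson product, and the normalization by $\langle f_y^*,f_y^*\rangle$ cancels this scalar.

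The main obstacle is bookkeeping: one must verify that the various operators $\mathbf{U}_{\mathfrak{p}}, \mathbf{V}_{\mathfrak{p}}, \nabla^t, H^{\dagger}, \zeta^*$ commute, twist, and compose in the correct way, and that in the split case the Euler factor $\sh{E}_{0,\mathfrak{p}}(g_x,T)$ appearing in the denominator really arises from the determinant $1-p^{2t_{\mathfrak{p}}}\alpha_i\alpha_j\beta_i\beta_j T^{-2}$ produced by inverting the $2\times 2$ matrix that relates $\breve{g}_x^{[\mathfrak{p}_1,\mathfrak{p}_2]}$ to $\breve{g}_x$ through both $\mathfrak{p}_i$-stabilizations. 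Once the commutation relations of Lemma \ref{L716} and the vanishing $\mathbf{U}_{\mathfrak{p}}\nabla^t\mathbf{V}_{\mathfrak{p}_i}\breve{g}^{[\mathfrak{p}_j]}=0$ of Lemma \ref{L720} are fixed, the rest is algebraic manipulation that mirrors the split/inert dichotomy classically appearing in Ichino's local computations at $p$.
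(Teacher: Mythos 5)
Your proposal is correct and follows essentially the same route as the paper: the paper's proof of this theorem is simply to collect the preceding results (Lemma \ref{L716}, Proposition \ref{P719} for inert primes, Lemmas \ref{L720}--\ref{L723} and the subsequent proposition for split primes, and the final lemma identifying $H^{\dagger,\leq a}\zeta^*(\nabla^t\breve{g}_x)$ with the holomorphic projection of $\zeta^*(\delta^t\breve{g}_x)$ via Proposition \ref{P712}), exactly as you describe. Your explicit remark that the primes of $F$ above $p$ decouple is left implicit in the paper but is the correct justification for multiplying the Euler factors.
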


\printbibliography

\end{document}